\newtheorem{thm}{Theorem}[section]
\newtheorem{lem}{Lemma}[section]
\newtheorem{cor}[thm]{Corollary}
\newtheorem{definition}{Definition}[section]
\newtheorem{rem}{Remark}[section]
\newtheorem{ass}{Assumption}[section]
\def\g{\gamma}
\def\G{\Gamma}
\def\d{\delta}
\def\e{\epsilon}
\def\l{\lambda}
\def\Z{{\mathbb Z}}
\def\R{{\mathbb R}}
\def\C{{\mathbb C}}
\def\EE{{\mathcal E}}
\def\LL{{\mathcal L}}
\def\supp{{\text{Supp}}}
\title{Universal edge fluctuations of discrete interlaced particle systems.}
\author{Erik Duse and Anthony Metcalfe}
\date{\today}
\begin{document}

\begin{abstract}
We impose the uniform probability measure on the set of all discrete
Gelfand-Tsetlin patterns of depth $n$ with the particles on row $n$
in deterministic positions. These systems equivalently describe
a broad class of random tilings models, and are closely related to
the eigenvalue minor processes of
a broad class of random Hermitian matrices. They have a determinantal
structure, with a known correlation kernel. We rescale the systems by
$\frac1n$, and examine the asymptotic behaviour, as $n \to \infty$, under
weak asymptotic assumptions for the (rescaled) particles on row $n$: The
empirical distribution of these converges weakly to a probability measure
with compact support, and they otherwise satisfy mild regulatory restrictions.

We prove that the correlation kernel of particles in the neighbourhood of
`typical edge points' convergences to the extended Airy kernel. To do this,
we first find an appropriate scaling for the fluctuations of the particles.
We give an explicit parameterisation of the asymptotic edge, define an
analogous non-asymptotic edge curve (or finite $n$-deterministic equivalent),
and choose our scaling such that that the particles fluctuate around this with
fluctuations of order $O(n^{-\frac13})$ and $O(n^{-\frac23})$ in the
tangent and normal directions respectively. While the final results are
quite natural, the technicalities involved in studying such a broad class
of models under such weak asymptotic assumptions are unavoidable and extensive.
\end{abstract}

\maketitle

\section{Introduction}

\subsection{Overview of the model, the asymptotic assumptions, and results}
\label{secoverview}

In this paper we consider universal edge behaviour of certain random systems
of discrete interlaced particles referred to as {\em Gelfand-Tsetlin patterns}.
A discrete Gelfand-Tsetlin pattern of depth $n$ is an $n$-tuple,
$(y^{(1)},y^{(2)},\ldots,y^{(n)}) \in  \Z \times \Z^2 \times \cdots \times \Z^n$,
which satisfies the interlacing constraint,
\begin{equation}
\label{eqInt}
y_1^{(r+1)} \; \ge \; y_1^{(r)} \; > \; y_2^{(r+1)} \; \ge \; y_2^{(r)}
\; > \cdots \ge \; y_r^{(r)} \; > \; y_{r+1}^{(r+1)},
\end{equation}
for all $r \in \{1,\ldots,n-1\}$, denoted $y^{(r+1)} \succ y^{(r)}$. Equivalently
this can be considered as an interlaced configuration of $\frac12 n (n+1)$
particles in $\Z \times \{1,2,\ldots,n\}$ by placing a particle at position
$(u,r) \in \Z \times \{1,2,\ldots,n\}$ whenever $u$ is an element of $y^{(r)}$. A
Gelfand-Tsetlin pattern of depth $4$ is shown on the
left of figure \ref{figGTAsyShape}.

\begin{figure}[t]
\centering
\begin{tikzpicture}

\draw (0,4.5) node {$y_4^{(4)}$};
\draw (2,4.5) node {$y_3^{(4)}$};
\draw (4,4.5) node {$y_2^{(4)}$};
\draw (6,4.5) node {$y_1^{(4)}$};
\draw (1,3) node {$y_3^{(3)}$};
\draw (3,3) node {$y_2^{(3)}$};
\draw (5,3) node {$y_1^{(3)}$};
\draw (2,1.5) node {$y_2^{(2)}$};
\draw (4,1.5) node {$y_1^{(2)}$};
\draw (3,0) node {$y_1^{(1)}$};

\draw (.45,3.75) node [rotate=-55] {$<$};
\draw (1.45,3.75) node [rotate=55] {$\le$};
\draw (2.45,3.75) node [rotate=-55] {$<$};
\draw (3.45,3.75) node [rotate=55] {$\le$};
\draw (4.45,3.75) node [rotate=-55] {$<$};
\draw (5.45,3.75) node [rotate=55] {$\le$};
\draw (1.45,2.25) node [rotate=-55] {$<$};
\draw (2.45,2.25) node [rotate=55] {$\le$};
\draw (3.45,2.25) node [rotate=-55] {$<$};
\draw (4.45,2.25) node [rotate=55] {$\le$};
\draw (2.45,.75) node [rotate=-55] {$<$};
\draw (3.45,.75) node [rotate=55] {$\le$};


\draw (7.5,3.5)
--++ (5,0)
--++ (0,-2.5)
--++ (-2.5,0)
--++ (-2.5,2.5);

\draw (7.5,3.9) node {$(a,1)$};
\draw (12.5,3.9) node {$(b,1)$};
\draw (12.5,.6) node {$(b,0)$};
\draw (10,.6) node {$(a+1,0)$};

\end{tikzpicture}
\caption{Left: A visualisation of a Gelfand-Tsetlin pattern of depth $4$.
Right: $\{(\chi,\eta) \in [a,b] \times [0,1] : b \ge \chi \ge \chi + \eta - 1 \ge a \}$.
Assumption \ref{assWeakConv} implies that the bulk of the rescaled particles of the
Gelfand-Tsetlin patterns lie asymptotically in this region as $n \to \infty$.}
\label{figGTAsyShape}
\end{figure}
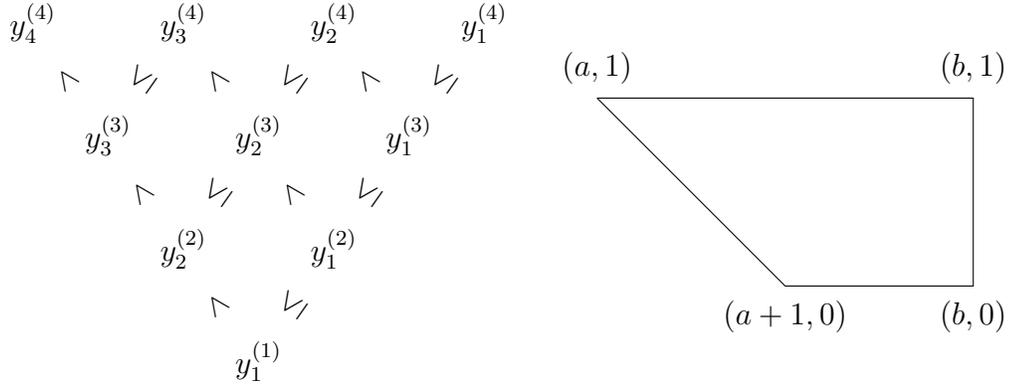

For each $n\ge1$, fix $x^{(n)} \in \Z^n$ with $x_1^{(n)} > x_2^{(n)} > \cdots > x_n^{(n)}$.
Consider the uniform probability measure, $\nu_n$, on the set of discrete Gelfand-Tsetlin
patterns of depth $n$ with the particles on row $n$ in the deterministic positions defined
by $x^{(n)}$:
\begin{equation}
\label{eqnun}
\nu_n[(y^{(1)},y^{(2)},\ldots,y^{(n)})]
:= \frac1{Z_n} \cdot \left\{
\begin{array}{rcl}
1 & ; &
\text{when} \; x^{(n)} = y^{(n)} \succ y^{(n-1)} \succ \cdots \succ y^{(1)}, \\
0 & ; & \text{otherwise},
\end{array}
\right.
\end{equation}
where $Z_n > 0$ is a normalisation constant. This measure, and the equivalent
description of Gelfand-Tsetlin patterns given above, induces a random point
process on interlaced configurations of particles in $\Z \times \{1,2,\ldots,n\}$.
In \cite{Duse15a}, we show that this process is determinantal.  See Johansson,
\cite{Joh06b}, for an introduction to such processes. Equation
(\ref{eqKnrusvFixTopLine}), below, recalls our expression for the correlation
kernel of this process, denoted $K_n : (\Z \times \{1,2,\ldots,n\})^2 \to \C$.
Loosely speaking, $K_n$ is a function on pairs of particle positions which
conveniently encodes the densities and correlations of the particles. $K_n$ was
also, independently, obtained by Petrov, \cite{Pet14}.

As discussed in sections 1.1 and 1.2 of Duse and Metcalfe, \cite{Duse15a},
our motivation for studying these processes is that they are an equivalent
description of uniform random tilings of `half-hexagons' with lozenges,
and of perfect matchings of dimer configurations of honeycomb lattices.
The set of `half-hexagons' is a class of polygons with quite general boundaries.
The boundary is determined by the (arbitrary) choice of the above
$x^{(n)} \in \Z^n$, and particular choices of $x^{(n)}$ recover well-studied
models. For example, if we fix $p,q,r \in \Z^+$, and choose $n = p+r$ and
$x^{(n)} = (p+q+r,p+q+r-1,\ldots,p+q+1,p,p-1,\ldots,1)$, then we recover
the uniform random tiling of a hexagon with sides of length $p,q,r,p,q,r$.
Cohn et al, \cite{Coh98}, studied the asymptotic shape of a `typical'
such tiling as $n \to \infty$, under the assumption that $\frac{p}q$ and
$\frac{p}r$ converge. Analogous limit shapes for other random tilings
models have been studied, for example, in \cite{Ken06} and \cite{Ken07}.

As we will discuss in more detail below, we rescale our systems by
$\frac1n$, and examine the asymptotic behaviour, as $n \to \infty$, under
weak asymptotic assumptions: We assume that the empirical distribution
of $\frac1n x^{(n)}$ converges weakly to a probability measure, $\mu$, with
compact support (assumption \ref{assWeakConv}), and that $\frac1n x^{(n)}$ otherwise
satisfies only mild regulatory restrictions (assumptions \ref{assIsol}
and \ref{asscases}). We avoid only that degenerate case where $\mu$ is
Lebesgue measure on a single interval of length 1, and allow all other $\mu$
which can be obtained via the weak convergence.
Note, as $n \to \infty$, the interlacing constraint
implies that the bulk of the rescaled particles of the Gelfand-Tsetlin
patterns lie asymptotically in the polygon shown on the right of figure
\ref{figGTAsyShape}. The technicalities involved in studying the asymptotic
behaviours of these systems, under such weak asymptotic assumptions,
are unavoidable and extensive. A positive aspect of this is that we uncovered
many unexpected situations. Indeed, we have written 4 papers on these rich systems of models.
Papers \cite{Duse15a, Duse15b} explore the possible global asymptotic shapes, and
this paper and \cite{Duse15d} examine the local asymptotic fluctuations of
the particles in neighbourhoods of the possible edges. Paper \cite{Duse15a}
examines `classic' global asymptotic shapes, and \cite{Duse15b} finds novel global
asymptotic behaviours. Figure \ref{figexs} depicts some example asymptotic shapes
obtained using the results of those papers. In \cite{Duse15d}, we find novel
local asymptotic edge fluctuations.

In this paper, we examine {\em universal} local asymptotic fluctuations at `typical
edge points'. To do this, we must first find an appropriate scaling for the
fluctuations of the particles. We start with the explicit parameterisation of the
asymptotic edge obtained in \cite{Duse15a} (see equation (\ref{eqchiEEetaEE})),
and a define a natural subset of the asymptotic edge called the set of
{\em typical edge points} (see definition \ref{defEdgeTyp}). This set is
always non-empty, and the difference between it and the whole edge is either
empty or discrete. Next, we use the parameterisation to define an analogous
{\em non-asymptotic edge curve for each $n$}, sometimes referred to in the literature
as the {\em finite $n$-deterministic equivalent}
(see definition \ref{defEdgeNonAsy}). We fix a typical edge point, denoted by
$(\chi,\eta)$, and (for each $n$) we let $(\chi_n,\eta_n)$ denote the analogous
point on the non-asymptotic edge. Our asymptotic assumptions imply that
$(\chi_n,\eta_n) \to (\chi,\eta)$ as $n \to \infty$, but we have no
control of the rate of convergence. Nevertheless, theorems \ref{thmAiry2} and
\ref{thmAiry} essentially prove the following:
\begin{thm}
Let $\{(u_n,r_n)\}_{n\ge1}, \{(v_n,s_n)\}_{n\ge1} \subset \Z \times \{1,2,\ldots,n\}$
be sequences of particle positions chosen as follows: For all $n$ sufficiently large,
both $\frac1n (u_n,r_n)$ and $\frac1n (v_n,s_n)$ fluctuate around $(\chi_n,\eta_n)$,
with fluctuations of order $O(n^{-\frac13})$ and $O(n^{-\frac23})$ respectively in
the tangent and normal directions of the non-asymptotic edge curve. Then the
asymptotic behaviour of $n^\frac13 K_n((u_n,r_n),(v_n,s_n))$ is governed by the
extended Airy kernel as $n \to \infty$.
\end{thm}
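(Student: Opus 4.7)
The plan is to perform a double-contour saddle point analysis on the explicit kernel formula \eqref{eqKnrusvFixTopLine}. One writes $K_n((u_n,r_n),(v_n,s_n))$ as a double integral of the form
\[
\frac{1}{(2\pi i)^2} \oint \oint e^{n\,[G_n(w)-G_n(z)]}\, F_n(w,z;u_n,r_n,v_n,s_n)\, dw\, dz,
\]
plus the usual single-integral residue term that is present only when $r_n>s_n$, where $G_n$ depends on the empirical measure $\mu_n := \frac1n \sum_{i=1}^n \delta_{x_i^{(n)}/n}$ through logarithmic terms and on the base point $(\chi_n,\eta_n)$ on the non-asymptotic edge curve. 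The definition of the non-asymptotic edge in \cite{Duse15a} is tailored precisely so that, for each $n$, there exists a complex number $w_0(n)$, depending on $\mu_n$ and $(\chi_n,\eta_n)$, which is a double critical point of $G_n$: $G_n'(w_0(n)) = G_n''(w_0(n)) = 0$ while $G_n'''(w_0(n)) \neq 0$. The typicality assumption on $(\chi,\eta)$ guarantees that $w_0(n)$ stays bounded away from the real axis and from the support of $\mu_n$, and that the cubic coefficient $G_n'''(w_0(n))$ is bounded away from zero, at least along a subsequence sufficient for our purposes.

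Next, decompose the particle positions in the tangent/normal frame of the edge curve at $(\chi_n,\eta_n)$. Writing
\[
\tfrac1n (u_n,r_n) = (\chi_n,\eta_n) + n^{-1/3} \tau_n\, \mathbf{t}_n + n^{-2/3} \sigma_n\, \mathbf{n}_n,
\]
and analogously for $(v_n,s_n)$ with $(\tau'_n,\sigma'_n)$, the prefactor $F_n$ and the linear-in-$(u,r,v,s)$ pieces of the exponent separate into a quadratic perturbation of $G_n$ governed by the tangent components and a linear perturbation governed by the normal components. After the microscopic change of variable $w = w_0(n) + c\, n^{-1/3} \hat{w}$, $z = w_0(n) + c\, n^{-1/3} \hat{z}$, for a suitable constant $c$ involving $G_n'''(w_0(n))^{-1/3}$, Taylor expansion of $G_n$ to third order at $w_0(n)$ produces, in the limit, a cubic exponent $\tfrac13(\hat w^3-\hat z^3)$ together with quadratic terms proportional to $\tau,\tau'$ and linear terms proportional to $\sigma,\sigma'$ in the hatted variables. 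Up to a matching of constants this is precisely the standard double-contour representation of the extended Airy kernel at time coordinates $\tau,\tau'$ and space coordinates $\sigma,\sigma'$. The prefactor $n^{1/3}$ in front of $K_n$ is reproduced by the Jacobian $dw\,dz \sim n^{-2/3} d\hat w\, d\hat z$ combined with the $n^{-1/3}$ normalisation that converts the lattice-density $K_n$ into a continuous fluctuation density.

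The main obstacle, by a wide margin, is the global tail analysis under the weak regularity hypotheses on $x^{(n)}$. Because only weak convergence $\mu_n \to \mu$ is assumed (together with assumptions \ref{assIsol} and \ref{asscases}), and because there is no rate for $(\chi_n,\eta_n) \to (\chi,\eta)$, the classical steepest descent contour constructed from the limiting phase need not be a descent contour for $G_n$. I would therefore construct the contours for each $n$ directly from $w_0(n)$: locally one takes the three rays leaving the cubic critical point at the standard angles $\pm\pi/3, \pi$ (and the complementary triple for the other variable), and globally one glues them to admissible closed contours dictated by the cut structure of the Cauchy transform of $\mu_n$. Uniform lower bounds on $\operatorname{Re}[G_n(z)-G_n(w_0(n))]$ along the global pieces, controlling contributions of $\mu_n$ uniformly in $n$, are precisely what the isolation hypothesis and the regularity of $\mu$ near the edge are designed to supply. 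Once such contours are in place, dominated convergence inside the microscopic $n^{-1/3}$-neighbourhood, combined with exponential smallness outside it, yields the pointwise convergence to the extended Airy kernel; threading this argument uniformly as $w_0(n)$ approaches its limit without rate, and organising the bookkeeping so that the full extended (rather than scalar) Airy kernel emerges, is where the main analytic work lies.
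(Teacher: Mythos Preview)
Your outline has the right high-level architecture (double contour integral, local cubic expansion, global descent estimates), but it rests on a geometric picture that is wrong for this model, and this error propagates into essentially every step.

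You assert that the double critical point $w_0(n)$ ``stays bounded away from the real axis.'' In this paper the opposite is true: the relevant double root $t$ of $f_{(\chi,\eta)}'$ is \emph{real}. By definition~\ref{defEdge} and lemma~\ref{lemEdge}, $t$ lies in one of the real sets $R_\mu^+$, $R_{\l-\mu}$, $R_\mu^-$. A root in $\mathbb{H}$ characterises the liquid region $\LL$; the edge is precisely where this root hits $\R$ and becomes a real root of multiplicity two. So the entire saddle analysis takes place at a real point, sitting either to the right of $\supp(\mu)$, inside a gap of $\supp(\l-\mu)$, or to the left of $\supp(\mu)$, and the ``standard angles $\pm\pi/3,\pi$'' you invoke only make sense once you know the sign of the real number $f_t'''(t)$.

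A second structural point: the integrand is not $e^{n[G_n(w)-G_n(z)]}$ with a single phase. The $w$-phase $f_n$ (equation~\eqref{eqfn}) depends on $(v_n,s_n)$ and the $z$-phase $\tilde f_n$ (equation~\eqref{eqtildefn}) depends on $(u_n,r_n)$; they are genuinely different. Neither has $t$ as an exact double root. The paper introduces an intermediate function $f_{t,n}$ (equations~\eqref{eqftn'Rmu}--\eqref{eqftn'Rl-mu}), built from $(\chi_n,\eta_n)$, for which $t$ \emph{is} a double root by construction (lemma~\ref{lemftn'}); the actual roots of $f_n'$ and $\tilde f_n'$ merely lie within $O(n^{-1/3})$ of $t$ (lemma~\ref{lemNonAsyRoots}), and near $t$ they may form a real double root, two real simple roots, or a complex conjugate pair (possibilities (a),(b),(c) of lemma~\ref{lemRootsNonAsy1}). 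Each of these local configurations needs its own treatment when you glue local rays to global contours.

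Because $t$ is real, its position relative to the three support pieces $S_1,S_2,S_3$ (equation~\eqref{eqS1S2S3}) can fall into twelve qualitatively different cases (lemma~\ref{lemCases}), and the global descent/ascent contours must be built separately in each (lemmas~\ref{lemDesAsc1-12} and~\ref{lemDesAsc1-12Rem}, with proofs occupying all of section~\ref{secCont}). Your plan to ``glue to admissible closed contours dictated by the cut structure of the Cauchy transform of $\mu_n$'' is not wrong in spirit, but it gives no hint of this case structure or of the (a)/(b)/(c) trichotomy, and a non-real-saddle ansatz would not produce contours that thread correctly between the real poles of the integrand. Until the real-saddle geometry is incorporated, the proposal does not yet constitute a workable proof for this model.
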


A steepest descent analysis of a contour integral expression for
$K_n((u_n,r_n),(v_n,s_n))$ (see equation (\ref{eqKnrnunsnvn1}), below)
is used to prove the asymptotics in theorems \ref{thmAiry2} and
\ref{thmAiry}. The length of this paper reflects the extensive technicalities
needed to do the analysis rigorously under our weak assumptions.
In section \ref{secabotrofn}, we examine the roots of the derivatives of
the appropriate steepest descent functions (see equations (\ref{eqfn},
\ref{eqtildefn}, \ref{eqf})). In section \ref{secsdatak}, we perform the
steepest descent analysis. In particular, we highlight lemmas
\ref{lemDesAsc1-12} and \ref{lemDesAsc1-12Rem}, which prove the
existence of appropriate contours of descent/ascent.
The proofs of these are given in section \ref{secCont}. This is, by far,
the most difficult part of the paper. Indeed, our weak assumptions
necessitate that we need to prove existence of different contours
for 12 distinct cases (see lemma \ref{lemCases}).

Note, while convergence to the extended Airy kernel is the only case which
we consider rigorously in this paper, we will briefly discuss other natural
asymptotic situations in section \ref{secOAS}. We will discuss known
results in the literature, and conjecture analogous asymptotic results for
this model. Unfortunately, the length of this paper necessitates that we do
not attempt to study these situations in greater detail here.

We end this section by comparing our result with analogous results
in the literature. Perhaps the closest result to ours is in Petrov,
\cite{Pet14}. Theorem 8.1 of \cite{Pet14} proves a similar asymptotic
result for the special case where $\mu$ is given by Lebesgue measure
on a disjoint union, $[a_1,b_1] \cup [a_2,b_2] \cup \cdots \cup [a_k,b_k]$,
where $k \ge 2$ and $\sum_i (b_i-a_i) = 1$. By contrast, here,
assumption \ref{assWeakConv} avoids only that degenerate case where $\mu$ is
Lebesgue measure on a single interval of length 1, and allows all other $\mu$
with compact support which can be obtained via the weak convergence.
Petrov further specialises by assuming that
$\frac1n x^{(n)}$ is essentially densely packed in the above disjoint union
of intervals. By contrast, here, assumption \ref{assWeakConv} implies
that the empirical distribution of $\frac1n x^{(n)}$ converges weakly to $\mu$,
and assumptions \ref{assIsol} and \ref{asscases} otherwise impose only
mild regulatory restrictions on $\frac1n x^{(n)}$. The stronger
assumptions of Petrov give a fast rate of convergence, and indeed it can be
shown that $(\chi_n,\eta_n) = (\chi,\eta) + O(n^{-1})$
for all $n$ sufficiently large under these. Petrov, therefore,
can ignore $(\chi_n,\eta_n)$ (the non-asymptotic edge), and fluctuate
simply around $(\chi,\eta)$ (the asymptotic edge). By contrast, we have
no control of the rate of convergence.
The fast rate of convergence in \cite{Pet14} also allows Petrov to avoid
many subtle technical points.

Theorem 8.1 of \cite{Pet14} furthermore chooses a somewhat different
scaling for the sequences of particle positions: Petrov fixes parameters
$(\tau_1,\sigma_1) \in \R^2$ and $(\tau_2,\sigma_2) \in \R^2$,
and fluctuates around the asymptotic edge. For all $n$ sufficiently large,
the fluctuations have order $O(n^{-\frac13})$ and $O(n^{-\frac23})$ respectively
in the tangent direction (of the asymptotic edge) and the direction $(1,0)$,
$\tau_1$ and $\tau_2$ measure the size of the $O(n^{-\frac13})$
fluctuations, and $\sigma_1 - \tau_1^2$ and $\sigma_2 - \tau_2^2$
measure the size of the $O(n^{-\frac23})$ fluctuations. Petrov chooses
this scaling to ensure convergence to the form of the extended Airy
kernel defined in \cite{Pra02}, evaluated at $(\tau_1,\sigma_1)$ and
$(\tau_2,\sigma_2)$. A contour integral expression for that kernel
was obtained in \cite{Bor08}. By contrast, here, we fix parameters
$(u,r) \in \R^2$ and $(v,s) \in \R^2$, and fluctuate around the
non-asymptotic edge. For all $n$ sufficiently large, the fluctuations
have order $O(n^{-\frac13})$ and $O(n^{-\frac23})$ respectively
in the tangent and normal directions of the non-asymptotic edge,
$u$ and $v$ measure the size of the $O(n^{-\frac13})$ fluctuations, and
$r$ and $s$ measure the size of the $O(n^{-\frac23})$ fluctuations (see
equations (\ref{equnrnvnsn2}, \ref{equnrnvnsn3})). The scaling is thus
naturally related to the geometric behaviour of the edge. We use the
scaling in lemma \ref{lemNonAsyRoots} to show that the relevant roots
and derivatives of the steepest descent functions have
well-behaved asymptotic behaviours, and these result
in simple Taylor expansions for the steepest descent functions
give in corollary \ref{corTay}. We then use these in theorems
\ref{thmAiry2} and \ref{thmAiry} to prove convergence to
the form of the extended Airy kernel given in equation (\ref{eqAi}),
evaluated at $(u,r)$ and $(v,s)$. Note that this expression is simpler
than that given in \cite{Pet14}, but equivalent, as can be
seen via a change of variables and the removal of a conjugation factor.

Our results also have interesting connections with those of
Kenyon et al, \cite{Ken06, Ken07}. Those papers study
the global asymptotic shapes of random tilings of a class of polygons.
In papers \cite{Duse15a, Duse15b}, we explore the global asymptotic
shapes of random tilings of a more restricted class of polygons,
but we allow more general boundary/asymptotic conditions which
results in some important differences. For example, the asymptotic
boundaries in \cite{Ken06, Ken07} are shown to be {\em algebraic}, and
this is not necessarily true in \cite{Duse15a}.
Moreover, in \cite{Duse15b}, we find novel global asymptotic behaviours.
Also, in \cite{Duse15a, Duse15b}, we obtain parameterisations of the boundaries.
This enables us to prove the asymptotic fluctuations
seen in this paper and \cite{Duse15d}, in neighbourhoods of the edges.
It is intuitively clear that these universal fluctuations will
also appear in the models of \cite{Ken06, Ken07}, under analogous
conditions. For example, in figure 1 of \cite{Ken07}, the asymptotic
frozen boundary of the polygon in seen to be a cardioid. In \cite{Duse15d},
we consider the asymptotic fluctuations in neighbourhoods of an analogous cusp,
and show that they are governed by a novel point process, which we call
the {\em Cusp Airy} process.

More generally, we believe that our techniques can be of use in other
random tiling models, or random perfect matchings, or systems of
random non-intersecting paths, etc, that can equivalently be described
as interacting particle systems. For example, \cite{Bufe16}
is a recent work concerning the asymptotics of random domino tilings of
rectangular Aztec diamonds. In it, the authors find explicit parameterisations
of the possible asymptotic boundaries using almost identical methods to those
that we used in \cite{Duse15a}, and find analogous results.
It is therefore reasonable to expect that the results we found in
\cite{Duse15b}, this paper, and in \cite{Duse15d}, also have natural
analogues for random domino tilings, and the techniques of those papers
would be sufficient to prove these results. There has been significant
interest in related models. See, for example, \cite{Bor08, Chh15, Def08, Joh03,
Joh05a}.

Other closely connected models arise from random matrices.
For each $n \ge 1$, let $A_n \in \C^{n \times n}$ be a random Hermitian matrix
whose distribution is unitarily invariant. For each $r \le n$, let $\l^{(r)} \in \R^r$
be the eigenvalues (in decreasing order) of the $r^\text{th}$ principal sub-matrix of
$A_n$ consisting of the first $r$ rows and columns. The asymptotic
behaviour of $\lambda^{(n)}$ (the eigenvalues of $A_n$) as $n \to \infty$ has been
studied for many different ensembles of random matrices (i.e. for different
choices of $A_n$), and universal behaviours have been found. See, for example,
\cite{Bai10, Past11} for reviews of known results.
See also the recent work of Hachem et al, \cite{Hachem15},
which studies the asymptotic behaviour of the edge of $\lambda^{(n)}$ as
$n \to \infty$ when $A_n$ is a complex correlated Wishart matrix. In \cite{Hachem15},
the model is shown to be determinantal, the edge asymptotic behaviour is
examined via a closely related saddle point problem to that seen in this
paper, and it is shown that this behaviour is governed by the standard Airy kernel.

Note, for general Hermitian $A_n$, an elementary result from matrix analysis
shows that $(\l^{(1)},\l^{(2)},\ldots,\l^{(n)})$ is a Gelfand-Tsetlin pattern
of depth $n$, where now the particles on each level take positions in $\R$
rather than $\Z$. Such models often display a similar determinantal structure
to the discrete Gelfand-Tsetlin patterns of this paper. Perhaps the
best studied determinantal minor process is that of the
{\em Gaussian Unitary Ensemble} (GUE), where the entries of $A_n$
are random independent Gaussians. See, for example, Mehta, \cite{Mehta04},
and Johansson and Nordenstam, \cite{Joh06a}.

Perhaps the most similar minor process to ours is that
studied by the author Metcalfe in \cite{Met13}. There,
the eigenvalues of $A_n$ are deterministic:
$\l^{(n)} = x^{(n)}$ for some fixed $x^{(n)} \in \R^n$ with
$x_1^{(n)} > x_2^{(n)} > \cdots > x_n^{(n)}$, and the eigenvalue
minor process induces the uniform probability
distribution on the set of Gelfand-Tsetlin patterns of depth $n$ with
the particles on the top row in the deterministic positions defined by
$x^{(n)} \in \R^n$. This is very similar to the measure defined in
equation (\ref{eqnun}). Metcalfe showed that the process
is determinantal, and found a correlation kernel. This kernel can, in fact,
be shown to be a limit of the kernel in equation
(\ref{eqKnrusvFixTopLine}), below, where we scale the discrete particle
positions on each level such that they become continuous.
In \cite{Met13}, Metcalfe proved universal bulk asymptotic behaviour
under the assumption that the empirical distribution
of $x^{(n)}$ converges weakly to a probability measure with
compact support, similar to assumption \ref{assWeakConv}, below.
The edge asymptotic behaviour has not yet been studied.
It is clear to the authors, however, that the techniques of this
paper would be sufficient to study this. Also, it is worth noting that there
are interesting asymptotic situations there that have no
natural analogues here. In particular,
the limit measure may have atoms in \cite{Met13}, which is not possible here.
The related asymptotic situations have also not yet been studied,
and we believe that our techniques would help in studying these.
More generally, we hope that our techniques would help to study the
determinantal minor processes of other ensembles of random matrices.

\subsection{The determinantal structure of discrete Gelfand-Tsetlin patterns}

As in the previous section, define interlacing as in equation (\ref{eqInt}),
fix $x^{(n)} \in \Z^n$ with $x_1^{(n)} > x_2^{(n)} > \cdots > x_n^{(n)}$,
and define $\nu_n$ as in equation (\ref{eqnun}). Recall
that $\nu_n$ induces a random point process on interlaced configurations of particles
in $\Z \times \{1,2,\ldots,n\}$. In section 4.1 of \cite{Duse15a}, we showed that
this process is determinantal, and we found an expression for a correlation kernel,
denoted by $K_n : (\Z \times \{1,2,\ldots,n\})^2 \to \C$. Note, ignoring the
deterministic particles on row $n$, interlacing implies that we need only consider
those particle positions, $(u,r), (v,s) \in \Z \times \{1,2,\ldots,n-1\}$, which satisfy
$u \ge x_n^{(n)}+n-r$ and $v \ge x_n^{(n)}+n-s$. For all such $(u,r),(v,s)$,
\begin{align}
\label{eqKnrusvFixTopLine}
&K_n((u,r),(v,s)) = - \phi_{r,s}(u,v) \\
\nonumber
& + \frac{(n-s)!}{(n-r-1)!} \sum_{k=1}^n 1_{(x_k^{(n)} \ge  u)} \sum_{l=v+s-n}^v
\frac{\prod_{j=u+r-n+1}^{u-1} (x_k^{(n)} - j)}{\prod_{j=v+s-n, \; j \neq l}^v (l - j)} \;
\prod_{i=1, \; i \neq k}^n \bigg( \frac{l - x_i^{(n)}}{x_k^{(n)} - x_i^{(n)}} \bigg),
\end{align}
where
\begin{equation}
\label{eqphirsuv}
\phi_{r,s} (u,v)
:= \left\{
\begin{array}{lll}
0 & ; & \text{when } v < u \text{ or } s \le r, \\
1 & ; & \text{when } v \ge u \text{ and } s = r+1, \\
\frac1{(s-r-1)!} \prod_{j=1}^{s-r-1} (v-u+s-r-j) & ; & \text{when } v \ge u \text{ and } s > r+1.
\end{array}
\right.
\end{equation}

Note, correlation kernels are not uniquely defined. Indeed,
$\mathcal{K}_n : (\Z \times \{1,2,\ldots,n\})^2 \to \C$ is an
equivalent kernel if $\det \left[ \mathcal{K}_n((u_i,r_i),(u_j,r_j)) \right]_{i,j=1}^m
= \det [ K_n((u_i,r_i),(u_j,r_j)) ]_{i,j=1}^m$ for all $m\ge1$, and
for all particle positions
$\{(u_1,r_1),\ldots,(u_m,r_m)\} \subset \Z \times \{1,2,\ldots,n\}$.
An equivalent kernel which will prove useful in our asymptotic analysis
is the following:
\begin{equation}
\label{eqKntilde}
\mathcal{K}_n((u,r),(v,s))
:= K_n((v,s),(u,r)) \; B_n(s,r)^{-1} \; A_{t,n}((v,s),(u,r))^{-1},
\end{equation}
for all $(u,r), (v,s) \in \Z \times \{1,2,\ldots,n\}$,
where $t \in \R$ is that fixed value in equation (\ref{equnrnvnsn}),
$A_{t,n} : (\Z^2)^2 \to \R \setminus \{0\}$ is defined in lemma \ref{lemFnt},
and $B_n : \Z_+^2 \to (0,+\infty)$ is defined by,
\begin{equation}
\label{eqConjFactB}
B_n(r,s) := \frac{(n-s)!}{(n-r)!} \; n^{s-r}.
\end{equation}

\subsection{The asymptotic `shape' of discrete Gelfand-Tsetlin patterns}
\label{sectasoGTp}

In \cite{Duse15a} and \cite{Duse15b} we consider the asymptotic `shape' of
the systems of Gelfand-Tsetlin patterns of the previous sections, under
some natural asymptotic assumptions. In this section, we recall the
relevant asymptotic assumptions, definitions, and results of those papers.
We state these without motivation or proof, and refer the interested reader
to those papers.

\begin{ass}
\label{assWeakConv}
Let $\mu$ be a probability measure on $\R$ with $\mu \le \l$, where $\l$
is Lebesgue measure. Assume that there is a compact interval $[a,b] \subset \R$
with $b-a>1$, $\supp(\mu) \subset [a,b]$ and $\{a,b\} \subset \supp(\mu)$.
Moreover, assume that,
\begin{equation*}
\frac1n \sum_{i=1}^n \delta_{x_i^{(n)}/n} \to \mu,
\end{equation*}
as $n \to \infty$, in the sense of weak convergence of measures.
\end{ass}
Then, rescaling the sides of the Gelfand-Tsetlin patterns by $\frac1n$,
the bulk of the rescaled particles asymptotically lie
in the polygon on the right of figure \ref{figGTAsyShape} as $n \to \infty$, i.e.,
$\{(\chi,\eta) \in [a,b] \times [0,1] : b \ge \chi \ge \chi + \eta - 1 \ge a \}$.
The local asymptotic behaviour of particles near a fixed point, $(\chi,\eta)$, in
this polygon is studied by considering $K_n((u_n,r_n),(v_n,s_n))$ as $n \to \infty$,
where $\{(u_n,r_n)\}_{n\ge1} \subset \Z^2$ and $\{(v_n,s_n)\}_{n\ge1} \subset \Z^2$
satisfy $\frac1n (u_n,r_n) \to (\chi,\eta)$ and $\frac1n (v_n,s_n) \to (\chi,\eta)$
as $n \to \infty$. First note, equation (\ref{eqKnrusvFixTopLine}) and the
Residue theorem give,
\begin{align}
\label{eqKnrnunsnvn1}
&K_n((u_n,r_n),(v_n,s_n)) = - \phi_{r_n,s_n}(u_n,v_n) \\
\nonumber
&+ \frac{(n-s_n)!}{(n-r_n-1)!} \frac{n^{s_n-r_n-1}}{(2\pi i)^2}
\int_{c_n} dw \int_{C_n} dz \; \frac{\prod_{j=u_n+r_n-n+1}^{u_n-1}
(z - \frac{j}n)}{\prod_{j=v_n+s_n-n}^{v_n} (w - \frac{j}n)}
\frac1{w-z} \prod_{i=1}^n \left( \frac{w - \frac{x_i}n}{z - \frac{x_i}n} \right),
\end{align}
for all $n \ge 1$, where we have dropped the superscript from
$x^{(n)} = (x_1^{(n)},x_2^{(n)},\ldots,x_n^{(n)})$,
$C_n$ is any counter-clockwise simple closed contour which
contains all of $\{\frac1n x_j : x_j \ge u_n\}$ but none of
$\{\frac1n x_j : x_j \le u_n+r_n-n \}$, and $c_n$ is any
counter-clockwise simple closed contour which contains
$\frac1n \{v_n+s_n-n,v_n+s_n-n+1,\ldots,v_n\}$ and $C_n$.
Next note, the above integrand equals
$\frac1{w-z} \exp(n f_n(w) - n \tilde{f}_n(z))$, where
\begin{align}
\label{eqfn}
f_n(w)
& := \frac1n \sum_{i=1}^n \log \left( w - \frac{x_i}n \right) -
\frac1n \sum_{j=v_n+s_n-n}^{v_n} \log \left( w - \frac{j}n \right), \\
\label{eqtildefn}
\tilde{f}_n(z)
& := \frac1n \sum_{i=1}^n \log \left( z - \frac{x_i}n \right) -
\frac1n \sum_{j=u_n+r_n-n+1}^{u_n-1} \log \left( z - \frac{j}n \right),
\end{align}
for all $w,z \in \C \setminus \R$, and $\log$ denotes the principal
logarithm. Finally note, since $\frac1n \sum_i \delta_{x_i/n} \to \mu$ weakly
as $n \to \infty$, and $\frac1n (u_n,r_n), \frac1n (v_n,s_n) \to (\chi,\eta)$
as $n \to \infty$, it is natural to define the following asymptotic function:
\begin{equation}
\label{eqf}
f_{(\chi,\eta)} (w)
:= \int_a^b \log (w-x) \mu[dx] - \int_{\chi+\eta-1}^\chi \log (w-x) dx,
\end{equation}
for all $w \in \C \setminus \R$.

Steepest descent analysis, and the above structure, intuitively suggests that the
behaviour of $K_n((u_n,r_n),(v_n,s_n))$
as $n \to \infty$ depends on the roots of $f_{(\chi,\eta)}'$. Recall that
$b \ge \chi \ge \chi+\eta-1 \ge a$, and (see assumption \ref{assWeakConv})
that $\mu$ and $\l-\mu$ are positive measures. Thus, for all $w \in \C \setminus \R$,
it is natural to write,
\begin{equation}
\label{eqf2}
f_{(\chi,\eta)}(w) 
= \int_{S_1} \log (w-x) \mu[dx]
- \int_{S_2} \log (w-x) (\l-\mu)[dx]
+ \int_{S_3} \log (w-x) \mu[dx],
\end{equation}
where $S_i := S_i(\chi,\eta)$ for all $i \in \{1,2,3\}$ are defined by: 
\begin{equation}
\label{eqS1S2S3}
S_1 := \supp(\mu |_{[\chi,b]}),
\hspace{0.5cm}
S_2 := \supp((\l-\mu) |_{[\chi+\eta-1,\chi]}),
\hspace{0.5cm}
S_3 := \supp(\mu |_{[a,\chi+\eta-1]}).
\end{equation}
Then, for all $w \in \C \setminus \R$,
\begin{equation}
\label{eqf'}
f_{(\chi,\eta)}' (w)
= \int_{S_1} \frac{\mu[dx]}{w-x}
- \int_{S_2} \frac{(\l-\mu)[dx]}{w-x}
+ \int_{S_3} \frac{\mu[dx]}{w-x}.
\end{equation}
Thus $f_{(\chi,\eta)}'$ has a unique analytic extension to $\C \setminus S$,
where $S := S_1 \cup S_2 \cup S_3$.

In \cite{Duse15a, Duse15b}, we used the possible behaviours of the
roots of $f_{(\chi,\eta)}'$ in the above domain to examine the asymptotic
shapes. First we defined:
\begin{definition}
The liquid region, $\LL$, is the set of all $(\chi,\eta) \in [a,b] \times [0,1]$
with $b \ge \chi \ge \chi + \eta - 1 \ge a$, for which $f_{(\chi,\eta)}'$ has a root
in $\mathbb{H} := \{ w \in \C : \text{Im}(w) > 0 \}$.
\end{definition}
We showed that $f_{(\chi,\eta)}'$ has a unique root in $\mathbb{H}$
whenever $(\chi,\eta) \in \LL$, and this root is of multiplicity $1$.
Moreover, we showed that the resulting map from $\LL$ to $\mathbb{H}$ is a
homeomorphism, and so $\LL$ is a non-empty, open, connected subset of the
interior of the polygon on the right of figure \ref{figGTAsyShape}. We used the
homeomorphism to study $\partial \LL$. In \cite{Duse15a}, we obtained a complete
parameterisation of $\partial \LL$ for $\mu$ in a broad class. In
\cite{Duse15b}, we examined the highly non-trivial behaviours of $\partial \LL$
that can occur when $\mu$ is not restricted to this class. $\LL$ and $\partial \LL$
for some example measures, $\mu$, studied in \cite{Duse15a, Duse15b},
are given in figure \ref{figexs}.

For the purposes of this paper, we only need the results of \cite{Duse15a},
which we now recall in more detail. We showed that the inverse of the above
homeomorphism, from $\mathbb{H}$ to $\LL$, has a unique continuous extension
to a natural, non-empty, open subset of $\R$ which depends on $\mu$. We denoted
this set by $R \subset \R$, and showed that the extension is an injective
smooth curve, parameterised over $R$. We defined the {\em edge}, denoted
$\EE \subset \partial \LL$, to be the image of this curve, and
referred to this curve as the {\em edge curve}, denoted
\begin{equation*}
(\chi_\EE(\cdot),\eta_\EE(\cdot)) : R \to \EE \subset \partial \LL.
\end{equation*}

In \cite{Duse15a}, we found an alternative definition of $\EE$
which is analogous to that of $\LL$:
\begin{definition}
\label{defEdge}
The edge, $\EE$, is the union $\EE := \EE_\mu^+ \cup \EE_{\l-\mu} \cup
\EE_\mu^- \cup \EE_0 \cup \EE_1 \cup \EE_2$, where,
\begin{itemize}
\item
$\EE_\mu^+$ is the set of all $(\chi,\eta) \in [a,b] \times [0,1]$
with $b \ge \chi \ge \chi + \eta - 1 \ge a$, for which $f_{(\chi,\eta)}'$ has a
repeated root in $(\chi,+\infty) \setminus \supp(\mu)$.
\item
$\EE_{\l-\mu}$ is the set of all $(\chi,\eta)$ for which $f_{(\chi,\eta)}'$ has a
repeated root in $(\chi+\eta-1,\chi) \setminus \supp(\l-\mu)$.
\item
$\EE_\mu^-$ is the set of all $(\chi,\eta)$ for which $f_{(\chi,\eta)}'$ has a
repeated root in $(-\infty,\chi+\eta-1) \setminus \supp(\mu)$.
\item
$\EE_0 \cup \EE_1 \cup \EE_2$ is the set of $(\chi,\eta)$ for which
$f_{(\chi,\eta)}'$ has a root in $\{\chi,\chi+\eta-1\}$.
\end{itemize}
\end{definition}
For clarity, in \cite{Duse15a}, we state that we denoted
$\EE_\mu = \EE_\mu^+ \cup \EE_\mu^-$. This decomposition
is more convenient here. Moreover, we defined $\EE_0$ and $\EE_1$ and $\EE_2$
exactly in \cite{Duse15a}, but do not do so here for brevity.
We showed that the above two definitions of $\EE$ are equivalent:
We started with definition \ref{defEdge}, showed that the sets
in this definition are mutually disjoint, $f_{(\chi,\eta)}'$ has a
unique real-valued repeated root in $\R \setminus \{\chi,\chi+\eta-1\}$ when
$(\chi,\eta) \in \EE_\mu^+ \cup \EE_{\l-\mu} \cup \EE_\mu^-$, and
$f_{(\chi,\eta)}'$ has a unique root in $\{\chi,\chi+\eta-1\}$ when
$(\chi,\eta) \in \EE_0 \cup \EE_1 \cup \EE_2$. We showed that
the resulting map from $\EE$ to $\R$ is injective, has image space $R$, and has
inverse equal to the edge curve discussed above. Therefore the definitions
are trivially equivalent. We also showed that the multiplicity of the
unique root determines the geometric behaviour of the edge curve. Indeed,
denoting the multiplicity by $m = m(\chi,\eta) \ge1$, we showed that:
\begin{itemize}
\item
The edge curve behaves like a parabola in a neighbourhood of $(\chi,\eta)$
when $(\chi,\eta) \in \EE_\mu^+ \cup \EE_{\l-\mu} \cup \EE_\mu^-$ and $m=2$,
and when $(\chi,\eta) \in \EE_0 \cup \EE_1 \cup \EE_2$ and $m=1$.
\item
The edge curve behaves like an algebraic cusp of first order in a
neighbourhood of $(\chi,\eta)$ when $(\chi,\eta) \in \EE_\mu^+ \cup \EE_{\l-\mu} \cup \EE_\mu^-$
and $m=3$, and when $(\chi,\eta) \in \EE_1 \cup \EE_2$ and $m=2$.
\end{itemize}
For clarity we state that $m$ takes no other values.
Examples edge curves, with the above sets clearly labelled, are depicted in
figure \ref{figexs}. Finally, we showed that
$\EE \setminus \{(\chi,\eta) \in \EE_\mu^+ \cup \EE_{\l-\mu} \cup \EE_\mu^- : m = 2\}$
is either empty or discrete. We therefore now define:
\begin{definition}
\label{defEdgeTyp}
The set of typical edge points is
$\{(\chi,\eta) \in \EE_\mu^+ \cup \EE_{\l-\mu} \cup \EE_\mu^- : m = 2\}$.
\end{definition}

\begin{figure}
\centering
\mbox{\includegraphics{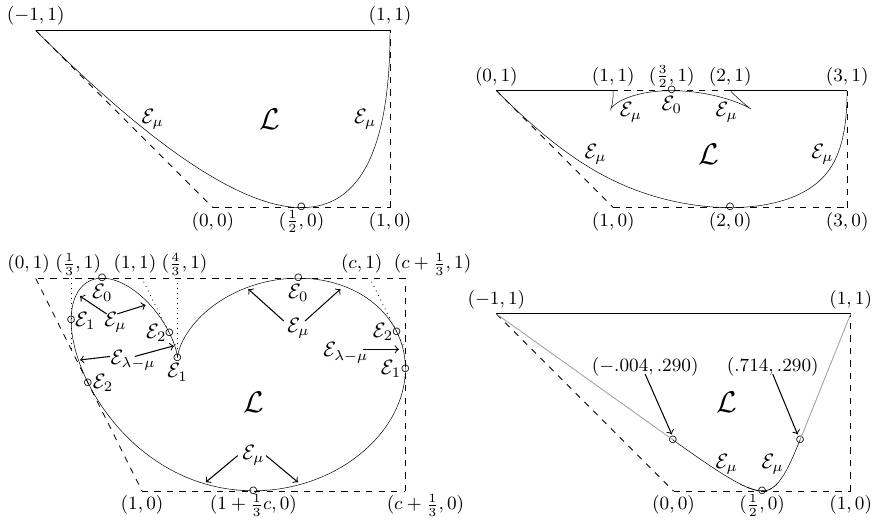}}
\caption{$\LL$ and $\partial \LL$ for some example measures,
$\mu$, with density $\varphi : \R \to [0,1]$. Top left:
$\varphi(x) = \frac12 \; \forall \; x \in [-1,1]$,
all points of $\EE$ are in $\EE_\mu$ with $m=2$. Top right:
$\varphi(x) = \frac12 \; \forall \; x \in [0,1] \cup [2,3]$, the cusps
are in $\EE_\mu$ with $m=3$, all other points of $\EE$ are in
$\EE_\mu$ with $m=2$. Lower left: $\varphi (x) = 1 \; \forall \;
x \in [0,\frac13] \cup [1,\frac43] \cup [c,c+\frac13]$,
where $c := \frac1{12} (23 + \sqrt{217})$, the cusp is in
$\EE_1$ with $m=2$, all other points in $\EE_1 \cup \EE_2$ have $m=1$.
Lower right: $\varphi (x) = \frac{15}{16} (x-1)^2 (x+1)^2 \; \forall \;
x \in [-1,1]$, the grey parts of $\partial \LL$ are not in $\EE$
but follow from the analysis in \cite{Duse15b}. For general $\mu$,
there is always an analogous lower convex part of $\EE$
which is contained in $\{(\chi,\eta) \in \EE_\mu : m = 2\}$, and
the lower tangent point always equals $(\frac12 + \int x \mu[dx],0)$.}
\label{figexs}
\end{figure}

We now consider the subset $\EE_\mu^+ \cup \EE_{\l-\mu} \cup \EE_\mu^- \subset \EE$,
in more detail. Let $C : \C \setminus \supp(\mu) \to \C$ denote the {\em Cauchy}
transform of $\mu$,
\begin{equation}
\label{eqCauTrans}
C(w) := \int_a^b \frac{\mu[dx]}{w-x},
\end{equation}
for all $w \in \C \setminus \supp(\mu)$. In \cite{Duse15a}, we showed that:
\begin{lem}
\label{lemEdge}
Recall that the edge curve, $(\chi_\EE(\cdot), \eta_\EE(\cdot)) : R \to \EE$,
is bijective. Define:
\begin{itemize}
\item
$R_\mu^+ := \{ t \in \R \setminus \supp(\mu) : C(t) > 0 \}$.
\item
$R_{\l-\mu} := \R \setminus \supp(\l-\mu)$.
\item
$R_\mu^- := \{ t \in \R \setminus \supp(\mu) : C(t) < 0 \}$.
\end{itemize}
Then, these are disjoint open subsets of $R$,
and the image spaces of these under the bijection are
(respectively) $\EE_\mu^+$, $\EE_{\l-\mu}$, $\EE_\mu^-$. Moreover,
for all $t \in R_\mu^+ \cup R_{\l-\mu} \cup R_\mu^-$,
\begin{equation}
\label{eqchiEEetaEE}
\chi_\EE(t) = t + \frac{e^{C(t)}-1}{e^{C(t)} C'(t)}
\hspace{0.5cm} \text{and} \hspace{0.5cm}
\eta_\EE(t) = 1 + \frac{(e^{C(t)}-1)^2}{e^{C(t)} C'(t)}.
\end{equation}
Finally, $(\chi_\EE(t),\eta_\EE(t)) \in (a,b) \times (0,1)$ and
$b > \chi_\EE(t) > \chi_\EE(t) + \eta_\EE(t) - 1 > a$
for all $t \in R_\mu^+ \cup R_{\l-\mu} \cup R_\mu^-$, i.e.,
$(\chi_\EE(t),\eta_\EE(t))$ is in the interior of the polygon
shown on the right of figure \ref{figGTAsyShape}.
\end{lem}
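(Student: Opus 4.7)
The plan is to parameterise each of the three edge subsets $\EE_\mu^+, \EE_{\l-\mu}, \EE_\mu^-$ by the location $t$ of the unique real repeated root of $f'_{(\chi,\eta)}$ guaranteed by definition \ref{defEdge}, and then invert the two conditions $f'_{(\chi,\eta)}(t)=0$ and $f''_{(\chi,\eta)}(t)=0$ algebraically to recover (\ref{eqchiEEetaEE}).

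First, I would compute the derivatives at a real $t$ with $t \notin \{\chi,\chi+\eta-1\}$. Differentiating (\ref{eqf}) in the case $t \notin \supp(\mu)$ gives
\begin{equation*}
f'_{(\chi,\eta)}(t) = C(t) - \log\!\left(\tfrac{t-\chi-\eta+1}{t-\chi}\right), \qquad
f''_{(\chi,\eta)}(t) = C'(t) - \tfrac{\eta-1}{(t-\chi)(t-\chi-\eta+1)}.
\end{equation*}
In the remaining case $\chi + \eta - 1 < t < \chi$ (which corresponds to $\EE_{\l-\mu}$), the decomposition (\ref{eqf2}) together with a principal-value interpretation of $C(t)$ on $\R \setminus \supp(\l-\mu)$ yields the same pair of scalar equations after cancellation of the logarithmic singularities contributed by the $\mu$- and $\l$-parts. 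In all three cases the repeated-root conditions reduce to
\begin{equation*}
e^{C(t)} = \frac{t-\chi-\eta+1}{t-\chi}, \qquad C'(t) = \frac{\eta-1}{(t-\chi)(t-\chi-\eta+1)}.
\end{equation*}

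Next, I would solve this system. The first equation gives $(t-\chi)(e^{C(t)}-1) = 1-\eta$ and $(t-\chi)(t-\chi-\eta+1) = e^{C(t)}(t-\chi)^2$; substituting into the second yields $e^{C(t)}(1-\eta) C'(t) = -(e^{C(t)}-1)^2$, which rearranges to $\eta - 1 = (e^{C(t)}-1)^2 / (e^{C(t)} C'(t))$, and back-substitution gives $\chi - t = (e^{C(t)}-1)/(e^{C(t)} C'(t))$, exactly (\ref{eqchiEEetaEE}). Since $C'(t) = -\int (t-x)^{-2} \mu[dx] < 0$ on $\R \setminus \supp(\mu)$, and negative by a parallel argument on the principal-value extension, the identity $t - \chi = -(e^{C(t)}-1)/(e^{C(t)} C'(t))$ shows that $\operatorname{sign}(t-\chi) = \operatorname{sign}(C(t))$. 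Hence $t > \chi$ iff $C(t) > 0$, placing $(\chi_\EE(t),\eta_\EE(t))$ in $\EE_\mu^+$ iff $t \in R_\mu^+$; the symmetric argument for $t < \chi+\eta-1$ handles $\EE_\mu^-$ and $R_\mu^-$; and the intermediate case $\chi+\eta-1 < t < \chi$ is exactly $t \in R_{\l-\mu}$. The three sets are manifestly pairwise disjoint, and open by continuity of $C$ on its domain.

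Finally, I would verify the polygon-interior containment. The explicit formula for $\eta_\EE(t) - 1$, together with $C'(t) < 0$, yields $\eta_\EE(t) < 1$ directly. The strict inequalities $\chi_\EE(t) < b$ and $\chi_\EE(t) + \eta_\EE(t) - 1 > a$ follow from the sign analysis above, combined with the observation that if $\chi_\EE(t)$ reached $b$ or $\chi_\EE(t) + \eta_\EE(t) - 1$ reached $a$, the edge conditions would collapse into the endpoint cases $\EE_0 \cup \EE_1 \cup \EE_2$ rather than yielding a simple repeated root off $\{\chi, \chi+\eta-1\}$. Bijectivity on each of the three pieces then follows from the already-established bijectivity of $(\chi_\EE,\eta_\EE):R\to\EE$ together with the disjoint decomposition of $\EE$ in definition \ref{defEdge}. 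I expect the main obstacle to be the rigorous handling of the frozen case $t \in R_{\l-\mu} \cap \supp(\mu)$: there one must show that the principal value defining $C(t)$ exists and that the logarithmic singularities contributed by the $\mu$- and $\l$-parts of (\ref{eqf2}) cancel exactly, so that the algebra from the two generic cases applies uniformly.
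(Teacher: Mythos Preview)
The paper does not prove this lemma; it is stated as a recall of results from the companion paper \cite{Duse15a} (see the sentence ``In \cite{Duse15a}, we showed that:'' immediately preceding it). So there is no in-paper proof to compare against, and your outline is in fact the natural derivation: set $f'_{(\chi,\eta)}(t)=f''_{(\chi,\eta)}(t)=0$, obtain $e^{C(t)}=(t-\chi-\eta+1)/(t-\chi)$ and $C'(t)=(\eta-1)/\bigl((t-\chi)(t-\chi-\eta+1)\bigr)$, and solve. Your algebra for (\ref{eqchiEEetaEE}) is correct.

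There is, however, a genuine error in your treatment of the $R_{\l-\mu}$ case. You assert that $C'(t)<0$ there ``by a parallel argument on the principal-value extension,'' but this is false: on $R_{\l-\mu}$ one has $C'(t)>0$ and $e^{C(t)}<0$. The paper records both facts explicitly (lemma~\ref{lemAnalExt} and the extension formula (\ref{eqC'Rl-mu}), which the paper attributes to lemma~2.2 of \cite{Duse15a}). The reason is that on an interval $I=(t_2,t_1)\subset R_{\l-\mu}$ where $\mu=\l$, the analytic extension is $C'(t)=C_I'(t)-\tfrac{1}{t-t_1}+\tfrac{1}{t-t_2}$, and the two boundary terms are both positive and dominate; likewise $e^{C(t)}=e^{C_I(t)}\cdot\tfrac{t-t_2}{t-t_1}<0$. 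Your downstream conclusions (that $\eta_\EE(t)<1$ and that $\chi+\eta-1<t<\chi$) happen to survive because the product $e^{C(t)}C'(t)$ is still negative, but the reasoning you give for them is wrong and would need to be redone with the correct signs. You correctly flag the $R_{\l-\mu}$ case as the delicate one; the fix is not a principal-value argument but the analytic-continuation formulas just cited.
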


Note, equation (\ref{eqchiEEetaEE}) is well-defined whenever
$t \in R_\mu^+ \cup R_\mu^-$, since $R_\mu^+ \cup R_\mu^- \subset \R \setminus \supp(\mu)$.
Indeed, equation (\ref{eqCauTrans}) gives,
\begin{equation}
\label{eqC'Rmu}
C(t) = \int_a^b \frac{\mu[dx]}{t-x}
\hspace{.5cm} \text{and} \hspace{.5cm}
C'(t) = - \int_a^b \frac{\mu[dx]}{(t-x)^2},
\end{equation}
for all $t \in R_\mu^+ \cup R_\mu^-$. Moreover, it is well-defined
whenever $t \in R_{\l-\mu}$. Indeed, since
$\mu = \l$ in $R_{\l-\mu} = \R \setminus \supp(\l-\mu)$, lemma
2.2 of \cite{Duse15a} implies that $e^{C(\cdot)} : \C \setminus \R \to \C$
and $C'(\cdot) : \C \setminus R \to \C$ have the following unique analytic
extensions to $R_{\l-\mu}$:
\begin{equation}
\label{eqC'Rl-mu}
e^{C(t)} = e^{C_I(t)} \bigg( \frac{t-t_2}{t-t_1} \bigg)
\hspace{.5cm} \text{and} \hspace{.5cm}
C'(t) = C_I'(t) - \frac1{t-t_1} + \frac1{t-t_2},
\end{equation}
for all $t \in R_{\l-\mu}$, where $I = (t_2,t_1)$ is any interval
with $t \in I \subset R_{\l-\mu}$, and
$C_I(t) := \int_{[a,b] \setminus I} \frac{\mu[dx]}{t-x}$.
These expressions are independent of the choice of $I$.

Finally, as discussed above, the set of typical edge points is
$\{(\chi,\eta) \in \EE_\mu^+ \cup \EE_{\l-\mu} \cup \EE_\mu^- : m = 2\}$,
and the edge curve behaves like a parabola in a
neighbourhood of each $(\chi,\eta)$ in this set.
Fix the corresponding points $t \in R_\mu^+ \cup R_{\l-\mu} \cup R_\mu^-$
and $(\chi,\eta) \in \EE_\mu^+ \cup \EE_{\l-\mu} \cup \EE_\mu^-$
with $(\chi,\eta) = (\chi_\EE(t),\eta_\EE(t))$.
Define the (un-normalised) orthogonal vectors
$\mathbf{x}(t) := (1,e^{C(t)}-1)$ and $\mathbf{y}(t) := (e^{C(t)}-1,-1)$.
In \cite{Duse15a}, we show that $\mathbf{x}(t)$ and $\mathbf{y}(t)$
are (respectively) tangent and normal to the edge curve
at $(\chi,\eta)$.

\subsection{Motivation and statement of main results}

In this paper, we consider the universal asymptotic behaviour, as
$n \to \infty$, of the systems introduced in the last section,
in the neighbourhood of typical edge points (see definition
\ref{defEdgeTyp}). More specifically, we study the asymptotic
behaviour of $K_n((u_n,r_n),(v_n,s_n))$ as
$n \to \infty$, where $K_n$ is the correlation kernel of the system
(see equation (\ref{eqKnrusvFixTopLine})), and
$\{(u_n,r_n)\}_{n\ge1} \subset \Z^2$ and $\{(v_n,s_n)\}_{n\ge1} \subset \Z^2$
are sequence of particle positions which satisfy:

\vspace{.2cm}

{\em Fix the corresponding
points $t \in R_\mu^+ \cup R_{\l-\mu} \cup R_\mu^-$
and $(\chi,\eta) \in \EE_\mu^+ \cup \EE_{\l-\mu} \cup \EE_\mu^-$ with
$(\chi,\eta) = (\chi_\EE(t),\eta_\EE(t))$, where $t$ is a root of
$f_{(\chi,\eta)}'$ of multiplicity $2$, and take,
\begin{equation}
\label{equnrnvnsn}
(u_n,r_n) = n(\chi,\eta) + o(n)
\hspace{.25cm} \text{and} \hspace{.25cm}
(v_n,s_n) = n(\chi,\eta) + o(n)
\hspace{.25cm} \text{as} \hspace{.25cm} n \to \infty.
\end{equation}}

A steepest descent analysis of a contour integral expression for
$K_n((u_n,r_n),(v_n,s_n))$ (see equation (\ref{eqKnrnunsnvn1}))
is used to consider the asymptotic behaviour. Since $t$ is a root of
$f_{(\chi,\eta)}'$ of multiplicity $2$, equation (\ref{eqKnrnunsnvn1})
and steepest descent analysis intuitively
imply that universal edge asymptotic behaviour should
be observed. The main results of this paper, theorems \ref{thmAiry2} and \ref{thmAiry},
puts this intuition on a rigorous footing. We show, under natural conditions, that
the asymptotic behaviour of $ n^\frac13 K_n((u_n,r_n),(v_n,s_n))$ is governed
by the {\em extended Airy kernel}, $K_\text{Ai} : (\R^2)^2 : \to \R$:
First define $\widetilde{K}_\text{Ai} : (\R^2)^2 : \to \R$ by,
\begin{equation}
\label{eqAitilde}
\widetilde{K}_\text{Ai}((u,r),(v,s)) :=
\frac1{(2\pi i)^2} \int_l dw \int_L dz \; \frac1{w-z}
\frac{\exp(w r  + w^2 u + \frac13 w^3)}
{\exp(z s  + z^2 v + \frac13 z^3)},
\end{equation}
for all $(u,r),(v,s) \in \R^2$, where $l$ and $L$ are the contours
in figure \ref{figAirtCont}. Note that the above integrals are finite
since $w^3 = - |w|^3$ and $z^3 = |z|^3$ for all $w$ on $l$ and $z$ on
$L$ respectively. Next define $\Phi :  (\R^2)^2 \to \R$ by,
\begin{equation}
\label{eqPhi}
\Phi((u,r),(v,s)) := 1_{(u>v)} \; \frac1{2 \sqrt{\pi(u-v)}}
\exp \left( -\frac14 \frac{(s-r)^2}{u-v} \right),
\end{equation}
for all $(u,r),(v,s) \in \R^2$. Finally define
$K_\text{Ai} : (\R^2)^2 : \to \R$ by,
\begin{equation}
\label{eqAi}
K_\text{Ai} := \widetilde{K}_\text{Ai} - \Phi.
\end{equation}

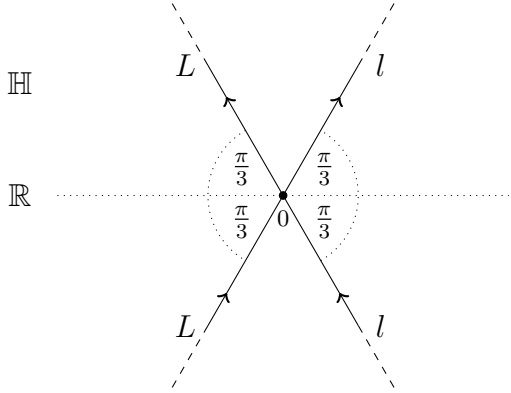
\begin{figure}[t]
\centering
\begin{tikzpicture};

\draw (-3.5,1.5) node {$\mathbb{H}$};
\draw [dotted] (-3,0) --++(6,0);
\draw (-3.5,0) node {$\R$};

\draw [fill] (0,0) circle (.05cm);
\draw (0,-.3) node {\scriptsize $0$};

\draw (0,0) --++ (1,1.732);
\draw [dashed] (1,1.732) --++ (.5,0.866);
\draw[arrows=->,line width=1pt](.75,1.299)--(.755,1.308);
\draw (1.3,1.732) node {$l$};
\draw (0,0) --++ (1,-1.732);
\draw [dashed] (1,-1.732) --++ (.5,-0.866);
\draw[arrows=->,line width=1pt](.755,-1.308)--(.75,-1.299);
\draw (1.3,-1.732) node {$l$};

\draw (0,0) --++ (-1,1.732);
\draw [dashed] (-1,1.732) --++ (-.5,0.866);
\draw[arrows=->,line width=1pt](-.75,1.299)--(-.755,1.308);

\draw (-1.3,1.732) node {$L$};
\draw (0,0) --++ (-1,-1.732);
\draw [dashed] (-1,-1.732) --++ (-.5,-0.866);
\draw[arrows=->,line width=1pt](-.755,-1.308)--(-.75,-1.299);
\draw (-1.3,-1.732) node {$L$};

\draw [dotted,domain=-60:60] plot ({cos(\x)}, {sin(\x)});
\draw [dotted,domain=120:240] plot ({cos(\x)}, {sin(\x)});
\draw (.55,.35) node {$\frac\pi3$};
\draw (-.55,.35) node {$\frac\pi3$};
\draw (-.55,-.35) node {$\frac\pi3$};
\draw (.55,-.35) node {$\frac\pi3$};

\end{tikzpicture}
\caption{The contours $l$ and $L$ of equation (\ref{eqAitilde}).
$l$ is the (straight) lines from $\infty e^{-i \frac\pi3}$ to
$0$, and from $0$ to $\infty e^{i \frac\pi3}$. $l$ is the
lines from $\infty e^{-i \frac{2\pi}3}$ to
$0$, and from $0$ to $\infty e^{i \frac{2\pi}3}$.}
\label{figAirtCont}
\end{figure}

We begin the analysis with a technical assumption that arises from steepest
descent considerations. First, for all $n \ge 1$, define
\begin{equation}
\label{eqPnHn}
P_n := \tfrac1n \{x_1,x_2,\ldots,x_n\} 
\;\; \text{and} \;\;
H_n := \tfrac1n (\Z \setminus \{x_1,x_2,\ldots,x_n\}).
\end{equation}
Above, we again omit the superscript from $x^{(n)} = (x_1^{(n)},x_2^{(n)},\ldots,x_n^{(n)})$
for simplicity. $P_n$ is referred to as the set of {\em particles}, and $H_n$ as
the set of {\em holes}. Note, an element of these sets may act as a pole for the
contour integral expression of equation (\ref{eqKnrnunsnvn1}), and so a problem may
arise in the steepest descent analysis if these are not {\em eventually isolated}
from the root, $t$, in equation (\ref{equnrnvnsn}). We therefore assume:
\begin{ass}
\label{assIsol}
Assume that $d(P_n, \supp(\mu)) \to 0$ and $d(H_n, \supp(\l-\mu)) \to 0$
as $n \to \infty$, where $d$ represents the Hausdorff distance.
\end{ass}
To see that this assumption has the desired effect, recall that
$t \in R_\mu^+ \cup R_{\l-\mu} \cup R_\mu^-$, a union of mutually
disjoint open sets. Thus there exists a fixed $\e>0$ for which:
\begin{align}
\nonumber
&t \in R_\mu^+(\e) := \{s \in R_\mu^+ : (s-\e,s+\e) \subset R_\mu^+\}
\text{ when } t \in R_\mu^+. \\
\label{eqtRmuRlmu}
&t \in R_{\l-\mu}(\e) := \{s \in R_{\l-\mu} : (s-\e,s+\e) \subset R_{\l-\mu}\}
\text{ when } t \in R_{\l-\mu}. \\
\nonumber
&t \in R_\mu^-(\e) := \{s \in R_\mu^- : (s-\e,s+\e) \subset R_\mu^-\}
\text{ when } t \in R_\mu^-.
\end{align}
Also, since $R_\mu^+ \cup R_\mu^- \subset \R \setminus \supp(\mu)$ and
$R_{\l-\mu} = \R \setminus \supp(\l-\mu)$, assumption \ref{assIsol}
implies that $P_n \subset \R \setminus (R_\mu^+(\e) \cup R_\mu^-(\e))$ and
$H_n \subset \R \setminus R_{\l-\mu}(\e)$ for all $n$ sufficiently
large, as desired. Finally note that we can equivalently write,
\begin{equation}
\label{eqPnHnlarge}
P_n \cap R_\mu^+(\e) = \emptyset
\hspace{0.25cm} \text{and} \hspace{0.25cm}
\frac{\Z}n \cap R_{\l-\mu}(\e) \subset P_n
\hspace{0.25cm} \text{and} \hspace{0.25cm}
P_n \cap R_\mu^-(\e) = \emptyset,
\end{equation}
for all $n$ sufficiently large. Indeed, the second part follows
since $H_n = \frac{\Z}n \setminus P_n$ (see equation (\ref{eqPnHn})),
and it implies that particles are {\em eventually
densely packed} in $R_{\l-\mu}(\e)$.

Note, assumption \ref{assIsol} imposes mild regulatory restrictions
on $x^{(n)}$. Our final assumption, assumption \ref{asscases},
similarly imposes mild regulatory restrictions on $x^{(n)}$.
Assumption \ref{asscases} is more subtle, however, and applies only in specific
cases, and so we leave the statement of this to section \ref{secTROft}.
The regularity effect of assumption \ref{asscases} can be seen, for example,
in part (ii) in the proof of lemma \ref{lemJn}. For the relevant cases
of assumption \ref{asscases}, part (ii)
is not necessarily true if assumption \ref{asscases} does not hold.
Note also, assumptions \ref{assIsol} and \ref{asscases} are sufficient
for theorems \ref{thmAiry2} and \ref{thmAiry} to be satisfied for
{\em all typical edge points}, i.e., for all corresponding
points $t$ and $(\chi,\eta)$ chosen as in equation (\ref{equnrnvnsn}).
Though we do not discuss this further, weaker forms of these assumptions
can be used if we are only interested in specific edge points.

Next, we motivate the choice of the $o(n)$ terms in equation
(\ref{equnrnvnsn}). Note, since the convergence in assumption
\ref{assWeakConv} is weak, there is no control of the rate of
convergence. It is therefore not natural to simply consider fluctuations
of the particles around the asymptotic edge curve. Instead, we
consider fluctuations around analogous {\em non-asymptotic edge curves}.
Intuitively, we could define these by replacing all Cauchy transforms
(see equation (\ref{eqCauTrans})) in equation (\ref{eqchiEEetaEE}) with
the following non-asymptotic analogue inspired by assumption
\ref{assWeakConv}: $w \mapsto \frac1n \sum_{i=1}^n (w-\frac{x_i}n)^{-1}
= \frac1n \sum_{x \in P_n} ({w-x})^{-1}$ for all $n \ge 1$ and
$w \in \C \setminus \R$. However, since it is desirable that the
non-asymptotic edge curves and the asymptotic edge curve have
approximately the same domain of definition, we use a modified non-asymptotic
Cauchy transform. First, fixing $\e>0$ as in equation (\ref{eqtRmuRlmu}),
define a new non-asymptotic measure by,
\begin{equation}
\label{eqWeakmun}
\mu_n := \frac1n \sum_{x \in P_n ; x \not\in R_{\l-\mu}(\e)} \delta_x
+ \l |_{R_{\l-\mu}(\e)},
\end{equation}
for all $n \ge 1$. Assumption \ref{assWeakConv} and equation (\ref{eqPnHn})
then imply that $\mu_n \to \mu$ weakly as $n \to \infty$. Next, let
$C_n : \C \setminus \supp(\mu_n) \to \C$ denote the Cauchy Transform of
$\mu_n$:
\begin{equation}
\label{eqCauTransn}
C_n(w) := \frac1n \sum_{x \in P_n ; x \not\in R_{\l-\mu}(\e)} \frac1{w-x}
+ \int_{R_{\l-\mu}(\e)} \frac{dx}{w-x},
\end{equation}
for all $n \ge 1$ and $w \in \C \setminus \supp(\mu_n)$. Note that
$R_\mu(\e)^+ \cup R_\mu(\e)^- \subset \C \setminus \supp(\mu_n)$
for all $n$ sufficiently large, since
$P_n \cap (R_\mu(\e)^+ \cup R_\mu(\e)^-) = \emptyset$ (see equation (\ref{eqPnHnlarge})),
and since $R_\mu(\e)^+ \cup R_\mu(\e)^-$ and $R_{\l-\mu}(\e)$ are disjoint
(see equation (\ref{eqtRmuRlmu})). Therefore $C_n$ is well-defined and analytic
in $R_\mu(\e)^+ \cup R_\mu(\e)^-$. Also, since $\mu_n = \l$ in $R_{\l-\mu}(\e)$
for all $n$ sufficiently large, lemma 2.2 of \cite{Duse15a} shows that $e^{C_n}$
and $C_n'$ have unique analytic extensions to $R_{\l-\mu}(\e)$. Finally, inspired
by equation (\ref{eqchiEEetaEE}), define:
\begin{definition}
\label{defEdgeNonAsy}
Fix $\e>0$ as in equation (\ref{eqtRmuRlmu}), and define $C_n$ as in equation
(\ref{eqCauTransn}). Then, for all $n$ sufficiently large, the non-asymptotic edge curves,
$(\chi_n(\cdot),\eta_n(\cdot)) : R_\mu^+(\e) \cup R_{\l-\mu}(\e) \cup R_\mu^-(\e)\to \R^2$,
are defined by,
\begin{equation*}
\chi_n(s) := s + \frac{e^{C_n(s)}-1}{e^{C_n(s)} C_n'(s)}
\hspace{0.5cm} \text{and} \hspace{0.5cm}
\eta_n(s) := 1 + \frac{(e^{C_n(s)}-1)^2}{e^{C_n(s)} C_n'(s)},
\end{equation*}
for all $s \in R_\mu^+(\e) \cup R_{\l-\mu}(\e) \cup R_\mu^-(\e)$.
Moreover, we define $(\chi_n,\eta_n) := (\chi_n(t),\eta_n(t))$.
\end{definition}

Note, since $\mu_n \to \mu$ weakly as $n \to \infty$,
\begin{equation}
\label{eqNonAsyEdge}
e^{C_n(s)} \to e^{C(s)}
\hspace{0.25cm} \text{and} \hspace{0.25cm}
C_n'(s) \to C'(s)
\hspace{0.25cm} \text{and} \hspace{0.25cm}
(\chi_n(s),\eta_n(s)) \to (\chi_\EE(s),\eta_\EE(s)),
\end{equation}
for all $s \in R_\mu^+(\e) \cup R_{\l-\mu}(\e) \cup R_\mu^-(\e)$.
As observed above, however, we have no control of the rate of convergence.

\begin{rem}
Note, the above definition depends on an arbitrary $\e>0$.
Suppose $\e'>0$ is any other value which also satisfies equation
(\ref{eqtRmuRlmu}), and let $(\chi_n',\eta_n')$ denote the
analogous non-asymptotic  edge obtained using $\e'$. Then,
equations (\ref{eqtRmuRlmu}, \ref{eqPnHnlarge}, \ref{eqCauTransn}),
definition \ref{defEdgeNonAsy}, and Riemann sum approximations
imply that $(\chi_n',\eta_n') = (\chi_n,\eta_n) + O(n^{-1})$
for all $n$ sufficiently large. This error is absorbed into the errors
of order $O(1)$ in equations (\ref{equnrnvnsn2}, \ref{equnrnvnsn3}),
below, and possibly effects the asymptotic behaviour in theorems \ref{thmAiry2}
and \ref{thmAiry} when $(u,r) = (v,s)$. The asymptotic behaviour when
$(u,r) \neq (v,s)$ is unaffected. 
\end{rem} 

Next recall (see end of last section), that the asymptotic edge curve
behaves like a parabola in a neighbourhood of $(\chi,\eta)$ with
tangent vector $\mathbf{x}(t) := (1,e^{C(t)}-1)$ and normal vector
$\mathbf{y}(t) := (e^{C(t)}-1,-1)$. This was proven in lemma 2.9 of
\cite{Duse15a}. Proceeding similarly for the non-asymptotic edge
curves for all $n$ sufficiently large, we can show that these also behave like a parabola in a
neighbourhood of $(\chi_n,\eta_n)$, with tangent vector
$\mathbf{x}_n(t) := (1,e^{C_n(t)}-1)$ and normal vector
$\mathbf{y}_n(t) := (e^{C_n(t)}-1,-1)$. Finally, we choose the
$o(n)$ terms in equation (\ref{equnrnvnsn}) as follows: Fix $(u,r) \in \R^2$
and $(v,s) \in \R^2$, and let $\{(u_n,r_n)\}_{n\ge1}$ and
$\{(v_n,s_n)\}_{n\ge1}$ be sequences in $\Z^2$ which satisfy,
\begin{align}
\label{equnrnvnsn2}
(u_n,r_n) &= n (\chi_n,\eta_n) + n^{\frac23} m_n \mathbf{x}_n \; u
+ n^{\frac13} p_n \mathbf{y}_n \; r + O(1), \\
\label{equnrnvnsn3}
(v_n,s_n) &= n (\chi_n,\eta_n) + n^{\frac23} m_n \mathbf{x}_n \; v
+ n^{\frac13} p_n \mathbf{y}_n \; s + O(1),
\end{align}
for all $n$ sufficiently large, where $\{m_n\}_{n\ge1} = \{m_n(t)\}_{n\ge1}$
and $\{p_n\}_{n\ge1} = \{p_n(t)\}_{n\ge1}$ are those convergent sequences of
real numbers with non-zero limits given in definition \ref{defmnpn}.
In words, $(\chi_n,\eta_n) \to (\chi,\eta)$ as $n \to \infty$ at an
indeterminate rate, and $\{\frac1n (u_n,r_n)\}_{n\ge1}$ and
$\{\frac1n (v_n,s_n)\}_{n\ge1}$ fluctuate around $(\chi_n,\eta_n)$.
The fluctuations are of order $O(n^{-\frac13})$ and $O(n^{-\frac23})$ respectively in
the tangent and normal directions of the non-asymptotic edge curve, $u$
and $v$ measure the size of the $O(n^{-\frac13})$ fluctuations,
and $r$ and $s$ measure the size of the $O(n^{-\frac23})$ fluctuations.

We finally state the main results of this paper:
\begin{thm}
\label{thmAiry2}
Assume assumptions \ref{assWeakConv} and \ref{assIsol}.
Fix any corresponding points $t \in R_\mu^+ \cup R_{\l-\mu} \cup R_\mu^-$
and $(\chi,\eta) \in \EE_\mu^+ \cup \EE_{\l-\mu} \cup \EE_\mu^-$ with
$(\chi,\eta) = (\chi_\EE(t),\eta_\EE(t))$, where $t$ is a root of
$f_{(\chi,\eta)}'$ of multiplicity $2$. Assume assumption
\ref{asscases} if one of the relevant cases is satisfied, and
choose $\{(u_n,r_n)\}_{n\ge1}$ and $\{(v_n,s_n)\}_{n\ge1}$ as in
equations (\ref{equnrnvnsn2}, \ref{equnrnvnsn3}).
Additionally assume that either $(u,r) \neq (v,s)$,
or $(u,r) = (v,s)$ and $r_n = s_n$ for all $n$ sufficiently large.

Let $\mathcal{K}_n$ be the equivalent correlation kernel
defined in equation (\ref{eqKntilde}), and $K_\text{Ai}$ be the extended Airy
kernel defined in equation (\ref{eqAi}). Define $C'(t)$ as in equations
(\ref{eqC'Rmu}, \ref{eqC'Rl-mu}), note that $C'(t) \neq 0$ (see lemma
\ref{lemAnalExt}, below), and define $\beta(t) > 0$ by
$\beta(t) := 2^{\frac13} |f_{(\chi,\eta)}'''(t)|^{-\frac13} |C'(t)|$.
Then, as $n \to \infty$,
\begin{align*}
n^\frac13 \beta(t)^{-1} \mathcal{K}_n((u_n,r_n),(v_n,s_n))
&\to K_\text{Ai}((u,r),(v,s))
\text{ when } t \in R_\mu^+ \cup R_\mu^-, \\
n^\frac13 \beta(t)^{-1} (1_{(u_n = v_n)} - \mathcal{K}_n((u_n,r_n),(v_n,s_n)))
&\to K_\text{Ai}((u,r),(v,s))
\text{ when } t \in R_{\l-\mu}.
\end{align*}
\end{thm}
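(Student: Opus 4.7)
The plan is to perform a steepest-descent analysis of the double contour integral expression (\ref{eqKnrnunsnvn1}) for $K_n((u_n,r_n),(v_n,s_n))$, whose integrand has the form $\frac{1}{w-z}\exp(n f_n(w) - n\tilde f_n(z))$. The entire strategy rests on the fact that $t$ is a root of $f_{(\chi,\eta)}'$ of multiplicity exactly $2$, so that locally $f_{(\chi,\eta)}(w)-f_{(\chi,\eta)}(t) \sim \tfrac{1}{6} f_{(\chi,\eta)}'''(t)(w-t)^3$; this cubic behaviour is precisely what produces the extended Airy kernel after a $n^{-1/3}$ rescaling of the spectral variable.

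First I would locate non-asymptotic critical points near $t$ for $f_n$ and $\tilde f_n$ via the discrete Cauchy transform $C_n$ of definition \ref{defEdgeNonAsy}. Because the non-asymptotic edge curve is built from the same algebraic relation (\ref{eqchiEEetaEE}) as the asymptotic one, one obtains points $t_n \to t$ with $f_n''(t_n), \tilde f_n''(t_n) \to 0$ and $f_n'''(t_n), \tilde f_n'''(t_n) \to f_{(\chi,\eta)}'''(t) \neq 0$; this is the content of lemma \ref{lemNonAsyRoots}. The tangent/normal scaling of $\{(u_n,r_n)\}$ and $\{(v_n,s_n)\}$ in (\ref{equnrnvnsn2},\ref{equnrnvnsn3}), together with the conjugation factor $A_{t,n}^{-1} B_n^{-1}$ absorbed into $\mathcal{K}_n$ via (\ref{eqKntilde}), is then calibrated so that under $w = t_n + \beta(t)^{-1} n^{-1/3} \tilde w$, $z = t_n + \beta(t)^{-1} n^{-1/3} \tilde z$, the rescaled phase converges uniformly on compacta to $\tilde w r + \tilde w^2 u + \tfrac{1}{3}\tilde w^3 - (\tilde z s + \tilde z^2 v + \tfrac{1}{3}\tilde z^3)$. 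This Taylor expansion is the content of corollary \ref{corTay}.

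The main technical step is the contour deformation: $c_n$ and $C_n$ must be deformed to pass through $t_n$ so that, after rescaling, they approximate the Airy contours $l$ and $L$ of figure \ref{figAirtCont}, with $\mathrm{Re}(f_n(w))$ strictly decreasing away from $t_n$ along the deformed $c_n$ and $\mathrm{Re}(\tilde f_n(z))$ strictly increasing away from $t_n$ along the deformed $C_n$. Under only the weak assumption \ref{assWeakConv}, the level-set geometry of $\mathrm{Re}(f_n)$ depends delicately on the joint distribution of particles $P_n$ and holes $H_n$ relative to $\supp(\mu)$ near $[\chi+\eta-1,\chi]$, giving the twelve distinct regimes of lemma \ref{lemCases}; constructing the required contours in each regime is the content of lemmas \ref{lemDesAsc1-12} and \ref{lemDesAsc1-12Rem}, and is where assumption \ref{asscases} enters. \textbf{This contour construction is by far the principal obstacle of the proof.}

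Once the contours are available, I would split each into a local piece of radius $\delta$ about $t_n$ and a global tail. The tail contribution is $o(n^{-1/3})$ by uniform exponential decay coming from the strict descent/ascent property. On the local piece, corollary \ref{corTay} combined with the $n^{-1/3}$ rescaling reduces the double integral to the one defining $\widetilde K_\text{Ai}((u,r),(v,s))$ in (\ref{eqAitilde}), up to errors vanishing as $n \to \infty$ and then $\delta \to 0$. Finally, the explicit term $-\phi_{r_n,s_n}(u_n,v_n)$ in (\ref{eqKnrnunsnvn1}) is analysed separately: after rescaling, the binomial expression (\ref{eqphirsuv}) converges by a direct Gaussian (de Moivre--Laplace) calculation to $-\Phi((u,r),(v,s))$, producing the subtracted term in (\ref{eqAi}). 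In the case $t \in R_{\l-\mu}$, where particles are eventually dense in $R_{\l-\mu}(\e)$ by (\ref{eqPnHnlarge}), deforming $C_n$ through the dense cluster of particle poles near $t_n$ forces crossings whose residues collectively implement the particle--hole duality $\mathcal{K}_n \mapsto 1_{(u_n=v_n)} - \mathcal{K}_n$, yielding the form of the second assertion. The hypothesis that either $(u,r)\neq(v,s)$ or $r_n=s_n$ is precisely what prevents the $O(1)$ uncertainty in (\ref{equnrnvnsn2},\ref{equnrnvnsn3}) from producing an ambiguity in the value of the Gaussian piece $\Phi$ at its singular locus $u=v$.
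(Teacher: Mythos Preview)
Your overall strategy matches the paper's: theorem~\ref{thmAiry2} is deduced from theorem~\ref{thmAiry}, whose proof is exactly the steepest-descent analysis you outline, with corollary~\ref{corTay} supplying the local cubic expansion and lemmas~\ref{lemDesAsc1-12}, \ref{lemDesAsc1-12Rem} supplying the contours. Two points where your description diverges from the paper's actual mechanism are worth flagging.

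First, the paper does \emph{not} treat the additive term $\phi_{r_n,s_n}$ by a direct de~Moivre--Laplace computation. Instead, lemma~\ref{lemKnJnPhin} rewrites $\beta_n K_n = \pm J_n \pm \Phi_n$ where $\Phi_n$ is a case-dependent combinatorial quantity (not literally $\beta_n\phi_{r_n,s_n}$; the two are related through the residue identities of lemmas~\ref{lemKnSums} and~\ref{lemPhin}). When $(u,r)\neq(v,s)$, part~(2) of lemma~\ref{lemKnJnPhin} re-expresses $\Phi_n$ as a \emph{single} contour integral over the contour $\kappa_n$ of lemma~\ref{lemDesAsc1-12Rem}, and lemma~\ref{lemRemv>uCase1} then runs a second steepest-descent argument on \emph{that} integral, with $F_n=f_n-\tilde f_n$ as the phase, to produce the Gaussian $\Phi$. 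Your Stirling route would presumably also work, but the paper's route is what makes lemma~\ref{lemDesAsc1-12Rem} necessary.

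Second, your account of the $1_{(u_n=v_n)}$ term in the $R_{\lambda-\mu}$ case is not the paper's mechanism. The indicator does not arise from residues picked up while deforming $C_n$ through a dense cluster of particle poles. Rather, the sign pattern $\beta_n K_n = J_n + \Phi_n$ versus $J_n - \Phi_n$ for the different cases is obtained by an algebraic comparison of two residue expansions (lemma~\ref{lemJn} versus lemma~\ref{lemKnSums}); the contours $\gamma_n,\Gamma_n$ are already the descent/ascent contours of definition~\ref{defConCases} from the outset, and the case distinctions reflect which of $\tilde S_{1,n}$ or $\tilde S_{3,n}$ lies inside $\Gamma_n$ and whether $(VU^{(n)})\cap P_n$ or $(VU^{(n)})\setminus P_n$ is the relevant set. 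The indicator $1_{(u_n=v_n)}$ then emerges by evaluating the explicit combinatorial $\alpha_n$ of theorem~\ref{thmAiry} at $(u,r)=(v,s)$, $r_n=s_n$: the factorial ratio and the conjugation factors all collapse to~$1$ exactly when $u_n=v_n$. A minor related point: the paper expands about the fixed point $t$ throughout (corollary~\ref{corTay}), not about a moving $t_n$; the auxiliary $f_{t,n}'$ of lemma~\ref{lemftn'} is constructed to have $t$ as an exact double root, and the actual roots $t_{1,n},t_{2,n}$ of $f_n'$ are only shown to lie within $O(n^{-1/3})$ of $t$.
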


In other words, we let $(\chi,\eta)$ be any typical edge point (see
definition \ref{defEdgeTyp}), we let $(\chi_n,\eta_n)$ denote the
analogous point on the non-asymptotic edge for each $n$
(see definition \ref{defEdgeNonAsy}), and we choose $\frac1n (u_n,r_n)$
and $\frac1n (v_n,s_n)$ (the rescaled particle positions) to fluctuate around
$(\chi_n,\eta_n)$ as described by equations (\ref{equnrnvnsn2}, \ref{equnrnvnsn3}).
Then, when $t \in R_\mu^+ \cup R_\mu^-$, theorem
\ref{thmAiry2} shows that $n^\frac13 \mathcal{K}_n((u_n,r_n),(v_n,s_n))$
(the rescaled correlation kernel of the particles) converges to the
extended Airy kernel as $n \to \infty$. When $t \in R_{\l-\mu}$, theorem
\ref{thmAiry2} shows that $n^\frac13 (1_{(u_n = v_n)} - \mathcal{K}_n((u_n,r_n),(v_n,s_n)))$
(the rescaled correlation kernel of the `holes') converges to the
extended Airy kernel as $n \to \infty$.

Note that theorem \ref{thmAiry2} does not give the asymptotic behaviour
when $(u,r) = (v,s)$ and $r_n \neq s_n$. The following theorem completes
the result by giving the asymptotic behaviour for all cases.
We prove the existence of a term, $\alpha_n$,
which has a well-defined asymptotic behaviour for the cases of theorem
\ref{thmAiry2}, but (possibly) has no well-defined asymptotic behaviour
when $(u,r) = (v,s)$ and $r_n \neq s_n$. Theorem \ref{thmAiry2} is, in
fact, a trivial corollary of theorem \ref{thmAiry}. We will prove theorem
\ref{thmAiry}, and leave the deduction of theorem \ref{thmAiry2} to the
interested reader:
\begin{thm}
\label{thmAiry}
Assume assumptions \ref{assWeakConv} and \ref{assIsol}.
Fix any corresponding points $t \in R_\mu^+ \cup R_{\l-\mu} \cup R_\mu^-$
and $(\chi,\eta) \in \EE_\mu^+ \cup \EE_{\l-\mu} \cup \EE_\mu^-$ with
$(\chi,\eta) = (\chi_\EE(t),\eta_\EE(t))$, where $t$ is a root of
$f_{(\chi,\eta)}'$ of multiplicity $2$. Assume assumption
\ref{asscases} if one of the relevant cases is satisfied, and
choose $\{(u_n,r_n)\}_{n\ge1}$ and $\{(v_n,s_n)\}_{n\ge1}$ as in
equations (\ref{equnrnvnsn2}, \ref{equnrnvnsn3}).

Define $\widetilde{K}_\text{Ai} : (\R^2)^2 : \to \R$
as in equation (\ref{eqAitilde}), $A_{t,n} : (\Z^2)^2 \to \R \setminus \{0\}$
as in lemma \ref{lemFnt}, $B_n : \Z_+^2 \to (0,+\infty)$ as in equation
(\ref{eqConjFactB}), $\beta(t) > 0$ as in theorem \ref{thmAiry2},
and $\alpha_n \ge 0$ by,
\begin{equation*}
\alpha_n
:= \begin{dcases}
\frac{1_{(u_n \ge v_n, r_n > s_n)}}
{|A_{t,n} ((v_n,s_n),(u_n,r_n))| B_n(s_n,r_n)}
\frac{(u_n+r_n-v_n-s_n-1)!}{(r_n-s_n-1)! (u_n-v_n)!}
& ; \; t \in R_\mu^+, \\
\frac{1_{(u_n \ge v_n, u_n+r_n \le v_n+s_n, r_n \le s_n)}}
{|A_{t,n} ((v_n,s_n),(u_n,r_n))| B_n(s_n,r_n)}
\frac{(s_n-r_n)!}{(u_n-v_n)! (v_n+s_n-u_n-r_n)!}
& ; \; t \in R_{\l-\mu}, \\
\frac{1_{(u_n+r_n \le v_n+s_n, r_n > s_n)}}
{|A_{t,n} ((v_n,s_n),(u_n,r_n))| B_n(s_n,r_n)}
\frac{(v_n-u_n-1)!}{(r_n-s_n-1)! (v_n+s_n-u_n-r_n)!}
& ; \; t \in R_\mu^-.
\end{dcases}
\end{equation*}
Then, as $n \to \infty$,
\begin{align*}
\mathcal{K}_n((u_n,r_n),(v_n,s_n))
&= - \alpha_n + n^{-\frac13} \; \beta(t) \; \widetilde{K}_\text{Ai}((u,r),(v,s))
+ o(n^{-\frac13})
\text{ when } t \in R_\mu^+ \cup R_\mu^-, \\
\mathcal{K}_n((u_n,r_n),(v_n,s_n))
&= + \alpha_n - n^{-\frac13} \; \beta(t) \; \widetilde{K}_\text{Ai}((u,r),(v,s))
+ o(n^{-\frac13})
\text{ when } t \in R_{\l-\mu}.
\end{align*}
Moreover, for all $n$ sufficiently large,
$\alpha_n = 0$ when $(u,r) = (v,s)$ and $r_n = s_n$ and $t \in R_\mu^+ \cup R_\mu^-$,
$\alpha_n = 1_{(u_n = v_n)}$ when $(u,r) = (v,s)$ and $r_n = s_n$ and $t \in R_{\l-\mu}$,
and $\alpha_n = O(1)$ when $(u,r) = (v,s)$ and $r_n \neq s_n$. Finally,
\begin{equation*}
\alpha_n = n^{-\frac13} \; 1_{(u>v)} \; \beta(t) \; \Phi((u,r),(v,s))
+ o(n^{-\frac13}),
\end{equation*}
as $n \to \infty$ when $(u,r) \neq (v,s)$,
where $\Phi :  (\R^2)^2 : \to \R$ is defined in equation (\ref{eqPhi}).
\end{thm}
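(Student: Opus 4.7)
The plan is to carry out a steepest descent analysis of the double contour integral representation of $K_n((u_n,r_n),(v_n,s_n))$ given in equation (\ref{eqKnrnunsnvn1}), transferred through the equivalent kernel $\mathcal{K}_n$ of equation (\ref{eqKntilde}). Since $t$ is a root of $f_{(\chi,\eta)}'$ of multiplicity two, one has $f_{(\chi,\eta)}'''(t)\neq 0$, so the local behaviour of the exponent near $t$ is genuinely cubic, which is exactly what produces the extended Airy kernel. The scalings in equations (\ref{equnrnvnsn2}, \ref{equnrnvnsn3}) are chosen precisely so that, after a natural change of variables $w=t+(\cdot)\,\zeta\, n^{-1/3}$ and $z=t+(\cdot)\,\xi\, n^{-1/3}$, the exponent $nf_n(w)-n\tilde f_n(z)$ becomes $\tfrac13\zeta^3+\zeta^2 u+\zeta r-\tfrac13\xi^3-\xi^2 v-\xi s+o(1)$, matching the integrand in (\ref{eqAitilde}), with the remaining factors absorbed into $A_{t,n}$ and $B_n$ and producing the prefactor $\beta(t)$.

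Concretely, I would split the conjugated kernel into the $\phi_{r_n,s_n}(u_n,v_n)$ boundary term and the remaining double integral. The boundary term, divided by $A_{t,n}((v_n,s_n),(u_n,r_n))\,B_n(s_n,r_n)$, will be shown to equal $\mp\alpha_n$ (with sign depending on whether $t\in R_\mu^+\cup R_\mu^-$ or $t\in R_{\l-\mu}$), using the explicit formula (\ref{eqphirsuv}), the formula for $A_{t,n}$ supplied by lemma \ref{lemFnt}, and careful book-keeping of the indicator functions that select the non-vanishing cases. For $(u,r)\neq(v,s)$, a Stirling expansion of the ratios of factorials in $\alpha_n$, combined with (\ref{equnrnvnsn2}, \ref{equnrnvnsn3}) and the definition of $\beta(t)$, yields $\alpha_n=n^{-1/3}\beta(t)\,\Phi((u,r),(v,s))+o(n^{-1/3})$. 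The degenerate statements $\alpha_n=0$ or $\alpha_n=1_{(u_n=v_n)}$ when $(u,r)=(v,s)$ and $r_n=s_n$, and $\alpha_n=O(1)$ when $(u,r)=(v,s)$ but $r_n\neq s_n$, follow directly from the indicator structure of $\alpha_n$.

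For the double integral contribution I would deform the contours $C_n$ (enclosing the relevant particles) and $c_n$ (enclosing $C_n$ and the required holes) so that each passes through the saddle at $t$ along the appropriate descent/ascent rays, rotated analogues of the contours $L$ and $l$ of figure \ref{figAirtCont}. Near $t$, lemma \ref{lemNonAsyRoots} supplies non-asymptotic analogues of the critical point for $f_n$ and $\tilde f_n$, and corollary \ref{corTay} provides the uniform Taylor expansion that identifies the rescaled local integral with (\ref{eqAitilde}). Off a shrinking neighbourhood of $t$, I would bound $\mathrm{Re}(f_n(w)-\tilde f_n(z))$ uniformly below its value at the saddle, so that the tails contribute $o(n^{-1/3})$. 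The local contribution then gives $n^{-1/3}\beta(t)\,\widetilde K_{\text{Ai}}((u,r),(v,s))+o(n^{-1/3})$, with the sign reversal in the case $t\in R_{\l-\mu}$ arising because the pole/residue structure near $t$ (equivalently, the roles of ascent and descent) interchanges there. Adding the $\mp\alpha_n$ boundary contribution reproduces the statement.

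The hard part is the global existence of descent/ascent contours that (i) pass through $t$ along the correct rays, (ii) together enclose exactly the particles and holes dictated by (\ref{eqKnrnunsnvn1}), and (iii) avoid every pole in $P_n\cup H_n$ while maintaining the required sign of $\mathrm{Re}(f_n-\tilde f_n)$ along their whole length. Under the weak asymptotic assumptions \ref{assWeakConv} and \ref{assIsol} such contours cannot simply be written down, and their existence is exactly what lemmas \ref{lemDesAsc1-12} and \ref{lemDesAsc1-12Rem} are designed to provide; their proofs in section \ref{secCont} occupy the bulk of the paper. The twelve cases of lemma \ref{lemCases} reflect the possible local configurations of $\supp(\mu)$ and $\supp(\l-\mu)$ near $t$, and assumption \ref{asscases} provides the mild regularity of $x^{(n)}$ needed in the delicate borderline cases. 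Once these contours are in hand, the steepest descent argument outlined above goes through and yields theorem \ref{thmAiry}.
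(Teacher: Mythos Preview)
Your overall strategy matches the paper's: steepest descent on the double contour integral of equation (\ref{eqKnrnunsnvn1}), with the local contribution near $t$ giving $\widetilde K_{\text{Ai}}$ via the Taylor expansions of corollary \ref{corTay}, the tails controlled using the descent/ascent contours of lemmas \ref{lemDesAsc1-12} and \ref{lemDesAsc1-12Rem}, and the factor $\beta(t)$ emerging from the constants $q_n,\chi_n,\eta_n$ exactly as in equation (\ref{eqthmAiry4}). That part is fine.

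There is, however, a genuine gap in your treatment of the ``boundary'' term. You propose that the conjugated $\phi_{s_n,r_n}(v_n,u_n)$ equals $\mp\alpha_n$ in all three cases $t\in R_\mu^+$, $R_{\l-\mu}$, $R_\mu^-$. This is only true when $t\in R_\mu^+$. Compare the indicator functions: $\phi_{s_n,r_n}(v_n,u_n)$ is nonzero only when $u_n\ge v_n$ and $r_n>s_n$, whereas for $t\in R_{\l-\mu}$ the theorem's $\alpha_n$ carries the indicator $1_{(u_n\ge v_n,\; u_n+r_n\le v_n+s_n,\; r_n\le s_n)}$, and for $t\in R_\mu^-$ it carries $1_{(u_n+r_n\le v_n+s_n,\; r_n>s_n)}$. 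These are incompatible with the $\phi$ indicator. What actually happens is that when you deform the original contours $C_n,c_n$ to the descent/ascent contours $\Gamma_n,\gamma_n$ you cross poles (elements of $P_n$ in $VU^{(n)}$ or $VU_{(n)}$, depending on the case) and pick up residues. In cases (5)--(12) of lemma \ref{lemCases} these residues do not simply reproduce $\phi$; they combine with it to yield the new term $\Phi_n$ with the correct indicator. The paper does not attempt to track this deformation directly; instead it computes both $\beta_n K_n$ and $J_n$ (the integral over the \emph{new} contours) explicitly as residue sums (lemmas \ref{lemKnSums} and \ref{lemJn}), then proves the combinatorial identities (lemma \ref{lemPhin}, using lemma \ref{lemphi}) that identify their difference with $\Phi_n$. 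This is the content of lemma \ref{lemKnJnPhin}, and it is where assumption \ref{asscases} enters, to force e.g.\ $(VU^{(n)})\cap P_n=\emptyset$ or $(VU^{(n)})\setminus P_n=\emptyset$ in the borderline cases (4), (5), (8), (9).

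A smaller remark: the paper does not obtain the asymptotic of $\Phi_n$ (equivalently $\alpha_n$) via Stirling on the factorial formula. It uses part (2) of lemma \ref{lemKnJnPhin} to rewrite $\Phi_n$ as a single contour integral of $\exp(nF_n(w))$ with $F_n=f_n-\tilde f_n$, and then runs a second, simpler, steepest descent on $F_n$ using the root $w_n$ of lemma \ref{lemFn'} and the contour $\kappa_n^+$ of lemma \ref{lemDesAsc1-12Rem}. Your Stirling route would likely also work, but you would still first need the correct combinatorial form of $\Phi_n$ in each case, which is exactly the missing step above.
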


Note that theorems \ref{thmAiry2} and \ref{thmAiry} prove pointwise convergence.
However, our methods also give the following extension:
\begin{thm}
\label{thmAiry3}
Assume assumptions \ref{assWeakConv} and \ref{assIsol}.
Fix any corresponding points $t \in R_\mu^+ \cup R_{\l-\mu} \cup R_\mu^-$
and $(\chi,\eta) \in \EE_\mu^+ \cup \EE_{\l-\mu} \cup \EE_\mu^-$ with
$(\chi,\eta) = (\chi_\EE(t),\eta_\EE(t))$, where $t$ is a root of
$f_{(\chi,\eta)}'$ of multiplicity $2$. Assume assumption
\ref{asscases} if one of the relevant cases is satisfied.
Finally, fix compact sets $U,R,V,S \subset \R$. Then the convergence
in theorems \ref{thmAiry2} and \ref{thmAiry} holds uniformly for
$(u,r) \in U \times R$ and $(v,s) \in V \times S$.
\end{thm}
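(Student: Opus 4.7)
\textbf{Proof proposal for Theorem \ref{thmAiry3}.} The plan is to revisit the steepest descent proof of Theorem \ref{thmAiry} (from which Theorem \ref{thmAiry2} is a corollary) and verify that every estimate carried out there is uniform in $(u,r,v,s)$ as this quadruple ranges over the compact product $U \times R \times V \times S$. The argument splits into three parts: uniform control of the saddle-point geometry, uniform steepest-descent estimates on the contours, and uniform asymptotics of the factorial prefactor $\alpha_n$.

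First, the contours produced by Lemmas \ref{lemDesAsc1-12} and \ref{lemDesAsc1-12Rem} depend only on the fixed root $t$ and on properties of $(\chi_n,\eta_n)$ and $\mu_n$; they do not depend on the particular choice of $(u,r)$ or $(v,s)$. Consequently a single pair of contours $c_n, C_n$ may be used for every $(u,r,v,s)$ in the compact set, for all $n$ sufficiently large. The Taylor expansions of $f_n$ and $\tilde f_n$ about the relevant non-asymptotic saddle points supplied by Corollary \ref{corTay} have leading coefficients that are functions of $(\chi_n,\eta_n,t)$ only; the $(u,r,v,s)$-dependence enters linearly through the shifts $n^{\frac23} m_n \mathbf{x}_n u + n^{\frac13} p_n \mathbf{y}_n r$ of the particle positions in equations (\ref{equnrnvnsn2}, \ref{equnrnvnsn3}), and these produce exactly the polynomial $w r + w^2 u + \tfrac13 w^3$ (and its counterpart in $z$) in the exponent of the Airy integrand of equation (\ref{eqAitilde}). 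Hence the rescaled saddle-point integrand depends continuously on $(u,r,v,s)$ with coefficients uniformly bounded on the compact set.

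Second, the tail bounds used to discard the contributions of the contours away from the saddle rely on a strict lower bound on $|\mathrm{Re}(f_n(w)-\tilde f_n(z))|$ along the descent/ascent contours outside a neighbourhood of the saddle. This lower bound is a function of the fixed data $(\chi_n,\eta_n,t,\mu_n)$, and the perturbation produced by the $(u,r,v,s)$-dependent linear terms has size $O(n^{-\frac13})$ on the relevant portions of the contours, so it is absorbed uniformly into the bound for $n$ large. The local contribution near the saddle, after the usual rescaling by $n^{\frac13}$ in the tangent direction and $n^{\frac23}$ in the normal direction, converges to the extended Airy integral; since the Airy integrand decays like $\exp(-c|w|^3)$ on $l$ and like $\exp(-c|z|^3)$ on $L$ for some $c>0$, with constants independent of $(u,r,v,s) \in U \times R \times V \times S$, dominated convergence upgrades pointwise to uniform convergence of the local integral to $\widetilde K_{\mathrm{Ai}}((u,r),(v,s))$ on the compact set.

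Third, the prefactor $\alpha_n$ is an explicit ratio of factorials multiplied by $|A_{t,n}|^{-1} B_n^{-1}$. For $(u,r,v,s)$ in the compact set, all factorial arguments that appear are either bounded by an $n$-independent constant or of order $n^{\frac23}$ or $n^{\frac13}$, uniformly in $(u,r,v,s)$; Stirling's formula then yields the asymptotic $\alpha_n = n^{-\frac13} \beta(t) \Phi((u,r),(v,s)) + o(n^{-\frac13})$ with a remainder that is uniform on $U \times R \times V \times S$ when $(u,r) \neq (v,s)$, and the exceptional cases $(u,r) = (v,s)$ are already dealt with by the pointwise identities in Theorem \ref{thmAiry}.

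The hard part will be verifying, across all $12$ cases produced by Lemma \ref{lemCases}, that the perturbation of the steepest descent function induced by the $(u,r,v,s)$-dependent linear shift is genuinely uniformly absorbed into the tail estimate; this must be checked case by case but in each instance reduces to the elementary observation that an $O(n^{-\frac13})$ perturbation of the derivative of the exponent cannot defeat the $\Theta(1)$ lower bound on $|\mathrm{Re}(f_n-\tilde f_n)|$ established away from the saddle. Together with the uniform Stirling estimate for $\alpha_n$, this yields the claimed uniform convergence on $U \times R \times V \times S$.
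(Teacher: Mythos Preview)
Your overall strategy matches the paper's: the paper does not give a detailed argument for Theorem~\ref{thmAiry3} but simply remarks that it ``follows simply by taking the appropriate uniform bounds at every step of our proof.'' Your plan to revisit the saddle-point geometry, the contour estimates, and the Stirling asymptotics of $\alpha_n$, and to check uniformity in each, is exactly this.

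There is, however, one factual error in your first paragraph that you should correct. The contours produced by Lemmas~\ref{lemDesAsc1-12} and~\ref{lemDesAsc1-12Rem} \emph{do} depend on $(u,r,v,s)$. Inside $\text{cl}(B(t,\xi))$, the contour $\gamma_{1,n}^+$ is built from a section of the steepest descent contour $D_n$ for $f_n$, and $\Gamma_{1,n}^+$ from the steepest ascent contour $\tilde A_n$ for $\tilde f_n$ (see Definition~\ref{defgnGnCase1}). Since $f_n$ depends on $(v_n,s_n)$ and $\tilde f_n$ on $(u_n,r_n)$ via equations~(\ref{eqfn}, \ref{eqtildefn}), and these in turn depend on $(v,s)$ and $(u,r)$ via equations~(\ref{equnrnvnsn2}, \ref{equnrnvnsn3}), the roots $t_{1,n},t_{2,n}$, the exit points $d_{1,n},d_{2,n},\tilde a_{1,n},\tilde a_{2,n}$, and the contour segments between them all vary with $(u,r,v,s)$. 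So your claim that ``a single pair of contours $c_n, C_n$ may be used for every $(u,r,v,s)$'' is not literally correct as stated.

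This does not break the argument, because (i) the contours are genuinely independent of $n$ and of $(u,r,v,s)$ \emph{outside} $\text{cl}(B(t,\xi))$ (this is part~(1) of Lemma~\ref{lemDesAsc1-12}), and (ii) inside $B(t,\xi)$ the dependence is controlled by Lemma~\ref{lemNonAsyRoots}, whose $O(n^{-1/3})$ bounds on $t_{1,n}-t$, $f_n'(t)$, $f_n''(t)$, etc.\ are manifestly uniform for $(u,r,v,s)$ in compacts (the constants depend polynomially on $u,r,v,s$). The angular estimates in part~(2) of Lemma~\ref{lemDesAsc1-12} and the length bounds in part~(6) are then also uniform. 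Your second paragraph already captures this correctly; you should just rephrase the first paragraph to say that the contours outside $B(t,\xi)$ are fixed, while inside $B(t,\xi)$ they vary with $(u,r,v,s)$ in a uniformly controlled way.
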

For clarity, we state that theorem \ref{thmAiry3} follows simply by taking
the appropriate uniform bounds at every step of our proof. 
We do not attempt to prove theorem \ref{thmAiry3} directly for the sake of
readability.

We end this section by stating that theorem \ref{thmAiry3} does not
truly prove {\em edge universality}: It does not yet prove that
the (rescaled) Gelfand-Tsetlin particle process at the edge
converges to the extended Airy kernel particle process.
Convergence of the respective Fredholm determinants of the
processes remains to be shown. To do this,
one could attempt to find an additional uniform bound of
$|K_n((u_n,r_n),(v_n,s_n))|$, similar to that seen in lemma 3.1
of \cite{Joh05a}. This step is often overlooked in the literature,
and we do not attempt to prove this here due to the
length and complexity of the paper. Nevertheless,
the results of this paper are an important step towards proving
edge universality for this broad class of models.

\subsection{Other asymptotic situations and conjectures}
\label{secOAS}

Recall the discussions given in sections \ref{secoverview} and
\ref{sectasoGTp}. Assume assumption \ref{assWeakConv}, fix
$(\chi,\eta)$ in the polygon on the right of figure \ref{figGTAsyShape},
and fix sequences of particle positions, $\{(u_n,r_n)\}_{n\ge1}$ and
$\{(v_n,s_n)\}_{n\ge1}$, which satisfy $\frac1n (u_n,r_n) \to (\chi,\eta)$
and $\frac1n (v_n,s_n) \to (\chi,\eta)$ as $n \to \infty$.
In equation (\ref{equnrnvnsn}), we fixed the corresponding points
$(\chi,\eta) \in \EE_\mu^+ \cup \EE_{\l-\mu} \cup \EE_\mu^-$,
and $t \in R_\mu^+ \cup R_{\l-\mu} \cup R_\mu^-$,
where $t$ is a root of $f_{(\chi,\eta)}'$ of multiplicity $2$ ($m=2$). In this section,
instead of equation (\ref{equnrnvnsn}), we briefly discuss other
natural asymptotic situations.

First, suppose that $(\chi,\eta) \in \LL$, and let $w_0 \in \mathbb{H}$,
denote the corresponding root of $f_{(\chi,\eta)}'$ of multiplicity $1$.
In \cite{Duse15a}, for general $\mu$, we mapped $\LL$ to $\mathbb{H}$ by
mapping to the unique root, showed that this is a homeomorphism (indeed,
a diffeomorphism), and found an expression for the inverse of this map.
In other words, we wrote $(\chi,\eta)$ as an explicit function of $w_0$.
Steepest descent analysis, combined with the above root behaviour
and equation (\ref{eqKnrnunsnvn1}), intuitively suggests that
$K_n((u_n,r_n),(v_n,s_n))$ should converge to the
{\em Sine} kernel as $n \to \infty$ (see, for example,
\cite{Joh01, Pas97}). When $r_n = s_n$,
convergence to the standard Sine kernel was shown in Metcalfe,
\cite{Met13}, for the analogous interlaced particle system where
the particles on each row of the Gelfand-Tsetlin patterns take
positions in $\R$. To our knowledge,
this situation has not yet been studied in this new setting.

Next suppose that $(\chi,\eta) \in \EE_\mu^+ \cup \EE_{\l-\mu} \cup \EE_\mu^-$,
and the corresponding root, $t \in R_\mu^+ \cup R_{\l-\mu} \cup R_\mu^-$,
of $f_{(\chi,\eta)}'$ is of multiplicity $3$ ($m=3$). Recall that
the set of all such points is discrete, and they define algebraic
cusps of first order in the edge curve. Examples are
shown in the top right of figure \ref{figexs}. Steepest descent
analysis, combined with the above root
behaviour and equation (\ref{eqKnrnunsnvn1}), intuitively suggests that
$K_n((u_n,r_n),(v_n,s_n))$ should converge to the
{\em Pearcey} kernel as $n \to \infty$ (see, for example,
Tracy and Widom, \cite{Tr06}). To our knowledge,
this situation has not yet been studied in this new setting.

Next suppose that $(\chi,\eta) \in \EE_0 \cup \EE_1 \cup \EE_2$.
Recall that the set of all such points is discrete, $m=1$ when
$(\chi,\eta) \in \EE_0$, $m=1$ or $m=2$ when
$(\chi,\eta) \in \EE_1 \cup \EE_2$, the edge curve
behaves like a parabola when $m=1$, and the edge curve
behaves like an algebraic cusp of first order when $m=2$. Examples are
shown in the lower left of figure \ref{figexs}. In
\cite{Duse15d}, we examined the local asymptotic behaviour when
$(\chi,\eta) \in \EE_1 \cup \EE_2$ and $m=2$. Scaling
$\{(u_n,r_n)\}_{n\ge1}$ and $\{(v_n,s_n)\}_{n\ge1}$ appropriately,
we showed that $K_n((u_n,r_n),(v_n,s_n))$ converges to a novel
kernel which we called the {\em Cusp-Airy} kernel. To our knowledge,
the situation when $(\chi,\eta) \in \EE_0 \cup \EE_1 \cup \EE_2$
and $m=1$ has not yet been studied. However, steepest descent analysis,
combined with the above root behaviour and equation (\ref{eqKnrnunsnvn1}),
intuitively suggests a universal asymptotic behaviour.

Finally suppose that $(\chi,\eta)$ lies on that grey part of $\partial \LL$
for the example depicted on the bottom right of figure \ref{figexs}.
In \cite{Duse15b}, we showed that the behaviour of the roots of
$f_{(\chi,\eta)}'$ is identical for every point on these sections.
Therefore these sections cannot be parameterised by defining unique roots
for each point, as we did for $\EE$. In \cite{Duse15b}, we instead made heavy
use of the theory of singular integrals to parameterise these sections. We
also examined similar, surprisingly subtle, situations. We now conjecture that
the local asymptotic behaviour of particles in neighbourhoods of $(\chi,\eta)$,
when $(\chi,\eta)$ lies on such sections of $\partial \LL$, are
non-universal.

\subsection{Notation and terminology}
\label{secNot}

We end the introduction by discussing the notation and terminology
that will be in use for the rest of the paper. The arguments are
quite technical, and we use these for simplicity, and to avoid needless repetition.
First recall (see equation (\ref{equnrnvnsn})) than we fix the corresponding
points $t \in R_\mu^+ \cup R_{\l-\mu} \cup R_\mu^-$
and $(\chi,\eta) \in \EE_\mu^+ \cup \EE_{\l-\mu} \cup \EE_\mu^-$ with
$(\chi,\eta) = (\chi_\EE(t),\eta_\EE(t))$. From now on, we simply label
$f_{(\chi,\eta)}$ with $t$ instead of $(\chi,\eta)$, i.e., we define,
\begin{equation}
\label{eqft}
f_t := f_{(\chi_\EE(t), \eta_\EE(t))} = f_{(\chi, \eta)}.
\end{equation}
Next, fix $a \in \C$, and a measure $\nu$ on $\R$. Also, for all $n \ge 1$,
fix $a_n \in \C$ and $b_n \in \C$, and a measure $\nu_n$ on $\R$. Then:
\begin{itemize}
\item
`$a_n \to a$' means `$a_n \to a$ as $n \to \infty$'.
\item
`$\nu_n \to \nu$ weakly' means `$\nu_n \to \nu$ as $n \to \infty$ in
the sense of weak convergence of measures'.
\item
`$a_n = b_n$' means `$a_n = b_n$ for all $n$ sufficiently large'.
In other words, if we do not explicitly state those $n \ge 1$ for which
$a_n = b_n$, then we implicitly understand that the equality holds for all
$n$ sufficiently large. Similarly for {\em any other expression or statement}
involving $n$.
\end{itemize}
Next, for all $n\ge1$, fix $c_n \in \R$ with $c_n > 0$. Then:
\begin{itemize}
\item
`$o(c_n)$' denotes a complex-number which is well-defined for all
$n$ sufficiently large. Moreover, $|o(c_n)|/c_n \to 0$ as $n \to \infty$.
\item
`$O(c_n)$' denotes a complex-number which  is well-defined for all $n$ sufficiently
large. Moreover, there exists a $C>0$ for which $|O(c_n)|/c_n \le C$
for all such $n$.
\end{itemize}
Next, fix $A \subset \C$, $f : \C \to \C$, and $m \ge 1$. Also,
for all $n\ge1$, fix $A_n \subset \C$, $f_n : \C \to \C$ and $g_n : \C \to \C$. Then:
\begin{itemize}
\item
`$f$ has $m$ roots in $A$' means
`$f$ has exactly $m$ roots in $A$ counting multiplicities'.
Similarly for `at least/most $m$ roots'.
\item
`$f_n$ has $m$ roots in $A_n$' means
`$f_n$ has exactly $m$ roots in $A_n$ for all $n$ sufficiently large
and counting multiplicities'.
Similarly for `at least/most $m$ roots'.
\item
`$f_n \to f$ uniformly in $A$' means
`$\sup_{w \in A} |f_n(w) - f(w)| \to 0$' as $n \to \infty$.
\item
`$|f_n(w)| > c_n$ uniformly for $w \in A$' means
`$\inf_{w \in A} |f_n(w)| > c_n$ for all $n$ sufficiently large'.
\item
`$f_n(w) = g_n(w) + o(c_n)$ uniformly for $w \in A$' means
`$\sup_{w \in A} |f_n(w) - g_n(w)|/c_n \to 0$' as $n \to \infty$.
\item
`$f_n(w) = g_n(w) + O(c_n)$ uniformly for $w \in A$' means
`there exists a $C>0$ for which
$\sup_{w \in A} |f_n(w) - g_n(w)|/c_n \le C$ for all $n$ sufficiently large.'
\end{itemize}
Finally, given $B \subset \C$, and $S \subset \R$ bounded:
\begin{itemize}
\item
$\text{cl}(B)$ is the closure of $B$.
\item
$\overline{S} := \sup S$ and $\underline{S} := \inf S$.
\end{itemize}

\section{The roots of the steepest descent functions}
\label{secabotrofn}

In this section, we assume assumptions \ref{assWeakConv} and \ref{assIsol},
and that equation (\ref{equnrnvnsn}) is satisfied for some fixed
$t \in R_\mu^+ \cup R_{\l-\mu} \cup R_\mu^-$, a root of
$f_t'$ of multiplicity $2$. We consider the behaviour of the roots of
$f_t'$, $f_n'$ and $\tilde{f}_n'$ (see equations (\ref{eqfn}, \ref{eqtildefn},
\ref{eqf}, \ref{eqft})). This results of this section enable us to perform
the steepest descent analysis of section \ref{secsdatak}.

\subsection{The roots of $f_t'$}
\label{secTROft}

In this section, we consider the behaviour of the roots of $f_t'$.
Recall the following basic facts which we make use of throughout this section:
Assumption \ref{assWeakConv} implies that $\mu$ is a probability measure on
$[a,b]$ with $\{a,b\} \in \supp(\mu)$ and $\mu \le \l$. Equation
(\ref{equnrnvnsn}) implies that $t \in R_\mu^+ \cup R_{\l-\mu} \cup R_\mu^-$
and $(\chi,\eta) \in \EE_\mu^+ \cup \EE_{\l-\mu} \cup \EE_\mu^-$ are the
corresponding points with $(\chi,\eta) = (\chi_\EE(t),\eta_\EE(t))$, and
$t$ is a root of $f_{(\chi,\eta)}'$ of multiplicity $2$. Also, since
$(\chi,\eta) = (\chi_\EE(t),\eta_\EE(t))$, definition
\ref{defEdge} and lemma \ref{lemEdge} imply that $t \in \R \setminus \{\chi,\chi+\eta-1\}$,
$(\chi,\eta) \in (a,b) \times (0,1)$, and $b > \chi > \chi + \eta - 1 > a$.
Finally, equations (\ref{eqf'}, \ref{eqft}) give,
\begin{equation}
\label{eqft'2}
f_t' (w)
= \int_{S_1} \frac{\mu[dx]}{w-x}
- \int_{S_2} \frac{(\l-\mu)[dx]}{w-x}
+ \int_{S_3} \frac{\mu[dx]}{w-x},
\end{equation}
for all $w \in \C \setminus S$, where $S = S_1 \cup S_2 \cup S_3$,
and $S_1,S_2,S_3$ are defined in equation (\ref{eqS1S2S3}).

First, note the following:
\begin{align}
\nonumber
&S_1 \neq \emptyset
\text{ and }
\mu[S_1] > 0,
\hspace{.3cm}
S_2 \neq \emptyset
\text{ and }
(\l-\mu)[S_2] > 0,
\hspace{.3cm}
S_3 \neq \emptyset
\text{ and }
\mu[S_3] > 0. \\
\label{eqS1S2S3In}
&\mu[S_1] - (\l-\mu)[S_2] + \mu[S_3]
= \mu[a,b] - \l[\chi+\eta-1,\chi] = \eta \in (0,1). \\
\nonumber
&b = \sup S_1 > \inf S_1 \ge \chi \ge \sup S_2
> \inf S_2 \ge \chi + \eta - 1 \ge \sup S_3 > \inf S_3 = a. \\
\nonumber
&[\sup S_i-\e,\sup S_i] \cup [\inf S_i, \inf S_i + \e] \subset S_i
\text{ for all } i \in \{1,2,3\} \text{ and some } \e>0.
\end{align}
The first part, above, follows from part (a) of corollary 3.2 of \cite{Duse15a}.
The second part follows since $\mu$ is a probability measure on $[a,b]$ and
$\eta \in (0,1)$. The third part follows from the first part and equation
(\ref{eqS1S2S3}) since $\mu \le \l$, $\{a,b\} \in \supp(\mu)$, and
$b > \chi > \chi + \eta - 1 > a$. Finally, the fourth part follows since
$\mu \le \l$ and $\{a,b\} \in \supp(\mu)$.

Next note, equation (\ref{eqS1S2S3In}) implies that $\C \setminus S$, the domain of $f_t'$,
can be partitioned as follows:
\begin{equation}
\label{eqf'domain2}
\C \setminus S = (\C \setminus \R) \cup J \cup K,
\end{equation}
where $J := \cup_{i=1}^4 J_i$, $K := \cup_{i=1}^3 K^{(i)}$, and
\begin{itemize}
\item
$J_1 := (\sup S_1,+\infty)$.
\item
$J_2 := (-\infty, \inf S_3)$.
\item
$J_3 := (\sup S_2,\inf S_1)$.
\item
$J_4 := (\sup S_3,\inf S_2)$.
\item
$K^{(i)} := [\inf S_i, \sup S_i] \setminus S_i$ for all $i \in \{1,2,3\}$.
\end{itemize}
This partition is depicted in figure \ref{figf'domain}.
Note that $J_1$ and $J_2$ are non-empty, but that
$J_3 = \emptyset$ when $\sup S_2 = \inf S_1$, $J_4 = \emptyset$
when $\sup S_3 = \inf S_2$, and $K^{(i)} = \emptyset$ when
$S_i = [\inf S_i, \sup S_i]$ for each $K^{(i)}$. Note also that $\chi$
is contained in the domain of $f_t'$ if and only
if $J_3$ is non-empty, in which case $\chi \in J_3$. Similarly
$\chi+\eta-1$ is contained in the domain of $f_t'$ if and only if
$J_4$ is non-empty, in which case $\chi+\eta-1 \in J_4$. Finally note that each
$K^{(i)}$ is open, and so can be partitioned as a set of pairwise disjoint open
intervals, which is unique up to order, and which is either empty or finite
or countable. We denote this partition of open intervals as
$\{K_1^{(i)}, K_2^{(i)}, \ldots\}$,
and we note that any $I \in \{K_1^{(i)}, K_2^{(i)}, \ldots\}$
must satisfy $\{\inf I, \sup I\} \subset S_i$.

Next note, since $t$ is real-valued root of $f_t'$, equation
(\ref{eqf'domain2}) implies that $t \in \R \setminus S = J \cup K$. We let
$L_t \subset \R \setminus S = J \cup K$ denote that open interval for which,
\begin{equation}
\label{eqIntervalt}
t \in L_t
\hspace{.5cm} \text{and} \hspace{.5cm}
L_t \in \{J_1,J_2,J_3,J_4\} \cup \cup_{i=1}^3 \{K_1^{(i)},K_2^{(i)},\ldots\}.
\end{equation}
In other words, $L_t$ is the largest open sub-interval of $\R \setminus S = J \cup K$
which contains $t$. These observations, and theorem 3.1 of \cite{Duse15a},
then immediately give the following, stated without proof:
\begin{lem}
\label{lemf'}
Counting multiplicities:
\begin{enumerate}
\item
$f_t'$ has a root of multiplicity $2$ at $t \in L_t$,
has $0$ roots in $L_t \setminus \{t\}$ when
$L_t \in \{J_1,J_2,J_3,J_4\}$, and has at most $1$ root in
$L_t \setminus \{t\}$ when $L_t \in \cup_{i=1}^3 \{K_1^{(i)},K_2^{(i)},\ldots\}$.
\item
$f_t'$ has $0$ roots in $\C \setminus \R$, and in each of
$\{J_1,J_2,J_3,J_4\} \setminus \{L_t\}$.
\item
$f_t'$ has at most $1$ root in each of
$\cup_{i=1}^3 \{K_1^{(i)},K_2^{(i)},\ldots\} \setminus \{L_t\}$.
\end{enumerate}
\end{lem}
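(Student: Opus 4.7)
The plan is to deduce the lemma from theorem 3.1 of \cite{Duse15a}, which enumerates the possible roots of $f_{(\chi,\eta)}' = f_t'$ in each connected component of its domain $\C \setminus S$, together with the partition (\ref{eqf'domain2}) and the fact that the double root at $t$ uses up part of the available ``root budget'' in its host interval $L_t$.

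First I would dispatch the complex-root assertion in part (2). Since $(\chi,\eta) = (\chi_\EE(t),\eta_\EE(t))$ lies on $\EE \subset \partial \LL$ and $\LL$ is open, $(\chi,\eta) \notin \LL$; the definition of $\LL$ then forces $f_t'$ to have no root in $\mathbb{H}$. The Schwarz-type identity $f_t'(\bar w) = \overline{f_t'(w)}$, which follows from (\ref{eqft'2}) because the integration is over $\R$ against real measures, then excludes roots in the lower half-plane as well.

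For the real-root claims, the partition (\ref{eqf'domain2}) breaks the real domain of $f_t'$ into intervals of two qualitatively distinct types: the four intervals $J_i$, each of which is either a gap between distinct supports $S_i$ or an unbounded tail beyond $\mathrm{supp}(\mu)$, and the components $K_j^{(i)}$ of each $K^{(i)}$, which are the interior gaps of the individual supports. The per-interval bounds of theorem 3.1 of \cite{Duse15a} assert that in each $J_i$ the total number of roots of $f_t'$ counted with multiplicity is at most $2$, and that in each $K_j^{(i)}$ an analogous bound holds upgraded to allow one additional simple root in the unique interval that also contains a higher-multiplicity root. Since $t \in L_t$ is a root of $f_t'$ of multiplicity exactly $2$, these bounds consume the entire budget inside $L_t$ when $L_t$ is of $J$-type (forcing no further roots there) and leave at most one simple root inside $L_t$ when $L_t$ is of $K$-type; they simultaneously force zero roots in every other $J_i$ and at most one root in every other $K_j^{(i)}$. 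This is precisely the content of (1), the remainder of (2), and (3).

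The main obstacle is not in the present lemma but in theorem 3.1 of \cite{Duse15a} itself, whose proof establishes the per-interval count via a global argument-principle analysis of $f_t'$ applied to contours enclosing each gap in $S$, combined with the boundary information at the interval endpoints guaranteed by the density conditions in the fourth line of (\ref{eqS1S2S3In}) (these force the relevant Cauchy kernels to blow up at the endpoints with the sign dictated by the signed integrands appearing in (\ref{eqft'2})). Given that theorem as a black box, the lemma is a bookkeeping exercise, which is why the authors state it without proof.
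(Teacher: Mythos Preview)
Your approach is essentially the paper's: the lemma is stated there without proof, as an immediate consequence of theorem 3.1 of \cite{Duse15a} together with the partition (\ref{eqf'domain2}) and the observation $t\in L_t$. Your self-contained argument for the absence of non-real roots (via $(\chi,\eta)\notin\LL$ and conjugate symmetry of (\ref{eqft'2})) is correct and a clean addition.

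There is one gap in your bookkeeping paraphrase. You describe theorem 3.1 as giving a local bound of ``at most $2$ roots in each $J_i$'' and then claim this ``simultaneously force[s] zero roots in every other $J_i$''. That inference does not follow: a purely local bound of $\le 2$ in each $J_i$ would still permit a $J_i\neq L_t$ to host two simple roots or a double root. To reach the conclusion of part~(2) you need either a sharper statement from theorem 3.1 (which is in fact what that theorem provides: it pins down the root structure precisely once the position of $(\chi,\eta)$ relative to $\LL$ is known, which is why the paper treats the lemma as immediate), or else an additional global counting argument of the kind the paper uses later for $f_n'$ in the proof of lemma \ref{lemRootsNonAsy1}. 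Since you are invoking theorem 3.1 as a black box anyway, this is a defect in your paraphrase of that theorem rather than in the overall strategy; but as written the sentence does not logically close.
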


\begin{figure}[t]
\centering
\begin{tikzpicture}

\draw [dashed] (-1,0) --++ (1,0);
\draw (0,0) --++ (2,0);
\draw [dashed] (2,0) --++ (3,0);
\draw (5,0) --++ (2,0);
\draw [dashed] (7,0) --++ (2,0);
\draw (9,0) --++ (2,0);
\draw [dashed] (11,0) --++ (1,0);
\draw (-1.5,0) node {$\R$};
\draw (5.5,1) node {$\mathbb{H}$};

\draw (0,.1) --++ (0,-.2);
\draw (0,-.4) node {\scriptsize $a = \underline{S_3}$};
\draw (1.2,-.4) node {\scriptsize $<$};
\draw (2,.1) --++ (0,-.2);
\draw (2,-.4) node {\scriptsize $\overline{S_3}$};
\draw (2.65,-.4) node {\scriptsize $\le$};
\draw (3.5,.1) --++ (0,-.2);
\draw (3.5,-.4) node {\scriptsize $\chi+\eta-1$};
\draw (4.4,-.4) node {\scriptsize $\le$};
\draw (5,.1) --++ (0,-.2);
\draw (5,-.4) node {\scriptsize $\underline{S_2}$};
\draw (6,-.4) node {\scriptsize $<$};
\draw (7,.1) --++ (0,-.2);
\draw (7,-.4) node {\scriptsize $\overline{S_2}$};
\draw (7.65,-.4) node {\scriptsize $\le$};
\draw (8,.1) --++ (0,-.2);
\draw (8,-.4) node {\scriptsize $\chi$};
\draw (8.4,-.4) node {\scriptsize $\le$};
\draw (9,.1) --++ (0,-.2);
\draw (9,-.4) node {\scriptsize $\underline{S_1}$};
\draw (9.8,-.4) node {\scriptsize $<$};
\draw (11,.1) --++ (0,-.2);
\draw (11,-.4) node {\scriptsize $\overline{S_1} = b$};

\draw [thick,decorate,decoration={brace,amplitude=10pt,mirror},xshift=0.2pt,yshift=-0.2pt]
(-1.5,-.7) -- (0,-.7) node[black,midway,yshift=-0.6cm] {\scriptsize $J_2$};
\draw [thick,decorate,decoration={brace,amplitude=10pt,mirror},xshift=0.4pt,yshift=-0.4pt]
(2.1,-.7) -- (4.9,-.7) node[black,midway,yshift=-0.6cm] {\scriptsize $J_4$};
\draw [thick,decorate,decoration={brace,amplitude=10pt,mirror},xshift=0.4pt,yshift=-0.4pt]
(7.1,-.7) -- (8.9,-.7) node[black,midway,yshift=-0.6cm] {\scriptsize $J_3$};
\draw [thick,decorate,decoration={brace,amplitude=10pt,mirror},xshift=0.2pt,yshift=-0.2pt]
(11,-.7) -- (12.5,-.7) node[black,midway,yshift=-0.6cm] {\scriptsize $J_1$};

\end{tikzpicture}
\caption{The sets of equation (\ref{eqf'domain2}), with
$K^{(i)} = [\inf S_i, \sup S_i] \setminus S_i$ for all $i \in \{1,2,3\}$.
Above, $\underline{S_1} := \inf S_1$, $\overline{S_1} := \sup S_1$,
etc (see section \ref{secNot}).}
\label{figf'domain}
\end{figure}

Next, we give the following useful result which describes the various situations
of theorem \ref{thmAiry} in explicit detail. A separate steepest descent analysis must be
performed for each case:
\begin{lem}
\label{lemCases}
The following 12 cases exhaust all possibilities:
\begin{enumerate}
\item
$t \in R_\mu^+$, $t > \chi$, $L_t = J_1$, $f_t'''(t) > 0$.
\item
$t \in R_\mu^+$, $t > \chi$, $L_t \in \{K_1^{(1)},K_2^{(1)},\ldots\}$, $f_t'''(t) > 0$.
\item
$t \in R_\mu^+$, $t > \chi$, $L_t \in \{K_1^{(1)},K_2^{(1)},\ldots\}$, $f_t'''(t) < 0$.
\item
$t \in R_\mu^+$, $t > \chi$, $\chi \in L_t$, $L_t = J_3$, $f_t'''(t) < 0$.
\item
$t \in R_{\l-\mu}$, $t \in (\chi+\eta-1,\chi)$, $\chi \in L_t$, $L_t = J_3$, $f_t'''(t) < 0$.
\item
$t \in R_{\l-\mu}$, $t \in (\chi+\eta-1,\chi)$, $L_t \in \{K_1^{(2)},K_2^{(2)},\ldots\}$, $f_t'''(t) < 0$.
\item
$t \in R_{\l-\mu}$, $t \in (\chi+\eta-1,\chi)$, $L_t \in \{K_1^{(2)},K_2^{(2)},\ldots\}$, $f_t'''(t) > 0$.
\item
$t \in R_{\l-\mu}$, $t \in (\chi+\eta-1,\chi)$, $\chi+\eta-1 \in L_t$, $L_t = J_4$, $f_t'''(t) > 0$.
\item
$t \in R_\mu^-$, $t < \chi+\eta-1$, $\chi+\eta-1 \in L_t$, $L_t = J_4$, $f_t'''(t) > 0$.
\item
$t \in R_\mu^-$, $t < \chi+\eta-1$, $L_t \in \{K_1^{(3)},K_2^{(3)},\ldots\}$, $f_t'''(t) > 0$.
\item
$t \in R_\mu^-$, $t < \chi+\eta-1$, $L_t \in \{K_1^{(3)},K_2^{(3)},\ldots\}$, $f_t'''(t) < 0$.
\item
$t \in R_\mu^-$, $t < \chi+\eta-1$, $L_t = J_2$, $f_t'''(t) < 0$.
\end{enumerate}
\end{lem}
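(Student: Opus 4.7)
The plan is a three-step case analysis along (i) which of the disjoint open sets $R_\mu^+$, $R_{\l-\mu}$, $R_\mu^-$ contains $t$, (ii) which interval $L_t$ in the partition (\ref{eqf'domain2}) contains $t$, and (iii) the sign of $f_t'''(t)$. The twelve cases will be exactly the $3 \times 4$ combinations that actually occur, with each $K^{(i)}_j$ subcase split further according to the sign of $f_t'''(t)$.

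\emph{Locating $t$.} From lemma \ref{lemEdge} I would read off
\[
\chi - t \;=\; \frac{e^{C(t)}-1}{e^{C(t)}\,C'(t)}, \qquad \chi+\eta-1-t \;=\; \frac{e^{C(t)}-1}{C'(t)},
\]
the second being obtained by adding $\eta-1 = (e^{C(t)}-1)^2/(e^{C(t)}C'(t))$ to the first. When $t \in R_\mu^+$, equation (\ref{eqC'Rmu}) gives $C'(t)<0$ while $C(t)>0$ gives $e^{C(t)}-1>0$, so $t > \chi > \chi+\eta-1$. When $t \in R_\mu^-$ the analogous signs yield $t < \chi+\eta-1 < \chi$. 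When $t \in R_{\l-\mu}$, the analytic extension (\ref{eqC'Rl-mu}) on an interval $I = (t_2,t_1) \ni t$ gives $e^{C(t)} = e^{C_I(t)}(t-t_2)/(t-t_1) < 0$, so $e^{C(t)}-1<0$; the strict chain $\chi > \chi+\eta-1$ then forces $C'(t)>0$ and $\chi+\eta-1 < t < \chi$ (the alternative $C'(t)<0$ would give $t > \chi > \chi+\eta-1 > t$).

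\emph{Enumerating $L_t$ and signing $f_t'''(t)$.} Combining the location of $t$ with the partition (\ref{eqf'domain2}) and the inclusions $S_1 \subset [\chi,b]$, $S_2 \subset [\chi+\eta-1,\chi]$, $S_3 \subset [a,\chi+\eta-1]$, only those intervals meeting the half-line to which $t$ is restricted can contain $t$: in $R_\mu^+$ this forces $L_t \in \{J_1, J_3\} \cup \{K^{(1)}_1, K^{(1)}_2, \ldots\}$, with $L_t = J_3$ moreover requiring $\chi \in L_t$; the other two regions produce symmetric lists. Since $t$ is a double root, $f_t'(w) = \tfrac12 f_t'''(t)(w-t)^2 + O((w-t)^3)$ near $t$, so $\operatorname{sign}(f_t'''(t))$ equals the sign of $f_t'$ on a punctured neighbourhood of $t$ in $L_t$. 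From (\ref{eqft'2}) the boundary behaviour is: an endpoint lying in $S_i$ forces $f_t'(w) \to \pm\infty$ with a sign determined by which side of $S_i$ the limit is taken from; and at $\pm\infty$, $f_t'(w) \sim \eta/w$ with $\eta \in (0,1)$ by (\ref{eqS1S2S3In}). For $L_t \in \{J_1,J_2,J_3,J_4\}$, lemma \ref{lemf'} forbids additional roots, so $f_t'$ has constant sign on $L_t$ read directly from the endpoints: positive throughout $J_1$ and $J_4$, negative throughout $J_2$ and $J_3$, yielding cases 1, 4, 5, 8, 9, 12. For $L_t = K^{(i)}_j$, the two endpoint limits have opposite signs, so the intermediate value theorem forces at least one further simple root, while lemma \ref{lemf'} allows at most one; hence there is exactly one additional simple root $t'$, and the sign of $f_t'$ near $t$ depends on whether $t < t'$ or $t > t'$, producing the paired cases $(2,3)$, $(6,7)$, $(10,11)$.

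\emph{Main obstacle.} The delicate part is the boundary bookkeeping in the enumeration of $L_t$: showing that whenever $L_t = J_3$ or $L_t = J_4$ occurs, the associated marginal point ($\chi$ or $\chi+\eta-1$) actually lies strictly inside $L_t$, and that the apparently missing degenerate configurations (for instance $\chi = \sup S_2$ with $t$ just to the right of $\chi$, which would place $t \in J_3$ with $\chi \notin L_t$) do not occur at a typical edge point. This is settled by cross-referencing the strict positioning of $t$ from the first step with the precise structure of $\EE_\mu^{\pm}$ and $\EE_{\l-\mu}$ provided by lemma \ref{lemEdge} and definition \ref{defEdge}, and is where the ``typicality'' hypothesis $m = 2$ and $(\chi,\eta) \in \EE_\mu^+ \cup \EE_{\l-\mu} \cup \EE_\mu^-$ genuinely bites.
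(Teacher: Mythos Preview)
Your overall plan is sound and your first step---locating $t$ relative to $\chi$ and $\chi+\eta-1$ by reading off the signs of $e^{C(t)}-1$ and $C'(t)$ from the parametrisation---is cleaner than the paper's appeal to definition~\ref{defEdge} and lemma~\ref{lemEdge}. For the $K^{(i)}_j$ subcases you are also doing more than necessary: the paper simply observes that multiplicity exactly $2$ forces $f_t'''(t) \neq 0$, so both signs are a priori possible, giving the paired cases immediately without any intermediate-value or extra-root argument.

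There is, however, a genuine gap in your treatment of the $J_3$ and $J_4$ cases. You claim that ``an endpoint lying in $S_i$ forces $f_t'(w) \to \pm\infty$'', but this is not true in general: the Cauchy-type integrals in (\ref{eqft'2}) can remain bounded at a support endpoint if the density of $\mu$ (or $\lambda-\mu$) vanishes there. For instance, a square-root edge $\varphi(x) \sim (x-\inf S_1)^{1/2}$ gives a finite limit for $\int_{S_1}\mu[dx]/(w-x)$ as $w \uparrow \inf S_1$. Without divergence, you cannot read off the sign of $f_t'$ near the endpoint, and hence cannot conclude $f_t'''(t) < 0$ in case~(4) (and its analogues (5), (8), (9)) by your route.

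The paper circumvents this via a somewhat indirect but robust device: it passes to the \emph{discrete} function $f_n'$, which has a genuine simple pole at $\min S_{1,n}$ and therefore does satisfy $\lim_{s \uparrow \min S_{1,n}} f_n'(s) = -\infty$. Combined with lemma~\ref{lemRootsNonAsy1} (no roots of $f_n'$ in $[t+\xi, \min S_{1,n})$), this forces $f_n'(s_t) < 0$ for any fixed $s_t$ in that range; then $f_n'(s_t) \to f_t'(s_t)$ and $f_t'(s_t) \neq 0$ give $f_t'(s_t) < 0$, hence $f_t'''(t) < 0$. For $J_1$ and $J_2$ the paper uses only the asymptotic $f_t'(w) \sim \eta/w$ at $\pm\infty$, which is the part of your endpoint argument that is actually valid.

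Finally, the ``main obstacle'' you single out---verifying that $\chi$ (resp.\ $\chi+\eta-1$) lies strictly inside $L_t$ when $L_t = J_3$ (resp.\ $J_4$)---is comparatively routine and the paper disposes of it in one line from (\ref{eqS1S2S3In}); the genuinely delicate point is the sign determination above, which you have underestimated.
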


\begin{proof}
First recall that $t \in \R \setminus \{\chi,\chi+\eta-1\}$.
Then, equations (\ref{eqS1S2S3In}, \ref{eqf'domain2}, \ref{eqIntervalt})
imply that the following exhaust all possibilities (see, also,
figure \ref{figf'domain}):
\begin{itemize}
\item
$t > \chi$, and $L_t = J_1$ or $L_t \in \{K_1^{(1)},K_2^{(1)},\ldots\}$
or $L_t = J_3$.
\item
$t \in (\chi+\eta-1,\chi)$, and $L_t = J_3$
or $L_t \in \{K_1^{(2)},K_2^{(2)},\ldots\}$ or $L_t = J_4$.
\item
$t < \chi+\eta-1$, and $L_t = J_4$
or $L_t \in \{K_1^{(3)},K_2^{(3)},\ldots\}$ or $L_t = J_2$.
\end{itemize}
We will show:
\begin{enumerate}
\item[(i)]
Case (1) is satisfied when $t > \chi$ and $L_t = J_1$.
\item[(ii)]
Either case (2) or (3) is satisfied when
$t > \chi$ and $L_t \in \{K_1^{(1)},K_2^{(1)},\ldots\}$.
\item[(iii)]
Case (4) is satisfied when $t > \chi$ and $L_t = J_3$.
\end{enumerate}
Similarly it can be shown that:
\begin{enumerate}
\item[(iv)]
Case (5) is satisfied when $t \in (\chi+\eta-1,\chi)$ and $L_t = J_3$.
\item[(v)]
Either case (6) or (7) is satisfied when
$t \in (\chi+\eta-1,\chi)$ and $L_t \in \{K_1^{(2)},K_2^{(2)},\ldots\}$.
\item[(vi)]
Case (8) is satisfied when $t \in (\chi+\eta-1,\chi)$ and $L_t = J_4$.
\item[(vii)]
Case (9) is satisfied when $t < \chi+\eta-1$ and $L_t = J_4$.
\item[(viii)]
Either case (10) or (11) is satisfied when $t < \chi+\eta-1$ and
$L_t \in \{K_1^{(3)},K_2^{(3)},\ldots\}$.
\item[(ix)]
Case (12) is satisfied when $t < \chi+\eta-1$ and $L_t = J_2$.
\end{enumerate}
The required result follows from (i-ix).

Consider (i). Recall that $t > \chi$ and $L_t = J_1$. Also,
since $t>\chi$ and $(\chi,\eta) = (\chi_\EE(t),\eta_\EE(t))$,
definition \ref{defEdge} and lemma \ref{lemEdge} imply that
$(\chi,\eta) \in \EE_\mu^+$ and $t \in R_\mu^+$. It thus
remains to show that $f_t'''(t) > 0$. To see this, first note,
equation (\ref{eqft'2}) implies that $(f_t') |_{J_1}$ is
continuous and real-valued. Moreover, since $L_t = J_1$,
part (1) of lemma \ref{lemf'} implies that
$f_t'(t) = f_t''(t) = 0$, $f_t'''(t) \neq 0$,
and $f_t'(s) \neq 0$ for all $s \in J_1 \setminus \{t\}
= (\sup S_1,t) \cup (t,+\infty)$. Therefore, it is sufficient
to show that there exists an $s_t \in (t,+\infty)$ with
$f_t'(s_t) > 0$. To see this, note equation (\ref{eqft'2}) gives
\begin{equation*}
f_t' (w)
= \bigg( \int_{S_1} \frac{\mu[dx]}w
- \int_{S_2} \frac{(\l-\mu)[dx]}w
+ \int_{S_3} \frac{\mu[dx]}w \bigg)
+ O \bigg( \frac1{|w|^2} \bigg),
\end{equation*}
for all $w \in \C$ with $|w|$ sufficiently large. Therefore
$f_t' (w) = ( \mu[S_1] - (\l-\mu)[S_2] + \mu[S_3] ) w^{-1}
+ O ( |w|^{-2} ) = \eta w^{-1} + O ( |w|^{-2} )$ for
all such $w$, where the last part follows from equation
(\ref{eqS1S2S3In}). Therefore, since $\eta > 0$,
there exists an $s_t \in (t,+\infty)$ with
$f_t'(s_t) > 0$. This proves (i).

Consider (ii). Recall that $t > \chi$ and
$L_t \in \{K_1^{(1)},K_2^{(1)},\ldots\}$. Also, since
$t>\chi$ and $(\chi,\eta) = (\chi_\EE(t),\eta_\EE(t))$,
definition \ref{defEdge} and lemma \ref{lemEdge} imply
that $(\chi,\eta) \in \EE_\mu^+$ and $t \in R_\mu^+$. Finally
recall that $t$ is a root of $f_t'$ of multiplicity $2$, and so
$f_t'''(t) \neq 0$. This proves (ii).

Consider (iii). Recall that $t > \chi$ and $L_t = J_3$. Also,
since $t>\chi$ and $(\chi,\eta) = (\chi_\EE(t),\eta_\EE(t))$,
definition \ref{defEdge} and lemma \ref{lemEdge} imply
that $(\chi,\eta) \in \EE_\mu^+$
and $t \in R_\mu^+$. Moreover, equations (\ref{eqS1S2S3In},
\ref{eqf'domain2}) imply that $\chi \in J_3 (= L_t)$
(see, also, figure \ref{figf'domain}). It thus
remains to show that $f_t'''(t) < 0$. To see this, first note,
equation (\ref{eqft'2}) implies that $(f_t') |_{J_3}$ is
continuous and real-valued. Moreover, since $L_t = J_3$,
part (1) of lemma \ref{lemf'} implies that
$f_t'(t) = f_t''(t) = 0$, $f_t'''(t) \neq 0$,
and $f_t'(s) \neq 0$ for all $s \in J_3 \setminus \{t\}
= (\sup S_2,t) \cup (t, \inf S_1)$. Therefore, it is sufficient
to show that there exists an $s_t \in (t, \inf S_t)$ with
$f_t'(s_t) < 0$.

To see the above, we use the notation and definitions and
results of next section. This is not a circular argument since
the current lemma is not used in the next section. First, fix $\xi>0$
as in lemma \ref{lemRootsNonAsy1}, and fix $s_t \in [t+\xi, \inf S_1)$.
We have shown above that $f_t'(s_t) \neq 0$, and so either
$f_t'(s_t) < 0$ or $f_t'(s_t) > 0$.
Next note, equation (\ref{eqfn'}) implies that $(f_n') |_{J_{3,n}}$ is
continuous and real-valued. Moreover, since $L_n = J_{3,n}$
(see equation (\ref{eqIntervaln})),
part (1) of lemma \ref{lemRootsNonAsy1} implies that
$f_n'(s) \neq 0$ for all $s \in J_{3,n} \setminus (t-\xi,t+\xi)
= (\max S_{2,n},t-\xi] \cup [t+\xi, \min S_{1,n})$. Also,
equation (\ref{eqfn'}) gives,
\begin{equation*}
\lim_{s \in \R, s \uparrow \min S_{1,n}} f_n'(s) = -\infty.
\end{equation*}
The above observations imply that $f_n'(s) < 0$ for all
$s \in [t+\xi, \min S_{1,n})$. In particular, since
$\min S_{1,n} = \inf S_1 + o(1)$ (see equation (\ref{eqIntervaln})
and note that $L_t = J_3$, $L_n = J_{3,n}$,
$\inf S_1 = \sup J_3$, $\min S_{1,n} = \sup J_{3,n}$),
and since $s_t \in [t+\xi, \inf S_1)$, this
gives $f_n'(s_t) < 0$. Finally note, equations
(\ref{eqft'2}, \ref{eqfn'}, \ref{eqS1nS1WeakConv}) give
$f_n'(s_t) \to f_t'(s_t)$. Therefore $f_t'(s_t) < 0$,
as required. This proves (iii).
\end{proof}

Next recall that $e^{C(\cdot)} : \C \setminus \R \to \C$
and $C'(\cdot) : \C \setminus R \to \C$ have those unique analytic
extensions to $R_\mu^+ \cup R_{\l-\mu} \cup R_\mu^-$ given in equations (\ref{eqC'Rmu},
\ref{eqC'Rl-mu}). We now consider some useful properties of these
extensions for cases, (1-12), of lemma \ref{lemCases}:
\begin{lem}
\label{lemAnalExt}
The following hold:
\begin{itemize}
\item
$e^{C(t)} > 1$ and $C'(t) < 0$ when $t \in R_\mu^+$, i.e., for cases (1-4).
\item
$e^{C(t)} < 0$ and $C'(t) > 0$ when $t \in R_{\l-\mu}$, i.e., for cases (5-8).
\item
$e^{C(t)} \in (0,1)$ and $C'(t) < 0$ when $t \in R_\mu^-$, i.e., for cases (9-12).
\end{itemize}
\end{lem}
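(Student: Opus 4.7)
The plan is to split into the three cases indicated by the lemma and treat each by a short computation. Cases (1)--(4) and (9)--(12), where $t \in R_\mu^+ \cup R_\mu^-$, will follow directly from the explicit real formula (\ref{eqC'Rmu}); cases (5)--(8), where $t \in R_{\l-\mu}$, will require the analytic-continuation formula (\ref{eqC'Rl-mu}), supplemented by the geometric constraint $\eta_\EE(t) \in (0,1)$ from lemma \ref{lemEdge}.

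For $t \in R_\mu^+$ (resp.\ $t \in R_\mu^-$), note that $R_\mu^+ \cup R_\mu^- \subset \R \setminus \supp(\mu)$, so both expressions in (\ref{eqC'Rmu}) are standard integrals. The defining condition on $R_\mu^+$ gives $C(t) > 0$, hence $e^{C(t)} > 1$; the defining condition on $R_\mu^-$ gives $C(t) < 0$, hence $e^{C(t)} \in (0,1)$. For the derivative, $C'(t) = -\int_a^b \frac{\mu[dx]}{(t-x)^2}$; since $t \notin \supp(\mu)$ the integrand is strictly positive $\mu$-a.e., and since $\mu$ is a probability measure the integral is strictly positive, so $C'(t) < 0$.

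For $t \in R_{\l-\mu}$, I would fix any open interval $I = (t_2, t_1)$ with $t \in I \subset R_{\l-\mu}$, as guaranteed by (\ref{eqC'Rl-mu}). Since $t_2 < t < t_1$, we have $\frac{t-t_2}{t-t_1} < 0$, while $e^{C_I(t)} > 0$ because $C_I(t)$ is the real integral of a real-valued function; hence (\ref{eqC'Rl-mu}) yields $e^{C(t)} < 0$ directly. To obtain the sign of $C'(t)$, I would \emph{not} attempt a direct estimate of $C'(t) = C_I'(t) - \frac{1}{t-t_1} + \frac{1}{t-t_2}$, because the negative term $C_I'(t)$ competes with the positive remainder and a priori either could dominate; this is the only real obstacle in the proof. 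Instead I would invoke lemma \ref{lemEdge}, which gives $\eta_\EE(t) \in (0,1)$, so that (\ref{eqchiEEetaEE}) rearranges to
\begin{equation*}
\eta_\EE(t) - 1 \; = \; \frac{(e^{C(t)}-1)^2}{e^{C(t)} \, C'(t)} \; \in \; (-1,0).
\end{equation*}
Since $e^{C(t)} < 0 \neq 1$, the numerator is strictly positive, so the denominator $e^{C(t)} C'(t)$ must be strictly negative; combined with $e^{C(t)} < 0$, this forces $C'(t) > 0$, completing the proof.
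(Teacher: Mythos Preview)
Your proof is correct. For $t \in R_\mu^+ \cup R_\mu^-$ it is identical to the paper's argument, and for $t \in R_{\l-\mu}$ the derivation of $e^{C(t)} < 0$ from (\ref{eqC'Rl-mu}) is likewise the same.

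The one point of difference is how you obtain $C'(t) > 0$ when $t \in R_{\l-\mu}$. The paper does not argue this directly but simply cites lemma~2.2 of \cite{Duse15a}, where the inequality is established. You instead deduce it from the geometric constraint $\eta_\EE(t) \in (0,1)$ of lemma~\ref{lemEdge}, back-solving through the formula (\ref{eqchiEEetaEE}). Your route has the advantage of being self-contained within the results explicitly stated in this paper, so a reader need not consult \cite{Duse15a} for the specific sign statement. The paper's route is shorter on the page but pushes the work into the external reference. Note that your argument implicitly uses $C'(t) \neq 0$ (so that the formula for $\eta_\EE(t)$ is well-defined), but this is already part of the content of lemma~\ref{lemEdge} as imported from \cite{Duse15a}, so there is no circularity.
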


\begin{proof}
First recall that $C(\cdot) : \C \setminus \R \to \C$ and
$C'(\cdot) : \C \setminus \R \to \C$ have those unique analytic
extensions to $\R \setminus \supp(\mu)$ given in equation
(\ref{eqC'Rmu}). Moreover, lemma \ref{lemEdge} gives
$R_\mu^+ = \{ s \in \R \setminus \supp(\mu) : C(s) > 0 \}$
and $R_\mu^- = \{ s \in \R \setminus \supp(\mu) : C(s) < 0 \}$.
These easily give $e^{C(t)} > 1$ when $t \in R_\mu^+$,
$e^{C(t)} \in (0,1)$ when $t \in R_\mu^-$, and $C'(t) < 0$
when $t \in R_\mu^+ \cup R_\mu^-$.
Next recall that $e^{C(\cdot)} : \C \setminus \R \to \C$ and
$C'(\cdot) : \C \setminus R \to \C$ have those unique analytic
extensions to $R_{\l-\mu}$ given in equation (\ref{eqC'Rl-mu}).
These easily give $e^{C(t)} < 0$ when $t \in R_{\l-\mu}$. Finally,
we showed in lemma 2.2 of \cite{Duse15a} that $C'(t) > 0$
when $t \in R_{\l-\mu}$.
\end{proof}

We end this section with the final technical assumption of theorem
\ref{thmAiry}. First note the following (see lemma \ref{lemCases}):
\begin{itemize}
\item
For case (4), $\chi < \inf S_1$ and
$t \in (\chi,\inf S_1) \subset \R \setminus \supp(\mu)$.
\item
For case (5), $\sup S_2 < \chi$ and
$t \in (\sup S_2,\chi) \subset \R \setminus \supp(\l-\mu)$.
\item
For case (8), $\chi+\eta-1 < \inf S_2$ and
$t \in (\chi+\eta-1,\inf S_2) \subset \R \setminus \supp(\l-\mu)$.
\item
For case (9), $\chi+\eta-1 > \sup S_3$ and
$t \in (\sup S_3,\chi+\eta-1) \subset \R \setminus \supp(\mu)$.
\end{itemize}
We assume:
\begin{ass}
\label{asscases}
If one of the cases (4,5,8,9) of lemma \ref{lemCases} is satisfied,
assume the following:
\begin{itemize}
\item
$\chi \not\in \supp(\mu)$ for case (4), i.e.,
$[\chi,\inf S_1) \subset \R \setminus \supp(\mu)$.
\item
$\chi \not\in \supp(\l-\mu)$ for case (5), i.e.,
$(\sup S_2,\chi] \subset \R \setminus \supp(\l-\mu)$.
\item
$\chi+\eta-1 \not\in \supp(\l-\mu)$ for case (8), i.e.,
$[\chi+\eta-1,\inf S_2) \subset \R \setminus \supp(\l-\mu)$.
\item
$\chi+\eta-1 \not\in \supp(\mu)$ for case (9), i.e.,
$(\sup S_3,\chi+\eta-1] \subset \R \setminus \supp(\mu)$.
\end{itemize}
\end{ass}
Note, with the above assumption, equation (\ref{eqS1S2S3}) implies
that $\chi = \sup S_2$ for case (4), $\chi = \inf S_1$ for case (5),
$\chi+\eta-1 = \sup S_3$ for case (8), and $\chi+\eta-1 = \inf S_2$
for case (9). Another effect of assumption \ref{asscases} can be seen,
for example, in part (ii) in the proof of lemma \ref{lemJn}. Part (ii)
is not necessarily true if assumption \ref{asscases} does not hold.

\subsection{The roots of $f_n'$ and $\tilde{f}_n'$}

In this section, we assume assumptions \ref{assWeakConv} and \ref{assIsol},
that equation (\ref{equnrnvnsn}) is satisfied for some fixed
$t \in R_\mu^+ \cup R_{\l-\mu} \cup R_\mu^-$ (a root of
$f_t'$ of multiplicity $2$), and assumption \ref{asscases}. We consider
the behaviour of the roots of $f_n'$ and $\tilde{f}_n'$ as $n \to \infty$.
We concentrate mainly on $f_n'$, since $\tilde{f}_n'$ has a similar behaviour.

First recall that $f_n$ and $\tilde{f}_n$ are defined in equations
(\ref{eqfn}, \ref{eqtildefn}). These equations give,
\begin{align}
\label{eqfn2}
f_n(w)
&= \frac1n \sum_{x \in S_{1,n}} \log(w-x)
- \frac1n \sum_{x \in S_{2,n}} \log(w-x)
+ \frac1n \sum_{x \in S_{3,n}} \log(w-x), \\
\label{eqtildefn2}
\tilde{f}_n(w)
&= \frac1n \sum_{x \in \tilde{S}_{1,n}} \log(w-x)
-  \frac1n \sum_{x \in \tilde{S}_{2,n}} \log(w-x)
+  \frac1n \sum_{x \in \tilde{S}_{3,n}} \log(w-x),
\end{align}
where the branch cuts are either $(-\infty,0]$ or $[0,+\infty)$, and where:
\begin{itemize}
\item
$S_{1,n} := \frac1n \{x_i: x_i > v_n\}$.
\item
$S_{2,n} := \frac1n \{v_n,v_n-1,\ldots,v_n+s_n-n\} \setminus \{x_1,x_2,\ldots,x_n\}$.
\item
$S_{3,n} := \frac1n \{x_i: x_i < v_n + s_n - n\}$.
\item
$\tilde{S}_{1,n} := \frac1n \{x_i: x_i \ge u_n\}$.
\item
$\tilde{S}_{2,n} := \frac1n \{u_n-1,u_n-2,\ldots,u_n+r_n-n+1\} \setminus \{x_1,x_2,\ldots,x_n\}$.
\item
$\tilde{S}_{3,n} := \frac1n \{x_i: x_i \le u_n + r_n - n\}$.
\end{itemize}

Consider $f_n$. First note, irrespective of the choices of the branches of
the logarithm in equation (\ref{eqfn2}), that
\begin{equation}
\label{eqfn'}
f_n'(w)
= \frac1n \sum_{x \in S_{1,n}} \frac1{w-x}
-  \frac1n \sum_{x \in S_{2,n}} \frac1{w-x}
+  \frac1n \sum_{x \in S_{3,n}} \frac1{w-x}.
\end{equation}
Also note, assumption \ref{assWeakConv} and equations (\ref{eqS1S2S3}, \ref{equnrnvnsn})
imply the following:
\begin{equation}
\label{eqS1nS1WeakConv}
\frac1n \sum_{x \in S_{1,n}} \delta_x \to \mu |_{S_1}
\hspace{.25cm} \text{and} \hspace{.25cm}
\frac1n \sum_{x \in S_{2,n}} \delta_x \to (\l-\mu) |_{S_2}
\hspace{.25cm} \text{and} \hspace{.25cm}
\frac1n \sum_{x \in S_{3,n}} \delta_x \to \mu |_{S_3}
\hspace{.25cm} \text{weakly}.
\end{equation}
Moreover, assumption \ref{assIsol} and equations (\ref{eqS1S2S3In}, \ref{eqS1nS1WeakConv})
imply the following:
\begin{align}
\nonumber
&S_{1,n} \neq \emptyset
\hspace{.5cm} \text{and} \hspace{.5cm}
\tfrac1n |S_{1,n}| = \mu[S_1] + o(1) = \mu[\chi,b] + o(1) > 0. \\
\nonumber
&S_{2,n} \neq \emptyset
\hspace{.5cm} \text{and} \hspace{.5cm}
\tfrac1n |S_{2,n}| = (\l-\mu)[S_2] + o(1) = (\l-\mu)[\chi+\eta-1,\chi] + o(1) > 0. \\
\label{eqS1nS2nS3nIn}
&S_{3,n} \neq \emptyset
\hspace{.5cm} \text{and} \hspace{.5cm}
\tfrac1n |S_{3,n}| = \mu[S_3] + o(1) = \mu[a,\chi+\eta-1] + o(1) > 0. \\
\nonumber
&\tfrac1n x_1^{(n)} = \max S_{1,n} > \min S_{1,n} >  \max S_{2,n}
> \min S_{2,n} > \max S_{3,n} > \min S_{3,n} = \tfrac1n x_n^{(n)}. \\
\nonumber
&\max S_{1,n} = \sup S_1 + o(1)
\hspace{.5cm} \text{and} \hspace{.5cm}
\min S_{3,n} = \inf S_3 + o(1).
\end{align}

Next note that $f_n'$ extends analytically to $\C \setminus S_n$,
where $S_n := S_{1,n} \cup S_{2,n} \cup S_{3,n}$.
Then, in analogy with equation (\ref{eqf'domain2}), partition the domain of
$f_n'$ as follows:
\begin{equation}
\label{eqfn'domain}
\C \setminus S_n = (\C \setminus \R) \cup J_n \cup K_n,
\end{equation}
where $J_n := \cup_{i=1}^4 J_{i,n}$, $K_n := \cup_{i=1}^3 K_n^{(i)}$, and
\begin{itemize}
\item
$J_{1,n} := (\max S_{1,n}, + \infty)$.
\item
$J_{2,n} := (-\infty, \min S_{3,n})$.
\item
$J_{3,n} := (\max S_{2,n},\min S_{1,n})$.
\item
$J_{4,n} := (\max S_{3,n},\min S_{2,n})$.
\item
$K_n^{(i)} := [\min S_{i,n}, \max S_{i,n}] \setminus S_{i,n}$ for all $i \in \{1,2,3\}$.
\end{itemize}
Note that each $K_n^{(i)}$ is open, and so can be partitioned as a set of
pairwise disjoint open intervals, which is unique up to order. We
denote this partition of open
intervals as $\{K_{1,n}^{(i)}, K_{2,n}^{(i)}, \ldots\}$,
and we note that $I_n \in \{K_{1,n}^{(i)}, K_{2,n}^{(i)}, \ldots\}$
if and only if $\inf I_n$ and $\sup I_n$ are two consecutive elements of $S_{i,n}$.

Next, recall (see equations (\ref{equnrnvnsn}, \ref{eqft'2}))
that $t \in \R \setminus S$ is a root of $f_t'$, and (see equation (\ref{eqIntervalt}))
that $L_t \subset \R \setminus S = J \cup K$ is that interval for which $t \in L_t$
and $L_t \in \{J_1,J_2,J_3,J_4\} \cup \cup_{i=1}^3 \{K_1^{(i)},K_2^{(i)},\ldots\}$.
Therefore, for all $\xi > 0$ sufficiently small,
\begin{equation}
\label{eqxi}
(t-4\xi,t+4\xi) \subset L_t,
\; B(t,4\xi) \subset \C \setminus S,
\text{ and }
t \text{ is the unique root of } f_t' \text{ in } B(t,4\xi).
\end{equation}
Fix such an $\xi>0$. Then:
\begin{lem}
\label{lemHassConv}
There exists $L_n \in \{J_{1,n},J_{2,n},J_{3,n},J_{4,n}\}
\cup \cup_{i=1}^3 \{K_{1,n}^{(i)},K_{2,n}^{(i)},\ldots\}$,
an open interval which satisfies,
\begin{align}
\nonumber
&\sup L_n = \sup L_t + o(1)
\hspace{.5cm} \text{and} \hspace{.5cm}
\inf L_n = \inf L_t + o(1), \\
\label{eqIntervaln}
&(t-2\xi,t+2\xi) \subset L_n
\hspace{.5cm} \text{and} \hspace{.5cm}
B(t,2\xi) \subset \C \setminus S_n, \\
\nonumber
&L_n = J_{i,n} \Leftrightarrow L_t = J_i \; \forall \; i, 
\hspace{.5cm} L_n \in \{K_{1,n}^{(i)},K_{2,n}^{(i)},\ldots\}
\Leftrightarrow L_t \in \{K_1^{(i)},K_2^{(i)},\ldots\} \; \forall \; i.
\end{align}
\end{lem}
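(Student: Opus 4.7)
The plan has three steps: (i) prove $B(t, 2\xi) \cap S_n = \emptyset$ for all $n$ sufficiently large; (ii) define $L_n$ as the connected component of $\R \setminus S_n$ containing $(t-2\xi, t+2\xi)$, which is automatically one of the intervals listed in the statement by the partition (\ref{eqfn'domain}); (iii) verify the endpoint convergences and the type correspondence by a case analysis on $L_t$. Once (i) is established, (ii) immediately yields existence of $L_n$ together with the second display line $(t-2\xi,t+2\xi) \subset L_n$ and $B(t,2\xi) \subset \C \setminus S_n$.

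For step (i), I would argue by contradiction. If $s_n \in S_n \cap (t-2\xi, t+2\xi)$ along a subsequence, pass to a further subsequence on which $s_n \in S_{i,n}$ for a fixed $i \in \{1,2,3\}$ and $s_n \to s_* \in [t-2\xi, t+2\xi]$, and then show $s_* \in S$, contradicting $B(t,4\xi) \cap S = \emptyset$ from (\ref{eqxi}). For $i=1$: the inclusion $S_{1,n} \subset P_n$ combined with $d(P_n, \supp(\mu)) \to 0$ from assumption \ref{assIsol} gives $s_* \in \supp(\mu)$, while $S_{1,n} \subset (v_n/n, \infty)$ with $v_n/n \to \chi$ gives $s_* \ge \chi$, whence $s_* \in \supp(\mu) \cap [\chi,b] \subset S_1$. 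For $i=2$: use $S_{2,n} \subset H_n$, $d(H_n, \supp(\l-\mu)) \to 0$, and the interval constraint $S_{2,n} \subset [(v_n+s_n-n)/n, v_n/n] \to [\chi+\eta-1,\chi]$ to conclude $s_* \in S_2$. The case $i=3$ is analogous to $i=1$.

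For step (iii), I would handle each possible type of $L_t$ in turn. When $L_t = J_1 = (\sup S_1, \infty)$, the convergences in (\ref{eqS1nS2nS3nIn}) together with $\max S_{2,n} \le v_n/n = \chi + o(1) \le \sup S_1 + o(1)$ and the analogous bound $\max S_{3,n} \le (v_n+s_n-n)/n + o(1) \le \sup S_1 + o(1)$ force all elements of $S_n$ to lie below $t - 3\xi$ eventually, so $L_n$ extends to $+\infty$, i.e., $L_n = J_{1,n}$, and $\inf L_n = \max S_{1,n} \to \sup S_1 = \inf L_t$. The cases $L_t = J_2, J_3, J_4$ are analogous, with the relevant endpoint convergences ($\min S_{1,n} \to \inf S_1$, etc.) following from the weak convergences (\ref{eqS1nS1WeakConv}) and the fact that all of $\sup S_2, \inf S_1, \sup S_3, \inf S_2$ lie in the supports of the limiting measures by the last line of (\ref{eqS1S2S3In}). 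The main obstacle is the case $L_t = (a_*, b_*) \in \{K_l^{(i)}\}_{l \ge 1}$ with $a_*, b_* \in S_i$ consecutive: weak convergence (\ref{eqS1nS1WeakConv}) ensures that every neighborhood of $a_*$ and of $b_*$ contains elements of $S_{i,n}$ for $n$ large, so defining $a_{*,n}$ (resp.\ $b_{*,n}$) as the nearest $S_n$-element below $t$ (resp.\ above $t$), step (i) applied with shrinking neighborhoods of points in $(a_*,b_*)$ gives $a_{*,n} \to a_*$ and $b_{*,n} \to b_*$; moreover $a_{*,n}, b_{*,n} \in S_{i,n}$, since $S_j$ for $j \neq i$ is disjoint from $(a_*, b_*)$, separated either by positive distance or, in the borderline situations arising in cases (4), (5), (8), (9) of lemma \ref{lemCases}, by assumption \ref{asscases} which ensures $\chi$ or $\chi+\eta-1$ lies outside the relevant support. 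Consequently $L_n = (a_{*,n}, b_{*,n}) \in \{K_{l,n}^{(i)}\}_{l \ge 1}$ with the required endpoint convergence, and the type correspondence follows since the enumerated cases exhaust all possibilities.
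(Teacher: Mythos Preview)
Your three-step strategy is sound and arguably cleaner than the paper's direct case-by-case analysis over the twelve cases of lemma~\ref{lemCases}. However, there is a genuine organisational gap in step (iii): you have misplaced the role of assumption~\ref{asscases}.

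Cases (4), (5), (8), (9) of lemma~\ref{lemCases} correspond precisely to $L_t = J_3$ or $L_t = J_4$, \emph{not} to $K$-intervals. So when you write that the $J_3, J_4$ cases are ``analogous'' to $J_1$ with endpoint convergences following from weak convergence alone, you are glossing over exactly the place where assumption~\ref{asscases} is required. Concretely, for $L_t = J_3$ in case (4) one needs $\min S_{1,n} \to \inf S_1$; weak convergence together with $\inf S_1 \in S_1$ gives only $\limsup \min S_{1,n} \le \inf S_1$. For the reverse inequality one must rule out stray elements of $S_{1,n} = P_n \cap (v_n/n, \infty)$ in the gap $(v_n/n, \inf S_1 - \epsilon)$, and a particle near $\chi$ (which lies in this gap whenever $\chi < \inf S_1$) is excluded only because assumption~\ref{asscases} forces $\chi \notin \supp(\mu)$, whence assumption~\ref{assIsol} clears a full neighbourhood of $\chi$ of particles. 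The paper's proof of case (4) does exactly this. Conversely, for the genuine $K^{(i)}$ cases (2, 3, 6, 7, 10, 11) the endpoints $a_*, b_*$ of $L_t$ are strictly interior to $[\underline{S_i}, \overline{S_i}]$ by the last line of (\ref{eqS1S2S3In}), hence bounded away from both $\chi$ and $\chi+\eta-1$; no appeal to assumption~\ref{asscases} is needed there, and your invocation of it in the $K$-discussion is spurious.

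A minor related point: the set inclusion $\supp(\mu) \cap [\chi,b] \subset S_1$ you use in step (i) can fail at the single point $\chi$ (take $\mu$ with density supported only on $[\chi - 1, \chi]$, so that $\chi \in \supp(\mu)$ but $\chi \notin \supp(\mu|_{[\chi,b]}) = S_1$). This does not actually break step (i), because under the standing hypotheses one always has $\chi \notin [t-2\xi, t+2\xi]$ --- in cases (1)--(3) since $\underline{L_t} > \underline{S_1} \ge \chi$, and in case (4) since assumption~\ref{asscases} forces $\sup S_2 = \chi$, making $\chi$ the left endpoint of $L_t$ rather than an interior point --- but you should state this explicitly rather than rely on an inclusion that is false as written.
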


\begin{proof}
Throughout this proof, it is helpful to refer to figure \ref{figf'domain} to visualise
the sets in question. Recall the exhaustive cases, (1-12), of lemma \ref{lemCases}. We
will prove the result for cases (1,2,4,7). The proof for case (12) is similar to case (1),
the proof for cases (3,10,11) are similar to case (2), the proof for case (6) is similar
to case (7), and the proof for cases (5,8,9) are similar to case (4).

For case (1), recall that $L_t = J_1 = (\sup S_1, + \infty)$,
$J_{1,n} = (\max S_{1,n}, + \infty)$, and
$\max S_{1,n} \to \sup S_1$ (see lemma \ref{lemCases} and equations
(\ref{eqS1nS2nS3nIn}, \ref{eqfn'domain})). The result for case (1)
then follows from equation (\ref{eqxi}).

For case (2), recall that $L_t \in \{K_1^{(1)}, K_2^{(1)}, \ldots \}$,
$L_t \in \R \setminus \supp(\mu)$, $\sup S_1 > \sup L_t$ and $\inf L_t > \chi$,
$\{\inf L_t, \sup L_t\} \subset S_1 = \supp(\mu |_{[\chi,b]})$,
and $S_1$ is entirely contained in $[\sup L_t, \sup S_1] \cup [\chi, \inf L_t]$
(see lemma \ref{lemCases}, and equations (\ref{eqS1S2S3}, \ref{eqS1S2S3In},
\ref{eqf'domain2})). Moreover, recall that $S_{1,n} = \frac1n \{x_i: x_i > v_n\}$,
$\max S_{1,n} \to \sup S_1$ and $\frac{v_n}n \to \chi$, and
$I_n \in \{K_{1,n}^{(1)}, K_{2,n}^{(1)}, \ldots\}$
if and only if $\inf I_n$ and $\sup I_n$ are two consecutive
elements of $S_{1,n}$ (see  equations (\ref{equnrnvnsn}, \ref{eqfn'},
\ref{eqfn'domain})). The result for case (2) then follows from
assumption \ref{assIsol} and equation (\ref{eqxi}).

For case (7), recall that $L_t \in \{K_1^{(2)}, K_2^{(2)}, \ldots \}$,
$L_t \in \R \setminus \supp(\l-\mu)$, $\chi > \sup L_t$ and $\inf L_t > \chi+\eta-1$,
$\{\inf L_t, \sup L_t\} \subset S_2 = \supp((\l-\mu) |_{[\chi+\eta-1, \chi]})$,
and $S_2$ is entirely contained in $[\sup L_t, \chi] \cup [\chi+\eta-1, \inf L_t]$
(see lemma \ref{lemCases}, and equations (\ref{eqS1S2S3}, \ref{eqS1S2S3In},
\ref{eqf'domain2})). Moreover, recall that
$S_{2,n} = \frac1n \{v_n,v_n-1,\ldots,v_n+s_n-n\} \setminus \{x_1,x_2,\ldots,x_n\}$,
$\frac{v_n}n \to \chi$ and $\frac{v_n+s_n-n}n \to \chi+\eta-1$, and
$I_n \in \{K_{1,n}^{(2)}, K_{2,n}^{(2)}, \ldots\}$
if and only if $\inf I_n$ and $\sup I_n$ are two consecutive
elements of $S_{2,n}$ (see  equations (\ref{equnrnvnsn}, \ref{eqfn'},
\ref{eqfn'domain})). The result for case (2) then follows from
assumption \ref{assIsol} and equation (\ref{eqxi}).

For case (4), recall that $L_t = J_3 = (\sup S_2,\inf S_1)$,
$\sup L_t \subset S_1 = \supp(\mu |_{[\chi,b]})$,
and $\inf L_t \subset S_2 = \supp((\l-\mu) |_{[\chi+\eta-1, \chi]})$
(see lemma \ref{lemCases}, and equations (\ref{eqS1S2S3}, \ref{eqS1S2S3In},
\ref{eqf'domain2})). Moreover, assumption \ref{asscases} implies
that $\sup S_2 = \chi$, i.e., that $\inf L_t = \chi$, and
$[\chi, \inf S_1) \subset \R \setminus \supp(\mu)$. Therefore
$(\chi-\delta, \inf S_1-\delta) \subset \R \setminus \supp(\mu)$
for all $\delta>0$ sufficiently small. Then,
for all such $\delta>0$, assumption \ref{assIsol} implies that
there exists an $n(\delta) \ge 1$ for which
$(\chi-\delta, \inf S_1-\delta) \cap (\frac1n \{x_1,x_2,\ldots,x_n\}) = \emptyset$
for all $n > n(\delta)$ .
Finally recall that $J_{3,n} = (\max S_{2,n},\min S_{1,n})$,
$S_{1,n} = \frac1n \{x_i: x_i > v_n\}$ and
$S_{2,n} = \frac1n \{v_n,v_n-1,\ldots,v_n+s_n-n\} \setminus \{x_1,x_2,\ldots,x_n\}$, 
and $\frac{v_n}n \to \chi$ (see  equations (\ref{equnrnvnsn}, \ref{eqfn'},
\ref{eqfn'domain})). The above observations imply that
$\sup J_{3,n} = \min S_{1,n} \ge \inf S_1 - \delta$ and
$\inf J_{3,n} = \max S_{2,n} = \frac{v_n}n$ for all $n>n(\delta)$.
Then, letting $\delta \downarrow 0$, the result
for case (4) follows from equation (\ref{eqxi}).
\end{proof}

Moreover:
\begin{lem}
\label{lemRootsNonAsy1}
The following hold:
\begin{enumerate}
\item
$f_n'$ has $2$ roots in $B(t,\xi)$, has $0$ roots in
$L_n \setminus (t-\xi,t+\xi)$ when $L_n \in \{J_{1,n},J_{2,n},J_{3,n},J_{4,n}\}$,
and has $1$ root in $L_n \setminus (t-\xi,t+\xi)$ when
$L_n \in \cup_{i=1}^3 \{K_{1,n}^{(i)},K_{2,n}^{(i)},\ldots\}$.
\item
$f_n'$ has $0$ roots in $\C \setminus (\R \cup B(t,\xi))$, and
in each of $\{J_{1,n},J_{2,n},J_{3,n},J_{4,n}\} \setminus \{L_n\}$.
\item
$f_n'$ has $1$ root in each of
$\cup_{i=1}^3 \{K_{1,n}^{(i)},K_{2,n}^{(i)},\ldots\} \setminus \{L_n\}$.
\end{enumerate}
Next, denote the roots of $f_n'$ in $B(t,\xi)$ by $\{t_{1,n},t_{2,n}\}$,
with the understanding that $t_{1,n} = t_{2,n}$ means that $t_{1,n}$ is
a root of multiplicity $2$. Then we can always choose the labelling such that
one of the following possibilities is satisfied:
\begin{enumerate}
\item[(a)]
$t_{1,n} \in (t-\xi,t+\xi)$ and $t_{1,n} = t_{2,n}$.
\item[(b)]
$\{t_{1,n},t_{2,n}\} \subset (t-\xi,t+\xi)$ and $t_{1,n} > t_{2,n}$.
\item[(c)]
$t_{1,n} \in B(t,\xi) \cap \mathbb{H}$ and
$t_{2,n}$ is the complex conjugate of $t_{1,n}$.
\end{enumerate}
\end{lem}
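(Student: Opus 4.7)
The plan is to combine a local Hurwitz argument in a fixed disc around $t$ with a global root count for $f_n'$ and structural upper bounds adapted from lemma \ref{lemf'}, and then to classify the two local roots using the reality of the coefficients of $f_n'$.

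First I would establish the two roots in $B(t,\xi)$. By equation (\ref{eqxi}), $f_t'$ is analytic on $\overline{B(t,4\xi)}$ with $t$ its only zero there, of multiplicity two. By equation (\ref{eqIntervaln}), $f_n'$ is analytic on $\overline{B(t,2\xi)}$ for all $n$ sufficiently large. The weak convergence of empirical measures in equation (\ref{eqS1nS1WeakConv}), together with uniform separation of $\partial B(t,\xi)$ from $S$ and from $S_n$ (for large $n$), upgrades pointwise convergence of $f_n'$ on $\partial B(t,\xi)$ to uniform convergence. Hurwitz's theorem then yields exactly two zeros of $f_n'$ in $B(t,\xi)$, counted with multiplicity, for all large $n$.

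Next I would determine the distribution of the remaining roots. Equation (\ref{eqfn'}) exhibits $f_n'$ as a rational function with simple poles exactly on $S_n$, and the expansion $f_n'(w) = \eta_n/w + O(w^{-2})$ with $\eta_n := \tfrac{1}{n}(|S_{1,n}| - |S_{2,n}| + |S_{3,n}|) \to \eta > 0$ (equation (\ref{eqS1nS2nS3nIn})) shows that its numerator has degree $|S_n|-1$ with nonzero leading coefficient. Thus $f_n'$ has exactly $|S_n|-1$ zeros in $\C\setminus S_n$, counted with multiplicity. On each $K_{j,n}^{(i)}$, the intermediate value theorem forces at least one real root: between two consecutive poles of $S_{i,n}$, $f_n'$ blows up to opposite signs at the two ends (signs determined by whether the coefficient is $+\tfrac1n$ for $i\in\{1,3\}$ or $-\tfrac1n$ for $i=2$). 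Applied to $L_n$ itself when $L_n$ is a $K$-interval, the same sign-change argument produces at least one additional real root in $L_n \setminus (t-\xi,t+\xi)$, since in none of the cases (a)--(c) below do the two roots in $B(t,\xi)$ single-handedly account for the forced sign change across $L_n$. On the upper-bound side, the non-asymptotic analog of lemma \ref{lemf'}, i.e.\ theorem 3.1 of \cite{Duse15a} applied with the discrete empirical measures $\tfrac1n\sum_{x\in S_{i,n}}\delta_x$ in place of the limiting measures, gives at most one root in each $K_{j,n}^{(i)}$ distinct from $L_n$, at most two non-real roots (which by reality of $f_n'$ must form a conjugate pair), and no roots in each $J_{i,n}$ distinct from $L_n$. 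Summing these upper bounds over the $|S_n|-3$ hole intervals and adding the two roots in $B(t,\xi)$ exactly matches the global count $|S_n|-1$, forcing every inequality to an equality. Combined with lemma \ref{lemHassConv}, this yields parts (1)--(3).

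Finally, for the classification (a)--(c), $f_n'$ has real coefficients as a rational function, so its complex zeros occur in conjugate pairs, and $B(t,\xi)$ is symmetric under conjugation. The pair $\{t_{1,n},t_{2,n}\}$ must therefore be either both real, lying in $(t-\xi,t+\xi)$, and coinciding as a double root (case (a)) or distinct and relabelled so $t_{1,n} > t_{2,n}$ (case (b)); or a complex conjugate pair, relabelled so $t_{1,n} \in \mathbb{H}$ (case (c)). The main anticipated obstacle is the structural upper bound on the number of roots in each $K_{j,n}^{(i)}$ and in $\C\setminus\R$: a direct monotonicity argument via $f_n''$ is not available because $f_n''$ need not have constant sign on a $K$-interval, and one must instead check that the proof of theorem 3.1 of \cite{Duse15a} carries over to the non-asymptotic discrete measures, relying only on the features preserved under weak approximation, namely the configuration of poles on $\R$, the signs of the coefficients, and the positive total mass $\eta_n$.
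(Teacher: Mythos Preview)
Your three building blocks --- the Rouch\'e/Hurwitz argument giving exactly two roots in $B(t,\xi)$, the global count of $|S_n|-1$ roots from the rational-function structure of $f_n'$, and the sign-change lower bound of at least one real root per $K$-interval --- are all correct and coincide with the paper's. The divergence is in how you close the accounting.

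You propose to supply upper bounds (at most one root per $K_{j,n}^{(i)}\ne L_n$, no roots in the $J_{i,n}\ne L_n$, at most two non-real roots) by re-running theorem~3.1 of \cite{Duse15a} with the atomic empirical measures in place of $\mu$, and you rightly flag this as the anticipated obstacle. The paper sidesteps this entirely: no separate upper bounds are needed, because your lower bounds already saturate the total. Since the number of $K$-intervals equals $|S_n|-3$, the bound ``at least one root in each $K$-interval'' plus ``two in $B(t,\xi)$'' already accounts for $|S_n|-1$ roots when $L_n$ is a $J$-interval; when $L_n$ is a $K$-interval, the parity refinement you note (odd number of sign changes across $L_n$, so at least one root in $L_n\setminus(t-\xi,t+\xi)$ whether the two local roots are a real pair or a conjugate pair) again brings the lower bound to exactly $|S_n|-1$. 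Equality with the total degree then forces every ``at least'' to an ``exactly'' and forces zero roots in $\C\setminus(\R\cup B(t,\xi))$ and in each $J_{i,n}\ne L_n$, without any structural input beyond what you have already established.

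So your plan can be completed, and more cheaply than you anticipate: drop the appeal to theorem~3.1 and close directly with the observation that your lower bounds sum to exactly $|S_n|-1$. The paper carries this out by first noting (from conjugate symmetry) that the two roots in $B(t,\xi)$ are either both in $(t-\xi,t+\xi)$ or both in $B(t,\xi)\setminus\R$, and then running the counting separately for $L_n\in\{J_{i,n}\}$ and $L_n\in\{K_{j,n}^{(i)}\}$.
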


\begin{proof}
Consider (1-3). We will show that:
\begin{enumerate}
\item[(i)]
$f_n'$ has $\sum_{i=1}^3 |\{K_{1,n}^{(i)},K_{2,n}^{(i)},\ldots\}| + 2$ roots in
$\C \setminus S_n = (\C \setminus \R) \cup J_n \cup K_n$.
\item[(ii)]
$f_n'$ an odd number of roots in each of $\cup_{i=1}^3 \{K_{1,n}^{(i)},K_{2,n}^{(i)},\ldots\}$.
\item[(iii)]
$f_n'$ has $2$ roots in $B(t,\xi)$. Moreover, either both
are in $(t-\xi,t+\xi)$, or both are in $B(t,\xi) \setminus (t-\xi,t+\xi)$.
\end{enumerate}
Next recall (see equation (\ref{eqIntervaln})) that either
$L_n \in \{J_{1,n},J_{2,n},J_{3,n},J_{4,n}\}$ for all $n$ sufficiently
large, or $L_n \in \cup_{i=1}^3 \{K_{1,n}^{(i)},K_{2,n}^{(i)},\ldots\}$
for all $n$ sufficiently large. We consider both cases separately. First
suppose that $L_n \in \{J_{1,n},J_{2,n},J_{3,n},J_{4,n}\}$. Recall (see
equation (\ref{eqIntervaln})) that
$B(t,\xi) \subset (\C \setminus \R) \cup L_n$. Then parts (i,ii,iii)
and a simple counting argument imply that
the following is satisfied: $f_n'$ has $2$ roots in $(\C \setminus \R) \cup L_n$,
$0$ roots in $\{J_{1,n},J_{2,n},J_{3,n},J_{4,n}\} \setminus \{L_n\}$, and
$1$ root in each of $\cup_{i=1}^3 \{K_{1,n}^{(i)},K_{2,n}^{(i)},\ldots\}$.
Moreover, since $B(t,\xi) \subset (\C \setminus \R) \cup L_n$, part (iii)
implies that the $2$ roots in $(\C \setminus \R) \cup L_n$ must be in $B(t,\xi)$.
This proves (1-3) when $L_n \in \{J_{1,n},J_{2,n},J_{3,n},J_{4,n}\}$. Next suppose that
$L_n \in \cup_{i=1}^3 \{K_{1,n}^{(i)},K_{2,n}^{(i)},\ldots\}$. Recall that
$B(t,\xi) \subset (\C \setminus \R) \cup L_n$. 
Then parts (i,ii,iii) and a simple counting argument imply that the following
is satisfied:
$f_n'$ has $3$ roots in $(\C \setminus \R) \cup L_n$, $0$ roots in
$\{J_{1,n},J_{2,n},J_{3,n},J_{4,n}\}$, and $1$ root in each of
$\cup_{i=1}^3 \{K_{1,n}^{(i)},K_{2,n}^{(i)},\ldots\} \setminus \{L_n\}$.
Moreover, since $B(t,\xi) \subset (\C \setminus \R) \cup L_n$, parts (ii,iii)
imply that $2$ of the roots in $(\C \setminus \R) \cup L_n$ must be in $B(t,\xi)$,
and $1$ of the roots must be in $L_n \setminus (t-\xi,t+\xi)$. This proves
(1-3) when $L_n \in \cup_{i=1}^3 \{K_{1,n}^{(i)},K_{2,n}^{(i)},\ldots\}$.

Consider (i). First recall, (see
equation (\ref{eqS1nS2nS3nIn})) that $S_n = S_{1,n} \cup S_{2,n} \cup S_{3,n}$,
where $\{S_{1,n},S_{2,n},S_{3,n}\}$ are
mutually disjoint, each set is non-empty and composed of singletons,
and $\min S_{1,n} > \max S_{2,n}$ and $\min S_{2,n} > \max S_{3,n}$.
Note, for all $w \in \C \setminus S_n$, equation
(\ref{eqfn'}) gives $f_n'(w) = \frac1n ( \prod_{x \in S_n} \frac1{w-x} ) Q_n(w)$,
where $Q_n$ is the polynomial,
\begin{equation*}
Q_n(w) = \sum_{x \in S_{1,n} \cup S_{3,n}}
\bigg( \prod_{y \in S_n \setminus \{x\}} (w - y) \bigg)
- \sum_{x \in S_{2,n}}
\bigg( \prod_{y \in S_n \setminus \{x\}} (w - y) \bigg).
\end{equation*}
Note that $Q_n$ is a polynomial of degree $|S_n|-1$. Also note that $Q_n$ has no
roots in $S_n$, and so the roots of $Q_n$ and $f_n'$ coincide. Therefore $f_n'$
has $|S_n|-1$ roots in $\C \setminus S_n$. Finally
recall (see equation (\ref{eqfn'domain}) and the subsequent remarks) that
$\cup_{i=1}^3 \{K_{1,n}^{(i)}, K_{2,n}^{(i)},\ldots\}$ is a set of pairwise
disjoint open intervals, and that $I_n \in \{K_{1,n}^{(i)}, K_{2,n}^{(i)},\ldots\}$
if and only if $\inf I_n$ and $\sup I_n$ are two consecutive elements of $S_{i,n}$.
Therefore $|S_n| = \sum_{i=1}^3 |\{K_{1,n}^{(i)},K_{2,n}^{(i)},\ldots\}| + 3$, which
proves (i).

Consider (ii). Fix $i \in \{1,2,3\}$, and any interval
$I_n \in \{K_{1,n}^{(i)},K_{2,n}^{(i)},\ldots\}$,
and recall that $\inf I_n$ and $\sup I_n$ are consecutive elements
of $S_{i,n}$. Note, when $i=1$, equation (\ref{eqfn'}) gives,
\begin{equation*}
\lim_{w \in \R, w \uparrow \sup I_n} f_n'(w) = -\infty
\hspace{0.5cm} \text{and} \hspace{0.5cm}
\lim_{w \in \R, w \downarrow \inf I_n} f_n'(w) = +\infty.
\end{equation*}
Thus $f_n'$
has an odd number of roots in $I_n$. Similarly whenever $i \in \{2,3\}$.
This proves (ii).

Consider (iii). First note, part (1) of lemma \ref{lemf'} and equation
(\ref{eqxi}) imply that $f_t'$ has $2$ roots in $B(t,\xi)$.
Next note, equation (\ref{eqfn'}) implies that
non-real roots of $f_n'$ occur in complex conjugate pairs.
Part (iii) thus follows from Rouch\'{e}'s theorem if, we can show that
$|f_t'(w)| > |f_t'(w) - f_n'(w)|$ for all $w \in \partial B(t,\xi)$. We shall show:
\begin{equation*}
\inf_{w \in \partial B(t,\xi)} |f_t'(w)| > 0
\hspace{0.5cm} \mbox{and} \hspace{0.5cm}
\lim_{n \to \infty} \sup_{w \in \text{cl}(B(t,\xi))} |f_t'(w) - f_n'(w)| = 0.
\end{equation*}
The first part follows from the extreme value theorem, since $f_t'$ is
analytic in $B(t,2\xi)$ (see equation (\ref{eqxi})). We prove the second
part via contradiction: Assume that the second part does not hold. Then
there exists a $\d > 0$ for which, for all $n\ge1$, there exists
some $p_n \ge n$ and $z_n \in \text{cl}(B(t,\xi))$ with
$\d < | f_t'(z_n) - f_{p_n}'(z_n) |$. Choosing $\{z_n\}_{n\ge1}$ to be
convergent, and denoting the limit by $z$, the triangle inequality gives
\begin{equation}
\label{eqlemRootsNonAsy1}
\d < | f_t'(z_n) - f_t'(z) |
+ | f_t'(z) - f_{p_n}'(z) |
+ | f_{p_n}'(z) - f_{p_n}'(z_n) |.
\end{equation}
Note, $| f_t'(z_n) - f_t'(z) | \to 0$ since $z_n \to z$,
$\{z,z_1,z_2\ldots\} \subset \text{cl}(B(t,\xi))$, and $f_t'$ is
analytic in $B(t,2\xi)$. Also, since $z \in \text{cl}(B(t,\xi))$, and
$B(t,2\xi) \subset \C \setminus S$ and $B(t,2\xi) \subset \C \setminus S_n$
(see equations (\ref{eqxi}, \ref{eqIntervaln})),
equations (\ref{eqft'2}, \ref{eqfn'}, \ref{eqS1nS1WeakConv}) imply that
$| f_t'(z) - f_{p_n}'(z) | \to 0$. Finally, equation (\ref{eqfn'}) implies
that,
\begin{equation*}
| f_{p_n}'(z) - f_{p_n}'(z_n) |
\le \sum_{i=1}^3 \left( \frac1{p_n} |S_{i,p_n}| \right)
\sup_{x \in S_{i,p_n}} \left| \frac1{z-x} - \frac1{z_n-x} \right|.
\end{equation*}
This implies that $| f_{p_n}'(z) - f_{p_n}'(z_n) | \to 0$, since
$z_n \to z$, $\{z,z_1,z_2\ldots\} \subset \text{cl}(B(t,\xi))$,
$B(t,2\xi) \subset \C \setminus S_{p_n}$, and
$\frac1{p_n} |S_{i,p_n}| = O(1)$ for all $i \in \{1,2,3\}$ (see
equation (\ref{eqS1nS2nS3nIn})). The above observations contradict
equation (\ref{eqlemRootsNonAsy1}), and so our assumption is false. This proves (iii).

Finally, we show that one of the possibilities, (a,b,c), must be satisfied. First
recall, part (1) implies that $f_n'$ has $2$ roots in $B(t,\xi)$. Next note, equation
(\ref{eqfn'}) implies that non-real roots of $f_n'$ occur in complex
conjugate pairs. Possibilities (a,b,c) easily follow.
\end{proof}

\subsection{The rates of convergence}

In the previous sections, we assumed assumptions \ref{assWeakConv} and
\ref{assIsol}, and that equation (\ref{equnrnvnsn}) is satisfied for
some fixed $t \in R_\mu^+ \cup R_{\l-\mu} \cup R_\mu^-$, a root of
$f_t'$ of multiplicity $2$, and we considered the behaviour
of the roots of $f_t'$ and $f_n'$ and $\tilde{f}_n'$. We saw that $2$ roots
of $f_n'$ (and $\tilde{f}_n'$) converge to $t$ as $n \to \infty$.
We did not, however, discuss the rate of convergence.

In this section we assume the above, and additionally assume that the
sequences of particle positions, $\{(u_n,r_n)\}_{n\ge1} \subset \Z^2$
and $\{(v_n,s_n)\}_{n\ge1} \subset \Z^2$, are chosen as in equations
(\ref{equnrnvnsn2}, \ref{equnrnvnsn3}). Recall that these are defined
in terms of sequences, $\{m_n\}_{n\ge1} \subset \R$ and
$\{p_n\}_{n\ge1} \subset \R$, which we
have yet to define. In this section, we define these in a natural
way such that the rate of convergence of the roots is sufficiently
fast to allow a steepest descent analysis of the correlation
kernel. The sequences are defined in definition \ref{defmnpn},
and the rate of convergence of the roots is given in part (4)
of lemma \ref{lemNonAsyRoots}. In that lemma, we also examine the
asymptotic behaviour of $f_n'(t)$ and $\tilde{f}_n'(t)$,
and their higher order derivatives. We concentrate mainly on
$f_n'$, since $\tilde{f}_n'$ has a similar behaviour.

We begin by writing convenient expressions for $f_t'$ in neighbourhoods of $t$.
First recall that $\mu \le \l$ (see assumption \ref{assWeakConv}), and
$(\chi,\eta) = (\chi_\EE(t),\eta_\EE(t))$ where
$t \in R_\mu^+ \cup R_{\l-\mu} \cup R_\mu^-$ (see equation (\ref{equnrnvnsn})).
Next recall (see definition \ref{defEdge} and lemma \ref{lemEdge}) that
$t \in (\chi,+\infty) \setminus \supp(\mu)$ when $t \in R_\mu^+$,
$t \in (\chi+\eta-1,\chi) \setminus \supp(\l-\mu)$ when $t \in R_{\l-\mu}$,
and $t \in (-\infty,\chi+\eta-1) \setminus \supp(\mu)$ when $t \in R_\mu^-$.
Thus the following are satisfied for all $\xi>0$ sufficiently small:
\begin{align}
\nonumber
\mu |_{(t-4\xi,t+4\xi)} &= 0
\text{ and } (t-4\xi,t+4\xi) \subset (\chi,+\infty)
\text{ when } t \in R_\mu^+. \\
\label{eqxi1}
\mu |_{(t-4\xi,t+4\xi)} &= \l |_{(t-4\xi,t+4\xi)}
\text{ and } (t-4\xi,t+4\xi) \subset (\chi+\eta-1,\chi)
\text{ when } t \in R_{\l-\mu}. \\
\nonumber
\mu |_{(t-4\xi,t+4\xi)} &= 0
\text{ and } (t-4\xi,t+4\xi) \subset (-\infty,\chi+\eta-1)
\text{ when } t \in R_\mu^-.
\end{align}
Then, fixing such an $\xi>0$, equations (\ref{eqf}, \ref{eqft})
imply the following, which are well-defined and analytic for all $w \in B(t,4\xi)$:
\begin{align}
\label{eqft'Rmu}
f_t'(w)
&= \int_a^b \frac{\mu[dx]}{w-x} - \int_{\chi+\eta-1}^\chi \frac{dx}{w-x}
\hspace{.5cm} \text{when } t \in R_\mu^+ \cup R_\mu^-. \\
\label{eqft'Rl-mu}
f_t'(w)
&= \bigg[ \int_a^{t-4\xi} + \int_{t+4\xi}^b \bigg] \frac{\mu[dx]}{w-x}
- \bigg[ \int_{\chi+\eta-1}^{t-4\xi} + \int_{t+4\xi}^\chi \bigg] \frac{dx}{w-x}
\hspace{.5cm} \text{when } t \in R_{\l-\mu}.
\end{align}

Now, we use the above to inspire the definition of natural
non-asymptotic functions which have a similar root behaviour
in $B(t,4\xi)$. First, fix $\e>0$ sufficiently small such
that equation (\ref{eqtRmuRlmu}) is satisfied.
Next, fix the above $\xi>0$ sufficiently small
such that the following are also satisfied:
\begin{align}
\nonumber
&(t-4\xi,t+4\xi) \subset R_\mu^+(\e)
= \{s \in R_\mu^+ : (s-\e,s+\e) \subset R_\mu^+\}
\text{ when } t \in R_\mu^+. \\
\label{eqxi4}
&(t-4\xi,t+4\xi) \subset R_{\l-\mu}(\e)
= \{s \in R_{\l-\mu} : (s-\e,s+\e) \subset R_{\l-\mu}\}
\text{ when } t \in R_{\l-\mu}. \\
\nonumber
&(t-4\xi,t+4\xi) \subset R_\mu^-(\e)
= \{s \in R_\mu^- : (s-\e,s+\e) \subset R_\mu^-\}
\text{ when } t \in R_\mu^-.
\end{align}
Next, define $\mu_n$ as in equation (\ref{eqWeakmun}),
and recall that $\mu_n \to \mu$ weakly. Note that
$\supp(\mu_n) \subset P_n \cup \text{cl}(R_{\l-\mu}(\e))$,
where $P_n = \frac1n \{x_1,\ldots,x_n\}$. Therefore
$\supp(\mu_n) \subset (a-\e,b+\e)$, since $\supp(\mu) \subset [a,b]$
(see assumption \ref{assWeakConv}), since $d(P_n, \supp(\mu)) \to 0$
(see assumption \ref{assIsol}), and since $R_{\l-\mu}(\e) \subset (a,b)$
(indeed, $R_{\l-\mu}(\e) \subset R_{\l-\mu} = \R \setminus \supp(\l-\mu)
\subset \supp(\mu)^\circ \subset (a,b)$). Next, define
$(\chi_n,\eta_n)$ as in definition \ref{defEdgeNonAsy}, and recall that
$(\chi_n,\eta_n) \to (\chi,\eta)$ (see equation (\ref{eqNonAsyEdge})).
Finally, inspired by equations (\ref{eqft'Rmu}, \ref{eqft'Rl-mu}), define:
\begin{align}
\label{eqftn'Rmu}
f_{t,n}'(w)
&:= \int_{a-\e}^{b+\e} \frac{\mu_n[dx]}{w-x}
- \int_{\chi_n+\eta_n-1}^{\chi_n} \frac{dx}{w-x}
\hspace{.5cm} \text{when } t \in R_\mu^+ \cup R_\mu^-. \\
\label{eqftn'Rl-mu}
f_{t,n}'(w)
&:= \bigg[ \int_{a-\e}^{t-4\xi} + \int_{t+4\xi}^{b+\e} \bigg] \frac{\mu_n[dx]}{w-x}
- \bigg[ \int_{\chi_n+\eta_n-1}^{t-4\xi} + \int_{t+4\xi}^{\chi_n} \bigg] \frac{dx}{w-x}
\hspace{.5cm} \text{when } t \in R_{\l-\mu}.
\end{align}
The function $f_{t,n}$ is unimportant and left unspecified. Note that
these functions are well-defined and analytic in $B(t,4\xi)$. Indeed,
when $t \in R_\mu^+$, the second term on the RHS of equation
(\ref{eqftn'Rmu}) is well-defined and analytic since
$(\chi_n,\eta_n) \to (\chi,\eta)$ and $(t-4\xi,t+4\xi) \subset (\chi,+\infty)$
(see equation (\ref{eqxi1})). Moreover, when $t \in R_\mu^+$, note that
$(t-4\xi,t+4\xi) \subset \R \setminus \supp(\mu_n)$, since
$\supp(\mu_n) \subset P_n \cup \text{cl}(R_{\l-\mu}(\e))$, since
$(t-4\xi,t+4\xi) \subset R_\mu^+(\e) \subset \R \setminus P_n$ (see
equations (\ref{eqPnHnlarge}, \ref{eqxi4})), and since
$(t-4\xi,t+4\xi) \subset R_\mu^+(\e) \subset \R \setminus \text{cl}(R_{\l-\mu}(\e))$
(see equations (\ref{eqtRmuRlmu}, \ref{eqxi4})).
Thus the first term on the RHS of equation (\ref{eqftn'Rmu})
is also well-defined and analytic in $B(t,4\xi)$. Similarly the
terms on the RHS of equation (\ref{eqftn'Rmu})
are well-defined and analytic in $B(t,4\xi)$ when $t \in R_\mu^-$, and
the terms on the RHS of equation (\ref{eqftn'Rl-mu})
are well-defined and analytic in $B(t,4\xi)$ when $t \in R_{\l-\mu}$.
Moreover:
\begin{lem}
\label{lemftn'}
$f_{t,n}'(t) = f_{t,n}''(t) = 0$. Moreover, $f_{t,n}'''(t) \to f_t'''(t)$,
where $f_t'''(t) \neq 0$ (see equation (\ref{equnrnvnsn})).
Finally, $t$ is the unique root of $f_{t,n}'$ in $B(t,\xi)$.
\end{lem}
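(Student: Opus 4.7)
First, I would note that for $w \in B(t,4\xi)$, both (\ref{eqftn'Rmu}) and (\ref{eqftn'Rl-mu}) can be written uniformly as
\begin{equation*}
f_{t,n}'(w) \;=\; C_n(w) - \int_{\chi_n+\eta_n-1}^{\chi_n} \frac{dx}{w-x};
\end{equation*}
in the $R_{\l-\mu}$ case this is seen by observing that $\mu_n = \l$ on $(t-4\xi,t+4\xi) \subset R_{\l-\mu}(\e)$, so the two artificial cutoffs in (\ref{eqftn'Rl-mu}) cancel, with $C_n$ and the second integral interpreted via the analytic extensions (\ref{eqC'Rl-mu}). Differentiating gives
\begin{equation*}
f_{t,n}''(w) = C_n'(w) + \frac{1}{w-\chi_n} - \frac{1}{w-\chi_n-\eta_n+1}, \quad f_{t,n}'''(w) = C_n''(w) - \frac{1}{(w-\chi_n)^2} + \frac{1}{(w-\chi_n-\eta_n+1)^2}.
\end{equation*}
From definition \ref{defEdgeNonAsy}, direct algebra yields $t-\chi_n = -(e^{C_n(t)}-1)/(e^{C_n(t)} C_n'(t))$ and $t-\chi_n-\eta_n+1 = -(e^{C_n(t)}-1)/C_n'(t)$, whence $(t-\chi_n)/(t-\chi_n-\eta_n+1) = e^{-C_n(t)}$ and $(t-\chi_n)^{-1} - (t-\chi_n-\eta_n+1)^{-1} = -C_n'(t)$. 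Substituting into the above formulas gives $f_{t,n}'(t) = C_n(t) + \log(e^{-C_n(t)}) = 0$ and $f_{t,n}''(t) = C_n'(t) - C_n'(t) = 0$.

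Next, applying the same algebraic identities asymptotically (with $C,\chi,\eta$ in place of $C_n,\chi_n,\eta_n$) gives $f_t'''(t) = C''(t) - (t-\chi)^{-2} + (t-\chi-\eta+1)^{-2}$, so the convergence $f_{t,n}'''(t) \to f_t'''(t)$ reduces to $(\chi_n,\eta_n) \to (\chi,\eta)$, which is (\ref{eqNonAsyEdge}), together with $C_n''(t) \to C''(t)$. The latter I would establish from weak convergence $\mu_n \to \mu$: when $t \in R_\mu^+ \cup R_\mu^-$, the integrand $x \mapsto (t-x)^{-3}$ is continuous and bounded on $\supp(\mu) \cup \bigcup_n \supp(\mu_n)$, which is uniformly separated from $t$ by (\ref{eqPnHnlarge}) and (\ref{eqxi4}); when $t \in R_{\l-\mu}$, the extension formula (\ref{eqC'Rl-mu}) applied on a fixed sub-interval $I \subset R_{\l-\mu}(\e)$ containing $t$ reduces the claim to weak convergence of a bounded continuous integral over $[a,b] \setminus I$. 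Since $t$ is a root of $f_t'$ of multiplicity exactly $2$, we have $f_t'''(t) \neq 0$, and hence $f_{t,n}'''(t) \neq 0$ for all $n$ sufficiently large.

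For uniqueness, the same weak-convergence estimate, applied uniformly in $w \in \partial B(t,\xi)$, yields $\sup_{w \in \partial B(t,\xi)} |f_{t,n}'(w) - f_t'(w)| \to 0$; combined with $\inf_{w \in \partial B(t,\xi)} |f_t'(w)| > 0$ (from (\ref{eqxi}) and the extreme value theorem), Rouch\'{e}'s theorem implies that $f_{t,n}'$ has exactly $2$ roots in $B(t,\xi)$, counted with multiplicity. Since $f_{t,n}'(t) = f_{t,n}''(t) = 0$ but $f_{t,n}'''(t) \neq 0$ eventually, $t$ is a root of multiplicity exactly $2$, accounting for both roots, so $t$ is the unique root of $f_{t,n}'$ in $B(t,\xi)$. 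The most delicate step is the analytic-continuation argument in the $R_{\l-\mu}$ case, but the representation (\ref{eqC'Rl-mu}) reduces it cleanly to standard weak-convergence estimates over compact sets bounded away from $t$.
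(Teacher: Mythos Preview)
Your approach mirrors the paper's: verify $f_{t,n}'(t)=f_{t,n}''(t)=0$ from the defining relations of $(\chi_n,\eta_n)$, deduce $f_{t,n}'''(t)\to f_t'''(t)$ from weak convergence $\mu_n\to\mu$ together with $(\chi_n,\eta_n)\to(\chi,\eta)$, and then apply Rouch\'e for uniqueness. Your algebra for $t-\chi_n$, $t-\chi_n-\eta_n+1$ and hence for $f_{t,n}''(t)=0$ and the third-derivative convergence is correct and matches the paper.

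There is, however, one genuine gap in the $R_{\l-\mu}$ case. Your unified formula $f_{t,n}'(w)=C_n(w)-\int_{\chi_n+\eta_n-1}^{\chi_n}(w-x)^{-1}\,dx$ is valid on $\mathbb H$, but at $w=t$ neither term is individually defined: equation (\ref{eqC'Rl-mu}) supplies analytic extensions only of $e^{C_n}$ and $C_n'$, \emph{not} of $C_n$ itself, which has a genuine jump across $R_{\l-\mu}$. Hence the line ``$f_{t,n}'(t)=C_n(t)+\log(e^{-C_n(t)})=0$'' is formal: $C_n(t)$ is not a number, and $e^{-C_n(t)}<0$ so the logarithm requires a branch choice you have not specified. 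The paper avoids this by replacing $C_n$ with $B_n$, the Cauchy transform of $\mu_n$ restricted away from a fixed neighbourhood of $t$, and writing $f_{t,n}'$ as $B_n$ plus four honest logarithms, each analytic at $t$; the relation $e^{C_n(t)}=-e^{B_n(t)}$ then makes the verification routine. A quicker repair within your framework: exponentiate your identity on $\mathbb H$ to obtain $e^{f_{t,n}'(w)}=e^{C_n(w)}\,(w-\chi_n)/(w-\chi_n-\eta_n+1)$, observe that the right-hand side \emph{does} extend analytically to $t$ (by (\ref{eqC'Rl-mu}) and $t\notin\{\chi_n,\chi_n+\eta_n-1\}$) and equals $e^{C_n(t)}\cdot e^{-C_n(t)}=1$ there by your ratio identity, whence $f_{t,n}'(t)\in 2\pi i\Z$; since (\ref{eqftn'Rl-mu}) manifestly gives $f_{t,n}'(t)\in\R$, conclude $f_{t,n}'(t)=0$.
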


\begin{proof}
First suppose that $t \in R_{\mu}^+$. Recall, definition
\ref{defEdgeNonAsy} gives,
\begin{equation*}
\chi_n =  t + \frac{e^{C_n(t)}-1}{e^{C_n(t)} C_n'(t)}
\hspace{0.5cm} \text{and} \hspace{0.5cm}
\chi_n+\eta_n-1 =  t + \frac{e^{C_n(t)}-1}{C_n'(t)},
\end{equation*}
where $C_n$ is the Cauchy transform of $\mu_n$ given in equation
(\ref{eqCauTransn}). Recall that $(\chi_n,\eta_n) \to (\chi,\eta)$.
Equation (\ref{eqxi1}) thus gives $(t-4\xi,t+4\xi) \subset (\chi_n,+\infty)$.
Equations (\ref{eqCauTransn}, \ref{eqftn'Rmu}) then give
$f_{t,n}'(w) = C_n(w) + \log(w - \chi_n) - \log(w-\chi_n-\eta_n+1)$
for all $w \in B(t,4\xi)$, where the logarithms use $(-\infty,0)$ as
the branch cut. Finally, taking $w=t$, the above expressions give
$f_{t,n}'(t) = f_{t,n}''(t) = 0$. Moreover, recalling that
$\mu_n \to \mu$ weakly and $(\chi_n,\eta_n) \to (\chi,\eta)$, equations
(\ref{eqft'Rmu}, \ref{eqftn'Rmu}) give $f_{t,n}'''(t) \to f_t'''(t) \neq 0$.
Finally, using those equations, we can proceed as in part (iii) in the proof
of lemma \ref{lemRootsNonAsy1} to show that $f_{t,n}'$ has $2$ roots in
$B(t,\xi)$. Thus $t$ is the unique root of $f_{t,n}'$ in $B(t,\xi)$.
This proves the result when $t \in R_\mu^+$.

Next suppose that $t \in R_{\l-\mu}$. Note that the above expressions
for $\chi_n$ and $\chi_n+\eta_n-1$ also hold in this case, where now
$e^{C_n(t)}$ and $C_n'(t)$ are defined by analytic extensions. More
exactly, since $\mu_n = \l$ in $(t-4\xi,t+4\xi)$ (see equations
(\ref{eqWeakmun}, \ref{eqxi4})), lemma 2.2 of \cite{Duse15a} gives,
\begin{equation*}
e^{C_n(w)} = e^{B_n(w)} \left( \frac{w-t+4\xi}{w-t-4\xi} \right)
\hspace{0.25cm} \text{and} \hspace{0.25cm}
C_n'(w) = B_n'(w) - \frac1{w-t-4\xi} + \frac1{w-t+4\xi},
\end{equation*}
for all $w \in B(t,4\xi)$, where
$B_n(w) := \int_{[a,b] \setminus (t-4\xi,t+4\xi)} \frac{\mu_n[dx]}{w-x}$.
Recall that $(\chi_n,\eta_n) \to (\chi,\eta)$. Equation (\ref{eqxi1})
thus gives $(t-4\xi,t+4\xi) \subset (\chi_n+\eta_n-1,\chi_n)$. Equation
(\ref{eqftn'Rl-mu}) then gives $f_{t,n}'(w) = B_n(w) + \log(w-\chi_n)
- \log(w-t-2\xi) + \log(w-t+2\xi) - \log(w-\chi_n-\eta_n+1)$ for all
$w \in B(t,2\xi)$, where the first and second logarithms use $(0,+\infty)$
as the branch cut, and the third and fourth logarithms use $(-\infty,0)$ as
the branch cut. Finally, taking $w=t$, the above expressions give
$f_{t,n}'(t) = f_{t,n}''(t) = 0$. Moreover, recalling that $\mu_n \to \mu$
weakly and $(\chi_n,\eta_n) \to (\chi,\eta)$, equations
(\ref{eqft'Rl-mu}, \ref{eqftn'Rl-mu}) give $f_{t,n}'''(t) \to f_t'''(t) \neq 0$.
Finally, using those equations, we can proceed as in part (iii) in the proof
of lemma \ref{lemRootsNonAsy1} to show that $f_{t,n}'$ has $2$ roots in
$B(t,\xi)$. Thus $t$ is the unique root of $f_{t,n}'$ in $B(t,\xi)$. This
proves the result when $t \in R_{\l-\mu}$. Similarly when $t \in R_\mu^-$.
\end{proof}

Next, we write similar convenient expressions for $f_n'$. Recall the
definition for $f_n$ given in equation (\ref{eqfn}). Note, equations
(\ref{eqPnHn}, \ref{eqPnHnlarge}, \ref{eqxi4}) give
$P_n = \frac1n \{x_1,x_2,\ldots,x_n\}$,
$(t-4\xi,t+4\xi) \cap P_n = \emptyset$ when $t \in R_\mu^+ \cup R_\mu^-$,
and $(t-4\xi,t+4\xi) \cap \frac{\Z}n \subset P_n$ when $t \in R_{\l-\mu}$.
Next define $V_n := \frac1n \{v_n+s_n-n,v_n+s_n-n+1,\ldots,v_n\}$. Then,
since $\frac1n (v_n,s_n) \to (\chi,\eta)$ (see equation (\ref{equnrnvnsn})),
equation (\ref{eqxi1}) gives $(t-4\xi,t+4\xi) \cap V_n = \emptyset$ when
$t \in R_\mu^+ \cup R_\mu^-$, and $(t-4\xi,t+4\xi) \cap \frac{\Z}n \subset V_n$
when $t \in R_{\l-\mu}$. Finally, equation (\ref{eqfn}) and the above
observations imply the following, which are well-defined and analytic for all
$w \in B(t,4\xi)$:
\begin{align}
\label{eqfn'Rmu}
f_n'(w)
&= \frac1n \sum_{x \in P_n} \frac1{w - x} -
\frac1n \sum_{x \in V_n} \frac1{w - x}
\hspace{.5cm} \text{when } t \in R_\mu^+ \cup R_\mu^-. \\
\label{eqfn'Rl-mu}
f_n'(w)
&= \frac1n \sum_{x \in P_n ; x \not\in (t-4\xi,t+4\xi)} \frac1{w - x} -
\frac1n \sum_{x \in V_n ; x \not\in (t-4\xi,t+4\xi)} \frac1{w - x}
\hspace{.5cm} \text{when } t \in R_{\l-\mu}.
\end{align}

Finally recall that the sequences $\{(u_n,r_n)\}_{n\ge1} \subset \Z^2$ and
$\{(v_n,s_n)\}_{n\ge1} \subset \Z^2$ of equations
(\ref{equnrnvnsn2}, \ref{equnrnvnsn3}), depend on
sequences $\{m_n\}_{n\ge1} \subset \R^2$ and $\{p_n\}_{n\ge1} \subset \R^2$
which we have yet to define. We now define these, along with some other
useful sequences:
\begin{definition}
\label{defmnpn}
First, recall that $f_{t,n}'''(t) \to f_t'''(t) \neq 0$
(see lemma \ref{lemftn'}), and define $q_n := q_n(t)$ such that,
\begin{equation*}
\tfrac16 q_n^3 f_{t,n}'''(t) = \tfrac13.
\end{equation*}
Next, define $q_{1,n} := q_{1,n}(t)$ and $q_{2,n} := q_{2,n}(t)$ such that,
\begin{equation*}
q_n q_{1,n} = \tfrac12 q_n^2 q_{2,n} = 1.
\end{equation*}
Finally, recall that $e^{C_n(t)} \to e^{C(t)} \not\in \{0,1\}$
(see equation (\ref{eqNonAsyEdge}) and lemma \ref{lemAnalExt}),
and $\chi_n \to \chi \neq t$ (see equation (\ref{eqNonAsyEdge})),
and define $m_n := m_n(t)$ and $p_n := p_n(t)$ such that,
\begin{equation*}
q_{1,n}
= - \frac{p_n ((e^{C_n(t)}-1)^2 + 1)}{(t - \chi_n) e^{C_n(t)}}
\hspace{.5cm} \text{and} \hspace{.5cm}
q_{2,n}
= \frac{m_n (e^{C_n(t)} - 1)}{(t - \chi_n)^2 e^{C_n(t)}}.
\end{equation*}
\end{definition}
Note that each $\{m_n\}_{n\ge1}$, $\{p_n\}_{n\ge1}$,
$\{q_n\}_{n\ge1}$, $\{q_{1,n}\}_{n\ge1}$, $\{q_{2,n}\}_{n\ge1}$, are
convergent sequences of real-numbers with non-zero limits. Moreover:
\begin{lem}
\label{lemNonAsyRoots}
Assume that
$\{(u_n,r_n)\}_{n\ge1} \subset \Z^2$ and $\{(v_n,s_n)\}_{n\ge1} \subset \Z^2$
are chosen as in equations (\ref{equnrnvnsn2}, \ref{equnrnvnsn3}). Then:
\begin{enumerate}
\item
$f_n'(t) = n^{-\frac23} s \; q_{1,n} + O(n^{-1})$ and
$\tilde{f}_n'(t) = n^{-\frac23} r \; q_{1,n} + O(n^{-1})$.
\item
$f_n''(t) = n^{-\frac13} v \; q_{2,n} + O(n^{-\frac23})$ and
$\tilde{f}_n''(t) = n^{-\frac13} u \; q_{2,n} + O(n^{-\frac23})$.
\item
$f_n'''(t) = f_{t,n}'''(t) + O(n^{-\frac13})$ and
$\tilde{f}_n'''(t) = f_{t,n}'''(t) + O(n^{-\frac13})$.
\item
$t = t_{1,n} + O(n^{-\frac13}) = t_{2,n} + O(n^{-\frac13})$
and
$t = \tilde{t}_{1,n} + O(n^{-\frac13}) = \tilde{t}_{2,n} + O(n^{-\frac13})$.
\end{enumerate}
Above, $u, v, r, s$ are the parameters in equations (\ref{equnrnvnsn2}, \ref{equnrnvnsn3}),
$\{t_{1,n},t_{2,n}\}$ denotes the set of roots of $f_n'$ in
$B(t,\xi)$ (see lemma \ref{lemRootsNonAsy1}), and $\{\tilde{t}_{1,n},\tilde{t}_{2,n}\}$
denotes the analogous roots of $\tilde{f}_n'$.
\end{lem}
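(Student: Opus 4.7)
The plan is to compare each of $f_n^{(k)}(t)$ ($k=1,2,3$) with the smoothed non-asymptotic counterpart $f_{t,n}^{(k)}(t)$ defined in (\ref{eqftn'Rmu}) (or (\ref{eqftn'Rl-mu}) when $t\in R_{\l-\mu}$), exploiting the fact that $f_{t,n}'(t)=f_{t,n}''(t)=0$ and $f_{t,n}'''(t)\to f_t'''(t)\neq 0$ (lemma \ref{lemftn'}). Using (\ref{eqfn'Rmu}) to express $f_n'(t)$, the difference $f_n'(t)-f_{t,n}'(t)$ splits into two pieces: a Riemann-sum discrepancy between $(1/n)\sum_{x\in P_n}1/(t-x)$ and $\int\mu_n[dx]/(t-x)$, which by (\ref{eqPnHnlarge}) reduces to a Riemann approximation of $\int_{R_{\l-\mu}(\e)}dx/(t-x)$ by $(1/n)\sum_{(\Z/n)\cap R_{\l-\mu}(\e)}1/(t-x)$; standard midpoint estimates and boundary corrections bound it by $O(n^{-1})$; and an endpoint discrepancy between $(1/n)\sum_{x\in V_n}1/(t-x)$ and $\int_{\chi_n+\eta_n-1}^{\chi_n}dx/(t-x)$. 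The case $t\in R_{\l-\mu}$ is identical, the exclusion of $(t-4\xi,t+4\xi)$ being the same on both sides and contributing no further endpoint shift.

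Setting $B:=v_n/n+1/(2n)-\chi_n$ and $A:=(v_n+s_n-n)/n-1/(2n)-(\chi_n+\eta_n-1)$, equations (\ref{equnrnvnsn3}) combined with $\mathbf{x}_n=(1,e^{C_n(t)}-1)$ and $\mathbf{y}_n=(e^{C_n(t)}-1,-1)$ yield
\[ B=n^{-1/3}m_n v+n^{-2/3}p_n s(e^{C_n(t)}-1)+O(n^{-1}),\quad A=n^{-1/3}m_n v\,e^{C_n(t)}+n^{-2/3}p_n s(e^{C_n(t)}-2)+O(n^{-1}). \]
Taylor-expanding the two resulting logarithms would give
\[ f_n'(t) = \frac{A}{t-\chi_n-\eta_n+1} - \frac{B}{t-\chi_n} + \frac{A^2}{2(t-\chi_n-\eta_n+1)^2} - \frac{B^2}{2(t-\chi_n)^2} + O(n^{-1}). \]
Because $f_{t,n}'(t)=0$, definition \ref{defEdgeNonAsy} forces the identities $1/(t-\chi_n)=-e^{C_n(t)}C_n'(t)/(e^{C_n(t)}-1)$ and $1/(t-\chi_n-\eta_n+1)=-C_n'(t)/(e^{C_n(t)}-1)$; the linear part then equals $[C_n'(t)/(e^{C_n(t)}-1)]\,(-A+Be^{C_n(t)})$. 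Its $n^{-1/3}$ piece vanishes identically (this is the tangency of $\mathbf{x}_n$ to the non-asymptotic edge), and its $n^{-2/3}$ piece rearranges to $p_n s[(e^{C_n(t)}-1)^2+1]$; the quadratic terms $A^2/(\cdots)^2$ and $B^2/(\cdots)^2$ contribute two equal $n^{-2/3}$ quantities that cancel exactly. Matching with the definition of $q_{1,n}$ produces $f_n'(t)=n^{-2/3}s\,q_{1,n}+O(n^{-1})$.

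Parts (2) and (3) follow the same scheme after differentiating in $w$. For $f_n''(t)$ the endpoint shift becomes $B/(t-\chi_n)^2-A/(t-\chi_n-\eta_n+1)^2+O(n^{-2/3})$; the $n^{-1/3}$ terms no longer cancel, and a direct computation using the identities above together with $q_{2,n}=m_n e^{C_n(t)}C_n'(t)^2/(e^{C_n(t)}-1)$ yields the stated $n^{-1/3}v\,q_{2,n}+O(n^{-2/3})$. For $f_n'''(t)$ the analogous endpoint shift is $O(n^{-1/3})$ and the Riemann discrepancy is still $O(n^{-1})$, producing $f_n'''(t)=f_{t,n}'''(t)+O(n^{-1/3})$. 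The corresponding statements for $\tilde{f}_n$ follow by an identical argument with $(u_n,r_n),u,r$ replacing $(v_n,s_n),v,s$; the slightly different boundary convention of $\tilde{S}_{2,n}$ contributes only an $O(n^{-1})$ adjustment absorbed in the remainder.

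For (4) I would Taylor-expand $f_n'$ around $t$: writing $h:=t_{j,n}-t$,
\[ 0=f_n'(t_{j,n})=f_n'(t)+h\,f_n''(t)+\tfrac12 h^2 f_n'''(t)+O(h^3), \]
where the $O(h^3)$ is uniform in $n$ because $|f_n^{(4)}|$ is bounded on $B(t,\xi)$ by (\ref{eqIntervaln}). Substituting parts (1)--(3), together with the fact that $f_n'''(t)$ is bounded away from $0$ (since $f_t'''(t)\neq 0$), forces the inequality $h^2\le C\bigl[n^{-2/3}+|h|\,n^{-1/3}\bigr]$ for some $C>0$, hence $|h|=O(n^{-1/3})$; by lemma \ref{lemRootsNonAsy1} both roots lie in $B(t,\xi)$ and so both satisfy this bound, and the same argument applies to $\tilde{t}_{j,n}$. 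The main technical obstacle throughout is the exact cancellation underlying part (1), where both the $n^{-1/3}$ linear and $n^{-2/3}$ quadratic contributions must vanish; these cancellations encode the geometric meaning of the tangent--normal decomposition at the non-asymptotic edge and are precisely what allow the scaling of (\ref{equnrnvnsn2},\ref{equnrnvnsn3}) to produce Airy-type asymptotics downstream.
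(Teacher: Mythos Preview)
Your treatment of parts (1)--(3) is essentially the paper's own argument: compare $f_n^{(k)}(t)$ with $f_{t,n}^{(k)}(t)$, reduce the difference to a Riemann-sum error of order $O(n^{-1})$ plus an endpoint shift governed by $v_n/n-\chi_n$ and $(v_n+s_n)/n-\chi_n-\eta_n$, and then invoke the identity $t-\chi_n-\eta_n+1=(t-\chi_n)e^{C_n(t)}$ together with definition~\ref{defmnpn}. The paper carries this out via the logarithmic form (equation~(\ref{eqftn'fn'Rmu})) and leaves the final Taylor expansion implicit; you spell out the linear and quadratic cancellations explicitly, which is fine and arguably clearer. Your half-integer shifts $\pm 1/(2n)$ in $A,B$ are harmless since they sit inside the $O(n^{-1})$ remainder.

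For part (4) you take a genuinely different route. The paper proves (4) by a Rouch\'e argument: it bounds $|f_{t,n}'(w)|\ge c_1\xi_n^2$ on $\partial B(t,\xi_n)$ and $|f_{t,n}'(w)-f_n'(w)|\le c_2(\xi_n n^{-1/3}+n^{-2/3})$ uniformly on $\text{cl}(B(t,\xi_n))$, then chooses $\xi_n$ of order $n^{-1/3}$ so that Rouch\'e applies and forces both roots of $f_n'$ into $B(t,\xi_n)$. Your argument instead Taylor-expands $f_n'$ at $t$, feeds in the bounds from (1)--(3) and the uniform control of $f_n^{(4)}$ on $B(t,\xi)$ (available from lemma~\ref{lemfnftConv}), and solves the resulting quadratic inequality $|h|^2\lesssim n^{-2/3}+|h|n^{-1/3}$ directly. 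This is correct and more elementary; it works because lemma~\ref{lemRootsNonAsy1} already localises the roots in $B(t,\xi)$, so the Taylor remainder is controlled a priori. The paper's Rouch\'e approach has the minor advantage of simultaneously re-establishing that there are exactly two roots near $t$, but since that is already known, your shortcut loses nothing.
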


\begin{proof}
We prove the results for $f_n$, and state
that the results for $\tilde{f}_n$ follow similarly.
Consider (1,2,3) when $t \in R_\mu^+$. First note, equations
(\ref{eqWeakmun}, \ref{eqftn'Rmu}, \ref{eqfn'Rmu}) give,
\begin{align*}
f_{t,n}'(w) - f_n'(w)
&= \int_{R_{\l-\mu}(\e)} \frac{dx}{w-x}
- \frac1n \sum_{x \in P_n; x \in R_{\l-\mu}(\e)} \frac1{w - x} \\
&- \int_{\chi_n+\eta_n-1}^{\chi_n} \frac{dx}{w-x}
+ \frac1n \sum_{x \in V_n} \frac1{w - x},
\end{align*}
for all $w \in B(t,4\xi)$, where
$V_n = \frac1n \{v_n+s_n-n,v_n+s_n-n+1,\ldots,v_n\}$. Next recall (see
equation (\ref{eqxi4})) that $(t-4\xi,t+4\xi) \subset R_{\mu}^+(\e)
\subset \R \setminus R_{\l-\mu}(\e)$, and (see equation (\ref{eqPnHnlarge}))
that $\frac{\Z}n \cap R_{\l-\mu}(\e) \subset P_n$. Riemann approximations
thus give,
\begin{align}
\label{eqftn'fn'Rmu}
f_{t,n}'(w) - f_n'(w)
&= - \int_{\chi_n+\eta_n-1}^{\chi_n} \frac{dx}{w-x}
+ \int_{\frac{v_n}n+\frac{s_n}n-1}^{\frac{v_n}n} \frac{dx}{w-x}
+ O (n^{-1}), \\
\nonumber
f_{t,n}''(w) - f_n''(w)
&= \int_{\chi_n+\eta_n-1}^{\chi_n} \frac{dx}{(w-x)^2}
- \int_{\frac{v_n}n+\frac{s_n}n-1}^{\frac{v_n}n} \frac{dx}{(w-x)^2}
+ O (n^{-1}), \\
\nonumber
f_{t,n}'''(w) - f_n'''(w)
&= -\int_{\chi_n+\eta_n-1}^{\chi_n} \frac{2dx}{(w-x)^3}
+ \int_{\frac{v_n}n+\frac{s_n}n-1}^{\frac{v_n}n} \frac{2dx}{(w-x)^3}
+ O (n^{-1}),
\end{align}
uniformly for $w \in B(t,\xi)$. Recall that $\frac1n(v_n,s_n) \to (\chi,\eta)$
and $(\chi_n,\eta_n) \to (\chi,\eta)$, and $t \in (\chi,+\infty)$. Therefore,
\begin{align*}
f_{t,n}'(t) - f_n'(t)
&= - \log \bigg( \frac{t - \frac{v_n}n}{t - \chi_n} \bigg)
+ \log \bigg( \frac{t - \frac{v_n}n - \frac{s_n}n + 1}{t - \chi_n - \eta_n + 1} \bigg)
+ O (n^{-1}), \\
f_{t,n}''(t) - f_n''(t)
&= - \frac{\frac{v_n}n - \chi_n}{(t - \chi_n) (t - \frac{v_n}n)}
+ \frac{\frac{v_n}n + \frac{s_n}n - \chi_n - \eta_n}
{(t - \chi_n - \eta_n + 1) (t - \frac{v_n}n - \frac{s_n}n + 1)}
+ O (n^{-1}), \\
f_{t,n}'''(t) - f_n'''(t)
&= \frac{(\frac{v_n}n - \chi_n) ((t - \chi_n) + (t - \frac{v_n}n))}
{(t - \chi_n)^2 (t - \frac{v_n}n)^2} \\
&- \frac{(\frac{v_n}n + \frac{s_n}n - \chi_n - \eta_n)
((t - \chi_n - \eta_n + 1) + (t - \frac{v_n}n - \frac{s_n}n + 1))}
{(t - \chi_n - \eta_n + 1)^2 (t - \frac{v_n}n - \frac{s_n}n + 1)^2}
+ O (n^{-1}),
\end{align*}
where $\log$ now represents the natural logarithm. Equation
(\ref{equnrnvnsn3}) thus gives,
\begin{align*}
f_{t,n}'(t) - f_n'(t)
&= - \log \bigg( 1 - \frac{n^{-\frac13} m_n v + n^{-\frac23} p_n s
\; (e^{C_n(t)}-1) + O(n^{-1})}{t - \chi_n} \bigg) \\
&+ \log \bigg( 1 - \frac{n^{-\frac13} m_n v \; e^{C_n(t)}
+ n^{-\frac23} p_n s \; (e^{C_n(t)}-2) + O(n^{-1})}{t - \chi_n - \eta_n + 1} \bigg)
+ O (n^{-1}), \\
f_{t,n}''(t) - f_n''(t)
&= - \frac{n^{-\frac13} m_n v + O(n^{-\frac23})}
{(t - \chi_n) ((t - \chi_n) + O (n^{-\frac13}))} \\
&+ \frac{n^{-\frac13} m_n v \; e^{C_n(t)} + O(n^{-\frac23})}
{(t - \chi_n - \eta_n + 1) ((t - \chi_n - \eta_n + 1) + O (n^{-\frac13}))}
+ O (n^{-1}), \\
f_{t,n}'''(t) - f_n'''(t)
&= O (n^{-\frac13}),
\end{align*}
The third equation gives (3) when $t \in R_\mu^+$. Also note,
that $t - \chi_n - \eta_n + 1 = (t - \chi_n) e^{C_n(t)}$ (see definition
\ref{defEdgeNonAsy}). The first and second equations thus give,
\begin{align*}
f_{t,n}'(t) - f_n'(t)
&= - \log \bigg( 1 - \frac{n^{-\frac13} m_n v}{t - \chi_n} -
\frac{n^{-\frac23} p_n s \; (e^{C_n(t)}-1)}{t - \chi_n}  + O(n^{-1}) \bigg) \\
&+ \log \bigg( 1 - \frac{n^{-\frac13} m_n v}{t - \chi_n}
- \frac{n^{-\frac23} p_n s \; (e^{C_n(t)}-2)}{(t - \chi_n) e^{C_n(t)}} + O(n^{-1}) \bigg)
+ O (n^{-1}), \\
f_{t,n}''(t) - f_n''(t)
&= - \frac{n^{-\frac13} m_n v}{(t - \chi_n)^2}
+ \frac{n^{-\frac13} m_n v}{(t - \chi_n)^2 e^{C_n(t)}} + O (n^{-\frac23}).
\end{align*}
Then, since $f_{t,n}''(t) = 0$ (see lemma \ref{lemftn'})
the second equation and definition \ref{defmnpn} give (2) when $t \in R_\mu^+$.
Also, since $f_{t,n}'(t) = 0$
(see lemma \ref{lemftn'}) the first equation, definition \ref{defmnpn} and a
Taylor expansion give (1) when $t \in R_\mu^+$. We can similarly prove
(1,2,3) when $t \in R_\mu^-$.

Consider (1,2,3) when $t \in R_{\l-\mu}$. First recall
(see equations (\ref{eqPnHnlarge}, \ref{eqxi4})) that
$(t-4\xi,t+4\xi) \subset R_{\l-\mu}(\e)$
and $\frac{\Z}n \cap R_{\l-\mu}(\e) \subset P_n$. Also,
recall (see equation (\ref{eqxi1})) that
$(t-4\xi,t+4\xi) \subset (\chi+\eta-1,\chi)$, and that
$\frac1n(v_n,s_n) \to (\chi,\eta)$ and
$(\chi_n,\eta_n) \to (\chi,\eta)$. Next note, equations
(\ref{eqWeakmun}, \ref{eqftn'Rl-mu}, \ref{eqfn'Rl-mu}) give,
\begin{align*}
f_{t,n}'(w) - f_n'(w)
&= \int_{R_{\l-\mu}(\e) \setminus (t-4\xi,t+4\xi)} \frac{dx}{w-x}
- \frac1n \sum_{x \in P_n; x \in R_{\l-\mu}(\e) \setminus (t-4\xi,t+4\xi)} \frac1{w - x} \\
\nonumber
&- \int_{[\chi_n+\eta_n-1,\chi_n] \setminus (t-4\xi,t+4\xi)} \frac{dx}{w-x}
+ \frac1n \sum_{x \in V_n ; x \not\in (t-4\xi,t+4\xi)} \frac1{w - x},
\end{align*}
for all $w \in B(t,4\xi)$,
where $V_n = \frac1n \{v_n+s_n-n,v_n+s_n-n+1,\ldots,v_n\}$. Riemann
approximations thus give,
\begin{align}
\label{eqftn'fn'Rl-mu}
f_{t,n}'(w) - f_n'(w)
&= - \int_{[\chi_n+\eta_n-1,\chi_n] \setminus (t-4\xi,t+4\xi)} \frac{dx}{w-x} \\
\nonumber
&\;+ \int_{[\frac{v_n}n+\frac{s_n}n-1, \frac{v_n}n] \setminus (t-4\xi,t+4\xi)} \frac{dx}{w - x}
+ O (n^{-1}),
\end{align}
uniformly for $w \in B(t,\xi)$. We can then proceed similarly to
above to prove (1,2,3) when $t \in R_{\l-\mu}$.

Consider (4). First recall, that $f_n'$ and $f_{t,n}'$
are well-defined and analytic in
$B(t,4\xi)$. Also, lemma \ref{lemftn'} implies that $t$ is a root of
$f_{t,n}'$ of multiplicity $2$, and $t$ is the unique root of
$f_{t,n}'$ in $B(t,\xi)$. We will show that there exists constants
$c_1,c_2>0$ for which, given any $\{\xi_n\}_{n\ge1}$ with
$\xi_n \downarrow 0$,
\begin{equation}
\label{eqlemRoots1}
\inf_{w \in \partial B(t,\xi_n)} |f_{t,n}'(w)| > c_1 \xi_n^2
\hspace{0.3cm} \mbox{and} \hspace{0.2cm}
\sup_{w \in \text{cl}(B(t,\xi_n))} | f_{t,n}'(w) - f_n'(w) |
< c_2 (\xi_n n^{-\frac13} + n^{-\frac23}).
\end{equation}
For clarity we state that $c_1,c_2$
are independent of the choice of $\{\xi_n\}_{n\ge1}$. Thus
there exists a choice of $\{\xi_n\}_{n\ge1}$ with
$\xi_n \sim O(n^{-\frac13})$ for which, for all $n$ sufficiently
large, $|f_{t,n}'(w)| > |f_{t,n}'(w) - f_n'(w)|$ for all
$w \in \partial B(t,\xi_n)$. Part (4) then follows from Rouch\'{e}'s
theorem.

Fix $\{\xi_n\}_{n\ge1}$ with $\xi_n \downarrow 0$. Recall,
lemma \ref{lemftn'} implies that $f_{t,n}'(t) = f_{t,n}''(t) = 0$,
and $f_{t,n}'''(t) \to f_t'''(t) \neq 0$. Also note, equations
(\ref{eqftn'Rmu}, \ref{eqftn'Rl-mu}) give $|f_{t,n}^{(4)}(w)| = O(1)$
uniformly for $w \in B(t,\xi)$. Taylor's theorem thus gives
$f_{t,n}'(w) = \frac12 (w-t)^2 f_{t,n}'''(t) + O(\xi_n^3)$
uniformly for $w \in \text{cl}(B(w,\xi_n))$. Therefore,
\begin{equation*}
f_{t,n}'(w) =  \xi_n^2 \; \frac12 e^{i 2 \text{Arg} (w-t) } f_{t,n}'''(t) + O(\xi_n^3),
\end{equation*}
uniformly for $w \in \partial B(t,\xi_n)$. This proves the first part of equation
(\ref{eqlemRoots1}).

Consider the second part of equation (\ref{eqlemRoots1}) when $t \in R_\mu^+$.
Note, equation (\ref{eqftn'fn'Rmu}) and Taylor's theorem give,
\begin{align*}
f_{t,n}'(w) - f_n'(w)
&= - \log \bigg( \frac{t - \frac{v_n}n}{t - \chi_n} \bigg)
+ \log \bigg( \frac{t - \frac{v_n}n - \frac{s_n}n + 1}{t - \chi_n - \eta_n + 1} \bigg) \\
&+ O \bigg(|w-t| \bigg|\chi_n - \frac{v_n}n \bigg|
+ |w-t| \bigg|\eta_n - \frac{s_n}n\bigg| + n^{-1} \bigg),
\end{align*}
uniformly for $w \in B(t,\xi)$, where $\log$ now represents the
natural logarithm. Proceeding similarly to part (1) then gives,
\begin{equation*}
f_{t,n}'(w) - f_n'(w) = O \left( |w-t| n^{-\frac13} + n^{-\frac23} \right),
\end{equation*}
uniformly for $w \in B(t,\xi)$.
This proves the second part of equation (\ref{eqlemRoots1}) when $t \in R_\mu^+$.
Similarly when $t \in R_\mu^-$. Finally, the second part of equation (\ref{eqlemRoots1})
when $t \in R_{\l-\mu}$ can be shown using similar arguments and equation
(\ref{eqftn'fn'Rl-mu}).
\end{proof}

\subsection{The asymptotic behaviour of $f_n - \tilde{f}_n$}

In this section, we assume assumptions \ref{assWeakConv} and \ref{assIsol},
that equation (\ref{equnrnvnsn}) is satisfied for some fixed
$t \in R_\mu^+ \cup R_{\l-\mu} \cup R_\mu^-$ (a root of $f_t'$ of
multiplicity $2$), and that $\{(u_n,r_n)\}_{n\ge1} \subset \Z^2$ and
$\{(v_n,s_n)\}_{n\ge1} \subset \Z^2$ are chosen as in equations
(\ref{equnrnvnsn2}, \ref{equnrnvnsn3}). Define,
\begin{equation}
\label{eqFnw}
F_n := f_n - \tilde{f}_n.
\end{equation}
This function will be useful for the steepest descent analysis. In this
section, we examine the roots of $F_n'$, and the asymptotic behaviour of
$F_n$ as $n \to \infty$. Note, since much of the analysis of this section
is similar to that of the previous two sections, we do not go into as much
detail here.

First, it is useful to examine the following functions:
Define $G_t, G_{t,n} : \C \setminus \R \to \C$ as,
\begin{align}
\label{eqGt}
G_t(w)
&:= - \log(w - \chi) + e^{C(t)} \log(w - \chi - \eta + 1), \\
\nonumber
G_{t,n}(w)
&:= - \log(w - \chi_n) + e^{C_n(t)} \log(w - \chi_n - \eta_n + 1),
\end{align}
for all $w \in \C \setminus \R$, where $C$ and $C_n$ are defined
in equations (\ref{eqCauTrans}, \ref{eqCauTransn}) respectively,
$(\chi,\eta) = (\chi_\EE(t),\eta_\EE(t))$ and
$(\chi_n,\eta_n) = (\chi_n(t),\eta_n(t))$
(see equation (\ref{equnrnvnsn}) and definition \ref{defEdgeNonAsy}),
and the branch cuts are chosen as follows:
\begin{itemize}
\item
For cases (1-4) of lemma \ref{lemCases}, all branch cuts are $(-\infty,0]$.
\item
For cases (5-8) of lemma \ref{lemCases}, the branch cut in the 1st terms
on the RHS is $[0,+\infty)$, and branch cut in the 2nd terms is $(-\infty,0]$.
\item
For cases (9-12) of lemma \ref{lemCases}, all branch cuts are $[0,+\infty)$.
\end{itemize}
Recall that $e^{C_n(t)} \to e^{C(t)}$ and $(\chi_n,\eta_n) \to (\chi,\eta)$
(see equation (\ref{eqNonAsyEdge})). The above choices
for the branches, lemma \ref{lemCases} and equation (\ref{eqxi1}),
imply that $G_t$ and $G_{t,n}$ both extend analytically to
$(\C \setminus \R) \cup (t-2\xi,t+2\xi)$.

Next note, irrespective of the choices of the branches of the logarithms,
\begin{align}
\label{eqGt'}
G_t'(w)
&= - \frac1{w - \chi} + \frac{e^{C(t)}}{w - \chi - \eta + 1}, \\
\label{eqGtn'}
G_{t,n}'(w)
&= - \frac1{w - \chi_n} + \frac{e^{C_n(t)}}{w - \chi_n - \eta_n + 1},
\end{align}
for all $w \in (\C \setminus \R) \cup (t-2\xi,t+2\xi)$. Therefore, $G_t'$
and $G_{t,n}'$ extend analytically to
$\C \setminus \{\chi,\chi+\eta-1\}$ and $\C \setminus \{\chi_n,\chi_n+\eta_n-1\}$,
respectively. Moreover:
\begin{lem}
\label{lemGt'}
$G_t'$ has a root of multiplicity $1$ at $t$, and no other roots in
$\C \setminus \{\chi,\chi+\eta-1\}$. Moreover,
$G_{t,n}'$ has a root of multiplicity $1$ at $t$, and no other roots in
$\C \setminus \{\chi_n,\chi_n+\eta_n-1\}$. Finally, $G_{t,n}''(t) \to G_t''(t)$.
\end{lem}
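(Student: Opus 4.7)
The proof plan is to show the result by direct computation, exploiting the explicit edge parameterisation in lemma \ref{lemEdge} and definition \ref{defEdgeNonAsy}.

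First, combine the two fractions in the expression for $G_t'(w)$. A short computation gives
\begin{equation*}
G_t'(w) = \frac{(e^{C(t)}-1)(w-\chi) + (\eta-1)}{(w-\chi)(w-\chi-\eta+1)}
\end{equation*}
on $\C \setminus \{\chi,\chi+\eta-1\}$. By lemma \ref{lemEdge},
\begin{equation*}
\chi - t = \frac{e^{C(t)}-1}{e^{C(t)} C'(t)}
\quad \text{and} \quad
\eta - 1 = \frac{(e^{C(t)}-1)^2}{e^{C(t)} C'(t)},
\end{equation*}
so $\eta - 1 = (e^{C(t)}-1)(\chi-t)$. Substituting back, the numerator simplifies to $(e^{C(t)}-1)(w-t)$, giving
\begin{equation*}
G_t'(w) = \frac{(e^{C(t)}-1)(w-t)}{(w-\chi)(w-\chi-\eta+1)}.
\end{equation*}
By lemma \ref{lemAnalExt}, $e^{C(t)} \neq 1$, and $t \in \R \setminus \{\chi,\chi+\eta-1\}$ as noted at the start of section \ref{secTROft}. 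Hence $G_t'$ has a unique root in $\C \setminus \{\chi,\chi+\eta-1\}$, namely $t$, of multiplicity $1$.

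The argument for $G_{t,n}'$ is identical: the relations in definition \ref{defEdgeNonAsy} mirror exactly those of lemma \ref{lemEdge} (with subscripts $n$ replacing the asymptotic quantities), so the same algebraic manipulation yields
\begin{equation*}
G_{t,n}'(w) = \frac{(e^{C_n(t)}-1)(w-t)}{(w-\chi_n)(w-\chi_n-\eta_n+1)}.
\end{equation*}
Since $e^{C_n(t)} \to e^{C(t)} \neq 1$ and $(\chi_n,\eta_n) \to (\chi,\eta)$ by equation (\ref{eqNonAsyEdge}), we have $e^{C_n(t)} \neq 1$, $t \neq \chi_n$ and $t \neq \chi_n+\eta_n-1$ for all $n$ sufficiently large (recalling our convention from section \ref{secNot}). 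Thus $G_{t,n}'$ has a unique root of multiplicity $1$ at $t$ in $\C \setminus \{\chi_n,\chi_n+\eta_n-1\}$.

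For the convergence $G_{t,n}''(t) \to G_t''(t)$, note that the identity $(t-\chi)e^{C(t)} = t-\chi-\eta+1$ (which follows by rearranging the formulas of lemma \ref{lemEdge}) and its non-asymptotic analogue $(t-\chi_n)e^{C_n(t)} = t-\chi_n-\eta_n+1$ (from definition \ref{defEdgeNonAsy}) allow us to simplify the derivative of equation (\ref{eqGt'}) at $w=t$ to
\begin{equation*}
G_t''(t) = \frac{1-e^{-C(t)}}{(t-\chi)^2}
\quad \text{and} \quad
G_{t,n}''(t) = \frac{1-e^{-C_n(t)}}{(t-\chi_n)^2}.
\end{equation*}
The required convergence is then immediate from equation (\ref{eqNonAsyEdge}). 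This completes the plan; the only subtlety is the bookkeeping required to verify that $t \neq \chi_n$ and $t \neq \chi_n+\eta_n-1$ hold eventually, but this is handled by the weak convergence in equation (\ref{eqNonAsyEdge}) together with $t \notin \{\chi,\chi+\eta-1\}$.
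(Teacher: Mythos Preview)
Your proof is correct and follows essentially the same approach as the paper: both use the edge parameterisation of lemma \ref{lemEdge} (and definition \ref{defEdgeNonAsy}) together with the explicit form of $G_t'$ in equation (\ref{eqGt'}) to identify the root structure by direct computation, and then verify the convergence $G_{t,n}''(t) \to G_t''(t)$ via equation (\ref{eqNonAsyEdge}). The only cosmetic difference is that you display the factored form $G_t'(w) = (e^{C(t)}-1)(w-t)/[(w-\chi)(w-\chi-\eta+1)]$ explicitly and write $G_t''(t) = (1-e^{-C(t)})/(t-\chi)^2$, whereas the paper leaves the factoring implicit and records the equivalent expression $G_t''(t) = e^{C(t)} C'(t)^2/(e^{C(t)}-1)$; your explicit factorisation arguably makes the multiplicity-$1$ claim more transparent.
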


\begin{proof}
Consider $G_t'$. Note, since $(\chi,\eta) =  (\chi_\EE(t),\eta_\EE(t))$,
equations (\ref{eqchiEEetaEE}, \ref{eqGt'}) imply that
$t$ is the only root of $G_t'$ in $\C \setminus \{\chi,\chi+\eta-1\}$.
These also give,
\begin{equation*}
G_t''(t) = \frac{e^{C(t)} C'(t)^2}{e^{C(t)} - 1}.
\end{equation*}
Lemma \ref{lemAnalExt} then implies that $G_t''(t) \neq 0$.

Consider $G_{t,n}'$. Note, since
$(\chi_n,\eta_n) =  (\chi_n(t),\eta_n(t))$, definition \ref{defEdgeNonAsy}
and equation (\ref{eqGtn'}) imply that $t$ is the only root of
$G_{t,n}'$ in $\C \setminus \{\chi_n,\chi_n+\eta_n-1\}$. These also give,
\begin{equation*}
G_{t,n}''(t) = \frac{e^{C_n(t)} C_n'(t)^2}{e^{C_n(t)} - 1}.
\end{equation*}
Equation (\ref{eqNonAsyEdge}) then gives $G_{t,n}''(t) \to G_t''(t) \neq 0$.
\end{proof}

Next consider $F_n$. Define, for convenience,
\begin{align}
\label{eqUnVn}
U_n &:= \tfrac1n \{u_n+r_n-n+1, u_n+r_n-n+2, \ldots, u_n-1\}, \\
\nonumber
V_n &:= \tfrac1n \{v_n+s_n-n, v_n+s_n-n+1, \ldots, v_n\}.
\end{align}
Thus, since $\eta \in (0,1)$ and $\frac1n (u_n,r_n) \to (\chi,\eta)$
and $\frac1n (v_n,s_n) \to (\chi,\eta)$ (see equation
(\ref{equnrnvnsn})), $\min \{u_n - 1, v_n\} > \max\{u_n + r_n - n + 1, v_n + s_n - n\}$.
Moreover,
\begin{equation}
\label{eqVn-Un}
V_n \setminus U_n = (VU^{(n)}) \cup (VU_{(n)})
\hspace{.5cm} \text{and} \hspace{.5cm}
U_n \setminus V_n = (UV^{(n)}) \cup (UV_{(n)}),
\end{equation}
where we define:
\begin{itemize}
\item
$VU^{(n)} :=
\frac1n \{u_n, u_n+1, \ldots, v_n\}$ when $v_n \ge u_n$.
\item
$UV^{(n)} := 
\frac1n \{v_n+1, v_n+2, \ldots, u_n-1\}$ when $v_n+1 \le u_n-1$.
\item
$VU_{(n)} := \frac1n \{v_n+s_n-n, v_n+s_n-n+1, \ldots, u_n+r_n-n\}$
when $v_n+s_n \le u_n+r_n$.
\item
$UV_{(n)} := \frac1n \{u_n+r_n-n+1, u_n+r_n-n+2, \ldots, v_n+s_n-n-1\}$
when $v_n+s_n-1 \ge u_n+r_n+1$.
\end{itemize}
Note, implicit in the above definitions is that $VU^{(n)} := \emptyset$ when
$v_n < u_n$, etc. Finally, fixing $\xi>0$ sufficiently
small such that equation (\ref{eqxi}, \ref{eqxi1}, \ref{eqxi4}) are satisfied,
and such that $\chi - 4\xi > \chi + \eta - 1 + 4 \xi$, equation
(\ref{equnrnvnsn}) gives:
\begin{equation}
\label{eqxi5}
VU^{(n)} \subset (\chi - 2\xi, \chi + 2\xi)
\hspace{.5cm} \text{and} \hspace{.5cm}
VU_{(n)} \subset (\chi+\eta-1 - 2\xi, \chi+\eta-1 + 2\xi).
\end{equation}
Similarly for $UV^{(n)}$
and $UV_{(n)}$. Finally note that
equations (\ref{eqfn}, \ref{eqtildefn}, \ref{eqFnw}, \ref{eqUnVn}, \ref{eqVn-Un}) give,
\begin{align}
\label{eqFnw2}
n F_n(w)
&= \bigg( 1_{(v_n+1 \le u_n-1)}
\sum_{x \in UV^{(n)}}
- 1_{(v_n \ge u_n)}
\sum_{x \in VU^{(n)}} \bigg) \log (w - x) \\
\nonumber
&+ \bigg( 1_{(v_n+s_n-1 \ge u_n+r_n+1)}
\sum_{x \in UV_{(n)}}
- 1_{(v_n+s_n \le u_n+r_n)}
\sum_{x \in VU_{(n)}} \bigg) \log (w - x),
\end{align}
for all $w \in \C \setminus \R$, where the branch cuts are chosen as follows:
\begin{itemize}
\item
For cases (1-4) of lemma \ref{lemCases}, all branch cuts are $(-\infty,0]$.
\item
For cases (5-8) of lemma \ref{lemCases}, the branch cuts in the 1st and 2nd
sums on the RHS are all $[0,+\infty)$, and the branch cuts in the 3rd
and 4th sums are all $(-\infty,0]$.
\item
For cases (9-12) of lemma \ref{lemCases}, all branch cuts are $[0,+\infty)$.
\end{itemize}
Note, the above branch cut choices are consistent with those made in
equations (\ref{eqfn}, \ref{eqtildefn}) (see the discussion
given before lemma \ref{lemfnftConv}). Also note, lemma \ref{lemCases}
and equations (\ref{eqxi}, \ref{eqxi1}, \ref{eqxi4}, \ref{eqxi5}), imply that $F_n$
extends analytically to $(\C \setminus \R) \cup (t-2\xi,t+2\xi)$.

Note, irrespective of the choices of the branches of the logarithms,
\begin{align}
\label{eqFn'w}
n F_n'(w)
&= \bigg( 1_{(v_n+1 \le u_n-1)} \sum_{x \in UV^{(n)}}
- 1_{(v_n \ge u_n)} \sum_{x \in VU^{(n)}} \bigg) \frac1{w - x} \\
\nonumber
&+ \bigg( 1_{(v_n+s_n-1 \ge u_n+r_n+1)} \sum_{x \in UV_{(n)}}
- 1_{(v_n+s_n \le u_n+r_n)} \sum_{x \in VU_{(n)}} \bigg) \frac1{w - x},
\end{align}
for all $(\C \setminus \R) \cup (t-2\xi,t+2\xi)$. Therefore $F_n'$ extends
analytically to $\C \setminus ((V_n \setminus U_n) \cup (U_n \setminus V_n))$.
Next we investigate the relationships between $G_t'$, $G_{t,n}'$ and
$F_n'$ in $B(t,\xi)$:
\begin{lem}
\label{lemGtGtnFn}
Fix $\xi>0$ as above. Then:
\begin{enumerate}
\item
$G_{t,n}'(w) = G_t'(w) + o(1)$ uniformly for
$w \in B(t,\xi)$. Similarly for $G_{t,n}''$, $G_t''$.
\end{enumerate}
Next, fix $\{(u_n,r_n)\}_{n\ge1}$, $\{(v_n,s_n)\}_{n\ge1}$, $\{m_n\}_{n\ge1}$,
$u$, $v$ as in equations (\ref{equnrnvnsn2}, \ref{equnrnvnsn3}). Additionally
assume that $u \neq v$. Then:
\begin{enumerate}
\setcounter{enumi}{1}
\item
$n^\frac13 F_n'(w) = m_n (v-u) G_{t,n}'(w) + O(n^{-\frac13})$ uniformly for
$w \in B(t,\xi)$. Similarly for $G_{t,n}''$, $F_n''$.
\end{enumerate}
\end{lem}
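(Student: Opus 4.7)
This is a routine continuity-in-parameters argument. The rational formulas (\ref{eqGt'}) and (\ref{eqGtn'}) depend on the parameters $(\chi,\eta)$, $(\chi_n,\eta_n)$, $e^{C(t)}$ and $e^{C_n(t)}$, and by (\ref{eqNonAsyEdge}) we have $(\chi_n,\eta_n)\to(\chi,\eta)$ and $e^{C_n(t)}\to e^{C(t)}$. Since $t\notin\{\chi,\chi+\eta-1\}$ and $\xi$ is chosen small, $\text{cl}(B(t,\xi))$ stays at positive distance from $\{\chi,\chi+\eta-1\}$, and hence (for all $n$ sufficiently large) from $\{\chi_n,\chi_n+\eta_n-1\}$. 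Writing
$$G_{t,n}'(w)-G_t'(w) \;=\; \Bigl(\tfrac{1}{w-\chi}-\tfrac{1}{w-\chi_n}\Bigr) + e^{C_n(t)}\Bigl(\tfrac{1}{w-\chi_n-\eta_n+1}-\tfrac{1}{w-\chi-\eta+1}\Bigr) + (e^{C_n(t)}-e^{C(t)})\tfrac{1}{w-\chi-\eta+1},$$
each term is $o(1)$ uniformly on $B(t,\xi)$. The claim for $G_{t,n}''-G_t''$ follows by the same identity applied to the $w$-derivatives, or equivalently by Cauchy's estimate on a slightly larger disc.

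\textbf{Part (2).} The plan is to insert the expansions (\ref{equnrnvnsn2}, \ref{equnrnvnsn3}) into the explicit expression (\ref{eqFn'w}) for $nF_n'$. Direct subtraction gives
\begin{align*}
u_n-v_n &= n^{2/3}m_n(u-v) + n^{1/3}p_n(r-s)(e^{C_n(t)}-1) + O(1),\\
(u_n+r_n)-(v_n+s_n) &= n^{2/3}m_n(u-v)\,e^{C_n(t)} + n^{1/3}p_n(r-s)(e^{C_n(t)}-2) + O(1).
\end{align*}
By the hypothesis $u\neq v$ and Lemma \ref{lemAnalExt} (so $e^{C(t)}\neq 0$), for $n$ large exactly one of $\{UV^{(n)},VU^{(n)}\}$ and exactly one of $\{UV_{(n)},VU_{(n)}\}$ is non-empty, and each active set consists of $\Theta(n^{2/3})$ consecutive points of $n^{-1}\Z$ inside an interval of length $O(n^{-1/3})$ centred on $\chi_n$ or on $\chi_n+\eta_n-1$. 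On $B(t,\xi)$ the variable $w$ is uniformly bounded away from both centres, so a standard Riemann-sum estimate gives
$$\sum_x \frac{1}{w-x} \;=\; n\int \frac{dy}{w-y} + O(n^{-1/3})$$
for each active sum. Evaluating the integral as a difference of logarithms and first-order Taylor-expanding around $\chi_n$ (respectively $\chi_n+\eta_n-1$) yields
$$n\int_{v_n/n}^{u_n/n}\frac{dy}{w-y} \;=\; \frac{n^{2/3}m_n(u-v)}{w-\chi_n} + O(n^{1/3}),$$
together with the analogous identity near $\chi_n+\eta_n-1$ carrying an extra factor $e^{C_n(t)}$. Inserting these into (\ref{eqFn'w}) with the correct signs, the two active leading contributions combine into
$$nF_n'(w) \;=\; n^{2/3}m_n(u-v)\Bigl(\tfrac{1}{w-\chi_n}-\tfrac{e^{C_n(t)}}{w-\chi_n-\eta_n+1}\Bigr) + O(n^{1/3}) \;=\; -n^{2/3}m_n(u-v)\,G_{t,n}'(w) + O(n^{1/3})$$
uniformly on $B(t,\xi)$; dividing by $n^{2/3}$ yields the required estimate for $F_n'$. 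The analogous bound on $n^{1/3}F_n''-m_n(v-u)G_{t,n}''$ then follows by applying Cauchy's integral formula for derivatives on a slightly larger disc still inside $(t-2\xi,t+2\xi)$, on which the first-derivative estimate continues to hold.

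\textbf{Main obstacle.} The bulk of the labour is bookkeeping across the twelve cases of Lemma \ref{lemCases}: each case fixes the signs of $u-v$ and $e^{C(t)}$, which together dictate which two of the four indicators in (\ref{eqFn'w}) are active, and one must also check that the branch-cut conventions set after (\ref{eqFnw2}) are compatible with the first-order Taylor expansion of $\log$ around $\chi$ or $\chi+\eta-1$. The saving grace is that the two surviving sums always combine with matching signs to reproduce exactly the rational combination $-G_{t,n}'$, so once the sign check is made the core algebra is uniform across all twelve cases.
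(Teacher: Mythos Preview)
Your proof is correct and follows essentially the same strategy as the paper. For part~(1) both arguments write $G_{t,n}'-G_t'$ as a sum of rational terms and bound each via $(\chi_n,\eta_n)\to(\chi,\eta)$ and $e^{C_n(t)}\to e^{C(t)}$. For part~(2) the paper's mechanics differ only slightly: instead of Riemann-sum plus log-Taylor, it uses the centering identity $\tfrac{1}{w-x}=\tfrac{1}{w-\chi_n}-\tfrac{\chi_n-x}{(w-\chi_n)(w-x)}$ on each summand and then inserts the exact counts $|UV^{(n)}|=n^{2/3}m_n(u-v)+O(n^{1/3})$ and $|VU_{(n)}|=n^{2/3}m_n e^{C_n(t)}(u-v)+O(n^{1/3})$; both routes give the same $O(n^{1/3})$ remainder and the same leading term.

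Two minor remarks on your closing paragraph. The relevant case split here is not the twelve cases of Lemma~\ref{lemCases} but simply which two of the four sets $VU^{(n)}$, $UV^{(n)}$, $VU_{(n)}$, $UV_{(n)}$ are active, determined by the signs of $m_n(v-u)$ and $m_n(v-u)e^{C_n(t)}$; the paper treats one such case and declares the rest similar. And branch cuts are irrelevant for $F_n'$, which is a rational function, so that concern can be dropped.
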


\begin{proof}
Consider (1). We will prove (1) only for $G_{t,n}', G_t'$. Part (1)
for $G_{t,n}'', G_t''$ follows similarly. First note,
equations (\ref{eqGt'}, \ref{eqGtn'}) give,
\begin{equation*}
G_{t,n}'(w)
= G_t'(w) - \frac{\chi_n - \chi}{(w - \chi) (w - \chi_n)}
+ \frac{e^{C_n(t)} - e^{C(t)}}{w - \chi_n - \eta_n + 1}
+ \frac{e^{C(t)} (\chi_n + \eta_n - \chi - \eta)}{(w - \chi - \eta + 1) (w - \chi_n - \eta_n + 1)},
\end{equation*}
for all $w \in B(t,\xi)$. Recall (see equation (\ref{eqNonAsyEdge})) that
$e^{C_n(t)} \to e^{C(t)}$ and $(\chi_n,\eta_n) \to (\chi,\eta)$.
Also recall (see equation (\ref{eqxi1})) that $|w - \chi| > 3 \xi$
and $|w - \chi - \eta + 1| > 3 \xi$ for all $w \in B(t,\xi)$. Combined, the above
prove (1) for $G_{t,n}', G_t'$.

Consider (2). Note, since $u \neq v$, the equations
(\ref{equnrnvnsn2}, \ref{equnrnvnsn3}, \ref{eqVn-Un})
imply that either $VU^{(n)} \neq \emptyset$ and $UV^{(n)} = \emptyset$
for all $n$ sufficiently large, or $VU^{(n)} = \emptyset$ and
$UV^{(n)} \neq \emptyset$ for all $n$ sufficiently large.
Similarly for $VU_{(n)}$ and $UV_{(n)}$. Moreover, these sets contain
at least $2$ distinct elements, whenever they are non-empty.

First suppose that $UV^{(n)} \neq \emptyset$ and $VU_{(n)} \neq \emptyset$ and
$UV_{(n)} = VU^{(n)} = \emptyset$. For this case, equation (\ref{eqFn'w}) gives,
\begin{equation*}
n F_n'(w)
= \sum_{x \in UV^{(n)}} \frac1{w - x}
- \sum_{x \in VU_{(n)}} \frac1{w - x},
\end{equation*}
for all $w \in B(t,\xi)$. We write this as,
\begin{align*}
n F_n'(w)
&= \sum_{x \in UV^{(n)}}
\bigg( \frac1{w - \chi_n} - \frac{\chi_n - x}{(w - \chi_n) (w - x)} \bigg) \\
&- \sum_{x \in VU_{(n)}} \bigg( \frac1{w - \chi_n - \eta_n + 1}
- \frac{\chi_n + \eta_n - 1 - x}{(w - \chi_n - \eta_n + 1) (w - x)} \bigg),
\end{align*}
for all $w \in B(t,\xi)$. Note, equations (\ref{eqNonAsyEdge}, \ref{eqxi1})
imply that $|w - \chi_n| > \xi$ and $|w - \chi_n - \eta_n + 1| > \xi$ uniformly
for $w \in B(t,\xi)$. Also note, equations (\ref{eqxi1}, \ref{eqxi5}) imply that
$|w - x| > \xi$ uniformly for $w \in B(t,\xi)$ and $x \in UV^{(n)}$, and
$|w - x| > \xi$ uniformly for $w \in B(t,\xi)$ and $x \in VU_{(n)}$.
Moreover, since $u \neq v$, equations (\ref{equnrnvnsn2}, \ref{equnrnvnsn3}, \ref{eqVn-Un}),
imply the following:
\begin{itemize}
\item
$\chi_n - x = O(n^{-\frac13})$ uniformly for $x \in UV^{(n)}$.
\item
$\chi_n+\eta_n-1 - x = O(n^{-\frac13})$ uniformly for $x \in VU_{(n)}$.
\item
$|UV^{(n)}| = O( n^\frac23 )$ and
$|VU_{(n)}| = O( n^\frac23 )$.
\end{itemize}
Combined, the above give,
\begin{equation*}
n F_n'(w)
= \sum_{x \in UV^{(n)}} \frac1{w - \chi_n}
- \sum_{x \in VU_{(n)}} \frac1{w - \chi_n - \eta_n + 1}
+ O(n^\frac13),
\end{equation*}
uniformly for $w \in B(t,\xi)$. Finally note, equations
(\ref{equnrnvnsn2}, \ref{equnrnvnsn3}, \ref{eqVn-Un}), give
$|UV^{(n)}| = n^\frac23 m_n (u-v) + O( n^\frac13 )$
and $|VU_{(n)}| = n^\frac23 m_n e^{C_n(t)} (u-v) + O( n^\frac13 )$.
Therefore,
\begin{equation*}
n F_n'(w) =  \frac{n^\frac23 m_n (u-v)}{w - \chi_n}
- \frac{n^\frac23 m_n e^{C_n(t)} (u-v)}{w - \chi_n - \eta_n + 1} + O(n^\frac13) ,
\end{equation*}
uniformly for $w \in B(t,\xi)$. Equation (\ref{eqGtn'}) finally proves (2) for
$F_n', G_{t,n}'$, when $UV^{(n)} \neq \emptyset$ and $VU_{(n)} \neq \emptyset$
and $UV_{(n)} = VU^{(n)} = \emptyset$. Part (2) for the other cases follows
similarly.
\end{proof}

Next we use the previous lemmas to investigate the roots of $F_n'$ when $u \neq v$.
\begin{lem}
\label{lemFn'}
Fix $\xi>0$ as above, and assume that $u \neq v$. Then:
\begin{enumerate}
\item
$F_n'$ has $1$ root in $(t-\xi,t+\xi)$.
\item
$F_n'$ has $0$ roots in $\C \setminus \R$.
\item
$F_n'$ has $1$ root in each interval
of the form $(x,y)$, when $x$ and $y$ are any two consecutive elements of
either $VU^{(n)}$ or $UV^{(n)}$
or $VU_{(n)}$ or $UV_{(n)}$.
\item
$F_n'$ has no other roots in $\R$ excluding those listed in parts (1,3).
\item
$w_n = t + O(n^{-\frac13})$, where $w_n$ denotes the root in part (1), above.
\end{enumerate}
\end{lem}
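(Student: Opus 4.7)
The plan is to combine three ingredients: a Rouché-style argument near $t$ using lemma \ref{lemGtGtnFn}(2), an intermediate-value argument between consecutive poles, and a global degree count on the rational function $n F_n'$ to confirm that the zeros produced by the first two arguments exhaust all zeros of $F_n'$. By the $u \leftrightarrow v$ symmetry (which sends $F_n \mapsto - F_n$ and hence preserves zero sets) I may assume $u < v$. Using (\ref{equnrnvnsn2}, \ref{equnrnvnsn3}) and definition \ref{defmnpn} one checks $v_n - u_n = n^{2/3} m_n (v-u) + O(n^{1/3})$ and $(v_n+s_n) - (u_n+r_n) = n^{2/3} m_n (v-u) e^{C_n(t)} + O(n^{1/3})$. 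Since $e^{C_n(t)} \to e^{C(t)}$ is bounded away from $0$ (lemma \ref{lemAnalExt}), exactly one of $\{VU^{(n)}, UV^{(n)}\}$ and exactly one of $\{VU_{(n)}, UV_{(n)}\}$ is non-empty, with the empty/non-empty pattern determined by the sign of $e^{C(t)}$. Let $N$ be the total number of distinct poles of $n F_n'$; then $N \sim c n^{2/3}$ with $c > 0$.

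For part (3), note from (\ref{eqFn'w}) that within each non-empty set among $VU^{(n)}, UV^{(n)}, VU_{(n)}, UV_{(n)}$ all residues of $n F_n'$ have a common sign. Consequently, at two consecutive poles $x < y$ of the same set, $n F_n'(w) \to \pm \infty$ as $w \downarrow x$ and $n F_n'(w) \to \mp \infty$ as $w \uparrow y$ with opposite signs; the intermediate value theorem gives at least one zero in $(x,y)$, producing $N - 2$ such real zeros in total. For part (1), apply Rouché on $\partial B(t,\xi)$ to the pair $(n^{1/3} F_n', m_n(v-u) G_{t,n}')$: lemma \ref{lemGtGtnFn}(2) gives $|n^{1/3} F_n' - m_n(v-u) G_{t,n}'| = O(n^{-1/3})$ uniformly on $\text{cl}(B(t,\xi))$, while lemma \ref{lemGt'} shows that $G_{t,n}'$ has only a simple zero at $t$, so by compactness $|m_n(v-u) G_{t,n}'|$ is bounded below by a strictly positive constant on $\partial B(t,\xi)$. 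Thus $F_n'$ has exactly one zero in $B(t,\xi)$; since $F_n'$ has real coefficients and complex zeros come in conjugate pairs, this zero is real, proving (1). Part (5) is obtained by repeating Rouché on $B(t, Cn^{-1/3})$ for large $C$: the Taylor expansion $G_{t,n}'(w) = (w-t) G_{t,n}''(t) + O((w-t)^2)$ together with $G_{t,n}''(t) \to G_t''(t) \neq 0$ yields $|m_n(v-u) G_{t,n}'(w)| \geq c' C n^{-1/3}$ on $\partial B(t, Cn^{-1/3})$, which for $C$ sufficiently large dominates the $O(n^{-1/3})$ error from lemma \ref{lemGtGtnFn}(2).

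Finally, for (2) and (4), write $n F_n'(w) = P(w)/Q(w)$ where $Q(w) = \prod_{x}(w-x)$ ranges over all $N$ poles and $P$ is a polynomial. Expanding (\ref{eqFn'w}) at infinity, the leading coefficient of the Laurent series is $\pm(s_n - r_n - 2)$, and the computation above gives $s_n - r_n \sim n^{2/3} m_n(v-u)(e^{C_n(t)} - 1)$, which is non-zero for $n$ large. Hence $\deg P = N - 1$ and $F_n'$ has exactly $N - 1$ zeros in $\C$ counted with multiplicity. By equation (\ref{eqxi5}) the $N-2$ zeros from part (3) lie in $(\chi - 2\xi, \chi + 2\xi) \cup (\chi + \eta - 1 - 2\xi, \chi + \eta - 1 + 2\xi)$, which (after choosing $\xi$ small enough that $t$ is separated from $\{\chi, \chi + \eta - 1\}$, possible since $t \notin \{\chi, \chi + \eta - 1\}$) is disjoint from the disk $B(t,\xi)$ containing the zero from part (1). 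These $N - 1$ real zeros are therefore distinct and exhaust all zeros, yielding both (2) and (4). The principal technical point requiring care is the verification that the leading coefficient $s_n - r_n - 2$ does not vanish; this holds uniformly across all twelve cases of lemma \ref{lemCases} because $e^{C(t)} \neq 1$ there (lemma \ref{lemAnalExt}) and $v - u \neq 0$ by hypothesis.
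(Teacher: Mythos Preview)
Your proof is correct and follows essentially the same route as the paper: Rouch\'e near $t$ via lemma~\ref{lemGtGtnFn}, the intermediate value theorem between consecutive same-sign poles, and a polynomial degree count to show these exhaust all zeros. Two small remarks. First, the opening symmetry claim is not quite right: swapping $(u,r)\leftrightarrow(v,s)$ does not send $F_n\mapsto -F_n$ exactly, because the index ranges in (\ref{eqfn}) and (\ref{eqtildefn}) differ by boundary terms (one runs over $n-s_n+1$ points, the other over $n-r_n-1$). Fortunately you never actually use the reduction to $u<v$; every subsequent step works verbatim for either sign of $v-u$, so this is cosmetic. Second, your computation of the leading Laurent coefficient as $s_n-r_n-2$ is a nice sharpening of the paper's argument, which only bounds $\deg P\le N-1$ and then infers equality a posteriori from the count; your direct verification that $s_n-r_n-2\neq0$ (via $e^{C(t)}\neq1$) makes the degree count self-contained.
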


\begin{proof}
Consider (1). First, recall that $u \neq v$, and let $m_t \neq 0$
denote the non-zero limit of the sequence $\{m_n\}_{n\ge1}$ of real-numbers
(see definition \ref{defmnpn}). Next recall (see lemma \ref{lemGt'}) that $t$ is
the only root of $G_t'$ in $B(t,\xi)$. The extremal value theorem thus gives,
\begin{equation*}
\inf_{w \in \partial B(t,\xi)} |(v-u) m_t G_t'(w)| > 0.
\end{equation*}
Next note, parts (1,2) of lemma \ref{lemGtGtnFn} give
$n^\frac13 F_n'(w) = (v-u) m_t G_t'(w) + o(1)$ uniformly for
$w \in B(t,\xi)$. Combined, the above
imply that $|(v-u) m_t G_t'(w)| > |(v-u) m_t G_t'(w) - n^\frac13 F_n'(w)|$
for all $w \in \partial B(t,\xi)$. Rouch\'{e}'s theorem thus implies that
$G_t'$ and $F_n'$ have the same number of roots in $B(t,\xi)$.
Lemma \ref{lemGt'} thus implies that
$F_n'$ has $1$ root in $B(t,\xi)$.
This root is necessarily real-valued since non-real roots of $F_n'$
occur in complex conjugate pairs (see equation (\ref{eqFn'w})).
This proves (1).

Consider (2-4). As in the proof of part (2) of lemma \ref{lemGtGtnFn},
we will prove these only when $UV^{(n)} \neq \emptyset$ and
$VU_{(n)} \neq \emptyset$ and $UV_{(n)} = VU^{(n)} = \emptyset$. Part
(2-4) for the other cases follows similarly. Note, for the above case,
equation (\ref{eqFn'w}) gives,
\begin{equation}
\label{eqlemFn'2}
n F_n'(w)
= \sum_{x \in UV^{(n)}} \frac1{w - x}
- \sum_{x \in VU_{(n)}} \frac1{w - x},
\end{equation}
for all $w \in \C \setminus ((UV^{(n)}) \cup (VU_{(n)}))$. Recall, since
$u \neq v$, that $UV^{(n)}$ and $VU_{(n)}$ both contain at least $2$ elements.
Also recall (see equations (\ref{eqxi1}, \ref{eqxi5})) that $(t-\xi,t+\xi)$
and $UV^{(n)}$ and $VU_{(n)}$ are mutually disjoint. We will show:
\begin{enumerate}
\item[(i)]
$F_n'$ has $|UV^{(n)}| + |VU_{(n)}| - 1$
roots in $\C \setminus ((UV^{(n)}) \cup (VU_{(n)}))$.
\item[(ii)]
$F_n'$ has at least $1$ root in each interval of the form $(x,y)$,
where $x$ and $y$ are any two consecutive elements of either
$UV^{(n)}$ or $VU_{(n)}$.
\end{enumerate}
Finally recall (see part (1)) that $F_n'$ has $1$ root
in $(t-\xi,t+\xi)$. Combined, the above
imply that $F_n'$ has $1$ root in $(t-\xi,t+\xi)$, $1$
in each of the intervals listed in part (ii), and no other roots.
This proves parts (2-4) in this case.

Consider (i). First note equation (\ref{eqlemFn'2}) gives,
\begin{equation*}
F_n'(w) = \frac1n \bigg( \prod_{y \in (UV^{(n)})
\cup (VU_{(n)})} \frac1{w - y} \bigg) P_n(w),
\end{equation*}
for all $w \in \C \setminus ((UV^{(n)})
\cup (VU_{(n)}))$, where $P_n$ is the polynomial,
\begin{equation*}
P_n(w) = \sum_{x \in UV^{(n)}}
\bigg( \prod_{y \in ((UV^{(n)}) \setminus \{x\})
\cup VU_{(n)})} (w - y) \bigg)
- \sum_{x \in VU_{(n)}}
\bigg( \prod_{y \in (UV^{(n)})
\cup ((VU_{(n)}) \setminus \{x\})} (w - y) \bigg).
\end{equation*}
Note that $P_n$ has degree at most $|UV^{(n)}| +
|VU_{(n)}| - 1$. Moreover, since $UV^{(n)}$
and $VU_{(n)}$ are disjoint, $P_n$ has no roots in
$(UV^{(n)}) \cup (VU_{(n)})$.
Therefore the roots of $P_n$ and $F_n'$ coincide. This proves (i).

Consider (ii). Let $x$ and $y$ denote any two consecutive elements of
$UV^{(n)}$. Equation (\ref{eqlemFn'2}) then implies that
$F_n'$ is a real-valued continuous function on $(x,y)$, and
\begin{equation*}
\lim_{w \in (x,y), w \uparrow y} F_n'(w) = - \infty
\hspace{0.5cm} \text{and} \hspace{0.5cm}
\lim_{w \in (x,y), w \downarrow x} F_n'(w) = + \infty.
\end{equation*}
The intermediate value theorem thus implies that $F_n'$ has a
root in $(x,y)$. Similarly $F_n'$ has a
root in $(x,y)$, when $x$ and $y$ denote any two
consecutive elements of $VU_{(n)}$. This proves (ii).

Consider (5). First recall (see lemma \ref{lemGt'}) that $G_{t,n}'$
has a root of multiplicity $1$ at $t$,
and no other roots in $\C \setminus \{\chi_n,\chi_n+\eta_n-1\}$, and
$G_{t,n}''(t) \to G_t''(t) \neq 0$. Next, recall that
$v - u \neq 0$ (by assumption) and that $\{m_n\}_{n\ge1}$ is a convergent
sequence of real-numbers with a non-zero limit (see definition \ref{defmnpn}).
Then, using part (2) of lemma \ref{lemGtGtnFn}, we can
proceed similarly to the proof of part (4) of lemma \ref{lemNonAsyRoots}
to show the following: There exists a sequence $\{\xi_n\}_{n\ge1}$ of positive numbers for
which $\xi_n = O(n^{-\frac13})$ and
$|(v-u) m_n G_{t,n}'(w)| > |(v-u) m_n G_{t,n}'(w) - n^\frac13 F_n'(w)|$
for all $w \in \partial B(t,\xi_n)$. Rouch\'{e}'s theorem thus implies
that $F_n'$ and $G_{t,n}'$ have the
same number of roots in $B(t,\xi_n)$ for this choice of $\xi_n$, i.e.,
$1$ root. Thus the root
$w_n$ of $F_n'$, discovered in part (1), must satisfy $w_n \in B(t,\xi_n)$.
This proves (5).
\end{proof}

We end this section by examining the asymptotic behaviour, as $n \to \infty$,
of $F_n(t)$:
\begin{lem}
\label{lemFnt}
We have,
\begin{equation*}
\exp( n F_n(t) ) = \frac{A_{t,n} ((u_n,r_n),(v_n,s_n))}
{(t-\chi_n) (t-\chi_n - \eta_n + 1)} \exp(O(n^{-\frac13})),
\end{equation*}
where $A_{t,n} : (\Z^2)^2 \to \R \setminus \{0\}$ is defined by:
\begin{align*}
&A_{t,n}((U,R),(V,S)) := (t-\chi_n)^{-(V-U)} (t-\chi_n-\eta_n+1)^{V+S-U-R} \\
\nonumber
&\times \exp \bigg(
\frac{n}2 \frac{(\frac{V}n - \chi_n)^2 - (\frac{U}n - \chi_n)^2}{t - \chi_n}
- \frac{n}2 \frac{(\frac{V}n + \frac{S}n - \chi_n - \eta_n)^2
- (\frac{U}n + \frac{R}n - \chi_n - \eta_n)^2}{t - \chi_n - \eta_n + 1} \bigg) \\
\nonumber
&\times \exp \bigg(
\frac{n}6 \frac{(\frac{V}{n} - \chi_n)^3 - (\frac{U}{n} - \chi_n)^3}{(t - \chi_n)^2}
- \frac{n}6 \frac{(\frac{V}n + \frac{S}{n} - \chi_n - \eta_n)^3
- (\frac{U}n + \frac{R}n - \chi_n - \eta_n)^3}{(t - \chi_n - \eta_n + 1)^2} \bigg),
\end{align*}
for all $(U,R),(V,S) \in \Z^2$.
\end{lem}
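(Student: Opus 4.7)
The plan is to Taylor-expand each logarithm in the explicit sum formula (\ref{eqFnw2}) and evaluate the resulting polynomial sums exactly, taking advantage of the smallness of $x-\chi_n$ (for the ``top'' sums over $VU^{(n)}$ or $UV^{(n)}$) and of $x-\chi_n-\eta_n+1$ (for the ``bottom'' sums over $VU_{(n)}$ or $UV_{(n)}$). Combining (\ref{equnrnvnsn2}), (\ref{equnrnvnsn3}) with (\ref{eqNonAsyEdge}), every $x$ in the top index sets satisfies $x-\chi_n=O(n^{-1/3})$, every $x$ in the bottom sets satisfies $x-\chi_n-\eta_n+1=O(n^{-1/3})$, and the cardinality of each index set is at most $O(n^{2/3})$. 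Thus Taylor expansion through cubic order will suffice for an $O(n^{-1/3})$ error in $n F_n(t)$.

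Concretely, for each sum $\sum_{j=j_1}^{j_2}\log(t-j/n)$ with $j_i/n$ within $O(n^{-1/3})$ of $\chi_n$, I apply the trapezoidal rule with spacing $h=1/n$ and interval length $O(n^{-1/3})$:
\begin{equation*}
\sum_{j=j_1}^{j_2} g(j/n) = n\int_{j_1/n}^{j_2/n} g(x)\,dx + \tfrac12[g(j_1/n)+g(j_2/n)] + O(n^{-4/3}),
\end{equation*}
for $g(x)=\log(t-x)$. The endpoint correction evaluates to $\log(t-\chi_n)+O(n^{-1/3})$. Expanding the integrand as $\log(t-x)=\log(t-\chi_n)-\sum_{k\geq1}(x-\chi_n)^k/[k(t-\chi_n)^k]$ and integrating termwise, the quartic remainder contributes only $O(n^{-2/3})$ after multiplication by $n$, while the $k=1$ and $k=2$ terms produce precisely the quadratic and cubic brackets appearing in $A_{t,n}$. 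An identical argument with reference point $\chi_n+\eta_n-1$ handles the bottom sums.

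Assembling the two nonzero sums in (\ref{eqFnw2}), the cardinality of each differs from $|v_n-u_n|$ (respectively $|v_n+s_n-u_n-r_n|$) by exactly one; this $\pm 1$, combined with the trapezoidal endpoint correction, produces an extra
\begin{equation*}
-\log(t-\chi_n)-\log(t-\chi_n-\eta_n+1)
\end{equation*}
in $n F_n(t)$, which exponentiates to the stated $1/[(t-\chi_n)(t-\chi_n-\eta_n+1)]$ factor. The remaining terms reassemble exactly to $\log A_{t,n}((u_n,r_n),(v_n,s_n))$ as defined in the lemma. Non-vanishing of $A_{t,n}$ follows from positivity of the exponential factors, while non-vanishing of $t-\chi_n$ and $t-\chi_n-\eta_n+1$ follows from $t\notin\{\chi,\chi+\eta-1\}$ (noted after (\ref{equnrnvnsn})) together with $(\chi_n,\eta_n)\to(\chi,\eta)$ from (\ref{eqNonAsyEdge}).

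The principal obstacle is case analysis: each of the four sign patterns of $v_n-u_n$ and $v_n+s_n-u_n-r_n$ activates a different indicator pair in (\ref{eqFnw2}). All four cases are reduced to a common form using the trivial identity $\sum_{j=v_n+1}^{u_n-1} g(j/n)=\sum_{j=v_n}^{u_n} g(j/n)-g(v_n/n)-g(u_n/n)$ when $v_n<u_n$ (and symmetrically for the bottom sums), which re-expresses each top sum in terms of the endpoints $u_n/n,v_n/n$ used in the statement of $A_{t,n}$, with signs correctly matching those of the exponents $V-U$ and $V+S-U-R$. The branch-cut conventions listed after (\ref{eqFnw2}), organized by the cases of lemma \ref{lemCases}, are consistent with $t-x$ being real of a fixed sign for each $x$ in the relevant index sets, so exponentiating the asymptotic identity for $n F_n(t)$ is unambiguous.
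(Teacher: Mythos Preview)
Your proposal is correct and takes essentially the same route as the paper: both Taylor-expand $\log(t-j/n)$ about $\chi_n$ (respectively $\chi_n+\eta_n-1$) through second order and sum, the paper doing this directly on each factor of the product $\prod_j(t-j/n)=(t-\chi_n)^{v_n-u_n+1}\prod_j(1-(j/n-\chi_n)/(t-\chi_n))$ in one representative case, while you organise the same sums via the trapezoidal rule and handle all four sign patterns. One minor slip: the quartic remainder contributes $O(n^{-1/3})$ (not $O(n^{-2/3})$) after multiplication by $n$, but that is exactly the error the statement permits, so the argument stands.
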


\begin{proof}
We will prove this result when $VU^{(n)} \neq \emptyset$
and $VU_{(n)} \neq \emptyset$ and $UV^{(n)} = UV_{(n)} = \emptyset$,
and state that the result for the other cases follows from similar
considerations.

Assume the above case. Then, irrespective of the choices
of the branches of the logarithms, equations (\ref{eqVn-Un}, \ref{eqFnw2}) give,
\begin{equation}
\label{eqFnt}
\exp ( n F_n(t) )
= \bigg[ \prod_{j=u_n}^{v_n} (t-\tfrac{j}n) \bigg]^{-1}
\bigg[ \prod_{k=v_n+s_n-n}^{u_n+r_n-n} (t-\tfrac{k}n) \bigg]^{-1}.
\end{equation}
To examine this, first write,
\begin{equation*}
\prod_{j=u_n}^{v_n} (t-\tfrac{j}n)
= (t-\chi_n)^{v_n-u_n+1}
\prod_{j=u_n}^{v_n} \bigg( 1 - \frac{\tfrac{j}n-\chi_n}{t-\chi_n} \bigg).
\end{equation*}
Note that $\frac{j}n - \chi_n = O(n^{-\frac13})$ uniformly
for $j \in \{u_n, u_n+1, \ldots, v_n\}$ (see equations
(\ref{equnrnvnsn2}, \ref{equnrnvnsn3}))
and that $|t - \chi_n| > 2 \xi > 0$ (see equation (\ref{eqxi1}),
and recall that $\chi_n \to \chi$). Thus, we can write,
\begin{equation*}
\prod_{j=u_n}^{v_n} (t-\tfrac{j}n)
= (t-\chi_n)^{v_n-u_n+1} \exp \bigg[
\sum_{j=u_n}^{v_n} \log \bigg( 1 - \frac{\tfrac{j}n-\chi_n}{t-\chi_n} \bigg) \bigg],
\end{equation*}
where $\log$ denotes the natural logarithm. Moreover, Taylor's theorem gives,
\begin{equation*}
\log \bigg( 1 - \frac{\tfrac{j}n-\chi_n}{t-\chi_n} \bigg)
= - \frac{\tfrac{j}n - \chi_n}{t - \chi_n}
- \frac12 \frac{(\tfrac{j}n - \chi_n)^2}{(t - \chi_n)^2} + O(n^{-1}),
\end{equation*}
uniformly for $j \in \{u_n, u_n+1, \ldots, v_n\}$. Therefore,
since $v_n = n \chi_n + O(n^\frac23)$ and $u_n = n \chi_n + O(n^\frac23)$
(see equations (\ref{equnrnvnsn2}, \ref{equnrnvnsn3})),
\begin{align*}
\prod_{j=u_n}^{v_n} (t-\tfrac{j}n)
= (t-\chi_n)^{v_n-u_n+1} \exp \bigg[
&-\frac{n}2 \frac{(\frac{v_n}n - \chi_n)^2 - (\frac{u_n}n - \chi_n)^2}{t - \chi_n} \\
&- \frac{n}6 \frac{(\frac{v_n}n - \chi_n)^3 - (\frac{u_n}n - \chi_n)^3}{(t - \chi_n)^2}
+ O(n^{-\frac13}) \bigg].
\end{align*}
Similarly we can show that,
\begin{align*}
\prod_{k=v_n+s_n-n}^{u_n+r_n-n} (t-\tfrac{k}n)
&= (t-\chi_n-\eta_n+1)^{u_n+r_n-v_n-s_n+1} \exp \bigg[ \\
&-\frac{n}2 \frac{(\frac{u_n}n + \frac{r_n}n - \chi_n - \eta_n)^2
- (\frac{v_n}n + \frac{s_n}n - \chi_n - \eta_n)^2}{t - \chi_n - \eta_n + 1} \\
&- \frac{n}6 \frac{(\frac{u_n}n + \frac{r_n}n - \chi_n - \eta_n)^3
- (\frac{v_n}n + \frac{s_n}n - \chi_n - \eta_n)^3}{(t - \chi_n - \eta_n + 1)^2}
+ O(n^{-\frac13}) \bigg].
\end{align*}
Equation (\ref{eqFnt}) then gives the required result.
\end{proof}

\section{Steepest descent analysis}
\label{secsdatak}

In this section we prove theorem \ref{thmAiry} via steepest descent
analysis. Assume the conditions of that theorem:
Assume assumptions \ref{assWeakConv} and \ref{assIsol},
that equation (\ref{equnrnvnsn}) is satisfied for some fixed
$t \in R_\mu^+ \cup R_{\l-\mu} \cup R_\mu^-$ (a root of
$f_t'$ of multiplicity $2$), assumption \ref{asscases},
and that $\{(u_n,r_n)\}_{n\ge1} \subset \Z^2$ and
$\{(v_n,s_n)\}_{n\ge1} \subset \Z^2$ are chosen as in equations
(\ref{equnrnvnsn2}, \ref{equnrnvnsn3}).

\subsection{Local asymptotic behaviour}

In this section we examine the local behaviour of $f_t$, $f_n$ and
$\tilde{f}_n$ in neighbourhoods of $t$. We begin by using lemma \ref{lemCases},
which describes the various situations of theorem \ref{thmAiry} in explicit
detail, to choose the branches of the logarithms in equations (\ref{eqf2}, \ref{eqfn2})
so that both $f_t$ and $f_n$ are well-defined and analytic
in convenient open subsets of $\C$ which contain $t$. We similarly choose the
branches of the logarithms in equation (\ref{eqtildefn2}), for $\tilde{f}_n$.
Recall (see equation (\ref{eqf'domain2})) that
$\C \setminus S = (\C \setminus \R) \cup J \cup K$, is the domain of $f_t'$,
$S = S_1 \cup S_2 \cup S_3 \subset \R$ (see equation (\ref{eqS1S2S3})),
$J = \cup_{i=1}^4 J_i$, $K = \cup_{i=1}^3 K^{(i)}$, $K^{(i)}$ is partitioned
as $\{K_1^{(i)}, K_2^{(i)}, \ldots\}$ for all $i \in \{1,2,3\}$, and
$\{J_1,J_2,J_3,J_4\} \cup \cup_{i=1}^3 \{K_1^{(i)},K_2^{(i)},\ldots\}$ is a set of
pairwise disjoint open intervals. These sets are depicted in
figure \ref{figf'domain}, and properties of $S_1,S_2,S_3$ are discussed in
equation (\ref{eqS1S2S3In}). Also recall (see equation (\ref{eqIntervalt}))
that $L_t \in \{J_1,J_2,J_3,J_4\} \cup \cup_{i=1}^3 \{K_1^{(i)},K_2^{(i)},\ldots\}$
denotes that open interval with $t \in L_t$.
We write (see equations (\ref{eqf2}, \ref{eqft})),
\begin{align*}
f_t(w) 
&= \int_{S_1^+} \log (w-x) \mu[dx]
- \int_{S_2^+} \log (w-x) (\l-\mu)[dx]
+ \int_{S_3^+} \log (w-x) \mu[dx] \\
&+ \int_{S_1^-} \log (w-x) \mu[dx]
- \int_{S_2^-} \log (w-x) (\l-\mu)[dx]
+ \int_{S_3^-} \log (w-x) \mu[dx],
\end{align*}
for all $w \in \C \setminus \R$, where $S_i^+ = S_i \cap [\sup L_t,+\infty)$
and $S_i^- = S_i \cap (-\infty,\inf L_t]$. Thus $f_t$ is analytic in
$(\C \setminus \R) \cup L_t$ if we choose the branch cuts of all the
logarithms in the first three terms on the RHS to be $[0,+\infty)$,
and the branch cuts of all the logarithms in the last three terms
to be $(-\infty,0]$. Next, define
$L_n \subset \R \setminus S_n$ as in equation (\ref{eqIntervaln}). 
Then we can similarly choose the branches of the logarithms in equation
(\ref{eqfn2}) such that $f_n$ is well-defined and analytic in
$\C \setminus L_n$. Finally,
fix $\xi>0$ sufficiently small such that equation (\ref{eqxi}) is
satisfied, and define
\begin{equation}
\label{eqAnalSetftfn}
\C_\xi := \{w \in \C : \inf L_t + 2 \xi <  \text{Re}(w) < \sup L_t - 2 \xi 
\text{ or } |\text{Im}(w)| > \xi^4 \}.
\end{equation}
The use of $\xi^4$, above, is a choice of convenience which will
simplify some calculations later. Note, equations (\ref{eqxi}, \ref{eqAnalSetftfn})
imply that $B(t,2\xi) \subset \C_\xi$. Also
note that $\C_\xi \subset (\C \setminus \R) \cup L_t$, and so $f_t$
is well-defined and analytic in $\C_\xi$. Also, equation
(\ref{eqIntervaln}) implies that $\C_\xi \subset (\C \setminus \R) \cup L_n$,
and so $f_n$ is well-defined and analytic in
$\C_\xi$. Similarly for $\tilde{f}_n$. Moreover:
\begin{lem}
\label{lemfnftConv}
Fix $\xi>0$ as above, fix $r>0$, and fix an integer $k \ge 0$. Then:
\begin{enumerate}
\item
Then there exists a positive constant, $C = C(t,\xi,k,r)$, for which
\begin{equation*}
\sup_{w \in \C_\xi \cap B(0,r)} |f_t^{(k)}(w)| < C
\;\; \text{and} \;\;
\sup_{w \in \C_\xi \cap B(0,r)} |f_n^{(k)}(w)| < C.
\end{equation*}
\item
$f_n^{(k)} \to f_t^{(k)}$ uniformly in $\C_\xi \cap B(0,r)$.
\end{enumerate}
Similarly for $\tilde{f}_n$.
\end{lem}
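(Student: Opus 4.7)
The plan is to leverage the fact that $\C_\xi \cap B(0,r)$ is a compact set on which both $f_t$ and (for $n$ sufficiently large) $f_n$ are analytic, and to combine elementary integrand estimates with the weak convergences from equation (\ref{eqS1nS1WeakConv}). Part (1) will follow from direct uniform bounds on the integrands, while part (2) will follow first pointwise from weak convergence and then uniformly by a normal families argument.

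For part (1), I would first establish that for $w \in \C_\xi$ the distance $|w - x|$ is uniformly bounded below for every $x$ in the asymptotic supports $\supp(\mu) \cup \supp(\l-\mu)$ and, for all $n$ sufficiently large, for every $x \in S_{1,n} \cup S_{2,n} \cup S_{3,n}$. This is precisely what the definition of $\C_\xi$ in equation (\ref{eqAnalSetftfn}) encodes: the strip condition places $w$ away from the endpoints $\sup L_t, \inf L_t$, while the imaginary-part condition gives $|w - x| \ge \xi^4$ for all real $x$; combined with lemma \ref{lemHassConv} (which gives $\sup L_n = \sup L_t + o(1)$ and $\inf L_n = \inf L_t + o(1)$) and assumption \ref{assIsol}, this yields a common positive lower bound on $|w - x|$ for all the relevant $x$. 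Together with $w \in B(0,r)$ and boundedness of the supports, it follows that both $|\log(w - x)|$ and $|(w - x)^{-k}|$ are dominated by a constant depending only on $t, \xi, k, r$. Integrating against measures of uniformly bounded total mass (finite for $\mu, \l - \mu$, and bounded uniformly in $n$ for the finite-sum analogues by equation (\ref{eqS1nS2nS3nIn})), and differentiating $k$ times under the integral sign, then yields the required bound.

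For part (2), I would first obtain pointwise convergence $f_n^{(k)}(w) \to f_t^{(k)}(w)$ at each fixed $w \in \C_\xi \cap B(0,r)$. At such $w$, the test functions $x \mapsto \log(w - x)$ and $x \mapsto (w - x)^{-j}$ are continuous and bounded on any fixed compact subset of $\R$ containing all the relevant supports (which exists for $n$ large, by assumption \ref{assIsol}), so the weak convergences in equation (\ref{eqS1nS1WeakConv}) immediately yield convergence of the associated integrals and finite sums, i.e., of $f_n^{(k)}(w)$ to $f_t^{(k)}(w)$. To upgrade to uniform convergence on $\C_\xi \cap B(0,r)$, I would invoke Vitali's convergence theorem: by part (1), $\{f_n\}_{n \ge 1}$ is a locally uniformly bounded family of analytic functions on an open neighbourhood of $\C_\xi \cap B(0,r)$, hence a normal family, and pointwise convergence together with normality forces uniform convergence on compacta; uniform convergence of derivatives then follows either by the Cauchy integral formula applied on a slightly shrunken domain, or by repeating the weak-convergence argument directly with the derivative integrands.

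The main delicate point will be branch-cut bookkeeping: one must verify that the branch choices specified immediately before equation (\ref{eqAnalSetftfn}) place the cuts of $\log(w - x)$, as $x$ ranges over the supports of the relevant measures, consistently on either $(-\infty, 0]$ or $[0, +\infty)$ for both $f_t$ and $f_n$, so that the test functions appearing in the weak-convergence step are genuinely continuous in $x$. This is precisely what the strip definition of $\C_\xi$, combined with the containment $(t - 2\xi, t + 2\xi) \subset L_n$ from equation (\ref{eqIntervaln}), enforces, by placing $\C_\xi$ inside both $(\C \setminus \R) \cup L_t$ and $(\C \setminus \R) \cup L_n$. The analogous claims for $\tilde{f}_n$ follow by the identical argument with $(u_n, r_n)$ in place of $(v_n, s_n)$, since $\tilde{f}_n$ shares the same analyticity domain and the sequence $(u_n, r_n)$ satisfies the same asymptotic relations with respect to $(\chi, \eta)$.
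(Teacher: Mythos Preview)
Your proposal is correct and follows essentially the same strategy as the paper's proof. For part (1) both arguments bound $|w-x|$ above and below uniformly over $\C_\xi \cap B(0,r)$ using the geometry of $\C_\xi$ together with equation (\ref{eqIntervaln}); for part (2) both first obtain pointwise convergence from the weak convergences in equation (\ref{eqS1nS1WeakConv}). The only difference is in how pointwise convergence is upgraded to uniform: the paper observes that the uniform derivative bounds from part (1) give equicontinuity of $\{f_n^{(k)}\}$ directly, whereas you invoke normal families/Vitali. These are equivalent routes; your version just requires the small extra observation that $\text{cl}(\C_\xi \cap B(0,r))$ sits compactly inside, say, $\C_{\xi/2} \cap B(0,2r)$, so that part (1) applied on the larger set furnishes the local uniform boundedness Vitali needs. (Your opening remark that $\C_\xi \cap B(0,r)$ is itself compact is a slip---it is open---but your later handling via ``an open neighbourhood'' shows you have the right picture.) Your explicit flagging of the branch-cut bookkeeping is a point the paper glosses over.
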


\begin{proof}
Consider (1). First recall that $S_n = S_{1,n} \cup S_{2,n} \cup S_{3,n}$,
and $\tfrac1n |S_{i,n}| = O(1)$ for all $i \in \{1,2,3\}$ (see equations
(\ref{eqfn2}, \ref{eqS1nS2nS3nIn})). Equation \ref{eqfn2} then implies that
there exists a constant, $c>0$, for which
\begin{equation*}
\sup_{w \in \C_\xi \cap B(0,r)} |f_n(w)|
< c \sup_{(w,x) \in (\C_\xi \cap B(0,r)) \times S_n} |\log(w-x)|.
\end{equation*}
Next note, equation (\ref{eqS1nS2nS3nIn}) gives,
\begin{equation*}
\sup_{(w,x) \in (\C_\xi \cap B(0,r)) \times S_n} |w-x|
< r + 2 \max\{|\inf S_3|,|\sup S_1|\}.
\end{equation*}
Moreover,
\begin{equation*}
\inf_{(w,x) \in (\C_\xi \cap B(0,r)) \times S_n} |w-x|
> \min\{\xi,\xi^4\}.
\end{equation*}
Indeed, the above follows since either $|\text{Im} (w-x)| > \xi^4$
(see equation (\ref{eqAnalSetftfn})) or $|\text{Re} (w-x)|
> \min \{ (\inf L_t + 2\xi) - \inf L_n, \sup L_n - (\sup L_t - 2\xi) \}$
(see equations (\ref{eqIntervaln}, \ref{eqAnalSetftfn}) and note that
$L_n \subset \R \setminus S_n$ and $\{\sup L_n, \inf L_n\} \subset S_n$),
and since $\sup L_n = \sup L_t + o(1)$ and $\inf L_n = \inf L_t + o(1)$
(see equation (\ref{eqIntervaln})).
Combined, the above three inequalities prove part (1) for $f_n$.
Part (1) for $f_n^{(k)}$ for all $k\ge1$ follows similarly.
Also, part (1) for $f_t^{(k)}$ and $\tilde{f}_n^{(k)}$ for all $k\ge0$
follows similarly.

Consider (2). First note, for all $k \ge 0$, equations
(\ref{eqf2}, \ref{eqft}, \ref{eqfn2}, \ref{eqS1nS1WeakConv})
imply that $f_n^{(k)} \to f_t^{(k)}$ pointwise in $\C_\xi$.
Moreover, for all $k \ge 0$, part (1) implies that
$\{f_t^{(k)}, f_1^{(k)}, f_2^{(k)}, \ldots \}$
are equicontinuous in $\C_\xi \cap B(0,r)$. Part (2)
trivially follows.
\end{proof}

Next we examine the Taylor expansions of $f_t$ and $f_n$ and
$\tilde{f}_n$ in neighbourhoods of $t$:
\begin{lem}
\label{lemTay}
Fix $\xi > 0$ as above,
$\{q_n\}_{n\ge1} \subset \R$ as in definition \ref{defmnpn}, and
$\{\xi_n\}_{n\ge1} \subset \R$ such that $|\xi_n q_n| \le \xi$ for
all $n$. Recall that $f_{t,n}'''(t) \to f_t'''(t) \neq 0$ (see lemma
\ref{lemftn'}), and let $u,r,v,s$ be the parameters in equations
(\ref{equnrnvnsn2}, \ref{equnrnvnsn3}).
Then, uniformly for $\alpha \in (-\pi,\pi]$:
\begin{align*}
(1) \hspace{.25cm}
f_t(t + \xi_n q_n e^{i \alpha})
&= f_t(t) + \tfrac13 \xi_n^3 e^{3 i \alpha} + O(|\xi_n|^3 |f_{t,n}'''(t) - f_t'''(t)| + |\xi_n|^4), \\
(2) \hspace{.2cm}
f_n(t + \xi_n q_n e^{i \alpha})
&= f_n(t) + n^{-\frac23} \xi_n e^{i \alpha} s
+ n^{-\frac13}  \xi_n^2 e^{2 i \alpha} v
+ \tfrac13 \xi_n^3 e^{3 i \alpha} \\
&+ O(n^{-1} |\xi_n| + n^{-\frac23} |\xi_n|^2 + n^{-\frac13} |\xi_n|^3 + |\xi_n|^4), \\
(3) \hspace{.2cm}
\tilde{f}_n(t + \xi_n q_n e^{i \alpha})
&= \tilde{f}_n(t) + n^{-\frac23} \xi_n e^{i \alpha} r
+ n^{-\frac13} \xi_n^2 e^{2 i \alpha} u
+ \tfrac13 \xi_n^3 e^{3 i \alpha} \\
&+ O(n^{-1} |\xi_n| + n^{-\frac23} |\xi_n|^2 + n^{-\frac13} |\xi_n|^3 + |\xi_n|^4),
\end{align*}
\end{lem}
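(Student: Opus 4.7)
The plan is to apply Taylor's theorem to order $4$ around $t$ for each of $f_t$, $f_n$, $\tilde f_n$, and then substitute the asymptotic expressions for the lower-order coefficients obtained in lemma \ref{lemNonAsyRoots} together with the normalisations from definition \ref{defmnpn}. The hypothesis $|\xi_n q_n|\le\xi$ guarantees that $h:=\xi_n q_n e^{i\alpha}\in B(t,\xi)\subset B(t,2\xi)\subset\C_\xi$, so all three functions are analytic in an open neighbourhood of the line segment from $t$ to $t+h$, and the required Taylor expansion with integral remainder makes sense. The first step is to observe that part (1) of lemma \ref{lemfnftConv} applied with $k=4$ and any $r$ large enough that $B(t,2\xi)\subset B(0,r)$ yields a constant $C>0$, independent of $n$, such that $|f_t^{(4)}(w)|,|f_n^{(4)}(w)|,|\tilde f_n^{(4)}(w)|\le C$ for all $w\in B(t,2\xi)$ and all $n$ sufficiently large. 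Hence the Taylor remainder of order $4$ is uniformly $O(|h|^4)=O(|\xi_n|^4)$ (using that $\{q_n\}$ is convergent, so $|h|=O(|\xi_n|)$).

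For part (1), since $t$ is a root of $f_t'$ of multiplicity $2$ we have $f_t'(t)=f_t''(t)=0$, so Taylor gives
\[
f_t(t+h)=f_t(t)+\tfrac{h^3}{6}f_t'''(t)+O(|\xi_n|^4).
\]
Writing $f_t'''(t)=f_{t,n}'''(t)+(f_t'''(t)-f_{t,n}'''(t))$ yields
\[
\tfrac{h^3}{6}f_t'''(t)=\tfrac{\xi_n^3 q_n^3 e^{3i\alpha}}{6}f_{t,n}'''(t)+O\bigl(|\xi_n|^3|f_{t,n}'''(t)-f_t'''(t)|\bigr),
\]
and by the defining relation $\tfrac16 q_n^3 f_{t,n}'''(t)=\tfrac13$ from definition \ref{defmnpn} the first term equals $\tfrac13\xi_n^3 e^{3i\alpha}$, which is the claimed form.

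For parts (2) and (3), Taylor's theorem gives
\[
f_n(t+h)=f_n(t)+h f_n'(t)+\tfrac{h^2}{2}f_n''(t)+\tfrac{h^3}{6}f_n'''(t)+O(|\xi_n|^4),
\]
and similarly for $\tilde f_n$. Substituting the estimates $f_n'(t)=n^{-2/3}s\, q_{1,n}+O(n^{-1})$, $f_n''(t)=n^{-1/3}v\, q_{2,n}+O(n^{-2/3})$, $f_n'''(t)=f_{t,n}'''(t)+O(n^{-1/3})$ from parts (1)--(3) of lemma \ref{lemNonAsyRoots}, and using the normalisations $q_n q_{1,n}=\tfrac12 q_n^2 q_{2,n}=1$ and $\tfrac16 q_n^3 f_{t,n}'''(t)=\tfrac13$ from definition \ref{defmnpn}, one obtains
\begin{align*}
h f_n'(t) &= n^{-2/3}\xi_n e^{i\alpha} s + O(n^{-1}|\xi_n|),\\
\tfrac{h^2}{2}f_n''(t) &= n^{-1/3}\xi_n^2 e^{2i\alpha} v + O(n^{-2/3}|\xi_n|^2),\\
\tfrac{h^3}{6}f_n'''(t) &= \tfrac13\xi_n^3 e^{3i\alpha} + O(n^{-1/3}|\xi_n|^3),
\end{align*}
whose sum with $f_n(t)$ and the $O(|\xi_n|^4)$ remainder is precisely the RHS of (2). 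Part (3) is identical upon replacing $(s,v)$ by $(r,u)$ and $f_n$ by $\tilde f_n$, using the corresponding $\tilde f_n$-estimates from lemma \ref{lemNonAsyRoots}. All three bounds are uniform in $\alpha\in(-\pi,\pi]$ because the constants in lemmas \ref{lemNonAsyRoots} and \ref{lemfnftConv} depend only on $t$, $\xi$, and the sequences $\{q_n\}$, $\{q_{1,n}\}$, $\{q_{2,n}\}$, and not on $\alpha$. There is no substantive obstacle here; the only point requiring a little care is the uniform $n$-independent bound on the fourth derivatives, which is supplied by lemma \ref{lemfnftConv}.
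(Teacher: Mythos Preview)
Your proposal is correct and follows essentially the same approach as the paper: Taylor expansion to third order with a fourth-derivative remainder bounded via part (1) of lemma \ref{lemfnftConv}, followed by substitution of the coefficient estimates from lemma \ref{lemNonAsyRoots} and simplification using the normalisations $q_n q_{1,n}=\tfrac12 q_n^2 q_{2,n}=1$ and $\tfrac16 q_n^3 f_{t,n}'''(t)=\tfrac13$ from definition \ref{defmnpn}. The paper's proof is organised identically, with the same intermediate displays and the same appeal to the convergence of $\{q_n\}$ to absorb the $q_n$-factors into the $O$-terms.
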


\begin{proof}
Consider (1). First recall that $f_t'(t) = f_t''(t) = 0$ and $f_t'''(t) \neq 0$.
Taylor's theorem and part (1) of lemma \ref{lemfnftConv} then give,
\begin{equation*}
f_t(t + \xi_n q_n e^{i \alpha})
= f_t(t) + \tfrac16 \xi_n^3 q_n^3 e^{3 i \alpha} f_t'''(t) + O(|\xi_n q_n|^4),
\end{equation*}
uniformly for $\alpha \in (-\pi,\pi]$. Therefore,
\begin{equation*}
f_t(t + \xi_n q_n e^{i \alpha})
= f_t(t) + \tfrac16 \xi_n^3 q_n^3  e^{3 i \alpha} f_{t,n}'''(t)
+ O(|\xi_n q_n|^3 |f_t'''(t) - f_{t,n}'''(t)| + |\xi_n q_n|^4),
\end{equation*}
uniformly for $\alpha \in (-\pi,\pi]$. Finally, recall (see definition
\ref{defmnpn}) that $\frac16 q_n^3 f_{t,n}'''(t) = \frac13$,
and that $\{q_n\}_{n\ge1}$ is a convergent sequence with a
non-zero limit. This proves (1).

Consider (2). First note, Taylor's theorem and part (1)
of lemma \ref{lemfnftConv} give,
\begin{equation*}
f_n(t + \xi_n q_n e^{i \alpha})
= f_n(t) + \xi_n q_n e^{i \alpha} f_n'(t)
+ \tfrac12 \xi_n^2 q_n^2 e^{2 i \alpha} f_n''(t)
+ \tfrac16 \xi_n^3 q_n^3 e^{3 i \alpha} f_n'''(t)
+ O(|\xi_n q_n|^4),
\end{equation*}
uniformly for $\alpha \in (-\pi,\pi]$. Parts (1-3) of lemma
\ref{lemNonAsyRoots} then give,
\begin{align*}
f_n(t + \xi_n q_n e^{i \alpha})
&= f_n(t) + n^{-\frac23} \xi_n q_n e^{i \alpha} s q_{1,n}
+ \tfrac12 n^{-\frac13} \xi_n^2 q_n^2 e^{2 i \alpha} v q_{2,n}
+ \tfrac16 \xi_n^3 q_n^3 e^{3 i \alpha} f_{t,n}'''(t) \\
&+ O(n^{-1} |\xi_n q_n| + n^{-\frac23} |\xi_n q_n|^2
+ n^{-\frac13} |\xi_n q_n|^3 + |\xi_n q_n|^4).
\end{align*}
Part (2) then follows from definition
\ref{defmnpn}. (3) follows similarly.
\end{proof}

A useful corollary is the following:
\begin{cor}
\label{corTay}
Fix $\{q_n\}_{n\ge1} \subset \R$
as in definition \ref{defmnpn}. Also fix $c>0$ and $\theta \in (\frac14,\frac13)$.
Then, uniformly in the appropriate sets:
\begin{enumerate}
\item
$f_n(w) = f_n(t) + O(n^{-1})$ for
$w \in B(t, c n^{-\frac13})$.
\item
$\tilde{f}_n(z) = \tilde{f}_n(t) + O(n^{-1})$ for
$z \in B(t, c n^{-\frac13})$.
\item
$n f_n(t + n^{-\frac13} q_n w)
= n f_n(t) +  w s  +  w^2 v + \tfrac13 w^3 + O(n^{1-4\theta})$
for $w \in \text{cl}(B(0,n^{\frac13-\theta}))$.
\item
$n \tilde{f}_n(t + n^{-\frac13} q_n z)
= n \tilde{f}_n(t) + z r  + z^2 u + \tfrac13 z^3 + O(n^{1-4\theta})$
for $z \in \text{cl}(B(0,n^{\frac13-\theta}))$.
\end{enumerate}
\end{cor}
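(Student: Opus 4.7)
The plan is to derive all four statements directly from lemma \ref{lemTay} by choosing the scaling parameter $\xi_n$ appropriately in each case and then tracking the error contributions. There is no real obstacle; the main work is bookkeeping of orders, and the only subtle point is identifying $n^{1-4\theta}$ as the dominant error in parts (3) and (4), which is precisely what forces the range $\theta \in (\tfrac14,\tfrac13)$.

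For parts (1) and (2), given $w \in B(t, cn^{-1/3})$, I would parametrize $w = t + \xi_n q_n e^{i\alpha}$ by setting $\xi_n = |w-t|/q_n$ and $\alpha = \arg(w-t)$. Since $\{q_n\}$ converges to a non-zero real number (definition \ref{defmnpn}), we have $|\xi_n| = O(n^{-1/3})$ uniformly in $w$, so in particular $|\xi_n q_n| \le \xi$ for all $n$ sufficiently large, and lemma \ref{lemTay}(2) applies. Each of the three explicit leading terms on the right-hand side is $O(n^{-1})$, and each of the four summands in the error bound is $O(n^{-4/3})$, yielding (1). Statement (2) follows identically from lemma \ref{lemTay}(3).

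For parts (3) and (4), given $w \in \mathrm{cl}(B(0, n^{1/3-\theta}))$, I would write $t + n^{-1/3} q_n w = t + (n^{-1/3}|w|)\, q_n\, e^{i\alpha}$ with $\alpha = \arg w$ and apply lemma \ref{lemTay}(2) with $\xi_n := n^{-1/3}|w|$, noting $|\xi_n q_n| \le q_n n^{-\theta} \le \xi$ eventually. The three leading factors $n^{-2/3}\xi_n e^{i\alpha}$, $n^{-1/3}\xi_n^2 e^{2i\alpha}$, and $\tfrac13 \xi_n^3 e^{3i\alpha}$ equal $n^{-1}w$, $n^{-1}w^2$, $\tfrac13 n^{-1} w^3$; multiplying through by $n$ therefore produces the explicit terms $ws + w^2 v + \tfrac13 w^3$ exactly.

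For the error analysis, multiplying the four terms in the $O$-bound of lemma \ref{lemTay}(2) by $n$ and using $|\xi_n| \le n^{-\theta}$ yields contributions of orders $n^{-\theta}$, $n^{1/3-2\theta}$, $n^{2/3-3\theta}$, and $n^{1-4\theta}$ respectively. A direct comparison shows that the condition $\theta < 1/3$ makes $1-4\theta$ strictly larger than each of $-\theta$, $1/3-2\theta$, $2/3-3\theta$, while $\theta > 1/4$ makes $1-4\theta$ negative; hence the four contributions consolidate into $O(n^{1-4\theta})$, proving (3). Statement (4) is obtained in exactly the same way from lemma \ref{lemTay}(3).
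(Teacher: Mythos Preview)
Your proposal is correct and follows essentially the same route as the paper: apply lemma \ref{lemTay} with the scaling $\xi_n$ of order $n^{-1/3}$ for (1)--(2) and of order at most $n^{-\theta}$ for (3)--(4), then compare the four error exponents to see that $1-4\theta$ dominates. The only cosmetic difference is that the paper fixes $\xi_n = c n^{-1/3}|q_n|^{-1}$ (respectively $\xi_n = n^{-\theta}$) once and for all, i.e.\ works on the boundary of the relevant ball and relies implicitly on the monotonicity of the error terms in $|\xi_n|$, whereas you parametrize each point individually; both are valid.
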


\begin{proof}
First recall (see definition \ref{defmnpn}) that $\{q_n\}_{n\ge1} \subset \R$
is a convergent sequence with a non-zero limit.
Then, (1) and (2) follow from parts (2,3) of lemma \ref{lemTay}
by choosing $\xi_n := c n^{-\frac13} |q_n|^{-1}$. Next, choose $\xi_n := n^{-\theta}$.
Part (2) of lemma \ref{lemTay} then gives,
\begin{align*}
n f_n(t + n^{-\theta} q_n e^{i \alpha})
&= n f_n(t) + n^{\frac13-\theta} e^{i \alpha} s
+ n^{\frac23-2\theta} e^{2 i \alpha} v
+ \tfrac13 n^{1-3\theta} e^{3 i \alpha} \\
&+ O(n^{-\theta} + n^{\frac13-2\theta} + n^{\frac23-3\theta} + n^{1-4\theta}),
\end{align*}
uniformly for $\alpha \in (-\pi,\pi]$. Finally recall that $\theta \in (\frac14,\frac13)$.
Therefore $-\theta < \frac13-2\theta < \frac23-3\theta < 1-4\theta < 0$,
and so $O(n^{-\theta} + n^{\frac13-2\theta} +
n^{\frac23-3\theta} + n^{1-4\theta}) = O(n^{1-4\theta})$. This proves (3).
Similarly, (4) follows from part (3) of lemma \ref{lemTay}.
\end{proof}

\subsection{Contours of descent/ascent}

The main results of this section, lemmas \ref{lemDesAsc1-12} and
\ref{lemDesAsc1-12Rem}, prove the existence of appropriate contours
of descent/ascent. These proofs are the most difficult part of the
paper, and will be given in section \ref{secCont}.

First we consider contours of steepest descent/ascent for $f_t$, $f_n$
and $\tilde{f}_n$. We do not define these rigorously, and refer the
interested reader to \cite{Mur84}, for example, for more information. We consider these for
$f_t$ and for $f_n$ for some fixed $n$, and state that $\tilde{f}_n$ can be
treated similarly to $f_n$. Note, irrespective of the choices of the branches
of the logarithms in equations (\ref{eqf2}, \ref{eqfn2}), that the
real-parts of $f_t$ and $f_n$ have unique continuous extensions
to $\C \setminus S$ and $\C \setminus S_n$ respectively, denoted by $R_t$
and $R_n$, and given by,
\begin{align}
\label{eqRt}
R_t(w)
&:= \int_{S_1} \log |w-x| \mu[dx]
- \int_{S_2} \log |w-x| (\l-\mu)[dx]
+ \int_{S_3} \log |w-x| \mu[dx], \\
\label{eqRn}
R_n(w)
&:= \frac1n \sum_{x \in S_{1,n}} \log |w-x|
-  \frac1n \sum_{x \in S_{2,n}} \log |w-x|
+  \frac1n \sum_{x \in S_{3,n}} \log |w-x|,
\end{align}
where $\log$ now represents natural logarithm. Then:
\begin{lem}
\label{lemDesAsc}
Fix $n \ge 1$ and $z \in \C \setminus S_n$, and let $D_n,A_n \subset \C \setminus S_n$
denote contours of steepest descent and ascent (respectively) for $f_n$ which pass
through $z$. Also, let $m_n := m_n(z)$ denote the multiplicity of $z$ as a root of
$f_n'$ (with the understanding that $m_n=0$ means $f_n'(z) \neq 0$), and let
$\alpha_n := \alpha_n(z) \in (-\pi,\pi]$ denote the principal value of the
argument of $f_n^{(m_n(w)+1)}(z)$. Then:
\begin{enumerate}
\item
$R_n$ strictly decreases along $D_n$, and strictly increases along $A_n$.
\item
The imaginary-part of $f_n$ is constant along both $D_n$ and $A_n$.
\item
There are $m_n+1$ possible directions for both $D_n$ and $A_n$ at $z$, given by
$((2i+1) \pi - \alpha_n)/(m_n+1)$ and $(2i \pi - \alpha_n)/(m_n+1)$
respectively for each $i \in \{0,1,\ldots,m_n\}$.
\item
$D_n$ is bounded, i.e., there exists a $C > 0$
for which $D_n \subset B(0,C)$.
\item
For all $x \in S_n$, there exists a $c(x) > 0$ for which
$D_n$ does not intersect $B(x,c(x))$ when $x \in S_{2,n}$, and
$A_n$ does not intersect $B(x,c(x))$ when $x \in S_{1,n} \cup S_{3,n}$.
\end{enumerate}
The equivalent objects for $f_t$, denoted $D_t, A_t, m_t, \alpha_t$,
also satisfy parts (1-4).
\end{lem}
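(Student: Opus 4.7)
The plan is to establish (1)--(2) via the Cauchy--Riemann equations, (3) via a Taylor expansion of $f_n$ at $z$, and (4)--(5) via the behaviour of $R_n$ at infinity and near its logarithmic singularities. Since $f_n$ is holomorphic on $\C \setminus S_n$ and $R_n$ given by (\ref{eqRn}) is single-valued (independent of branch-cut choices), the Cauchy--Riemann equations give $\nabla R_n \perp \nabla \mathrm{Im}(f_n)$ pointwise on $\C \setminus S_n$, with equal magnitudes. By definition, $D_n$ and $A_n$ are the integral curves of $-\nabla R_n$ and $\nabla R_n$ through $z$; both therefore lie along level sets of $\mathrm{Im}(f_n)$, giving (2). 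Parametrising by arc length, $\frac{d}{ds} R_n(\gamma(s)) = \mp |\nabla R_n|$, which is strictly negative/positive off the isolated zeros of $\nabla R_n$, yielding (1). The same reasoning applies to $f_t$.

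For (3), I would Taylor-expand $f_n$ at $z$. Setting $m = m_n(z)$, $w - z = r e^{i\theta}$, and $f_n^{(m+1)}(z) = |f_n^{(m+1)}(z)| e^{i\alpha_n}$, one has
\begin{equation*}
R_n(w) - R_n(z) = \frac{|f_n^{(m+1)}(z)|}{(m+1)!}\, r^{m+1} \cos\bigl((m+1)\theta + \alpha_n\bigr) + O(r^{m+2})
\end{equation*}
as $r \downarrow 0$. The $m+1$ descent directions at $z$ are the solutions of $\cos((m+1)\theta + \alpha_n) = -1$, namely $\theta = ((2i+1)\pi - \alpha_n)/(m+1)$ for $i = 0, \dots, m$; the ascent directions come from $\cos = +1$, giving $\theta = (2i\pi - \alpha_n)/(m+1)$. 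The argument for $f_t$ is identical.

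For (4), I would show $R_n(w) \to +\infty$ as $|w| \to \infty$. Expanding (\ref{eqRn}) for large $|w|$,
\begin{equation*}
R_n(w) = \frac{|S_{1,n}| + |S_{3,n}| - |S_{2,n}|}{n}\, \log|w| + O(|w|^{-1}).
\end{equation*}
Partitioning the $n$ particles as those above $v_n$, those below $v_n + s_n - n$, and those in $[v_n + s_n - n, v_n]$ (of cardinality $P$), one obtains $|S_{1,n}| + |S_{3,n}| = n - P$ and $|S_{2,n}| = (n - s_n + 1) - P$, so the coefficient equals $(s_n - 1)/n$, which is strictly positive for $n$ sufficiently large since $s_n/n \to \eta \in (0,1)$. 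Combined with (1), $D_n \subset \{R_n \le R_n(z)\}$ is bounded. The analogue for $f_t$ uses $\mu[S_1] + \mu[S_3] - (\l-\mu)[S_2] = \eta > 0$ by (\ref{eqS1S2S3In}).

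For (5), (\ref{eqRn}) shows that as $w \to x \in S_n$, $R_n(w) = \pm \tfrac{1}{n} \log|w-x| + O(1)$, with sign $+$ for $x \in S_{1,n} \cup S_{3,n}$ and sign $-$ for $x \in S_{2,n}$; hence $R_n \to -\infty$ at points of $S_{1,n} \cup S_{3,n}$ and $R_n \to +\infty$ at points of $S_{2,n}$. Choosing $c(x) > 0$ small enough that $R_n > R_n(z)$ on $B(x, c(x)) \setminus \{x\}$ for $x \in S_{2,n}$ (respectively $R_n < R_n(z)$ for $x \in S_{1,n} \cup S_{3,n}$), the monotonicity from (1) excludes $D_n$ (respectively $A_n$) from the ball. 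The only quantitatively delicate point is the combinatorial identity $|S_{1,n}| + |S_{3,n}| - |S_{2,n}| = s_n - 1$ used in (4), which however follows immediately from the partition above.
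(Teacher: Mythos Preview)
Your proposal is correct and follows essentially the same approach as the paper: parts (1)--(3) are handled as standard steepest-descent facts (the paper simply cites ``general considerations''), and parts (4)--(5) are proved via the large-$|w|$ and near-singularity asymptotics of $R_n$, exactly as you do. The one minor difference is that for (4) you compute the exact combinatorial identity $|S_{1,n}|+|S_{3,n}|-|S_{2,n}| = s_n-1$ directly, whereas the paper instead quotes the asymptotic $\tfrac1n(|S_{1,n}|-|S_{2,n}|+|S_{3,n}|)\to\eta>0$ from (\ref{eqS1nS2nS3nIn}); your version is slightly more self-contained but otherwise equivalent.
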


\begin{proof}
Parts (1-3) follow from general considerations about contours of
steepest descent/ascent. Consider (4) for $D_n$. First recall that
$|S_{1,n}| - |S_{2,n}| + |S_{3,n}| > 0$ (indeed, equations
(\ref{eqS1S2S3In}, \ref{eqS1nS2nS3nIn}) give
$\tfrac1n |S_{1,n}| + \tfrac1n |S_{3,n}| - \tfrac1n |S_{2,n}| \to \eta > 0$).
Equation (\ref{eqRn}) thus gives
$R_n(w) \sim \frac1n (|S_{1,n}| - |S_{2,n}| + |S_{3,n}| ) \log |w|$
for all $w \in \C \setminus S_n$ with $|w|$ sufficiently large.
Then, letting $t_n \in \C \setminus S_n$ be the initial point of $D_n$,
there exists a $C>0$ for which $R_n(w) > R_n(t_n)$
for all $w \in \C \setminus S_n$ with $|w| \ge C$. Also, part (1)
gives $R_n(w) \le R_n(t_n)$ for all $w$ on $D_n$. Part (4) for
$D_n$ easily follows. Part (4) for $D_t$ follows similarly.

Consider (5). Fix $x \in S_n$. Equation (\ref{eqRn}) then gives
$R_n(w) \sim s(x) \frac1n \log |w - x|$
for all $w \in \C \setminus S_n$ with $|w-x|$ sufficiently small,
where $s(x) = -1$ whenever $x \in S_{2,n}$ and $s(x) = 1$ whenever
$x \in S_{1,n} \cup S_{3,n}$. Thus, whenever $x \in S_{2,n}$,
letting $t_n \in \C \setminus S_n$ be the initial point of $D_n$,
there exists a $c(x)>0$ for which $R_n(w) > R_n(t_n)$ for all
$w \in \C \setminus S_n$ with $|w-x| < c(x)$. Also,
part (1) gives $R_n(w) \le R_n(t_n)$ for all $w$ on $D_n$.
Part (5) for $D_n$ easily follows. Part (5) for
$A_n$ follows similarly.
\end{proof}

We now discuss natural extensions of the real and imaginary parts of $f_t$
and $f_n$, from $\mathbb{H} = \{w \in \C : \text{Im}(w) > 0 \}$,
to $\R \setminus S$ and $\R \setminus S_n$ respectively. Our
motivation is the following: In section \ref{secCont} we will be examining
contours of steepest descent/ascent which are contained in $\mathbb{H}$
except (possibly) for the end-points. Part (2) of the previous lemma thus
show that these extensions are natural to examine.
First note, irrespective of the choices of the branches of the
logarithms in equations (\ref{eqft}, \ref{eqfn2}), that
\begin{align}
\label{eqImt}
&\text{Im} (f_t(w)) \\
\nonumber
&= \int_{S_1} \text{Arg} (w-x) \mu[dx]
- \int_{S_2} \text{Arg} (w-x) (\l-\mu)[dx]
+ \int_{S_3} \text{Arg} (w-x) \mu[dx], \\
\nonumber
&\text{Im} (f_n(w)) \\
\nonumber
&= \frac1n \sum_{x \in S_{1,n}} \text{Arg} (w-x)
- \frac1n \sum_{x \in S_{2,n}} \text{Arg} (w-x)
+ \frac1n \sum_{x \in S_{3,n}} \text{Arg} (w-x),
\end{align}
for all $w \in \mathbb{H}$, where $\text{Arg}$ represents
the principal value of the argument. Note that these has unique
extensions from $\mathbb{H}$ to $\R$, denoted by
$I_t$ and $I_n$ respectively and given by,
\begin{align}
\label{eqIt}
I_t(s)
&:= \pi \mu[ \{x \in S_1 : x > s\} ]
- \pi (\l-\mu) [ \{x \in S_2 : x > s\} ]
+ \pi \mu[ \{x \in S_3 : x > s\} ], \\
\label{eqIn}
I_n(s)
&:= \frac{\pi}n  | \{ x \in S_{1,n} : x > s\} |
-  \frac{\pi}n  | \{ x \in S_{2,n} : x > s\} |
+  \frac{\pi}n  | \{ x \in S_{3,n} : x > s\} |,
\end{align}
for all $s \in \R$. Note, since $\mu \le \l$ (see assumption
\ref{assWeakConv}), that $I_t: \R \to \R$ is continuous. Also,
equations (\ref{eqS1S2S3In}, \ref{eqf'domain2}) imply that
$I_t: \R \to \R$ is constant in sub-intervals of $\R \setminus S = J \cup K$,
is strictly decreasing in the interior of $S_1$ and $S_3$,
and is strictly increasing in the interior of $S_2$. Similarly,
equation (\ref{eqfn'domain}) implies that $I_n : \R \to \R$, is
constant in sub-intervals of $\R \setminus S_n = J_n \cup K_n$. Finally note that
each discrete element of $S_n$ acts as a point of discontinuity for
$I_n$: $I_n$ decreases by $\frac{\pi}n$ at each point of $S_{1,n}$
and $S_{3,n}$, and increases by $\frac{\pi}n$ at each point of
$S_{2,n}$. These sets, and the above  extensions, are depicted in
figure \ref{figImfnExt}.

\begin{figure}[t]
\centering
\begin{tikzpicture}[scale=0.6]

\draw [dotted] (-1.5,10) --++ (1,0);
\draw (-.5,10) --++(.5,0);
\draw plot [smooth, tension=1] coordinates { (0,10) (3,8) (4,6)};
\draw (4,6) --++(2,0);
\draw plot [smooth, tension=1] coordinates { (6,6) (7.5,8) (10,10)};
\draw (10,10) --++ (2,0);
\draw plot [smooth, tension=1] coordinates { (12,10) (14.5,8) (16,6)};
\draw (16,6) --++ (.5,0);
\draw [dotted] (16.5,6) --++ (1,0);

\draw [dashed] (0,10) --++ (0,-2.5);
\draw (0,7.2) node {\scriptsize $\inf S_3$};
\draw [dashed] (4,6) --++ (0,-1.5);
\draw (4,4.2) node {\scriptsize $\sup S_3$};
\draw [dashed] (6,6) --++ (0,-1.5);
\draw (6,4.2) node {\scriptsize $\inf S_2$};
\draw [dashed] (10,10) --++ (0,-2.5);
\draw (10,7.2) node {\scriptsize $\sup S_2$};
\draw [dashed] (12,10) --++ (0,-2.5);
\draw (12,7.2) node {\scriptsize $\inf S_1$};
\draw [dashed] (16,6) --++ (0,-1.5);
\draw (16,4.2) node {\scriptsize $\sup S_1$};

\draw [thick,decorate,decoration={brace,amplitude=10pt,mirror},xshift=0.2pt,yshift=-0.2pt]
(-3,3.8) -- (-.1,3.8) node[black,midway,yshift=-0.6cm] {\scriptsize $J_2$};
\draw [thick,decorate,decoration={brace,amplitude=10pt,mirror},xshift=0.2pt,yshift=-0.2pt]
(.1,3.8) -- (3.9,3.8) node[black,midway,yshift=-0.6cm] {\scriptsize $\;$ non-increasing};
\draw [thick,decorate,decoration={brace,amplitude=10pt,mirror},xshift=0.4pt,yshift=-0.4pt]
(4,3.8) -- (6,3.8) node[black,midway,yshift=-0.6cm] {\scriptsize $J_4$};
\draw [thick,decorate,decoration={brace,amplitude=10pt,mirror},xshift=0.2pt,yshift=-0.2pt]
(6.1,3.8) -- (9.9,3.8) node[black,midway,yshift=-0.6cm] {\scriptsize non-decreasing};
\draw [thick,decorate,decoration={brace,amplitude=10pt,mirror},xshift=0.4pt,yshift=-0.4pt]
(10,3.8) -- (12,3.8) node[black,midway,yshift=-0.6cm] {\scriptsize $J_3$};
\draw [thick,decorate,decoration={brace,amplitude=10pt,mirror},xshift=0.2pt,yshift=-0.2pt]
(12.1,3.8) -- (15.9,3.8) node[black,midway,yshift=-0.6cm] {\scriptsize non-increasing $\;$};
\draw [thick,decorate,decoration={brace,amplitude=10pt,mirror},xshift=0.2pt,yshift=-0.2pt]
(16.1,3.8) -- (18.9,3.8) node[black,midway,yshift=-0.6cm] {\scriptsize $J_1$};

\draw (1,10.4) node {\scriptsize $\pi \eta = \pi (\mu[S_1] - (\l-\mu)[S_2] + \mu[S_3])$};
\draw (5,9) node {\scriptsize $\pi (\mu[S_1] - (\l-\mu)[S_2])$};
\draw[arrows=->,line width=.5pt](5,8.8)--(5,6);
\draw (11,10.4) node {\scriptsize $\pi \mu[S_1]$};
\draw (16.75,6.3) node {\scriptsize $0$};

\draw [dotted] (-1.5,0) --++ (1,0);
\draw (-.5,0) --++(.5,0);
\draw [dashed] (0,0) --++ (0,-2.4);
\draw (0,-1) --++(1,0);
\draw [dashed] (1,-1) --++ (0,-2.4);
\draw (1,-2) --++(1,0);
\draw [dashed] (2,-2) --++ (0,-2.4);
\draw [dotted] (2,-2) --++ (1,-1);
\draw [dashed] (3,-3) --++ (0,-1.4);
\draw (3,-3) --++(1,0);
\draw [dashed] (4,-3) --++ (0,-2.4);
\draw (4,-4) --++(2,0);
\draw [dashed] (6,-3) --++ (0,-2.4);
\draw (6,-3) --++(1,0);
\draw [dashed] (7,-3) --++ (0,-1.4);
\draw [dotted] (7,-3) --++ (1,1);
\draw [dashed] (8,-2) --++ (0,-1.4);
\draw (8,-2) --++ (1,0);
\draw [dashed] (9,-1) --++ (0,-1.4);
\draw (9,-1) --++ (1,0);
\draw [dashed] (10,0) --++ (0,-2.4);
\draw (10,0) --++ (2,0);
\draw [dashed] (12,0) --++ (0,-2.4);
\draw (12,-1) --++ (1,0);
\draw [dashed] (13,-1) --++ (0,-1.4);
\draw [dotted] (13,-1) --++ (1,-1);
\draw [dashed] (14,-2) --++ (0,-1.4);
\draw (14,-2) --++ (1,0);
\draw [dashed] (15,-2) --++ (0,-2.4);
\draw (15,-3) --++ (1,0);
\draw [dashed] (16,-3) --++ (0,-2.4);
\draw (15,-3) --++ (1,0);
\draw (16,-4) --++ (.5,0);
\draw [dotted] (16.5,-4) --++ (1,0);

\draw (-1.2,-2.8) node {\scriptsize $\min S_{3,n} = \frac1n x_n^{(n)}$};
\draw (1,-3.8) node {\scriptsize $\frac1n x_{n-1}^{(n)}$};
\draw (2,-4.8) node {\scriptsize $\frac1n x_{n-2}^{(n)}$};
\draw (3.9,-5.8) node {\scriptsize $\max S_{3,n} \;$};
\draw (6.1,-5.8) node {\scriptsize $\; \min S_{2,n}$};
\draw (9.9,-2.8) node {\scriptsize $\max S_{2,n} \;$};
\draw (12.1,-2.8) node {\scriptsize $\; \min S_{1,n}$};
\draw (14,-3.8) node {\scriptsize $\frac1n x_3^{(n)}$};
\draw (15,-4.8) node {\scriptsize $\frac1n x_2^{(n)}$};
\draw (17.1,-5.8) node {\scriptsize $\frac1n x_1^{(n)} = \max S_{1,n}$};

\draw [thick,decorate,decoration={brace,amplitude=10pt,mirror},xshift=0.2pt,yshift=-0.2pt]
(-3,-6.2) -- (0,-6.2) node[black,midway,yshift=-0.6cm] {\scriptsize $J_{2,n}$};
\draw [thick,decorate,decoration={brace,amplitude=10pt,mirror},xshift=0.4pt,yshift=-0.4pt]
(4,-6.2) -- (6,-6.2) node[black,midway,yshift=-0.6cm] {\scriptsize $J_{4,n}$};
\draw [thick,decorate,decoration={brace,amplitude=10pt,mirror},xshift=0.4pt,yshift=-0.4pt]
(10,-6.2) -- (12,-6.2) node[black,midway,yshift=-0.6cm] {\scriptsize $J_{3,n}$};
\draw [thick,decorate,decoration={brace,amplitude=10pt,mirror},xshift=0.2pt,yshift=-0.2pt]
(16,-6.2) -- (19,-6.2) node[black,midway,yshift=-0.6cm] {\scriptsize $J_{1,n}$};

\draw (1,.4) node {\scriptsize $\frac{\pi}n (|S_{1,n}|-|S_{2,n}|+|S_{3,n}|)$};
\draw (5,-2) node {\scriptsize $\frac{\pi}n (|S_{1,n}|-|S_{2,n}|)$};
\draw[arrows=->,line width=.5pt](5,-2.2)--(5,-4);
\draw (11,.4) node {\scriptsize $\frac{\pi}n |S_{1,n}|$};
\draw (16.75,-3.7) node {\scriptsize $0$};

\end{tikzpicture}
\caption{The functions given in equations (\ref{eqIt}, \ref{eqIn}),
with $I_t$ on the top and $I_n$ on the bottom.
The identity $\eta = \mu[S_1] - (\l-\mu)[S_2] + \mu[S_3]$
is given in equation (\ref{eqS1S2S3In}). All jumps in $I_n$ are of
size $\frac{\pi}n$.}
\label{figImfnExt}
\end{figure}
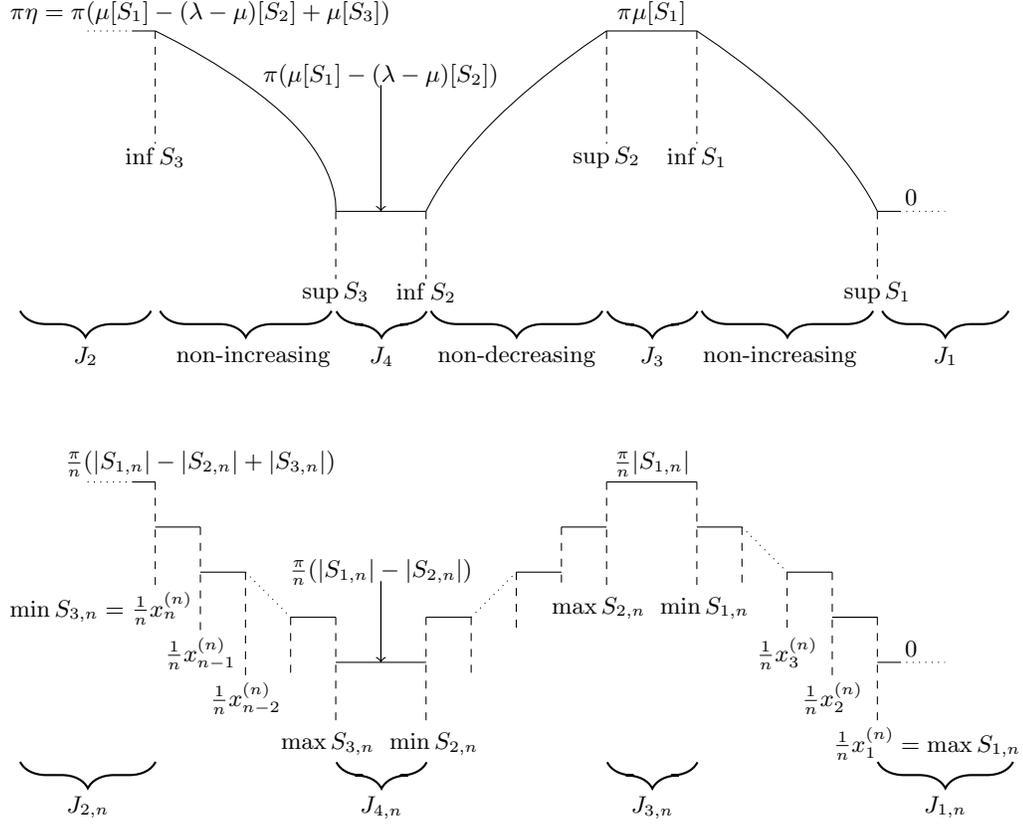

Next note, equations (\ref{eqRt}, \ref{eqRn}) give,
\begin{align*}
R_t(s)
&= \int_{S_1} \log |s-x| \mu[dx]
- \int_{S_2} \log |s-x| (\l-\mu)[dx]
+ \int_{S_3} \log |s-x| \mu[dx], \\
R_n(s)
&= \frac1n \sum_{x \in S_{1,n}} \log |s-x|
- \frac1n \sum_{x \in S_{2,n}} \log |s-x|
+ \frac1n \sum_{x \in S_{3,n}} \log |s-x|,
\end{align*}
for all $s \in \R \setminus S$ and  $s \in \R \setminus S_n$ respectively.
Thus the restrictions are real-valued, and we can regard them as functions from
$\R \setminus S = J \cup K$ and $\R \setminus S_n = J_n \cup K_n$ (respectively)
to $\R$. Moreover, note that,
\begin{equation}
\label{eqRnRes}
(R_t |_{\R \setminus S})' = (f_t') |_{\R \setminus S}
\hspace{0.5cm} \text{and} \hspace{0.5cm}
(R_n |_{\R \setminus S_n})' = (f_n') |_{\R \setminus S_n},
\end{equation}
and similarly for the higher order derivatives. Above, the functions on the
LHSs are the `real-derivative' of the real-valued restrictions,
and the functions on the RHSs are those given in equations
(\ref{eqft'2}, \ref{eqfn'}) (respectively) restricted to $\R \setminus S$
and to $\R \setminus S_n$.

We now state the main results of this section,
which will be proven in section \ref{secCont}. Recall that $L_t$ is
the largest open sub-interval of $\R \setminus S $ which contains $t$
(see equation (\ref{eqIntervalt})).
Also recall that lemma \ref{lemCases} splits the conditions of
theorem \ref{thmAiry} into 12 exhaustive cases. These lemmas prove
the existence of appropriate contours of descent/ascent for each case:
\begin{lem}
\label{lemDesAsc1-12}
Fix $\theta \in (\frac14,\frac13)$, and $\xi>0$ sufficiently small such
that equations (\ref{eqxi}, \ref{eqxi1}, \ref{eqxi4}, \ref{eqxi5}) are satisfied.
Then $(t-4\xi,t+4\xi) \subset L_t$, and $B(t,n^{-\theta} |q_n|) \subset B(t,\xi)$,
where $\{q_n\}_{n\ge1} \subset \R$ is given in definition \ref{defmnpn}.
Moreover, in each of the cases of lemma \ref{lemCases}, there exists simply
contours as shown in figure \ref{figDesAsc1-12} with the following properties:
\begin{enumerate}
\item
$\g_{1,n}^+$ and $\G_{1,n}^+$ both start at $t$, end in the interior of the intervals
shown in figure \ref{figDesAsc1-12}, are otherwise contained in $\mathbb{H}$, do not
intersect except at $t$, and are independent of $n$ outside $\text{cl}(B(t,\xi))$. $\g_{2,n}^+$
and $\G_{2,n}^+$ either start in $(t+\xi, \sup L_t)$ or in $(\inf L_t, t-\xi)$ as shown in
figure \ref{figDesAsc1-12}, end in the interior of the intervals shown, are otherwise
contained in $\mathbb{H}$, do not intersect $\g_{1,n}^+$ or $\G_{1,n}^+$ or
$\text{cl}(B(t,\xi))$, and are independent of $n$ everywhere.
\item
$\g_{1,n}^+ \cap B(t,n^{-\theta} |q_n|)$ and $\G_{1,n}^+ \cap B(t,n^{-\theta} |q_n|)$
are straight lines from $t$ to points $d_{1,n} \in \partial B(t,n^{-\theta} |q_n|)$ and
$\tilde{a}_{1,n} \in \partial B(t,n^{-\theta} |q_n|)$ (respectively) for which,
letting $\text{Arg}(\cdot)$ be the principal value of the argument,
$\text{Arg} (d_{1,n} - t) = \frac\pi3 + O(n^{-\frac13+\theta})$ and
$\text{Arg} (\tilde{a}_{1,n} - t) = \frac{2\pi}3 + O(n^{-\frac13+\theta})$
in cases (1,2,7,8,9,10), and
$\text{Arg} (d_{1,n} - t) = \frac{2\pi}3 + O(n^{-\frac13+\theta})$ and
$\text{Arg} (\tilde{a}_{1,n} - t) = \frac\pi3 + O(n^{-\frac13+\theta})$
in cases (3,4,5,6,11,12).
\item
$\text{Re}(f_n(w)) \le \text{Re}(f_n(d_{1,n}))$ for all
$w \in \g_{1,n}^+ \setminus B(t,n^{-\theta} |q_n|)$ and $w \in \g_{2,n}^+$.
\item
$\text{Re}(\tilde{f}_n(z)) \ge \text{Re}(\tilde{f}_n(\tilde{a}_{1,n}))$
for all $z \in \G_{1,n}^+ \setminus B(t,n^{-\theta} |q_n|)$ and
$z \in \G_{2,n}^+$.
\item
$|w-z|^{-1} = O(n^\theta)$ uniformly for
$w \in \g_{1,n}^+$ and $w \in \g_{2,n}^+$, and uniformly for
$z \in \G_{1,n}^+ \setminus B(t,n^{-\theta} |q_n|)$
and $z \in \G_{2,n}^+$. $|w-z|^{-1} = O(n^\theta)$ uniformly for
$w \in \g_{1,n}^+ \setminus B(t,n^{-\theta} |q_n|)$ and $w \in \g_{2,n}^+$,
and uniformly for $z \in \G_{1,n}^+$ and $z \in \G_{2,n}^+$.
\item
$|\g_{1,n}^+| = O(1)$ and $|\G_{1,n}^+| = O(1)$, where $|\cdot|$ represents length.
\end{enumerate}
\end{lem}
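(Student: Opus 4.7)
The plan is to construct each contour as a concatenation of two pieces: a short straight segment emanating from $t$ inside $B(t,n^{-\theta}|q_n|)$, and a global arc (independent of $n$) lying outside this small ball. For the local piece, I would take $\gamma_{1,n}^+ \cap B(t,n^{-\theta}|q_n|)$ to be the straight segment from $t$ at angle $\pi/3$ when $q_n>0$ (cases with $f_t'''(t)>0$) and $2\pi/3$ when $q_n<0$ (cases with $f_t'''(t)<0$), and conversely for $\Gamma_{1,n}^+$. The endpoints $d_{1,n}$ and $\tilde a_{1,n}$ lie on $\partial B(t,n^{-\theta}|q_n|)$, and the asserted arguments up to $O(n^{-1/3+\theta})$ follow because the segments themselves are fixed lines and the radius $n^{-\theta}|q_n|$ dominates the correction terms. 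The key verification on this local piece is that $\operatorname{Re}(f_n)$ strictly decreases along $\gamma_{1,n}^+$ and $\operatorname{Re}(\tilde f_n)$ strictly increases along $\Gamma_{1,n}^+$: this is immediate from Corollary~\ref{corTay}, since along a ray $w=t+re^{i\alpha}$ with $\alpha\in\{\pi/3,2\pi/3\}$ chosen as above, the leading cubic term $\tfrac13 w^3$ in the rescaled Taylor expansion has strictly decreasing (resp.\ increasing) real part, dominating the $O(n^{1-4\theta})$ remainder since $\theta<\tfrac13$ and the segment has length $\ge cn^{-1/3}$. Separation $|w-z|\ge cn^{-\theta}$ for $w\in\gamma_{1,n}^+$ and $z\in\Gamma_{1,n}^+$ inside the ball is immediate from the fixed $\pi/3$ angle between the two rays.

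For the global piece, the idea is to work with $f_t$ rather than $f_n$ and transfer. Using Lemma~\ref{lemfnftConv}, $f_n^{(k)}\to f_t^{(k)}$ uniformly on compacta of $\mathbb{C}_\xi$, so it suffices to construct an $n$-independent contour on which $\operatorname{Re}(f_t)$ strictly decreases (strictly increases for $\tilde f_t$); the corresponding bound for $f_n$ then follows for all $n$ large by uniform approximation and the strict monotonicity on the compact arc. I would take the global arc of $\gamma_{1,n}^+$ to be (a slight modification of) the steepest descent contour of $f_t$ issuing from $t$ at angle $\pi/3$ or $2\pi/3$, extended in $\mathbb{H}$ until it meets the real line. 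By Lemma~\ref{lemDesAsc}(2) this contour is a level curve of $\operatorname{Im}(f_t)$, and its terminal point on $\mathbb{R}$ can therefore be read off from the explicit extension $I_t$ in~\eqref{eqIt}: since $I_t$ is constant on each interval of $\mathbb{R}\setminus S$, strictly monotone on the interiors of the $S_i$, and continuous across $\mathbb{R}$, the required terminal interval from figure~\ref{figDesAsc1-12} is forced by matching $I_t$-values. When $L_t$ is a $J$-interval the contour terminates there; when $L_t$ is a $K$-interval, the root structure from Lemma~\ref{lemf'}(1) supplies a second real saddle of $f_t'$ in $L_t$, and I would use the second descent contour from this saddle to construct $\gamma_{2,n}^+$ (starting from the real segment indicated in the figure). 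Lemma~\ref{lemDesAsc}(4,5) bounds the length of each descent contour and guarantees avoidance of $S$, yielding (6). Property (5) follows because the local $\pi/3$ vs $2\pi/3$ angles guarantee $|w-z|\ge c n^{-\theta}$ near $t$, while outside $B(t,\xi)$ the four contours are $n$-independent and pairwise disjoint by construction, hence separated by a uniform positive constant.

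The main obstacle is the case-by-case verification, especially for the twelve cases of Lemma~\ref{lemCases}. Cases (1), (2), (12), (11) are cleanest, since $L_t$ is bounded away from $\chi$ and $\chi+\eta-1$ and the descent contour exits naturally into $J_1,J_2$ or a neighbouring $K$-interval. The delicate cases are (4), (5), (8), (9), where $\chi$ or $\chi+\eta-1$ lies inside $L_t$ and therefore close to the contour endpoints; here Assumption~\ref{asscases} is essential, as it guarantees that $\chi$ (resp.\ $\chi+\eta-1$) coincides with the endpoint of $S_2$ (resp.\ $S_2$ or $S_3$) rather than lying in its interior, ensuring the branch cut structure of $f_t$ behaves compatibly with the chosen logarithm conventions along the contour near the terminal point. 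I would handle cases (3), (6), (7), (10), where $L_t$ is a bounded $K$-interval, by a symmetric argument using the second root of $f_t'$ in $L_t$ to anchor $\gamma_{2,n}^+$ (resp.\ $\Gamma_{2,n}^+$). In all twelve cases, the construction for $\tilde f_n$ is the mirror image under swapping descent $\leftrightarrow$ ascent and $\pi/3\leftrightarrow 2\pi/3$, so once the plan is executed for $f_n$ the statement for $\tilde f_n$ follows with no further ideas; the care lies almost entirely in drawing the right picture in each case and confirming that the global steepest descent/ascent contours of $f_t$ do exit $\mathbb{H}$ into the specified real intervals.
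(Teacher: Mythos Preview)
Your overall architecture is close, but there is a genuine gap in the intermediate annulus $B(t,\xi)\setminus B(t,n^{-\theta}|q_n|)$. You propose to use the steepest descent contour $D_t$ of $f_t$ there and transfer the descent inequality to $f_n$ via the uniform convergence of Lemma~\ref{lemfnftConv}. This fails: the bound $|f_n-f_t|=o(1)$ on $\text{cl}(B(t,\xi))$ carries no explicit rate (Assumption~\ref{assWeakConv} is only weak convergence), whereas just outside the small ball, at distance $r\sim n^{-\theta}$ from $t$, the descent of $\text{Re}(f_t)$ along $D_t$ is only of order $r^3\sim n^{-3\theta}$. The $o(1)$ error can therefore swamp the descent, and you cannot conclude $\text{Re}(f_n(w))\le\text{Re}(f_n(d_{1,n}))$ on this piece. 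The paper's remedy is to insert a third tier: between $\partial B(t,n^{-\theta}|q_n|)$ and $\partial B(t,\xi)$ it follows the \emph{actual} steepest descent contour $D_n$ of $f_n$ (whose existence and global behaviour are established in Lemma~\ref{lemConAscDesCase1}), on which $\text{Re}(f_n)$ decreases exactly by Lemma~\ref{lemDesAsc}(1); only outside $B(t,\xi)$, where the descent of $\text{Re}(f_t)$ has an $n$-independent lower bound $\sim\xi^3$, does it switch to $D_t$ and transfer via uniform convergence. A short arc of $\partial B(t,\xi)$ joins $D_n$ to $D_t$. (A fix closer in spirit to your sketch would be to extend the straight ray at angle $\pi/3$ all the way to $\partial B(t,\xi)$ and verify descent of $\text{Re}(f_n)$ on it directly from Lemma~\ref{lemTay}, not from uniform convergence; that also works, but it is not what you wrote.)

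There is a second gap in your handling of the $K$-interval cases. Lemma~\ref{lemf'}(1) says $f_t'$ has \emph{at most} one root in $L_t\setminus\{t\}$, not exactly one, so the ``second real saddle'' you invoke to anchor $\gamma_{2,n}^+$ or $\Gamma_{2,n}^+$ need not exist. The paper (Lemma~\ref{lemfn'Case2} and Lemma~\ref{lemConAscDesCase2}) splits into possibilities (A) (extra root $s_t$ exists) and (B) (no extra root); in possibility (B) it picks an arbitrary $s_t\in(t,\overline{L_t})$ and a small $\epsilon>0$, and launches the auxiliary ascent contour from the non-critical point $s_t+i\epsilon\in\mathbb{H}$, using $\text{Im}(f_t(s_t+i\epsilon))>\text{Im}(f_t(s_t))$ to keep it disjoint from $D_t,A_t$.
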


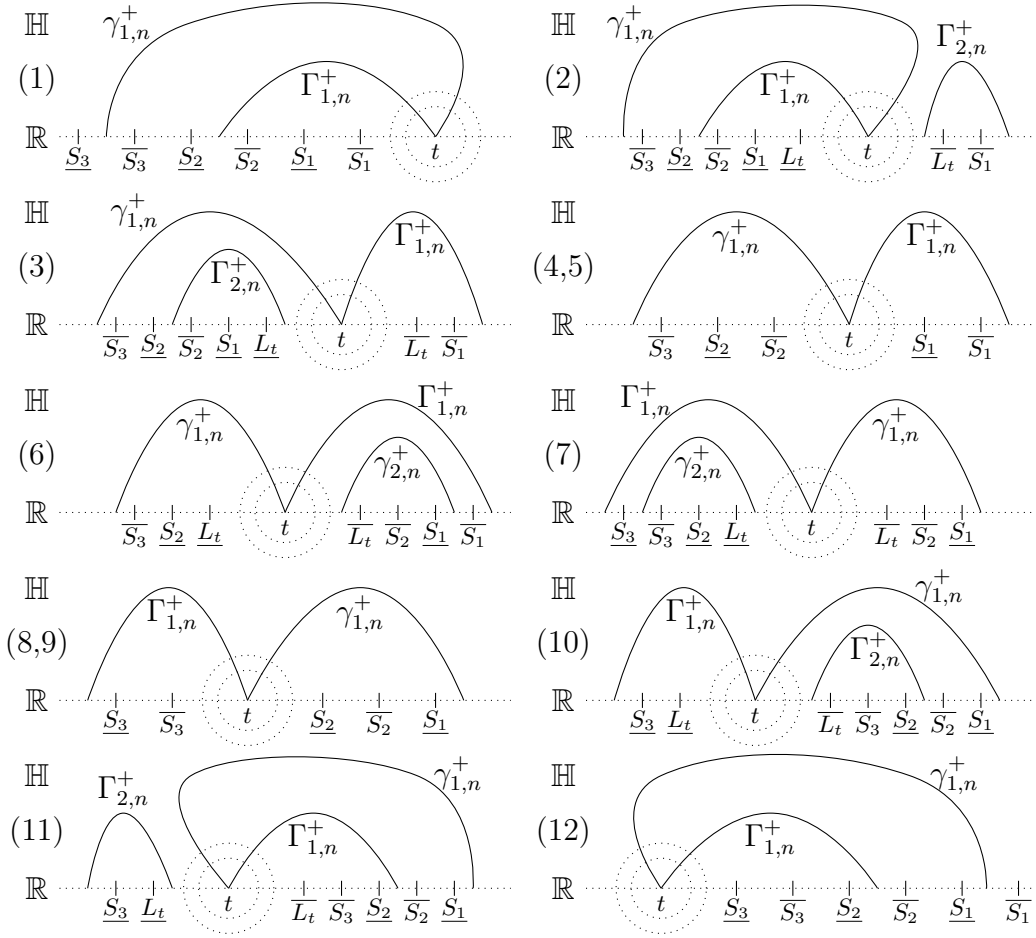
\begin{figure}[t]
\centering
\begin{tikzpicture};

\draw [dotted] (0,10) --++(6,0);
\draw (-.3,11.5) node {$\mathbb{H}$};
\draw (-.3,10.75) node {(1)};
\draw (-.3,10) node {$\R$};
\draw (.25,9.9) --++(0,.2);
\draw (.25,9.7) node {\scriptsize $\underline{S_3}$};
\draw (1,9.9) --++(0,.2);
\draw (1,9.7) node {\scriptsize $\overline{S_3}$};
\draw (1.75,9.9) --++(0,.2);
\draw (1.75,9.7) node {\scriptsize $\underline{S_2}$};
\draw (2.5,9.9) --++(0,.2);
\draw (2.5,9.7) node {\scriptsize $\overline{S_2}$};
\draw (3.25,9.9) --++(0,.2);
\draw (3.25,9.7) node {\scriptsize $\underline{S_1}$};
\draw (4,9.9) --++(0,.2);
\draw (4,9.7) node {\scriptsize $\overline{S_1}$};
\draw [dotted] (5,10) circle (.4cm);
\draw [dotted] (5,10) circle (.6cm);
\draw (5,9.8) node {\scriptsize $t$};

\draw plot [smooth, tension=.9] coordinates
{ (5,10) (5,11.5) (1.7,11.5) (.625,10) };
\draw (.9,11.5) node {$\g_{1,n}^+$};
\draw plot [smooth, tension=1] coordinates
{ (5,10) (3.55,11) (2.125,10) };
\draw (3.55,10.65) node {$\G_{1,n}^+$};

\draw [dotted] (7,10) --++(6,0);
\draw (6.7,11.5) node {$\mathbb{H}$};
\draw (6.7,10.75) node {(2)};
\draw (6.7,10) node {$\R$};
\draw (7.75,9.9) --++(0,.2);
\draw (7.75,9.7) node {\scriptsize $\overline{S_3}$};
\draw (8.25,9.9) --++(0,.2);
\draw (8.25,9.7) node {\scriptsize $\underline{S_2}$};
\draw (8.75,9.9) --++(0,.2);
\draw (8.75,9.7) node {\scriptsize $\overline{S_2}$};
\draw (9.25,9.9) --++(0,.2);
\draw (9.25,9.7) node {\scriptsize $\underline{S_1}$};
\draw (9.85,9.9) --++(0,.2);
\draw (9.75,9.7) node {\scriptsize $\underline{L_t}$};
\draw [dotted] (10.75,10) circle (.4cm);
\draw [dotted] (10.75,10) circle (.6cm);
\draw (10.75,9.8) node {\scriptsize $t$};
\draw (11.75,9.9) --++(0,.2);
\draw (11.75,9.7) node {\scriptsize $\overline{L_t}$};
\draw (12.25,9.9) --++(0,.2);
\draw (12.25,9.7) node {\scriptsize $\overline{S_1}$};

\draw plot [smooth, tension=.8] coordinates
{ (10.75,10) (11.25,11.5) (8.25,11.5) (7.5,10) };
\draw (7.6,11.5) node {$\g_{1,n}^+$};
\draw plot [smooth, tension=1] coordinates
{ (10.75,10) (9.65,11) (8.5,10) };
\draw (9.65,10.65) node {$\G_{1,n}^+$};
\draw plot [smooth, tension=1] coordinates
{ (11.5,10) (12,11) (12.625,10) };
\draw (12,11.3) node {$\G_{2,n}^+$};

\draw [dotted] (0,7.5) --++(6,0);
\draw (-.3,9) node {$\mathbb{H}$};
\draw (-.3,8.25) node {(3)};
\draw (-.3,7.5) node {$\R$};
\draw (.75,7.4) --++(0,.2);
\draw (.75,7.2) node {\scriptsize $\overline{S_3}$};
\draw (1.25,7.4) --++(0,.2);
\draw (1.25,7.2) node {\scriptsize $\underline{S_2}$};
\draw (1.75,7.4) --++(0,.2);
\draw (1.75,7.2) node {\scriptsize $\overline{S_2}$};
\draw (2.25,7.4) --++(0,.2);
\draw (2.25,7.2) node {\scriptsize $\underline{S_1}$};
\draw (2.75,7.4) --++(0,.2);
\draw (2.75,7.2) node {\scriptsize $\underline{L_t}$};
\draw [dotted] (3.75,7.5) circle (.4cm);
\draw [dotted] (3.75,7.5) circle (.6cm);
\draw (3.75,7.3) node {\scriptsize $t$};
\draw (4.75,7.4) --++(0,.2);
\draw (4.75,7.2) node {\scriptsize $\overline{L_t}$};
\draw (5.25,7.4) --++(0,.2);
\draw (5.25,7.2) node {\scriptsize $\overline{S_1}$};

\draw plot [smooth, tension=1] coordinates
{ (3.75,7.5) (2,9) (.5,7.5) };
\draw (1,9) node {$\g_{1,n}^+$};
\draw plot [smooth, tension=1] coordinates
{ (3.75,7.5) (4.7,9) (5.625,7.5) };
\draw (4.8,8.65) node {$\G_{1,n}^+$};
\draw plot [smooth, tension=1] coordinates
{ (3,7.5) (2.25,8.5) (1.5,7.5) };
\draw (2.35,8.15) node {$\G_{2,n}^+$};

\draw [dotted] (7,7.5) --++(6,0);
\draw (6.7,9) node {$\mathbb{H}$};
\draw (6.7,8.25) node {(4,5)};
\draw (6.7,7.5) node {$\R$};
\draw (8,7.4) --++(0,.2);
\draw (8,7.2) node {\scriptsize $\overline{S_3}$};
\draw (8.75,7.4) --++(0,.2);
\draw (8.75,7.2) node {\scriptsize $\underline{S_2}$};
\draw (9.5,7.4) --++(0,.2);
\draw (9.5,7.2) node {\scriptsize $\overline{S_2}$};
\draw [dotted] (10.5,7.5) circle (.4cm);
\draw [dotted] (10.5,7.5) circle (.6cm);
\draw (10.5,7.3) node {\scriptsize $t$};
\draw (11.5,7.4) --++(0,.2);
\draw (11.5,7.2) node {\scriptsize $\underline{S_1}$};
\draw (12.25,7.4) --++(0,.2);
\draw (12.25,7.2) node {\scriptsize $\overline{S_1}$};

\draw plot [smooth, tension=1] coordinates
{ (10.5,7.5) (9,9) (7.625,7.5) };
\draw (9,8.65) node {$\g_{1,n}^+$};
\draw plot [smooth, tension=1] coordinates
{ (10.5,7.5) (11.5,9) (12.625,7.5) };
\draw (11.6,8.65) node {$\G_{1,n}^+$};

\draw [dotted] (0,5) --++(6,0);
\draw (-.3,6.5) node {$\mathbb{H}$};
\draw (-.3,5.75) node {(6)};
\draw (-.3,5) node {$\R$};
\draw (1,4.9) --++(0,.2);
\draw (1,4.7) node {\scriptsize $\overline{S_3}$};
\draw (1.5,4.9) --++(0,.2);
\draw (1.5,4.7) node {\scriptsize $\underline{S_2}$};
\draw (2,4.9) --++(0,.2);
\draw (2,4.7) node {\scriptsize $\underline{L_t}$};
\draw [dotted] (3,5) circle (.4cm);
\draw [dotted] (3,5) circle (.6cm);
\draw (3,4.8) node {\scriptsize $t$};
\draw (4,4.9) --++(0,.2);
\draw (4,4.7) node {\scriptsize $\overline{L_t}$};
\draw (4.5,4.9) --++(0,.2);
\draw (4.5,4.7) node {\scriptsize $\overline{S_2}$};
\draw (5,4.9) --++(0,.2);
\draw (5,4.7) node {\scriptsize $\underline{S_1}$};
\draw (5.5,4.9) --++(0,.2);
\draw (5.5,4.7) node {\scriptsize $\overline{S_1}$};

\draw plot [smooth, tension=1] coordinates
{ (3,5) (1.875,6.5) (.75,5) };
\draw (1.875,6.15) node {$\g_{1,n}^+$};
\draw plot [smooth, tension=1] coordinates
{ (3,5) (4.375,6.5) (5.75,5) };
\draw (5.1,6.5) node {$\G_{1,n}^+$};
\draw plot [smooth, tension=1] coordinates
{ (3.75,5) (4.5,6) (5.25,5) };
\draw (4.5,5.65) node {$\g_{2,n}^+$};

\draw [dotted] (7,5) --++(6,0);
\draw (6.7,6.5) node {$\mathbb{H}$};
\draw (6.7,5.75) node {(7)};
\draw (6.7,5) node {$\R$};
\draw (7.5,4.9) --++(0,.2);
\draw (7.5,4.7) node {\scriptsize $\underline{S_3}$};
\draw (8,4.9) --++(0,.2);
\draw (8,4.7) node {\scriptsize $\overline{S_3}$};
\draw (8.5,4.9) --++(0,.2);
\draw (8.5,4.7) node {\scriptsize $\underline{S_2}$};
\draw (9,4.9) --++(0,.2);
\draw (9,4.7) node {\scriptsize $\underline{L_t}$};
\draw [dotted] (10,5) circle (.4cm);
\draw [dotted] (10,5) circle (.6cm);
\draw (10,4.8) node {\scriptsize $t$};
\draw (11,4.9) --++(0,.2);
\draw (11,4.7) node {\scriptsize $\overline{L_t}$};
\draw (11.5,4.9) --++(0,.2);
\draw (11.5,4.7) node {\scriptsize $\overline{S_2}$};
\draw (12,4.9) --++(0,.2);
\draw (12,4.7) node {\scriptsize $\underline{S_1}$};

\draw plot [smooth, tension=1] coordinates
{ (10,5) (8.625,6.5) (7.25,5) };
\draw (7.8,6.5) node {$\G_{1,n}^+$};
\draw plot [smooth, tension=1] coordinates
{ (10,5) (11.125,6.5) (12.25,5) };
\draw (11.125,6.15) node {$\g_{1,n}^+$};
\draw plot [smooth, tension=1] coordinates
{ (9.25,5) (8.5,6) (7.75,5) };
\draw  (8.5,5.65) node {$\g_{2,n}^+$};

\draw [dotted] (0,2.5) --++(6,0);
\draw (-.3,4) node {$\mathbb{H}$};
\draw (-.3,3.25) node {(8,9)};
\draw (-.3,2.5) node {$\R$};
\draw (.75,2.4) --++(0,.2);
\draw (.75,2.2) node {\scriptsize $\underline{S_3}$};
\draw (1.5,2.4) --++(0,.2);
\draw (1.5,2.2) node {\scriptsize $\overline{S_3}$};
\draw [dotted] (2.5,2.5) circle (.4cm);
\draw [dotted] (2.5,2.5) circle (.6cm);
\draw (2.5,2.3) node {\scriptsize $t$};
\draw (3.5,2.4) --++(0,.2);
\draw (3.5,2.2) node {\scriptsize $\underline{S_2}$};
\draw (4.25,2.4) --++(0,.2);
\draw (4.25,2.2) node {\scriptsize $\overline{S_2}$};
\draw (5,2.4) --++(0,.2);
\draw (5,2.2) node {\scriptsize $\underline{S_1}$};

\draw plot [smooth, tension=1] coordinates
{ (2.5,2.5) (1.45,4) (.375,2.5) };
\draw (1.5,3.65) node {$\G_{1,n}^+$};
\draw plot [smooth, tension=1] coordinates
{ (2.5,2.5) (4,4) (5.375,2.5) };
\draw (4,3.65) node {$\g_{1,n}^+$};

\draw [dotted] (7,2.5) --++(6,0);
\draw (6.7,4) node {$\mathbb{H}$};
\draw (6.7,3.25) node {(10)};
\draw (6.7,2.5) node {$\R$};
\draw (7.75,2.4) --++(0,.2);
\draw (7.75,2.2) node {\scriptsize $\underline{S_3}$};
\draw (8.25,2.4) --++(0,.2);
\draw (8.25,2.2) node {\scriptsize $\underline{L_t}$};
\draw [dotted] (9.25,2.5) circle (.4cm);
\draw [dotted] (9.25,2.5) circle (.6cm);
\draw (9.25,2.3) node {\scriptsize $t$};
\draw (10.25,2.4) --++(0,.2);
\draw (10.25,2.2) node {\scriptsize $\overline{L_t}$};
\draw (10.75,2.4) --++(0,.2);
\draw (10.75,2.2) node {\scriptsize $\overline{S_3}$};
\draw (11.25,2.4) --++(0,.2);
\draw (11.25,2.2) node {\scriptsize $\underline{S_2}$};
\draw (11.75,2.4) --++(0,.2);
\draw (11.75,2.2) node {\scriptsize $\overline{S_2}$};
\draw (12.25,2.4) --++(0,.2);
\draw (12.25,2.2) node {\scriptsize $\underline{S_1}$};

\draw plot [smooth, tension=1] coordinates
{ (9.25,2.5) (10.875,4) (12.5,2.5) };
\draw (11.7,4) node {$\g_{1,n}^+$};
\draw plot [smooth, tension=1] coordinates
{ (9.25,2.5) (8.3,4) (7.375,2.5) };
\draw (8.4,3.65) node {$\G_{1,n}^+$};
\draw plot [smooth, tension=1] coordinates
{ (10,2.5) (10.75,3.5) (11.5,2.5) };
\draw (10.85,3.15) node {$\G_{2,n}^+$};

\draw [dotted] (0,0) --++(6,0);
\draw (-.3,1.5) node {$\mathbb{H}$};
\draw (-.3,.75) node {(11)};
\draw (-.3,0) node {$\R$};
\draw (.75,-.1) --++(0,.2);
\draw (.75,-.3) node {\scriptsize $\underline{S_3}$};
\draw (1.25,-.1) --++(0,.2);
\draw (1.25,-.3) node {\scriptsize $\underline{L_t}$};
\draw [dotted] (2.25,0) circle (.4cm);
\draw [dotted] (2.25,0) circle (.6cm);
\draw (2.25,-.2) node {\scriptsize $t$};
\draw (3.25,-.1) --++(0,.2);
\draw (3.25,-.3) node {\scriptsize $\overline{L_t}$};
\draw (3.75,-.1) --++(0,.2);
\draw (3.75,-.3) node {\scriptsize $\overline{S_3}$};
\draw (4.25,-.1) --++(0,.2);
\draw (4.25,-.3) node {\scriptsize $\underline{S_2}$};
\draw (4.75,-.1) --++(0,.2);
\draw (4.75,-.3) node {\scriptsize $\overline{S_2}$};
\draw (5.25,-.1) --++(0,.2);
\draw (5.25,-.3) node {\scriptsize $\underline{S_1}$};

\draw plot [smooth, tension=.8] coordinates
{ (2.25,0) (1.75,1.5) (4.75,1.5) (5.5,0) };
\draw (5.3,1.5) node {$\g_{1,n}^+$};
\draw plot [smooth, tension=1] coordinates
{ (2.25,0) (3.375,1) (4.5,0) };
\draw (3.375,.65) node {$\G_{1,n}^+$};
\draw plot [smooth, tension=1] coordinates
{ (1.5,0) (.85,1) (.375,0) };
\draw (.85,1.3) node {$\G_{2,n}^+$};

\draw [dotted] (7,0) --++(6,0);
\draw (6.7,1.5) node {$\mathbb{H}$};
\draw (6.7,.75) node {(12)};
\draw (6.7,0) node {$\R$};
\draw [dotted] (8,0) circle (.4cm);
\draw [dotted] (8,0) circle (.6cm);
\draw (8,-.2) node {\scriptsize $t$};
\draw (9,-.1) --++(0,.2);
\draw (9,-.3) node {\scriptsize $\underline{S_3}$};
\draw (9.75,-.1) --++(0,.2);
\draw (9.75,-.3) node {\scriptsize $\overline{S_3}$};
\draw (10.5,-.1) --++(0,.2);
\draw (10.5,-.3) node {\scriptsize $\underline{S_2}$};
\draw (11.25,-.1) --++(0,.2);
\draw (11.25,-.3) node {\scriptsize $\overline{S_2}$};
\draw (12,-.1) --++(0,.2);
\draw (12,-.3) node {\scriptsize $\underline{S_1}$};
\draw (12.75,-.1) --++(0,.2);
\draw (12.75,-.3) node {\scriptsize $\overline{S_1}$};

\draw plot [smooth, tension=.9] coordinates
{ (8,0) (8,1.5) (11.3,1.5) (12.325,0) };
\draw (11.9,1.5) node {$\g_{1,n}^+$};
\draw plot [smooth, tension=1] coordinates
{ (8,0) (9.45,1) (10.875,0) };
\draw (9.45,.65) node {$\G_{1,n}^+$};

\end{tikzpicture}
\caption{The contours described in lemma
\ref{lemDesAsc1-12}, for the exhaustive cases (1-12) of
lemma \ref{lemCases}. Above, $\underline{S_1} := \inf S_1$,
$\overline{S_1} := \sup S_1$, etc.
The smaller circles represent $B(t,n^{-\theta} |q_n|)$, the larger
circles represent $B(t,\xi)$.}
\label{figDesAsc1-12}
\end{figure}

\begin{lem}
\label{lemDesAsc1-12Rem}
Define $F_n$ as in equation (\ref{eqFnw}). Assume that $v > u$. Fix
$\theta \in (\frac14,\frac13)$, and $\xi>0$ sufficiently small such
that equations (\ref{eqxi}, \ref{eqxi1}, \ref{eqxi4}, \ref{eqxi5}) are
satisfied. Then, in each of the cases of lemma \ref{lemCases}, there
exists a simple contour as shown in figure \ref{figDesAscRem} with the
following properties:
\begin{itemize}
\item
$\kappa_n^+$ starts at $t$, ends in the interior
of the intervals shown in figure \ref{figDesAscRem}, is otherwise contained
in $\mathbb{H}$, and is independent of $n$ outside $\text{cl}(B(t,\xi))$.
\item
$\kappa_n^+ \cap B(t,n^{-\theta} |q_n|)$ is a straight
line from $t$ to a point $D_{1,n} \in \partial B(t,n^{-\theta} |q_n|)$
for which $\text{Arg} (D_{1,n} - t) = \frac\pi2 + O(n^{-\frac13+\theta})$.
\item
$\text{Re}(F_n(w)) \le \text{Re}(F_n(D_{1,n}))$ for all
$w \in \kappa_n^+ \setminus B(t,n^{-\theta} |q_n|)$.
\item
$|\kappa_n^+| = O(1)$, where $|\cdot|$ represents length.
\end{itemize}
\end{lem}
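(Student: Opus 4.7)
The plan is to build $\kappa_n^+$ in two pieces, glued at $D_{1,n}$: a straight segment from $t$ to $D_{1,n}$ inside the small ball, and a true contour of steepest descent of $F_n$ from $D_{1,n}$ to the real axis. The angle $\pi/2$ at $t$ will come from showing that the unique critical point $w_n$ of $F_n$ in $B(t,\xi)$ has $F_n''(w_n)$ real and strictly positive up to negligible corrections, so that the local Taylor expansion $F_n(w)-F_n(w_n)=\tfrac12(w-w_n)^2 F_n''(w_n)+O(|w-w_n|^3)$ makes the vertical direction a direction of descent of $\text{Re}(F_n)$.

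Concretely, I would first invoke Lemma \ref{lemFn'} (valid because $v>u$) to obtain the unique simple root $w_n\in(t-\xi,t+\xi)$ of $F_n'$ with $w_n=t+O(n^{-1/3})$. Lemma \ref{lemGtGtnFn}(2) then gives $F_n''(w_n)=n^{-1/3} m_n (v-u) G_t''(t)+o(n^{-1/3})$, and differentiating \eqref{eqGt'} at the edge point produces the closed form
\[
G_t''(t)=\frac{e^{C(t)}C'(t)^2}{e^{C(t)}-1}.
\]
A short case check, using the sign table in Lemma \ref{lemAnalExt} together with the sign of $m_n$ read off from Definition \ref{defmnpn}, shows that both $m_n$ and $G_t''(t)$ are controlled by the common factor $e^{C(t)}/(e^{C(t)}-1)$ and therefore share the same sign in every one of the twelve cases of Lemma \ref{lemCases}; hence $m_n G_t''(t)>0$. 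Combined with $v-u>0$, this yields the required positivity of $F_n''(w_n)$. Because $\theta<1/3$, the displacement $w_n-t$ is much smaller than the ball radius $n^{-\theta}|q_n|$, so taking $\kappa_n^+\cap\text{cl}(B(t,n^{-\theta}|q_n|))$ to be the straight segment from $t$ through $w_n$ continued to the boundary at a point $D_{1,n}$ gives $\text{Arg}(D_{1,n}-t)=\pi/2+O(n^{\theta-1/3})$, as required.

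Outside the small ball, I would extend $\kappa_n^+$ as a contour of steepest descent of $F_n$ starting at $D_{1,n}$. By Lemma \ref{lemDesAsc}(1,2,4,5) applied to $F_n$, this contour is bounded, $\text{Re}(F_n)$ is strictly decreasing along it (giving property (3) immediately), and it cannot accumulate at those singularities $x\in VU^{(n)}\cup VU_{(n)}$ at which $\text{Re}(F_n(w))\to+\infty$. Since $w_n$ is real and the branch cuts in \eqref{eqFnw2} have been fixed case by case, a direct evaluation shows $\text{Im}(F_n(w_n))=0$, hence $\text{Im}(F_n)\equiv 0$ along the ensuing steepest descent contour by Lemma \ref{lemDesAsc}(2). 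The contour must therefore exit $\text{cl}(\mathbb{H})$ at a real point where the piecewise constant boundary extension of $\text{Im}(F_n)$ vanishes. Using the clustering of the singularities of $F_n$ near $\chi$ and $\chi+\eta-1$ from \eqref{eqxi5}, together with Lemma \ref{lemFn'}(3,4) describing all remaining critical points, this exit point is forced into the interval indicated by Figure \ref{figDesAscRem}; the bound $|\kappa_n^+|=O(1)$ follows from Lemma \ref{lemDesAsc}(4).

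The main obstacle is the global step in the last paragraph: verifying case by case that the terminal point of the steepest descent contour lies in the correct interval of Figure \ref{figDesAscRem}. The analysis is parallel to, but slightly simpler than, the one carried out in Lemma \ref{lemDesAsc1-12}, because the singularities of $F_n$ come only from the four families $UV^{(n)}$, $VU^{(n)}$, $UV_{(n)}$, $VU_{(n)}$ (rather than from all of $S_n$) and there is a single simple critical point in $B(t,\xi)$. I would therefore defer the full case-by-case book-keeping to Section \ref{secCont}, following the same template used there for Lemma \ref{lemDesAsc1-12}.
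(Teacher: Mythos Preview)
Your local analysis near $t$ is correct in spirit --- the sign computation showing $m_n G_t''(t)>0$ in all twelve cases is right, and this is indeed what forces the descent direction to be $\frac\pi2$. However, the phrase ``the straight segment from $t$ through $w_n$'' is wrong as written: $w_n$ is real, so that segment is horizontal, not vertical. What you presumably mean is that the steepest descent contour of $F_n$ leaves $w_n$ vertically, and since $|w_n-t|=O(n^{-1/3})\ll n^{-\theta}|q_n|$, its exit point $D_{1,n}$ from $B(t,n^{-\theta}|q_n|)$ satisfies $\text{Arg}(D_{1,n}-t)=\frac\pi2+O(n^{\theta-1/3})$; the segment in the ball is then the chord from $t$ to $D_{1,n}$.

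The more serious gap is outside the small ball. You propose to continue along the steepest descent contour of $F_n$ all the way to the real axis. This violates the first bullet of the lemma: $\kappa_n^+$ is required to be \emph{independent of $n$} outside $\text{cl}(B(t,\xi))$, and a descent contour of $F_n$ manifestly depends on $n$. Relatedly, your appeal to Lemma~\ref{lemDesAsc}(4) for $|\kappa_n^+|=O(1)$ doesn't work: that lemma gives only that the contour is contained in a bounded set, which says nothing about arclength being uniformly bounded in $n$.

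The paper resolves both issues simultaneously by a two-scale construction exactly parallel to the one used for $\g_{1,n}^+$ in Definition~\ref{defgnGnCase1}. Inside $B(t,\xi)$ one follows the steepest descent contour $\mathcal{D}_n$ of $F_n$ (from $D_{1,n}$ to an exit point $D_{2,n}\in\partial B(t,\xi)$), but outside $B(t,\xi)$ one switches to the steepest descent contour $\mathcal{D}_t$ of the \emph{asymptotic} function $G_t$ of equation~\eqref{eqGt}, which is $n$-independent; the two are glued by a short arc on $\partial B(t,\xi)$ and the contour is finished with a short vertical drop to $\R$ (Definition~\ref{defknCase1}). The descent inequality $\text{Re}(F_n(w))\le\text{Re}(F_n(D_{1,n}))$ on the $\mathcal{D}_t$-portion is then obtained not from steepest descent of $F_n$ but from the uniform approximation of $F_n$ by (a scalar multiple of) $G_t$ on compact subsets bounded away from the singularities, exactly as in parts (iv)--(vi) of the proof of Lemma~\ref{lemDesAsc1-12} for case~(1). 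The $O(1)$ length is then immediate: the part outside $B(t,\xi)$ is a fixed contour, and the part inside $B(t,\xi)$ has length $O(\xi)$ by the direction-control argument of part~(vii) of that same proof.
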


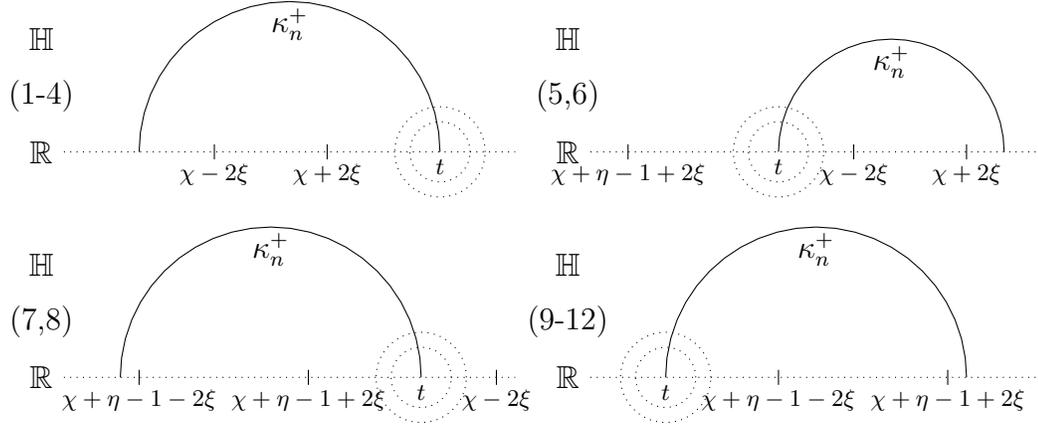
\begin{figure}[t]
\centering
\begin{tikzpicture};

\draw [dotted] (0,3) --++(6,0);
\draw (-.3,4.5) node {$\mathbb{H}$};
\draw (-.3,3.75) node {(1-4)};
\draw (-.3,3) node {$\R$};
\draw (2,2.9) --++(0,.2);
\draw (2,2.7) node {\scriptsize $\chi-2\xi$};
\draw (3.5,2.9) --++(0,.2);
\draw (3.5,2.7) node {\scriptsize $\chi+2\xi$};
\draw [dotted] (5,3) circle (.4cm);
\draw [dotted] (5,3) circle (.6cm);
\draw (5,2.8) node {\scriptsize $t$};

\draw [domain=0:180] plot ({3+2*(cos(\x))}, {3+2*(sin(\x))});
\draw (3,4.7) node {$\kappa_n^+$};

\draw [dotted] (7,3) --++(6,0);
\draw (6.7,4.5) node {$\mathbb{H}$};
\draw (6.7,3.75) node {(5,6)};
\draw (6.7,3) node {$\R$};
\draw (7.5,2.9) --++(0,.2);
\draw (7.5,2.7) node {\scriptsize $\chi+\eta-1 + 2\xi$};
\draw [dotted] (9.5,3) circle (.4cm);
\draw [dotted] (9.5,3) circle (.6cm);
\draw (9.5,2.8) node {\scriptsize $t$};
\draw (10.5,2.9) --++(0,.2);
\draw (10.5,2.7) node {\scriptsize $\chi-2\xi$};
\draw (12,2.9) --++(0,.2);
\draw (12,2.7) node {\scriptsize $\chi+2\xi$};

\draw [domain=0:180] plot ({11+(1.5)*(cos(\x))}, {3+(1.5)*(sin(\x))});
\draw (11,4.2) node {$\kappa_n^+$};

\draw [dotted] (0,0) --++(6,0);
\draw (-.3,1.5) node {$\mathbb{H}$};
\draw (-.3,.75) node {(7,8)};
\draw (-.3,0) node {$\R$};
\draw (5.75,-.1) --++(0,.2);
\draw (5.75,-.3) node {\scriptsize $\chi-2\xi$};
\draw [dotted] (4.75,0) circle (.4cm);
\draw [dotted] (4.75,0) circle (.6cm);
\draw (4.75,-.2) node {\scriptsize $t$};
\draw (1,-.1) --++(0,.2);
\draw (1,-.3) node {\scriptsize $\chi+\eta-1-2\xi$};
\draw (3.25,-.1) --++(0,.2);
\draw (3.25,-.3) node {\scriptsize $\chi+\eta-1+2\xi$};

\draw [domain=0:180] plot ({2.75-2*(cos(\x))}, {2*(sin(\x))});
\draw (2.75,1.7) node {$\kappa_n^+$};

\draw [dotted] (7,0) --++(6,0);
\draw (6.7,1.5) node {$\mathbb{H}$};
\draw (6.7,.75) node {(9-12)};
\draw (6.7,0) node {$\R$};
\draw (11.75,-.1) --++(0,.2);
\draw (11.75,-.3) node {\scriptsize $\chi+\eta-1+2\xi$};
\draw (9.5,-.1) --++(0,.2);
\draw (9.5,-.3) node {\scriptsize $\chi+\eta-1-2\xi$};
\draw [dotted] (8,0) circle (.4cm);
\draw [dotted] (8,0) circle (.6cm);
\draw (8,-.2) node {\scriptsize $t$};

\draw [domain=0:180] plot ({10+2*(cos(\x))}, {2*(sin(\x))});
\draw (10,1.7) node {$\kappa_n^+$};

\end{tikzpicture}
\caption{The contours described in lemma \ref{lemDesAsc1-12Rem} for
the exhaustive cases (1-12) of lemma \ref{lemCases}, when $v>u$. The
smaller circles represent $B(t,n^{-\theta} |q_n|)$, the larger
circles represent $B(t,\xi)$.}
\label{figDesAscRem}
\end{figure}

\subsection{Alternative contour integral expressions}

The main results of the previous section, lemmas \ref{lemDesAsc1-12}
and \ref{lemDesAsc1-12Rem}, prove the existence of appropriate contours
of descent/ascent for the cases, (1-12), of lemma \ref{lemCases}.
In this section, we will use these contours to find alternative contour integral
expressions for the correlation kernel than that given in equation (\ref{eqKnrnunsnvn1}).
These new expressions will allow us to perform a steepest descent analysis
for each case, (1-12). First, using lemmas \ref{lemDesAsc1-12} and
\ref{lemDesAsc1-12Rem},
we define:
\begin{definition}
\label{defConCases}
For cases (1-12) of lemma \ref{lemCases}, define
$\g_{1,n}$ to be the following simple closed contour with
counter-clockwise orientation:
$\g_{1,n} := \g_{1,n}^+ + \g_{1,n}^-$,
where $\g_{1,n}^-$ is the reflection of $\g_{1,n}^+$
in $\R$. Similarly define $\g_{2,n}, \G_{1,n}, \G_{2,n}$.
Similarly define $\kappa_n$ when $v>u$. 
Finally, define $\g_n := \g_{1,n}$ when $\g_{2,n}$ does not exist, and
$\g_n := \g_{1,n} + \g_{2,n}$ when $\g_{2,n}$ exists.
Similarly define $\Gamma_n$.
\end{definition}
The main result of this section is then:
\begin{lem}
\label{lemKnJnPhin}
Assume the conditions of theorem \ref{thmAiry}. Recall that one of the
cases, (1-12), of lemma \ref{lemCases} must be satisfied. Then,
\begin{equation}
\tag{1}
\beta_n \; K_n((u_n,r_n),(v_n,s_n))
= \left\{
\begin{array}{rcl}
J_n - \Phi_n & ; & \text{ for cases (1-4),} \\
J_n + \Phi_n & ; & \text{ for cases (5,6),} \\  
-J_n - \Phi_n & ; & \text{ for cases (7,8),} \\
-J_n + \Phi_n & ; & \text{ for cases (9-12),} 
\end{array}
\right. 
\end{equation}
where we define:
\begin{equation*}
\beta_n := \frac{(n-r_n-1)!}{(n-s_n)!} n^{r_n+1-s_n},
\end{equation*}
and
\begin{equation*}
J_n := \frac1{(2\pi i)^2} \int_{\g_n} dw \int_{\G_n} dz \;
\frac{\prod_{j=u_n+r_n-n+1}^{u_n-1}
(z - \frac{j}n)}{\prod_{j=v_n+s_n-n}^{v_n} (w - \frac{j}n)} \;
\frac1{w-z} \; \prod_{i=1}^n \left( \frac{w - \frac{x_i}n}{z - \frac{x_i}n} \right),
\end{equation*}
and
\begin{align*}
\Phi_n
&:= \left\{ \begin{array}{lll}
1_{(v_n \ge u_n, s_n > r_n)} \;
\frac{(v_n+s_n-u_n-r_n-1)!}{(s_n-r_n-1)! (v_n-u_n)!} \; \beta_n
& ; & \text{for cases (1-4)}, \\
1_{(v_n \ge u_n,  v_n+s_n \le u_n+r_n, s_n \le r_n)} \;
\frac{(-1)^{v_n-u_n} (r_n-s_n)!}{(v_n-u_n)! (u_n+r_n-v_n-s_n)!} \; \beta_n
& ; & \text{for cases (5,6)}, \\
1_{(v_n \ge u_n,  v_n+s_n \le u_n+r_n, s_n \le r_n)} \;
\frac{(-1)^{v_n-u_n-1} (r_n-s_n)!}{(v_n-u_n)! (u_n+r_n-v_n-s_n)!} \; \beta_n
& ; & \text{for cases (7,8)}, \\
1_{(v_n+s_n \le u_n+r_n, s_n > r_n)} \;
\frac{(-1)^{s_n-r_n-1} (u_n-v_n-1)!}{(s_n-r_n-1)! (u_n+r_n-v_n-s_n)!} \; \beta_n
& ; & \text{for cases (9-12)}.
\end{array} \right.
\end{align*}
Moreover, when $(u,r) \neq (v,s)$,
\begin{equation}
\tag{2}
\Phi_n = 1_{(v>u)} \frac1{2\pi i} \int_{\kappa_n} dw \;
\frac{\prod_{j=u_n+r_n-n+1}^{u_n-1} (w - \frac{j}n)}
{\prod_{j=v_n+s_n-n}^{v_n} (w - \frac{j}n)}.
\end{equation}
\end{lem}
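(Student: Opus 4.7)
The strategy is to start from the contour integral representation (\ref{eqKnrnunsnvn1}), multiply through by $\beta_n$ to absorb its scalar prefactor, and deform the contour pair $(c_n,C_n)$ to $(\g_n,\G_n)$ using Cauchy's theorem, carefully tracking every residue that is crossed. A first observation is that although the $w$-integrand formally has poles at every $j/n$ with $j\in\{v_n+s_n-n,\ldots,v_n\}$, the numerator factor $\prod_i(w-x_i/n)$ cancels all the poles at $j/n\in P_n$, leaving effective $w$-poles only on $S_{2,n}$; similarly, the $z$-integrand has effective $z$-poles only on $\widetilde S_{1,n}\cup\widetilde S_{3,n}$ after cancellation with $\prod_{j\in\widetilde U_n}(z-j/n)$, where $\widetilde U_n:=\tfrac{1}{n}\{u_n+r_n-n+1,\ldots,u_n-1\}$. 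The cross pole at $w=z$ contributes precisely when $\g_n$ encloses the support of $\G_n$.

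Next, a direct computation from (\ref{eqphirsuv}) shows that $\beta_n\phi_{r_n,s_n}(u_n,v_n)=\Phi_n$ in cases (1--4) on the set where the indicator $1_{(v_n\ge u_n,\,s_n>r_n)}$ is active. In these cases figure \ref{figDesAsc1-12} shows that $\G_n$ lies inside $\g_n$ on the same side of $t$, that $\g_n$ encloses every point of $S_{2,n}$, and that $\G_n$ encloses exactly $\widetilde S_{1,n}$ while avoiding $\widetilde S_{3,n}$; therefore the deformation crosses no effective poles, preserves the $w=z$ contribution, and yields $J_n-\Phi_n$. For cases (7,8) the contours $\g_n,\G_n$ lie on opposite sides of $t$, so the $w=z$ residue is no longer enclosed and orientation reasoning along $\g_n$ produces the $-J_n$ prefactor; combined with the matching $\beta_n\phi_{r_n,s_n}$ cancellation this gives $-J_n-\Phi_n$. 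For cases (5,6) and (9--12) the deformation additionally sweeps through some effective poles---or, equivalently, exploits the densely-packed inclusion $\tfrac{\Z}{n}\cap R_{\l-\mu}(\e)\subset P_n$ from equation (\ref{eqPnHnlarge})---and the extracted residues combine with $\beta_n\phi_{r_n,s_n}$, after the reflection identity for falling factorials, to produce the sign factors $(-1)^{v_n-u_n}$ and $(-1)^{s_n-r_n-1}$ that appear in the corresponding definition of $\Phi_n$.

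For part (2), when $v>u$ the simple closed contour $\kappa_n$ encloses a connected block of effective $V_n$-poles of the integrand, located in a small real interval near $\chi$ in cases (1--4) or near $\chi+\eta-1$ in cases (9--12), and near one of the two endpoints in cases (5--8); direct residue summation, using the sizes $|V_n|=n-s_n+1$ and $|\widetilde U_n|=n-r_n-1$ together with the cancellations between numerator and denominator factors, telescopes to the signed binomial coefficient appearing in the definition of $\Phi_n$. When $v\le u$ and $(u,r)\ne(v,s)$, we instead verify $\Phi_n=0$ directly from its indicator: if $v<u$ the leading $n^{2/3}$-scale term of $v_n-u_n$ is negative, forcing $v_n<u_n$; if $v=u$ and $r\ne s$, the sign information on $m_n$, $p_n$ and $e^{C_n(t)}-1$ supplied by definition \ref{defmnpn} and lemma \ref{lemAnalExt} makes the simultaneous inequalities required by the indicator incompatible in each of the twelve cases. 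The hardest step is the case-by-case bookkeeping in part (1)---determining which effective poles sit inside $\g_n$ versus $\G_n$, whether the cross pole $w=z$ is enclosed, and how the residues one picks up telescope into the signed factorial expression defining $\Phi_n$---for which the explicit topological enumeration in lemma \ref{lemCases} and the detailed contour positions in figure \ref{figDesAsc1-12} are indispensable.
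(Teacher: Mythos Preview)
Your overall strategy---deform $(c_n,C_n)$ to $(\g_n,\G_n)$ and track residues---is different from the paper's. The paper never deforms the original contours; instead it evaluates $J_n$ and $\beta_n K_n$ \emph{independently} as explicit double sums (lemmas \ref{lemJn} and \ref{lemKnSums}, the latter starting from the sum formula (\ref{eqKnrusvFixTopLine}) rather than the integral (\ref{eqKnrnunsnvn1})), identifies $\Phi_n$ with a residue sum via lemma \ref{lemPhin}, and then compares the three expressions term by term. Your deformation route is viable for cases (1--4), where $\G_n$ encloses exactly $\tilde S_{1,n}$ (the same effective $z$-poles as $C_n$), $\g_n$ encloses $S_{2,n}$ and still contains $\G_n$, and $\beta_n\phi_{r_n,s_n}=\Phi_n$ holds directly from (\ref{eqphirsuv}).

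However, your treatment of cases (7,8) (and implicitly (9--12)) contains a genuine error. The sign $-J_n$ does \emph{not} come from ``orientation reasoning along $\g_n$''. What changes is that $\G_n$ now encloses $\tilde S_{3,n}$ rather than $\tilde S_{1,n}$, so $\int_{\G_n}$ picks up the \emph{complementary} set of effective $z$-residues. Passing from $\sum_{\tilde S_{1,n}}$ to $-\sum_{\tilde S_{3,n}}$ requires the Lagrange interpolation identity (part (iv) in the proof of lemma \ref{lemKnSums}), or equivalently a residue-at-infinity argument for the $z$-integrand; this is where the minus sign, together with an extra boundary sum over $V_n\setminus U_n$, actually originates. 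Relatedly, your claim that in cases (7,8) ``the $w=z$ residue is no longer enclosed'' and that ``$\beta_n\phi_{r_n,s_n}$ cancellation'' yields $\Phi_n$ is wrong on both counts: in case (7) the cross pole \emph{is} picked up because $\g_{2,n}\subset\G_n^\circ$ (see the proof of lemma \ref{lemJn}), and in cases (5--8) the indicator of $\Phi_n$ forces $s_n\le r_n$, so $\phi_{r_n,s_n}(u_n,v_n)=0$ by (\ref{eqphirsuv}). The $\Phi_n$ term therefore cannot come from $\phi$; it must be assembled from the extra $VU^{(n)}$ or $VU_{(n)}$ residues produced by the $\tilde S_{1,n}\leftrightarrow\tilde S_{3,n}$ switch, and matching that sum to the closed-form factorial expression requires the finite-difference identity of lemma \ref{lemphi}, which your ``telescopes'' hides. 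Finally, you omit the role of assumption \ref{asscases}: in cases (4,5,8,9) it is precisely what guarantees $(VU^{(n)})\cap P_n=\emptyset$ (or the analogous statement), without which the residue bookkeeping in lemma \ref{lemJn} and the sum in lemma \ref{lemKnSums} fail to match.
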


We will prove the above using a number of sub-results.
First, we examine $J_n$ using the Residue theorem.
Note that $J_n$ can be written as follows:
\begin{equation}
\label{eqJ_n2}
J_n = \frac1{(2\pi i)^2} \int_{\g_n} dw \int_{\G_n} dz \;
\frac{\prod_{y \in U_n} (z - y)}{\prod_{x \in V_n} (w - x)} \;
\frac1{w-z} \; \frac{\prod_{x \in P_n} (w - x)}{\prod_{y \in P_n} (z - y)},
\end{equation}
where:
\begin{itemize}
\item
$P_n := \frac1n \{x_1^{(n)}, x_2^{(2)}, \ldots, x_n^{(n)}\}$ (as in equation (\ref{eqPnHn})).
\item
$U_n := \frac1n \{u_n+r_n-n+1, u_n+r_n-n+2, \ldots, u_n-1\}$ (as in equation (\ref{eqUnVn})).
\item
$V_n := \frac1n \{v_n+s_n-n, v_n+s_n-n+1, \ldots, v_n\}$ (as in equation (\ref{eqUnVn})).
\end{itemize}
Also note, the above definitions, and equations (\ref{eqfn2}, \ref{eqtildefn2}), give
\begin{align}
\label{eqPn-Un1}
\tilde{S}_{1,n} = \{y \in P_n : y \ge \tfrac{u_n}n \}
\hspace{.5cm} &\text{and} \hspace{.5cm}
\tilde{S}_{3,n} = \{y \in P_n : y \le \tfrac{u_n+r_n-n}n \}, \\
\label{eqPn-Un2}
P_n \setminus U_n = \tilde{S}_{1,n} \cup \tilde{S}_{3,n}
\hspace{.5cm} &\text{and} \hspace{.5cm}
V_n \setminus P_n = S_{2,n}.
\end{align}
Finally recall the decomposition,
$V_n \setminus U_n = (VU^{(n)}) \cup (VU_{(n)})$,
given in equation (\ref{eqVn-Un}). Then:
\begin{lem}
\label{lemJn}
Assume the conditions of theorem \ref{thmAiry}. Define $U_n$, $V_n$,
$P_n$, $VU^{(n)}$, $VU_{(n)}$ as above.
Let $A \in \{1,2,\ldots,12\}$ denote that case of lemma \ref{lemCases} which is
satisfied. Then, for cases (1-6),
\begin{align*}
J_n
&= \sum_{y' \in \tilde{S}_{1,n}} \sum_{x' \in S_{2,n}} \;
\frac{\prod_{y \in U_n} (y' - y)}{\prod_{x \in V_n \setminus x'} (x' - x)} \;
\frac{\prod_{x \in P_n \setminus y'} (x' - x)}{\prod_{y \in P_n \setminus y'} (y' - y)} \\
&+ 1_{(v_n \ge u_n)} \bigg( 1_{(A \in \{1,2,3\})}
\sum_{y' \in (VU^{(n)}) \cap P_n}
- 1_{(A=6)} \sum_{y' \in (VU^{(n)}) \setminus P_n} \bigg)
\frac{\prod_{y \in U_n} (y' - y)}{\prod_{x \in V_n \setminus y'} (y' - x)}.
\end{align*}
Moreover, for cases (7-12),
\begin{align*}
J_n
&= \sum_{y' \in \tilde{S}_{3,n}} \sum_{x' \in S_{2,n}} \;
\frac{\prod_{y \in U_n} (y' - y)}{\prod_{x \in V_n \setminus x'} (x' - x)} \;
\frac{\prod_{x \in P_n \setminus y'} (x' - x)}{\prod_{y \in P_n \setminus y'} (y' - y)} \\
&+ 1_{(v_n+s_n \le u_n+r_n)} \bigg( 1_{(A \in \{10,11,12\})}
\sum_{y' \in (VU_{(n)}) \cap P_n}
- 1_{(A=7)} \sum_{y' \in (VU_{(n)}) \setminus P_n} \bigg)
\frac{\prod_{y \in U_n} (y' - y)}{\prod_{x \in V_n \setminus y'} (y' - x)}.
\end{align*}
\end{lem}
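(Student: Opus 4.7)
The proof is a direct application of the Residue theorem, twice. Fix $w$ on $\g_n$ and do the inner $z$-integral first: the integrand has simple $z$-poles at $z = y'$ for each $y' \in P_n$ and at $z = w$. Reading off figure \ref{figDesAsc1-12} together with equations (\ref{eqPn-Un1}, \ref{eqPn-Un2}), the enclosed $P_n$-points are exactly $\tilde{S}_{1,n}$ for cases (1--6) and $\tilde{S}_{3,n}$ for cases (7--12); and the pole at $z=w$ contributes only for those $w$ lying in $\mathrm{int}(\G_n)$.

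Next perform the $w$-integration. For each residue at $z = y'$, the factor $1/(w - y')$ produced cancels against the factor $(w-y')$ in $\prod_{x \in P_n}(w-x)$ (since $y' \in P_n$), leaving a rational function whose $w$-poles are $S_{2,n} \cup (V_n \cap \{y'\})$. All of $S_{2,n}$ lies in $\mathrm{int}(\g_n)$ by the construction of $\g_{1,n}$, and the residues there produce the bulk double sum $\sum_{y'}\sum_{x'}$ in the statement. The additional pole at $w = y'$ (which is present only when $y' \in V_n$) lies inside $\g_n$ in cases (1,2,3) precisely when $y' \in (VU^{(n)}) \cap P_n$, and a short calculation shows its residue equals $\prod_{y \in U_n}(y'-y)/\prod_{x \in V_n \setminus y'}(y'-x)$, giving the second sum for cases (1,2,3). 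The analogous argument for cases (10,11,12) produces the $\sum_{y' \in (VU_{(n)}) \cap P_n}$ term.

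Finally, the residue at $z = w$ in step one contributes
\[
-\frac{1}{2\pi i} \oint_{\g_n \cap \mathrm{int}(\G_n)} \frac{\prod_{y \in U_n}(w-y)}{\prod_{x \in V_n}(w-x)} \, dw.
\]
After cancelling common $(w-y)$ factors, the integrand has $w$-poles exactly at $V_n \setminus U_n = VU^{(n)} \cup VU_{(n)}$. Comparing with the figures, the region $\g_n \cap \mathrm{int}(\G_n)$ encloses exactly $(VU^{(n)}) \setminus P_n$ in case 6 (the ``holes'' of $V_n$ near $\chi$), producing the $-\sum_{y' \in (VU^{(n)}) \setminus P_n}$ term, and analogously $(VU_{(n)}) \setminus P_n$ in case 7. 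In cases (4,5,8,9) assumption \ref{asscases} ensures that the relevant cluster of $V_n \setminus U_n$-poles near $\chi$ (or $\chi + \eta - 1$) either lies outside $\g_n$ altogether or lies outside $\mathrm{int}(\G_n)$, so neither the $w = y'$ nor the $z = w$ contribution produces an extra term.

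The main obstacle is the systematic bookkeeping across all twelve cases, and in particular the invocation of assumption \ref{asscases} in cases (4,5,8,9) which is what guarantees that the endpoints of the contours from lemma \ref{lemDesAsc1-12} sit in the correct intervals, so that the inclusion/exclusion of each pole in the residue calculation agrees with the lemma statement. I would carry out one representative case (say case 1) in full detail, after which the remaining cases reduce to reading off the enclosure relations from figure \ref{figDesAsc1-12} and repeating the residue calculations mechanically.
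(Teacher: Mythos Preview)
Your approach is exactly the paper's: two successive applications of the Residue theorem, first in $z$ over $\G_n$ (poles at $P_n\setminus U_n=\tilde S_{1,n}\cup\tilde S_{3,n}$ and at $z=w$), then in $w$ over $\g_n$ (poles at $S_{2,n}\cup(V_n\cap\{y'\})$), followed by case-by-case bookkeeping from figure~\ref{figDesAsc1-12}. Your identification of the role of assumption~\ref{asscases} in cases (4,5,8,9) is also correct.

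There is one genuine misstep in your case-6 analysis. You claim the $z=w$ contribution alone produces $-\sum_{y'\in (VU^{(n)})\setminus P_n}$, because $\g_n\cap\mathrm{int}(\G_n)$ ``encloses exactly $(VU^{(n)})\setminus P_n$''. This is not right: in case~6 the contour $\g_{2,n}$ (which is the part of $\g_n$ lying inside $\G_n$) encloses \emph{all} of $VU^{(n)}$, particles and holes alike, so the $z=w$ term yields $-\sum_{y'\in VU^{(n)}}$. Meanwhile, the $w=y'$ pole term, which you say contributes only in cases (1,2,3), \emph{also} contributes in case~6: here $VU^{(n)}\subset\g_n^\circ\cap\G_n^\circ$, so the particles $(VU^{(n)})\cap P_n$ give $+\sum_{y'\in(VU^{(n)})\cap P_n}$. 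The stated $-\sum_{y'\in(VU^{(n)})\setminus P_n}$ arises only after combining these two contributions. The same subtlety applies in case~7. This is precisely the sort of thing your proposed ``do case~1 in detail, read off the rest from the figure'' would miss, since case~1 has no $\g_{2,n}$ and hence no such cancellation; you should work out case~6 (or~7) explicitly as well.
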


\begin{proof}
In this lemma, we let $\g_n^\circ$ and $\G_n^\circ$ respectively denote the
interiors of $\g_n$ and $\G_n$. Note, in equation (\ref{eqJ_n2}), we perform
the $\G_n$ integral first, and so we consider the $w \in \g_n$ in the
integrand to be fixed. Also note that $w \not\in P_n$ since $\g_n$ can always
be chosen so that it does not intersect $P_n$ (see remark \ref{remgnPn}), and
that each element of $P_n$ is distinct. The integrand of equation
(\ref{eqJ_n2}) thus has a simple pole at each distinct element of
$P_n \setminus U_n = \tilde{S}_{1,n} \cup \tilde{S}_{3,n}$
(see equation (\ref{eqPn-Un2})), a simple pole or removable singularity at $w$,
and no other singularities. Therefore, the
Residue theorem gives,
\begin{align}
\nonumber
J_n
&= \frac1{2\pi i} \int_{\g_n} dw \sum_{y' \in \tilde{S}_{1,n} \cup \tilde{S}_{3,n}}
\; 1_{(y' \in \G_n^\circ)} \;
\frac{\prod_{y \in U_n} (y' - y)}{\prod_{x \in V_n} (w - x)} \;
\frac1{w-y'} \; \frac{\prod_{x \in P_n} (w - x)}{\prod_{y \in P_n \setminus y'} (y' - y)} \\
\nonumber
&- \frac1{2\pi i} \int_{\g_n} dw \; 1_{(w \in \G_n^\circ)} \;
\frac{\prod_{y \in U_n} (w - y)}{\prod_{x \in V_n} (w - x)} \;
\frac{\prod_{x \in P_n} (w - x)}{\prod_{y \in P_n} (w - y)} \\
\label{eqJ_n3}
&= \sum_{y' \in \tilde{S}_{1,n} \cup \tilde{S}_{3,n}}
\; 1_{(y' \in \G_n^\circ)} \; \frac1{2\pi i} \int_{\g_n} dw \;
\frac{\prod_{y \in U_n} (y' - y)}{\prod_{x \in V_n} (w - x)} \;
\frac{\prod_{x \in P_n \setminus y'} (w - x)}{\prod_{y \in P_n \setminus y'} (y' - y)} \\
\nonumber
&- \frac1{2\pi i} \int_{\g_n} dw \; 1_{(w \in \G_n^\circ)} \;
\frac{\prod_{y \in U_n} (w - y)}{\prod_{x \in V_n} (w - x)}.
\end{align}

Consider the first term on the RHS, above. Recall that $y' \in P_n$
is fixed, and (see equation (\ref{eqPn-Un2})) $V_n \setminus P_n = S_{2,n}$. Therefore
$V_n = S_{2,n} \cup (V_n \cap \{y'\}) \cup (V_n \cap (P_n \setminus \{y'\})$, a
disjoint union. The integrand thus has a simple pole at each
distinct element of $S_{2,n} \cup (V_n \cap \{y'\})$, and no other singularities.
The Residue theorem thus implies that the first term equals,
\begin{align*}
&\sum_{y' \in \tilde{S}_{1,n} \cup \tilde{S}_{3,n}} 1_{(y' \in \G_n^\circ)} \;
\bigg( \sum_{x' \in S_{2,n}} 1_{(x' \in \g_n^\circ)} \;
\frac{\prod_{y \in U_n} (y' - y)}{\prod_{x \in V_n \setminus x'} (x' - x)} \;
\frac{\prod_{x \in P_n \setminus y'} (x' - x)}{\prod_{y \in P_n \setminus y'} (y' - y)} \\
&\hspace{3.6cm} + 1_{(y' \in V_n, \; y' \in \g_n^\circ)} \;
\frac{\prod_{y \in U_n} (y' - y)}{\prod_{x \in V_n \setminus y'} (y' - x)} \;
\frac{\prod_{x \in P_n \setminus y'} (y' - x)}{\prod_{y \in P_n \setminus y'} (y' - y)} \bigg) \\
&= \sum_{y' \in \tilde{S}_{1,n} \cup \tilde{S}_{3,n}} 1_{(y' \in \G_n^\circ)}
\sum_{x' \in S_{2,n}} 1_{(x' \in \g_n^\circ)} \;
\frac{\prod_{y \in U_n} (y' - y)}{\prod_{x \in V_n \setminus x'} (x' - x)} \;
\frac{\prod_{x \in P_n \setminus y'} (x' - x)}{\prod_{y \in P_n \setminus y'} (y' - y)} \\
&+ \sum_{y' \in (\tilde{S}_{1,n} \cup \tilde{S}_{3,n}) \cap V_n}
\; 1_{(y' \in \g_n^\circ \cap \G_n^\circ)} \;
\frac{\prod_{y \in U_n} (y' - y)}{\prod_{x \in V_n \setminus y'} (y' - x)}.
\end{align*}
Next consider the second term on the RHS of equation (\ref{eqJ_n3}).
Note, definition \ref{defConCases} and figure \ref{figDesAsc1-12}
imply that $\G_n$ contains $\g_{2,n}$ and none of $\g_{1,n}$ for cases (6,7),
and $\G_n$ contains no parts of $\g_n$ for all other cases. 
Therefore, the second term equals,
\begin{equation*}
- 1_{(A \in \{6,7\})} \; \int_{\g_{2,n}} dw \;
\frac{\prod_{y \in U_n} (w - y)}{\prod_{x \in V_n} (w - x)}
= - 1_{(A \in \{6,7\})} \; \sum_{y' \in V_n \setminus U_n} \; 1_{(y' \in \g_{2,n}^\circ)}
\frac{\prod_{y \in U_n} (y' - y)}{\prod_{x \in V_n \setminus y'} (y' - x)}.
\end{equation*}
where $A \in \{1,2,\ldots,12\}$ denotes that particular case of lemma
\ref{lemCases} which is satisfied. Combined the above give,
\begin{align*}
J_n
&= \sum_{y' \in \tilde{S}_{1,n} \cup \tilde{S}_{3,n}} 1_{(y' \in \G_n^\circ)}
\sum_{x' \in S_{2,n}} 1_{(x' \in \g_n^\circ)} \;
\frac{\prod_{y \in U_n} (y' - y)}{\prod_{x \in V_n \setminus x'} (x' - x)} \;
\frac{\prod_{x \in P_n \setminus y'} (x' - x)}{\prod_{y \in P_n \setminus y'} (y' - y)} \\
&+ \left( \sum_{y' \in (\tilde{S}_{1,n} \cup \tilde{S}_{3,n}) \cap V_n}
1_{(y' \in \g_n^\circ \cap \G_n^\circ)}
- 1_{(A \in \{6,7\})} \sum_{y' \in V_n \setminus U_n} 1_{(y' \in \g_{2,n}^\circ)} \right)
\frac{\prod_{y \in U_n} (y' - y)}{\prod_{x \in V_n \setminus y'} (y' - x)}.
\end{align*}
Finally recall that $V_n \setminus U_n = (VU^{(n)}) \cup (VU_{(n)})$ (see
equation (\ref{eqVn-Un})), and $\tilde{S}_{1,n} \cap V_n = (VU^{(n)}) \cap P_n$
and $\tilde{S}_{3,n} \cap V_n = (VU_{(n)}) \cap P_n$ (see equations
(\ref{eqUnVn}, \ref{eqVn-Un}, \ref{eqPn-Un1})). The required result thus follows if
we can show that:
\begin{enumerate}
\item[(i)]
$\{x' \in S_{2,n} : x' \in \g_n^\circ\}$ equals $S_{2,n}$ for all cases, (1-12),
of lemma \ref{lemCases}.
\item[(ii)]
$\{y' \in \tilde{S}_{1,n} \cup \tilde{S}_{3,n} : y' \in \G_n^\circ\}$ equals
$\tilde{S}_{1,n}$ for (1-6), and equals $\tilde{S}_{3,n}$ for (7-12).
\item[(iii)]
$\{y' \in (\tilde{S}_{1,n} \cup \tilde{S}_{3,n}) \cap V_n : y' \in \g_n^\circ \cap \G_n^\circ\}$
equals $(VU^{(n)}) \cap P_n$ for (1-3) and (6)
when $v_n \ge u_n$, equals $(VU_{(n)}) \cap P_n$
for (7) and (10-12) when $v_n+s_n \le u_n+r_n$, and is empty
otherwise.
\item[(iv)]
$\{y' \in V_n \setminus U_n : y' \in \g_{2,n}^\circ\}$ equals
$VU^{(n)}$ for (6) when $v_n \ge u_n$, equals $VU_{(n)}$ for
(7) when $v_n+s_n \le u_n+r_n$, is empty for (6) when
$u_n > v_n$, and is empty for (7) when $v_n+s_n > u_n+r_n$.
\end{enumerate}
We will prove (i) only for cases (1-6). The proof of (i) for case (12) is
similar to case (1), the proof of (i) for case (11) is similar to case (2),
etc. We will prove (ii) only for case (4). The proof of (ii)
for all other cases follows from similar considerations. We will not prove
(iii) and (iv), but we state that their proofs also follow from
similar considerations.

Consider (i) for cases (1-4). Recall (see equation (\ref{eqfn2}))
that $S_{2,n} \subset \frac1n \{v_n+s_n-n,v_n+s_n-n+1,\ldots, v_n\}$. Thus,
fixing $\d>0$, equation (\ref{equnrnvnsn}) implies that
$S_{2,n} \subset (\chi+\eta-1-\d,\chi+\d)$.
Next recall (see lemma \ref{lemCases} and equation
(\ref{eqS1S2S3In})) that $t > \chi > \chi+\eta-1 \ge \overline{S_3}$.
Definition \ref{defConCases} and figure \ref{figDesAsc1-12} thus imply
that we can choose the above $\d>0$ such that $\g_n$ contains $(\chi+\eta-1-\d,\chi+\d)$.
This proves (i) for cases (1-4).

Consider (i) for case (6). First recall (see lemma \ref{lemCases}) that
$t \in L_t \subset \R \setminus \supp (\l-\mu)$. Thus, fixing $\d>0$ such that
$\overline{L_t} - \d > t > \underline{L_t} + \d$, equation (\ref{eqPnHnlarge})
gives $\frac{\Z}n \cap (\underline{L_t} + \d, \overline{L_t} - \d) \subset P_n$.
Next recall that $S_{2,n} = \frac1n \{v_n+s_n-n,v_n+s_n-n+1,\ldots, v_n\} \setminus P_n$
(see equation (\ref{eqfn2})), that
$\frac1n \{v_n+s_n-n,v_n+s_n-n+1,\ldots, v_n\} \subset (\chi+\eta-1-\d,\chi+\d)$
(see equation (\ref{equnrnvnsn})). Therefore
$S_{2,n} \subset (\chi+\eta-1-\d, \underline{L_t} + \d] \cup [\overline{L_t} - \d, \chi+\d)$.
Next recall (see lemma \ref{lemCases} and equation
(\ref{eqS1S2S3In})) that $\underline{S_1} \ge \chi > \overline{L_t} > t >
\underline{L_t} > \chi+\eta-1 \ge \overline{S_3}$. Definition \ref{defConCases}
and figure \ref{figDesAsc1-12} thus imply that we can choose the above $\d>0$
such that $\g_{1,n}$ contains $(\chi+\eta-1-\d, \underline{L_t} + \d]$ and
$\g_{2,n}$ contains $[\overline{L_t}-\d,\chi+\d)$.
This proves (i) for case (6).

Consider (i) for case (5). First recall (see lemma \ref{lemCases}) that
$t \in (\overline{S_2},\chi) \subset \R \setminus \supp (\l-\mu)$. Also recall
(see assumption \ref{asscases}) that $\chi \in \R \setminus \supp (\l-\mu)$.
Thus we can fix a $\d>0$ such that $t > \overline{S_2} + \d$ and
$(\overline{S_2} + \d,\chi+\d) \subset \R \setminus \supp (\l-\mu)$.
Then we can proceed similarly to case (6), above, to show that
$S_{2,n} \subset (\chi+\eta-1-\d, \overline{S_2} + \d]$. Moreover,
definition \ref{defConCases} and figure \ref{figDesAsc1-12} imply
that we can choose the $\d>0$ such that $\g_n$ contains
$(\chi+\eta-1-\d, \overline{S_2} + \d]$. This proves (i) for case (5).

Consider (ii) for case (4). We must show that $\G_n$ contains all
of $\tilde{S}_{1,n}$ and none of
$\tilde{S}_{3,n}$. First recall (see lemma \ref{lemCases}) that
$t \in (\chi,\underline{S_1}) \subset \R \setminus \supp (\mu)$.
Also recall (see assumption \ref{asscases}) that
$\chi \in \R \setminus \supp (\mu)$. Thus we can fix a $\d>0$ such
that $t < \underline{S_1} - \d$ and
$(\chi-\d,\underline{S_1} - \d) \subset \R \setminus \supp (\mu)$.
Equation (\ref{eqPnHnlarge}) thus gives
$P_n \cap (\chi-\d,\underline{S_1} - \d) = \emptyset$.
Equations (\ref{equnrnvnsn}, \ref{eqPn-Un1}) then give
$\tilde{S}_{1,n} = \{y \in P_n : y \ge \underline{S_1} - \d \}$.
Equation (\ref{eqS1nS2nS3nIn}) then gives
$\tilde{S}_{1,n} \subset[\underline{S_1} - \d, \overline{S_1} + \delta)$.
Next recall (see equation (\ref{eqPn-Un1})) that
$\tilde{S}_{3,n} = \{y \in P_n : y \le \frac{u_n+r_n-n}n \}$.
Equations (\ref{equnrnvnsn}, \ref{eqS1nS2nS3nIn}) then give
$\tilde{S}_{3,n} \subset (\underline{S}_3-\d,\chi+\eta-1+\d)$.
Finally recall (see lemma \ref{lemCases} and equation (\ref{eqS1S2S3In}))
that $\underline{S_1} > t > \chi > \chi+\eta-1$. Definition \ref{defConCases}
and figure \ref{figDesAsc1-12} thus imply that we can choose the above
$\d>0$ such that $\G_n$ contains all of $[\underline{S_1} - \d, \overline{S_1} + \d)$
and none of $(\underline{S}_3-\d,\chi+\eta-1+\d)$. This proves (ii) for case (4).
\end{proof}

Next we prove the following technical result:
\begin{lem}
\label{lemphi}
Fix $k,i \ge 1$. Then, for all $x \in \Z$,
\begin{equation}
\tag{1}
\sum_{l=0}^k \frac{\prod_{j=1}^i (x+l-j)}{\prod_{j=0, j \neq l}^k (l-j)} 
= \begin{dcases}
\frac{i!}{k! (i-k)!} \prod_{j=1}^{i-k} (x-j) & ; \text{ when } k < i, \\
1 & ; \text{ when } k = i, \\
0 & ; \text{ when } k > i.
\end{dcases}
\end{equation}
Next, fix and $a,b \in \Z$. Then, for all $x \in \Z$ with $x+b,x+a \ge 0$,
\begin{equation}
\tag{2}
\sum_{l=0}^k \frac{(x+b+l)!}{(x+a+l)!} \; \frac1{\prod_{j=0, j \neq l}^k (l-j)} 
= (b-a)(b-a-1) \cdots (b-a-k+1) \; \frac1{k!} \; \frac{(x+b)!}{(x+a+k)!}.
\end{equation}
\end{lem}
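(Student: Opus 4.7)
The plan is to reduce both identities to the finite difference identity
\[
\sum_{l=0}^k \frac{f(l)}{\prod_{j=0,\, j\neq l}^k (l-j)} \;=\; \frac{\Delta^k f(0)}{k!}, \qquad \Delta f(l) := f(l+1)-f(l),
\]
which holds because $\prod_{j=0,\, j\neq l}^k(l-j) = (-1)^{k-l} l!(k-l)!$, so the left-hand side equals $\frac{1}{k!}\sum_{l=0}^k (-1)^{k-l}\binom{k}{l}f(l)$, the standard expansion of $\Delta^k f(0)$. After this reduction, both parts become routine evaluations of $\Delta^k f(0)$ for the specific functions $f$ in question.

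For part (1), I take $f(l) = \prod_{j=1}^i(x+l-j)$, which I recognise as the falling factorial $(x+l-1)_i$, where $(y)_m := y(y-1)\cdots(y-m+1)$. Because $\Delta_y (y)_m = m\,(y)_{m-1}$, iterating gives $\Delta_l^k f(l) = i(i-1)\cdots(i-k+1)\,(x+l-1)_{i-k}$ when $k\le i$, and $\Delta^k f \equiv 0$ when $k > i$ (since $f$ has degree exactly $i$ in $l$). Evaluating at $l=0$ and dividing by $k!$ reproduces the three stated cases: $\frac{i!}{k!(i-k)!}\prod_{j=1}^{i-k}(x-j)$ when $k<i$, the value $1$ when $k=i$ (since $\frac{i!}{k!(i-k)!}=1$ and the empty product is $1$), and $0$ when $k>i$.

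For part (2), I take $f(l) = (x+b+l)!/(x+a+l)!$, which is well-defined under the hypothesis $x+a,\, x+b\ge 0$, and I write $f(l) = g(x+l)$ with $g(y) := (y+b)!/(y+a)!$. A direct computation yields
\[
\Delta g(y) = \frac{(y+b)!}{(y+a+1)!}\bigl[(y+b+1)-(y+a+1)\bigr] = (b-a)\,\frac{(y+b)!}{(y+a+1)!},
\]
which has the same form as $g$ but with $a$ replaced by $a+1$ and an extra factor $(b-a)$ out front. Induction on $k$ then gives $\Delta^k g(y) = (b-a)(b-a-1)\cdots(b-a-k+1)\,(y+b)!/(y+a+k)!$; setting $y=x$, noting that $\Delta^k f(0) = \Delta^k g(x)$, and dividing by $k!$ recovers the claimed right-hand side. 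Both halves are direct once the finite-difference reduction is in place, and I do not foresee any substantive obstacle.
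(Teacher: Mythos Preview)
Your proposal is correct and follows essentially the same approach as the paper: both reduce the sums to $\frac{1}{k!}\Delta^k f(0)$ via the identity $\prod_{j=0,j\neq l}^k(l-j)=(-1)^{k-l}l!(k-l)!$, and then evaluate the iterated forward differences of the same two functions (the falling factorial for part (1) and the ratio of factorials for part (2)) by induction. Your write-up is slightly more explicit about the falling factorial structure in part (1), but the argument is the paper's.
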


\begin{proof}
First note, letting $g : \Z \to \R$ be any function,
and letting $\Delta$ denote the finite difference
operator (i.e., $(\Delta g)(x) := g(x+1) - g(x)$,
$(\Delta^2 g) (x) := (\Delta (\Delta g)) (x)
= g(x+2) - 2 g(x+1) + g(x)$, etc), then
\begin{equation}
\label{eqlemphi}
(\Delta^k g)(x)
= \sum_{l=0}^k (-1)^{k-l} \binom{k}{l} g(x+l)
= k! \sum_{l=0}^k \frac{g(x+l)}{\prod_{j=0,j\ne l}^k (l-j)},
\end{equation}
for all $x \in \Z$.
Above, the first step follows by induction, and the second step
follows since $(-1)^{k-l} l! (k-l)! = \prod_{j=0,j\ne l}^k (l-j)$
for all $l \in \{0,\ldots,k\}$. We will use the above identity to prove
(1) and (2).

Consider (1). In this case, take
\begin{equation*}
g(x) := \prod_{j=1}^i (x-j),
\end{equation*}
for all $x \in \Z$. Then, induction gives,
\begin{equation*}
(\Delta^k g)(x)
= \begin{dcases}
\frac{i!}{(i-k)!} \prod_{j=1}^{i-k} (x-j) & ; \text{ when } k < i, \\
k! & ; \text{ when } k = i, \\
0 & ; \text{ when } k > i,
\end{dcases}
\end{equation*}
Moreover, equation (\ref{eqlemphi}) gives,
\begin{equation*}
(\Delta^k g)(x)
= k! \sum_{l=0}^k \frac{\prod_{j=1}^i (x+l-j)}{\prod_{j=0,j\ne l}^k (l-j)},
\end{equation*}
for all $x \in \Z$. Combined, the above prove (1).

Consider (2). In this case take,
\begin{equation*}
g(x) := \frac{(x+b)!}{(x+a)!},
\end{equation*}
for all $x \in \Z$ with $x+b,x+a \ge 0$. Then, induction gives,
\begin{equation*}
(\Delta^k g)(x)
= (b-a)(b-a-1) \cdots (b-a-k+1) \; \frac{(x+b)!}{(x+a+k)!},
\end{equation*}
for all $x \in \Z$ with $x+b,x+a \ge 0$. Moreover, equation (\ref{eqlemphi}) gives,
\begin{equation*}
(\Delta^k g)(x)
= k! \sum_{l=0}^k \frac{(x+b+l)!}{(x+a+l)!} \; \frac1{\prod_{j=0,j\ne l}^k (l-j)},
\end{equation*}
for all $x \in \Z$ with $x+b,x+a \ge 0$.  Combined, the above prove (2).
\end{proof}

Next we examine $K_n((u_n,r_n),(v_n,s_n))$:
\begin{lem}
\label{lemKnSums}
Assume the conditions of theorem \ref{thmAiry}. Define the sets
$U_n$, $V_n$, $P_n$, $VU^{(n)}$, $VU_{(n)}$ as above. Also define
$\beta_n$ as in lemma \ref{lemKnJnPhin}. Then,
\begin{align}
\tag{1}
&\beta_n \; K_n((u_n,r_n),(v_n,s_n))
= \sum_{y' \in \tilde{S}_{1,n}} \sum_{x' \in S_{2,n}} \;
\frac{\prod_{y \in U_n} (y' - y)}{\prod_{x \in V_n \setminus x'} (x' - x)} \;
\frac{\prod_{x \in P_n \setminus y'} (x' - x)}{\prod_{y \in P_n \setminus y'} (y' - y)} \\
\nonumber
&+ 1_{(v_n \ge u_n)} \bigg( 1_{(s_n \le r_n)} \sum_{y' \in (VU^{(n)}) \cap P_n}
- 1_{(s_n > r_n)} \sum_{y' \in (VU^{(n)}) \setminus P_n} \bigg)
\frac{\prod_{y \in U_n} (y' - y)}{\prod_{x \in V_n \setminus y'} (y' - x)}.
\end{align}
Moreover,
\begin{align}
\tag{2}
&\beta_n \; K_n((u_n,r_n),(v_n,s_n))
= -\sum_{y' \in \tilde{S}_{3,n}} \sum_{x' \in S_{2,n}} \;
\frac{\prod_{y \in U_n} (y' - y)}{\prod_{x \in V_n \setminus x'} (x' - x)} \;
\frac{\prod_{x \in P_n \setminus y'} (x' - x)}{\prod_{y \in P_n \setminus y'} (y' - y)} \\
\nonumber
&+ 1_{(v_n+s_n \le u_n+r_n)} \bigg( - 1_{(s_n \le r_n)} \sum_{y' \in (VU_{(n)}) \cap P_n}
+ 1_{(s_n > r_n)} \sum_{y' \in (VU_{(n)}) \setminus P_n} \bigg)
\frac{\prod_{y \in U_n} (y' - y)}{\prod_{x \in V_n \setminus y'} (y' - x)}.
\end{align}
\end{lem}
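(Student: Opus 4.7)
For part (1), I begin by multiplying equation (\ref{eqKnrusvFixTopLine}) through by $\beta_n$. The factorial ratio $(n-s_n)!/(n-r_n-1)!$ cancels exactly against the corresponding factor in $\beta_n$, leaving the factor $n^{r_n+1-s_n}$ in front of the double sum. Rescaling $y' := x_k^{(n)}/n \in P_n$ and $x' := l/n \in V_n$, each product $\prod_j(\cdot - j/n)$ pulls out a power of $1/n$; a short book-keeping shows that the total power of $n$ is zero, producing
\begin{equation*}
\beta_n K_n((u_n,r_n),(v_n,s_n)) \;=\; -\beta_n\,\phi_{r_n,s_n}(u_n,v_n) \;+\; \sum_{y' \in \tilde{S}_{1,n}} \sum_{x' \in V_n} T(x',y'),
\end{equation*}
where $T(x',y')$ denotes the rational expression in the statement. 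I then split $V_n = S_{2,n} \sqcup (V_n \cap P_n)$. For $x' \in V_n \cap P_n$ the factor $\prod_{x \in P_n \setminus y'}(x'-x)$ vanishes unless $x' = y'$; in that case the common factor $(x'-y')$ cancels out of the two products over $P_n \setminus y'$, leaving the reduced expression $T'(y') := \prod_{y \in U_n}(y'-y)/\prod_{x \in V_n \setminus y'}(y'-x)$. The resulting boundary sum is indexed by $V_n \cap \tilde{S}_{1,n}$, which a short interval check (using $u_n/n, v_n/n \to \chi$ and $\eta \in (0,1)$) identifies with $(VU^{(n)}) \cap P_n$ when $v_n \ge u_n$ and with $\emptyset$ otherwise.

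The remaining task for (1) is to absorb $-\beta_n\phi_{r_n,s_n}(u_n,v_n)$ into this boundary sum so as to produce the indicator split $1_{(s_n \le r_n)}$ versus $-1_{(s_n > r_n)}$ in the statement. This reduces to the identity
\begin{equation*}
\sum_{y' \in VU^{(n)}} T'(y') \;=\; \beta_n\,\phi_{r_n,s_n}(u_n,v_n) \qquad (v_n \ge u_n,\ s_n > r_n),
\end{equation*}
which I will verify by parametrising $y' = (u_n+\ell)/n$ for $\ell \in \{0,\dots,v_n-u_n\}$, collecting the explicit factorials that come out of the numerator and denominator, and applying part (2) of Lemma \ref{lemphi} with $k := v_n-u_n$, $x+b := n-r_n-1$ and $x+a := n-s_n-(v_n-u_n)$. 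The rising-factorial prefactor $(b-a)(b-a-1)\cdots(b-a-k+1)/k!$ produced by Lemma \ref{lemphi}(2) is exactly the binomial form $(v_n+s_n-u_n-r_n-1)!/((s_n-r_n-1)!(v_n-u_n)!)$ of $\phi_{r_n,s_n}(u_n,v_n)$, and the surviving factorials match those in $\beta_n$. The full $VU^{(n)}$-sum minus the $(VU^{(n)}) \cap P_n$-sum gives the claimed $-\sum_{(VU^{(n)}) \setminus P_n} T'$ contribution, completing (1).

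For part (2), the key new input is the partial fractions / Lagrange identity
\begin{equation*}
\sum_{y' \in P_n} \frac{\prod_{y \in U_n}(y'-y)}{\prod_{y'' \in P_n \setminus y'}(y'-y'')} \prod_{x \in P_n \setminus y'}(x'-x) \;=\; \prod_{y \in U_n}(x'-y) \qquad (x' \notin P_n),
\end{equation*}
which holds because $|U_n| = n-r_n-1 < n = |P_n|$. Dividing by $\prod_{x \in V_n \setminus x'}(x'-x)$, summing over $x' \in S_{2,n}$, and noting that terms with $y' \in U_n \cap P_n$ vanish (their numerator $\prod_{y \in U_n}(y'-y)$ is zero), the partition $P_n = \tilde{S}_{1,n} \sqcup (U_n \cap P_n) \sqcup \tilde{S}_{3,n}$ gives
\begin{equation*}
\sum_{y' \in \tilde{S}_{1,n}} \sum_{x' \in S_{2,n}} T(x',y') \;+\; \sum_{y' \in \tilde{S}_{3,n}} \sum_{x' \in S_{2,n}} T(x',y') \;=\; \sum_{x' \in S_{2,n}} T'(x').
\end{equation*}
Substituting the expression from (1) for the first sum and solving for the second, one obtains $\beta_n K_n$ as $-\sum_{\tilde{S}_{3,n}}\sum_{S_{2,n}} T$ plus a collection of one-dimensional boundary sums. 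Using that $\sum_{x' \in V_n} T'(x') = \sum_{x' \in VU^{(n)}} T'(x') + \sum_{x' \in VU_{(n)}} T'(x')$ (since $x' \in V_n \cap U_n$ gives $T'(x')=0$) and applying Lemma \ref{lemphi}(2) a second time, now indexed by the \emph{lower} boundary $VU_{(n)}$ rather than the upper boundary $VU^{(n)}$, these boundary sums collapse into precisely the $\Phi_n$-type expression claimed in (2), with the $(-1)^{s_n-r_n-1}$ sign in cases (9--12) arising naturally from the reversed orientation of $VU_{(n)}$. The main obstacle in both parts is the careful tracking of the four indicator regimes $v_n \gtrless u_n$ and $s_n \gtrless r_n$, so that the combinatorial identities produced by Lemma \ref{lemphi} align with the piecewise definition of $\phi_{r_n,s_n}$; the rest of the argument is essentially bookkeeping.
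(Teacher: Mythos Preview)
Your approach is essentially the paper's. For part (1) your computation is correct; the one variation is that you evaluate $\sum_{y'\in VU^{(n)}}T'(y')$ via Lemma~\ref{lemphi}(2) with $k=v_n-u_n$, whereas the paper evaluates $\sum_{y'\in V_n}T'(y')$ via Lemma~\ref{lemphi}(1) with $k=n-s_n$ (its step (i)) and then drops the $V_n\cap U_n$ terms. Both routes give the same identity, and your handling of the split $V_n=S_{2,n}\sqcup(V_n\cap P_n)$ and the identification $V_n\cap\tilde S_{1,n}=(VU^{(n)})\cap P_n$ matches the paper's step (iii).

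For part (2) there is a gap in the regime $s_n\le r_n$. After your Lagrange step and substitution from (1), the boundary contribution you must simplify is $\sum_{x'\in S_{2,n}}T'(x')+B_1$, and you need this to equal the target $B_2$. Writing $\sum_{S_{2,n}}T'=1_{(v_n\ge u_n)}\sum_{(VU^{(n)})\setminus P_n}T'+1_{(v_n+s_n\le u_n+r_n)}\sum_{(VU_{(n)})\setminus P_n}T'$, a direct check shows the collapse works when $s_n>r_n$, but when $s_n\le r_n$ it reduces to requiring
\[
1_{(v_n\ge u_n)}\sum_{y'\in VU^{(n)}}T'(y')\;+\;1_{(v_n+s_n\le u_n+r_n)}\sum_{y'\in VU_{(n)}}T'(y')\;=\;0,
\]
i.e.\ $\sum_{y'\in V_n}T'(y')=0$. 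The paper obtains this in one line as the $k>i$ case of Lemma~\ref{lemphi}(1). Your proposed route --- two separate applications of part (2) to $VU^{(n)}$ and $VU_{(n)}$ --- yields two closed-form expressions that are each generically nonzero when both sets are nonempty and $s_n<r_n$; you would still have to verify by hand that they are negatives of each other, which is not ``bookkeeping''. Either invoke part (1) once, as the paper does, or supply that cancellation explicitly.

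Finally, the ``$(-1)^{s_n-r_n-1}$ sign in cases (9--12)'' you mention does not appear in this lemma at all; you are conflating the statement with the later Lemma~\ref{lemPhin}.
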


\begin{proof}
First note, equation (\ref{eqKnrusvFixTopLine}) and the expression for $\beta_n$
(see statement of lemma \ref{lemKnJnPhin}) give,
\begin{align*}
&\beta_n \; K_n((u_n,r_n),(v_n,s_n)) =
- \beta_n \; \phi_{r_n,s_n}(u_n,v_n) \\
&+ \sum_{k=1}^n 1_{(x_k^{(n)} \ge  u_n)} \sum_{l=v_n+s_n-n}^{v_n}
\frac{\prod_{j=u_n+r_n-n+1}^{u_n-1} (\frac{x_k^{(n)}}n - \frac{j}n)}
{\prod_{j=v_n+s_n-n, \; j \neq l}^{v_n} (\frac{l}n - \frac{j}n)} \;
\prod_{i=1, \; i \neq k}^n
\left( \frac{\frac{l}n - \frac{x_i^{(n)}}n}{\frac{x_k^{(n)}}n - \frac{x_i^{(n)}}n} \right).
\end{align*}
Equations (\ref{eqJ_n2}, \ref{eqPn-Un1}) then give,
\begin{align}
\label{eqlemKnJn}
&\beta_n \; K_n((u_n,r_n),(v_n,s_n)) =
- \beta_n \; \phi_{r_n,s_n}(u_n,v_n) \\
\nonumber
&+ \sum_{y' \in \tilde{S}_{1,n}} \sum_{x' \in V_n}
\frac{\prod_{y \in U_n} (y' - y)}{\prod_{x \in V_n \setminus x'} (x' - x)} \;
\frac{\prod_{x \in P_n \setminus y'} (x' - x)}{\prod_{y \in P_n \setminus y'} (y' - y)}.
\end{align}
First, we will show:
\begin{equation}
\tag{i}
\sum_{y' \in V_n} \frac{\prod_{y \in U_n} (y'-y)}{\prod_{x \in V_n \setminus y'} (y'-x)}
= \begin{dcases}
\frac{\beta_n \prod_{j=1}^{s_n-r_n-1} (v_n-u_n+s_n-r_n-j)}{(s_n-r_n-1)!} & ; \; s_n > r_n+1, \\
1 & \; ; \; s_n = r_n+1, \\
0 & \; ; \; s_n \le r_n.
\end{dcases}
\end{equation}
Then, we will use this to show:
\begin{equation}
\tag{ii}
\beta_n \; \phi_{r_n,s_n}(u_n,v_n)
= 1_{(v_n \ge u_n, s_n > r_n)} \sum_{y' \in VU^{(n)}}
\frac{\prod_{y \in U_n} (y' - y)}{\prod_{x \in V_n \setminus y'} (y' - x)}.
\end{equation}
Next, recalling that $S_{2,n} \subset V_n$ (see equation (\ref{eqPn-Un2})), we will show:
\begin{align}
\tag{iii}
&\sum_{y' \in \tilde{S}_{1,n}} \sum_{x' \in V_n \setminus S_{2,n}}
\frac{\prod_{y \in U_n} (y' - y)}{\prod_{x \in V_n \setminus x'} (x' - x)} \;
\frac{\prod_{x \in P_n \setminus y'} (x' - x)}{\prod_{y \in P_n \setminus y'} (y' - y)} \\
\nonumber
&= 1_{(v_n \ge u_n)} \sum_{y' \in (VU^{(n)}) \cap P_n}
\frac{\prod_{y \in U_n} (y' - y)}{\prod_{x \in V_n \setminus y'} (y' - x)}.
\end{align}
Thus, since $S_{2,n} \subset V_n$,
equation (\ref{eqlemKnJn}) and parts (ii,iii) prove (1).
Next we will show:
\begin{align}
\tag{iv}
&\sum_{y' \in \tilde{S}_{1,n} \cup \tilde{S}_{3,n}} \sum_{x' \in V_n}
\frac{\prod_{y \in U_n} (y' - y)}{\prod_{x \in V_n \setminus x'} (x' - x)} \;
\frac{\prod_{x \in P_n \setminus y'} (x' - x)}{\prod_{y \in P_n \setminus y'} (y' - y)} \\
\nonumber
&= \bigg( 1_{(v_n \ge u_n, s_n > r_n)} \sum_{y' \in VU^{(n)}}
+ 1_{(v_n+s_n \le u_n+r_n, s_n > r_n)} \sum_{y' \in VU_{(n)}} \bigg)
\frac{\prod_{y \in U_n} (y' - y)}{\prod_{x \in V_n \setminus y'} (y' - x)}.
\end{align}
Finally we will show:
\begin{align}
\tag{v}
&\sum_{y' \in \tilde{S}_{3,n}} \sum_{x' \in V_n \setminus S_{2,n}}
\frac{\prod_{y \in U_n} (y' - y)}{\prod_{x \in V_n \setminus x'} (x' - x)} \;
\frac{\prod_{x \in P_n \setminus y'} (x' - x)}{\prod_{y \in P_n \setminus y'} (y' - y)} \\
\nonumber
&= 1_{(v_n+s_n \le u_n+r_n)} \sum_{y' \in (VU_{(n)}) \cap P_n}
\frac{\prod_{y \in U_n} (y' - y)}{\prod_{x \in V_n \setminus y'} (y' - x)}.
\end{align}
Thus, since $S_{2,n} \subset V_n$,
equation (\ref{eqlemKnJn}) and parts (ii,iv,v) prove (2).

Consider (i). Note, taking $k:=n-s_n$ and $i := n-r_n-1$ and $x := v_n-u_n+s_n-r_n$,
part (1) of lemma \ref{lemphi} gives,
\begin{align*}
&\sum_{l=0}^{n-s_n} \frac{\prod_{j=1}^{n-r_n-1} (v_n-u_n+s_n-r_n+l-j)}
{\prod_{j=0, j \neq l}^{n-s_n} (l-j)} \\
&= \begin{dcases}
\frac{(n-r_n-1)!}{(n-s_n)! (s_n-r_n-1)!} \prod_{j=1}^{s_n-r_n-1} (v_n-u_n+s_n-r_n-j)
& ; \text{ when } s_n > r_n+1, \\
1 & ; \text{ when } s_n = r_n+1, \\
0 & ; \text{ when } s_n < r_n+1.
\end{dcases}
\end{align*}
Shifting the dummy variables implies that the above LHS equals,
\begin{equation*}
\sum_{l=v_n+s_n-n}^{v_n} \frac{\prod_{j=u_n+r_n-n+1}^{u_n-1} (l-j)}
{\prod_{j=v_n+s_n-n, j \neq l}^{v_n} (l-j)}.
\end{equation*}
Equation (\ref{eqJ_n2}) and the expression for $\beta_n$
(see statement of lemma \ref{lemKnJnPhin})
then prove (i).

Consider (ii). First note, part (i), equation (\ref{eqphirsuv}),
and the expression for $\beta_n$ (see statement of lemma \ref{lemKnJnPhin})
prove the following:
\begin{equation*}
\beta_n \; \phi_{r_n,s_n}(u_n,v_n)
= 1_{(v_n \ge u_n, s_n > r_n)} \sum_{y' \in V_n}
\frac{\prod_{y \in U_n} (y' - y)}{\prod_{x \in V_n \setminus y'} (y' - x)}.
\end{equation*}
Therefore,
\begin{equation*}
\beta_n \; \phi_{r_n,s_n}(u_n,v_n)
= 1_{(v_n \ge u_n, s_n > r_n)} \sum_{y' \in V_n \setminus U_n}
\frac{\prod_{y \in U_n} (y' - y)}{\prod_{x \in V_n \setminus y'} (y' - x)}.
\end{equation*}
Next note, equation (\ref{eqVn-Un}) implies that
$V_n \setminus U_n = (VU^{(n)}) \cup (VU_{(n)})$.
Finally note, equation (\ref{eqVn-Un}) also implies that
$VU^{(n)} \neq \emptyset$ and $VU_{(n)} = \emptyset$ when
$v_n \ge u_n$ and $s_n > r_n$. This proves (ii).

Consider (iii). Recall (see equations (\ref{eqPn-Un1}, \ref{eqPn-Un2})) that
$\tilde{S}_{1,n} \subset P_n$ and $S_{2,n} = V_n \setminus P_n$. Thus, for all
$y' \in \tilde{S}_{1,n}$, $V_n$ equals the disjoint union
$S_{2,n} \cup (V_n \cap \{y'\}) \cup (V_n \cap (P_n \setminus y'))$.
Therefore,
\begin{align*}
&\sum_{y' \in \tilde{S}_{1,n}} \sum_{x' \in V_n \setminus S_{2,n}}
\frac{\prod_{y \in U_n} (y' - y)}{\prod_{x \in V_n \setminus x'} (x' - x)} \;
\frac{\prod_{x \in P_n \setminus y'} (x' - x)}{\prod_{y \in P_n \setminus y'} (y' - y)} \\
&= \sum_{y' \in \tilde{S}_{1,n} \cap V_n}
\frac{\prod_{y \in U_n} (y' - y)}{\prod_{x \in V_n \setminus y'} (y' - x)} \;
\frac{\prod_{x \in P_n \setminus y'} (y' - x)}{\prod_{y \in P_n \setminus y'} (y' - y)}
= \sum_{y' \in \tilde{S}_{1,n} \cap (V_n \setminus U_n)}
\frac{\prod_{y \in U_n} (y' - y)}{\prod_{x \in V_n \setminus y'} (y' - x)}.
\end{align*}
Note that equations (\ref{eqVn-Un}, \ref{eqJ_n2}) give
$\tilde{S}_{1,n} \cap (V_n \setminus U_n) = (VU^{(n)}) \cap P_n$
when $v_n \ge u_n$, and $\tilde{S}_{1,n} \cap (V_n \setminus U_n) = \emptyset$ when
$v_n < u_n$. This proves (iii). Part (v) follows similarly.

Consider (iv). First recall (see equation (\ref{eqPn-Un2})) that
$P_n \setminus U_n = \tilde{S}_{1,n} \cup \tilde{S}_{3,n}$, a disjoint
union. Therefore,
\begin{align*}
&\sum_{y' \in \tilde{S}_{1,n} \cup \tilde{S}_{3,n}} \sum_{x' \in V_n}
\frac{\prod_{y \in U_n} (y' - y)}{\prod_{x \in V_n \setminus x'} (x' - x)} \;
\frac{\prod_{x \in P_n \setminus y'} (x' - x)}{\prod_{y \in P_n \setminus y'} (y' - y)} \\
&= \sum_{y' \in P_n} \sum_{x' \in V_n}
\frac{\prod_{y \in U_n} (y' - y)}{\prod_{x \in V_n \setminus x'} (x' - x)} \;
\frac{\prod_{x \in P_n \setminus y'} (x' - x)}{\prod_{y \in P_n \setminus y'} (y' - y)}.
\end{align*}
Thus, since $P_n$ and $U_n$ are sets of distinct points, and since $|U_n| < |P_n|$
(see equation (\ref{eqJ_n2})), Lagrange interpolation
gives,
\begin{equation*}
\sum_{y' \in \tilde{S}_{1,n} \cup \tilde{S}_{3,n}} \sum_{x' \in V_n}
\frac{\prod_{y \in U_n} (y' - y)}{\prod_{x \in V_n \setminus x'} (x' - x)} \;
\frac{\prod_{x \in P_n \setminus y'} (x' - x)}{\prod_{y \in P_n \setminus y'} (y' - y)}
= \sum_{x' \in V_n}
\frac{\prod_{y \in U_n} (x' - y)}{\prod_{x \in V_n \setminus x'} (x' - x)}.
\end{equation*}
Part (i) then implies part (iv) when $s_n \le r_n$. To
see part (iv) when $s_n > r_n$, first note the above equation gives,
\begin{equation*}
\sum_{y' \in \tilde{S}_{1,n} \cup \tilde{S}_{3,n}} \sum_{x' \in V_n}
\frac{\prod_{y \in U_n} (y' - y)}{\prod_{x \in V_n \setminus x'} (x' - x)} \;
\frac{\prod_{x \in P_n \setminus y'} (x' - x)}{\prod_{y \in P_n \setminus y'} (y' - y)}
= \sum_{x' \in V_n \setminus U_n}
\frac{\prod_{y \in U_n} (x' - y)}{\prod_{x \in V_n \setminus x'} (x' - x)}.
\end{equation*}
Next note, equation (\ref{eqVn-Un}) implies that
$V_n \setminus U_n = (VU^{(n)}) \cup (VU_{(n)}$).
Finally note, equation (\ref{eqVn-Un}) also implies the following:
\begin{itemize}
\item
$VU^{(n)} \neq \emptyset$ and $VU_{(n)} = \emptyset$ when
$s_n > r_n$ and $v_n \ge u_n$.
\item
$VU^{(n)} = \emptyset$ and $VU_{(n)} = \emptyset$ when
$s_n > r_n$ and $v_n < u_n$ and $v_n+s_n > u_n+r_n$.
\item
$VU^{(n)} = \emptyset$ and $VU_{(n)} \neq \emptyset$ when
$s_n > r_n$ and $v_n+s_n \le u_n+r_n$.
\end{itemize}
The above exhaust all possibilities when $s_n>r_n$. This proves (iv).
\end{proof}

Next we examine $\Phi_n$:
\begin{lem}
\label{lemPhin}
Assume the conditions of theorem \ref{thmAiry}. Define the sets
$U_n$, $V_n$, $P_n$, $VU^{(n)}$, $VU_{(n)}$ as above. Also define
$\Phi_n$ and $\beta_n$ as in lemma \ref{lemKnJnPhin}. Recall that
one of the cases, (1-12), of lemma \ref{lemCases} must be satisfied.
Then:
\begin{align*}
\Phi_n
&= 1_{(v_n \ge u_n, s_n>r_n)} \sum_{y' \in VU^{(n)}}
\frac{\prod_{y \in U_n} (y' - y)}{\prod_{x \in V_n \setminus y'} (y' - x)}
\text{ for cases (1-4),} \\
\Phi_n
&= 1_{(v_n \ge u_n, s_n \le r_n)} \sum_{y' \in VU^{(n)}}
\frac{\prod_{y \in U_n} (y' - y)}{\prod_{x \in V_n \setminus y'} (y' - x)}
\text{ for cases (5,6),} \\
\Phi_n
&= 1_{(v_n+s_n \le u_n+r_n, s_n \le r_n)} \sum_{y' \in VU_{(n)}}
\frac{\prod_{y \in U_n} (y' - y)}{\prod_{x \in V_n \setminus y'} (y' - x)}
\text{ for cases (7,8),} \\
\Phi_n
&= 1_{(v_n+s_n \le u_n+r_n, s_n > r_n)} \sum_{y' \in VU_{(n)}}
\frac{\prod_{y \in U_n} (y' - y)}{\prod_{x \in V_n \setminus y'} (y' - x)}
\text{ for cases (9-12).}
\end{align*}
\end{lem}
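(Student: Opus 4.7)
The statement is a purely combinatorial identity: for each of the four case groups it asserts that the explicit closed form for $\Phi_n$ from Lemma~\ref{lemKnJnPhin} coincides with a specific sum over $VU^{(n)}$ or $VU_{(n)}$. My plan is to prove it by direct evaluation using Lemma~\ref{lemphi}, together with the elementary vanishing of the numerator $\prod_{j=u_n+r_n-n+1}^{u_n-1}(l-j)$ whenever $l\in\{u_n+r_n-n+1,\ldots,u_n-1\}$.

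First I would clear the $1/n$ factors and rewrite each target sum as $n^{r_n+1-s_n}$ times an integer sum in $l$. The scaling in equations~\eqref{equnrnvnsn2} and~\eqref{equnrnvnsn3}, combined with $\eta\in(0,1)$, forces $v_n+s_n-n<u_n$ and $u_n+r_n-n<v_n$ for all large $n$, so the integer interval $\{v_n+s_n-n,\ldots,v_n\}$ decomposes as the disjoint union of the $VU_{(n)}$-range $\{v_n+s_n-n,\ldots,u_n+r_n-n\}$, the ``dead zone'' $\{u_n+r_n-n+1,\ldots,u_n-1\}$ on which the numerator vanishes identically, and the $VU^{(n)}$-range $\{u_n,\ldots,v_n\}$. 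Combined with the exact evaluation of $\sum_{l=v_n+s_n-n}^{v_n}$ already carried out in the proof of Lemma~\ref{lemKnSums}(i), this yields the master identity
\begin{equation*}
\sum_{y'\in VU_{(n)}}\frac{\prod_{y\in U_n}(y'-y)}{\prod_{x\in V_n\setminus y'}(y'-x)}
+ \sum_{y'\in VU^{(n)}}\frac{\prod_{y\in U_n}(y'-y)}{\prod_{x\in V_n\setminus y'}(y'-x)}
= \beta_n\,\phi_{r_n,s_n}(u_n,v_n).
\end{equation*}

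From here the four case groups follow by case analysis on the sign of $s_n-r_n$ and the emptiness of $VU^{(n)}$ or $VU_{(n)}$. In cases~(1--4), $s_n>r_n$ together with the case hypothesis $v_n\ge u_n$ forces $v_n+s_n>u_n+r_n$ and hence $VU_{(n)}=\emptyset$, so the master identity collapses to the stated formula for $\Phi_n$ (matching $\beta_n\phi_{r_n,s_n}(u_n,v_n)$). In cases~(9--12), $s_n>r_n$ together with $v_n+s_n\le u_n+r_n$ forces $v_n<u_n$ and hence $VU^{(n)}=\emptyset$; the remaining $VU_{(n)}$-sum equals the full sum $\beta_n\phi_{r_n,s_n}(u_n,v_n)$, whose evaluation via the factorisation $\prod_{j=1}^{s_n-r_n-1}(v_n-u_n+s_n-r_n-j) = (-1)^{s_n-r_n-1}(u_n-v_n-1)!/(u_n+r_n-v_n-s_n)!$ (valid because every factor is negative under the case hypotheses) yields the stated signed formula. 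In cases~(5--8), $s_n\le r_n$ makes the full sum vanish by Lemma~\ref{lemphi}(1) (since $k=n-s_n>n-r_n-1=i$), so $\sum_{y'\in VU^{(n)}}[\,\cdot\,] = -\sum_{y'\in VU_{(n)}}[\,\cdot\,]$, and it suffices to evaluate one of them in closed form.

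To carry out this final evaluation I would substitute $l = v_n+s_n-n+l''$ and $m = d-l''$ with $d := u_n+r_n-v_n-s_n$, so that the numerator becomes $(-1)^{n-r_n-1}(n-r_n-1+m)!/m!$ and the denominator becomes $(d-m)!(-1)^{n-s_n-d+m}(n-r_n+v_n-u_n+m)!$. Recognising the resulting sum as an instance of Lemma~\ref{lemphi}(2) with $k=d$, $x+b = n-r_n-1$, and $x+a = n-r_n+v_n-u_n$ (so that $b-a = u_n-v_n-1$), the lemma delivers the closed form $(r_n-s_n)!\beta_n/[(v_n-u_n)!(u_n+r_n-v_n-s_n)!]$ up to an overall sign. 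The principal obstacle will be the sign bookkeeping: the parities of $n-r_n-1$, $n-s_n-d+m$, and $d$ all appear and must consolidate into the $(-1)^{v_n-u_n}$ (cases~(5,6)) and $(-1)^{v_n-u_n-1}$ (cases~(7,8)) appearing in the statement, for which the key simplification is $s_n-r_n-1+d = u_n-v_n-1$. A secondary technical point is verifying $d\le n-s_n$ and $n-r_n+v_n-u_n\ge 0$, both of which follow at once from $\eta_n\in(0,1)$ and ensure that the factorials in Lemma~\ref{lemphi}(2) are well-defined non-negative integers.
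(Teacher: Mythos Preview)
Your approach is correct but differs from the paper's in two notable ways. First, a notational issue: your stated ``master identity'' with right-hand side $\beta_n\,\phi_{r_n,s_n}(u_n,v_n)$ is not literally correct, since $\phi_{r_n,s_n}(u_n,v_n)=0$ whenever $v_n<u_n$ (see equation~\eqref{eqphirsuv}), whereas the full sum from part~(i) of the proof of Lemma~\ref{lemKnSums} does not carry that cutoff. Your case~(9--12) analysis implicitly uses the correct piecewise formula from part~(i), so the argument goes through, but you should state the master identity with that formula rather than with $\phi$.

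Second, and more substantively, for cases~(5--8) the paper evaluates $\sum_{VU^{(n)}}$ \emph{directly} by applying Lemma~\ref{lemphi}(2) with $k=v_n-u_n$ (after the shift $l\mapsto l-u_n$), whereas you compute the complementary sum $\sum_{VU_{(n)}}$ via Lemma~\ref{lemphi}(2) with $k=u_n+r_n-v_n-s_n$ and then negate using the vanishing of the full sum. Both work; the paper's route is slightly more direct for cases~(5,6) since it lands on the sum actually appearing in the statement, while yours has the mild advantage of treating cases~(5--8) with a single computation. One small gap in your writeup: for cases~(7,8) with $v_n<u_n$ (permitted by the indicator in the statement though not by the indicator in the definition of $\Phi_n$), your closed form $(r_n-s_n)!/[(v_n-u_n)!\,\cdot\,]$ is undefined. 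You should instead observe that the descending product $(b-a)(b-a-1)\cdots(b-a-k+1)=(u_n-v_n-1)\cdots(s_n-r_n)$ from Lemma~\ref{lemphi}(2) then passes through zero, giving $\sum_{VU_{(n)}}=0=\Phi_n$ as required.
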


\begin{proof}
We will prove the result for cases (5,6) of lemma \ref{lemCases}. The other cases follow
from similar considerations.

Assume that one of (5,6) is satisfied. First note, the definition of
$\Phi_n$ given in the statement of lemma \ref{lemKnJnPhin} gives
$\Phi_n = 0$ when $v_n < u_n$ or $s_n > r_n$. This proves
the result for these cases. Next note, the definition also gives
$\Phi_n = 0$ when $v_n \ge u_n$ and $s_n \le r_n$ and $v_n+s_n > u_n+r_n$.
Thus, in this case, it is necessary to show that,
\begin{equation*}
\sum_{y' \in VU^{(n)}}
\frac{\prod_{y \in U_n} (y' - y)}{\prod_{x \in V_n \setminus y'} (y' - x)}
= 0.
\end{equation*}
To see this, first note that equations (\ref{eqUnVn}, \ref{eqVn-Un}) give,
\begin{equation*}
\sum_{y' \in VU^{(n)}}
\frac{\prod_{y \in U_n} (y' - y)}{\prod_{x \in V_n \setminus y'} (y' - x)}
= n^{r_n+1-s_n} \; \sum_{l=u_n}^{v_n}
\frac{\prod_{j=u_n+r_n-n+1}^{u_n-1} (l - j)}
{\prod_{j=v_n+s_n-n, j \neq l}^{v_n} (l - j)}.
\end{equation*}
Therefore, since $v_n+s_n > u_n+r_n$,
\begin{equation*}
\sum_{y' \in VU^{(n)}}
\frac{\prod_{y \in U_n} (y' - y)}{\prod_{x \in V_n \setminus y'} (y' - x)}
= n^{r_n+1-s_n} \; \sum_{l=v_n+s_n-n}^{v_n}
\frac{\prod_{j=u_n+r_n-n+1}^{u_n-1} (l - j)}
{\prod_{j=v_n+s_n-n, j \neq l}^{v_n} (l - j)}.
\end{equation*}
Finally, since $s_n \le r_n$, we can proceed as in part (i) in the proof of lemma
\ref{lemKnSums} to show that the above RHS equals $0$. This
proves the result when $v_n \ge u_n$ and $s_n \le r_n$ and
$v_n+s_n > u_n+r_n$. It thus remains to prove the result
when $v_n \ge u_n$ and $s_n \le r_n$ and $v_n+s_n \le u_n+r_n$.

Suppose first that $v_n = u_n$ and $s_n \le r_n$ and $v_n+s_n \le u_n+r_n$.
Then, equations (\ref{eqUnVn}, \ref{eqVn-Un}) give,
\begin{align*}
\sum_{y' \in VU^{(n)}}
\frac{\prod_{y \in U_n} (y' - y)}{\prod_{x \in V_n \setminus y'} (y' - x)}
&= n^{r_n+1-s_n} \; \sum_{l=u_n}^{u_n}
\frac{\prod_{j=u_n+r_n-n+1}^{u_n-1} (l - j)}
{\prod_{j=u_n+s_n-n, j \neq l}^{u_n} (l - j)} \\
&= n^{r_n+1-s_n} \;
\frac{\prod_{j=u_n+r_n-n+1}^{u_n-1} (u_n - j)}
{\prod_{j=u_n+s_n-n}^{u_n-1} (u_n - j)}.
\end{align*}
Thus, since $s_n \le r_n$,
\begin{equation*}
\sum_{y' \in VU^{(n)}}
\frac{\prod_{y \in U_n} (y' - y)}{\prod_{x \in V_n \setminus y'} (y' - x)}
= n^{r_n+1-s_n} \; \frac1{\prod_{j=u_n+s_n-n}^{u_n+r_n-n} (u_n - j)}
= n^{r_n+1-s_n} \; \frac{(n-r_n-1)!}{(n-s_n)!}.
\end{equation*}
The definitions of $\Phi_n$ and $\beta_n$ in the
statement of lemma \ref{lemKnJnPhin} then prove the result
when $v_n = u_n$ and $s_n \le r_n$ and $v_n+s_n \le u_n+r_n$.

Finally suppose that $v_n > u_n$ and $s_n \le r_n$ and
$v_n+s_n \le u_n+r_n$. Then, equations (\ref{eqUnVn}, \ref{eqVn-Un}) give,
\begin{align*}
\frac{\prod_{y \in U_n} (y' - y)}{\prod_{x \in V_n \setminus y'} (y' - x)}
&= n^{r_n+1-s_n} \; \sum_{l=u_n}^{v_n}
\frac{\prod_{j=u_n+r_n-n+1}^{u_n-1} (l - j)}
{\prod_{j=v_n+s_n-n, j \neq l}^{v_n} (l - j)} \\
&= n^{r_n+1-s_n} \; \sum_{l=u_n}^{v_n}
\frac1{\prod_{j=v_n+s_n-n}^{u_n+r_n-n} (l - j)} \;
\frac1{\prod_{j=u_n, j \neq l}^{v_n} (l - j)}.
\end{align*}
Note that $l - j > 0$ for all $l \in \{u_n,u_n+1,\ldots,v_n\}$
and $j \in \{v_n+s_n-n, v_n+s_n-n+1,\ldots,u_n+r_n-n\}$
(indeed, equation (\ref{equnrnvnsn}) gives
$l-j = n(1-\eta) + o(n)$ uniformly for $l,j$). We can thus write,
\begin{equation*}
\frac{\prod_{y \in U_n} (y' - y)}{\prod_{x \in V_n \setminus y'} (y' - x)}
= n^{r_n+1-s_n} \; \sum_{l=u_n}^{v_n}
\frac{(l-(u_n+r_n-n+1))!}{(l-(v_n+s_n-n))!} \;
\frac1{\prod_{j=u_n, j \neq l}^{v_n} (l - j)}.
\end{equation*}
Shifting the dummy variables on the RHS then gives,
\begin{equation*}
\frac{\prod_{y \in U_n} (y' - y)}{\prod_{x \in V_n \setminus y'} (y' - x)}
= n^{r_n+1-s_n} \; \sum_{l=0}^{v_n-u_n}
\frac{(l-(r_n-n+1))!}{(l-(v_n-u_n+s_n-n))!} \;
\frac1{\prod_{j=0, j \neq l}^{v_n-u_n} (l - j)}.
\end{equation*}
Then, part (2) of lemma \ref{lemphi} (take
$k := v_n - u_n$ and $x := n$ and $b := -r_n-1$ and
$a := u_n - v_n - s_n$), and the definition of $\beta_n$
in the statement of lemma \ref{lemKnJnPhin} gives,
\begin{equation*}
\frac{\prod_{y \in U_n} (y' - y)}{\prod_{x \in V_n \setminus y'} (y' - x)}
= \frac{(s_n-r_n) (s_n-r_n+1) \cdots ((v_n+s_n)-(u_n+r_n)-1)}{(v_n-u_n)!}
\beta_n.
\end{equation*}
The definition of $\Phi_n$ in the statement of lemma \ref{lemKnJnPhin}
then proves the result when $v_n > u_n$ and $s_n \le r_n$ and $v_n+s_n \le u_n+r_n$.
\end{proof}

Next we prove the following technical result:
\begin{lem}
\label{lemvnunIneq}
Fix $(u,r) \in \R^2$ and $(v,s) \in \R^2$, and define $\{(u_n,r_n)\}_{n\ge1}$ and
$\{(v_n,s_n)\}_{n\ge1}$ as in equations (\ref{equnrnvnsn2}, \ref{equnrnvnsn3}).
Recall that one of the cases, (1-12), of
lemma \ref{lemCases} must be satisfied. Fix $\xi>0$ sufficiently small
such that equations (\ref{eqxi}, \ref{eqxi1}, \ref{eqxi4},
\ref{eqxi5}) are satisfied. Assume that $(u,r) \neq (v,s)$,
i.e., that either $v > u$, or $v < u$, or $v = u$ and $s \neq r$.

When $v>u$, the following are satisfied:
\begin{itemize}
\item
For cases (1-4),
$v_n > u_n$ (and so $VU^{(n)} \neq \emptyset$)
and $v_n + s_n > u_n + r_n$ (and so $VU_{(n)} = \emptyset$)
and $s_n > r_n+1$. Moreover,
$t - 2\xi > \chi+2\xi > \max(VU^{(n)})
> \min(VU^{(n)}) > \chi-2\xi > \chi+\eta-1+2\xi$.
\item
For cases (5-8), $v_n > u_n$ (and so $VU^{(n)} \neq \emptyset$) and
$v_n + s_n < u_n + r_n$ (and so $VU_{(n)} \neq \emptyset$)
and $s_n < r_n+1$. Moreover, $\min(VU^{(n)}) > \chi-2\xi > t+2\xi >
t - 2\xi > \chi+\eta-1+2\xi > \max(VU_{(n)})
> \min(VU_{(n)}) > \chi+\eta-1-2\xi$.
\item
For cases (9-12), $v_n < u_n$ (and so $VU^{(n)} = \emptyset$) and
$v_n + s_n < u_n + r_n$ (and so $VU_{(n)} \neq \emptyset$) and
$s_n > r_n+1$. Moreover,
$\chi-2\xi > \chi+\eta-1+2\xi > \max(VU_{(n)})
> \min(VU_{(n)}) > \chi+\eta-1-2\xi > t + 2\xi$.
\end{itemize}
Moreover, when $v < u$:
\begin{itemize}
\item
For cases (1-4), $v_n < u_n$ and $v_n + s_n < u_n + r_n$ and $s_n < r_n+1$.
\item
For cases (5-8), $v_n < u_n$ and $v_n + s_n > u_n + r_n$ and $s_n > r_n+1$.
\item
For cases (9-12), $v_n > u_n$ and $v_n+s_n > u_n+r_n$ and $s_n < r_n+1$.
\end{itemize}
Finally, when $v = u$ and $s \neq r$:
\begin{itemize}
\item
For cases (1-4), $v_n - u_n$ and $s_n - (r_n+1)$ have opposite signs.
\item
For cases (5-12), $v_n - u_n$ and $s_n - (r_n+1)$ and
$(v_n + s_n) - (u_n + r_n)$ have the same sign.
\end{itemize}
\end{lem}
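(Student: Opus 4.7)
My plan is to derive explicit leading-order expansions for the three differences $v_n - u_n$, $s_n - (r_n+1)$, and $(v_n+s_n) - (u_n+r_n)$, and then to read off signs case-by-case from Definition \ref{defmnpn}, Lemma \ref{lemftn'}, and Lemma \ref{lemAnalExt}.

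Subtracting (\ref{equnrnvnsn2}) from (\ref{equnrnvnsn3}), and using $\mathbf{x}_n = (1, e^{C_n(t)}-1)$ and $\mathbf{y}_n = (e^{C_n(t)}-1, -1)$, direct computation yields
\begin{align*}
v_n - u_n &= n^{2/3} m_n (v-u) + n^{1/3} p_n (s-r)(e^{C_n(t)}-1) + O(1), \\
s_n - (r_n+1) &= n^{2/3} m_n (v-u)(e^{C_n(t)}-1) - n^{1/3} p_n (s-r) + O(1), \\
(v_n+s_n) - (u_n+r_n) &= n^{2/3} m_n (v-u) e^{C_n(t)} + n^{1/3} p_n (s-r)(e^{C_n(t)}-2) + O(1).
\end{align*}
When $v \neq u$ the $n^{2/3}$ term dominates; when $v = u$ and $s \neq r$ the $n^{1/3}$ term dominates. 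In either regime, the sign of each difference (for all large $n$) equals the sign of the dominant coefficient, so the nine claims reduce to tracking signs of the factors $m_n$, $p_n$, $e^{C(t)}-1$, $e^{C(t)}$, and $e^{C(t)}-2$.

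Next I extract signs from Definition \ref{defmnpn}. Since $q_n^3 = 2/f_{t,n}'''(t)$ and $f_{t,n}'''(t) \to f_t'''(t) \neq 0$ by Lemma \ref{lemftn'}, one has $\mathrm{sign}(q_n) = \mathrm{sign}(f_t'''(t))$ eventually. The identity $q_{2,n} = 2/q_n^2 > 0$ combined with its defining formula forces $\mathrm{sign}(m_n) = \mathrm{sign}(e^{C(t)}/(e^{C(t)}-1))$, which by Lemma \ref{lemAnalExt} is positive in cases (1-8) and negative in cases (9-12). The identity $q_{1,n} = 1/q_n$ and its defining formula, together with $t - \chi_n \to -(e^{C(t)}-1)/(e^{C(t)} C'(t))$, determine $\mathrm{sign}(p_n)$ in each of the twelve cases of Lemma \ref{lemCases}. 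Combining these tables with the signs of $e^{C(t)}-1$, $e^{C(t)}$, $e^{C(t)}-2$ from Lemma \ref{lemAnalExt}, the nine algebraic assertions follow by direct substitution.

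For the interval inequalities in the $v > u$ sub-case, I use that $VU^{(n)} \subset \frac{1}{n}[u_n, v_n]$ and $VU_{(n)} \subset \frac{1}{n}[v_n+s_n-n, u_n+r_n-n]$, with $u_n/n, v_n/n \to \chi$ and the endpoints of $VU_{(n)}$ tending to $\chi+\eta-1$ by (\ref{equnrnvnsn}). The relative positions of $t$, $\chi$, and $\chi+\eta-1$ prescribed by Lemma \ref{lemCases}, together with the smallness of $\xi$ guaranteed by (\ref{eqxi}, \ref{eqxi1}, \ref{eqxi5}) (in particular $\chi - 2\xi > \chi+\eta-1 + 2\xi$), then yield the required chains of strict inequalities for all $n$ sufficiently large. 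The only obstacle is maintaining twelve parallel sign tables consistently; no single step is technically subtle.
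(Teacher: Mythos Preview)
Your proposal is correct and follows essentially the same approach as the paper: both derive the three expansions for $v_n-u_n$, $s_n-(r_n+1)$, and $(v_n+s_n)-(u_n+r_n)$, identify the dominant $n^{2/3}$ or $n^{1/3}$ term, and read off signs using Definition~\ref{defmnpn} together with Lemma~\ref{lemAnalExt}. One minor remark: your explicit tracking of $\mathrm{sign}(p_n)$ is unnecessary, since in the $v=u$ case the lemma only asserts \emph{relative} signs among the three differences, and $p_n(s-r)$ is a common factor that cancels; the paper accordingly works only with $\mathrm{sign}(m_n)$.
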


\begin{proof}
First note, equations (\ref{equnrnvnsn2}, \ref{equnrnvnsn3}) give,
\begin{align*}
v_n - u_n
&= n^\frac23 m_n (v-u) + n^\frac13 p_n (e^{C_n(t)}-1) (s-r) + O(1), \\
s_n - (r_n+1)
&= n^\frac23 m_n (e^{C_n(t)}-1) (v-u) + n^\frac13 p_n (-1) (s-r) + O(1), \\
(v_n+s_n) - (u_n+r_n)
&= n^\frac23 m_n (e^{C_n(t)}) (v-u) + n^\frac13 p_n (e^{C_n(t)}-2) (s-r) + O(1),
\end{align*}
where $\{m_n\}_{n\ge1}$ and $\{p_n\}_{n\ge1}$ are those convergent sequences of
real numbers with non-zero limits given in definition \ref{defmnpn}. Note that
this definition also gives,
$m_n (e^{C_n(t)} - 1)/e^{C_n(t)} > 0$, and so $m_n$ and $(e^{C_n(t)} - 1)/e^{C_n(t)}$
have the same sign. Next recall (see lemma \ref{lemCases} and \ref{lemAnalExt})
that $e^{C(t)} > 1$ for cases (1-4), $e^{C(t)} < 0$ for
cases (5-8), and $e^{C(t)} \in (0,1)$ for cases (9-12). Also recall
that $e^{C_n(t)} \to e^{C(t)}$ (see equation (\ref{eqNonAsyEdge})).

Consider cases (1-4) with $v>u$. The above observations then imply
that $\{m_n\}_{n\ge1}$ is a convergent sequence of real numbers with a
positive limit, and that $v_n > u_n$ and $v_n + s_n > u_n + r_n$ and
$s_n > r_n+1$. Equation (\ref{eqVn-Un})
then implies that $VU^{(n)} \neq \emptyset$ and
$VU_{(n)} = \emptyset$.
Next note, since $t > \chi$ (see cases (1-4) of lemma \ref{lemCases}), equations
(\ref{eqxi1}, \ref{eqxi5}) give $t - 2\xi > \chi+2\xi > \max(VU^{(n)})
> \min(VU^{(n)}) > \chi-2\xi > \chi+\eta-1+2\xi$. We have thus shown the required result for
cases (1-4) when $v>u$. The other cases follow similarly.
\end{proof}

Finally, we prove lemma \ref{lemKnJnPhin}:
\begin{proof}[Proof of lemma \ref{lemKnJnPhin}]
We will prove part (1) for cases (1-4) of lemma \ref{lemCases},
and part (2) for cases (1-4) and (7,8). Parts (1,2) for
the remaining cases follow from similar considerations.
 
Part (1) for cases (1-3) of lemma \ref{lemCases} easily follows
from lemmas \ref{lemJn} and \ref{lemKnSums} and \ref{lemPhin}.
Consider part (1) for case (4) of lemma \ref{lemCases}.
Note, lemmas \ref{lemJn} and \ref{lemKnSums} give,
\begin{align*}
&\beta_n \; K_n((u_n,r_n),(v_n,s_n)) = J_n \\
&+ 1_{(v_n \ge u_n)} \bigg( 1_{(s_n \le r_n)} \sum_{y' \in (VU^{(n)}) \cap P_n}
- 1_{(s_n > r_n)} \sum_{y' \in (VU^{(n)}) \setminus P_n} \bigg)
\frac{\prod_{y \in U_n} (y' - y)}{\prod_{x \in V_n \setminus y'} (y' - x)}.
\end{align*}
Lemma \ref{lemPhin} thus implies that the result for case (4) follows if
$(VU^{(n)}) \cap P_n = \emptyset$. To see this, fix $\d>0$
such that $(\chi-\d,\chi+\d) \subset \R \setminus \supp(\mu)$ (see assumption
\ref{asscases}). Assumption \ref{assIsol} and equation (\ref{eqPnHnlarge}) then
give $(\chi-\d,\chi+\d) \cap P_n = \emptyset$.
Finally recall (see equations (\ref{equnrnvnsn}, \ref{eqVn-Un}))
that $VU^{(n)} \subset (\chi-\d,\chi+\d)$. Therefore $(VU^{(n)}) \cap P_n = \emptyset$,
as required.

Consider (2) for cases (1-4) of lemma \ref{lemCases}. First recall
(see lemma \ref{lemPhin}) that,
\begin{equation*}
\Phi_n
= 1_{(v_n \ge u_n, s_n > r_n)} \sum_{y' \in VU^{(n)}}
\frac{\prod_{y \in U_n} (y' - y)}{\prod_{x \in V_n \setminus y'} (y' - x)}.
\end{equation*}
Next recall (see statement of lemma \ref{lemKnJnPhin}) that
$(u,r) \neq (v,s)$, i.e., that either $v > u$, or $v < u$,
or $v = u$ and $s \neq r$. Moreover, lemma \ref{lemvnunIneq}
implies that $1_{(v_n \ge u_n, s_n>r_n)} = 1$ when $v > u$,
$1_{(v_n \ge u_n, s_n>r_n)} = 0$ when $v < u$, and
$1_{(v_n \ge u_n, s_n>r_n)} = 0$ when $v = u$ and $s \neq r$.
Therefore,
\begin{equation*}
\Phi_n = 1_{(v>u)} \sum_{y' \in VU^{(n)}}
\frac{\prod_{y \in U_n} (y' - y)}{\prod_{x \in V_n \setminus y'} (y' - x)}.
\end{equation*}
It thus remains to show that,
\begin{equation*}
\sum_{y' \in VU^{(n)}}
\frac{\prod_{y \in U_n} (y' - y)}{\prod_{x \in V_n \setminus y'} (y' - x)}
= \frac1{2\pi i} \int_{\kappa_n} dw \;
\frac{\prod_{y \in U_n} (w - y)}{\prod_{x \in V_n} (w - x)},
\end{equation*}
when $v>u$.
To see the above, first note that the integrand on the RHS
has a simple pole at each distinct element of $V_n \setminus U_n$. Next note, since
$v>u$ and one of cases (1-4) is satisfied, equation (\ref{eqVn-Un}) and lemma
\ref{lemvnunIneq} imply that
$V_n \setminus U_n = VU^{(n)} \subset (\chi-2\xi, \chi+2\xi)$.
Finally, lemma \ref{lemDesAsc1-12Rem} and
definition \ref{defConCases} and figure \ref{figDesAscRem} clearly imply
that $\kappa_n$ contains $(\chi-2\xi, \chi+2\xi)$.
The above equation thus follows from the Residue theorem. This proves
part (2) for cases (1-4).

Consider (2) for cases (7,8) of lemma \ref{lemCases}.
First recall (see lemma \ref{lemPhin}) that,
\begin{equation*}
\Phi_n
= 1_{(v_n+s_n \le u_n+r_n, s_n \le r_n)} \sum_{y' \in VU_{(n)}}
\frac{\prod_{y \in U_n} (y' - y)}{\prod_{x \in V_n \setminus y'} (y' - x)}.
\end{equation*}
Next recall (see statement of lemma \ref{lemKnJnPhin}) that
$(u,r) \neq (v,s)$, i.e., that either $v > u$, or $v < u$,
or $v = u$ and $s \neq r$. Moreover, lemma \ref{lemvnunIneq}
implies that $1_{(v_n+s_n \le u_n+r_n, s_n \le r_n)} = 1$ when $v > u$, and
$1_{(v_n+s_n \le u_n+r_n, s_n \le r_n)} = 0$ when $v < u$.
Finally, when $v = u$ and $s \neq r$, lemma \ref{lemvnunIneq}
implies that either of the following is satisfied:
\begin{itemize}
\item
$v_n+s_n > u_n+r_n$ and $s_n > r_n+1$ and $v_n > u_n$ for all $n$
sufficiently large. In this case $1_{(v_n+s_n \le u_n+r_n, s_n \le r_n)} = 0$.
\item
$v_n+s_n < u_n+r_n$ and $s_n < r_n+1$ and $v_n < u_n$ for all $n$
sufficiently large. In this case equation (\ref{eqVn-Un})
gives $V_n \setminus U_n = VU_{(n)}$, and part (i) in the proof
of lemma \ref{lemKnSums} gives,
\begin{equation*}
\sum_{y' \in VU_{(n)}}
\frac{\prod_{y \in U_n} (y' - y)}{\prod_{x \in V_n \setminus y'} (y' - x)} = 0.
\end{equation*}
\end{itemize}
Combined, the above observations give,
\begin{equation*}
\Phi_n = 1_{(v>u)} \sum_{y' \in VU_{(n)}}
\frac{\prod_{y \in U_n} (y' - y)}{\prod_{x \in V_n \setminus y'} (y' - x)}.
\end{equation*}
We can then proceed similar to above to prove part (2) for cases (7,8).
\end{proof}

\subsection{Proof of theorem \ref{thmAiry}}

In this section we finally prove theorem \ref{thmAiry} using the results
of the previous sections. We will prove the result only when $t \in R_\mu^+$,
i.e, when one of cases (1-4) of lemma \ref{lemCases} is satisfied. The results
when $t \in R_{\l-\mu}$ (cases (5-8)), and when $t \in R_\mu^-$ (cases (9-12)),
follow from similar considerations.

Assume the conditions of theorem \ref{thmAiry}. Additionally assume
that one of cases, (1-4), of lemma \ref{lemCases} is satisfied.
Lemma \ref{lemKnJnPhin} thus gives,
\begin{equation}
\label{eqKnIntCase1}
\beta_n \; K_n((u_n,r_n),(v_n,s_n))
= J_n - \Phi_n.
\end{equation}
We begin by using a steepest descent argument to examine
the asymptotic behaviour of $J_n$. First, fix $\theta \in (\frac14,\frac13)$,
and $\{q_n\}_{n\ge1} \subset \R$ as in definition \ref{defmnpn}.
Next, using lemma \ref{lemDesAsc1-12}
and definition \ref{defConCases}, we partition $\g_n$ as follows:
\begin{equation}
\label{eqConLocRem}
\g_n = \g_n^{(l)} + \g_n^{(r)}
\hspace{.5cm} \text{and} \hspace{.5cm}
\G_n = \G_n^{(l)} + \G_n^{(r)},
\end{equation}
where $\g_n^{(l)}$ and $\G_n^{(l)}$ are (respectively) those {\em local
sections} of $\g_n$ and $\G_n$ inside $B(t, n^{-\theta} |q_n|)$, and
$\g_n^{(r)}$ and $\G_n^{(r)}$ are (respectively) the {\em remaining sections}
of $\g_n$ and $\G_n$ outside $B(t, n^{-\theta} |q_n|)$. Then, the definition
of $J_n$ in the statement of lemma \ref{lemKnJnPhin} gives,
\begin{equation}
\label{eqJn11Jn12}
J_n = J_n^{(l,l)} + J_n^{(l,r)} + J_n^{(r,l)} + J_n^{(r,r)},
\end{equation}
where,
\begin{equation*}
J_n^{(l,l)} := \frac1{(2\pi i)^2} \int_{\g_n^{(l)}} dw \int_{\G_n^{(l)}} dz \;
\frac{\prod_{j=u_n+r_n-n+1}^{u_n-1}
(z - \frac{j}n)}{\prod_{j=v_n+s_n-n}^{v_n} (w - \frac{j}n)} \;
\frac1{w-z} \; \prod_{i=1}^n \left( \frac{w - \frac{x_i}n}{z - \frac{x_i}n} \right).
\end{equation*}
The other three terms on the RHS of equation (\ref{eqJn11Jn12}) are defined
analogously. As we shall see in the following lemmas, the asymptotic
behaviour of $J_n^{(l,l)}$ dominates the other three terms:
\begin{lem}
\label{lemJn11Case1}
Assume the conditions of theorem \ref{thmAiry}. Additionally assume
that one of cases, (1-4), of lemma \ref{lemCases} is satisfied.
Fix $\theta \in (\frac14,\frac13)$, and $\{q_n\}_{n\ge1} \subset \R$
as in definition \ref{defmnpn}. Define $J_n^{(l,l)}$ as in equation
(\ref{eqJn11Jn12}), $\widetilde{K}_\text{Ai} : (\R^2)^2 \to \R$ as in
equation (\ref{eqAitilde}), and $A_{t,n} : (\Z^2)^2 \to \R \setminus \{0\}$
as in lemma \ref{lemFnt}. Then,
\begin{equation*}
\frac{n^\frac13 |q_n|^{-1} (t - \chi_n) (t - \chi_n - \eta_n +1)}
{A_{t,n}((u_n,r_n),(v_n,s_n))}
\; J_n^{(l,l)} \to \widetilde{K}_\text{Ai}((v,s),(u,r)).
\end{equation*}
\end{lem}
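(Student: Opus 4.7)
The plan is to apply a steepest-descent rescaling around $t$. Recall from equations (\ref{eqfn}) and (\ref{eqtildefn}) and the derivation of equation (\ref{eqKnrnunsnvn1}) that the integrand of $J_n$ equals $(w-z)^{-1} \exp(n f_n(w) - n \tilde{f}_n(z))$. I will substitute $w = t + n^{-1/3} q_n \tilde{w}$ and $z = t + n^{-1/3} q_n \tilde{z}$, where $q_n$ is as in definition \ref{defmnpn}. The Jacobian combined with the $(w-z)^{-1}$ factor produces an overall $n^{-1/3} q_n$, and $\g_n^{(l)}, \G_n^{(l)}$ get mapped into $\text{cl}(B(0, n^{1/3-\theta}))$. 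Part (2) of lemma \ref{lemDesAsc1-12} says that the images start at $0$ with initial tangent directions approximating those of the Airy contours $l$ and $L$ in the cases (1,2) where $q_n > 0$; the sign reversal in cases (3,4) where $q_n < 0$ is absorbed by simultaneously flipping both the angular directions and the traversal orientation, giving the same net integral.

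On the rescaled contours, parts (3) and (4) of corollary \ref{corTay} provide the Taylor expansion
\begin{equation*}
n f_n(w) - n \tilde{f}_n(z) = n F_n(t) + \bigl(\tilde{w}s + \tilde{w}^2 v + \tfrac{1}{3}\tilde{w}^3\bigr) - \bigl(\tilde{z}r + \tilde{z}^2 u + \tfrac{1}{3}\tilde{z}^3\bigr) + O(n^{1-4\theta}),
\end{equation*}
uniformly in $\tilde w,\tilde z$, where $F_n = f_n - \tilde{f}_n$ and the error is $o(1)$ because $\theta \in (\tfrac14,\tfrac13)$. Factoring out $\exp(n F_n(t))$ and invoking lemma \ref{lemFnt} rewrites this prefactor as $A_{t,n}((u_n,r_n),(v_n,s_n))/((t-\chi_n)(t-\chi_n-\eta_n+1)) \cdot (1 + o(1))$, which together with the $n^{-1/3} q_n$ from the Jacobian reproduces precisely the normalising constant on the left of the claim. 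The remaining integral is
\begin{equation*}
\frac{1}{(2\pi i)^2} \iint \frac{1}{\tilde{w}-\tilde{z}} \, \frac{\exp(\tilde{w}s + \tilde{w}^2 v + \tfrac{1}{3}\tilde{w}^3)}{\exp(\tilde{z}r + \tilde{z}^2 u + \tfrac{1}{3}\tilde{z}^3)} \, d\tilde{z} \, d\tilde{w}
\end{equation*}
taken over the rescaled, truncated contours, which matches the integrand in equation (\ref{eqAitilde}) for $\widetilde{K}_{\text{Ai}}$ with arguments $((v,s),(u,r))$ after the parameter identification $(r,u) \leftrightarrow (s,v)$.

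The last step is to replace the truncated rescaled contours by the full Airy contours $l$ and $L$. Along these, $\text{Re}(\tilde{w}^3) = -|\tilde{w}|^3$ and $\text{Re}(\tilde{z}^3) = +|\tilde{z}|^3$, so the limit integrand decays super-exponentially past $|\tilde{w}|, |\tilde{z}| \sim n^{1/3-\theta}$, and the extension contributes only $o(1)$. Dominated convergence then delivers the stated limit $\widetilde{K}_{\text{Ai}}((v,s),(u,r))$. The main technical obstacle is justifying the interchange of limit and integral uniformly out to the boundary of $B(0, n^{1/3-\theta})$: the Taylor remainder $O(n^{1-4\theta})$ in corollary \ref{corTay} must be dominated by the cubic-exponential decay produced by the true function $nf_n(w)$ on the rescaled $\tilde\g_n^{(l)}$, and similarly for $n\tilde f_n(z)$ on $\tilde\G_n^{(l)}$. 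This is where the descent/ascent properties from parts (3) and (4) of lemma \ref{lemDesAsc1-12} enter: they give the needed global bound $\text{Re}(nf_n(w) - nf_n(t)) \le -c|\tilde w|^3$ on the rescaled local contour (and the reverse bound for $\tilde f_n$), valid uniformly up to $\partial B(t,n^{-\theta}|q_n|)$, which fuses with the Taylor expansion to produce an integrable majorant and thereby closes the argument.
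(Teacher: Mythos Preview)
Your overall approach matches the paper's: the substitution $w = t + n^{-1/3} q_n \tilde w$, the use of corollary \ref{corTay} for the Taylor expansion, factoring out $\exp(nF_n(t))$ and identifying it via lemma \ref{lemFnt}, and then passing to the Airy contours. The handling of the sign of $q_n$ in cases (1,2) versus (3,4) is also correct and mirrors what the paper does.

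Your final paragraph, however, misfires in two places. First, the Taylor remainder $O(n^{1-4\theta})$ from corollary \ref{corTay} is \emph{uniform} over $\text{cl}(B(0,n^{1/3-\theta}))$ and already $o(1)$; it factors out as $\exp(o(1)) = 1+o(1)$ and needs no domination. There is no interchange-of-limits problem coming from that term. Second, parts (3) and (4) of lemma \ref{lemDesAsc1-12} concern the contour \emph{outside} $B(t,n^{-\theta}|q_n|)$, so they say nothing about the rescaled local contour; they are the input for bounding $J_n^{(l,r)}, J_n^{(r,l)}, J_n^{(r,r)}$, not $J_n^{(l,l)}$. The bound you actually need on the local piece comes from part (2) of that lemma: the rescaled contours $h_n, H_n$ are straight segments with arguments $\alpha_n = \tfrac{\pi}{3} + O(n^{-1/3+\theta})$ and $\zeta_n = \tfrac{2\pi}{3} + O(n^{-1/3+\theta})$, so $\text{Re}(\tilde w^3) = |\tilde w|^3 \cos(3\alpha_n) \le -\tfrac12 |\tilde w|^3$ for $n$ large, and similarly for $\tilde z$. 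That direct estimate on the \emph{explicit} Airy integrand gives the integrable majorant.

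There is also one step you gloss over: the rescaled contours $h_n, H_n$ are not the truncated Airy contours $l_n, L_n$ exactly, because of the $O(n^{-1/3+\theta})$ angle error. The paper handles this by a Cauchy deformation $h_n \to l_n + c_n$ (where $c_n$ is a short arc of $\partial B(0,n^{1/3-\theta})$) and bounds the arc contributions $I_{2,n}, I_{3,n}, I_{4,n}$ separately using the same cubic-decay estimate. Your dominated-convergence sketch would need either this deformation or a parametrised-limit argument to cope with the moving angles; as written it is incomplete on this point.
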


\begin{proof}
First note, equations (\ref{eqfn}, \ref{eqtildefn}, \ref{eqJn11Jn12}) give,
\begin{equation*}
J_n^{(l,l)} = \frac1{(2\pi i)^2} \int_{\g_n^{(l)}} dw \int_{\G_n^{(l)}} dz \;
\frac{\exp(n f_n(w) - n \tilde{f}_n(z))}{w-z}.
\end{equation*}
Define $d_{1,n}$ and $\tilde{a}_{1,n}$
as in lemma \ref{lemDesAsc1-12}. Also define,
\begin{align}
\label{eqArgd_1n}
\alpha_n
&:= \left\{ \begin{array}{rcl}
\text{Arg}(d_{1,n}-t)
& ; & \text{ for cases (1,2) of lemma \ref{lemCases}}, \\
\text{Arg}(\tilde{a}_{1,n}-t)
& ; & \text{ for cases (3,4) of lemma \ref{lemCases}},
\end{array} \right. \\
\nonumber
\zeta_n
&:= \left\{ \begin{array}{rcl}
\text{Arg}(\tilde{a}_{1,n}-t)
& ; & \text{ for cases (1,2) of lemma \ref{lemCases}}, \\
\text{Arg}(d_{1,n}-t)
& ; & \text{ for cases (3,4) of lemma \ref{lemCases}},
\end{array} \right.
\end{align}
where $\text{Arg}$ represents the principal value of the
argument, and note that part (2) of lemma \ref{lemDesAsc1-12} gives
$\alpha_n = \frac\pi3 + O(n^{-\frac13+\theta})$
and $\zeta_n = \frac{2\pi}3 + O(n^{-\frac13+\theta})$.
Recall that $\g_n^{(l)}$ and $\G_n^{(l)}$ are (respectively) those
sections of $\g_n$ and $\G_n$ inside $B(t, n^{-\theta} |q_n|)$
(see equation (\ref{eqConLocRem})), and $\g_n$ and $\G_n$
are counter-clockwise (see definition \ref{defConCases}). Lemma
\ref{lemDesAsc1-12} and figure \ref{figDesAsc1-12} then imply,
for cases (1,2) of lemma \ref{lemCases}, that:
\begin{itemize}
\item
$\g_n^{(l)}$ is the lines from
$t + n^{-\theta} |q_n| e^{-i \alpha_n}$ to $t$,
and from $t$ to $t + n^{-\theta} |q_n| e^{i \alpha_n}$.
\item
$\G_n^{(l)}$ is the lines from
$t + n^{-\theta} |q_n| e^{-i \zeta_n}$ to $t$,
and from $t$ to $t + n^{-\theta} |q_n| e^{i \zeta_n}$.
\end{itemize}
Moreover, for cases (3,4):
\begin{itemize}
\item
$\g_n^{(l)}$ is the lines from
$t + n^{-\theta} |q_n| e^{-i \zeta_n}$ to $t$,
and from $t$ to $t + n^{-\theta} |q_n| e^{i \zeta_n}$.
\item
$\G_n^{(l)}$ is the lines from
$t + n^{-\theta} |q_n| e^{i \alpha_n}$ to $t$,
and from $t$ to $t + n^{-\theta} |q_n| e^{-i \alpha_n}$.
\end{itemize}
A change of variables thus gives,
\begin{equation*}
J_n^{(l,l)} = \begin{dcases}
\frac{n^{-\frac13} |q_n|}{(2\pi i)^2} \int_{h_n} dw \int_{H_n} dz \;
\frac{\exp(n f_n(t + n^{-\frac13} |q_n| w) - n \tilde{f}_n(t + n^{-\frac13} |q_n| z))}{w-z}
& ; \text{ for (1,2)}, \\
\frac{n^{-\frac13} |q_n|}{(2\pi i)^2} \int_{h_n} dw \int_{H_n} dz \;
\frac{\exp(n f_n(t - n^{-\frac13} |q_n| w) - n \tilde{f}_n(t - n^{-\frac13} |q_n| z))}{w-z}
& ; \text{ for (3,4).}
\end{dcases}
\end{equation*}
Above, for cases (1-4):
\begin{itemize}
\item
$h_n$ is the lines from $n^{\frac13-\theta} e^{-i \alpha_n}$ to $0$,
and from $0$ to $n^{\frac13-\theta} e^{i \alpha_n}$.
\item
$H_n$ is the lines from $n^{\frac13-\theta} e^{-i \zeta_n}$ to $0$,
and from $0$ to $n^{\frac13-\theta} e^{i \zeta_n}$.
\end{itemize}
These contours are shown on the left of figure \ref{figConReScaleCase1}.

Next recall that $q_n > 0$ for cases (1,2) of lemma \ref{lemCases},
and $q_n<0$ for cases (3,4) (see definition \ref{defmnpn} and lemma
\ref{lemCases}). Therefore, for cases (1-4),
\begin{equation*}
J_n^{(l,l)} =
\frac{n^{-\frac13} |q_n|}{(2\pi i)^2} \int_{h_n} dw \int_{H_n} dz \;
\frac{\exp(n f_n(t + n^{-\frac13} q_n w) - n \tilde{f}_n(t + n^{-\frac13} q_n z))}{w-z}.
\end{equation*}
Parts (3,4) of corollary \ref{corTay} then give,
\begin{equation*}
J_n^{(l,l)}
= n^{-\frac13} |q_n| \exp(n f_n(t) - n \tilde{f}_n(t) + O(n^{1-4\theta})) \; I_n,
\end{equation*}
for cases (1-4), where
\begin{equation*}
I_n := \frac1{(2\pi i)^2} \int_{h_n} dw \int_{H_n} dz \; \frac1{w-z}
\frac{\exp(w s  + w^2 v + \frac13 w^3)}{\exp(z r  + z^2 u + \frac13 z^3)}.
\end{equation*}
Equation (\ref{eqFnw}) and lemma \ref{lemFnt} then give,
\begin{equation*}
J_n^{(l,l)}
= \frac{n^{-\frac13} |q_n| A_{t,n} ((u_n,r_n),(v_n,s_n))}
{(t-\chi_n) (t-\chi_n - \eta_n + 1)}
\exp(O(n^{-\frac13} + n^{1-4\theta})) \; I_n.
\end{equation*}
Next, recall that $\alpha_n = \frac\pi3 + O(n^{-\frac13+\theta})$
and $\zeta_n = \frac{2\pi}3 + O(n^{-\frac13+\theta})$ for cases (1-4),
and define:
\begin{itemize}
\item
$l_n$ is the lines from $n^{\frac13-\theta} e^{-i \frac\pi3}$ to $0$,
and from $0$ to $n^{\frac13-\theta} e^{i \frac\pi3}$.
$c_n$ is the smallest arcs of $\partial B(0, n^{\frac13-\theta})$
from $n^{\frac13-\theta} e^{-i \alpha_n}$ to $n^{\frac13-\theta} e^{-i \frac\pi3}$,
and from $n^{\frac13-\theta} e^{i \frac\pi3}$ to $n^{\frac13-\theta} e^{i \alpha_n}$.
\item
$L_n$ is the lines from $n^{\frac13-\theta} e^{-i \frac{2\pi}3}$ to $0$,
and from $0$ to $n^{\frac13-\theta} e^{i \frac{2\pi}3}$.
$C_n$ is the smallest arcs of $\partial B(0, n^{\frac13-\theta})$
from $n^{\frac13-\theta} e^{-i \zeta_n}$ to $n^{\frac13-\theta} e^{-i \frac{2\pi}3}$,
and from $n^{\frac13-\theta} e^{i \frac{2\pi}3}$ to $n^{\frac13-\theta} e^{i \zeta_n}$.
\end{itemize}
These contours are shown on the right of figure \ref{figConReScaleCase1}.
\begin{figure}[t]
\centering
\begin{tikzpicture};

\draw (-3.5,1.5) node {$\mathbb{H}$};
\draw [dotted] (-3,0) --++(6,0);
\draw (-3.5,0) node {$\R$};

\draw [dotted] (0,0) circle (2cm);
\draw [fill] (0,0) circle (.05cm);
\draw (0,-.3) node {\scriptsize $0$};
\draw (0,3.2) node {$B(0,n^{\frac13-\theta})$};
\draw[arrows=->,line width=0.5pt](0,2.9)--(0,2.1);
\draw (2,2.4) node {\scriptsize $n^{\frac13-\theta} e^{i \alpha_n}$};
\draw[arrows=->,line width=0.5pt](2,2.2)--(1.1,1.8);
\draw (-2,2.4) node {\scriptsize $n^{\frac13-\theta} e^{i \zeta_n}$};
\draw[arrows=->,line width=0.5pt](-2,2.2)--(-1.1,1.8);
\draw (2,-2.4) node {\scriptsize $n^{\frac13-\theta} e^{-i \alpha_n}$};
\draw[arrows=->,line width=0.5pt](2,-2.2)--(1.1,-1.8);
\draw (-2,-2.4) node {\scriptsize $n^{\frac13-\theta} e^{-i \zeta_n}$};
\draw[arrows=->,line width=0.5pt](-2,-2.2)--(-1.1,-1.8);

\draw (0,0) --++ (1,1.732);
\draw[arrows=->,line width=1pt](.5,0.866)--(.505,.875);
\draw (1,0.866) node {$h_n$};
\draw (0,0) --++ (1,-1.732);
\draw[arrows=->,line width=1pt](.505,-.875)--(.5,-0.866);
\draw (1,-0.866) node {$h_n$};

\draw (0,0) --++ (-1,1.732);
\draw[arrows=->,line width=1pt](-.5,0.866)--(-.505,.875);
\draw (-1,0.866) node {$H_n$};
\draw (0,0) --++ (-1,-1.732);
\draw[arrows=->,line width=1pt](-.505,-.875)--(-.5,-0.866);
\draw (-1,-0.866) node {$H_n$};

\draw [dotted] (4,0) --++(6,0);

\draw [dotted] (7,0) circle (2cm);
\draw [fill] (7,0) circle (.05cm);
\draw (7,-.3) node {\scriptsize $0$};
\draw (7,3.2) node {$B(0,n^{\frac13-\theta})$};
\draw[arrows=->,line width=0.5pt](7,2.9)--(7,2.1);
\draw (9.2,2.4) node {\scriptsize $n^{\frac13-\theta} e^{i \frac\pi3}$};
\draw[arrows=->,line width=0.5pt](9,2.2)--(8.5,1.5);
\draw (5,2.4) node {\scriptsize $n^{\frac13-\theta} e^{i \frac{2\pi}3}$};
\draw[arrows=->,line width=0.5pt](5,2.2)--(5.5,1.5);
\draw (9,-2.4) node {\scriptsize $n^{\frac13-\theta} e^{-i \frac\pi3}$};
\draw[arrows=->,line width=0.5pt](9,-2.2)--(8.5,-1.5);
\draw (5,-2.4) node {\scriptsize $n^{\frac13-\theta} e^{-i \frac{2\pi}3}$};
\draw[arrows=->,line width=0.5pt](5,-2.2)--(5.5,-1.5);

\draw (7,0) --++ (1,1.732);
\draw[arrows=->,line width=1pt](7.5,0.866)--(7.505,.875);
\draw (7.5,1.5) node {$h_n$};
\draw (7,0) --++ (1,-1.732);
\draw[arrows=->,line width=1pt](7.505,-.875)--(7.5,-0.866);
\draw (7.5,-1.5) node {$h_n$};

\draw (7,0) --++ (-1,1.732);
\draw[arrows=->,line width=1pt](6.5,0.866)--(6.495,.875);
\draw (6.6,1.5) node {$H_n$};
\draw (7,0) --++ (-1,-1.732);
\draw[arrows=->,line width=1pt](6.495,-.875)--(6.5,-0.866);
\draw (6.6,-1.5) node {$H_n$};

\draw (7,0) --++ (1.414,1.414);
\draw[arrows=->,line width=1pt](7.707,0.707)--(7.712,0.712);
\draw (8.1,0.707) node {$l_n$};
\draw (7,0) --++ (1.414,-1.414);
\draw[arrows=->,line width=1pt](7.712,-0.712)--(7.707,-0.707);
\draw (8.1,-0.707) node {$l_n$};

\draw (7,0) --++ (-1.414,1.414);
\draw[arrows=->,line width=1pt](6.293,0.707)--(6.288,0.712);
\draw (5.8,0.707) node {$L_n$};
\draw (7,0) --++ (-1.414,-1.414);
\draw[arrows=->,line width=1pt](6.288,-0.712)--(6.293,-0.707);
\draw (5.8,-0.707) node {$L_n$};

\draw [domain=45:60] plot ({7+2*cos(\x)}, {2*sin(\x)});
\draw[arrows=->,line width=1pt](8.227,1.579)--(8.220,1.585);
\draw (8.3,1.9) node {$c_n$};
\draw [domain=-60:-45] plot ({7+2*cos(\x)}, {2*sin(\x)});
\draw[arrows=->,line width=1pt](8.220,-1.585)--(8.227,-1.579);
\draw (8.3,-1.9) node {$c_n$};

\draw [domain=120:135] plot ({7+2*cos(\x)}, {2*sin(\x)});
\draw[arrows=->,line width=1pt](5.773,1.579)--(5.780,1.585);
\draw (5.7,1.9) node {$C_n$};
\draw [domain=-135:-120] plot ({7+2*cos(\x)}, {2*sin(\x)});
\draw[arrows=->,line width=1pt](5.780,-1.585)--(5.773,-1.579);
\draw (5.7,-1.9) node {$C_n$};

\end{tikzpicture}
\caption{Left: The contours $h_n$ and $H_n$. Recall
$\theta \in (\frac14, \frac13)$,
$\alpha_n = \frac\pi3 + O(n^{-\frac13+\theta})$ and
$\zeta_n = \frac{2\pi}3 + O(n^{-\frac13+\theta})$ (see equation
(\ref{eqArgd_1n})).
Right: The contours $h_n$, $H_n$, $l_n$, $L_n$, $c_n, C_n$.}
\label{figConReScaleCase1}
\end{figure}
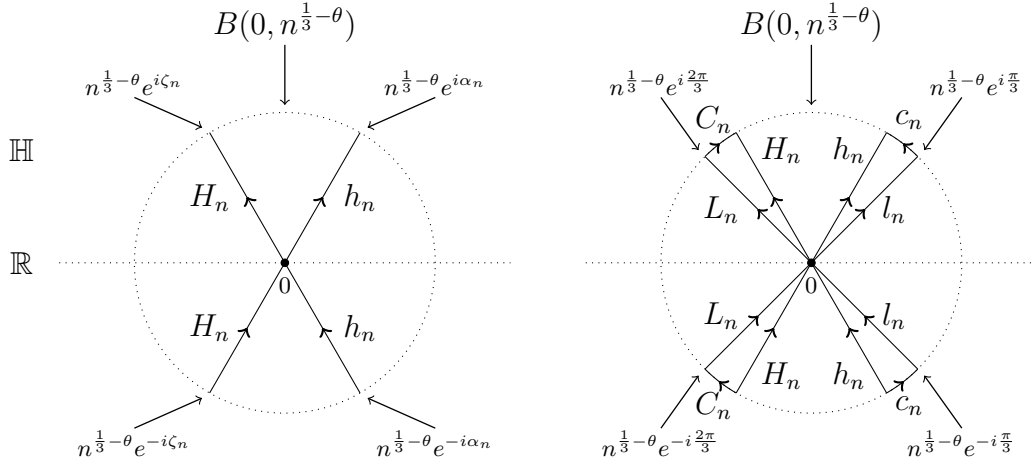
Then, noting that $h_n$ and $c_n + l_n$ have the same initial and final
points, and similarly for $H_n$ and $C_n + L_n$,
\begin{equation*}
I_n = I_{1,n} + I_{2,n} + I_{3,n} + I_{4,n},
\end{equation*}
where,
\begin{align}
\nonumber
I_{1,n}
&:= \frac1{(2\pi i)^2} \int_{l_n} dw \int_{L_n} dz \; \frac1{w-z}
\frac{\exp(w s  + w^2 v + \frac13 w^3)}{\exp(z r  + z^2 u + \frac13 z^3)}, \\
\label{eqCase1I2n1}
I_{2,n}
&:= \frac1{(2\pi i)^2} \int_{l_n} dw \int_{C_n} dz \; \frac1{w-z}
\frac{\exp(w s  + w^2 v + \frac13 w^3)}{\exp(z r  + z^2 u + \frac13 z^3)}, \\
\nonumber
I_{3,n}
&:= \frac1{(2\pi i)^2} \int_{c_n} dw \int_{L_n} dz \; \frac1{w-z}
\frac{\exp(w s  + w^2 v + \frac13 w^3)}{\exp(z r  + z^2 u + \frac13 z^3)}, \\
\nonumber
I_{4,n}
&:= \frac1{(2\pi i)^2} \int_{c_n} dw \int_{C_n} dz \; \frac1{w-z}
\frac{\exp(w s  + w^2 v + \frac13 w^3)}{\exp(z r  + z^2 u + \frac13 z^3)}.
\end{align}
Finally, we will show that,
\begin{enumerate}
\item[(i)]
$I_{1,n} \to \widetilde{K}_\text{Ai}((v,s),(u,r))$.
\item[(ii)]
$I_{2,n} \to 0$ and $I_{3,n} \to 0$ and $I_{4,n} \to 0$.
\end{enumerate}
Thus, since $\theta \in (\frac14,\frac13)$, the required result follows
from parts (i,ii) and the above expression of $J_n^{(l,l)}$.

Consider (i). First define:
\begin{itemize}
\item
$r_n$ is the lines from $\infty e^{-i \frac\pi3}$ to
$n^{\frac13-\theta} e^{-i \frac\pi3}$, and from
$n^{\frac13-\theta} e^{i \frac\pi3}$ to $\infty e^{i \frac\pi3}$.
\item
$R_n$ is the lines from $\infty e^{-i \frac{2\pi}3}$ to
$n^{\frac13-\theta} e^{-i \frac{2\pi}3}$, and from
$n^{\frac13-\theta} e^{i \frac{2\pi}3}$ to $\infty e^{i \frac{2\pi}3}$.
\end{itemize}
It thus follows from figures
\ref{figAirtCont} and \ref{figConReScaleCase1} that $l = l_n + r_n$ and
$L = L_n + R_n$. Equation (\ref{eqAitilde}) and
the definition of $I_{1,n}$, above, thus give,
\begin{equation*}
\widetilde{K}_\text{Ai}((v,s),(u,r)) = I_{1,n} + I_{1,n}' + I_{1,n}'' + I_{1,n}''',
\end{equation*}
where,
\begin{align}
\label{eqCase1I1n'1}
I_{1,n}'
&:= \frac1{(2\pi i)^2} \int_{l_n} dw \int_{R_n} dz \; \frac1{w-z}
\frac{\exp(w s  + w^2 v + \frac13 w^3)}{\exp(z r  + z^2 u + \frac13 z^3)}, \\
\nonumber
I_{1,n}''
&:= \frac1{(2\pi i)^2} \int_{r_n} dw \int_{L_n} dz \; \frac1{w-z}
\frac{\exp(w s  + w^2 v + \frac13 w^3)}{\exp(z r  + z^2 u + \frac13 z^3)}, \\
\nonumber
I_{1,n}'''
&:= \frac1{(2\pi i)^2} \int_{r_n} dw \int_{R_n} dz \; \frac1{w-z}
\frac{\exp(w s  + w^2 v + \frac13 w^3)}{\exp(z r  + z^2 u + \frac13 z^3)}.
\end{align}
Part (i) thus follows if we can show that $I_{1,n}' = o(1)$, $I_{1,n}'' = o(1)$,
and $I_{1,n}''' = o(1)$.

Consider $I_{1,n}'$, defined in equation (\ref{eqCase1I1n'1}).
The contours in this expression are given in figure \ref{figConI1n'}.
\begin{figure}[t]
\centering
\begin{tikzpicture}

\draw (-3.5,1.5) node {$\mathbb{H}$};
\draw [dotted] (-3,0) --++(6,0);
\draw (-3.5,0) node {$\R$};

\draw [dotted] (0,0) circle (2cm);
\draw [fill] (0,0) circle (.05cm);
\draw (0,-.3) node {\scriptsize $0$};
\draw (0,3.2) node {$B(0,n^{\frac13-\theta})$};
\draw[arrows=->,line width=0.5pt](0,2.9)--(0,2.1);

\draw (0,0) --++ (1,1.732);
\draw[arrows=->,line width=1pt](.75,1.299)--(.755,1.308);
\draw (1.1,1.2) node {$l_n$};
\draw (0,0) --++ (1,-1.732);
\draw[arrows=->,line width=1pt](.755,-1.308)--(.75,-1.299);
\draw (1.1,-1.2) node {$l_n$};

\draw [dotted] (0,0) --++ (-1,1.732);
\draw (-1,1.732) --++ (-.5,0.866);
\draw [dashed] (-1.5,2.598) --++ (-.5,0.866);
\draw[arrows=->,line width=1pt](-1.375,2.382)--(-1.38,2.39);
\draw (-2,2.598) node {$R_n$};
\draw [dotted] (0,0) --++ (-1,-1.732);
\draw (-1,-1.732) --++ (-.5,-0.866);
\draw [dashed] (-1.5,-2.598) --++ (-.5,-0.866);
\draw[arrows=->,line width=1pt](-1.38,-2.39)--(-1.375,-2.382);
\draw (-2,-2.598) node {$R_n$};

\draw [dotted,domain=-60:60] plot ({cos(\x)}, {sin(\x)});
\draw [dotted,domain=120:240] plot ({cos(\x)}, {sin(\x)});
\draw (.55,.35) node {$\frac\pi3$};
\draw (-.55,.35) node {$\frac\pi3$};
\draw (-.55,-.35) node {$\frac\pi3$};
\draw (.55,-.35) node {$\frac\pi3$};

\end{tikzpicture}
\caption{The contours $l_n$ and $R_n$.
$l_n$ is the lines from $n^{\frac13-\theta} e^{-i \frac\pi3}$ to $0$,
and from $0$ to $n^{\frac13-\theta} e^{i \frac\pi3}$.
$R_n$ is the lines from $\infty e^{-i \frac{2\pi}3}$ to
$n^{\frac13-\theta} e^{-i \frac{2\pi}3}$, and from
$n^{\frac13-\theta} e^{i \frac{2\pi}3}$ to $\infty e^{i \frac{2\pi}3}$.}
\label{figConI1n'}
\end{figure}
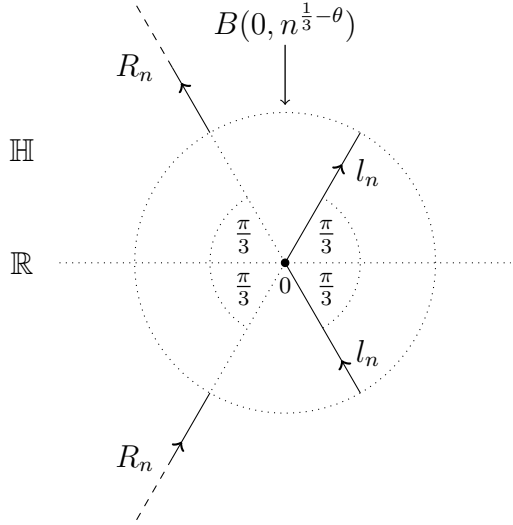
Note that,
\begin{equation}
\label{eqCase1I1n'2}
\frac1{|w-z|}
\le \frac1{ n^{\frac13-\theta} \cos (\frac{\pi}3)}
= \frac2{n^{\frac13-\theta}},
\end{equation}
uniformly for $w$ on $l_n$ and $z$ on $R_n$. Also,
\begin{equation*}
\left| \frac{\exp(w s  + w^2 v + \frac13 w^3)}
{\exp(z r  + z^2 u + \frac13 z^3)} \right|
= \frac{\exp(\text{Re}(w s  + w^2 v + \frac13 w^3))}
{\exp(\text{Re}(z r  + z^2 u + \frac13 z^3))},
\end{equation*}
for all $w$ on $l_n$ and $z$ on $R_n$. Moreover,
$|\text{Arg}(w)| = \frac{\pi}3$ and $|\text{Arg}(z)| = \frac{2\pi}3$
for all $w$ on $l_n$ and $z$ on $R_n$, and so
$\text{Re} (w^3) = - |w|^3$ and $\text{Re} (z^3) = |z|^3$. Therefore,
\begin{equation}
\label{eqCase1I1n'3}
\left| \frac{\exp(w s  + w^2 v + \frac13 w^3)}
{\exp(z r  + z^2 u + \frac13 z^3)} \right|
\le  \frac{\exp(|w| |s|  + |w|^2 |v| - \frac13 |w|^3)}
{\exp(-|z| |r| - |z|^2 |u| + \frac13 |z|^3)},
\end{equation}
for all $w$ on $l_n$ and $z$ on $R_n$.
Equations (\ref{eqCase1I1n'1}, \ref{eqCase1I1n'2}, \ref{eqCase1I1n'3})
then give,
\begin{equation*}
|I_{1,n}'|
\le 4 \frac1{(2 \pi)^2} \int_0^\infty dy_1 \int_0^\infty dy_2
\; \frac2{n^{\frac13-\theta}}
\frac{\exp(y_1 |s| + y_1^2 |v| - \frac13 y_1^3)}
{\exp(-y_2 |r| - y_2^2 |u| + \frac13 y_2^3)}.
\end{equation*}
The above integral converges, and so $I_{1,n}' = O( n^{-(\frac13-\theta)} )$.
Similarly it can be shown that $I_{1,n}'' = O( n^{-(\frac13-\theta)} )$ and
$I_{1,n}''' = O( n^{-(\frac13-\theta)} )$. Finally, since $\theta \in (\frac14,\frac13)$,
it follows that $I_{1,n}' = o(1)$ and $I_{1,n}'' = o(1)$ and $I_{1,n}''' = o(1)$.
This proves (i).

Consider (ii). Recall that $I_{2,n}$ is defined in equation (\ref{eqCase1I2n1}).
The contours in this expression are given in figure \ref{figConI2n}.
\begin{figure}[t]
\centering
\begin{tikzpicture};

\draw (-4,1.5) node {$\mathbb{H}$};
\draw [dotted] (-3.5,0) --++(7,0);
\draw (-4,0) node {$\R$};

\draw [dotted] (0,0) circle (2cm);
\draw [fill] (0,0) circle (.05cm);
\draw (0,-.3) node {\scriptsize $0$};
\draw (3.5,.5) node {$B(0,n^{\frac13-\theta})$};
\draw[arrows=->,line width=0.5pt](2.5,.5)--(2,.5);

\draw (0,0) --++ (1,1.732);
\draw[arrows=->,line width=1pt](.5,0.866)--(.505,.875);
\draw (.9,0.866) node {$l_n$};
\draw (0,0) --++ (1,-1.732);
\draw[arrows=->,line width=1pt](.505,-.875)--(.5,-0.866);
\draw (.9,-0.866) node {$l_n$};

\draw [domain=120:135] plot ({2*cos(\x)}, {2*sin(\x)});
\draw[arrows=->,line width=1pt](-1.227,1.579)--(-1.22,1.585);
\draw (-1,1.3) node {$C_n$};
\draw [domain=-135:-120] plot ({2*cos(\x)}, {2*sin(\x)});
\draw[arrows=->,line width=1pt](-1.22,-1.585)--(-1.227,-1.579);
\draw (-1,-1.3) node {$C_n$};

\draw (2,2.4) node {\scriptsize $n^{\frac13-\theta} e^{i \frac\pi3}$};
\draw[arrows=->,line width=0.5pt](2,2.2)--(1.1,1.8);
\draw (-1,2.7) node {\scriptsize $n^{\frac13-\theta} e^{i \zeta_n}$};
\draw[arrows=->,line width=0.5pt](-1,2.5)--(-1,1.8);
\draw (-2.7,1.414) node {\scriptsize $n^{\frac13-\theta} e^{i \frac{2\pi}3}$};
\draw[arrows=->,line width=0.5pt](-1.9,1.414)--(-1.5,1.414);
\draw (2,-2.4) node {\scriptsize $n^{\frac13-\theta} e^{-i \frac\pi3}$};
\draw[arrows=->,line width=0.5pt](2,-2.2)--(1.1,-1.8);
\draw (-1,-2.7) node {\scriptsize $n^{\frac13-\theta} e^{-i \zeta_n}$};
\draw[arrows=->,line width=0.5pt](-1,-2.5)--(-1,-1.8);
\draw (-2.7,-1.414) node {\scriptsize $n^{\frac13-\theta} e^{-i \frac{2\pi}3}$};
\draw[arrows=->,line width=0.5pt](-1.9,-1.414)--(-1.5,-1.414);

\end{tikzpicture}
\caption{The contours $l_n$ and $C_n$.
$l_n$ is the lines from $n^{\frac13-\theta} e^{-i \frac\pi3}$ to $0$,
and from $0$ to $n^{\frac13-\theta} e^{i \frac\pi3}$.
$C_n$ is the smallest arcs of $\partial B(0, n^{\frac13-\theta})$
from $n^{\frac13-\theta} e^{-i \zeta_n}$ to $n^{\frac13-\theta} e^{-i \frac{2\pi}3}$,
and from $n^{\frac13-\theta} e^{i \frac{2\pi}3}$ to $n^{\frac13-\theta} e^{i \zeta_n}$.}
\label{figConI2n}
\end{figure}
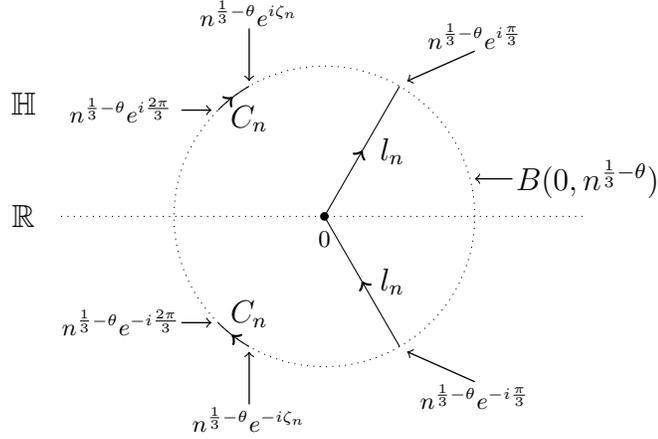
Also recall that $\zeta_n = \frac{2\pi}3 + O(n^{-\frac13+\theta})$.
It thus follows that $|\text{Arg}(w)| = \frac{\pi}3$ for all $w$ on $l_n$,
and $|\text{Arg}(z)| = \frac{2\pi}3 + O(n^{-\frac13+\theta})$
uniformly for $z$ on $C_n$. Moreover,
\begin{equation}
\label{eqCase1I2n2}
\frac1{|w-z|}
\le \frac1{ n^{\frac13-\theta} \cos (\frac{\pi}3 + o(1))}
< \frac4{n^{\frac13-\theta}},
\end{equation}
uniformly for $w$ on $l_n$ and $z$ on $C_n$. Also,
\begin{equation*}
\left| \frac{\exp(w s  + w^2 v + \frac13 w^3)}{\exp(z r  + z^2 u + \frac13 z^3)} \right|
= \frac{\exp(\text{Re}(w s  + w^2 v + \frac13 w^3))}
{\exp(\text{Re}(z r  + z^2 u + \frac13 z^3))},
\end{equation*}
for all $w$ on $l_n$ and $z$ on $C_n$.
Moreover, $|\text{Arg}(w)| = \frac{\pi}3$ and
$|\text{Arg}(z)| = \frac{2\pi}3 + O(n^{-\frac13+\theta})$ uniformly for
$w$ on $l_n$ and $z$ on $C_n$, and
so $\text{Re} (w^3) = - |w|^3$ and $\text{Re} (z^3) > \frac12 |z|^3$
for all such $w,z$. Therefore,
\begin{equation*}
\left| \frac{\exp(w s  + w^2 v + \frac13 w^3)}
{\exp(z r  + z^2 u + \frac13 z^3)} \right|
< \frac{\exp(|w| |s|  + |w|^2 |v| - \frac13 |w|^3)}
{\exp(-|z| |r| - |z|^2 |u| + \frac16 |z|^3)}.
\end{equation*}
Finally, note that $|z| = n^{\frac13-\theta}$ for all $z$ on $C_n$,
and that $n^{\frac13-\theta} \to \infty$ since
$\theta \in (\frac14,\frac13)$. Therefore,
\begin{equation}
\label{eqCase1I2n3}
\left| \frac{\exp(w s  + w^2 v + \frac13 w^3)}
{\exp(z r  + z^2 u + \frac13 z^3)} \right|
< \frac{\exp(|w| |s|  + |w|^2 |v| - \frac13 |w|^3)}
{\exp(\frac1{12} n^{1-3\theta})},
\end{equation}
uniformly for $w$ on $l_n$ and $z$ on $C_n$. Equations
(\ref{eqCase1I2n1}, \ref{eqCase1I2n2}, \ref{eqCase1I2n3}) then give,
\begin{equation*}
|I_{2,n}| < 4 \frac1{(2 \pi)^2}
\int_0^\infty dy_1 \int_0^{2\pi} n^{\frac13-\theta} dy_2 \; 
\frac4{n^{\frac13-\theta}} \frac{\exp(y_1 |s| + y_1^2 |v| - \frac13 y_1^3)}
{\exp(\frac1{12} n^{1-3\theta})}.
\end{equation*}
The above integral converges, and so $I_{2,n} = O( \exp(-\frac1{12} n^{1-3\theta}) )$.
Therefore $I_{2,n} = o(1)$ since $\theta \in (\frac14,\frac13)$.
Similarly, we can show that $I_{3,n} = o(1)$ and $I_{4,n} = o(1)$. This proves (ii).
\end{proof}

Next we examine the asymptotic behaviour of the remaining terms
of equation (\ref{eqJn11Jn12}):
\begin{lem}
\label{lemJn12Case1}
Assume the conditions of theorem \ref{thmAiry}. Additionally assume
that one of cases, (1-4), of lemma \ref{lemCases} is satisfied.
Fix $\theta \in (\frac14,\frac13)$, and $\{q_n\}_{n\ge1} \subset \R$
as in definition \ref{defmnpn}. Define
$J_n^{(l,r)}, J_n^{(r,l)}, J_n^{(r,r)}$ as in equation
(\ref{eqJn11Jn12}), and $A_{t,n} : (\Z^2)^2 \to \R \setminus \{0\}$ as
in lemma \ref{lemFnt}. Then,
\begin{equation*}
\frac{n^\frac13 |q_n|^{-1} |t - \chi_n| \; |t - \chi_n - \eta_n +1|}
{|A_{t,n}((u_n,r_n),(v_n,s_n))|} \; |J_n^{(l,r)}|
= O \bigg( n^\frac13  \exp \bigg( - \frac1{12} n^{1-3\theta} \bigg) \bigg).
\end{equation*}
Similarly for $J_n^{(r,l)}$ and $J_n^{(r,r)}$.
\end{lem}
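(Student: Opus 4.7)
The plan is to estimate each of $J_n^{(l,r)}$, $J_n^{(r,l)}$, $J_n^{(r,r)}$ by brute majorization of the integrand, exploiting the fact that the integrand factorizes as $(w-z)^{-1} \exp(nf_n(w))\exp(-n\tilde f_n(z))$. I will bound the two exponential factors separately, then combine with the length bounds of lemma \ref{lemDesAsc1-12}(6) and the $|w-z|^{-1} = O(n^\theta)$ bound of lemma \ref{lemDesAsc1-12}(5). All three estimates will share the same structure; the asymmetry between the three terms lies only in which factor supplies the crucial $\exp(-\frac{1}{12} n^{1-3\theta})$ decay.

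The main estimate is as follows. On the local pieces $\g_n^{(l)}$ and $\G_n^{(l)}$, corollary \ref{corTay}(3,4) gives $nf_n(w) = nf_n(t) + O(n^{1-4\theta})$ and similarly for $\tilde f_n$; since $\theta > 1/4$, this yields the uniform bound $|\exp(nf_n(w))| \le C |\exp(nf_n(t))|$ and $|\exp(-n\tilde f_n(z))| \le C|\exp(-n\tilde f_n(t))|$. On the remaining pieces $\g_n^{(r)}$ and $\G_n^{(r)}$, parts (3,4) of lemma \ref{lemDesAsc1-12} reduce the question to a single point: $|\exp(nf_n(w))| \le |\exp(nf_n(d_{1,n}))|$ and $|\exp(-n\tilde f_n(z))| \le |\exp(-n\tilde f_n(\tilde a_{1,n}))|$. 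I will then apply corollary \ref{corTay}(3) at $w = d_{1,n} = t + n^{-\theta}|q_n|e^{i\alpha_n}$ with $\alpha_n = \pi/3 + O(n^{-1/3+\theta})$ (by lemma \ref{lemDesAsc1-12}(2)). The cubic term contributes $\tfrac13 n^{1-3\theta} \cos(3\alpha_n)$ to $n\operatorname{Re}(f_n(d_{1,n}) - f_n(t))$, and since $\cos(3\alpha_n) \to -1$, this term equals $-\tfrac13 n^{1-3\theta}(1+o(1))$; the subdominant polynomial terms ($ws$, $w^2 v$) and the error $O(n^{1-4\theta})$ are all $o(n^{1-3\theta})$ because $\theta \in (1/4, 1/3)$. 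Hence $|\exp(nf_n(d_{1,n}))| \le |\exp(nf_n(t))| \exp(-\tfrac{1}{12} n^{1-3\theta})$ with ample slack. An identical argument with $\zeta_n \to 2\pi/3$ (so $\cos(3\zeta_n) \to +1$) gives $|\exp(-n\tilde f_n(\tilde a_{1,n}))| \le |\exp(-n\tilde f_n(t))| \exp(-\tfrac{1}{12} n^{1-3\theta})$.

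Combining these bounds with $|\g_n^{(l)}| = O(n^{-\theta})$, $|\G_n^{(l)}| = O(n^{-\theta})$, $|\g_n^{(r)}| = O(1)$, $|\G_n^{(r)}| = O(1)$, the uniform bound $|w-z|^{-1} = O(n^\theta)$, and the identity $|\exp(nf_n(t) - n\tilde f_n(t))| = |\exp(nF_n(t))| = |A_{t,n}((u_n,r_n),(v_n,s_n))| / (|t-\chi_n| |t-\chi_n-\eta_n+1|) \cdot \exp(O(n^{-1/3}))$ from lemma \ref{lemFnt}, the $n^\theta$ from $|w-z|^{-1}$ precisely cancels the $n^{-\theta}$ from the length of the local piece in $J_n^{(l,r)}$ and $J_n^{(r,l)}$ (and is dominated by the exponential decay in $J_n^{(r,r)}$). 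This yields
\[
|J_n^{(l,r)}| = O(1) \cdot \frac{|A_{t,n}|}{|t-\chi_n||t-\chi_n-\eta_n+1|} \exp\bigl(-\tfrac{1}{12} n^{1-3\theta}\bigr),
\]
and the same for $J_n^{(r,l)}$ and $J_n^{(r,r)}$. Multiplication by $n^{1/3} |q_n|^{-1}$ gives the claimed bound.

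The main technical obstacle is precisely the step of extracting the cubic decay at $d_{1,n}$ and $\tilde a_{1,n}$: one must keep explicit track of all the polynomial and error terms in corollary \ref{corTay}, verify that the constraint $\theta \in (1/4, 1/3)$ forces each to be negligible relative to $n^{1-3\theta}$, and also verify that the angle perturbations of order $O(n^{-1/3+\theta})$ do not degrade the sign of $\cos(3\alpha_n)$. Everything else — contour-length, $|w-z|^{-1}$, and the identification of $\exp(nF_n(t))$ — is a direct invocation of results already established, so the estimate reduces to the bookkeeping of one angular Taylor expansion.
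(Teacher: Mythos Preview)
Your overall architecture is correct and mirrors the paper's proof: split off the exponentials, use parts (3)--(6) of lemma~\ref{lemDesAsc1-12} for the remaining pieces and corollary~\ref{corTay} for the local pieces, and close with lemma~\ref{lemFnt}. However, your bound on the local pieces contains a genuine error. You write that corollary~\ref{corTay}(3,4) gives $nf_n(w) = nf_n(t) + O(n^{1-4\theta})$ on $\gamma_n^{(l)}$, but this drops the polynomial $w's + w'^2 v + \tfrac13 w'^3$ (where $w' = n^{1/3}q_n^{-1}(w-t)$, so $|w'|$ ranges up to $n^{1/3-\theta}$ on $\gamma_n^{(l)}$). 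At the boundary of the local piece the cubic term is of order $n^{1-3\theta}$, which diverges for $\theta < \tfrac13$; hence $|\exp(nf_n(w))|/|\exp(nf_n(t))|$ is \emph{not} bounded by a fixed constant~$C$, and the condition $\theta>\tfrac14$ that you invoke controls only the error term, not the polynomial.

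The fix is simple and is exactly what the paper does: keep the polynomial terms explicit. On $\gamma_n^{(l)}$ the argument of $w'$ is $\tfrac{\pi}{3} + o(1)$ (cases (1,2); similarly for (3,4) after the sign of $q_n$ is tracked), so $\text{Re}(\tfrac13 w'^3) \le 0$ and the cubic helps rather than hurts; the linear and quadratic terms contribute at most $O(n^{2/3-2\theta})$ to the real part. Thus the correct local bound is $|\exp(nf_n(w))| \le |\exp(nf_n(t))|\exp\bigl(O(n^{2/3-2\theta})\bigr)$, and analogously for $|\exp(-n\tilde f_n(z))|$ on $\Gamma_n^{(l)}$. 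This growing factor is then absorbed by the $\exp\bigl(-\tfrac16 n^{1-3\theta}(1+o(1))\bigr)$ decay coming from $\tilde a_{1,n}$ (respectively $d_{1,n}$), because $\tfrac23-2\theta < 1-3\theta$ for $\theta < \tfrac13$; after this absorption you recover the stated $\exp\bigl(-\tfrac{1}{12}n^{1-3\theta}\bigr)$. The rest of your argument---the length bounds, the $|w-z|^{-1} = O(n^\theta)$ estimate from part~(5), and the identification via lemma~\ref{lemFnt}---is correct as written.
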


\begin{proof}
We prove the result for $J_n^{(l,r)}$ for cases (1,2) of lemma
\ref{lemCases}, and state that the remaining results follow similarly.

Assume that one of cases, (1,2), of lemma \ref{lemCases} is satisfied.
Lemma \ref{lemCases} and definition \ref{defmnpn} then imply that
$q_n$ converges to a positive constant as $n \to \infty$.
Next note, equations (\ref{eqfn}, \ref{eqtildefn},
\ref{eqJn11Jn12}) give,
\begin{equation*}
|J_n^{(l,r)}| \le \frac1{(2\pi)^2} |\g_n^{(l)}| |\G_n^{(r)}|
\sup_{(w,z) \in \g_n^{(l)} \times \G_n^{(r)}}
\left| \frac{\exp(n f_n(w) - n \tilde{f}_n(z))}{w-z} \right|.
\end{equation*}
Recall (see equation (\ref{eqConLocRem})) that $\g_n^{(l)}$ is that 
section of $\g_n$ inside $B(t, n^{-\theta} q_n)$.
Therefore $|\g_n^{(l)}| = 2 n^{-\theta} q_n$ (see part (2) of lemma
\ref{lemDesAsc1-12} and definition \ref{defConCases}).
Thus, since $q_n$ converges to a positive constant as $n \to \infty$,
$|\g_n^{(l)}| = O(n^{-\theta})$. Next recall (see equation
(\ref{eqConLocRem})) that $\G_n^{(r)}$ is that part of $\G_n$ outside
$B(t, n^{-\theta} q_n)$. Definition \ref{defConCases} and parts
(4,5,6) of lemma \ref{lemDesAsc1-12} thus give,
\begin{equation*}
|J_n^{(l,r)}| \le C \sup_{w \in \g_n^{(l)}}
|\exp(n f_n(w) - n \tilde{f}_n(\tilde{a}_{1,n}))|,
\end{equation*}
where $\tilde{a}_{1,n} \subset \partial B(t, n^{-\theta} q_n)$
is defined in part (2) of lemma \ref{lemDesAsc1-12},
and where $C>0$ is some fixed constant. Recall that
$q_n$ converges to a positive constant as $n \to \infty$,
$\g_n^{(l)} \subset B(t, n^{-\theta} q_n)$ and
$\tilde{a}_{1,n} \in \partial B(t, n^{-\theta} q_n)$. Also
recall (see previous lemma), since one of cases (1,2) of lemma
\ref{lemCases} is satisfied,
$|\text{Arg}(w-t)| = \frac\pi3 + O(n^{-\frac13+\theta})$
uniformly for $w$ on $\g_n^{(l)}$, and
$\text{Arg}(\tilde{a}_{1,n}-t) = \frac{2\pi}3 + O(n^{-\frac13+\theta})$.
Therefore,
\begin{equation*}
|J_n^{(l,r)}| \le C \sup_{w \in h_n}
\left| \exp(n f_n(t + n^{-\frac13} q_n w)
- n \tilde{f}_n(t + n^{-\frac13} q_n z_n)) \right|,
\end{equation*}
where $h_n \subset B(0,n^{\frac13-\theta})$
is defined as on the left of figure \ref{figConReScaleCase1}, and
$z_n \subset \partial B(0,n^{\frac13-\theta})$ is defined
by $z_n := n^{\frac13} q_n^{-1} (\tilde{a}_{1,n} - t)$.
Note, $|\text{Arg}(w)| = \frac\pi3 + O(n^{-\frac13+\theta})$
uniformly for $w$ on $h_n$, and
$\text{Arg}(z_n) = \frac{2\pi}3 + O(n^{-\frac13+\theta})$. Also note, parts
(3,4) of corollary \ref{corTay} give,
\begin{equation*}
|J_n^{(l,r)}|
\le C | \exp(n f_n(t) - n \tilde{f}_n(t) + O(n^{1-4\theta}) ) |
\sup_{w \in h_n} \left| \frac{\exp(w s  + w^2 v + \frac13 w^3)}
{\exp(z_n r  + (z_n)^2 u + \frac13 (z_n)^3)} \right|.
\end{equation*}
Lemma \ref{lemFnt} then gives,
\begin{align*}
|J_n^{(l,r)}|
&\le C \frac{ |A_{t,n} ((u_n,r_n),(v_n,s_n))|}{|t-\chi_n| \; |t-\chi_n - \eta_n + 1|}
\exp(O(n^{-\frac13} + n^{1-4\theta})) \\
&\times \sup_{w \in h_n} \left| \frac{\exp(w s  + w^2 v + \frac13 w^3)}
{\exp(z_n r + (z_n)^2 u + \frac13 (z_n)^3)} \right|.
\end{align*}
Finally recall that $\theta \in (\frac14,\frac13)$. Therefore we can choose
the constant $C>0$ such that,
\begin{equation}
\label{eqJn12Case1}
|J_n^{(l,r)}|
\le C \frac{ |A_{t,n} ((u_n,r_n),(v_n,s_n))|}{|t-\chi_n| \; |t-\chi_n - \eta_n + 1|}
\sup_{w \in h_n} \left| \frac{\exp(w s  + w^2 v + \frac13 w^3)}
{\exp(z_n r  + (z_n)^2 u + \frac13 (z_n)^3)} \right|.
\end{equation}

Next note that,
\begin{equation*}
\left| \frac{\exp(w s  + w^2 v + \frac13 w^3)}
{\exp(z_n r  + (z_n)^2 u + \frac13 (z_n)^3)} \right|
= \frac{\exp(\text{Re}(w s  + w^2 v + \frac13 w^3))}
{\exp(\text{Re}(z_n r  + (z_n)^2 u + \frac13 (z_n)^3))},
\end{equation*}
for all $w$ on $h_n$. Recall that
$|\text{Arg}(w)| = \frac\pi3 + O(n^{-\frac13+\theta})$
uniformly for $w$ on $h_n$. Therefore $\text{Re}(w^3) \le 0$
for all $w$ on $h_n$. Moreover, recall that
$\text{Arg}(z_n) = \frac{2\pi}3 + O(n^{-\frac13+\theta})$. Therefore
$\text{Re} ((z_n)^3) > \frac12 |z_n|^3$. Combine the above to get,
\begin{equation*}
\left| \frac{\exp(w s  + w^2 v + \frac13 w^3)}
{\exp(z_n r  + (z_n)^2 u + \frac13 (z_n)^3)} \right|
\le \frac{\exp(|w| |s|  + |w|^2 |v| + 0)}
{\exp(-|z_n| |r| - |z_n|^2 |u| + \frac16 |z_n|^3)},
\end{equation*}
for all $w$ on $h_n$. Finally
recall that $|w| \le n^{\frac13-\theta}$ for all $w$ on $h_n$
(since $h_n \subset B(0, n^{\frac13-\theta})$),
$|z_n| = n^{\frac13-\theta}$ (since
$z_n \in \partial B(0, n^{\frac13-\theta})$), and
$n^{\frac13-\theta} \to \infty$ (since
$\theta \in (\frac14,\frac13)$). Therefore,
\begin{equation*}
\sup_{w \in h_n} \left| \frac{\exp(w s  + w^2 v + \frac13 w^3)}
{\exp(z_n r  + (z_n)^2 u + \frac13 (z_n)^3)} \right|
\le \frac{\exp(O(n^{\frac23 - 2\theta}))}
{\exp(O(n^{\frac23 - 2\theta}) + \frac16 n^{1 - 3\theta})}
\le \frac1{\exp( \frac1{12} n^{1 - 3\theta} )}.
\end{equation*}
Substitute into equation (\ref{eqJn12Case1})
to get the required result.
\end{proof}

Equation (\ref{eqJn11Jn12}), and the above two lemmas, give the asymptotic
behaviour of $J_n$. It remains to consider the asymptotic behaviour
of $\Phi_n$ (see equation (\ref{eqKnIntCase1})). Since
many of the arguments of the following lemma are similar to those used in
the proofs of lemmas \ref{lemJn11Case1} and \ref{lemJn12Case1}, we do not
go into as much detail here:
\begin{lem}
\label{lemRemv>uCase1}
Assume the conditions of theorem \ref{thmAiry}. Additionally assume
that one of cases, (1-4), of lemma \ref{lemCases} is satisfied.
Fix $\theta \in (\frac14,\frac13)$, and $\{q_n\}_{n\ge1} \subset \R$
as in definition \ref{defmnpn}. Define $\Phi_n$ as in lemma
\ref{lemKnJnPhin}, $\Phi :  (\R^2)^2 \to \R$ as in equation
(\ref{eqPhi}), and $A_{t,n} : (\Z^2)^2 \to \R \setminus \{0\}$ as
in lemma \ref{lemFnt}. Then, when the parameters of
equations (\ref{equnrnvnsn2}, \ref{equnrnvnsn3}) satisfy $(u,r) \neq (v,s)$,
\begin{equation*}
\frac{n^\frac13 |q_n|^{-1} (t - \chi_n) (t - \chi_n - \eta_n +1)}
{A_{t,n}((u_n,r_n),(v_n,s_n))} \; \Phi_n \to 1_{(v>u)} \; \Phi((v,s),(u,r)).
\end{equation*}
\end{lem}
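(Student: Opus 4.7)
The plan is to use part (2) of lemma \ref{lemKnJnPhin} to express $\Phi_n$ as a contour integral along $\kappa_n$, and then to perform a local Gaussian saddle-point analysis. First, I will dispose of the trivial case $v \le u$: by lemma \ref{lemvnunIneq}, when $v<u$ one has $v_n<u_n$ for cases (1-4), and when $v=u$ with $s\neq r$ either $v_n<u_n$ (with $s_n>r_n+1$) or $s_n\le r_n$ (with $v_n>u_n$). In each situation the indicator $1_{(v_n \ge u_n, s_n > r_n)}$ in the definition of $\Phi_n$ for cases (1-4) vanishes, so $\Phi_n = 0$; since also $1_{(v>u)} = 0$, both sides of the claimed limit equal zero.

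Assume now $v>u$. Part (2) of lemma \ref{lemKnJnPhin} yields $\Phi_n = (2\pi i)^{-1} \int_{\kappa_n} \prod_{x \in U_n}(w-x)/\prod_{x \in V_n}(w-x) \, dw$. Lemma \ref{lemvnunIneq} gives $U_n \setminus V_n = UV_{(n)}$ and $V_n \setminus U_n = VU^{(n)}$ (with $UV^{(n)} = VU_{(n)} = \emptyset$), and equation (\ref{eqFnw2}) with branch cuts $(-\infty,0]$ (appropriate for cases (1-4)) identifies the integrand on $\kappa_n$ with $\exp(nF_n(w))$; hence $\Phi_n = (2\pi i)^{-1}\int_{\kappa_n} e^{nF_n(w)} dw$. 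Following the strategy used in lemmas \ref{lemJn11Case1} and \ref{lemJn12Case1}, I will fix $\theta \in (1/4, 1/3)$ and split $\kappa_n = \kappa_n^{(l)} + \kappa_n^{(r)}$ with $\kappa_n^{(l)} = \kappa_n \cap B(t, n^{-\theta}|q_n|)$, then estimate the two pieces separately.

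For $\kappa_n^{(r)}$, the bound $\mathrm{Re}(F_n(w)) \le \mathrm{Re}(F_n(D_{1,n}))$ from lemma \ref{lemDesAsc1-12Rem} combined with $|\kappa_n^{(r)}| = O(1)$ reduces matters to estimating $n\mathrm{Re}(F_n(D_{1,n}))$. With $D_{1,n} = t + n^{-\theta}|q_n| e^{i\alpha_n}$ and $\alpha_n = \pi/2 + O(n^{-1/3+\theta})$, corollary \ref{corTay} (together with the key observation that the cubic terms of $nf_n$ and $n\tilde{f}_n$ cancel inside $nF_n$) gives $n(F_n(D_{1,n}) - F_n(t)) = -(v-u)n^{2/3-2\theta} + O(n^{1/3-\theta})$, which decays faster than any power of $n$ since $v>u$ and $2/3-2\theta>0$; thus the $\kappa_n^{(r)}$-integral is $o(n^{-1/3})$. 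For $\kappa_n^{(l)}$, I will substitute $w = t + n^{-1/3}q_n \tilde{w}$ and use corollary \ref{corTay} to obtain, uniformly on the rescaled contour $\tilde\kappa_n^{(l)}$,
$$nF_n(w) = nF_n(t) + \tilde{w}(s-r) + \tilde{w}^2(v-u) + O(n^{1-4\theta}).$$
The contour $\tilde\kappa_n^{(l)}$ is a nearly-vertical segment of length $2n^{1/3-\theta}$ crossing $0$, and the Gaussian damping from $\tilde{w}^2(v-u)$ (recall $v>u$) justifies both deforming $\tilde\kappa_n^{(l)}$ onto the straight imaginary segment $[-iR, iR]$ with $R=n^{1/3-\theta}$ and then extending to the full imaginary axis. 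A direct Gaussian computation gives
$$\int_{-i\infty}^{i\infty} e^{\tilde{w}(s-r) + \tilde{w}^2(v-u)} d\tilde{w} = i\sqrt{\pi/(v-u)} \, e^{-(s-r)^2/(4(v-u))}.$$
The Jacobian $n^{-1/3}q_n$, whose sign for cases (3,4) (where $q_n<0$) is compensated by the reversal of the traversal of $\tilde\kappa_n^{(l)}$, produces the uniform prefactor $n^{-1/3}|q_n|$ in all four subcases. Invoking lemma \ref{lemFnt} to replace $e^{nF_n(t)}$ by $A_{t,n}((u_n,r_n),(v_n,s_n))/[(t-\chi_n)(t-\chi_n-\eta_n+1)]$ (both factors positive since $t>\chi>\chi+\eta-1$ in cases (1-4)) then delivers the claim. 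The main delicate point will be the sign bookkeeping for $q_n$ across subcases, together with the uniform control of the $O(n^{1-4\theta})$ Taylor error when deforming $\tilde\kappa_n^{(l)}$ onto the straight imaginary axis.
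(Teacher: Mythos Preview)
Your proposal is correct and follows essentially the same approach as the paper: express $\Phi_n$ via the contour integral of part (2) of lemma \ref{lemKnJnPhin}, identify the integrand with $\exp(nF_n(w))$, split $\kappa_n$ into local and remainder pieces at scale $n^{-\theta}|q_n|$, use lemma \ref{lemDesAsc1-12Rem} to kill the remainder, and reduce the local piece to a Gaussian integral via the Taylor expansion of corollary \ref{corTay} and lemma \ref{lemFnt}. The only cosmetic difference is that for $v\le u$ the paper simply cites the built-in factor $1_{(v>u)}$ in part (2) of lemma \ref{lemKnJnPhin}, whereas you argue directly from the indicator in the definition of $\Phi_n$ via lemma \ref{lemvnunIneq}; both are fine.
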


\begin{proof}
First note, since $(u,r) \neq (v,s)$, part (2) of lemma \ref{lemKnJnPhin} gives,
\begin{equation*}
\Phi_n = 1_{(v>u)} \frac1{2\pi i} \int_{\kappa_n} dw \;
\frac{\prod_{j=u_n+r_n-n+1}^{u_n-1} (w - \frac{j}n)}
{\prod_{j=v_n+s_n-n}^{v_n} (w - \frac{j}n)}.
\end{equation*}
This proves the result when $v < u$, and when
$v = u$ and $s \neq r$. It thus remains to show the
result when $v > u$. Assuming this, partition $\kappa_n$ (see definition
\ref{defConCases} and lemma \ref{lemDesAsc1-12Rem}) as 
$\kappa_n = \kappa_n^{(l)} + \kappa_n^{(r)}$,
where $\kappa_n^{(l)}$ is that {\em local section} of $\kappa_n$ inside
$B(t, n^{-\theta} |q_n| )$, and $\kappa_n^{(l)}$ is the {\em remaining section}
of $\kappa_n$ outside $B(t, n^{-\theta} |q_n| )$. Therefore
$\Phi_n = \Phi_n^{(l)} + \Phi_n^{(r)}$, where,
\begin{equation*}
\Phi_n^{(l)} := \frac1{2\pi i} \int_{\kappa_n^{(l)}} dw \;
\frac{\prod_{j=u_n+r_n-n+1}^{u_n-1} (w - \frac{j}n)}
{\prod_{j=v_n+s_n-n}^{v_n} (w - \frac{j}n)},
\end{equation*}
and $\Phi_n^{(r)}$ is defined analogously. We will show that:
\begin{align}
\tag{i}
\frac{n^\frac13 |q_n|^{-1} (t - \chi_n) (t - \chi_n - \eta_n +1)}
{A_{t,n}((u_n,r_n),(v_n,s_n))} \; \Phi_n^{(l)}
&\to \frac1{2 \sqrt{\pi(v-u)}}
\exp \left( -\frac14 \frac{(s-r)^2}{v-u} \right), \\
\tag{ii}
\frac{n^\frac13 |q_n|^{-1} (t - \chi_n) (t - \chi_n - \eta_n +1)}
{A_{t,n}((u_n,r_n),(v_n,s_n))} \; \Phi_n^{(r)}
&\to 0.
\end{align}
Parts (i,ii), and equation (\ref{eqPhi}), then give the required result.

Consider (i). First note, equations
(\ref{eqfn}, \ref{eqtildefn}, \ref{eqFnw}) give,
\begin{equation*}
\Phi_n^{(l)}
= \frac1{2\pi i}  \int_{\kappa_n^{(l)}} dw \; \exp (n F_n(w) ).
\end{equation*}
Define $D_{1,n}$ as in lemma \ref{lemDesAsc1-12Rem}. Also define,
$\psi_n := \text{Arg}(D_{1,n}-t)$, and note that lemma
\ref{lemDesAsc1-12Rem} gives $\psi_n = \frac\pi2 + O(n^{-\frac13+\theta})$.
Recall that $\kappa_n^{(l)}$ is that section of $\kappa_n$ inside
$B(t, n^{-\theta} |q_n|)$, and $\kappa_n$ is counter-clockwise
(see definition \ref{defConCases}). Lemma \ref{lemDesAsc1-12Rem} and
figure \ref{figDesAscRem} then imply that $\kappa_n^{(l)}$ is the lines
from $t + n^{-\theta} |q_n| e^{-i \psi_n}$ to $t$, and from $t$ to
$t + n^{-\theta} |q_n| e^{i \psi_n}$. A change of variables thus gives,
\begin{equation*}
\Phi_n^{(l)} = \frac{n^{-\frac13} |q_n|}{2\pi i}
\int_{k_n} dw \; \exp (n F_n(t + n^{-\frac13} |q_n| w) ),
\end{equation*}
where $k_n$ is the lines from $n^{\frac13-\theta} e^{-i \psi_n}$ to $0$,
and from $0$ to $n^{\frac13-\theta} e^{i \psi_n}$.
Thus, since $q_n > 0$ for cases (1,2) of lemma \ref{lemCases},
and $q_n<0$ for cases (3,4),
\begin{equation*}
\Phi_n^{(l)}
= \begin{dcases}
\frac{n^{-\frac13} |q_n|}{2\pi i}
\int_{k_n} dw \; \exp (n F_n(t + n^{-\frac13} q_n w) )
& ; \text{ for cases (1,2)}, \\
\frac{n^{-\frac13} |q_n|}{2\pi i}
\int_{k_n} dw \; \exp (n F_n(t - n^{-\frac13} q_n w) )
& ; \text{ for cases (3,4)}.
\end{dcases}
\end{equation*}
Equation (\ref{eqFnw}), and parts (3,4) of corollary \ref{corTay}, then give,
\begin{equation*}
\Phi_n^{(l)} = n^{-\frac13} |q_n| \exp(n F_n(t) + O(n^{1-4\theta})) \; \phi_n,
\end{equation*}
where,
\begin{equation*}
\phi_n
:= \begin{dcases}
\frac1{2\pi i} \int_{k_n} dw \; \exp(w (s-r)  + w^2 (v-u))
& ; \text{ for cases (1,2)}, \\
\frac1{2\pi i} \int_{k_n} dw \; \exp((-w) (s-r)  + (-w)^2 (v-u))
& ; \text{ for cases (3,4)}.
\end{dcases}
\end{equation*}
Lemma \ref{lemFnt} then gives,
\begin{equation*}
\Phi_n^{(l)} = \frac{n^{-\frac13} |q_n| A_{t,n} ((u_n,r_n),(v_n,s_n))}
{(t-\chi_n) (t-\chi_n - \eta_n + 1)}
\exp(O(n^{-\frac13} + n^{1-4\theta})) \; \phi_n.
\end{equation*}
Note that $\exp(O(n^{-\frac13} + n^{1-4\theta})) \to 1$ since $\theta \in (\frac14,\frac13)$.
Also, since $v > u$ and $\theta \in (\frac14,\frac13)$, we can proceed similarly to
lemma \ref{lemJn11Case1} to show that,
\begin{equation*}
\phi_n \to \frac1{2\pi i}
\int_{-\infty}^{+\infty} d (i y) \exp ( i y (s-r) + (i y)^2 (v-u) ),
\end{equation*}
for cases (1,2). The RHS is integrable since $v > u$, and
integrating gives,
\begin{equation*}
\phi_n \to \frac1{2 \sqrt{\pi(v-u)}}
\exp \left( -\frac14 \frac{(s-r)^2}{v-u} \right).
\end{equation*}
for cases (1,2). This proves part (i) for
cases (1,2). Similarly for cases (3,4).

Consider (ii). First note, equations
(\ref{eqfn}, \ref{eqtildefn}, \ref{eqFnw}) give,
\begin{equation*}
\Phi_n^{(r)}
= \frac1{2\pi i} \int_{\kappa_n^{(r)}} dw \; \exp (n F_n(w) ),
\end{equation*}
where $\kappa_n^{(r)}$ is that section of $\kappa_n$ outside
$B(t, n^{-\theta} |q_n|)$. Therefore,
\begin{equation*}
| \Phi_n^{(r)} |
\le \frac1{2\pi} |\kappa_n^{(r)}|
\; \sup_{w \in \kappa_n^{(r)}} | \exp (n F_n(w) ) |
\le C \;| \exp (n F_n(D_{1,n}) ) |,
\end{equation*}
where $C>0$ is some fixed constant, and the second part above
follows from lemma \ref{lemDesAsc1-12Rem}. We can then proceed
similarly to the proof of lemma \ref{lemJn12Case1} to prove part (ii).
\end{proof}

We are finally ready to prove the main result of this section:
\begin{proof}[Proof of theorem \ref{thmAiry} when $t \in R_\mu^+$:]
In this proof, for brevity, denote
$a_n := (t - \chi_n) (t - \chi_n - \eta_n +1)$.
First recall that $t \in R_\mu^+$ if and only if one of cases (1-4) of
lemma \ref{lemCases} is satisfied. Then, equations (\ref{eqConjFactB})
and (\ref{eqKnIntCase1}), and the expression for $\beta_n$ in the
statement of lemma \ref{lemKnJnPhin} give,
\begin{equation*}
\frac{n}{n-r_n} \; \frac{K_n((u_n,r_n),(v_n,s_n))}{B_n(r_n,s_n)}
= J_n - \Phi_n.
\end{equation*}
Equation (\ref{eqKntilde}) then gives,
\begin{equation*}
\mathcal{K}_n((v_n,s_n),(u_n,r_n))
= \frac{n-r_n}n \; \frac{J_n - \Phi_n}{A_{t,n} ((u_n,r_n),(v_n,s_n))}.
\end{equation*}
Then, equation (\ref{eqJn11Jn12}), and lemmas \ref{lemJn11Case1} and
\ref{lemJn12Case1} give,
\begin{align}
\label{eqthmAiry1}
\mathcal{K}_n((v_n,s_n),(u_n,r_n))
&= n^{-\frac13} |q_n| a_n^{-1} \tfrac{n-r_n}n
(\widetilde{K}_\text{Ai}((v,s),(u,r)) + o(1)) \\
\nonumber
&- \frac{n-r_n}n \frac{\Phi_n}{A_{t,n} ((u_n,r_n),(v_n,s_n))}.
\end{align}
Moreover, since one of cases (1-4) is satisfied, the expression
for $\Phi_n$ in part (1) of lemma \ref{lemKnJnPhin} gives,
\begin{align}
\label{eqthmAiry2}
&\frac{n-r_n}n \; \frac{\Phi_n}{A_{t,n} ((u_n,r_n),(v_n,s_n))} \\
\nonumber
&= \frac{1_{(v_n \ge u_n, s_n > r_n)}}{A_{t,n} ((u_n,r_n),(v_n,s_n))} \;
\frac{(v_n+s_n-u_n-r_n-1)!}{(s_n-r_n-1)! (v_n-u_n)!} \;
\frac{(n-r_n)!}{(n-s_n)!} n^{r_n-s_n}.
\end{align}
Alternatively, when $(u,r) \neq (v,s)$, lemma \ref{lemRemv>uCase1} gives,
\begin{equation}
\label{eqthmAiry3}
\frac{n-r_n}n \; \frac{\Phi_n}{A_{t,n} ((u_n,r_n),(v_n,s_n))}
= n^{-\frac13} |q_n| a_n^{-1} \frac{n-r_n}n
(1_{(v>u)} \Phi((v,s),(u,r)) + o(1)).
\end{equation}

Recall, for cases (1-4), that $f_t'''(t) \neq 0$, $e^{C(t)} > 1$,
and $C'(t) < 0$ (see lemma \ref{lemAnalExt}). Moreover, defining $f_{t,n}'$ as in
equation (\ref{eqftn'Rmu}), recall (see lemma \ref{lemftn'}) that
$f_{t,n}'''(t) \to f_t'''(t)$ as $n \to \infty$.
Definition \ref{defmnpn} then gives,
\begin{equation*}
|q_n|
= 2^{\frac13} |f_{t,n}'''(t)|^{-\frac13}
\to 2^{\frac13} |f_t'''(t)|^{-\frac13},
\end{equation*}
a non-zero constant. Also, recalling that
$a_n = (t - \chi_n) (t - \chi_n - \eta_n +1)$,
and $(\chi_n,\eta_n) = (\chi_n(t),\eta_n(t))$,
definition \ref{defEdgeNonAsy} gives,
\begin{equation*}
a_n = \frac{(e^{C_n(t)}-1)^2}{e^{C_n(t)} C_n'(t)^2}.
\end{equation*}
Moreover, equation (\ref{equnrnvnsn2}) and definition \ref{defEdgeNonAsy} give,
\begin{equation*}
\frac{n-r_n}n
= 1 - \eta_n + O(n^{-\frac13})
= - \frac{(e^{C_n(t)}-1)^2}{e^{C_n(t)} C_n'(t)} + O(n^{-\frac13}).
\end{equation*}
Thus, since $e^{C_n(t)} \to e^{C(t)} \not\in \{0,1\}$, and
$C_n'(t) \to C'(t) \neq 0$ (see equation (\ref{eqNonAsyEdge})),
\begin{equation*}
a_n \to \frac{(e^{C(t)}-1)^2}{e^{C(t)} C'(t)^2}
\hspace{.5cm} \text{and} \hspace{.5cm}
\frac{n-r_n}n
\to - \frac{(e^{C(t)}-1)^2}{e^{C(t)} C'(t)},
\end{equation*}
non-zero constants. Combined, the above give,
\begin{equation}
\label{eqthmAiry4}
|q_n| a_n^{-1} \tfrac{n-r_n}n
\to - 2^{\frac13} |f_t'''(t)|^{-\frac13} C'(t)
= 2^{\frac13} |f_t'''(t)|^{-\frac13} |C'(t)|
= \beta(t),
\end{equation}
where the second part follows since $C'(t) < 0$,
the last part follows from the definition of $\beta(t)$
in the statement of theorem \ref{thmAiry2}.

Note, equations (\ref{eqthmAiry1}, \ref{eqthmAiry4}) give,
\begin{align*}
\mathcal{K}_n((v_n,s_n),(u_n,r_n))
&= n^{-\frac13} (\beta(t) + o(1))
(\widetilde{K}_\text{Ai}((v,s),(u,r)) + o(1)) \\
&- \frac{n-r_n}n \frac{\Phi_n}{A_{t,n} ((u_n,r_n),(v_n,s_n))}.
\end{align*}
Recall that $t > \chi_n > \chi_n + \eta_n - 1$ for cases (1-4)
of lemma \ref{lemCases}, and so $A_{t,n} ((u_n,r_n),(v_n,s_n)) > 0$ (see lemma
\ref{lemFnt}). Equations (\ref{eqConjFactB}, \ref{eqthmAiry2}) then give,
\begin{equation*}
\frac{n-r_n}n \; \frac{\Phi_n}{A_{t,n} ((u_n,r_n),(v_n,s_n))}
= \frac{1_{(v_n \ge u_n, s_n > r_n)}}{|A_{t,n} ((u_n,r_n),(v_n,s_n))| B(r_n,s_n)} \;
\frac{(v_n+s_n-u_n-r_n-1)!}{(s_n-r_n-1)! (v_n-u_n)!}.
\end{equation*}
Moreover, when $(u,r) \neq (v,s)$, equations (\ref{eqthmAiry3},
\ref{eqthmAiry4}) alternatively give,
\begin{equation*}
\frac{n-r_n}n \; \frac{\Phi_n}{A_{t,n} ((u_n,r_n),(v_n,s_n))}
= n^{-\frac13} (\beta(t) + o(1)) (1_{(v>u)} \Phi((v,s),(u,r)) + o(1)).
\end{equation*}
Swap $(u_n,r_n)$ and $(v_n,s_n)$ in the
above expressions to get the required result for cases (1-4) of
lemma \ref{lemCases}, i.e., when $t \in R_\mu^+$.
\end{proof}

\section{Existence of appropriate contours of descent/ascent}
\label{secCont}

We finally prove lemmas \ref{lemDesAsc1-12} and \ref{lemDesAsc1-12Rem}.

\subsection{Lemma \ref{lemDesAsc1-12} for case (1) of lemma \ref{lemCases}}
\label{secContCase1}

Assume the conditions of lemma \ref{lemDesAsc1-12}. Additionally assume that case
(1) of lemma \ref{lemCases} is satisfied. Fix $\xi>0$ sufficiently
small such that equations (\ref{eqxi}, \ref{eqxi1}, \ref{eqxi4}, \ref{eqxi5})
are satisfied. 

We begin by considering the roots of the functions $f_t'$, $f_n'$ and
$\tilde{f}_n'$ in this case. We consider $f_n'$ and state that $\tilde{f}_n'$
can be treated similarly. Recall the definitions given in equations
(\ref{eqS1S2S3}, \ref{eqf'domain2}, \ref{eqIntervalt},
\ref{eqfn2}, \ref{eqfn'domain}, \ref{eqIntervaln}), and
the properties discussed in equations (\ref{eqS1S2S3In}, \ref{eqS1nS2nS3nIn}).
Recall that $\underline{S_1} := \inf S_1$, $\overline{S_1} := \sup S_1$, etc
(see section \ref{secNot}). Then:
\begin{lem}
\label{lemfn'Case1}
Assume the above conditions. Then, $t \in J_1 = (\overline{S_1}, +\infty)$.
Moreover:
\begin{enumerate}
\item
$f_t'(s) > 0$ for all $s \in (\overline{S_1},t)$,
$f_t'(t) = f_t''(t) = 0$ and $f_t'''(t) > 0$, and $f_t'(s) > 0$
for all $s \in (t,+\infty)$.
\item
$f_t'$ has $0$ roots in each of $\{\C \setminus \R,J_2,J_3,J_4\}$.
\item
$f_t'$ has at most $1$ root in each of
$\cup_{i=1}^3 \{K_1^{(i)},K_2^{(i)},\ldots\}$.
\end{enumerate}

Next note that $t \in J_{1,n} = (\overline{S_{1,n}}, +\infty)$.
Indeed, fixing $\xi > 0$ as above, we have $t - 4\xi > \overline{S_1}$ and
$t - 2\xi > \overline{S_{1,n}}$. Also
$f_n'$ has $2$ roots in $B(t,\xi)$. We denote these by $\{t_{1,n},t_{2,n}\}$ as
in lemma \ref{lemRootsNonAsy1}, and we recall that one of the possibilities,
(a,b,c), discussed in that lemma must be satisfied. Then,
whenever possibility (a) is satisfied:
\begin{enumerate}
\item[(a1)]
$t_{1,n} \in (t-\xi,t+\xi)$ and $t_{1,n} = t_{2,n}$.
Moreover $f_n'(s) > 0$ for all $s \in (\overline{S_{1,n}},t_{1,n})$,
$f_n'(t_{1,n}) = f_n''(t_{1,n}) = 0$ and $f_n'''(t_{1,n}) > 0$,
and $f_n'(s) > 0$ for all $s \in (t_{1,n},+\infty)$.
\item[(a2)]
$f_n'$ has $0$ roots in each of
$\{\C \setminus \R, J_{2,n},J_{3,n},J_{4,n}\}$.
\item[(a3)]
$f_n'$ has $1$ root in each of
$\cup_{i=1}^3 \{K_{1,n}^{(i)},K_{2,n}^{(i)},\ldots\}$.
\end{enumerate}
Moreover, whenever possibility (b) is satisfied:
\begin{enumerate}
\item[(b1)]
$\{t_{1,n},t_{2,n}\} \subset (t-\xi,t+\xi)$ and $t_{1,n} > t_{2,n}$.
Moreover $f_n'(s) > 0$ for all $s \in (\overline{S_{1,n}},t_{2,n})$,
$f_n'(t_{2,n}) = 0$ and $f_n''(t_{2,n}) < 0$,
$f_n'(s) < 0$ for all $s \in (t_{2,n},t_{1,n})$,
$f_n'(t_{1,n}) = 0$ and $f_n''(t_{1,n}) > 0$, and 
$f_n'(s) > 0$ for all $s \in (t_{1,n},+\infty)$.
\item[(b2)]
$f_n'$ has $0$ roots in each of
$\{\C \setminus \R, J_{2,n},J_{3,n},J_{4,n}\}$.
\item[(b3)]
$f_n'$ has $1$ root in each of
$\cup_{i=1}^3 \{K_{1,n}^{(i)},K_{2,n}^{(i)},\ldots\}$.
\end{enumerate}
Finally, whenever possibility (c) is satisfied:
\begin{enumerate}
\item[(c1)]
$t_{1,n} \in B(t,\xi) \cap \mathbb{H}$ and $t_{2,n}$ is the complex conjugate of $t_{1,n}$.
Moreover, $f_n'(s) > 0$ for all $s \in (\overline{S_{1,n}},+\infty)$.
\item[(c2)]
$f_n'$ has $0$ roots in each of
$\{\C \setminus (\R \cup \{t_{1,n},t_{2,n}\}), J_{2,n},J_{3,n},J_{4,n}\}$
\item[(c3)]
$f_n'$ has $1$ root in each of
$\cup_{i=1}^3 \{K_{1,n}^{(i)},K_{2,n}^{(i)},\ldots\}$.
\end{enumerate}
\end{lem}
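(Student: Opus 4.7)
The first group of claims, concerning $f_t'$, will essentially be a restatement of Lemma \ref{lemf'} specialised to this case, together with a sign analysis. Since case (1) of Lemma \ref{lemCases} gives $L_t = J_1 = (\overline{S_1},+\infty)$, part (1) of Lemma \ref{lemf'} yields $f_t'(t) = f_t''(t) = 0$, $f_t'''(t) \ne 0$, and no other roots of $f_t'$ in $L_t$; the sign $f_t'''(t) > 0$ is part of case (1) itself. Parts (2) and (3) then come directly from parts (2) and (3) of Lemma \ref{lemf'}. For the sign statement in (1), the Taylor expansion $f_t'(s) = \tfrac12 f_t'''(t)(s-t)^2 + O((s-t)^3)$ gives $f_t'>0$ on both sides of $t$ locally, and continuity together with the absence of other real roots in $L_t$ upgrades this to strict positivity on $(\overline{S_1},t) \cup (t,+\infty)$.

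For the claims about $f_n'$, I would first apply Lemma \ref{lemHassConv} to identify $L_n = J_{1,n} = (\overline{S_{1,n}},+\infty)$, which immediately gives the inequalities $t-4\xi > \overline{S_1}$ and $t-2\xi > \overline{S_{1,n}}$. Lemma \ref{lemRootsNonAsy1} then supplies the total count of two roots in $B(t,\xi)$ and the decomposition into possibilities (a), (b), (c); parts (2) and (3) of that lemma, combined with the identification $L_n = J_{1,n}$, are exactly the statements (a2)--(a3), (b2)--(b3), (c2)--(c3).

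The remaining task is the sign/order analysis (a1), (b1), (c1) on the real line. The two boundary ingredients are that $f_n'(s) \to +\infty$ as $s \downarrow \overline{S_{1,n}}$ (from the simple pole in \eqref{eqfn'} with positive residue), and $f_n'(s) \sim \eta_n/s > 0$ as $s \to +\infty$ (by the asymptotic argument used in part (i) of the proof of Lemma \ref{lemCases}, since $\tfrac1n|S_{1,n}|-\tfrac1n|S_{2,n}|+\tfrac1n|S_{3,n}| \to \eta > 0$ by \eqref{eqS1nS2nS3nIn}). In case (a), $t_{1,n}$ is a double real root; Lemma \ref{lemNonAsyRoots} parts (3)--(4) give $f_n'''(t_{1,n}) \to f_t'''(t) > 0$, so the Taylor expansion forces $f_n' > 0$ on both sides of $t_{1,n}$, consistent with the two boundary signs. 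In case (c), there are no real roots at all in $L_n$ by (c2)--(c3), so the boundary signs force $f_n' > 0$ throughout. In case (b), the two simple real roots $t_{2,n} < t_{1,n}$ must produce two sign changes between the two positive boundary values, forcing $f_n' > 0$ on $(\overline{S_{1,n}},t_{2,n}) \cup (t_{1,n},+\infty)$ and $f_n' < 0$ on $(t_{2,n},t_{1,n})$, and the signs of $f_n''$ at the two roots are then read off as derivatives of $f_n'$ at simple sign-changing zeros.

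The content is essentially bookkeeping on top of Lemmas \ref{lemf'}, \ref{lemHassConv}, \ref{lemRootsNonAsy1}, and \ref{lemNonAsyRoots}; the only place that needs a small, independent argument is the boundary asymptotic $f_n'(s) \sim \eta_n/s$ as $s \to +\infty$, which is what pins down the correct sign in cases (b) and (c). I do not anticipate a genuine obstacle here, merely the need to carry out the three sub-cases in parallel.
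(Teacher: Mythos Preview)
Your proposal is correct and follows essentially the same route as the paper: reduce the $f_t'$ claims to Lemma~\ref{lemf'} and the sign of $f_t'''(t)$, identify $L_n = J_{1,n}$ via Lemma~\ref{lemHassConv}, and read off the root counts from Lemma~\ref{lemRootsNonAsy1}, leaving only a real-variable sign analysis on $J_{1,n}$ for (a1), (b1), (c1).

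One small simplification the paper makes: it does \emph{not} use the large-$s$ asymptotic $f_n'(s) \sim \eta_n/s$. It uses only the left-endpoint behaviour $f_n'(s) \to +\infty$ as $s \downarrow \overline{S_{1,n}}$, which comes directly from the simple pole in \eqref{eqfn'}. That single boundary sign, together with the exact root count in $J_{1,n}$ and the fact that simple (resp.\ double) real roots of a real-analytic function force a sign change (resp.\ no sign change), already determines the entire sign pattern in each of (a), (b), (c); in particular $f_n'''(t_{1,n}) > 0$ in case (a) is forced by the positivity of $f_n'$ just to the left of $t_{1,n}$, so you do not need to invoke the convergence $f_n'''(t_{1,n}) \to f_t'''(t)$ from Lemma~\ref{lemNonAsyRoots} (which, incidentally, would also require Lemma~\ref{lemfnftConv} to pass from $t$ to $t_{1,n}$). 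Your version with two boundary conditions is equally valid, just slightly more than needed.
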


\begin{proof}
Consider $f_t'$. Note, equations (\ref{eqf'domain2}, \ref{eqIntervalt}), and
case (1) of lemma \ref{lemCases}, give $t \in L_t = J_1 = (\overline{S_1}, +\infty)$. Also,
equation (\ref{equnrnvnsn}) gives $f_t'(t) = f_t''(t) = 0$, and case (1) of
lemma \ref{lemCases} gives $f_t'''(t) > 0$. Parts (1,2,3) then follow from
lemma \ref{lemf'}.

Consider $f_n'$. First note, since $L_t = J_1 = (\overline{S_1}, +\infty)$,
equation (\ref{eqxi}) gives $t - 4 \xi > \overline{S_1}$. Also,
equation (\ref{eqIntervaln}) gives
$t \in L_n = J_{1,n} = (\overline{S_{1,n}}, +\infty)$, and $t - 2\xi > \overline{S_{1,n}}$.
Also, part (1) of lemma \ref{lemRootsNonAsy1} implies that $f_n'$ has
$2$ roots in $B(t,\xi)$.

Consider part (b1). First note, since possibility (b) of lemma
\ref{lemRootsNonAsy1}, is satisfied, that $\{t_{1,n},t_{2,n}\}
\subset (t-\xi,t+\xi) \subset L_n = J_{1,n} = (\overline{S_{1,n}}, +\infty)$,
$t_{1,n} > t_{2,n}$, and $t_{1,n}$ and $t_{2,n}$ are roots of
$f_n'$ of multiplicity $1$. Next note, equation
(\ref{eqfn'}) implies that $(f_n') |_{J_{1,n}}$ is real-valued and continuous,
and
\begin{equation*}
\lim_{w \in \R, w \downarrow \overline{S_{1,n}}} f_n'(w) = + \infty.
\end{equation*}
Finally note that part (1) of lemma \ref{lemRootsNonAsy1} implies that
$f_n'$ has $0$ roots in $(t-\xi,t+\xi) \setminus \{t_{1,n},t_{2,n}\}$,
and $0$ roots in
$L_n \setminus (t-\xi,t+\xi) = J_{1,n} \setminus (t-\xi,t+\xi)
= (\overline{S_{1,n}}, t-\xi] \cup [t+\xi,+\infty)$. (b1) follows
from the above observations. Parts (b2,b3) similarly follow
from lemma \ref{lemRootsNonAsy1}, as do parts (a1,a2,a3) and (c1,c2,c3).
\end{proof}

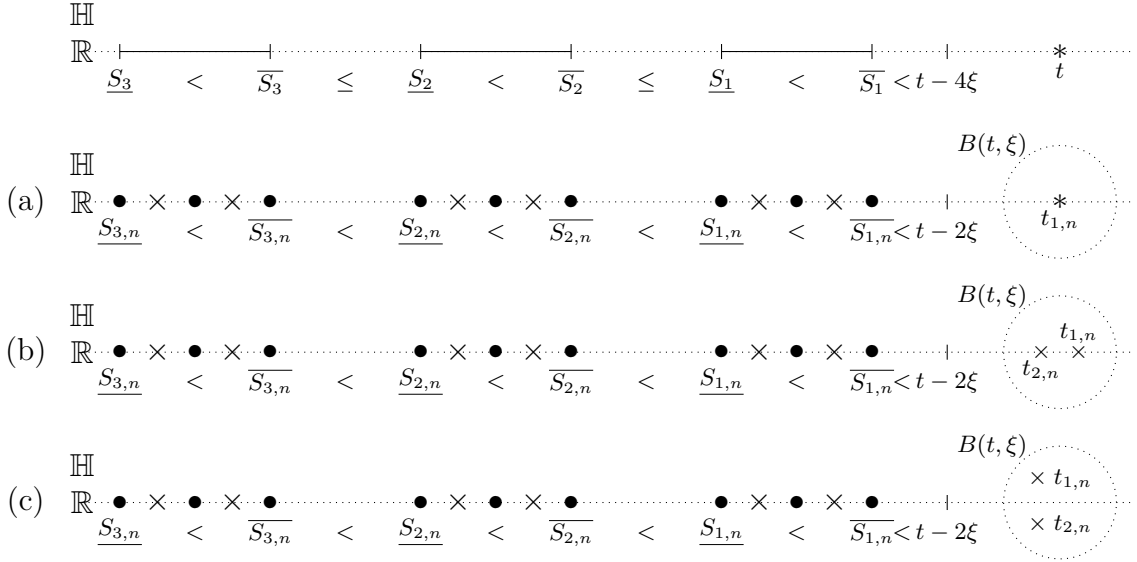
\begin{figure}[t]
\centering
\begin{tikzpicture}

\draw [dotted] (-.5,6) --++(14,0);
\draw (-.5,6) node {$\R$};
\draw (-.5,6.5) node {$\mathbb{H}$};

\draw (0,6.1) --++(0,-.2);
\draw (0,6) --++(2,0);
\draw (2,6.1) --++(0,-.2);
\draw (4,6.1) --++(0,-.2);
\draw (4,6) --++(2,0);
\draw (6,6.1) --++(0,-.2);
\draw (8,6.1) --++(0,-.2);
\draw (8,6) --++(2,0);
\draw (10,6.1) --++(0,-.2);
\draw (11,6.1) --++(0,-.2);

\draw (0,5.6) node {\scriptsize $\underline{S_3}$};
\draw (1,5.6) node {\scriptsize $<$};
\draw (2,5.6) node {\scriptsize $\overline{S_3}$};
\draw (3,5.6) node {\scriptsize $\le$};
\draw (4,5.6) node {\scriptsize $\underline{S_2}$};
\draw (5,5.6) node {\scriptsize $<$};
\draw (6,5.6) node {\scriptsize $\overline{S_2}$};
\draw (7,5.6) node {\scriptsize $\le$};
\draw (8,5.6) node {\scriptsize $\underline{S_1}$};
\draw (9,5.6) node {\scriptsize $<$};
\draw (10,5.6) node {\scriptsize $\overline{S_1}$};
\draw (10.4,5.6) node {\scriptsize $<$};
\draw (11,5.6) node {\scriptsize $t-4\xi$};

\draw (12.5,6) node {$\ast$};
\draw (12.5,5.75) node {\scriptsize $t$};

\draw [dotted] (-.5,4) --++(14,0);
\draw (-.5,4) node {$\R$};
\draw (-.5,4.5) node {$\mathbb{H}$};
\draw (-1.25,4) node {(a)};

\draw (0,4) node {$\bullet$};
\draw (.5,4) node {$\times$};
\draw (1,4) node {$\bullet$};
\draw (1.5,4) node {$\times$};
\draw (2,4) node {$\bullet$};
\draw (4,4) node {$\bullet$};
\draw (4.5,4) node {$\times$};
\draw (5,4) node {$\bullet$};
\draw (5.5,4) node {$\times$};
\draw (6,4) node {$\bullet$};
\draw (8,4) node {$\bullet$};
\draw (8.5,4) node {$\times$};
\draw (9,4) node {$\bullet$};
\draw (9.5,4) node {$\times$};
\draw (10,4) node {$\bullet$};
\draw (11,4.1) --++(0,-.2);

\draw (0,3.6) node {\scriptsize $\underline{S_{3,n}}$};
\draw (1,3.6) node {\scriptsize $<$};
\draw (2,3.6) node {\scriptsize $\overline{S_{3,n}}$};
\draw (3,3.6) node {\scriptsize $<$};
\draw (4,3.6) node {\scriptsize $\underline{S_{2,n}}$};
\draw (5,3.6) node {\scriptsize $<$};
\draw (6,3.6) node {\scriptsize $\overline{S_{2,n}}$};
\draw (7,3.6) node {\scriptsize $<$};
\draw (8,3.6) node {\scriptsize $\underline{S_{1,n}}$};
\draw (9,3.6) node {\scriptsize $<$};
\draw (10,3.6) node {\scriptsize $\overline{S_{1,n}}$};
\draw (10.4,3.6) node {\scriptsize $<$};
\draw (11,3.6) node {\scriptsize $t-2\xi$};

\draw [dotted] (12.5,4) circle (.75cm);
\draw (11.6,4.75) node {\scriptsize $B(t,\xi)$};
\draw (12.5,4) node {$\ast$};
\draw (12.5,3.75) node {\scriptsize $t_{1,n}$};

\draw [dotted] (-.5,2) --++(14,0);
\draw (-.5,2) node {$\R$};
\draw (-.5,2.5) node {$\mathbb{H}$};
\draw (-1.25,2) node {(b)};

\draw (0,2) node {$\bullet$};
\draw (.5,2) node {$\times$};
\draw (1,2) node {$\bullet$};
\draw (1.5,2) node {$\times$};
\draw (2,2) node {$\bullet$};
\draw (4,2) node {$\bullet$};
\draw (4.5,2) node {$\times$};
\draw (5,2) node {$\bullet$};
\draw (5.5,2) node {$\times$};
\draw (6,2) node {$\bullet$};
\draw (8,2) node {$\bullet$};
\draw (8.5,2) node {$\times$};
\draw (9,2) node {$\bullet$};
\draw (9.5,2) node {$\times$};
\draw (10,2) node {$\bullet$};
\draw (11,2.1) --++(0,-.2);

\draw (0,1.6) node {\scriptsize $\underline{S_{3,n}}$};
\draw (1,1.6) node {\scriptsize $<$};
\draw (2,1.6) node {\scriptsize $\overline{S_{3,n}}$};
\draw (3,1.6) node {\scriptsize $<$};
\draw (4,1.6) node {\scriptsize $\underline{S_{2,n}}$};
\draw (5,1.6) node {\scriptsize $<$};
\draw (6,1.6) node {\scriptsize $\overline{S_{2,n}}$};
\draw (7,1.6) node {\scriptsize $<$};
\draw (8,1.6) node {\scriptsize $\underline{S_{1,n}}$};
\draw (9,1.6) node {\scriptsize $<$};
\draw (10,1.6) node {\scriptsize $\overline{S_{1,n}}$};
\draw (10.4,1.6) node {\scriptsize $<$};
\draw (11,1.6) node {\scriptsize $t-2\xi$};

\draw [dotted] (12.5,2) circle (.75cm);
\draw (11.6,2.75) node {\scriptsize $B(t,\xi)$};
\draw (12.75,2) node {\scriptsize $\times$};
\draw (12.75,2.25) node {\scriptsize $t_{1,n}$};
\draw (12.25,2) node {\scriptsize $\times$};
\draw (12.25,1.75) node {\scriptsize $t_{2,n}$};

\draw [dotted] (-.5,0) --++(14,0);
\draw (-.5,0) node {$\R$};
\draw (-.5,.5) node {$\mathbb{H}$};
\draw (-1.25,0) node {(c)};

\draw (0,0) node {$\bullet$};
\draw (.5,0) node {$\times$};
\draw (1,0) node {$\bullet$};
\draw (1.5,0) node {$\times$};
\draw (2,0) node {$\bullet$};
\draw (4,0) node {$\bullet$};
\draw (4.5,0) node {$\times$};
\draw (5,0) node {$\bullet$};
\draw (5.5,0) node {$\times$};
\draw (6,0) node {$\bullet$};
\draw (8,0) node {$\bullet$};
\draw (8.5,0) node {$\times$};
\draw (9,0) node {$\bullet$};
\draw (9.5,0) node {$\times$};
\draw (10,0) node {$\bullet$};
\draw (11,.1) --++(0,-.2);

\draw (0,-.4) node {\scriptsize $\underline{S_{3,n}}$};
\draw (1,-.4) node {\scriptsize $<$};
\draw (2,-.4) node {\scriptsize $\overline{S_{3,n}}$};
\draw (3,-.4) node {\scriptsize $<$};
\draw (4,-.4) node {\scriptsize $\underline{S_{2,n}}$};
\draw (5,-.4) node {\scriptsize $<$};
\draw (6,-.4) node {\scriptsize $\overline{S_{2,n}}$};
\draw (7,-.4) node {\scriptsize $<$};
\draw (8,-.4) node {\scriptsize $\underline{S_{1,n}}$};
\draw (9,-.4) node {\scriptsize $<$};
\draw (10,-.4) node {\scriptsize $\overline{S_{1,n}}$};
\draw (10.4,-.4) node {\scriptsize $<$};
\draw (11,-.4) node {\scriptsize $t-2\xi$};

\draw [dotted] (12.5,0) circle (.75cm);
\draw (11.6,.75) node {\scriptsize $B(t,\xi)$};
\draw (12.5,.3) node {\scriptsize $\times \; t_{1,n}$};
\draw (12.5,-.3) node {\scriptsize $\times \; t_{2,n}$};

\end{tikzpicture}
\caption{Top: $T_t$, the set of roots of $f_t'$, as described by lemma
\ref{lemfn'Case1}. Note, for each $i \in \{1,2,3\}$, $S_i$ does not
necessarily equal $[\underline{S_i}, \overline{S_i}]$, and subintervals
of $[\underline{S_i}, \overline{S_i}] \setminus S_i$ contain at most
$1$ root. (a,b,c): $T_n$, the set of roots of
$f_n'$, as described by lemma \ref{lemfn'Case1} for possibilities (a,b,c).
Roots of multiplicity $1$ and $2$ are represented by $\times$ and $\ast$
respectively, and elements of $S_n = S_{1,n} \cup S_{2,n} \cup S_{3,n}$
are represented by $\bullet$. }
\label{figSnC_xi1}
\end{figure}

\begin{figure}[t]
\centering
\begin{tikzpicture}[scale=0.8];

\draw [dashed] (-.5,13) --++(0,4);
\draw (-.5,12.7) node {\scriptsize $\overline{S_1}$};

\draw plot [smooth, tension=1] coordinates
{ (0,13.5) (1,14.55) (2.25,15) (3.4,16.3) (4.5,17)};
\draw [dotted] (0,13.5) --++(-.3,-.5);
\draw [dotted] (4.5,17) --++(.6,.25);

\draw [dashed] (1.6,13) --++(0,3);
\draw (1.6,12.7) node {\scriptsize $t$};
\draw (1.6,16.3) node {\scriptsize $R_t'''(t) > 0$};
\draw (1.6,16.9) node {\scriptsize $R_t'(t) = R_t''(t) = 0$};

\draw [dashed] (-.5,8) --++(0,4);
\draw (-.5,7.7) node {\scriptsize $\overline{S_{1,n}}$};
\draw (-1.5,10) node {$(a)$};

\draw plot [smooth, tension=1] coordinates
{ (0,8.5) (1,9.55) (2.25,10) (3.4,11.3) (4.5,12)};
\draw [dotted] (0,8.5) --++(-.3,-.5);
\draw [dotted] (4.5,12) --++(.6,.25);

\draw [dashed] (1.6,8) --++(0,3);
\draw (1.6,7.7) node {\scriptsize $t_{1,n}$};
\draw (1.6,11.3) node {\scriptsize $R_n'''(t_{1,n}) > 0$};
\draw (1.7,11.9) node {\scriptsize $R_n'(t_{1,n}) = R_n''(t_{1,n}) = 0$};

\draw [dashed] (-.5,4) --++(0,3);
\draw (-.5,3.7) node {\scriptsize $\overline{S_{1,n}}$};
\draw (-1.5,5.5) node {$(b)$};

\draw plot [smooth, tension=1] coordinates
{ (0,4) (1.5,6) (3,4.5) (4.5,6.5)};
\draw [dotted] (0,4) --++(-.2,-.5);
\draw [dotted] (4.5,6.5) --++(.2,.5);

\draw [dashed] (1.6,4) --++(0,2.4);
\draw (1.6,3.7) node {\scriptsize $t_{2,n}$};
\draw (1,7.1) node {\scriptsize $R_n''(t_{2,n}) < 0$};
\draw (1,6.6) node {\scriptsize $R_n'(t_{2,n}) = 0$};

\draw [dashed] (2.9,4) --++(0,2.4);
\draw (2.9,3.7) node {\scriptsize $t_{1,n}$};
\draw (3.4,7.1) node {\scriptsize $R_n''(t_{1,n}) > 0$};
\draw (3.4,6.6) node {\scriptsize $R_n'(t_{1,n}) = 0$};

\draw [dashed] (-.5,0) --++(0,3);
\draw (-.5,-.4) node {\scriptsize $\overline{S_{1,n}}$};
\draw (-1.5,1.5) node {$(c)$};

\draw plot [smooth, tension=1] coordinates
{ (0,0) (2,1.75) (4.5,2.5)};
\draw [dotted] (0,0) --++(-.3,-.4);
\draw [dotted] (4.5,2.5) --++(.5,.05);

\draw (1.5,2.5) node {\scriptsize $R_n'(s) > 0 \text{ for all } s$};

\draw (13.2,16.5) node {$\mathbb{H}$};
\draw [dotted] (5,15) --++(8,0);
\draw (13.2,15) node {$\R$};

\draw[arrows=->,line width=.75pt](9,15)--(11,15);
\draw (11.3,15) node {a};
\draw[arrows=->,line width=.75pt](9,15)--(10,{15+sqrt(3)});
\draw (10.1,16.95) node {d};
\draw[arrows=->,line width=.75pt](9,15)--(8,{15+sqrt(3)});
\draw (7.9,16.95) node {a};
\draw[arrows=->,line width=.75pt](9,15)--(7,15);
\draw (6.7,15) node {d};
\draw[arrows=->,line width=.75pt](9,15)--(8,{15-sqrt(3)});
\draw (7.9,13.05) node {a};
\draw[arrows=->,line width=.75pt](9,15)--(10,{15-sqrt(3)});
\draw (10.1,13.05) node {d};

\fill[white] (9,15) circle (.35cm);
\draw (9,15) node {\scriptsize $t$};

\draw [dotted] (9,15) circle (1cm);
\draw (9.55,15.35) node {\scriptsize $\frac\pi3$};
\draw (9,15.6) node {\scriptsize $\frac\pi3$};
\draw (8.45,15.35) node {\scriptsize $\frac\pi3$};
\draw (9,14.4) node {\scriptsize $\frac\pi3$};
\draw (8.45,14.65) node {\scriptsize $\frac\pi3$};
\draw (9.55,14.65) node {\scriptsize $\frac\pi3$};

\draw (13.2,11.5) node {$\mathbb{H}$};
\draw [dotted] (5,10) --++(8,0);
\draw (13.2,10) node {$\R$};

\draw[arrows=->,line width=.75pt](9,10)--(11,10);
\draw (11.3,10) node {a};
\draw[arrows=->,line width=.75pt](9,10)--(10,{10+sqrt(3)});
\draw (10.1,11.95) node {d};
\draw[arrows=->,line width=.75pt](9,10)--(8,{10+sqrt(3)});
\draw (7.9,11.95) node {a};
\draw[arrows=->,line width=.75pt](9,10)--(7,10);
\draw (6.7,10) node {d};
\draw[arrows=->,line width=.75pt](9,10)--(8,{10-sqrt(3)});
\draw (7.9,8.05) node {a};
\draw[arrows=->,line width=.75pt](9,10)--(10,{10-sqrt(3)});
\draw (10.1,8.05) node {d};

\fill[white] (9,10) circle (.35cm);
\draw (9,10) node {\scriptsize $t_{1,n}$};

\draw [dotted] (9,10) circle (1cm);
\draw (9.55,10.35) node {\scriptsize $\frac\pi3$};
\draw (9,10.6) node {\scriptsize $\frac\pi3$};
\draw (8.45,10.35) node {\scriptsize $\frac\pi3$};
\draw (9,9.4) node {\scriptsize $\frac\pi3$};
\draw (8.45,9.65) node {\scriptsize $\frac\pi3$};
\draw (9.55,9.65) node {\scriptsize $\frac\pi3$};

\draw (13.2,6.5) node {$\mathbb{H}$};
\draw [dotted] (5,5) --++(8,0);
\draw (13.2,5) node {$\R$};

\draw[arrows=->,line width=.75pt](7,5)--(8.5,5);
\draw (8.7,5) node {d};
\draw[arrows=->,line width=.75pt](7,5)--(7,6.5);
\draw (7,6.7) node {a};
\draw[arrows=->,line width=.75pt](7,5)--(5.5,5);
\draw (5.3,5) node {d};
\draw[arrows=->,line width=.75pt](7,5)--(7,3.5);
\draw (7,3.3) node {a};

\fill[white] (7,5) circle (.35cm);
\draw (7,5) node {\scriptsize $t_{2,n}$};

\draw [dotted] (7,5) circle (1cm);
\draw (7.5,5.4) node {\scriptsize $\frac\pi2$};
\draw (6.5,5.4) node {\scriptsize $\frac\pi2$};
\draw (6.5,4.6) node {\scriptsize $\frac\pi2$};
\draw (7.5,4.6) node {\scriptsize $\frac\pi2$};

\draw[arrows=->,line width=.75pt](11,5)--(12.5,5);
\draw (12.7,5) node {a};
\draw[arrows=->,line width=.75pt](11,5)--(11,6.5);
\draw (11,6.7) node {d};
\draw[arrows=->,line width=.75pt](11,5)--(9.5,5);
\draw (9.3,5) node {a};
\draw[arrows=->,line width=.75pt](11,5)--(11,3.5);
\draw (11,3.3) node {d};

\fill[white] (11,5) circle (.35cm);
\draw (11,5) node {\scriptsize $t_{1,n}$};

\draw [dotted] (11,5) circle (1cm);
\draw (11.5,5.4) node {\scriptsize $\frac\pi2$};
\draw (10.5,5.4) node {\scriptsize $\frac\pi2$};
\draw (10.5,4.6) node {\scriptsize $\frac\pi2$};
\draw (11.5,4.6) node {\scriptsize $\frac\pi2$};

\draw (13.2,1) node {$\mathbb{H}$};
\draw [dotted] (5,-.6) --++(8,0);
\draw (13.2,-.6) node {$\R$};

\draw[arrows=->,line width=.75pt](9,1)--(9+sqrt{9/2},{1+sqrt{9/2}});
\draw (10.65,2.65) node {d};
\draw[arrows=->,line width=.75pt](9,1)--(9-sqrt{9/2},{1+sqrt{9/2}});
\draw (7.35,2.65) node {a};
\draw[arrows=->,line width=.75pt](9,1)--(9-sqrt{9/2},{1-sqrt{9/2}});
\draw (7.35,-.65) node {d};
\draw[arrows=->,line width=.75pt](9,1)--(9+sqrt{9/2},{1-sqrt{9/2}});
\draw (10.65,-.65) node {a};

\fill[white] (9,1) circle (.35cm);
\draw (9,1) node {\scriptsize $t_{1,n}$};

\draw [dotted] (9,1) circle (1cm);
\draw (9.6,1) node {\scriptsize $\frac\pi2$};
\draw (9,1.6) node {\scriptsize $\frac\pi2$};
\draw (8.4,1) node {\scriptsize $\frac\pi2$};
\draw (9,.4) node {\scriptsize $\frac\pi2$};

\end{tikzpicture}
\caption{
Top left: $R_t |_{J_1}$. Top right: The directions of steepest decent/ascent
for $f_t$ at $t$. `a' represents ascent and `d' represents descent. (a,b,c)
left: $R_n |_{J_{1,n}}$ for possibilities (a,b,c) of lemma \ref{lemfn'Case1}.
(a,b,c) right: The directions of steepest decent/ascent for $f_n$ at
$t_{1,n}/t_{2,n}$.}
\label{figConDirCase1}
\end{figure}
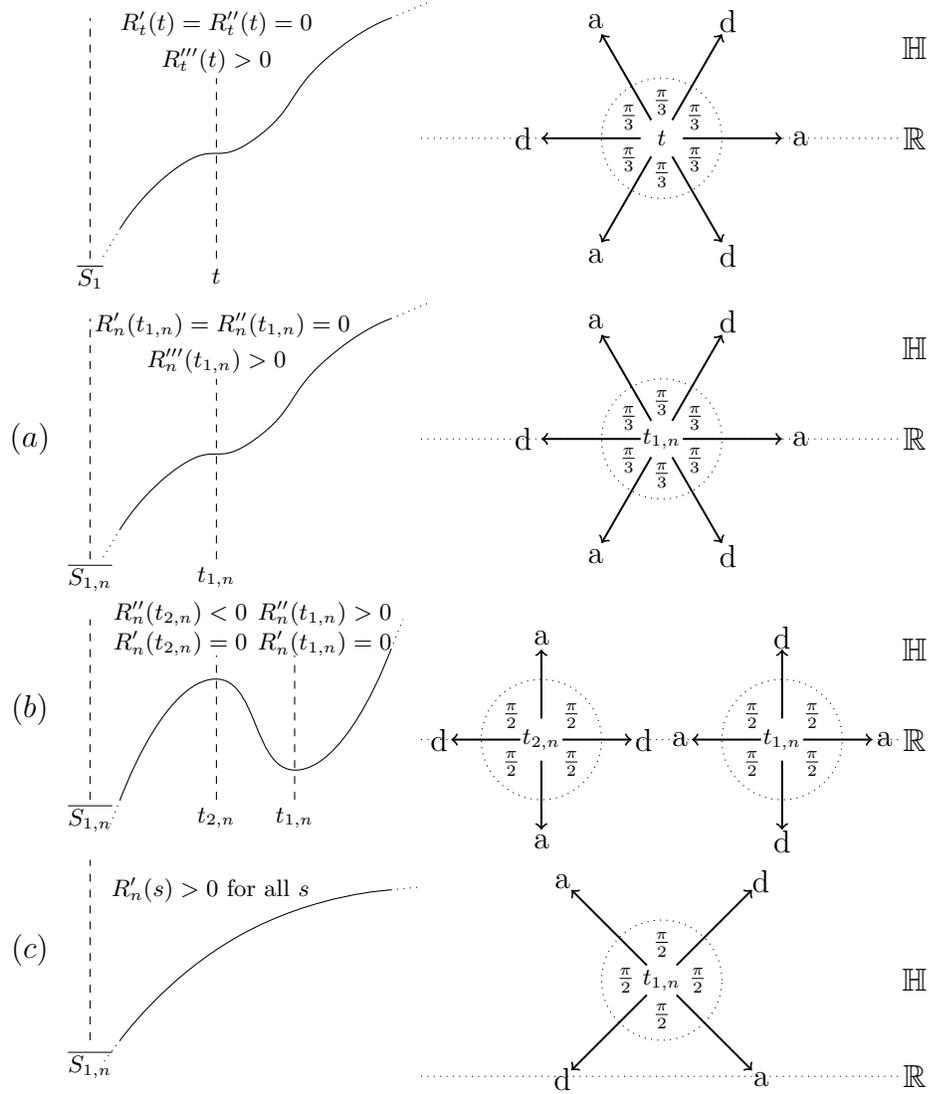

Next, for convenience, define,
\begin{equation}
\label{eqRootsTn}
T_t := \text{ Set of roots of } f_t'
\hspace{.5cm} \text{and} \hspace{.5cm}
T_n : = \text{ Set of roots of } f_n'.
\end{equation}
Then, recalling that $f_t' : \C \setminus S \to \C$ and $f_n' : \C \setminus S_n \to \C$
are analytic, $T_t$ and $T_n$ are discrete subsets of $\C \setminus S$ and
$\C \setminus S_n$ respectively. The previous lemma discusses the locations
of the elements of these sets, and this is depicted in figure \ref{figSnC_xi1}. 
Note that,
\begin{equation}
\label{eqRootsT1nT2n}
T_t = T_{1,t} \cup T_{2,t} \cup T_{3,t} \cup \{t\}
\hspace{.5cm} \text{and} \hspace{.5cm}
T_n = T_{1,n} \cup T_{2,n} \cup T_{3,n} \cup \{t_{1,n},t_{2,n}\},
\end{equation}
where $T_{i,t}$ is the set of roots of $f_t'$ in
$[\underline{S_i}, \overline{S_i}] \setminus S_i$,
and similarly for $T_{i,n}$.

Next recall equation (\ref{eqRnRes}) gives,
\begin{equation*}
(R_t |_{J_1})' = (f_t') |_{J_1}
\hspace{.5cm} \text{and} \hspace{.5cm}
(R_n |_{J_{1,n}})' = (f_n') |_{J_{1,n}},
\end{equation*}
and similarly for the higher order derivatives. Above, $R_t$ is the
real-part of $f_t$ and $R_n$ is the real-part of $f_n$ (see equations
(\ref{eqRt}, \ref{eqRn})). The functions on the LHSs
are the `real-derivatives' of $R_t |_{J_1}$ and $R_n |_{J_{1,n}}$,
and the functions on the RHSs are those given in equations
(\ref{eqft'2}, \ref{eqfn'}) restricted to $J_1$ and $J_{1,n}$
respectively. Part (1) of lemma \ref{lemfn'Case1} then implies that $R_t |_{J_1}$
has the behaviour shown on the top left of figure \ref{figConDirCase1}.
Also, parts (a1,b1,c1) of lemma \ref{lemfn'Case1} imply that
$R_n |_{J_{1,n}}$ have those behaviours shown on the left of figure
\ref{figConDirCase1} for the possibilities (a,b,c). Part (3) of lemma
\ref{lemDesAsc} also shows that $f_t$ and $f_n$ have those
directions of steepest descent/ascent shown on the right of figure
\ref{figConDirCase1}. The following lemma examines some of the resulting
contours of steepest descent/ascent:
\begin{lem}
\label{lemConAscDesCase1}
There exists simple contours, $D_t$ and $A_t$, as shown on
the top of figure \ref{figConDesAscCase1} with the following properties:
\begin{enumerate}
\item
$D_t$ and $A_t$ both start at $t$, enter $\mathbb{H}$
in the directions $\frac\pi3$ and $\frac{2\pi}3$ respectively,
end in the intervals shown, and are otherwise
contained in $\mathbb{H}$.
\item
$D_t$ and $A_t$ are contours of steepest descent and ascent
(respectively) for $f_t$.
\item
$D_t$ and $A_t$ do not intersect except at $t$.
\end{enumerate}

Also, whenever possibility (a) is
satisfied, there exists simple contours, $D_n$ and $A_n$, as shown
in figure \ref{figConDesAscCase1} with the following properties:
\begin{enumerate}
\item[(a1)]
$D_n$ and $A_n$ both start at $t_{1,n}$, enter $\mathbb{H}$
in the directions $\frac\pi3$ and $\frac{2\pi}3$ respectively,
end in the intervals shown, and are otherwise
contained in $\mathbb{H}$.
\item[(a2)]
$D_n$ and $A_n$ are contours of steepest descent and ascent
(respectively) for $f_n$.
\item[(a3)]
$D_n$ and $A_n$ do not intersect except at $t_{1,n}$.
\end{enumerate}
Next, whenever possibility (b) is
satisfied, there exists simple contours, $D_n$ and $A_n$, as shown
in figure \ref{figConDesAscCase1} with the following properties:
\begin{enumerate}
\item[(b1)]
$D_n$ and $A_n$ start at $t_{1,n}$ and $t_{2,n}$ respectively, both
enter $\mathbb{H}$ in the direction $\frac\pi2$, end in the
intervals shown, and are otherwise contained in $\mathbb{H}$.
\item[(b2)]
$D_n$ and $A_n$ are contours of steepest descent and ascent
(respectively) for $f_n$.
\item[(b3)]
$D_n$ and $A_n$ do not intersect.
\end{enumerate}
Next, whenever possibility (c) is
satisfied, there exists simple contours, $D_n,A_n,D_n',A_n'$, as shown
in figure \ref{figConDesAscCase1} with the following properties:
\begin{enumerate}
\item[(c1)]
$D_n,A_n,D_n',A_n'$ all start at $t_{1,n} \in \mathbb{H}$, leave $t_{1,n}$
in orthogonal directions in the counter-clockwise order $D_n,A_n,D_n',A_n'$,
end in the intervals shown or are unbounded, and are
otherwise contained in $\mathbb{H}$.
\item[(c2)]
$D_n,D_n'$ are contours of steepest descent for $f_n$,
and $A_n,A_n'$ are contours of steepest ascent for $f_n$.
\item[(c3)]
$D_n,D_n',A_n,A_n'$ do not intersect except at $t_{1,n}$.
\end{enumerate}
\end{lem}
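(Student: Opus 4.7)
The plan is to construct each contour as a connected component of the level set $\{w \in \text{cl}(\mathbb{H}) : \text{Im}(f_t(w)) = c\}$ (respectively $\text{Im}(f_n(w)) = c$) for the appropriate constant $c$, which by standard steepest descent theory coincides with a smooth steepest descent/ascent trajectory of $R_t$ (respectively $R_n$) in the sense of lemma \ref{lemDesAsc}. First I would perform a local Taylor analysis at each starting critical point to identify the correct tangent direction, then extend each curve maximally in $\mathbb{H}$, and finally pin down the terminal real point via continuity of $\text{Im}(f_t) = I_t$ on $\R$ (equation (\ref{eqIt}), figure \ref{figImfnExt}).

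For $D_t$ and $A_t$: by lemma \ref{lemfn'Case1}(1), $f_t(w) - f_t(t) = \tfrac{1}{6}(w-t)^3 f_t'''(t) + O(|w-t|^4)$ with $f_t'''(t) > 0$, so the level set $\{\text{Im}(f_t) = I_t(t) = 0\}$ has six analytic branches through $t$ at angles $k\pi/3$, alternating descent (at $\pi/3, \pi, 5\pi/3$) and ascent (at $0, 2\pi/3, 4\pi/3$). I take $D_t$ to be the component entering $\mathbb{H}$ in direction $\pi/3$ and $A_t$ the one in direction $2\pi/3$. Since $f_t'$ has no zeros in $\C \setminus \R$ by lemma \ref{lemfn'Case1}(2), the level set is a regular real-analytic 1-manifold in $\mathbb{H}$, so each component is an embedded smooth arc. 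Parts (4,5) of lemma \ref{lemDesAsc} show $D_t$ is bounded and disjoint from neighborhoods of $S_2$; an analogous asymptotic argument, using $f_t(w) \sim \eta \log w$ at infinity with $\eta \in (0,1)$, bounds $A_t$. Hence each arc terminates at a real point in $I_t^{-1}(0)$, and inspection of $I_t$ (figure \ref{figImfnExt}) together with the monotonicity of $R_t$ on each sub-interval of $\R \setminus S$ identifies the terminal intervals shown. Non-intersection except at $t$ is automatic: along $D_t$ we have $R_t < R_t(t)$, while along $A_t$ we have $R_t > R_t(t)$.

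The $n$-case is treated analogously. Possibility (a) is identical to the $t$-case since the local critical structure at $t_{1,n}$ is again cubic (lemma \ref{lemfn'Case1}(a1)), and (a2,a3) exclude obstructing critical points in $\mathbb{H}$. For (b), each simple zero contributes four level-set branches at angles $k\pi/2$: at $t_{1,n}$ (with $f_n'' > 0$) descent lies along $\pm \pi/2$, and at $t_{2,n}$ (with $f_n'' < 0$) ascent lies along $\pm \pi/2$; taking the upward branches yields $D_n$ and $A_n$, each extended as before. Non-intersection follows because the profile in lemma \ref{lemfn'Case1}(b1) gives $R_n(t_{2,n}) > R_n(t_{1,n})$, so along $D_n$ one has $R_n < R_n(t_{1,n}) < R_n(t_{2,n})$ while along $A_n$ one has $R_n > R_n(t_{2,n})$. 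For (c) the interior simple zero $t_{1,n} \in \mathbb{H}$ yields four orthogonal level-set branches alternating descent/ascent, giving the four curves $D_n, A_n, D_n', A_n'$; the claimed counter-clockwise angular ordering follows directly from the local quadratic expansion, and one or two of these contours may be unbounded as permitted by the statement.

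The main technical obstacle will be rigorously confirming that each extended contour lands in the correct interval, rather than looping or terminating elsewhere. The core of this step is the global topology of the level sets $\{\text{Im}(f_t) = c\}$ in $\text{cl}(\mathbb{H})$: by lemma \ref{lemfn'Case1} the only critical points in $\mathbb{H}$ (respectively in $\mathbb{H} \cap B(t,\xi)$ for the $n$-case) are those we explicitly start from, so each connected component away from these is a smooth embedded arc whose endpoints lie in $\R \cup \{\infty\}$. Infinity is excluded for descent by lemma \ref{lemDesAsc}(4); singularities in $S_1 \cup S_3$ are excluded for ascent and $S_2$ for descent by lemma \ref{lemDesAsc}(5); the remaining candidate endpoints are then forced to lie in specific intervals by the monotonicity and boundary values of $R_t$ (or $R_n$). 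The case bookkeeping across (a), (b), (c) is routine but requires care.
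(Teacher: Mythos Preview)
Your overall strategy is the same as the paper's: define $D_t,A_t,D_n,A_n,\ldots$ as steepest descent/ascent trajectories from the relevant critical points, use the local Taylor expansion to read off the initial directions, and then argue globally that each trajectory must terminate on the correct real interval. The non-intersection arguments you give (via strict monotonicity of $R_t$, $R_n$ along the contours) are exactly what the paper uses.

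There is, however, a genuine gap in how you handle termination. Your claim that the asymptotic $f_t(w)\sim\eta\log w$ ``bounds $A_t$'' is wrong as stated: since $R_t(w)\sim\eta\log|w|\to+\infty$, the real part \emph{grows} at infinity, which is perfectly compatible with an ascent contour escaping. Indeed in possibility~(c) for $f_n$ one of the ascent contours \emph{is} unbounded. The paper does not bound $A_t$ a priori; instead it first pins down the endpoint of $D_t$ and then rules out all other fates for $A_t$ (including unboundedness) by contradiction: if $A_t$ were unbounded or ended in the wrong interval, it would be forced to cross $D_t$. Second, your endpoint identification ``by the monotonicity and boundary values of $R_t$'' is not how the paper proceeds, and it is not clear it suffices. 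The paper uses the imaginary part $I_t$ on $\R$: for instance, to exclude $D_t$ ending in $[\underline{S_1},\overline{S_1})$ one notes $I_t>0$ there while $\text{Im}(f_t)\equiv I_t(t)=0$ along $D_t$. More delicately, to exclude $D_t$ ending in $[\underline{S_2},\overline{S_2})$ the paper argues that $D_t$, $A_t$, and a real subinterval would then bound a region on whose boundary $\text{Im}(f_t)$ is constant, forcing (by harmonicity) $\text{Im}(f_t)$ to be constant on the interior, a contradiction. This harmonic-function step is the heart of the $f_t$ argument and is missing from your sketch. Finally, in possibility~(c) the counter-clockwise order $D_n,A_n,D_n',A_n'$ does \emph{not} follow from the local quadratic expansion alone: the local expansion only tells you descent and ascent branches alternate; which descent branch is $D_n$ (the one ending in $S_{3,n}\cup T_{3,n}$) versus $D_n'$ is determined by global topology, and the paper again argues by contradiction that the wrong cyclic order would force an illegal crossing.
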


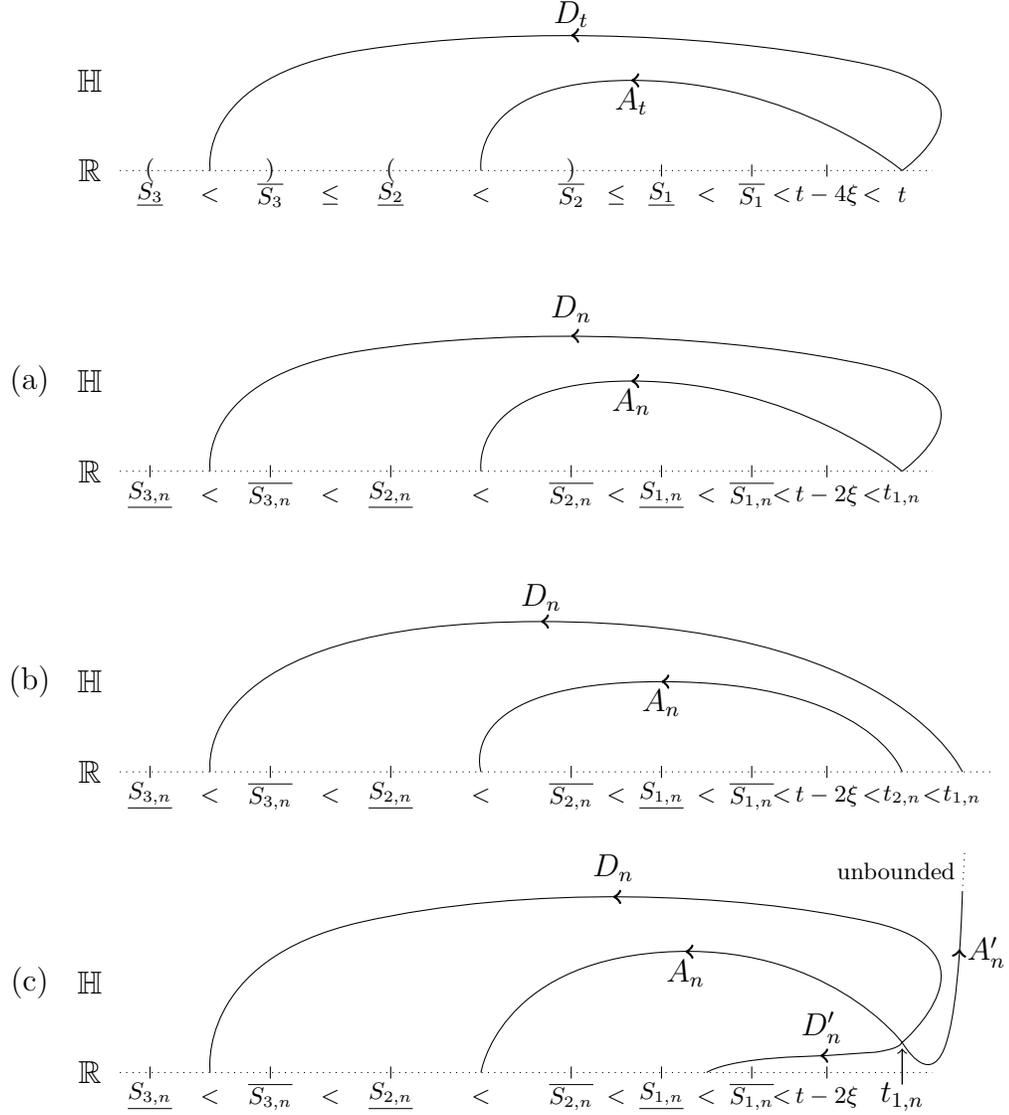
\begin{figure}[t]
\centering
\begin{tikzpicture}[scale=0.8];

\draw (-1.5,16.5) node {$\mathbb{H}$};
\draw [dotted] (-1,15) --++(13.5,0);
\draw (-1.5,15) node {$\R$};
\draw (-.5,15) node {\scriptsize $($};
\draw (1.5,15) node {\scriptsize $)$};
\draw (3.5,15) node {\scriptsize $($};
\draw (6.5,15) node {\scriptsize $)$};
\draw (8,14.9) --++(0,.2);
\draw (9.5,14.9) --++(0,.2);
\draw (10.75,14.9) --++(0,.2);

\draw (-.5,14.6) node {\scriptsize $\underline{S_3}$};
\draw (.5,14.6) node {\scriptsize $<$};
\draw (1.5,14.6) node {\scriptsize $\overline{S_3}$};
\draw (2.5,14.6) node {\scriptsize $\le$};
\draw (3.5,14.6) node {\scriptsize $\underline{S_2}$};
\draw (5,14.6) node {\scriptsize $<$};
\draw (6.5,14.6) node {\scriptsize $\overline{S_2}$};
\draw (7.25,14.6) node {\scriptsize $\le$};
\draw (8,14.6) node {\scriptsize $\underline{S_1}$};
\draw (8.75,14.6) node {\scriptsize $<$};
\draw (9.5,14.6) node {\scriptsize $\overline{S_1}$};
\draw (10,14.6) node {\scriptsize $<$};
\draw (10.75,14.6) node {\scriptsize $t-4\xi$};
\draw (11.5,14.6) node {\scriptsize $<$};
\draw (12,14.6) node {\scriptsize $t$};

\draw plot [smooth, tension=.9] coordinates
{ (12,15) (11.5,16.75) (3,17) (.5,15) };
\draw[arrows=->,line width=1pt](6.51,17.25)--(6.5,17.25);
\draw (6.5,17.6) node {$D_t$};
\draw plot [smooth, tension=1.4] coordinates
{ (12,15) (7.5,16.5) (5,15) };
\draw[arrows=->,line width=1pt](7.51,16.5)--(7.5,16.5);
\draw (7.5,16.15) node {$A_t$};

\draw (-2.5,11.5) node {(a)};
\draw (-1.5,11.5) node {$\mathbb{H}$};
\draw [dotted] (-1,10) --++(13.5,0);
\draw (-1.5,10) node {$\R$};
\draw (-.5,9.9) --++(0,.2);
\draw (1.5,9.9) --++(0,.2);
\draw (3.5,9.9) --++(0,.2);
\draw (6.5,9.9) --++(0,.2);
\draw (8,9.9) --++(0,.2);
\draw (9.5,9.9) --++(0,.2);
\draw (10.75,9.9) --++(0,.2);

\draw (-.5,9.6) node {\scriptsize $\underline{S_{3,n}}$};
\draw (.5,9.6) node {\scriptsize $<$};
\draw (1.5,9.6) node {\scriptsize $\overline{S_{3,n}}$};
\draw (2.5,9.6) node {\scriptsize $<$};
\draw (3.5,9.6) node {\scriptsize $\underline{S_{2,n}}$};
\draw (5,9.6) node {\scriptsize $<$};
\draw (6.5,9.6) node {\scriptsize $\overline{S_{2,n}}$};
\draw (7.25,9.6) node {\scriptsize $<$};
\draw (8,9.6) node {\scriptsize $\underline{S_{1,n}}$};
\draw (8.75,9.6) node {\scriptsize $<$};
\draw (9.5,9.6) node {\scriptsize $\overline{S_{1,n}}$};
\draw (10,9.6) node {\scriptsize $<$};
\draw (10.75,9.6) node {\scriptsize $t-2\xi$};
\draw (11.5,9.6) node {\scriptsize $<$};
\draw (12,9.6) node {\scriptsize $t_{1,n}$};

\draw plot [smooth, tension=.9] coordinates
{ (12,10) (11.5,11.75) (3,12) (.5,10) };
\draw[arrows=->,line width=1pt](6.51,12.25)--(6.5,12.25);
\draw (6.5,12.7) node {$D_n$};
\draw plot [smooth, tension=1.4] coordinates
{ (12,10) (7.5,11.5) (5,10) };
\draw[arrows=->,line width=1pt](7.51,11.5)--(7.5,11.5);
\draw (7.5,11.15) node {$A_n$};

\draw (-2.5,6.5) node {(b)};
\draw (-1.5,6.5) node {$\mathbb{H}$};
\draw [dotted] (-1,5) --++(14.5,0);
\draw (-1.5,5) node {$\R$};
\draw (-.5,4.9) --++(0,.2);
\draw (1.5,4.9) --++(0,.2);
\draw (3.5,4.9) --++(0,.2);
\draw (6.5,4.9) --++(0,.2);
\draw (8,4.9) --++(0,.2);
\draw (9.5,4.9) --++(0,.2);
\draw (10.75,4.9) --++(0,.2);

\draw (-.5,4.6) node {\scriptsize $\underline{S_{3,n}}$};
\draw (.5,4.6) node {\scriptsize $<$};
\draw (1.5,4.6) node {\scriptsize $\overline{S_{3,n}}$};
\draw (2.5,4.6) node {\scriptsize $<$};
\draw (3.5,4.6) node {\scriptsize $\underline{S_{2,n}}$};
\draw (5,4.6) node {\scriptsize $<$};
\draw (6.5,4.6) node {\scriptsize $\overline{S_{2,n}}$};
\draw (7.25,4.6) node {\scriptsize $<$};
\draw (8,4.6) node {\scriptsize $\underline{S_{1,n}}$};
\draw (8.75,4.6) node {\scriptsize $<$};
\draw (9.5,4.6) node {\scriptsize $\overline{S_{1,n}}$};
\draw (10,4.6) node {\scriptsize $<$};
\draw (10.75,4.6) node {\scriptsize $t-2\xi$};
\draw (11.5,4.6) node {\scriptsize $<$};
\draw (12,4.6) node {\scriptsize $t_{2,n}$};
\draw (12.5,4.6) node {\scriptsize $<$};
\draw (13,4.6) node {\scriptsize $t_{1,n}$};

\draw plot [smooth, tension=1.7] coordinates
{ (13,5) (6,7.5) (.5,5) };
\draw[arrows=->,line width=1pt](6.01,7.5)--(6,7.5);
\draw (6,7.9) node {$D_n$};
\draw plot [smooth, tension=1.8] coordinates
{ (12,5) (8,6.5) (5,5) };
\draw[arrows=->,line width=1pt](8.01,6.5)--(8,6.5);
\draw (8,6.15) node {$A_n$};

\draw (-2.5,1.5) node {(c)};
\draw (-1.5,1.5) node {$\mathbb{H}$};
\draw [dotted] (-1,0) --++(13.5,0);
\draw (-1.5,0) node {$\R$};
\draw (-.5,-.1) --++(0,.2);
\draw (1.5,-.1) --++(0,.2);
\draw (3.5,-.1) --++(0,.2);
\draw (6.5,-.1) --++(0,.2);
\draw (8,-.1) --++(0,.2);
\draw (9.5,-.1) --++(0,.2);
\draw (10.75,-.1) --++(0,.2);

\draw (-.5,-.4) node {\scriptsize $\underline{S_{3,n}}$};
\draw (.5,-.4) node {\scriptsize $<$};
\draw (1.5,-.4) node {\scriptsize $\overline{S_{3,n}}$};
\draw (2.5,-.4) node {\scriptsize $<$};
\draw (3.5,-.4) node {\scriptsize $\underline{S_{2,n}}$};
\draw (5,-.4) node {\scriptsize $<$};
\draw (6.5,-.4) node {\scriptsize $\overline{S_{2,n}}$};
\draw (7.25,-.4) node {\scriptsize $<$};
\draw (8,-.4) node {\scriptsize $\underline{S_{1,n}}$};
\draw (8.75,-.4) node {\scriptsize $<$};
\draw (9.5,-.4) node {\scriptsize $\overline{S_{1,n}}$};
\draw (10,-.4) node {\scriptsize $<$};
\draw (10.75,-.4) node {\scriptsize $t-2\xi$};
\draw (12,-.4) node {$t_{1,n}$};
\draw[arrows=->,line width=.5pt](12,-.2)--(12,.4);

\draw plot [smooth, tension=.9] coordinates
{ (12,0.5) (11.5,2.5) (3,2.5) (.5,0) };
\draw[arrows=->,line width=1pt](7.21,2.92)--(7.2,2.92);
\draw (7.2,3.4) node {$D_n$};
\draw plot [smooth, tension=1.4] coordinates
{ (12,0.5) (8,2) (5,0) };
\draw[arrows=->,line width=1pt](8.41,2.01)--(8.4,2.01);
\draw (8.4,1.65) node {$A_n$};
\draw plot [smooth, tension=.95] coordinates
{ (12,0.52) (11.75,.375) (11,.3) (9.5,.2) (8.75,0)};
\draw[arrows=->,line width=1pt](10.66,.28)--(10.64,.28);
\draw (10.66,.75) node {$D_n'$};
\draw plot [smooth, tension=.95] coordinates
{ (12,0.5) (12.7,.4) (13,3)};
\draw [dotted] (13,3) --++(.04,.7);
\draw (11.9,3.35) node {\scriptsize unbounded};
\draw[arrows=->,line width=1pt](12.95,2)--(12.954,2.05);
\draw (13.4,2) node {$A_n'$};

\end{tikzpicture}
\caption{The contours of lemma \ref{lemConAscDesCase1}.}
\label{figConDesAscCase1}
\end{figure}

\begin{proof}
Consider $f_n$. Recall (see equation (\ref{eqRootsTn})) that $T_n$ is the set of
roots of $f_n'$, and the behaviour of $T_n$ is described
in lemma \ref{lemfn'Case1} and displayed in figure \ref{figSnC_xi1}. Also,
the directions of steepest decent/ascent for $f_n$ at $t_{1,n}/t_{2,n}$ are
shown on the right of figure \ref{figConDirCase1}. Define:
\begin{enumerate}
\item[(i)]
For possibility (a), let $D_n$ and $A_n$ denote the contours of steepest
descent and ascent (respectively) for $f_n$, which start at
$t_{1,n} \in (\overline{S_{1,n}}, +\infty)$, and which enter $\mathbb{H}$ in
the directions $\frac\pi3$ and $\frac{2\pi}3$ respectively. For possibility
(b), let $D_n$ and $A_n$ denote the contours of steepest descent and ascent
(respectively), which start at $t_{1,n} \in (\overline{S_{1,n}}, +\infty)$ and
$t_{2,n} \in (\overline{S_{1,n}}, +\infty)$ respectively,
and which enter $\mathbb{H}$ in the direction $\frac\pi2$. For possibility (c),
let $\{D_n,D_n'\}$ and $\{A_n,A_n'\}$ denote the contours of steepest
descent and ascent (respectively) which start at $t_{1,n} \in \mathbb{H}$.
All contours are defined to follow the unique directions of steepest
descent and ascent (as appropriate) at each $w \in \C \setminus (S_n \cup T_n)$
(see part (3) of lemma \ref{lemDesAsc}), and are defined to end whenever/if
they first intersect a point in $S_n \cup T_n$. Finally, if no such
point of intersection exists, the contours are unbounded.
\end{enumerate}
Then, using the above definition, we will first show:
\begin{enumerate}
\item[(ii)]
For possibility (a), $D_n$ and $A_n$ do not intersect except at
$t_{1,n}$. For possibility (b), $D_n$ and $A_n$ never intersect.
For possibility (c), $D_n,A_n,D_n',A_n'$ do not intersect except
at $t_{1,n}$. For all possibilities, the contours are simple.
\end{enumerate}

Next we investigate the possible end-points of the contours in
$S_n \cup T_n$. Recall that these are depicted in figure \ref{figSnC_xi1},
and that $T_n = T_{1,n} \cup T_{2,n} \cup T_{3,n} \cup \{t_{1,n},t_{2,n}\}$
(see equation (\ref{eqRootsT1nT2n})). Also, since $t_{1,n}$ and $t_{2,n}$
are the start-points of $D_n,A_n,D_n',A_n'$, part (ii) implies that
the contours do not intersect $\{t_{1,n},t_{2,n}\}$ again. Part (i) thus
implies that they end whenever/if they intersect a point in
$S_{1,n} \cup T_{1,n} \cup S_{2,n} \cup T_{2,n} \cup S_{3,n} \cup T_{3,n} \subset \R$.
Next, we will show:
\begin{enumerate}
\item[(iii)]
Each of $D_n,A_n,D_n',A_n'$ either eventually intersect and end in
$S_{1,n} \cup T_{1,n} \cup S_{2,n} \cup T_{2,n} \cup S_{3,n} \cup T_{3,n}
\subset \R$, or they do not intersect $\R$ and are unbounded. Moreover,
if we exclude the start-points and end-points, the contours are
contained in $\mathbb{H}$.
\end{enumerate}
Finally, we use the above observations to show:
\begin{enumerate}
\item[(iv)]
Each of $D_n,D_n'$ eventually intersect and end in $S_{1,n} \cup T_{1,n}
\cup S_{3,n} \cup T_{3,n} \subset \R$.
\item[(v)]
Each of $A_n,A_n'$ either eventually intersect and end in $S_{2,n} \cup T_{2,n} \subset \R$,
or they do not intersect $\R$ and are unbounded.
\item[(vi)]
For possibilities (a) and (b), $D_n$ ends in $S_{3,n} \cup T_{3,n}$
and $A_n$ ends in $S_{2,n} \cup T_{2,n}$. For possibility (c), one of
$\{D_n, D_n'\}$ ends in $S_{3,n} \cup T_{3,n}$, one of $\{A_n, A_n'\}$
ends in $S_{2,n} \cup T_{2,n}$, one of $\{D_n, D_n'\}$ ends in
$S_{1,n} \cup T_{1,n}$, one of $\{A_n, A_n'\}$ does not
intersect $\R$ and is unbounded. Moreover, if we label
so that $D_n$ ends in $S_{3,n} \cup T_{3,n}$ and $A_n$ ends
in $S_{2,n} \cup T_{2,n}$, then they leave $t_{1,n}$
in the counter-clockwise order $D_n,A_n,D_n',A_n'$.
\end{enumerate}
The required results follow from parts (i,ii,iii,vi) since
$S_{i,n} \cup T_{i,n} \subset [\underline{S_{i,n}}, \overline{S_{i,n}}]$
for all $i \in \{1,2,3\}$ (see figure \ref{figSnC_xi1} and
equation (\ref{eqRootsT1nT2n})).

Consider (ii) for possibility (a). Recall (see part (i))
that $D_n,A_n$ both start at $t_{1,n}$. Also, recall (see
part (1) of lemma \ref{lemDesAsc}) that $R_n$ strictly decreases
along $D_n$, where $R_n$ is the real-part of $f_n$, and $R_n$
strictly increases along $A_n$. A contradiction argument then proves
part (ii) for possibility (a). Consider (ii) for possibility
(b). Recall (see part (i)) that $D_n$ starts at $t_{1,n} \in \R$ and
$A_n$ starts at $t_{2,n} \in \R$, and $t_{1,n} > t_{2,n}$. Also,
recall (see left of figure \ref{figConDirCase1}) that $R_n$
strictly decreases as we move from $t_{2,n}$ to $t_{1,n}$ along $\R$.
A similar contradiction argument then proves part (ii) for possibility (b).
Part (ii) for possibility (c) also follows similarly.

Consider (iii) for possibility (a). Recall (see part (i)) that $D_n$ and
$A_n$ both start at $t_{1,n} \in \R$ and immediately enter $\mathbb{H}$.
Also, recall that the contours either end in $S_n \cup T_n$ or they are
unbounded. Finally, recall that $S_n \cup T_n \subset \R$. Thus, to prove
part (iii) for possibility (a), it is sufficient to show that contours of
steepest descent and ascent cannot intersect $\R \setminus (S_n \cup T_n)$
from $\mathbb{H}$. To show this, fix $s \in \R \setminus (S_n \cup T_n)$.
Recall that $f_n'(s) \in \R$ (see equation (\ref{eqfn'})), and note that
$f_n'(s) \neq 0$ since $T_n$ is the set of roots of $f_n'$. Therefore $f_n'(s) > 0$
or $f_n'(s) < 0$. Part (3) of lemma \ref{lemDesAsc} then implies that
$f_n$ has $1$ direction of steepest ascent at $s$. Moreover, the direction
of steepest ascent is along the positive real axis whenever $f_n'(s) > 0$,
and along the negative real axis whenever $f_n'(s) < 0$. Thus a contour of
steepest descent cannot intersect $s$ from $\mathbb{H}$. Similarly for
contours of steepest ascent. This proves part (iii) for possibility (a).
We can similarly prove part (iii) for possibilities (b) and (c). 

Consider (iv). First note, part (4) of lemma \ref{lemDesAsc} implies
that $D_n,D_n'$ are bounded. Part (iii) thus implies that each of
$D_n,D_n'$ eventually intersects and ends in $S_{1,n} \cup T_{1,n}
\cup S_{2,n} \cup T_{2,n} \cup S_{3,n} \cup T_{3,n} \subset \R$.
Next note, part (5) of lemma \ref{lemDesAsc} implies
that $D_n,D_n'$ do not intersect $S_{2,n}$. Thus, to prove
part (iv), it remains to show that contours of steepest
descent cannot intersect $T_{2,n}$ from $\mathbb{H}$. To show this,
fix $s \in T_{2,n}$. Recall (see figure \ref{figSnC_xi1} and
equation (\ref{eqRootsTn})) that $s \in (x,y)$ where $x$ and $y$ are
consecutive elements of $S_{2,n}$, $s$ is a root of $f_n'$ of
multiplicity $1$, and $s$ is the unique root of $f_n'$ in $(x,y)$
(see parts (a3,b3,c3) of lemma \ref{lemfn'Case1}).
Also, equation (\ref{eqfn'}) gives $f_n'(w) \in \R$ for all $w \in (x,y)$, and
\begin{equation*}
\lim_{w \in \R, w \uparrow y} f_n'(w) = +\infty
\hspace{0.5cm} \text{and} \hspace{0.5cm}
\lim_{w \in \R, w \downarrow x} f_n'(w) = -\infty.
\end{equation*}
It thus follows that $f_n'(s) = 0$ and $f_n''(s) > 0$. Part (3) of lemma
\ref{lemDesAsc} thus shows that there are $2$ directions of steepest
descent and $2$ directions of steepest ascent for $f_n$ at $s$. Moreover,
the directions of steepest descent are given by $-\frac\pi2$ and $\frac\pi2$,
and the directions of steepest ascent are given by $0$ and $\pi$. Thus a
contour of steepest descent cannot intersect $s$ from $\mathbb{H}$. This
proves (iv).

Consider (v). First, recall (see part (iii)) that each of
$A_n,A_n'$ eventually intersects and ends in $S_{1,n} \cup T_{1,n}
\cup S_{2,n} \cup T_{2,n} \cup S_{3,n} \cup T_{3,n} \subset \R$,
or they do not intersect $\R$ and are unbounded.
Next note, part (5) of lemma \ref{lemDesAsc} implies
that $A_n,A_n'$ do not intersect $S_{1,n} \cup S_{3,n}$. Thus, to prove
part (v), it remains to show that contours of
steepest ascent cannot intersect $T_{1,n} \cup T_{3,n}$ from $\mathbb{H}$.
This follows from similar arguments to those used in the
proof of part (iv).

Consider (vi) for possibility (a). Recall (see part (iv)) that $D_n$
ends either in $S_{1,n} \cup T_{1,n}$ or in $S_{3,n} \cup T_{3,n}$.
We argue by contradiction: Assume that $D_n$ ends in $S_{1,n} \cup T_{1,n}$.
Recall (see part (i)) that $D_n$ and $A_n$ leave $t_{1,n}$ in the
directions $\frac\pi3$ and $\frac{2\pi}3$ respectively. Next recall
(see part (v)) that $A_n$ ends either in $S_{2,n} \cup T_{2,n}$, or
$A_n$ does not intersect $\R$ and is unbounded. Thus,
since $S_{1,n} \cup T_{1,n} \subset [\underline{S_{1,n}}, \overline{S_{1,n}}]$
and $S_{2,n} \cup T_{2,n} \subset [\underline{S_{2,n}}, \overline{S_{2,n}}]$,
figure \ref{figSnC_xi1} clearly implies that $A_n$ must intersect $D_n$.
This contradicts part (ii). Thus $D_n$ cannot end in $S_{1,n} \cup T_{1,n}$,
and must end in $S_{3,n} \cup T_{3,n}$. Moreover, a similar argument by
contradiction shows that $A_n$ cannot be unbounded, and so $A_n$ must end in
$S_{2,n} \cup T_{2,n}$. This proves (vi) for possibility (a).
(vi) for possibility (b) follows similarly.

Consider (vi) for possibility (c). Recall (see part (iv)) each of
$\{D_n,D_n'\}$ end either in $S_{1,n} \cup T_{1,n}$ or in
$S_{3,n} \cup T_{3,n}$. We argue by contradiction: Assume that both
of $\{D_n,D_n'\}$ end in $S_{1,n} \cup T_{1,n}$. Recall (see part
(i) and right of figure \ref{figConDirCase1}) that $D_n,D_n',A_n,A_n'$
leave from $t_{1,n} \in \mathbb{H}$ in orthogonal directions, and we
alternately encounter elements from $\{D_n,D_n'\}$ and $\{A_n,A_n'\}$
when proceeding counter-clockwise in a neighbourhood of $t_{1,n}$.
Next recall (see part (v)) that each of $\{A_n,A_n'\}$ end either in
$S_{2,n} \cup T_{2,n}$, or they do not intersect $\R$ and are unbounded.
Thus, since $S_{1,n} \cup T_{1,n} \subset [\underline{S_{1,n}}, \overline{S_{1,n}}]$
and $S_{2,n} \cup T_{2,n} \subset [\underline{S_{2,n}}, \overline{S_{2,n}}]$,
figure \ref{figSnC_xi1} clearly implies that one of $\{A_n,A_n'\}$
must intersect at least one of $\{D_n,D_n'\}$. This contradicts part (ii).
Thus both of $\{D_n,D_n'\}$ cannot end in $S_{1,n} \cup T_{1,n}$.
Similarly, both of $\{D_n,D_n'\}$ cannot end in $S_{3,n} \cup T_{3,n}$.
Thus one of $\{D_n,D_n'\}$ must end in $S_{1,n} \cup T_{1,n}$, and
the other must end $S_{3,n} \cup T_{3,n}$. Also, we can similarly
argue by contradiction that one of $\{A_n,A_n'\}$ must end in
$S_{2,n} \cup T_{2,n}$, and the other is unbounded. Finally, if we choose
the labelling as described in part (vi), it remains to show that
the contours do not leave $t_{1,n}$ in the counter-clockwise order
$D_n,A_n',D_n',A_n$. This again follows from a similar argument by
contradiction. This proves (vi).

Next consider $f_t$. Recall (see equation (\ref{eqRootsTn})) that $T_t$ is
the set of roots of $f_t'$, and the behaviour of $T_t$ is described
in lemma \ref{lemfn'Case1} and displayed in figure \ref{figSnC_xi1}. Also,
the directions of steepest decent/ascent for $f_t$ at $t$ are
shown on the right of figure \ref{figConDirCase1}. We define:
\begin{enumerate}
\item[(vii)]
Let $D_t$ and $A_t$ denote the contours of steepest descent and ascent
(respectively) for $f_t$ which start $t \in (\overline{S_1},+\infty)$, and which
enter $\mathbb{H}$ in the directions $\frac\pi3$ and $\frac{2\pi}3$
respectively. The contours are defined to follow the unique directions
of steepest descent and ascent (as appropriate) at each $w \in \C \setminus (S \cup T)$
(see part (3) of lemma \ref{lemDesAsc}), and are defined to end whenever/if they
first intersect a point in $S \cup T$. Finally, if no such point of intersection
exists, the contours are unbounded.
\end{enumerate}
This definition, and similar arguments to those used to prove
part (ii), above, then give:
\begin{enumerate}
\item[(viii)]
$D_t$ and $A_t$ do not intersect except at $t$, and they are simple.
\end{enumerate}
Next we investigate the possible end-points of the contours in
$S \cup T_t$. Recall that these are depicted in figure \ref{figSnC_xi1},
and $T_t = T_{1,t} \cup T_{2,t} \cup T_{3,t} \cup \{t\}$
(see equation (\ref{eqRootsT1nT2n})). Also, since $t$ is the
start-point of $D_t$ and $A_t$, part (viii) implies that
the contours do not intersect $\{t\}$ again. Part (vii) thus
implies that they end whenever/if they intersect a point in
$S_1 \cup T_{1,t} \cup S_2 \cup T_{2,t} \cup S_3 \cup T_{3,t} \subset \R$.
Similar arguments to those used to prove part (iii), above, then give:
\begin{enumerate}
\item[(ix)]
$D_t$ and $A_t$ either eventually intersect and end in
$S_1 \cup T_{1,t} \cup S_2 \cup T_{2,t} \cup S_3 \cup T_{3,t} \subset \R$,
or they do not intersect $\R$ and are unbounded. Moreover, if we exclude
the start-points and end-points, the contours are contained in $\mathbb{H}$.
\end{enumerate}
Next, we use the above observations to show:
\begin{enumerate}
\item[(x)]
$D_t$ ends in
$(\underline{S_3}, \overline{S_3}] \cup [\underline{S_2}, \overline{S_2})$.
\item[(xi)]
$A_t$ ends in
$(\underline{S_3}, \overline{S_3}] \cup [\underline{S_2}, \overline{S_2})$,
or it does not intersect $\R$ and is unbounded.
\end{enumerate}
Finally, we will show:
\begin{enumerate}
\item[(xii)]
$D_t$ ends in $(\underline{S_3}, \overline{S_3})$.
\item[(xiii)]
$A_t$ ends in $(\underline{S_2}, \overline{S_2})$.
\end{enumerate}
The required results then follows from parts (vii,viii,ix,xii,xiii).

Consider (x). First note, part (4) of lemma \ref{lemDesAsc} implies
that $D_t$ is bounded. Part (ix) then implies that $D_t$ ends in
$S_1 \cup T_{1,t} \cup S_2 \cup T_{2,t} \cup S_3 \cup T_{3,t} \subset \R$.
Thus, since $S_i \cup T_{i,t} \subset [\underline{S_i}, \overline{S_i}]$
for all $i \in \{1,2,3\}$ (see equation (\ref{eqRootsT1nT2n})), $D_t$ ends
in $[\underline{S_3}, \overline{S_3}] \cup [\underline{S_2}, \overline{S_2}]
\cup [\underline{S_1}, \overline{S_1}]$. Thus, to show part (x), it is
sufficiently to show that $D_t$ does not end in
$\{\underline{S_3}, \overline{S_2}\} \cup [\underline{S_1}, \overline{S_1}]$.
We argue by contradiction: First assume that $D_t$ ends at $\overline{S_1}$.
Then (see parts (vii,ix)) $D_t$ starts at $t > \overline{S_1}$ in the
direction $\frac\pi3$, ends at $\overline{S_1}$, and is otherwise
contained in $\mathbb{H}$. Consider the simple closed contour consisting
of $D_t$ and the interval $[\overline{S_1},t]$. Note, figure \ref{figImfnExt}
and part (2) of lemma \ref{lemDesAsc} implies that the function
$w \mapsto \text{Im}(f_t(w))$ is constant on this contour
(indeed, the constant valve is $0$). Therefore, since
this function is harmonic, it is also constant in the domain
bounded by the simple closed contour. Equation (\ref{eqImt}) easily
shows that this is not true. Thus we have a contradiction, and so
$D_t$ does not end at $\overline{S_1}$. Next assume that $D_t$ ends at
a point $d_t \in [\underline{S_1},\overline{S_1})$. Note, as above,
$w \mapsto \text{Im}(f_t(w))$ is constant on $D_t$. In particular,
this gives $\text{Im}(f_t(d_t)) = \text{Im}(f_t(t))$. Recall (see
equation (\ref{eqS1S2S3In})) that 
$(\overline{S_1}-\e,\overline{S_1}] \subset S_1$ for all $\e>0$
sufficiently small. Equation (\ref{eqIt}) and figure \ref{figImfnExt}
then give $\text{Im}(f_t(s)) > 0$ for all $s \in [\underline{S_1}, \overline{S_1})$.
However, these also give $\text{Im}(f_t(t)) = 0$. This contradicts
$\text{Im}(f_t(d_t)) = \text{Im}(f_t(t))$, and so $D_t$ does not end in
$[\underline{S_1},\overline{S_1})$. Next assume that $D_t$ ends at $\overline{S_2}$
or at $\underline{S_3}$. Note, equations (\ref{eqS1S2S3In}, \ref{eqIt}) and figure
\ref{figImfnExt} give $\text{Im}(f_t(\overline{S_2})) = \pi \mu[S_1] > 0$
and $\text{Im}(f_t(\underline{S_3})) = \pi \eta > 0$. We then proceed as in
the previous case to get a contradiction, and so $D_t$ does not end
at $\overline{S_2}$ or at $\underline{S_3}$. This proves (x). Part (xi) follows similarly.

Consider (xii). Part (x) implies that it is sufficient to show that $D_t$
does not end in $\{\overline{S_3}\} \cup [\underline{S_2},\overline{S_2})$. We
argue by contradiction: First assume that $D_t$ ends in
$[\underline{S_2},\overline{S_2})$. Then (see parts (vii,ix)) $D_t$
starts at $t$ in the direction $\frac\pi3$, ends in $[\underline{S_2},\overline{S_2})$,
and is otherwise contained in $\mathbb{H}$. Denote the end-point by $d_t$.
Parts (vii,viii,xi) then imply that $A_t$ stars at $t$ in the direction
$\frac{2\pi}3$, $A_t$ and $D_t$ do not intersect except at $t$, $A_t$
ends in $(d_t,\overline{S_2})$, and $A_t$ is otherwise contained in $\mathbb{H}$.
Denote the end-point of $A_t$ by $a_t \in (d_t,\overline{S_2})$. Note, part (2)
of lemma \ref{lemDesAsc}
implies that the function $w \mapsto \text{Im}(f_t(w))$ is constant on
$D_t$ and $A_t$ (indeed, the constant valve is $0$). In particular this gives
$\text{Im}(f_t(d_t)) = \text{Im}(f_t(a_t))$. Equation (\ref{eqIt}) and
figure \ref{figImfnExt} imply that this can only happen when
$(d_t,a_t)$ is entirely contained in a sub-interval of
$[\underline{S_2},\overline{S_2}] \setminus S_2$, in which case
$w \mapsto \text{Im}(f_t(w))$ is constant on the interval $[d_t,a_t]$
also. Therefore $w \mapsto \text{Im}(f_t(w))$ is an harmonic
function which is constant on the simple closed contour
consisting of $D_t$ and $A_t$ and interval $[d_t,a_t]$, and so it also constant
in the domain bounded by the closed contour. Equation (\ref{eqImt})
easily shows that this is not true. Thus we have a contradiction, and
so $D_t$ does not end in $[\underline{S_2},\overline{S_2})$. Similarly, we can
argue by contradiction that $D_t$ does not end at $\overline{S_3}$. This
proves (xii).

Consider (xiii). Part (xi) implies that it is sufficient to show that $A_t$
does not end in $(\underline{S_3},\overline{S_3}] \cup \{\underline{S_2}\}$,
and that $A_t$ is not unbounded. Using, parts (vii,xii), this follows
from arguments by contradiction similar to those used in the proof of (xii).
\end{proof}

\begin{rem}
Note, the nature of $D_t$ and $A_t$ for case (1) of lemma \ref{lemCases},
as described above, proves an interesting inequality. Recall that
$D_t$ and $A_t$ both start at $t \in (\overline{S_1}, +\infty)$, $D_t$
ends at a point $d_t \in (\underline{S_3}, \overline{S_3})$, and $A_t$
ends at a point $a_t \in (\underline{S_2}, \overline{S_2})$. Also recall that
$w \mapsto \text{Im}(f_t(w))$ is constant along $D_t$ and $A_t$.
Figure \ref{figImfnExt} implies that $\text{Im}(f_t(t)) = 0$,
and so $0 = \text{Im}(f_t(t)) = \text{Im}(f_t(d_t)) = \text{Im}(f_t(a_t))$.
Figure \ref{figImfnExt} also implies that this can only occur if
$\mu[S_1] - (\l-\mu)[S_2] < 0$. This inequality must be satisfied
whenever case (1) is satisfied. The authors are not aware of a direct
proof of this inequality, or of its significance. Though we will not discuss
them, analogous inequalities exist for the other cases of lemma \ref{lemCases}.
\end{rem}

We are now in a position to define the contours, $\gamma_{1,n}^+$ and $\Gamma_{1,n}^+$,
that satisfy lemma \ref{lemDesAsc1-12} for case (1) of lemma \ref{lemCases}. 
As above, fix $\xi>0$ sufficiently small such that equations
(\ref{eqxi}, \ref{eqxi1}, \ref{eqxi4}, \ref{eqxi5}) are
satisfied. Next, fix $\theta \in (\frac14,\frac13)$ as in lemma \ref{lemDesAsc1-12},
and $\{q_n\}_{n\ge1} \subset \R$ as in definition \ref{defmnpn}.
Note, since $f_t'''(t) > 0$ (see case (1) of lemma \ref{lemCases}),
lemma \ref{lemftn'} and definition \ref{defmnpn} imply that $\{q_n\}_{n\ge1}$
converges to a positive constant as $n \to \infty$. Also, part (4) of
lemma \ref{lemNonAsyRoots} gives $t_{1,n} = t + O(n^{-\frac13})$ and
$t_{2,n} = t + O(n^{-\frac13})$, and so
\begin{equation*}
\{t_{1,n},t_{2,n}\} \subset B(t, n^{-\theta} q_n) \subset B(t,\xi).
\end{equation*}
Thus, $D_n$ and $A_n$ (see previous lemma) both start inside
$B(t, n^{-\theta} q_n)$ and $B(t,\xi)$. Moreover, assuming for simplicity
that $[0,1]$ is the domain of definition of these contours, we define:
\begin{align}
\nonumber
d_{1,n}
&:= D_n(\sup \{ y \in [0,1] : D_n(y) \in \text{cl}(B(t,n^{-\theta} q_n)) \}), \\
\label{eqdn}
d_{2,n}
&:= D_n(\sup \{ y \in [0,1] : D_n(y) \in \text{cl}(B(t,\xi)) \}), \\
\nonumber
a_{1,n}
&:= A_n(\sup \{ y \in [0,1] : A_n(y) \in \text{cl}(B(t,n^{-\theta} q_n)) \}), \\
\nonumber
a_{2,n}
&:= A_n(\sup \{ y \in [0,1] : A_n(y) \in \text{cl}(B(t,\xi)) \}).
\end{align}
In words, $d_{1,n}$ and $d_{2,n}$
denote the points where $D_n$ `exits' $B(t, n^{-\theta} q_n)$ and
$B(t, \xi)$ respectively. Similarly for $a_{1,n}$, $a_{2,n}$ and
$A_n$.

Next denote the equivalent quantities for $D_t$ and $A_t$ by
$d_{1,t}, d_{2,t}, a_{1,t}, a_{2,t}$. Also, fixing $\xi>0$
sufficiently small as above, note that case (1) of lemma
\ref{lemCases} and equation (\ref{eqAnalSetftfn}) give,
\begin{equation*}
\C_\xi = \{w \in \C : \text{Re}(w) > \overline{S_1} + 2 \xi 
\text{ or } |\text{Im}(w)| > \xi^4 \}.
\end{equation*}
Recall that $D_t$ ends in $(\underline{S_3}, \overline{S_3}) \subset \C \setminus \C_\xi$,
and $A_t$ ends in $(\underline{S_2}, \overline{S_2}) \subset \C \setminus \C_\xi$,
and let $d_{3,t}$ and $a_{3,t}$ denote the points where $D_t$ and $A_t$ `exit' $\C_\xi$
respectively. Note, it is always possible to choose the $\xi>0$ sufficiently small
that $d_{3,t}$ and $a_{3,t}$ are on the upper boundary of $\C_\xi$:
\begin{equation}
\label{eqd3ta3t1}
\text{Im}(d_{3,t}) = \text{Im}(a_{3,t}) = \xi^4.
\end{equation}
Also note, since $D_t$ and $A_t$ end in $(\underline{S_3}, \overline{S_3})$
and $(\underline{S_2}, \overline{S_2})$ respectively, it is always
possible to choose the $\xi>0$ sufficiently small such that,
\begin{equation}
\label{eqd3ta3t2}
\text{Re}(d_{3,t}) \in (\underline{S_3} + c, \overline{S_3} - c)
\hspace{0.5cm} \text{and} \hspace{0.5cm}
\text{Re}(a_{3,t}) \in (\underline{S_2} + c, \overline{S_2} - c),
\end{equation}
where $c = c(t) > 0$ is some constant which is independent of $\xi$.
Finally define the following contours, which are depicted in figure
\ref{figCongnGnCase1}:
\begin{definition}
\label{defgnGnCase1}
Define $\g_{1,n}^+$ to be the simple contour which:
\begin{itemize}
\item
starts at $t \in (\overline{S_1}, +\infty)$,
\item
then traverses the straight line from $t$ to
$d_{1,n} \in \partial B(t, n^{-\theta} q_n)$,
\item
then traverses that section of $D_n$ from $d_{1,n} \in \partial B(t, n^{-\theta} q_n)$
to $d_{2,n} \in \partial B(t, \xi)$,
\item
then traverses the shortest arc of $\partial B(t, \xi)$ from
$d_{2,n} \in \partial B(t, \xi)$ to $d_{2,t} \in \partial B(t, \xi)$,
\item
then traverses that section of $D_t$ from $d_{2,t}$ to $d_{3,t}$,
\item
then traverses the straight line from $d_{3,t}$ to $\text{Re}(d_{3,t})$,
\item
then ends at $\text{Re}(d_{3,t}) \in (\underline{S_3} + c, \overline{S_3} - c)$.
\end{itemize}

Next, let $\tilde{D}_n$ and $\tilde{A}_n$ denote the contours of steepest
descent and ascent (respectively) for $\tilde{f}_n$ which
are analogous to $D_n$ and $A_n$. Moreover, let
$\tilde{d}_{1,n}, \tilde{d}_{2,n}, \tilde{a}_{1,n}, \tilde{a}_{2,n}$ denote
analogous quantities to those defined in equation
(\ref{eqdn}). Finally, define $\G_{1,n}^+$ to be the simple contour which:
\begin{itemize}
\item
starts at $t \in (\overline{S_1}, +\infty)$,
\item
then traverses the straight line from $t$ to
$\tilde{a}_{1,n} \in \partial B(t, n^{-\theta} q_n)$,
\item
then traverses that section of $\tilde{A}_n$ from
$\tilde{a}_{1,n} \in \partial B(t, n^{-\theta} q_n)$
to $\tilde{a}_{2,n} \in \partial B(t, \xi)$,
\item
then traverses the shortest arc of $\partial B(t, \xi)$ from
$\tilde{a}_{2,n} \in \partial B(t, \xi)$ to $a_{2,t} \in \partial B(t, \xi)$,
\item
then traverses that section of $A_t$ from $a_{2,t}$ to $a_{3,t}$,
\item
then traverses the straight line from $a_{3,t}$ to $\text{Re}(a_{3,t})$,
\item
then ends at $\text{Re}(a_{3,t}) \in (\underline{S_2} + c, \overline{S_2} - c)$.
\end{itemize}
\end{definition}

\begin{figure}[t]
\centering
\begin{tikzpicture};

\draw (12,0) --++(.375,.65);
\draw[arrows=->,line width=1pt](12.2,.34641)--(12.205,.35507);
\fill [black] (12.375,.65) circle (.05cm);
\draw (12.8,.65) node {\scriptsize $d_{1,n}$};
\draw (12.375,.65) --++(.557,.525);
\fill [black] (12.932,1.175) circle (.05cm);
\draw (13.3,1.2) node {\scriptsize $d_{2,n}$};
\draw [domain=51.567:68.755] plot ({12+(1.5)*(cos(\x))}, {(1.5)*(sin(\x))});
\fill [black] (12.543,1.4) circle (.05cm);
\draw (12.35,1.65) node {\scriptsize $d_{2,t}$};
\draw plot [smooth, tension=1] coordinates
{ (12.543,1.398) (11,2.25) (4,2.25) (1.4,0)};
\draw[arrows=->,line width=1pt](7,2.59)--(6.99,2.59);
\draw (7,3) node {$\g_{1,n}^+$};
\fill (1.5,0) [white] circle (.5cm);
\draw (1.5,.5) --++(0,-.5);
\fill [black] (1.5,.5) circle (.05cm);
\draw (1.2,.7) node {\scriptsize $d_{3,t}$};

\draw (12,0) --++(-.375,.65);
\draw[arrows=->,line width=1pt](11.8,.34641)--(11.795,.35507);
\fill [black] (11.625,.65) circle (.05cm);
\draw (11.25,.7) node {\scriptsize $\tilde{a}_{1,n}$};
\draw (11.625,.65) --++(-.557,.525);
\fill [black] (11.068,1.175) circle (.05cm);
\draw (10.7,1.25) node {\scriptsize $\tilde{a}_{2,n}$};
\draw [domain=111.567:128.755] plot ({12+(1.5)*(cos(\x))}, {(1.5)*(sin(\x))});
\fill [black] (11.456,1.4) circle (.05cm);
\draw (11.65,1.6) node {\scriptsize $a_{2,t}$};
\draw plot [smooth, tension=1] coordinates
{ (11.456,1.398) (10,2) (6.5,1.5) (5.45,0)};
\draw[arrows=->,line width=1pt](9,2.01)--(8.99,2.01);
\draw (9,1.6) node {$\G_{1,n}^+$};
\fill (5.4,0) [white] circle (.5cm);
\draw (5.5,.5) --++(0,-.5);
\fill [black] (5.5,.5) circle (.05cm);
\draw (5.2,.7) node {\scriptsize $a_{3,t}$};

\fill [black] (12,0) circle (.05cm);
\draw (12,-.25) node {\scriptsize $t$};
\draw [domain=0:180, dotted] plot ({12+(.75)*(cos(\x))}, {(.75)*(sin(\x))});
\draw [domain=0:180, dotted] plot ({12+(1.5)*(cos(\x))}, {(1.5)*(sin(\x))});

\draw [dashed] (-.75,.5) --++(10.75,0) --++ (0,-.5);
\draw[arrows=<->,line width=1pt](2.5,0)--(2.5,.5);
\draw (2.75,0.25) node {\scriptsize $\xi^4$};

\draw (10,-.1) --++(0,.2);
\draw (9,-.1) --++(0,.2);
\draw (8,-.1) --++(0,.2);
\draw (7,-.1) --++(0,.2);
\draw (6,.1) --++(0,-.6);
\draw (5,.1) --++(0,-.6);
\draw (4,-.1) --++(0,.2);
\draw (3,-.1) --++(0,.2);
\draw (2,.1) --++(0,-.6);
\draw (1,.1) --++(0,-.6);
\draw (0,-.1) --++(0,.2);

\draw (10,-.4) node {\scriptsize $\overline{S_1}+2\xi$};
\draw (9,-.4) node {\scriptsize $\overline{S_1}$};
\draw (8.5,-.4) node {\scriptsize $<$};
\draw (8,-.4) node {\scriptsize $\underline{S_1}$};
\draw (7.5,-.4) node {\scriptsize $\le$};
\draw (7,-.4) node {\scriptsize $\overline{S_2}$};
\draw (6.1,-.7) node {\scriptsize $\overline{S_2}-c$};
\draw (4.9,-.7) node {\scriptsize $\underline{S_2}+c$};
\draw (4,-.4) node {\scriptsize $\underline{S_2}$};
\draw (3.5,-.4) node {\scriptsize $\le$};
\draw (3,-.4) node {\scriptsize $\overline{S_3}$};
\draw (2.1,-.7) node {\scriptsize $\overline{S_3}-c$};
\draw (.9,-.7) node {\scriptsize $\underline{S_3}+c$};
\draw (0,-.4) node {\scriptsize $\underline{S_3}$};

\draw (-.75,1.5) node {$\mathbb{H}$};
\draw [dotted] (-.55,0) --++(14.15,0);
\draw (-.75,0) node {$\R$};
\draw[arrows=<->,line width=1pt](12,-.5)--(13.5,-.5);
\draw[arrows=<->,line width=1pt](12,-.65)--(12.75,-.65);
\draw (13.125,-.75) node {\scriptsize $\xi$};
\draw (12.375,-.9) node {\scriptsize $n^{-\theta} q_n$};

\end{tikzpicture}
\caption{The contours defined in definition \ref{defgnGnCase1}.
The dashed lines represent boundaries of $\C_\xi$.
$c = c(t) > 0$ is independent of $\xi$.}
\label{figCongnGnCase1}
\end{figure}
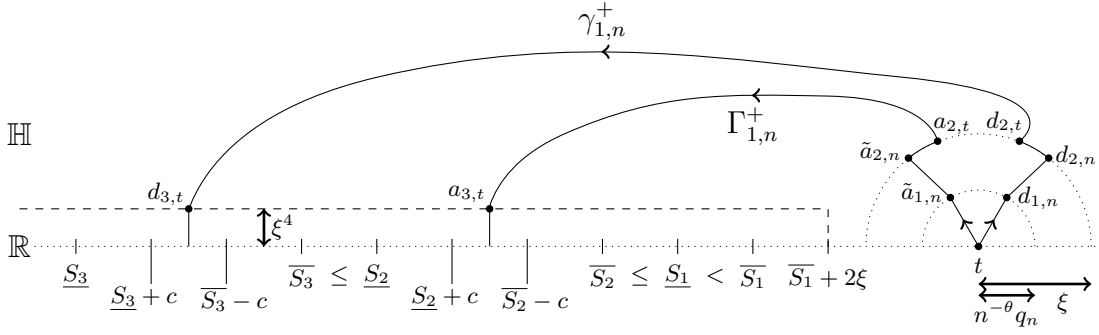

We finally show that the above contours satisfy the requirements of lemma
\ref{lemDesAsc1-12}:
\begin{proof}[Proof of lemma \ref{lemDesAsc1-12} for case (1) of lemma \ref{lemCases}:]
Part (1) of lemma \ref{lemDesAsc1-12} follows easily from lemma \ref{lemConAscDesCase1}
and definition \ref{defgnGnCase1} (see also figures \ref{figDesAsc1-12} and
\ref{figCongnGnCase1}). Consider (2). We will show that:
\begin{enumerate}
\item[(i)]
$\text{Arg}(d_{1,n}-t) = \frac\pi3 + O(n^{-\frac13+\theta})$
and
$\text{Arg}(a_{1,n}-t) = \frac{2\pi}3 + O(n^{-\frac13+\theta})$.
\end{enumerate}
Similarly, we can show that
$\text{Arg}(\tilde{d}_{1,n}-t) = \frac\pi3 + O(n^{-\frac13+\theta})$
and $\text{Arg}(\tilde{a}_{1,n}-t) = \frac{2\pi}3 + O(n^{-\frac13+\theta})$.
This proves (2).

Consider (3). We will show, for all $\xi>0$ sufficiently small
as in the statement of this lemma, that there
exists an integer $n(\xi) > 0$ such that the following are
satisfied for all $n>n(\xi)$:
\begin{enumerate}
\item[(ii)]
$\text{Arg} (d_{2,n} - t) = \frac\pi3 + O(\xi)$
and $\text{Arg} (a_{2,n} - t) = \frac{2\pi}3 + O(\xi)$.
\end{enumerate}
Then, letting $R_n$ denote the real-part of $f_n$ (see equation
(\ref{eqRn})), we will use this to show that there exists a choice
of the above $\xi$ such that the following are satisfied:
\begin{enumerate}
\item[(iii)]
$R_n(w) \le R_n(d_{1,n})$ for all $w$ on that section of $D_n$ from
$d_{1,n}$ to $d_{2,n}$.
\item[(iv)]
$R_n(w) \le R_n(d_{1,n})$ for all $w$ on the shortest arc of
$\partial B(t, \xi)$ from $d_{2,n}$ to $d_{2,t}$.
\item[(v)]
$R_n(w) \le R_n(d_{1,n})$ for all $w$ on that section of $D_t$ from
$d_{2,t}$ to $d_{3,t}$.
\item[(vi)]
$R_n(w) \le R_n(d_{1,n})$ for all $w$ on the straight line from
$d_{3,t}$ to $\text{Re}(d_{3,t})$.
\end{enumerate}
Definition \ref{defgnGnCase1} then implies that $R_n(w) \le R_n(d_{1,n})$
for all $w \in \g_{1,n}^+ \setminus B(t, n^{-\theta} q_n)$. This proves (3).
(4) follows similarly.

Consider (5). First note, we can proceed similarly to the proofs of
parts (i,ii) to show the following: For all $\xi>0$ sufficiently
small, there exists an integer $n(\xi) > 0$ such that
$\text{Arg} (w - t) = \frac\pi3 + O(\xi)$ for all $n>n(\xi)$ and
uniformly for $w \in \g_{1,n}^+ \cap \text{cl}(B(t,\xi))$, and
$\text{Arg} (z - t) = \frac{2\pi}3 + O(\xi)$ for all $n>n(\xi)$ and
uniformly for $z \in \G_{1,n}^+ \cap \text{cl}(B(t,\xi))$. These
contour sections are thus contained in the cones shown in figure
\ref{figCongnGnCase1Cones}. Next note, definition \ref{defgnGnCase1}
implies that $\g_{1,n}^+$ and $\G_{1,n}^+$ depend on $n$ inside the
cones, are independent of $n$ outside the cones, and the parts outside
the cones never intersect. Then, for $\xi>0$ fixed sufficiently small,
figure \ref{figCongnGnCase1Cones} and a simple geometric argument
proves part (5).

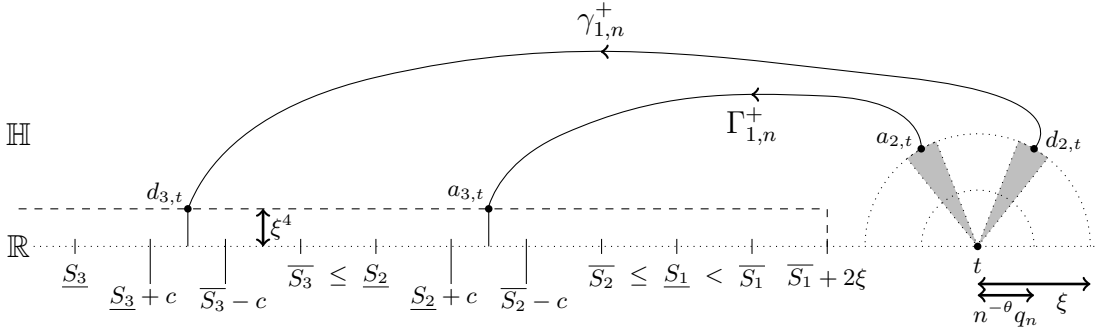
\begin{figure}[t]
\centering
\begin{tikzpicture};
 
\filldraw[fill=gray!50!white, dotted]
(12,0) -- (12.932,1.175) arc (51.567:68.755:1.5) -- (12,0);
\draw (13.15,1.4) node {\scriptsize $d_{2,t}$};
\fill [black] (12.75,1.299) circle (.05cm);
\draw plot [smooth, tension=1] coordinates
{ (12.75,1.299) (11,2.25) (4,2.25) (1.4,0)};
\draw[arrows=->,line width=1pt](7,2.59)--(6.99,2.59);
\draw (7,3) node {$\g_{1,n}^+$};
\fill (1.5,0) [white] circle (.5cm);
\draw (1.5,.5) --++(0,-.5);
\fill [black] (1.5,.5) circle (.05cm);
\draw (1.2,.7) node {\scriptsize $d_{3,t}$};

\filldraw[fill=gray!50!white, dotted]
(12,0) -- (11.449,1.395) arc (111.567:128.755:1.5) -- (12,0);
\draw (10.9,1.4) node {\scriptsize $a_{2,t}$};
\fill [black] (11.25,1.299) circle (.05cm);
\draw plot [smooth, tension=1] coordinates
{ (11.25,1.299) (10,2) (6.5,1.5) (5.45,0)};
\draw[arrows=->,line width=1pt](9,2.02)--(8.99,2.02);
\draw (9,1.6) node {$\G_{1,n}^+$};
\fill (5.4,0) [white] circle (.5cm);
\draw (5.5,.5) --++(0,-.5);
\fill [black] (5.5,.5) circle (.05cm);
\draw (5.2,.7) node {\scriptsize $a_{3,t}$};

\fill [black] (12,0) circle (.05cm);
\draw (12,-.25) node {\scriptsize $t$};
\draw [domain=0:180, dotted] plot ({12+(.75)*(cos(\x))}, {(.75)*(sin(\x))});
\draw [domain=0:180, dotted] plot ({12+(1.5)*(cos(\x))}, {(1.5)*(sin(\x))});

\draw [dashed] (-.75,.5) --++(10.75,0) --++ (0,-.5);
\draw[arrows=<->,line width=1pt](2.5,0)--(2.5,.5);
\draw (2.75,0.25) node {\scriptsize $\xi^4$};

\draw (10,-.1) --++(0,.2);
\draw (9,-.1) --++(0,.2);
\draw (8,-.1) --++(0,.2);
\draw (7,-.1) --++(0,.2);
\draw (6,.1) --++(0,-.6);
\draw (5,.1) --++(0,-.6);
\draw (4,-.1) --++(0,.2);
\draw (3,-.1) --++(0,.2);
\draw (2,.1) --++(0,-.6);
\draw (1,.1) --++(0,-.6);
\draw (0,-.1) --++(0,.2);

\draw (10,-.4) node {\scriptsize $\overline{S_1}+2\xi$};
\draw (9,-.4) node {\scriptsize $\overline{S_1}$};
\draw (8.5,-.4) node {\scriptsize $<$};
\draw (8,-.4) node {\scriptsize $\underline{S_1}$};
\draw (7.5,-.4) node {\scriptsize $\le$};
\draw (7,-.4) node {\scriptsize $\overline{S_2}$};
\draw (6.1,-.7) node {\scriptsize $\overline{S_2}-c$};
\draw (4.9,-.7) node {\scriptsize $\underline{S_2}+c$};
\draw (4,-.4) node {\scriptsize $\underline{S_2}$};
\draw (3.5,-.4) node {\scriptsize $\le$};
\draw (3,-.4) node {\scriptsize $\overline{S_3}$};
\draw (2.1,-.7) node {\scriptsize $\overline{S_3}-c$};
\draw (.9,-.7) node {\scriptsize $\underline{S_3}+c$};
\draw (0,-.4) node {\scriptsize $\underline{S_3}$};

\draw (-.75,1.5) node {$\mathbb{H}$};
\draw [dotted] (-.55,0) --++(14.15,0);
\draw (-.75,0) node {$\R$};
\draw[arrows=<->,line width=1pt](12,-.5)--(13.5,-.5);
\draw[arrows=<->,line width=1pt](12,-.65)--(12.75,-.65);
\draw (13.125,-.75) node {\scriptsize $\xi$};
\draw (12.375,-.9) node {\scriptsize $n^{-\theta} q_n$};

\end{tikzpicture}
\caption{The contours defined in definition \ref{defgnGnCase1} and
depicted in figure \ref{figCongnGnCase1}. Here, we do not depict
those sections of the contours in $\text{cl}(B(t, \xi))$. Instead, we depict
two cones (the shaded areas). For all $\xi>0$ sufficiently small
and $n>n(\xi)$, $\text{Arg}(w-t) = \frac{\pi}3 + O(\xi)$ and
$\text{Arg}(z-t) = \frac{2\pi}3 + O(\xi)$ uniformly for
$w$ and $z$ in the right and left cones respectively,
$\g_{1,n}^+ \cap B (t, \xi)$ and $\G_{1,n}^+ \cap B (t, \xi)$ are
contained in the right and left cones respectively, and
$\g_{1,n}^+ \setminus B (t, \xi)$ and $\G_{1,n}^+ \setminus B (t, \xi)$
are independent of $n$.}
\label{figCongnGnCase1Cones}
\end{figure}

Consider (6). Recall (see the proof of part (5)), that for all $\xi>0$
sufficiently small, there exists an integer $n(\xi) > 0$ such that
$\text{Arg} (w - t) = \frac\pi3 + O(\xi)$ for all $n>n(\xi)$ and
uniformly for $w$ on that section of $D_n$ from $d_{1,n}$
to $d_{2,n}$. Looking at figure \ref{figCongnGnCase1Cones}, this
contour is contained in that section of the right cone in
$B(t, \xi) \setminus B(t, n^{-\theta} q_n)$. We will show the following:
\begin{enumerate}
\item[(vii)]
Consider that section of the right cone discussed above. Then,
we can choose the above $\xi$ and $n(\xi)$ such that the direction
of steepest descent for $f_n$ at $w$ equals $\frac\pi3 + O(\xi)$
for all $n>n(\xi)$ and uniformly for $w$ in that section.
\end{enumerate}
Finally recall (see lemma \ref{lemConAscDesCase1} and definition \ref{defgnGnCase1})
that $D_n$ follows the uniquely defined directions of steepest descent for
$f_n$. The above observations imply that the length of that section
of $D_n$ from $d_{1,n}$ to $d_{2,n}$ is of order $O(\xi)$ for all
$n>n(\xi)$. Moreover, definition \ref{defgnGnCase1} trivially
implies that the remaining sections of $\g_{1,n}^+$ are of order
at most $O(1)$ for all $n$ sufficiently large. Therefore we can fix
$\xi>0$ sufficiently small such that $|\g_{1,n}^+| = O(1)$. Similarly, we can show that
$|\G_{1,n}^+| = O(1)$. This proves (6).

Consider (i). First recall (see lemma \ref{lemConAscDesCase1}) that
$D_n$ is a contour of steepest descent for $f_n$ which starts at $t_{1,n}$,
and (see equation (\ref{eqdn})) $d_{1,n} \in \partial B(t,n^{-\theta} q_n)$
denotes that point at which $D_n$ `exits' $B(t,n^{-\theta} q_n)$. Thus,
letting $R_n$ and $I_n$ respectively denote the real and imaginary-parts of
$f_n$, parts (1,2) of lemma \ref{lemDesAsc} give,
\begin{equation*}
R_n(d_{1,n}) < R_n(t_{1,n})
\hspace{0.5cm} \text{and} \hspace{0.5cm}
I_n(d_{1,n}) = I_n(t_{1,n}).
\end{equation*}
Thus, since $t_{1,n} = t + O(n^{-\frac13})$ (see part (4) of lemma \ref{lemNonAsyRoots}),
and since $f_n(t) \in \R$, part (1) of corollary
\ref{corTay} gives,
\begin{equation}
\label{eqlemgnGnCase11}
R_n(d_{1,n}) < f_n(t) + O(n^{-1})
\hspace{0.5cm} \text{and} \hspace{0.5cm}
I_n(d_{1,n}) = O(n^{-1}).
\end{equation}
Similar considerations for the contour, $A_n$, of steepest ascent
for $f_n$ give,
\begin{equation}
\label{eqlemgnGnCase12}
R_n(a_{1,n}) > f_n(t) + O(n^{-1})
\hspace{0.5cm} \text{and} \hspace{0.5cm}
I_n(a_{1,n}) = O(n^{-1}).
\end{equation}

Next note, since $\theta \in (\frac14,\frac13)$, part (2) of
lemma \ref{lemTay} (take $\xi_n = n^{-\theta}$) gives,
\begin{equation*}
f_n(t + n^{-\theta} q_n e^{i \alpha})
= f_n(t) + \tfrac13 n^{-3\theta} e^{3i \alpha}
+ O(n^{-\frac13-2\theta}),
\end{equation*}
uniformly for $\alpha \subset (-\pi,\pi]$. Therefore, since $f_n(t) \in \R$,
\begin{align}
\label{eqlemgnGnCase13}
R_n(t + n^{-\theta} q_n e^{i \alpha})
&= f_n(t) + \tfrac13 n^{-3\theta} \cos(3 \alpha)
+ O(n^{-\frac13-2\theta}), \\
\label{eqlemgnGnCase14}
I_n(t + n^{-\theta} q_n e^{i \alpha})
&= \tfrac13 n^{-3\theta} \sin(3 \alpha)
+ O(n^{-\frac13-2\theta}),
\end{align}
uniformly for $\alpha \subset (-\pi,\pi]$.

Equation (\ref{eqlemgnGnCase14}), and the second parts of
equations (\ref{eqlemgnGnCase11}, \ref{eqlemgnGnCase12}), give
\begin{equation}
\label{eqlemgnGnCase15}
\sin(3 \text{Arg}(d_{1,n}-t)) = O(n^{-\frac13+\theta})
\hspace{0.5cm} \text{and} \hspace{0.5cm}
\sin(3 \text{Arg}(a_{1,n}-t)) = O(n^{-\frac13+\theta}).
\end{equation}
Then, since
$\theta \in (\frac14,\frac13)$, equations
(\ref{eqlemgnGnCase13}, \ref{eqlemgnGnCase15}), and the first parts of
equations (\ref{eqlemgnGnCase11}, \ref{eqlemgnGnCase12}) imply the
following:
\begin{itemize}
\item
Either $d_{1,n} = d_{1,n}'$ or $d_{1,n} = d_{1,n}''$ where
$\text{Arg}(d_{1,n}'-t) = \frac\pi3 + O(n^{-\frac13+\theta})$
and $\text{Arg}(d_{1,n}''-t) = \pi + O(n^{-\frac13+\theta})$.
In either case, $R_n(d_{1,n}) = f_n(t) - \frac13 n^{-3\theta}
+ O(n^{-\frac13-2\theta})$.
\item
Either $a_{1,n} = a_{1,n}'$ or $a_{1,n} = a_{1,n}''$ where
$\text{Arg}(a_{1,n}'-t) = O(n^{-\frac13+\theta})$ and
$\text{Arg}(a_{1,n}''-t) = \frac{2\pi}3 + O(n^{-\frac13+\theta})$.
In either case, $R_n(a_{1,n}) = f_n(t) + \frac13 n^{-3\theta}
+ O(n^{-\frac13-2\theta})$.
\end{itemize}
These exit points, and each of the possibilities (a,b,c) of lemmas
\ref{lemfn'Case1} and \ref{lemConAscDesCase1}, are shown in figure
\ref{figConAnglesCase1}. We will show, for each possibility,
that $d_{1,n} = d_{1,n}'$ and $a_{1,n} = a_{1,n}''$.
This proves (i).

For possibilities (a) and (b), recall that $D_n$ and $A_n$ have
the behaviours described in lemma \ref{lemConAscDesCase1} and shown
in figure \ref{figConDesAscCase1}. Note that the contours do not
intersect outside $B(t,n^{-\theta} q_n)$, and it is easy to see that
these behaviours are only possible if $D_n$ `exits'
$B(t,n^{-\theta} q_n)$ at $d_{1,n}'$ and $A_n$ `exits'
at $a_{1,n}''$. Thus $d_{1,n} = d_{1,n}'$ and $a_{1,n} = a_{1,n}''$,
as required, for possibilities (a) and (b). For
possibility (c), proceeding as above, we can show that $a_{1,n}'$ and
$a_{1,n}''$ are the possible `exit' points of both $A_n$ and $A_n'$, and
$d_{1,n}'$ and $d_{1,n}''$ are the possible `exit' points of both $D_n$
and $D_n'$. Also, the end-points of the contours, and the fact that they
do not intersect outside $B(t,n^{-\theta} q_n)$, implies that one of
$\{A_n,A_n'\}$ `exits' at $a_{1,n}'$, one of $\{A_n,A_n'\}$ `exits' at
$a_{1,n}''$, one of $\{D_n,D_n'\}$ `exits' at $d_{1,n}'$, and one of
$\{D_n,D_n'\}$ `exits' at $d_{1,n}''$. Finally, we can proceed
similarly to the proof of part (vi) of lemma \ref{lemConAscDesCase1}
to show that $A_n', D_n, A_n, D_n'$ respectively `exit' at
$a_{1,n}', d_{1,n}', a_{1,n}'', d_{1,n}''$. Thus $d_{1,n} = d_{1,n}'$
and $a_{1,n} = a_{1,n}''$, as required, for possibility (c).

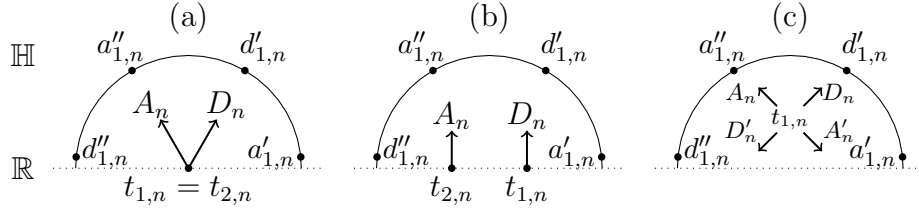
\begin{figure}[t]
\centering
\begin{tikzpicture}

\draw (-.2,1.5) node {$\mathbb{H}$};
\draw [dotted] (.2,0) --++(3.6,0);
\draw [dotted] (4.2,0) --++(3.6,0);
\draw [dotted] (8.2,0) --++(3.6,0);
\draw (-.2,0) node {$\R$};

\draw (2,2) node {(a)};
\draw [domain=0:180] plot ({2+(1.5)*cos(\x)}, {(1.5)*sin(\x)});
\fill [black] (3.493,.15) circle (.05cm);
\draw (3.1,.2) node {\small $a_{1,n}'$};
\fill [black] (2.75,1.299) circle (.05cm);
\draw (3,1.6) node {\small $d_{1,n}'$};
\fill [black] (1.25,1.299) circle (.05cm);
\draw (1.1,1.6) node {\small $a_{1,n}''$};
\fill [black] (.507,.15) circle (.05cm);
\draw (.9,.21) node {\small $d_{1,n}''$};

\fill [black] (2,0) circle (.05cm);
\draw (2,-.3) node {$t_{1,n} = t_{2,n}$};
\draw[arrows=->,line width=.75pt](2,0)--(2.375,{(.375)*sqrt(3)});
\draw (2.5,.85) node {$D_n$};
\draw[arrows=->,line width=.75pt](2,0)--(1.625,{(.375)*sqrt(3)});
\draw (1.5,.85) node {$A_n$};

\draw (6,2) node {(b)};
\draw [domain=0:180] plot ({6+(1.5)*cos(\x)}, {(1.5)*sin(\x)});
\fill [black] (7.493,.15) circle (.05cm);
\draw (7.1,.2) node {\small $a_{1,n}'$};
\fill [black] (6.75,1.299) circle (.05cm);
\draw (7,1.6) node {\small $d_{1,n}'$};
\fill [black] (5.25,1.299) circle (.05cm);
\draw (5.1,1.6) node {\small $a_{1,n}''$};
\fill [black] (4.507,.15) circle (.05cm);
\draw (4.9,.21) node {\small $d_{1,n}''$};

\fill [black] (6.5,0) circle (.05cm);
\draw (6.5,-.3) node {$t_{1,n}$};
\draw[arrows=->,line width=.75pt](6.5,0)--(6.5,.5);
\draw (6.5,.7) node {$D_n$};
\fill [black] (5.5,0) circle (.05cm);
\draw (5.5,-.3) node {$t_{2,n}$};
\draw[arrows=->,line width=.75pt](5.5,0)--(5.5,.5);
\draw (5.5,.7) node {$A_n$};

\draw (10,2) node {(c)};
\draw [domain=0:180] plot ({10+(1.5)*cos(\x)}, {(1.5)*sin(\x)});
\fill [black] (11.493,.15) circle (.05cm);
\draw (11.1,.2) node {\small $a_{1,n}'$};
\fill [black] (10.75,1.299) circle (.05cm);
\draw (11,1.6) node {\small $d_{1,n}'$};
\fill [black] (9.25,1.299) circle (.05cm);
\draw (9.1,1.6) node {\small $a_{1,n}''$};
\fill [black] (8.507,.15) circle (.05cm);
\draw (8.9,.21) node {\small $d_{1,n}''$};

\draw[arrows=->,line width=.75pt](10,.65)--(10.424,1.074);
\draw (10.65,1) node {\scriptsize $D_n$};
\draw[arrows=->,line width=.75pt](10,.65)--(9.576,1.074);
\draw (9.35,1) node {\scriptsize $A_n$};
\draw[arrows=->,line width=.75pt](10,.65)--(9.576,.226);
\draw (9.35,.5) node {\scriptsize $D_n'$};
\draw[arrows=->,line width=.75pt](10,.65)--(10.424,.226);
\draw (10.65,.5) node {\scriptsize $A_n'$};

\fill[white] (10,.65) circle (.25cm);
\draw (10,.65) node {\scriptsize $t_{1,n}$};

\end{tikzpicture}
\caption{The contours $A_n,D_n,A_n',D_n'$ of lemma \ref{lemConAscDesCase1}
in $B(t, n^{-\theta} q_n) \cap \mathbb{H}$ for each of the possibilities,
(a,b,c), of lemmas \ref{lemfn'Case1} and \ref{lemConAscDesCase1}.}
\label{figConAnglesCase1}
\end{figure}

Consider (ii). First recall (see equation
(\ref{eqdn})) that $d_{2,n}$ and $a_{2,n}$ denote the points
at which $D_n$ and $A_n$ respectively `exit' $B(t,\xi)$. Then, proceed as in
(i) to get,
\begin{align*}
R_n(d_{2,n}) < f_n(t) + O(n^{-1})
&\hspace{0.5cm} \text{and} \hspace{0.5cm}
I_n(d_{2,n}) = O(n^{-1}), \\
R_n(a_{2,n}) > f_n(t) + O(n^{-1})
&\hspace{0.5cm} \text{and} \hspace{0.5cm}
I_n(a_{2,n}) = O(n^{-1}).
\end{align*}
Thus, since $f_n(t) \to f_t(t)$,
\begin{align*}
R_n(d_{2,n}) < f_t(t) + o(1)
&\hspace{0.5cm} \text{and} \hspace{0.5cm}
I_n(d_{2,n}) = o(1), \\
R_n(a_{2,n}) > f_t(t) + o(1)
&\hspace{0.5cm} \text{and} \hspace{0.5cm}
I_n(a_{2,n}) = o(1).
\end{align*}
Next, recall that $\xi$ is any positive number for
which equations (\ref{eqxi}, \ref{eqxi1}, \ref{eqxi4}, \ref{eqxi5})
hold. Note that this can be fixed arbitrarily small. Also, for all
such $\xi>0$ chosen sufficiently small, part (2) of lemma
\ref{lemTay} implies (take $\xi_n = \xi q_n^{-1}$) that there
exists a positive integer, $n(\xi)$, such that,
\begin{equation*}
f_n(t + \xi e^{i \alpha})
= f_n(t) + \tfrac13 \xi^3 q_n^{-3} e^{3i \alpha} + O(\xi^4),
\end{equation*}
for all $n>n(\xi)$ and uniformly for $\alpha \subset (-\pi,\pi]$.
Next recall that $f_n(t) \to f_t(t)$ and $q_n \to q_t$,
where $q_t$ is some positive number. It thus follows that there exists
a choice of $n(\xi)$ such that,
\begin{equation*}
f_n(t + \xi e^{i \alpha})
= f_t(t) + \tfrac13 \xi^3 q_t^{-3} e^{3i \alpha} + O(\xi^4),
\end{equation*}
for all $n>n(\xi)$ and uniformly for $\alpha \subset (-\pi,\pi]$.
Therefore, since $f_t(t) \in \R$,
\begin{align*}
R_n(t + \xi e^{i \alpha})
&= f_t(t) + \tfrac13 \xi^3 q_t^{-3} \cos(3 \alpha) + O(\xi^4), \\
I_n(t + \xi e^{i \alpha})
&= \tfrac13 \xi^3 q_t^{-3} \sin(3 \alpha) + O(\xi^4),
\end{align*}
for all $n>n(\xi)$ and uniformly for $\alpha \subset (-\pi,\pi]$.
Then, for all $\xi>0$ sufficiently small, it follows from similar
arguments to those used in part (i) that there exists a choice of
$n(\xi)$ such that we have the following possibilities for $d_{2,n}$
and $a_{2,n}$ for all $n>n(\xi)$:
\begin{itemize}
\item
Either $d_{2,n} = d_{2,n}'$ or $d_{2,n} = d_{2,n}''$
where $\text{Arg}(d_{2,n}'-t) = \frac\pi3 + O(\xi)$
and $\text{Arg}(d_{2,n}''-t) = \pi + O(\xi)$. In
either case, $R_n(d_{2,n}) = f_t(t) - \frac13 \xi^3 q_t^{-3} + O(\xi^4)$.
\item
Either $a_{2,n} = a_{2,n}'$ or $a_{2,n} = a_{2,n}''$
where $\text{Arg}(a_{2,n}'-t) = O(\xi)$ and
$\text{Arg}(a_{2,n}''-t) = \frac{2\pi}3 + O(\xi)$.
In either case, $R_n(a_{2,n}) = f_t(t) + \frac13 \xi^3 q_t^{-3} + O(\xi^4)$.
\end{itemize}
(ii) then follows from similar arguments to those used in part (i), above.

Consider (iii). This follows trivially from part (1) of lemma \ref{lemDesAsc},
since $D_n$ is a contour of steepest descent for $f_n$. Consider (iv).
First note, proceeding similarly to the proof of part
(ii) above, we can show that we can choose $\xi$ and $n(\xi)$
such that,
\begin{equation*}
\text{Arg}(w-t) = \tfrac\pi3 + O(\xi)
\hspace{0.5cm} \text{and} \hspace{0.5cm}
R_n(w) = f_t(t) - \tfrac13 \xi^3 q_t^{-3} + O(\xi^4),
\end{equation*}
for all $n>n(\xi)$ and uniformly for $w$ on the shortest arc of
$\partial B(t, \xi)$ from $d_{2,n}$ to $d_{2,t}$.
Also recall (see proof of part (i)) that
$R_n(d_{1,n}) = f_n(t) - \frac13 n^{-3\theta} + O(n^{-\frac13-2\theta})$.
Therefore we can choose $\xi$ and
$n(\xi)$ such that $R_n(w) \le R_n(d_{1,n})$ for all $n>n(\xi)$ and $w$ on
this arc. This proves (iv).

Consider (v). Note, that section of $D_t$ from $d_{2,t}$ to $d_{3,t}$ is
independent of $n$ and is contained in $\C_\xi$, and so part (2) of lemma
\ref{lemfnftConv} implies that $R_n(w) = R_t(w) + o(1)$
uniformly for all $w$ on this section. Thus, for all $\xi>0$, there
exists an integer $n(\xi) > 0$ such that $R_n(w) = R_t(w) + O(\xi^4)$ for
all $n>n(\xi)$ and uniformly for $w$ on this section. Next recall
that $D_t$ is a contour of steepest descent for $f_t$. Part (1) of lemma
\ref{lemDesAsc} thus gives $R_t(w) \le R_t(d_{2,t})$ for all $w$ on this
section. Finally, proceeding similarly to part (ii) above, we can show
that for all $\xi>0$ sufficiently small, we can choose the above $n(\xi) > 0$
such that the following is satisfied for all $n>n(\xi)$:
$R_t(d_{2,t}) = f_t(t) - \frac13 \xi^3 q_t^{-3} + O(\xi^4)$.
Combined, these observations give,
\begin{equation}
\label{eqlemgnGnCase16}
R_n(w)
= R_t(w) + O(\xi^4)
\le R_t(d_{2,t}) + O(\xi^4)
= f_t(t) - \tfrac13 \xi^3 q_t^{-3} + O(\xi^4),
\end{equation}
for all $n>n(\xi)$ and uniformly for $w$ on that section of $D_t$
from $d_{2,t}$ to $d_{3,t}$.  Finally recall (see proof of part (i)) that
$R_n(d_{1,n}) = f_n(t) - \frac13 n^{-3\theta} + O(n^{-\frac13-2\theta})$.
Therefore we can choose $\xi$ and $n(\xi)$ such that
$R_n(w) \le R_n(d_{1,n})$ for all $n>n(\xi)$ and $w$ on this section.

Consider (vi). Note that we trivially have $|w-x| \le |d_{3,t}-x|$
for all $w$ on the vertical straight line from $d_{3,t}$ to
$\text{Re}(d_{3,t})$ and $x \in \R$. Equation (\ref{eqRn}) then gives,
\begin{equation*}
R_n(w)
\le \frac1n \sum_{x \in S_{1,n}} \log |d_{3,t}-x|
- \frac1n \sum_{x \in S_{2,n}} \log |w-x|
+ \frac1n \sum_{x \in S_{3,n}} \log |d_{3,t}-x|,
\end{equation*}
for all $w$ on the vertical line. Next recall that
$\underline{S_{2,n}} \ge \chi+\eta-1 + o(1)$ (see equations
(\ref{equnrnvnsn}, \ref{eqfn2})) and $\chi+\eta-1 \ge \overline{S_3}$
(see equation (\ref{eqS1S2S3In})).
Equation (\ref{eqd3ta3t2}) thus
implies that $\underline{S_{2,n}} > \text{Re}(d_{3,t}) + \frac12 c$ for
all $\xi>0$ sufficiently small and $n$ sufficiently large
chosen independently, where $c = c(t) > 0$ is some constant
independent of $\xi$ and $n$. Equation (\ref{eqd3ta3t1}) and the above
expression thus give,
\begin{equation*}
R_n(w)
\le \frac1n \sum_{x \in S_{1,n}} \log |d_{3,t}-x|
- \frac1n \sum_{x \in S_{2,n}} (\log |d_{3,t}-x| + O(\xi^4))
+ \frac1n \sum_{x \in S_{3,n}} \log |d_{3,t}-x|,
\end{equation*}
uniformly for $w$ on the vertical line. Then, equation (\ref{eqRn}) gives
$R_n(w) \le R_n(d_{3,t}) + O(\xi^4)$ for all such $n,w$. Next
note, substituting $d_{3,t}$ in equation (\ref{eqlemgnGnCase16})
gives $R_n(d_{3,t}) \le f_t(t) - \frac13 \xi^3 q_t^{-3} + O(\xi^4)$
for all $n>n(\xi)$. Combined, these give
$R_n(w) \le f_t(t) - \frac13 \xi^3 q_t^{-3} + O(\xi^4)$
for all $n>n(\xi)$ and uniformly for $w$ on the vertical
line. Finally recall (see proof of part (i)) that
$R_n(d_{1,n}) = f_n(t) - \frac13 n^{-3\theta} + O(n^{-\frac13-2\theta})$.
Therefore we can choose $\xi$ and
$n(\xi)$ such that $R_n(w) \le R_n(d_{1,n})$ for all $n>n(\xi)$ and
$w$ on the vertical line. This proves (vi).

Consider (vii). Fixing a constant $c>0$, and recalling that
$\{q_n\}_{n\ge1}$ is a convergent sequence with a positive limit,
note that it is sufficient to show the following:
\begin{itemize}
\item
We can fix the $\xi$ sufficiently small, and choose the $n(\xi)$, such
that the direction of steepest descent for $f_n$ at $t + r q_n e^{i \alpha}$ equals
$\frac\pi3 + O(\xi)$ for all $n>n(\xi)$ and uniformly for
$r \in [n^{-\theta}, \xi q_n^{-1}]$ and $\alpha \in (\frac{\pi}3 - c \xi, \frac{\pi}3 + c \xi)$.
\end{itemize}
To see this first note, for all such $r,\alpha$, part (3) of lemma
\ref{lemDesAsc} implies that the unique direction of steepest descent
for $f_n$ at $t + r q_n e^{i \alpha}$ is
$\pi - \text{Arg}(f_n'(t + r q_n e^{i \alpha}))$. Then, we can proceed as
in part (2) of lemma \ref{lemTay} to show that we can choose the
above $n(\xi)>0$ such that,
\begin{align*}
f_n'(t + r q_n e^{i \alpha})
&= r^2 q_n^{-1} e^{2 i \alpha}
+ O(n^{-\frac23} + n^{-\frac13} r + n^{-\frac13} r^2 + r^3) \\
&= r^2 q_n^{-1} e^{2 i \alpha} \left(1 + O(n^{-\frac23} r^{-2}
+ n^{-\frac13} r^{-1} + n^{-\frac13} + r) \right),
\end{align*}
for all $n>n(\xi)$, and uniformly for all such $r,\alpha$. Also note,
since $\theta < \frac13$ and $r \in [n^{-\theta}, \xi q_n^{-1}]$, that
$n^{-\frac23} r^{-2} + n^{-\frac13} r^{-1} + n^{-\frac13} = o(1)$,
and $r = O(\xi)$. Therefore
we can choose the above $\xi$ sufficiently small, and we can choose the $n(\xi)>0$,
such that $f_n'(t + r q_n e^{i \alpha}) = r^2 q_n^{-1} e^{2 i \alpha} ( 1 + O(\xi) )$
for all $n>n(\xi)$ and uniformly for all such $r,\alpha$. Finally recall
that  $\alpha \in (\frac{\pi}3 - c \xi, \frac{\pi}3 + c \xi)$ for some
$c>0$. Thus we can choose the above $\xi$ sufficiently small, and we can
choose the $n(\xi)>0$, such that
\begin{equation*}
\text{Arg}(f_n'(t + r q_n e^{i \alpha}))
= 2 \alpha + O(\xi)
= \tfrac{2\pi}3 + O(\xi),
\end{equation*}
for all $n>n(\xi)$, and uniformly for all such $r,\alpha$.
Thus the direction of steepest descent
for $f_n$ at $t + r q_n e^{i \alpha}$ equals
$\pi - \text{Arg}(f_n'(t + r q_n e^{i \alpha})) = \frac\pi3 + O(\xi)$
for all $n>n(\xi)$ and uniformly for all such $r,\alpha$, as required.
\end{proof}

\begin{rem}
\label{remgnPn}
Note, it may happen that $\text{Re} (d_{3,t}) \in P_n$ (see equation
(\ref{eqPnHn})). If this happens, for simplicity, we deform
$\g_{1,n}^+$ (see definition \ref{defgnGnCase1}) in neighbourhoods of
$\text{Re} (d_{3,t})$ to avoid this. Note, since such deformations are
arbitrarily small, they do not significantly affect the proof of the
previous lemma.
\end{rem}

\subsection{Lemma \ref{lemDesAsc1-12} for case (2) of lemma \ref{lemCases}}
\label{secContCase2}

Assume the conditions of lemma \ref{lemDesAsc1-12}. Additionally
assume that case (2) of lemma \ref{lemCases} is satisfied. Fix
$\xi>0$ sufficiently small such that equations (\ref{eqxi},
\ref{eqxi1}, \ref{eqxi4}, \ref{eqxi5}) are satisfied.
Note, many of the arguments in this section are similar to those
used in section \ref{secContCase1} for case (1). Therefore, we
shall not go into as much detail here, but we shall
highlight the differences where necessary.

We begin by consider the roots of the functions $f_t'$, $f_n'$
and $\tilde{f}_n'$ in this case. We consider $f_n'$ and state
that $\tilde{f}_n'$ can be treated similarly. Recall the definitions
given in equations (\ref{eqS1S2S3}, \ref{eqf'domain2}, \ref{eqIntervalt},
\ref{eqfn2}, \ref{eqfn'domain}, \ref{eqIntervaln}), and
the properties discussed in equations (\ref{eqS1S2S3In}, \ref{eqS1nS2nS3nIn}).
These, and case (2) of lemma \ref{lemCases}, give the following:
\begin{itemize}
\item
$L_t$ is an open interval with $t \in L_t \in \{K_1^{(1)},K_2^{(1)},\ldots\}$,
$\{\underline{L_t}, \overline{L_t}\} \subset \supp (\mu)$,
and $\overline{S_1} > \overline{L_t} > t > \underline{L_t} > \underline{S_1} \ge \chi$.
Moreover, $f_t'(t) = f_t''(t) = 0$ and $f_t'''(t) > 0$.
\item
$L_n$ is an open interval with
$t \in L_n \in \{K_{1,n}^{(1)},K_{2,n}^{(1)},\ldots\}$,
$\underline{L_n}$ and $\overline{L_n}$ are two consecutive elements of $S_{1,n}$,
and $\underline{L_n} \to \underline{L_t}$ and $\overline{L_n} \to \overline{L_t}$.
\end{itemize}
Moreover:
\begin{lem}
\label{lemfn'Case2}
Assume the above conditions. Then, one of the following is satisfied:
\begin{enumerate}
\item[(A)]
$f_t'$ has $1$ root in $(t,\overline{L_t})$, denoted by $s_t$.
\item[(B)]
$f_t'$ has $0$ roots in $(t,\overline{L_t})$.
\end{enumerate}
Moreover, whenever possibility (A) is satisfied:
\begin{enumerate}
\item[(A1)]
$f_t'(s) > 0$ for all $s \in (\underline{L_t},t)$,
$f_t'(t) = f_t''(t) = 0$ and $f_t'''(t) > 0$,
$f_t'(s) > 0$ for all $s \in (t,s_t)$,
$f_t'(s_t) = 0$ and $f_t''(s_t) < 0$,
and $f_t'(s) < 0$ for all $s \in (s_t,\overline{L_t})$.
\item[(A2)]
$f_t'$ has $0$ roots in each of $\{\C \setminus \R, J_1, J_2, J_3, J_4\}$.
\item[(A3)]
$f_t'$ has at most $1$ root in each of
$\cup_{i=1}^3 \{K_1^{(i)},K_2^{(i)},\ldots\} \setminus \{L_t\}$.
\end{enumerate}
Also, whenever possibility (B) is satisfied:
\begin{enumerate}
\item[(B1)]
$f_t'(s) > 0$ for all $s \in (\underline{L_t},t)$,
$f_t'(t) = f_t''(t) = 0$ and $f_t'''(t) > 0$,
$f_t'(s) > 0$ for all $s \in (t,\overline{L_t})$.
\item[(B2)]
$f_t'$ has $0$ roots in each of $\{\C \setminus \R, J_1, J_2, J_3, J_4\}$.
\item[(B3)]
$f_t'$ has at most $1$ root in each of
$\cup_{i=1}^3 \{K_1^{(i)},K_2^{(i)},\ldots\} \setminus \{L_t\}$.
\end{enumerate}

Next, fixing $\xi > 0$ as above, then $(t - 4\xi, t + 4\xi) \subset L_t$,
and $(t - 2\xi, t + 2\xi) \subset L_n$. Also, $f_n'$ has $2$
roots in $B(t,\xi)$ (denoted by $\{t_{1,n},t_{2,n}\}$), $0$ roots in
$(\underline{L_n},t-\xi]$, and $1$ root in $[t+\xi, \overline{L_n})$ (denoted
$s_n$). Moreover, whenever possibility (a) of
lemma \ref{lemRootsNonAsy1} is satisfied:
\begin{enumerate}
\item[(a1)]
$t_{1,n} \in (t-\xi,t+\xi)$ and $t_{1,n} = t_{2,n}$.
Moreover $f_n'(s) > 0$ for all $s \in (\underline{L_n},t_{1,n})$,
$f_n'(t_{1,n}) = f_n''(t_{1,n}) = 0$ and $f_n'''(t_{1,n}) > 0$,
$f_n'(s) > 0$ for all $s \in (t_{1,n},s_n)$,
$f_n'(s_n) = 0$ and $f_n''(s_n) < 0$,
and $f_n'(s) < 0$ for all $s \in (s_n,\overline{L_n})$.
\item[(a2)]
$f_n'$ has $0$ roots in each of
$\{\C \setminus \R, J_{1,n}, J_{2,n},J_{3,n},J_{4,n}\}$.
\item[(a3)]
$f_n'$ has $1$ root in each of
$\cup_{i=1}^3 \{K_{1,n}^{(i)},K_{2,n}^{(i)},\ldots\} \setminus \{L_n\}$.
\end{enumerate}
Moreover, whenever possibility (b) is satisfied:
\begin{enumerate}
\item[(b1)]
$\{t_{1,n},t_{2,n}\} \subset (t-\xi,t+\xi)$ and $t_{1,n} > t_{2,n}$.
Moreover $f_n'(s) > 0$ for all $s \in (\underline{L_n},t_{2,n})$,
$f_n'(t_{2,n}) = 0$ and $f_n''(t_{2,n}) < 0$,
$f_n'(s) < 0$ for all $s \in (t_{2,n},t_{1,n})$,
$f_n'(t_{1,n}) = 0$ and $f_n''(t_{1,n}) > 0$, 
$f_n'(s) > 0$ for all $s \in (t_{1,n},s_n)$,
$f_n'(s_n) = 0$ and $f_n''(s_n) < 0$,
and $f_n'(s) < 0$ for all $s \in (s_n,\overline{L_n})$.
\item[(b2)]
$f_n'$ has $0$ roots in each of
$\{\C \setminus \R, J_{1,n}, J_{2,n},J_{3,n},J_{4,n}\}$.
\item[(b3)]
$f_n'$ has $1$ root in each of
$\cup_{i=1}^3 \{K_{1,n}^{(i)},K_{2,n}^{(i)},\ldots\} \setminus \{L_n\}$.
\end{enumerate}
Finally, whenever possibility (c) is satisfied:
\begin{enumerate}
\item[(c1)]
$t_{1,n} \in B(t,\xi) \cap \mathbb{H}$ and $t_{2,n}$ is the complex conjugate of $t_{1,n}$.
Moreover, $f_n'(s) > 0$ for all $s \in (\underline{L_n},s_n)$,
$f_n'(s_n) = 0$ and $f_n''(s_n) < 0$,
$f_n'(s) < 0$ for all $s \in (s_n, \overline{L_n})$.
\item[(c2)]
$f_n'$ has $0$ roots in each of
$\{\C \setminus (\R \cup \{t_{1,n},t_{2,n}\}), J_{1,n}, J_{2,n},J_{3,n},J_{4,n}\}$
\item[(c3)]
$f_n'$ has $1$ root in each of
$\cup_{i=1}^3 \{K_{1,n}^{(i)},K_{2,n}^{(i)},\ldots\} \setminus \{L_n\}$.
\end{enumerate}
\end{lem}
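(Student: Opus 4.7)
The proof mirrors the structure of lemma \ref{lemfn'Case1}, but must handle the new feature that $L_t$ is a bounded gap in $S_1$ with both endpoints in $\supp(\mu)$, so $f_t'$ may carry an additional real root beyond $t$. My plan is to first set up the framework via case (2) of lemma \ref{lemCases} and lemma \ref{lemHassConv}, which places $L_t \in \{K_1^{(1)}, K_2^{(1)}, \ldots\}$ with $\{\underline{L_t}, \overline{L_t}\} \subset S_1$, identifies $L_n$ as the analogous gap in $S_{1,n}$ with $\underline{L_n}, \overline{L_n}$ consecutive points of $S_{1,n}$ tending to $\underline{L_t}, \overline{L_t}$, and ensures $(t-2\xi, t+2\xi) \subset L_n$. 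The complementary root-count statements (A2/A3/B2/B3) and (a2/a3/b2/b3/c2/c3) then come directly from lemmas \ref{lemf'} and \ref{lemRootsNonAsy1}, which also supply the existence of the extra root $s_n$ in $L_n \setminus (t-\xi, t+\xi)$ and at most one extra real root of $f_t'$ in $L_t \setminus \{t\}$; all that remains is the sign analysis.

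I would handle the sign analysis first for $f_n'$, where the pole structure is clean. Equation (\ref{eqfn'}) shows $f_n'$ has simple poles with residues $+1/n$ and $-1/n$ at $\underline{L_n}$ and $\overline{L_n}$ respectively, giving $f_n'(s) \to +\infty$ as $s \downarrow \underline{L_n}$ and $f_n'(s) \to -\infty$ as $s \uparrow \overline{L_n}$. Lemma \ref{lemNonAsyRoots} combined with Taylor's theorem yields $f_n'(s) \approx \tfrac12 f_t'''(t)(s-t)^2 > 0$ whenever $|s-t|$ is of fixed order $\xi$, so in particular $f_n'(t\pm\xi) > 0$ for all large $n$. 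For each of the possibilities (a), (b), (c) of lemma \ref{lemRootsNonAsy1}, the positivity of $f_n'$ in a neighborhood of the $B(t,\xi)$-roots (from Taylor around the double root in (a); from the fact that a quadratic with two real roots and positive leading coefficient is negative only between them, which controls (b); or from strict positivity throughout $(t-\xi, t+\xi)$ in the non-real conjugate case (c)), combined with $f_n'(\underline{L_n}^+) = +\infty$, forces $s_n \in [t+\xi, \overline{L_n})$: a root in $(\underline{L_n}, t-\xi]$ would demand two sign changes on that interval, since $f_n'$ is positive at both endpoints, but provides only one. Reading off the direction of each sign change then yields the sign patterns (a1), (b1), (c1), including the signs of $f_n''$ at each simple root.

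For $f_t'$, I would invoke part (2) of lemma \ref{lemfnftConv}, which gives uniform convergence $f_n' \to f_t'$ on compact subsets of $L_t$, to transfer positivity: for fixed $\delta, \xi > 0$, $f_n' > 0$ on $[\underline{L_t}+\delta, t-\xi]$ for all large $n$, so in the limit $f_t' \ge 0$ there, and letting $\delta, \xi \downarrow 0$ yields $f_t' \ge 0$ on $(\underline{L_t}, t)$. Since lemma \ref{lemf'} forces any root of $f_t'$ in $L_t \setminus \{t\}$ to be simple, hence to involve a strict sign change, non-negativity on $(\underline{L_t}, t)$ rules out a root there; any extra root therefore lies in $(t, \overline{L_t})$, and the dichotomy of its existence defines cases (A) and (B). The detailed sign statements (A1) and (B1) then follow from the Taylor positivity near $t$ together with the single permitted sign change at $s_t$ in case (A). The main obstacle is precisely this positivity step for $f_t'$: a direct attack via boundary divergence $f_t'(s) \to +\infty$ as $s \downarrow \underline{L_t}$ is not robust, because although $\underline{L_t} \in \supp(\mu)$, the density of $\mu$ (bounded above by $1$ since $\mu \le \l$) may decay rapidly enough near $\underline{L_t}$ to make $\int_a^{\underline{L_t}} \mu[dx]/(\underline{L_t}-x)$ finite. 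Routing through the $f_n'$ analysis sidesteps this subtlety, since the discrete approximations have unambiguous simple poles at $\underline{L_n} \in S_{1,n}$ regardless of the density profile of $\mu$, and uniform convergence on compact sub-intervals of $L_t$ suffices to propagate positivity from $f_n'$ to $f_t'$.
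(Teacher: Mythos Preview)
Your proposal is correct and follows the same overall strategy as the paper: establish the sign analysis for $f_n'$ first using the explicit simple poles at $\underline{L_n}, \overline{L_n} \in S_{1,n}$, then deduce the $f_t'$ statement from it. The only difference is in how you rule out a root of $f_t'$ in $(\underline{L_t}, t)$. The paper argues by contradiction via a Rouch\'e step: if $f_t'$ had a simple root $s_t$ there, then (as in part (iii) of the proof of lemma \ref{lemRootsNonAsy1}) $f_n'$ would have a root in any fixed ball about $s_t$, hence in $(\underline{L_n}, t-\xi]$, contradicting the $f_n'$ analysis already carried out. You instead transfer the strict positivity of $f_n'$ on compact subintervals of $(\underline{L_t}, t)$ to $f_t' \ge 0$ via uniform convergence, and then observe that a simple root would force a strict sign change. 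Both routes are sound and rest on the same underlying input; your remark that the direct boundary-divergence argument can fail for $f_t'$ (since $\mu \le \lambda$ only guarantees $\underline{L_t} \in \supp(\mu)$, not divergence of the Cauchy integral there) is a point the paper leaves implicit.
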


\begin{proof}
Consider $f_t'$. First recall, since case (2) of lemma \ref{lemCases}
is satisfied, that $t \in L_t \in \{K_1^{(1)},K_2^{(1)},\ldots\}$,
and $f_t'(t) = f_t''(t) = 0$ and $f_t'''(t) > 0$. Part (1)
of lemma \ref{lemf'} then implies that $f_t'$ has at most $1$ root in
$L_t \setminus \{t\}$. Thus we have three possibilities:
\begin{itemize}
\item
$f_t'$ has $0$ roots in $(\underline{L_t},t)$,
and $1$ root in $(t,\overline{L_t})$ (denoted $s_t$).
\item
$f_t'$ has $0$ roots in $(\underline{L_t},t)$,
and $0$ roots in $(t,\overline{L_t})$.
\item
$f_t'$ has $1$ root in $(\underline{L_t},t)$ (denoted $s_t$),
and $0$ roots in $(t,\overline{L_t})$.
\end{itemize}
Whenever the first possibility is satisfied, we will show that possibility
(A) and parts (A1,A2,A3) are also satisfied. Moreover, whenever the second
possibility is satisfied, we can similarly show that possibility (B) and parts
(B1,B2,B3) are also satisfied. Finally, we will show that the third
possibility is never satisfied.

Suppose that the first possibility is satisfied. Possibility (A) is then
trivially satisfied. Moreover, $f_t'$ has $0$ roots in
$(\underline{L_t}, t)$, $f_t'(t) = f_t''(t) = 0$ and $f_t'''(t) > 0$,
$f_t'$ has $0$ roots in $(t,s_t)$, $f_t'(s_t) = 0$ and $f_t''(s_t) \neq 0$,
and $f_t'$ has $0$ roots in $(s_t,\overline{L_t})$. Thus, since
$(f_t') |_{L_t}$ is real-valued and continuous (see equation (\ref{eqft'2})),
the above observations are only possible if part (A1) is satisfied. Moreover,
since $L_t \in \{K_1^{(1)},K_2^{(1)},\ldots\}$, parts (2,3) of
lemma \ref{lemf'} imply parts (A2,A3). Thus, whenever the first possibility
is satisfied, possibility (A) and parts (A1,A2,A3) are also satisfied.

Suppose that the third possibility is satisfied. First, recall that
$s_t \in (\underline{L_t},t)$
is a root of $f_t'$ of multiplicity $1$. Thus, we can fix an $\e>0$ sufficiently
small such that $\underline{L_t} + \e < s_t < t - \e$, and $s_t$ is the unique
root of $f_t'$ in $B(s_t,\e)$. Next, we can proceed similarly to part (iii) in
the proof of lemma \ref{lemRootsNonAsy1} to show that $f_n'$ has $1$ root in
$B(s_t,\e)$. Also, since roots of $f_n'$ occur
in complex conjugate pairs (see equation (\ref{eqfn'})), this root must be
contained in $(s_t-\e,s_t+\e)$. Finally, note that equation (\ref{eqIntervaln})
implies that we can choose $\e$ and $\xi$ such that
$(s_t-\e,s_t+\e) \subset (\underline{L_n}, t-\xi]$.
Therefore $f_n'$ has a root in $(\underline{L_n}, t-\xi]$. However, as we will
shortly show below, such a root does not exist. Thus we have a contradiction,
and so the third possibility is never satisfied.

Consider $f_n'$. First recall that $(t - 4\xi, t + 4\xi) \subset L_t$,
and $(t - 2\xi, t + 2\xi) \subset L_n$
comes from equations (\ref{eqxi}, \ref{eqIntervaln}). Also, part (1) of
lemma \ref{lemRootsNonAsy1} implies that $f_n'$ has $2$ roots
in $B(t,\xi)$.

\begin{figure}[t]
\centering
\begin{tikzpicture}

\draw [dotted] (-.5,8) --++(14,0);
\draw (-.5,8) node {$\R$};
\draw (-.5,8.5) node {$\mathbb{H}$};
\draw (-1.25,8) node {(A)};

\draw (0,8.1) --++(0,-.2);
\draw (0,8) --++(1.5,0);
\draw (1.5,8.1) --++(0,-.2);
\draw (3,8.1) --++(0,-.2);
\draw (3,8) --++(1.5,0);
\draw (4.5,8.1) --++(0,-.2);
\draw (6,8.1) --++(0,-.2);
\draw (6,8) --++(1.5,0);
\draw (7.5,8.1) --++(0,-.2);
\draw (11.5,8.1) --++(0,-.2);
\draw (11.5,8) --++(1.5,0);
\draw (13,8.1) --++(0,-.2);

\draw (0,7.6) node {\scriptsize $\underline{S_3}$};
\draw (.75,7.6) node {\scriptsize $<$};
\draw (1.5,7.6) node {\scriptsize $\overline{S_3}$};
\draw (2.25,7.6) node {\scriptsize $\le$};
\draw (3,7.6) node {\scriptsize $\underline{S_2}$};
\draw (3.75,7.6) node {\scriptsize $<$};
\draw (4.5,7.6) node {\scriptsize $\overline{S_2}$};
\draw (5.25,7.6) node {\scriptsize $\le$};
\draw (6,7.6) node {\scriptsize $\underline{S_1}$};
\draw (6.75,7.6) node {\scriptsize $<$};
\draw (7.5,7.6) node {\scriptsize $\underline{L_t}$};
\draw (11.5,7.6) node {\scriptsize $\overline{L_t}$};
\draw (12.25,7.6) node {\scriptsize $<$};
\draw (13,7.6) node {\scriptsize $\overline{S_1}$};

\draw (9.5,8) node {$\ast$};
\draw (9.5,7.75) node {\scriptsize $t$};
\draw (10.5,8) node {$\times$};
\draw (10.5,7.75) node {\scriptsize $s_t$};

\draw [dotted] (-.5,6) --++(14,0);
\draw (-.5,6) node {$\R$};
\draw (-.5,6.5) node {$\mathbb{H}$};
\draw (-1.25,6) node {(B)};

\draw (0,6.1) --++(0,-.2);
\draw (0,6) --++(1.5,0);
\draw (1.5,6.1) --++(0,-.2);
\draw (3,6.1) --++(0,-.2);
\draw (3,6) --++(1.5,0);
\draw (4.5,6.1) --++(0,-.2);
\draw (6,6.1) --++(0,-.2);
\draw (6,6) --++(1.5,0);
\draw (7.5,6.1) --++(0,-.2);
\draw (11.5,6.1) --++(0,-.2);
\draw (11.5,6) --++(1.5,0);
\draw (13,6.1) --++(0,-.2);

\draw (0,5.6) node {\scriptsize $\underline{S_3}$};
\draw (.75,5.6) node {\scriptsize $<$};
\draw (1.5,5.6) node {\scriptsize $\overline{S_3}$};
\draw (2.25,5.6) node {\scriptsize $\le$};
\draw (3,5.6) node {\scriptsize $\underline{S_2}$};
\draw (3.75,5.6) node {\scriptsize $<$};
\draw (4.5,5.6) node {\scriptsize $\overline{S_2}$};
\draw (5.25,5.6) node {\scriptsize $\le$};
\draw (6,5.6) node {\scriptsize $\underline{S_1}$};
\draw (6.75,5.6) node {\scriptsize $<$};
\draw (7.5,5.6) node {\scriptsize $\underline{L_t}$};
\draw (11.5,5.6) node {\scriptsize $\overline{L_t}$};
\draw (12.25,5.6) node {\scriptsize $<$};
\draw (13,5.6) node {\scriptsize $\overline{S_1}$};

\draw (9.5,6) node {$\ast$};
\draw (9.5,5.75) node {\scriptsize $t$};

\draw [dotted] (-.5,4) --++(14,0);
\draw (-.5,4) node {$\R$};
\draw (-.5,4.5) node {$\mathbb{H}$};
\draw (-1.25,4) node {(a)};

\draw (0,4) node {$\bullet$};
\draw (.375,4) node {$\times$};
\draw (.75,4) node {$\bullet$};
\draw (1.125,4) node {$\times$};
\draw (1.5,4) node {$\bullet$};
\draw (3,4) node {$\bullet$};
\draw (3.375,4) node {$\times$};
\draw (3.75,4) node {$\bullet$};
\draw (4.125,4) node {$\times$};
\draw (4.5,4) node {$\bullet$};
\draw (6,4) node {$\bullet$};
\draw (6.375,4) node {$\times$};
\draw (6.75,4) node {$\bullet$};
\draw (7.125,4) node {$\times$};
\draw (7.5,4) node {$\bullet$};
\draw (11.5,4) node {$\bullet$};
\draw (11.875,4) node {$\times$};
\draw (12.25,4) node {$\bullet$};
\draw (12.625,4) node {$\times$};
\draw (13,4) node {$\bullet$};

\draw (0,3.6) node {\scriptsize $\underline{S_{3,n}}$};
\draw (.75,3.6) node {\scriptsize $<$};
\draw (1.5,3.6) node {\scriptsize $\overline{S_{3,n}}$};
\draw (2.25,3.6) node {\scriptsize $<$};
\draw (3,3.6) node {\scriptsize $\underline{S_{2,n}}$};
\draw (3.75,3.6) node {\scriptsize $<$};
\draw (4.5,3.6) node {\scriptsize $\overline{S_{2,n}}$};
\draw (5.25,3.6) node {\scriptsize $<$};
\draw (6,3.6) node {\scriptsize $\underline{S_{1,n}}$};
\draw (6.75,3.6) node {\scriptsize $<$};
\draw (7.5,3.6) node {\scriptsize $\underline{L_n}$};
\draw (11.5,3.6) node {\scriptsize $\overline{L_n}$};
\draw (12.25,3.6) node {\scriptsize $<$};
\draw (13,3.6) node {\scriptsize $\overline{S_{1,n}}$};

\draw [dotted] (9.5,4) circle (.75cm);
\draw (8.6,4.75) node {\scriptsize $B(t,\xi)$};
\draw (9.5,4) node {$\ast$};
\draw (9.5,3.75) node {\scriptsize $t_{1,n}$};
\draw (10.5,4) node {\scriptsize $\times$};
\draw (10.5,3.75) node {\scriptsize $s_n$};

\draw [dotted] (-.5,2) --++(14,0);
\draw (-.5,2) node {$\R$};
\draw (-.5,2.5) node {$\mathbb{H}$};
\draw (-1.25,2) node {(b)};

\draw (0,2) node {$\bullet$};
\draw (.375,2) node {$\times$};
\draw (.75,2) node {$\bullet$};
\draw (1.125,2) node {$\times$};
\draw (1.5,2) node {$\bullet$};
\draw (3,2) node {$\bullet$};
\draw (3.375,2) node {$\times$};
\draw (3.75,2) node {$\bullet$};
\draw (4.125,2) node {$\times$};
\draw (4.5,2) node {$\bullet$};
\draw (6,2) node {$\bullet$};
\draw (6.375,2) node {$\times$};
\draw (6.75,2) node {$\bullet$};
\draw (7.125,2) node {$\times$};
\draw (7.5,2) node {$\bullet$};
\draw (11.5,2) node {$\bullet$};
\draw (11.875,2) node {$\times$};
\draw (12.25,2) node {$\bullet$};
\draw (12.625,2) node {$\times$};
\draw (13,2) node {$\bullet$};

\draw (0,1.6) node {\scriptsize $\underline{S_{3,n}}$};
\draw (.75,1.6) node {\scriptsize $<$};
\draw (1.5,1.6) node {\scriptsize $\overline{S_{3,n}}$};
\draw (2.25,1.6) node {\scriptsize $<$};
\draw (3,1.6) node {\scriptsize $\underline{S_{2,n}}$};
\draw (3.75,1.6) node {\scriptsize $<$};
\draw (4.5,1.6) node {\scriptsize $\overline{S_{2,n}}$};
\draw (5.25,1.6) node {\scriptsize $<$};
\draw (6,1.6) node {\scriptsize $\underline{S_{1,n}}$};
\draw (6.75,1.6) node {\scriptsize $<$};
\draw (7.5,1.6) node {\scriptsize $\underline{L_n}$};
\draw (11.5,1.6) node {\scriptsize $\overline{L_n}$};
\draw (12.25,1.6) node {\scriptsize $<$};
\draw (13,1.6) node {\scriptsize $\overline{S_{1,n}}$};

\draw [dotted] (9.5,2) circle (.75cm);
\draw (8.6,2.75) node {\scriptsize $B(t,\xi)$};
\draw (9.75,2) node {\scriptsize $\times$};
\draw (9.75,2.25) node {\scriptsize $t_{1,n}$};
\draw (9.25,2) node {\scriptsize $\times$};
\draw (9.25,1.75) node {\scriptsize $t_{2,n}$};
\draw (10.5,2) node {\scriptsize $\times$};
\draw (10.5,1.75) node {\scriptsize $s_n$};

\draw [dotted] (-.5,0) --++(14,0);
\draw (-.5,0) node {$\R$};
\draw (-.5,.5) node {$\mathbb{H}$};
\draw (-1.25,0) node {(c)};

\draw (0,0) node {$\bullet$};
\draw (.375,0) node {$\times$};
\draw (.75,0) node {$\bullet$};
\draw (1.125,0) node {$\times$};
\draw (1.5,0) node {$\bullet$};
\draw (3,0) node {$\bullet$};
\draw (3.375,0) node {$\times$};
\draw (3.75,0) node {$\bullet$};
\draw (4.125,0) node {$\times$};
\draw (4.5,0) node {$\bullet$};
\draw (6,0) node {$\bullet$};
\draw (6.375,0) node {$\times$};
\draw (6.75,0) node {$\bullet$};
\draw (7.125,0) node {$\times$};
\draw (7.5,0) node {$\bullet$};
\draw (11.5,0) node {$\bullet$};
\draw (11.875,0) node {$\times$};
\draw (12.25,0) node {$\bullet$};
\draw (12.625,0) node {$\times$};
\draw (13,0) node {$\bullet$};

\draw (0,-.4) node {\scriptsize $\underline{S_{3,n}}$};
\draw (.75,-.4) node {\scriptsize $<$};
\draw (1.5,-.4) node {\scriptsize $\overline{S_{3,n}}$};
\draw (2.25,-.4) node {\scriptsize $<$};
\draw (3,-.4) node {\scriptsize $\underline{S_{2,n}}$};
\draw (3.75,-.4) node {\scriptsize $<$};
\draw (4.5,-.4) node {\scriptsize $\overline{S_{2,n}}$};
\draw (5.25,-.4) node {\scriptsize $<$};
\draw (6,-.4) node {\scriptsize $\underline{S_{1,n}}$};
\draw (6.75,-.4) node {\scriptsize $<$};
\draw (7.5,-.4) node {\scriptsize $\underline{L_n}$};
\draw (11.5,-.4) node {\scriptsize $\overline{L_n}$};
\draw (12.25,-.4) node {\scriptsize $<$};
\draw (13,-.4) node {\scriptsize $\overline{S_{1,n}}$};

\draw [dotted] (9.5,0) circle (.75cm);
\draw (8.6,.75) node {\scriptsize $B(t,\xi)$};
\draw (9.5,.3) node {\scriptsize $\times \; t_{1,n}$};
\draw (9.5,-.3) node {\scriptsize $\times \; t_{2,n}$};
\draw (10.5,0) node {\scriptsize $\times$};
\draw (10.5,-.25) node {\scriptsize $s_n$};

\end{tikzpicture}
\caption{(A,B): $T_t$, the set of roots of $f_t'$, as described by lemma
\ref{lemfn'Case1}, for possibilities (A,B). Note, for each $i \in \{1,2,3\}$,
the subintervals of $[\underline{S_i}, \overline{S_i}] \setminus S_i$ contain
at most $1$ root (except $L_t$). (a,b,c): $T_n$, the set
of roots of $f_t'$, as described by lemma \ref{lemfn'Case1} for possibilities (a,b,c).
Roots of multiplicity $1$ and $2$ are represented by $\times$ and $\ast$
respectively, and elements of $S_n = S_{1,n} \cup S_{2,n} \cup S_{3,n}$
are represented by $\bullet$.}
\label{figSnC_xi2}
\end{figure}
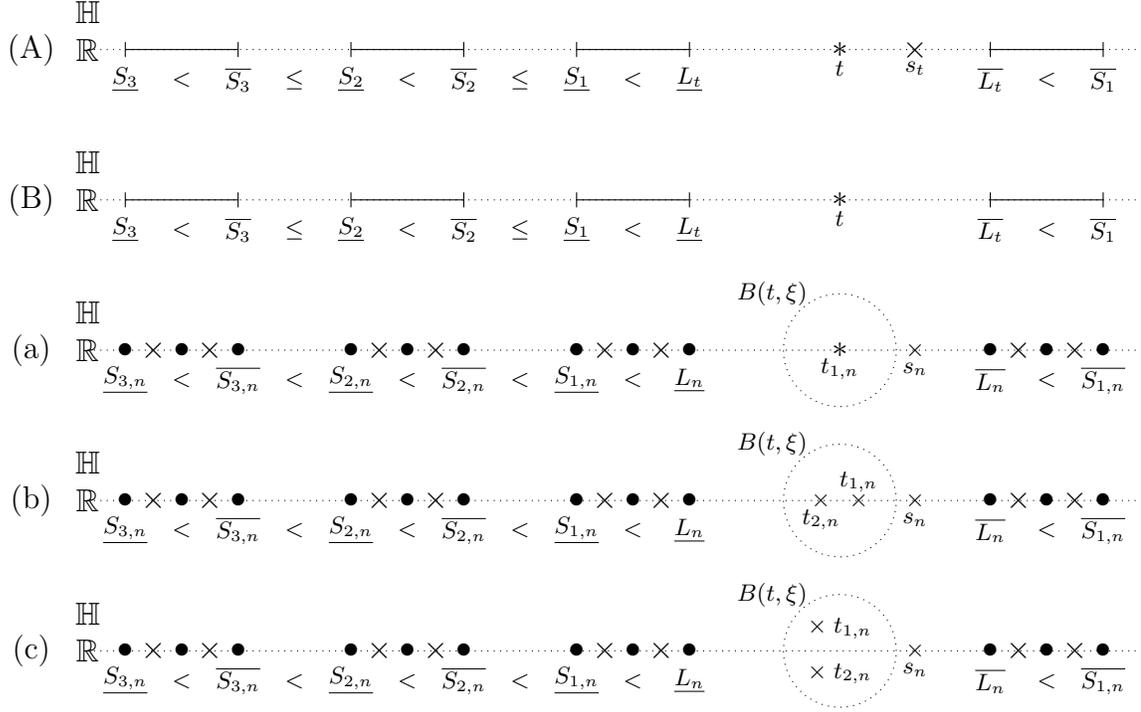

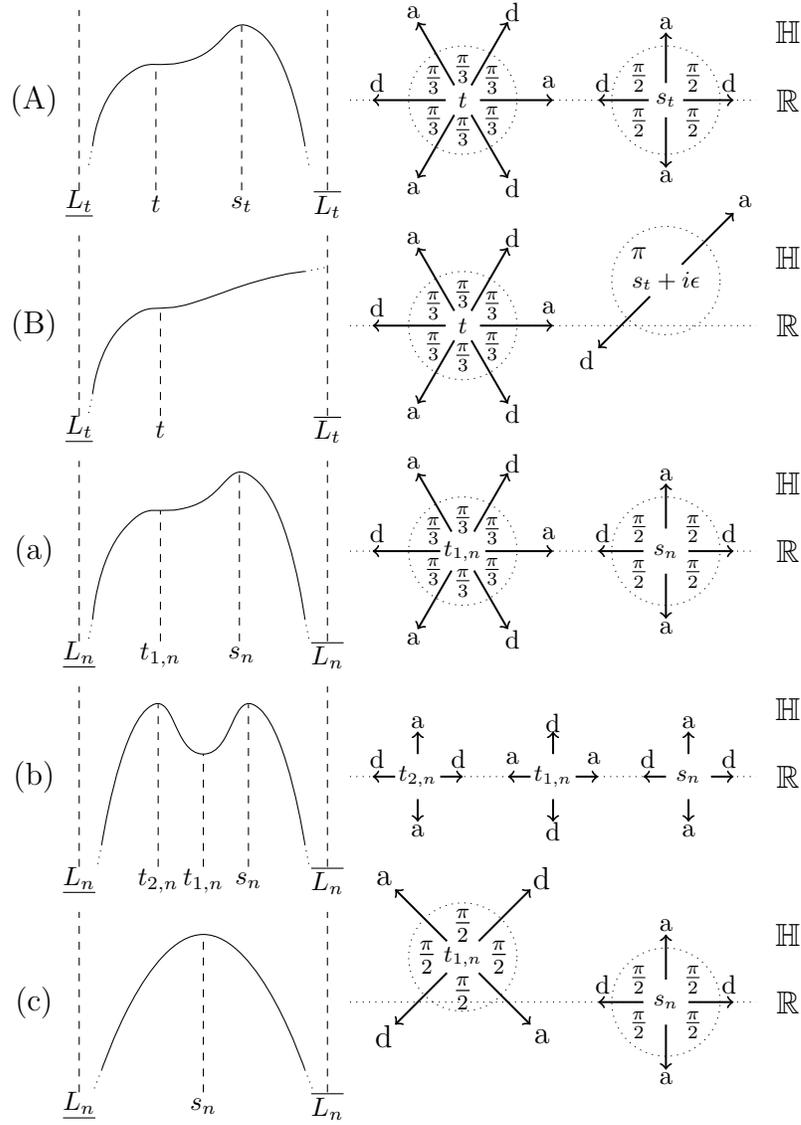
\begin{figure}[t]
\centering
\begin{tikzpicture}[scale=0.6]

\draw [dashed] (-.5,18) --++(0,4);
\draw (-1.5,20) node {(A)};
\draw (-.5,17.7) node {\footnotesize $\underline{L_t}$};
\draw [dashed] (5,18) --++ (0,4);
\draw (5,17.7) node {\footnotesize $\overline{L_t}$};

\draw plot [smooth, tension=1] coordinates
{ (-.2,19) (.5,20.5) (2,20.9) (3.5,21.5) (4.5,19)};
\draw [dotted] (-.2,19) --++(-.1,-.5);
\draw [dotted] (4.5,19) --++(.1,-.5);

\draw [dashed] (1.2,18) --++(0,2.8);
\draw (1.2,17.7) node {\footnotesize $t$};
\draw [dashed] (3.1,18) --++(0,3.65);
\draw (3.1,17.7) node {\footnotesize $s_t$};

\draw [dashed] (-.5,13) --++(0,4);
\draw (-1.5,15) node {(B)};
\draw (-.5,12.7) node {\footnotesize $\underline{L_t}$};
\draw [dashed] (5,13) --++ (0,4);
\draw (5,12.7) node {\footnotesize $\overline{L_t}$};

\draw plot [smooth, tension=1] coordinates
{ (-.2,13.5) (.5,15.05) (2,15.5) (3.5,16) (4.5,16.2)};
\draw [dotted] (-.2,13.5) --++(-.1,-.5);
\draw [dotted] (4.5,16.2) --++(.5,.1);

\draw [dashed] (1.3,13) --++(0,2.3);
\draw (1.3,12.7) node {\footnotesize $t$};

\draw [dashed] (-.5,8) --++(0,4);
\draw (-1.5,10) node {(a)};
\draw (-.5,7.7) node {\footnotesize $\underline{L_n}$};
\draw [dashed] (5,8) --++ (0,4);
\draw (5,7.7) node {\footnotesize $\overline{L_n}$};

\draw plot [smooth, tension=1] coordinates
{ (-.2,8.5) (.5,10.5) (2,11) (3.5,11.5) (4.5,8.5)};
\draw [dotted] (-.2,8.5) --++(-.1,-.5);
\draw [dotted] (4.5,8.5) --++(.1,-.5);

\draw [dashed] (1.3,8) --++(0,2.9);
\draw (1.3,7.7) node {\footnotesize $t_{1,n}$};
\draw [dashed] (3.05,8) --++(0,3.75);
\draw (3.1,7.7) node {\footnotesize $s_n$};

\draw [dashed] (-.5,3) --++(0,4);
\draw (-1.5,5) node {(b)};
\draw (-.5,2.7) node {\footnotesize $\underline{L_n}$};
\draw [dashed] (5,3) --++ (0,4);
\draw (5,2.7) node {\footnotesize $\overline{L_n}$};

\draw plot [smooth, tension=1] coordinates
{ (0,3.5) (1,6.5) (2.25,5.5) (3.5,6.5) (4.5,3.5)};
\draw [dotted] (0,3.5) --++(-.1,-.5);
\draw [dotted] (4.5,3.5) --++(.1,-.5);

\draw [dashed] (1.25,3) --++(0,3.6);
\draw (1.25,2.7) node {\footnotesize $t_{2,n}$};
\draw [dashed] (2.25,3) --++(0,2.5);
\draw (2.25,2.7) node {\footnotesize $t_{1,n}$};
\draw [dashed] (3.25,3) --++(0,3.6);
\draw (3.25,2.7) node {\footnotesize $s_n$};

\draw [dashed] (-.5,-2) --++(0,4);
\draw (-1.5,0) node {(c)};
\draw (-.5,-2.3) node {\footnotesize $\underline{L_n}$};
\draw [dashed] (5,-2) --++ (0,4);
\draw (5,-2.3) node {\footnotesize $\overline{L_n}$};

\draw plot [smooth, tension=1] coordinates
{ (0,-1.5) (2.25,1.5) (4.5,-1.5)};
\draw [dotted] (0,-1.5) --++(-.2,-.5);
\draw [dotted] (4.5,-1.5) --++(.2,-.5);

\draw [dashed] (2.25,-2) --++(0,3.5);
\draw (2.25,-2.3) node {\footnotesize $s_n$};

\draw (15.2,21.5) node {$\mathbb{H}$};
\draw [dotted] (5.5,20) --++(9,0);
\draw (15.2,20) node {$\R$};

\draw[arrows=->,line width=.75pt](8,20)--(10,20);
\draw (9.9,20.4) node {\footnotesize a};
\draw[arrows=->,line width=.75pt](8,20)--(9,{20+sqrt(3)});
\draw (9.1,21.95) node {\footnotesize d};
\draw[arrows=->,line width=.75pt](8,20)--(7,{20+sqrt(3)});
\draw (6.9,21.95) node {\footnotesize a};
\draw[arrows=->,line width=.75pt](8,20)--(6,20);
\draw (6.1,20.4) node {\footnotesize d};
\draw[arrows=->,line width=.75pt](8,20)--(7,{20-sqrt(3)});
\draw (6.9,18.05) node {\footnotesize a};
\draw[arrows=->,line width=.75pt](8,20)--(9,{20-sqrt(3)});
\draw (9.1,18.05) node {\footnotesize d};

\fill[white] (8,20) circle (.38cm);
\draw (8,20) node {\scriptsize $t$};

\draw [dotted] (8,20) circle (1.2cm);
\draw (8.65,20.45) node {\footnotesize $\frac\pi3$};
\draw (8,20.7) node {\footnotesize $\frac\pi3$};
\draw (7.35,20.45) node {\footnotesize $\frac\pi3$};
\draw (8,19.3) node {\footnotesize $\frac\pi3$};
\draw (7.35,19.55) node {\footnotesize $\frac\pi3$};
\draw (8.65,19.55) node {\footnotesize $\frac\pi3$};

\draw[arrows=->,line width=.75pt](12.5,20)--(14,20);
\draw (13.9,20.4) node {\footnotesize d};
\draw[arrows=->,line width=.75pt](12.5,20)--(12.5,21.5);
\draw (12.5,21.7) node {\footnotesize a};
\draw[arrows=->,line width=.75pt](12.5,20)--(11,20);
\draw (11.1,20.4) node {\footnotesize d};
\draw[arrows=->,line width=.75pt](12.5,20)--(12.5,18.5);
\draw (12.5,18.3) node {\footnotesize a};

\fill[white] (12.5,20) circle (.38cm);
\draw (12.5,20) node {\scriptsize $s_t$};

\draw [dotted] (12.5,20) circle (1.2cm);
\draw (13.1,20.5) node {\footnotesize $\frac\pi2$};
\draw (11.9,20.5) node {\footnotesize $\frac\pi2$};
\draw (13.1,19.5) node {\footnotesize $\frac\pi2$};
\draw (11.9,19.5) node {\footnotesize $\frac\pi2$};

\draw (15.2,16.5) node {$\mathbb{H}$};
\draw [dotted] (5.5,15) --++(9,0);
\draw (15.2,15) node {$\R$};

\draw[arrows=->,line width=.75pt](8,15)--(10,15);
\draw (9.9,15.4) node {\footnotesize a};
\draw[arrows=->,line width=.75pt](8,15)--(9,{15+sqrt(3)});
\draw (9.1,16.95) node {\footnotesize d};
\draw[arrows=->,line width=.75pt](8,15)--(7,{15+sqrt(3)});
\draw (6.9,16.95) node {\footnotesize a};
\draw[arrows=->,line width=.75pt](8,15)--(6,15);
\draw (6.1,15.4) node {\footnotesize d};
\draw[arrows=->,line width=.75pt](8,15)--(7,{15-sqrt(3)});
\draw (6.9,13.05) node {\footnotesize a};
\draw[arrows=->,line width=.75pt](8,15)--(9,{15-sqrt(3)});
\draw (9.1,13.05) node {\footnotesize d};

\fill[white] (8,15) circle (.38cm);
\draw (8,15) node {\scriptsize $t$};

\draw [dotted] (8,15) circle (1.2cm);
\draw (8.65,15.45) node {\footnotesize $\frac\pi3$};
\draw (8,15.7) node {\footnotesize $\frac\pi3$};
\draw (7.35,15.45) node {\footnotesize $\frac\pi3$};
\draw (8,14.3) node {\footnotesize $\frac\pi3$};
\draw (7.35,14.55) node {\footnotesize $\frac\pi3$};
\draw (8.65,14.55) node {\footnotesize $\frac\pi3$};

\draw[arrows=->,line width=.75pt](12.5,16)--(12.5+sqrt{9/2},{16+sqrt{9/2}});
\draw (14.25,17.75) node {\footnotesize a};
\draw[arrows=->,line width=.75pt](12.5,16)--(12.5-sqrt{9/2},{16-sqrt{9/2}});
\draw (10.75,14.25) node {\footnotesize d};

\fill[white] (12.5,16) circle (.5cm);
\draw (12.5,16) node {\scriptsize $s_t+i\e$};

\draw [dotted] (12.5,16) circle (1.2cm);
\draw (11.9,16.6) node {\footnotesize $\pi$};

\draw (15.2,11.5) node {$\mathbb{H}$};
\draw [dotted] (5.5,10) --++(9,0);
\draw (15.2,10) node {$\R$};

\draw[arrows=->,line width=.75pt](8,10)--(10,10);
\draw (9.9,10.4) node {\footnotesize a};
\draw[arrows=->,line width=.75pt](8,10)--(9,{10+sqrt(3)});
\draw (9.1,11.95) node {\footnotesize d};
\draw[arrows=->,line width=.75pt](8,10)--(7,{10+sqrt(3)});
\draw (6.9,11.95) node {\footnotesize a};
\draw[arrows=->,line width=.75pt](8,10)--(6,10);
\draw (6.1,10.4) node {\footnotesize d};
\draw[arrows=->,line width=.75pt](8,10)--(7,{10-sqrt(3)});
\draw (6.9,8.05) node {\footnotesize a};
\draw[arrows=->,line width=.75pt](8,10)--(9,{10-sqrt(3)});
\draw (9.1,8.05) node {\footnotesize d};

\fill[white] (8,10) circle (.5cm);
\draw (8,10) node {\scriptsize $t_{1,n}$};

\draw [dotted] (8,10) circle (1.2cm);
\draw (8.65,10.45) node {\footnotesize $\frac\pi3$};
\draw (8,10.7) node {\footnotesize $\frac\pi3$};
\draw (7.35,10.45) node {\footnotesize $\frac\pi3$};
\draw (8,9.3) node {\footnotesize $\frac\pi3$};
\draw (7.35,9.55) node {\footnotesize $\frac\pi3$};
\draw (8.65,9.55) node {\footnotesize $\frac\pi3$};

\draw[arrows=->,line width=.75pt](12.5,10)--(14,10);
\draw (13.9,10.4) node {\footnotesize d};
\draw[arrows=->,line width=.75pt](12.5,10)--(12.5,11.5);
\draw (12.5,11.7) node {\footnotesize a};
\draw[arrows=->,line width=.75pt](12.5,10)--(11,10);
\draw (11.1,10.4) node {\footnotesize d};
\draw[arrows=->,line width=.75pt](12.5,10)--(12.5,8.5);
\draw (12.5,8.3) node {\footnotesize a};

\fill[white] (12.5,10) circle (.5cm);
\draw (12.5,10) node {\scriptsize $s_n$};

\draw [dotted] (12.5,10) circle (1.2cm);
\draw (13.1,10.5) node {\footnotesize $\frac\pi2$};
\draw (11.9,10.5) node {\footnotesize $\frac\pi2$};
\draw (13.1,9.5) node {\footnotesize $\frac\pi2$};
\draw (11.9,9.5) node {\footnotesize $\frac\pi2$};

\draw (15.2,6.5) node {$\mathbb{H}$};
\draw [dotted] (5.5,5) --++(9,0);
\draw (15.2,5) node {$\R$};

\draw[arrows=->,line width=.75pt](7,5)--(8,5);
\draw (7.9,5.4) node {\footnotesize d};
\draw[arrows=->,line width=.75pt](7,5)--(7,6);
\draw (7,6.2) node {\footnotesize a};
\draw[arrows=->,line width=.75pt](7,5)--(6,5);
\draw (6.1,5.4) node {\footnotesize d};
\draw[arrows=->,line width=.75pt](7,5)--(7,4);
\draw (7,3.8) node {\footnotesize a};

\fill[white] (7,5) circle (.5cm);
\draw (7,5) node {\scriptsize $t_{2,n}$};

\draw[arrows=->,line width=.75pt](10,5)--(11,5);
\draw (10.9,5.4) node {\footnotesize a};
\draw[arrows=->,line width=.75pt](10,5)--(10,6);
\draw (10,6.2) node {\footnotesize d};
\draw[arrows=->,line width=.75pt](10,5)--(9,5);
\draw (9.1,5.4) node {\footnotesize a};
\draw[arrows=->,line width=.75pt](10,5)--(10,4);
\draw (10,3.8) node {\footnotesize d};

\fill[white] (10,5) circle (.5cm);
\draw (10,5) node {\scriptsize $t_{1,n}$};

\draw[arrows=->,line width=.75pt](13,5)--(14,5);
\draw (13.9,5.4) node {\footnotesize d};
\draw[arrows=->,line width=.75pt](13,5)--(13,6);
\draw (13,6.2) node {\footnotesize a};
\draw[arrows=->,line width=.75pt](13,5)--(12,5);
\draw (12.1,5.4) node {\footnotesize d};
\draw[arrows=->,line width=.75pt](13,5)--(13,4);
\draw (13,3.8) node {\footnotesize a};

\fill[white] (13,5) circle (.5cm);
\draw (13,5) node {\scriptsize $s_n$};

\draw (15.2,1.5) node {$\mathbb{H}$};
\draw [dotted] (5.5,0) --++(9,0);
\draw (15.2,0) node {$\R$};

\draw[arrows=->,line width=.75pt](12.5,0)--(14,0);
\draw (13.9,.4) node {\footnotesize d};
\draw[arrows=->,line width=.75pt](12.5,0)--(12.5,1.5);
\draw (12.5,1.7) node {\footnotesize a};
\draw[arrows=->,line width=.75pt](12.5,0)--(11,0);
\draw (11.1,.4) node {\footnotesize d};
\draw[arrows=->,line width=.75pt](12.5,0)--(12.5,-1.5);
\draw (12.5,-1.7) node {\footnotesize a};

\fill[white] (12.5,0) circle (.5cm);
\draw (12.5,0) node {\scriptsize $s_n$};

\draw [dotted] (12.5,0) circle (1.2cm);
\draw (13.1,.5) node {\footnotesize $\frac\pi2$};
\draw (11.9,.5) node {\footnotesize $\frac\pi2$};
\draw (13.1,-.5) node {\footnotesize $\frac\pi2$};
\draw (11.9,-.5) node {\footnotesize $\frac\pi2$};

\draw[arrows=->,line width=.75pt](8,1)--(8+sqrt{9/2},{1+sqrt{9/2}});
\draw (9.75,2.75) node {d};
\draw[arrows=->,line width=.75pt](8,1)--(8-sqrt{9/2},{1+sqrt{9/2}});
\draw (6.25,2.75) node {a};
\draw[arrows=->,line width=.75pt](8,1)--(8-sqrt{9/2},{1-sqrt{9/2}});
\draw (6.25,-.75) node {d};
\draw[arrows=->,line width=.75pt](8,1)--(8+sqrt{9/2},{1-sqrt{9/2}});
\draw (9.75,-.75) node {a};

\fill[white] (8,1) circle (.5cm);
\draw (8,1) node {\scriptsize $t_{1,n}$};

\draw [dotted] (8,1) circle (1.2cm);
\draw (8.8,1) node {$\frac\pi2$};
\draw (8,1.7) node {$\frac\pi2$};
\draw (7.2,1) node {$\frac\pi2$};
\draw (8,.2) node {$\frac\pi2$};

\end{tikzpicture}
\caption{
(A,B), left: $R_t |_{L_t}$ for possibilities (A,B) of lemma \ref{lemfn'Case2}.
(a,b,c), left: $R_n |_{L_n}$ for possibilities (a,b,c) of lemma \ref{lemfn'Case2}.
(A,B), right: The associated directions of steepest decent/ascent in $\C$ for
$f_t$ for possibilities (A,B).
(a,b,c), right: The associated directions of steepest decent/ascent in $\C$ for
$f_n$ for possibilities (a,b,c).}
\label{figConDirCase2}
\end{figure}

Suppose that possibility (a) of lemma \ref{lemRootsNonAsy1} is satisfied,
i.e., that $t_{1,n} \in (t-\xi,t+\xi) \subset L_n$,
and $t_{1,n} = t_{2,n}$ is root of $f_n'$ of multiplicity $2$. Thus
$f_n'(t_{1,n}) = f_n''(t_{1,n}) = 0$ and $f_n'''(t_{1,n}) \neq 0$.
Indeed, $f_n'''(t_{1,n}) \to f_t'''(t) > 0$
(see part (4) of lemma \ref{lemNonAsyRoots}, and lemma
\ref{lemfnftConv}). Next note, equation (\ref{eqfn'}) implies that
$(f_n') |_{L_n}$ is real-valued and continuous. Moreover, recalling
that $\underline{L_n}$ and $\overline{L_n}$ are two consecutive elements
of $S_{1,n}$, this equation gives
\begin{equation*}
\lim_{w \in \R, w \downarrow \underline{L_n}} f_n'(w) = + \infty
\hspace{.5cm} \text{and} \hspace{.5cm}
\lim_{w \in \R, w \uparrow \overline{L_n}} f_n'(w) = - \infty.
\end{equation*}
Also, recalling that $L_n \in \{K_{1,n}^{(1)},K_{2,n}^{(1)},\ldots\}$,
part (1) of lemma \ref{lemRootsNonAsy1} implies that $f_n'$ has $0$
roots in $(t-\xi,t+\xi) \setminus \{t_{1,n}\}$, and $1$ root
(denoted $s_n$) in $L_n \setminus (t-\xi,t+\xi)
= (\underline{L_n}, t-\xi] \cup [t+\xi,\overline{L_n})$.
The above observations are only possible if $s_n \in [t+\xi,\overline{L_n})$
and part (a1) is satisfied. Finally, since $t_{1,n} \in (t-\xi, t+\xi)$
and $t_{1,n} = t_{2,n}$ (see possibility (a) of lemma \ref{lemRootsNonAsy1}),
and since $L_n \in \{K_{1,n}^{(1)},K_{2,n}^{(1)},\ldots\}$, parts (1,2,3)
of lemma \ref{lemRootsNonAsy1} imply parts (a2,a3). Thus, whenever
possibility (a) of lemma \ref{lemRootsNonAsy1} is satisfied, we have
parts (a1,a2,a3). Similarly, whenever possibilities (b) and (c)
are satisfied, we have parts (b1,b2,b3) and (c1,c2,c3) respectively.
\end{proof}

As in section \ref{secContCase1}, let $T_t \subset \C \setminus S$ and
$T_n \subset \C \setminus S_n$ respectively denote the set of roots of
$f_t'$ and the set of roots of $f_n'$. The previous lemma discusses the
locations of the elements of these discrete sets, and this is depicted
in figure \ref{figSnC_xi2}. Next recall, similarly to section
\ref{secContCase1}, equation (\ref{eqRnRes}) gives
$(R_t |_{L_t})' = (f_t') |_{L_t}$ and $(R_n |_{L_n})' = (f_n') |_{L_n}$,
and similarly for the higher order derivatives. Parts (A1,B1,a1,b1,c1)
of lemma \ref{lemfn'Case2} then imply that $R_t |_{L_t}$ and
$R_n |_{L_t}$ have those behaviours shown on the left of figure
\ref{figConDirCase2} for the various possibilities. Part (3) of lemma
\ref{lemDesAsc} also shows that $f_t$ and $f_n$ have those
directions of steepest descent/ascent shown on the right of figure
\ref{figConDirCase2}. For possibility (B), in anticipation of the
following lemma, we also display the directions of steepest
descent/ascent in the neighbourhood of an arbitrary point
$s_t + i \e \in \mathbb{H}$, where $s_t \in (t, \overline{L_t})$ and $\e>0$. Note,
part (B2) of lemma \ref{lemfn'Case2} implies that $f_t'(s_t+i\e) \neq 0$
for any such $s_t+i\e$, and so part (3) of lemma \ref{lemDesAsc}
implies that there is a unique direction of steepest descent and a
unique direction of steepest ascent as shown. The following lemma
examines the resulting contours of steepest descent/ascent:
\begin{lem}
\label{lemConAscDesCase2}
Whenever possibility (A) of lemma \ref{lemfn'Case2} is satisfied,
$s_t \in [t+\xi,\overline{L_t})$ (see equation (\ref{eqxi}) and
part (A1) of lemma \ref{lemfn'Case2}). Moreover, there exists simple
contours, $D_t, A_t, A_t''$, as shown in figure \ref{figConDesAscCase2}
with the following properties:
\begin{enumerate}
\item[(A1)]
$D_t, A_t, A_t''$ start at $t, t, s_t$ respectively, enter $\mathbb{H}$
in the directions $\frac\pi3, \frac{2\pi}3, \frac\pi2$ respectively,
and end in the intervals shown or are unbounded.
\item[(A2)]
$D_t$ is a contour of steepest descent for $f_t$, and
$A_t, A_t''$ are contours of steepest ascent for $f_t$.
\item[(A3)]
$D_t$ and $A_t$ intersect at $t$, and $D_t, A_t, A_t''$ do not otherwise intersect.
\end{enumerate}
When possibility (B) is satisfied, fix an arbitrary
$s_t \in (t, \overline{L_t})$. Moreover, fix the $\xi>0$
sufficiently small such that $s_t \in [t+\xi,\overline{L_t})$,
and fix an arbitrary $\e > 0$.
Then, when the $\e>0$ is fixed sufficiently small, there exists simple contours,
$D_t, A_t, D_t'', A_t''$, as shown in figure \ref{figConDesAscCase2}
with the following properties:
\begin{enumerate}
\item[(B1)]
$D_t$ and $A_t$ both start at $t$, enter $\mathbb{H}$
in the directions $\frac\pi3$ and $\frac{2\pi}3$ respectively,
and end in the intervals shown. $D_t''$ and
$A_t''$ both start at $s_t+i\e \in \mathbb{H}$, leave $s_t+i\e$
in opposite directions, and end in the interior of the
intervals shown or are unbounded.
\item[(B2)]
$D_t,D_t''$ are contours of steepest descent for $f_t$, and
$A_t, A_t''$ are contours of steepest ascent for $f_t$.
\item[(B3)]
$D_t$ and $A_t$ intersect at $t$,
$D_t''$ and $A_t''$ intersect at $s_t+i\e$,
and $D_t, A_t, D_t'', A_t''$ do not otherwise intersect.
\end{enumerate}

Also, whenever possibility (a) is
satisfied, there exists simple contours, $D_n, A_n, A_n''$, as shown
in figure \ref{figConDesAscCase2} with the following properties:
\begin{enumerate}
\item[(a1)]
$D_n, A_n, A_n''$ start at $t_{1,n}, t_{1,n}, s_n$ respectively, enter $\mathbb{H}$
in the directions $\frac\pi3, \frac{2\pi}3, \frac\pi2$ respectively,
and end in the intervals shown or are unbounded.
\item[(a2)]
$D_n$ is a contour of steepest descent for $f_n$, and
$A_n, A_n''$ are contours of steepest ascent for $f_n$.
\item[(a3)]
$D_n$ and $A_n$ intersect at $t_{1,n}$,
and $D_n, A_n, A_n''$ do not otherwise intersect.
\end{enumerate}
Next, whenever possibility (b) is
satisfied, there exists simple contours, $D_n,A_n,A_n''$, as shown
in figure \ref{figConDesAscCase2} with the following properties:
\begin{enumerate}
\item[(b1)]
$D_n, A_n, A_n''$ start at $t_{1,n}, t_{2,n}, s_n$ respectively,
all enter $\mathbb{H}$ in the direction $\frac\pi2$,
and end in the intervals shown or are unbounded.
\item[(b2)]
$D_n$ is a contour of steepest descent for $f_n$, and
$A_n, A_n''$ are contours of steepest ascent for $f_n$.
\item[(b3)]
$D_n, A_n, A_n''$ do not intersect.
\end{enumerate}
Next, whenever possibility (c) is
satisfied, there exists simple contours, $D_n, A_n, D_n', A_n', A_n''$, as shown
in figure \ref{figConDesAscCase2} with the following properties:
\begin{enumerate}
\item[(c1)]
$D_n,A_n,D_n',A_n'$ all start at $t_{1,n} \in \mathbb{H}$, leave $t_{1,n}$
in orthogonal directions in the counter-clockwise order $D_n,A_n,D_n',A_n'$,
and end in the intervals shown or are unbounded. $A_n''$ starts at $s_n$,
enters $\mathbb{H}$ in the direction $\frac\pi2$, and is unbounded.
\item[(c2)]
$D_n,D_n'$ are contours of steepest descent for $f_n$,
and $A_n,A_n',A_n''$ are contours of steepest ascent for $f_n$.
\item[(c3)]
$D_n,A_n,D_n',A_n'$ intersect at $t_{1,n}$,
and $D_n,A_n,D_n',A_n',A_n''$ does not otherwise intersect.
\end{enumerate}
\end{lem}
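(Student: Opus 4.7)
The proof plan is to mirror the structure of the proof of lemma \ref{lemConAscDesCase1}, with modifications to handle the additional critical point $s_t$ or $s_n$ lying in $L_t$ or $L_n$ under possibilities (A)/(a)/(b)/(c), and the non-critical starting point $s_t + i\e$ under possibility (B). Each required contour will be defined by starting at the indicated point, following the unique direction of steepest descent or ascent prescribed by part (3) of lemma \ref{lemDesAsc} (the specific launch directions being forced by the behaviour of $R_t |_{L_t}$ and $R_n |_{L_n}$ displayed in figure \ref{figConDirCase2}), and terminating at its first intersection with $S \cup T_t$ (respectively $S_n \cup T_n$), or declaring it unbounded if no such intersection exists. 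Here $T_t$ and $T_n$ are the discrete sets of roots introduced in equation (\ref{eqRootsTn}), and their locations relative to $S$ and $S_n$ are described by lemma \ref{lemfn'Case2} and displayed in figure \ref{figSnC_xi2}.

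First I would prove the non-intersection claims (A3), (B3), (a3), (b3), (c3). As in part (ii) of the proof of lemma \ref{lemConAscDesCase1}, this reduces to the strict monotonicity of $\text{Re}(f_t)$ and $\text{Re}(f_n)$ along descent and ascent contours (part (1) of lemma \ref{lemDesAsc}), combined with the linear order of the start points on $\R$ or, in possibility (c), the orthogonality of the exit directions at $t_{1,n} \in \mathbb{H}$. Confinement of each contour to $\mathbb{H}$ until its endpoint is then handled exactly as in part (iii) of the earlier proof: if $s \in \R \setminus (S \cup T_t)$ then $f_t'(s) \in \R \setminus \{0\}$, forcing both the ascent and descent directions at $s$ to be horizontal, so no such contour can cross $s$ from $\mathbb{H}$.

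Pinning down the exact endpoints will split into two layers. Gross location, meaning descent contours are bounded and avoid $S_{2,n}$ while ascent contours avoid $S_{1,n} \cup S_{3,n}$, follows from parts (4) and (5) of lemma \ref{lemDesAsc}. Excluding roots interior to the holes $K^{(i)}$ uses the fact that at such a root $f_t'$ vanishes to order one with a real-valued second derivative of definite sign, which constrains the admissible ascent and descent directions to be purely vertical or purely horizontal, as in the proof of part (iv) of lemma \ref{lemConAscDesCase1}. The final identification of the precise subinterval in which each contour terminates then relies on the harmonic-function arguments of parts (x)--(xiii) of that earlier proof: $\text{Im}(f_t)$ is constant along each contour by part (2) of lemma \ref{lemDesAsc}, and the piecewise description of $I_t$ recalled in figure \ref{figImfnExt}, together with $\{\underline{L_t}, \overline{L_t}\} \subset \supp(\mu)$ which holds in case (2), shows that the constant values are compatible only with the endpoints displayed in figure \ref{figConDesAscCase2}.

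The main obstacle will be possibility (B) for $f_t$, in which $D_t''$ and $A_t''$ start at the interior, non-critical point $s_t + i\e$; here the branched directional classification from lemma \ref{lemDesAsc}(3) at a critical point is unavailable, giving only one descent and one ascent direction. Part (B1) of lemma \ref{lemfn'Case2} yields $f_t'(s_t) > 0$, so for $\e > 0$ sufficiently small $f_t'(s_t + i\e)$ is close to a positive real, and the two launch directions at $s_t + i\e$ are asymptotic to $\pi$ and $0$. Along each of $D_t''$ and $A_t''$, $\text{Im}(f_t)$ is constant at a value close to the value $I_t$ takes on $L_t$, while $\text{Re}(f_t)$ is strictly monotone; matching this level against the piecewise-constant description of $I_t$ on $\R$ in figure \ref{figImfnExt}, and ruling out a return of either contour into $L_t$ by the strict monotonicity of $\text{Re}(f_t)$, forces the endpoints to lie in the claimed subintervals or yields unbounded branches. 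Non-collision of $D_t''$ and $A_t''$ away from the common start will be verified by the same $\text{Re}(f_t)$-monotonicity contradiction used for possibility (A), after taking $\e$ small enough that the two contours first separate within a thin horizontal strip around $\R$.
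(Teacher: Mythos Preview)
Your overall plan matches the paper's approach closely: define the contours as steepest descent/ascent paths, use lemma \ref{lemDesAsc} parts (1)--(5) for monotonicity, boundedness, and pole-avoidance, and pin down endpoints by combining the sign analysis at the real roots with the constancy of $\text{Im}(f_t)$ and the piecewise description of $I_t$ in figure \ref{figImfnExt}. For possibilities (A), (a), (b), (c) the sketch is essentially the paper's argument.

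There is, however, a genuine gap in your treatment of (B3). You claim that the non-intersection statements all reduce to strict monotonicity of $\text{Re}(f_t)$ along the contours together with the linear order of the start points. This is fine for $D_t$ versus $A_t$ and for $D_t''$ versus $A_t''$, but it does \emph{not} handle the cross-pairs $D_t$ versus $D_t''$ (both descent), $A_t$ versus $A_t''$ (both ascent), or $A_t$ versus $D_t''$. In the last case, $A_t$ ascends from $t$ so takes $\text{Re}$-values above $R_t(t)$, while $D_t''$ descends from $s_t+i\e$ so takes $\text{Re}$-values below $R_t(s_t+i\e)\approx R_t(s_t)>R_t(t)$; the ranges overlap, and no contradiction arises from the real part alone. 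The paper resolves all three cross-pairs at once by observing that, since $f_t'(s_t)>0$ by part (B1) of lemma \ref{lemfn'Case2}, a first-order Taylor expansion gives
\[
\text{Im}(f_t(s_t+i\e)) \;>\; \text{Im}(f_t(s_t)) + \tfrac12 \e f_t'(s_t) \;>\; \text{Im}(f_t(s_t)) \;=\; \text{Im}(f_t(t))
\]
for all $\e>0$ sufficiently small. Since $\text{Im}(f_t)$ is constant along each steepest descent/ascent contour (part (2) of lemma \ref{lemDesAsc}), the pair $D_t,A_t$ sits on one $\text{Im}$-level and the pair $D_t'',A_t''$ sits on a strictly higher one, and distinct level sets cannot meet. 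Your phrase ``at a value close to the value $I_t$ takes on $L_t$'' misses exactly this point: it is the \emph{strict} inequality, not proximity, that does the work.

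The same strict inequality is what the paper uses in step (v) to exclude $D_t''$ (and $A_t''$) from landing in $[\overline{L_t},\overline{S_1}]$: since $\{\underline{L_t},\overline{L_t}\}\subset\supp(\mu)$ and $I_t$ is non-increasing on $S_1$, every $s\in[\overline{L_t},\overline{S_1}]$ has $I_t(s)\le I_t(s_t)$, but $\text{Im}(f_t(s_t+i\e))>I_t(s_t)$, so the $\text{Im}$-level of $D_t''$ cannot be matched there. Your proposal to ``rule out a return of either contour into $L_t$ by the strict monotonicity of $\text{Re}(f_t)$'' does not work either, since $D_t''$ descends from a value near $R_t(s_t)$ and there are points of $L_t$ (for instance $t$ itself) with strictly smaller $R_t$. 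Once you replace these two steps by the imaginary-part-level argument, the rest of your plan goes through as in the paper.
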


\begin{figure}[t]
\centering
\begin{tikzpicture};

\draw (-1.2,15.5) node {$\mathbb{H}$};
\draw [dotted] (-1,14) --++(13,0);
\draw (-2,15.55) node {(A)};
\draw (-1.2,14) node {$\R$};
\draw (-.5,14) node {\scriptsize $)$};
\draw (-.5,13.7) node {\scriptsize $\underline{S_3}$};
\draw (1,14) node {\scriptsize $)$};
\draw (1,13.7) node {\scriptsize $\overline{S_3}$};
\draw (2.5,14) node {\scriptsize $($};
\draw (2.5,13.7) node {\scriptsize $\underline{S_2}$};
\draw (4,14) node {\scriptsize $)$};
\draw (4,13.7) node {\scriptsize $\overline{S_2}$};
\draw (5.5,13.9) --++(0,.2);
\draw (5.5,13.7) node {\scriptsize $\underline{S_1}$};
\draw (7,14) node {\scriptsize $($};
\draw (7,13.7) node {\scriptsize $\underline{L_t}$};
\draw (8,13.7) node {$t$};
\draw (9,13.7) node {$s_t$};
\draw (10,14) node {\scriptsize $)$};
\draw (10,13.7) node {\scriptsize $\overline{L_t}$};
\draw (11.5,13.9) --++(0,.2);
\draw (11.5,13.7) node {\scriptsize $\overline{S_1}$};

\draw plot [smooth, tension=.9] coordinates
{ (8,14) (7.5,15.75) (2,16) (.25,14) };
\draw[arrows=->,line width=1pt](4.01,16.24)--(4,16.24);
\draw (4,15.9) node {$D_t$};
\draw plot [smooth, tension=1.4] coordinates
{ (8,14) (5,15.5) (3.25,14) };
\draw[arrows=->,line width=1pt](5.01,15.5)--(5,15.5);
\draw (5,15.15) node {$A_t$};
\draw plot [smooth, tension=1] coordinates
{ (9,14) (9.25,14.5) (10,15) (10.5,15.75) };
\draw[arrows=->,line width=1pt](10,15)--(10.01,15.01);
\draw [dotted] (10.5,15.75) --++(.25,.5);
\draw (9.9,16.3) node {\scriptsize unbounded};
\draw (9.8,15.3) node {$A_t''$};

\draw (-1.2,12) node {$\mathbb{H}$};
\draw (-2,12) node {(B)};
\draw [dotted] (-1,10.5) --++(13,0);
\draw (-1.2,10.5) node {$\R$};
\draw (-.5,10.5) node {\scriptsize $)$};
\draw (-.5,10.2) node {\scriptsize $\underline{S_3}$};
\draw (1,10.5) node {\scriptsize $)$};
\draw (1,10.2) node {\scriptsize $\overline{S_3}$};
\draw (2.5,10.5) node {\scriptsize $($};
\draw (2.5,10.2) node {\scriptsize $\underline{S_2}$};
\draw (4,10.5) node {\scriptsize $)$};
\draw (4,10.2) node {\scriptsize $\overline{S_2}$};
\draw (5.5,10.4) --++(0,.2);
\draw (5.5,10.2) node {\scriptsize $\underline{S_1}$};
\draw (7,10.5) node {\scriptsize $($};
\draw (7,10.2) node {\scriptsize $\underline{L_t}$};
\draw (8,10.2) node {$t$};
\draw [dashed] (9,10.5) --++ (0,.5);
\draw (9,10.2) node {$s_t$};
\draw (9.25,10.73) node {$\e$};
\draw[arrows=<->,line width=.5pt](9.1,10.5)--(9.1,11);
\draw (10,10.5) node {\scriptsize $)$};
\draw (10,10.2) node {\scriptsize $\overline{L_t}$};
\draw (11.5,10.4) --++(0,.2);
\draw (11.5,10.2) node {\scriptsize $\overline{S_1}$};

\draw plot [smooth, tension=.9] coordinates
{ (8,10.5) (7.5,12.25) (2,12.5) (.25,10.5) };
\draw[arrows=->,line width=1pt](4.01,12.74)--(4,12.74);
\draw (4,12.4) node {$D_t$};
\draw plot [smooth, tension=1.4] coordinates
{ (8,10.5) (5,12) (3.25,10.5) };
\draw[arrows=->,line width=1pt](5.01,12)--(5,12);
\draw (5,11.65) node {$A_t$};
\filldraw [dotted] (9,11) circle (.05cm);
\draw plot [smooth, tension=1] coordinates
{ (9,11) (9.25,11.5) (10,12) (10.5,12.75) };
\draw[arrows=->,line width=1pt](10,12)--(10.01,12.01);
\draw [dotted] (10.5,12.75) --++(.25,.5);
\draw (9.7,13.1) node {\scriptsize unbounded};
\draw (10.2,11.7) node {$A_t''$};
\draw plot [smooth, tension=.9] coordinates
{ (9,11) (8.5,10.8) (7.5,12.75) (2,13) (0,10.5) };
\draw[arrows=->,line width=1pt](4.11,13.31)--(4.1,13.31);
\draw (8.7,12.2) node {$D_t''$};

\draw (-1.2,8.5) node {$\mathbb{H}$};
\draw (-2,8.5) node {(a)};
\draw [dotted] (-1,7) --++(13,0);
\draw (-1.2,7) node {$\R$};
\draw (-.5,6.9) --++(0,.2);
\draw (-.5,6.7) node {\scriptsize $\underline{S_{3,n}}$};
\draw (1,6.9) --++(0,.2);
\draw (1,6.7) node {\scriptsize $\overline{S_{3,n}}$};
\draw (2.5,6.9) --++(0,.2);
\draw (2.5,6.7) node {\scriptsize $\underline{S_{2,n}}$};
\draw (4,6.9) --++(0,.2);
\draw (4,6.7) node {\scriptsize $\overline{S_{2,n}}$};
\draw (5.5,6.9) --++(0,.2);
\draw (5.5,6.7) node {\scriptsize $\underline{S_{1,n}}$};
\draw (7,6.9) --++(0,.2);
\draw (7,6.7) node {\scriptsize $\underline{L_n}$};
\draw (8,6.7) node {$t_{1,n}$};
\draw (9,6.7) node {$s_n$};
\draw (10,6.9) --++(0,.2);
\draw (10,6.7) node {\scriptsize $\overline{L_n}$};
\draw (11.5,6.9) --++(0,.2);
\draw (11.5,6.7) node {\scriptsize $\overline{S_{1,n}}$};

\draw plot [smooth, tension=.9] coordinates
{ (8,7) (7.5,8.75) (2,9) (.25,7) };
\draw[arrows=->,line width=1pt](4.01,9.24)--(4,9.24);
\draw (4,8.9) node {$D_n$};
\draw plot [smooth, tension=1.4] coordinates
{ (8,7) (5,8.5) (3.25,7) };
\draw[arrows=->,line width=1pt](5.01,8.5)--(5,8.5);
\draw (5,8.15) node {$A_n$};
\draw plot [smooth, tension=1] coordinates
{ (9,7) (9.25,7.5) (10,8) (10.5,8.75) };
\draw[arrows=->,line width=1pt](10,8)--(10.01,8.01);
\draw [dotted] (10.5,8.75) --++(.25,.5);
\draw (9.9,9.3) node {\scriptsize unbounded};
\draw (9.8,8.3) node {$A_n''$};

\draw (-1.2,5) node {$\mathbb{H}$};
\draw (-2,5) node {(b)};
\draw [dotted] (-1,3.5) --++(13,0);
\draw (-1.2,3.5) node {$\R$};
\draw (-.5,3.4) --++(0,.2);
\draw (-.5,3.2) node {\scriptsize $\underline{S_{3,n}}$};;
\draw (1,3.4) --++(0,.2);
\draw (1,3.2) node {\scriptsize $\overline{S_{3,n}}$};
\draw (2.5,3.4) --++(0,.2);
\draw (2.5,3.2) node {\scriptsize $\underline{S_{2,n}}$};
\draw (4,3.4) --++(0,.2);
\draw (4,3.2) node {\scriptsize $\overline{S_{2,n}}$};
\draw (5.5,3.4) --++(0,.2);
\draw (5.5,3.2) node {\scriptsize $\underline{S_{1,n}}$};
\draw (7,3.4) --++(0,.2);
\draw (7,3.2) node {\scriptsize $\underline{L_n}$};
\draw (7.75,3.2) node {\scriptsize $t_{2,n}$};
\draw (8.5,3.2) node {\scriptsize $t_{1,n}$};
\draw (9.25,3.2) node {\scriptsize $s_n$};
\draw (10,3.4) --++(0,.2);
\draw (10,3.2) node {\scriptsize $\overline{L_n}$};
\draw (11.5,3.4) --++(0,.2);
\draw (11.5,3.2) node {\scriptsize $\overline{S_{1,n}}$};

\draw plot [smooth, tension=1.8] coordinates
{ (8.5,3.5) (4.375,6) (.25,3.5) };
\draw[arrows=->,line width=1pt](4.375,6)--(4.37,6);
\draw (4.375,5.7) node {$D_n$};
\draw plot [smooth, tension=1.9] coordinates
{ (7.75,3.5) (5.5,5) (3.25,3.5) };
\draw[arrows=->,line width=1pt](5.51,5)--(5.5,5);
\draw (5.5,4.6) node {$A_n$};
\draw plot [smooth, tension=1] coordinates
{ (9.25,3.5) (9.5,4) (10.25,4.5) (10.75,5.25) };
\draw[arrows=->,line width=1pt](10.25,4.5)--(10.26,4.51);
\draw [dotted] (10.75,5.25) --++(.25,.5);
\draw (10,5.7) node {\scriptsize unbounded};
\draw (10.05,4.8) node {$A_n''$};

\draw (-1.2,1.5) node {$\mathbb{H}$};
\draw (-2,1.5) node {(c)};
\draw [dotted] (-1,0) --++(13,0);
\draw (-1.2,0) node {$\R$};
\draw (-.5,-.1) --++(0,.2);
\draw (-.5,-.3) node {\scriptsize $\underline{S_{3,n}}$};
\draw (1,-.1) --++(0,.2);
\draw (1,-.3) node {\scriptsize $\overline{S_{3,n}}$};
\draw (2.5,-.1) --++(0,.2);
\draw (2.5,-.3) node {\scriptsize $\underline{S_{2,n}}$};
\draw (4,-.1) --++(0,.2);
\draw (4,-.3) node {\scriptsize $\overline{S_{2,n}}$};
\draw (5.5,-.1) --++(0,.2);
\draw (5.5,-.3) node {\scriptsize $\underline{S_{1,n}}$};
\draw (7,-.1) --++(0,.2);
\draw (7,-.3) node {\scriptsize $\underline{L_n}$};
\draw (8.4,-.3) node {$t_{1,n}$};
\draw[arrows=->,line width=.5pt](8.5,-.2)--(8.5,.4);
\draw (9.5,-.3) node {$s_n$};
\draw (10,-.1) --++(0,.2);
\draw (10,-.3) node {\scriptsize $\overline{L_n}$};
\draw (11.5,-.1) --++(0,.2);
\draw (11.5,-.3) node {\scriptsize $\overline{S_{1,n}}$};

\draw plot [smooth, tension=.9] coordinates
{ (8.5,0.5) (8.2,2.5) (2,2.5) (.25,0) };
\draw[arrows=->,line width=1pt](5.21,2.92)--(5.2,2.92);
\draw (5.2,2.6) node {$D_n$};
\draw plot [smooth, tension=1.3] coordinates
{ (8.5,0.5) (5,2) (3.25,0) };
\draw[arrows=->,line width=1pt](5.41,2.01)--(5.4,2.01);
\draw (5.4,1.65) node {$A_n$};
\draw plot [smooth, tension=.95] coordinates
{ (8.5,0.5) (8,.2) (7.3,.2) (6.8,0)};
\draw[arrows=->,line width=1pt](7.4,.21)--(7.39,.21);
\draw (7.45,.5) node {$D_n'$};
\draw plot [smooth, tension=.95] coordinates
{ (8.5,0.5) (9.2,.4) (10,1.75)};
\draw [dotted] (10,1.75) --++(.25,.5);
\draw[arrows=->,line width=1pt](9.9,1.5)--(9.91,1.52);
\draw (9.5,1.5) node {$A_n'$};
\draw plot [smooth, tension=1] coordinates
{ (9.5,0) (9.75,.5) (10.5,1) (11,1.75) };
\draw[arrows=->,line width=1pt](10.5,1)--(10.51,1.01);
\draw (10.75,2.4) node {\scriptsize unbounded};
\draw [dotted] (11,1.75) --++(.25,.5);
\draw (10.7,.7) node {$A_n''$};

\end{tikzpicture}
\caption{The contours of lemma \ref{lemConAscDesCase2}.}
\label{figConDesAscCase2}
\end{figure}
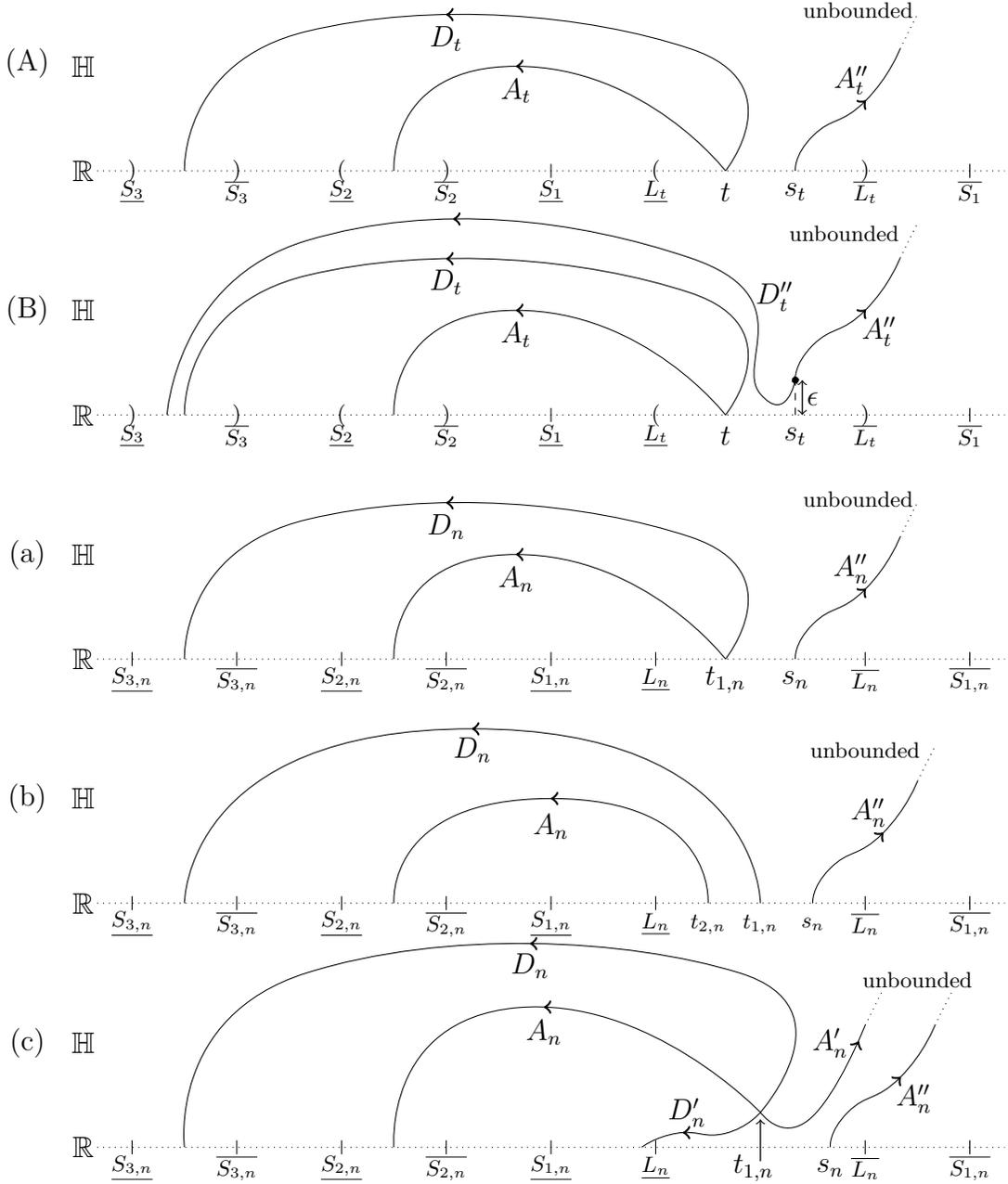

\begin{proof}
Many of the arguments here are similar to those used in lemma
\ref{lemConAscDesCase1}, so we do not go into as much detail here.

Consider $f_n$. First, for possibilities (a,b), define $D_n$ and $A_n$
as in lemma \ref{lemConAscDesCase1}. Also, for possibility (c), define
$D_n,A_n,D_n',A_n'$ as in lemma \ref{lemConAscDesCase1}. Moreover, for
possibilities (a,b,c), let $A_n''$ denote the contour of steepest ascent for
$f_n$ which starts at $s_n \in L_n$, and which enters $\mathbb{H}$ in the
direction $\frac\pi2$. Then, proceeding similarly to the proof of lemma
\ref{lemConAscDesCase1}, we can show that each of $D_n,D_n'$ end in
$S_n \cup T_n$, each of $A_n,A_n',A_n''$ either end in $S_n \cup T_n$ or are
unbounded, and the contours are otherwise in $\mathbb{H}$. We will show:
\begin{enumerate}
\item[(i)]
There exists a $\delta \in (0,\xi)$ for which
$A_n''$ does not intersect $\text{cl}(B(t,\delta))$.
\item[(ii)]
For possibility (a), $D_n$ and $A_n$ do not intersect except at
$t_{1,n}$, and $D_n$ and $A_n''$ never intersect. For possibility (b),
$D_n$ and $A_n$ never intersect, and $D_n$ and $A_n''$ never intersect. For possibility
(c), $D_n,A_n,D_n',A_n'$ do not intersect except at $t_{1,n}$, $D_n$
and $A_n''$ never intersect, and $D_n'$ and $A_n''$ never intersect. For all
possibilities, the contours are simple.
\end{enumerate}
Then, we can investigate the possible end-points of $D_n,A_n,D_n',A_n',A_n''$
using arguments by contradiction similar to those used in the proof of
lemma \ref{lemConAscDesCase1}. We omit the details, and simply state that
this investigation gives the required results.

Consider (i). First recall that $A_n''$ starts at $s_n$, and $R_n$
strictly increases along $A_n''$. It is thus sufficient to show
that there exists a $\delta \in (0,\xi)$ for which the following is satisfied:
\begin{equation}
\label{eq-lemConAscDesCase2-1}
R_n(w) < R_n(s_n)
\text{ for all } w \in \text{cl}(B(t,\delta)).
\end{equation}
To see this first note, part (2) of lemma \ref{lemfnftConv} implies that
$R_n(w) = R_t(w) + o(1)$ uniformly for $w \in B(t,\xi)$. Also, since $R_t$
is continuous, $R_t(w) = R_t(t) + O(\delta) = f_t(t) + O(\delta)$ for all $\delta>0$
sufficiently small and uniformly for $w \in \text{cl}(B(t,\delta))$. Next
recall (see statement of this lemma) that $s_t \in [t+\xi, \overline{L_t})$
for possibilities (A,B), and so parts (A1,B1) of lemma
\ref{lemfn'Case2} imply that $f_t(t) < f_t(t+\xi)$. Moreover, part (2) of
lemma \ref{lemfnftConv} implies that $f_t(t+\xi) = f_n(t+\xi) + o(1)$.
Finally, parts (a1,b1,c1) of lemma \ref{lemfn'Case2} imply that
$f_n(t+\xi) < f_n(s_n) = R_n(s_n)$. Combined, the above imply equation
(\ref{eq-lemConAscDesCase2-1}). This proves part (i).

Consider (ii) for possibility (a). Recall that $D_n$ and $A_n$ start at
$t_{1,n}$, $R_n$ strictly decreases along $D_n$, and $R_n$ strictly
increases along $A_n$. Also recall that $D_n$ starts at $t_{1,n} \in L_n$
and $A_n''$ starts at $s_n \in L_n$, $R_n$ strictly decreases along $D_n$,
$R_n$ strictly increases along $A_n''$, $s_n > t_{1,n}$, and $R_n$
strictly increases as we move from $t_{1,n}$ to $s_n$ along $L_n$ (see
left of figure \ref{figConDirCase2}). A contradiction argument then proves
part (ii) for possibility (a). Part (ii) for possibility (b) follows similarly.

Consider (ii) for possibility (c). This follows from similar arguments
to those used in the proof of lemma \ref{lemConAscDesCase1}, except we
need to additionally argue that $D_n$ never intersects $A_n''$, and $D_n'$
never intersects $A_n''$. Recall that $D_n$ and $D_n'$ start at $t_{1,n}$,
$A_n''$ starts at $s_n$, $R_n$ strictly decreases along $D_n$ and $D_n'$,
$R_n$ strictly increases along $A_n''$, and $R_n(t_{1,n}) < R_n(s_n)$
(recall that $t_{1,n} \to t$, and apply part (i) above). A
contradiction argument then proves that $D_n$ and $D_n'$ never intersect
$A_n''$. This proves part (ii) for possibility (c).

Consider $f_t$. First, define $D_t$ and $A_t$ as in lemma
\ref{lemConAscDesCase1}. Next, for possibility (A) of
lemma \ref{lemfn'Case2}, let $A_t''$ denote the contour of
steepest ascent for $f_t$ which starts at $s_t \in L_t$, and
which enters $\mathbb{H}$ in the direction $\frac\pi2$. For
possibility (B), let $D_t''$ and $A_t''$ denote the contours
of steepest descent and ascent (respectively) which start at
$s_t+i\e \in \mathbb{H}$.  Then, proceeding similarly to the proof
of lemma \ref{lemConAscDesCase1}, we can show that each of
$D_t,D_t''$ end in $S \cup T_t$, each of $A_t,A_t''$ either
end in $S \cup T_t$ or are unbounded, and the contours are
otherwise in $\mathbb{H}$. We will show:
\begin{enumerate}
\item[(iii)]
For possibility (A), $D_t$ and $A_t$ do not intersect except at
$t$, and $D_t$ and $A_t''$ never intersect. For possibility (B),
$D_t$ and $A_t$ do not intersect except at $t$, $D_t''$ and $A_t''$ do
not intersect except at $s_t+i\e$, and we can fix the $\e>0$ sufficiently
small such that $D_t'',A_t''$ never intersect $D_t,A_t$. For both
possibilities, the contours are simple.
\item[(iv)]
For possibilities (A,B), $D_t$ ends in $[\underline{S_3}, \overline{S_3})$,
and $A_t$ ends in $(\underline{S_2},\overline{S_2})$.
\item[(v)]
For possibility (A), $A_t''$ is unbounded. For possibility (B),
fixing the $\e>0$ sufficiently small as above, $A_t''$ is unbounded
and $D_t''$ ends in $[\underline{S_3}, d_t)$, where $d_t$ denotes
the end-point of $D_t$ (thus, necessarily, we must have
$d_t \in (\underline{S_3}, \overline{S_3})$).
\end{enumerate}
These prove the required results.

Consider (iii) for possibility (A). This follows from similar arguments
to those used to show part (ii) for possibility (a), above. Consider
(iii) for possibility (B). Similar arguments show that
$D_t$ and $A_t$ do not intersect except at $t$, $D_t''$ and $A_t''$ do
not intersect except at $s_t+i\e$, and the contours are simple. It thus
remains to show that  we can fix the $\e>0$ sufficiently
small such that $D_t'',A_t''$ never intersect $D_t,A_t$. Recalling
that $\text{Im}(f_t(w)) = \text{Im}(f_t(t))$ and
$\text{Im}(f_t(z)) = \text{Im}(f_t(s_t+i\e))$ for all $w$ on
$D_t \cup A_t$ and $z$ on $D_t'' \cup A_t''$ (see part (2) of
lemma \ref{lemDesAsc}), and $\text{Im}(f_t(t)) = \text{Im}(f_t(s_t))$
(recall that $t$ and $s$ are both in $L_t \subset \R \setminus S$, and see
figure \ref{figImfnExt}), it is thus sufficient to show that
$\text{Im}(f_t(s_t+i\e)) \neq \text{Im}(f_t(s_t))$ for all $\e>0$
sufficiently small. To see this recall that $s_t \in (t,\overline{L_t})$,
and so part (B1) of lemma \ref{lemfn'Case2} implies that $f_t'(s_t) > 0$.
Also, a Taylor expansion gives $f_t(s_t+i\e) = f_t(s_t) + i \e f_t'(s_t) + O(\e^2)$
for all $\e>0$ sufficiently small. Therefore,
\begin{equation}
\label{eq-lemConAscDesCase2-2}
\text{Im}(f_t(s_t+i\e)) > \text{Im}(f_t(s_t)) + \tfrac12 \e f_t'(s_t)
\text{ for all } \e>0 \text{ sufficiently small.}
\end{equation}
This proves part (iii) for possibility (B).

Consider (iv). Recall that $D_t$ ends in $S \cup T_t$, and $A_t$ either ends
$S \cup T_t$ or is unbounded, where $T_t$ is the set of roots of $f_t'$.
The behaviours of $T_t$ for possibilities (A,B) are discussed in lemma
\ref{lemfn'Case2}, and these behaviours are displayed in figure
\ref{figSnC_xi2}. It follows that $S \cup T_t \subset [\underline{S_3}, \overline{S_3}]
\cup [\underline{S_2}, \overline{S_2}]
\cup [\underline{S_1}, \underline{L_t}] \cup \{t,s_t\}
\cup [\overline{L_t}, \overline{S_1}]$
for possibility (A), and $S \cup T_t \subset [\underline{S_3}, \overline{S_3}]
\cup [\underline{S_2}, \overline{S_2}]
\cup [\underline{S_1}, \underline{L_t}] \cup \{t\}
\cup [\overline{L_t}, \overline{S_1}]$ for possibility (B).
Next recall that $D_t$ and $A_t$ start at $t$, and $A_t''$ starts
at $s_t$ for possibility (A). The above observations, and part
(iii), thus imply the following for both possibilities: $D_t$ ends in
$[\underline{S_3}, \overline{S_3}] \cup [\underline{S_2}, \overline{S_2}] \cup
[\underline{S_1}, \underline{L_t}] \cup [\overline{L_t}, \overline{S_1}]$, and
$A_t$ either ends in $[\underline{S_3}, \overline{S_3}] \cup [\underline{S_2}, \overline{S_2}]
\cup [\underline{S_1}, \underline{L_t}] \cup [\overline{L_t}, \overline{S_1}]$
or is unbounded. Then, since $D_t$ and $A_t$ start at $t \in L_t$, part (iv)
follows from similar arguments to those used to show parts (xii,xiii) in the
proof of lemma \ref{lemConAscDesCase1}.

Consider (v) for possibility (A). Recall that $A_t''$ starts at
$s_t \in (t,\overline{L_t})$, and ends either in $S \cup T_t$ or is unbounded.
Then, similar considerations to those used in part (iv), above, show that
$A_t''$ either ends in $[\underline{S_3}, \overline{S_3}] \cup
[\underline{S_2}, \overline{S_2}] \cup [\underline{S_1}, \underline{L_t}]
\cup [\overline{L_t}, \overline{S_1}]$ or is unbounded. We must thus show
that $A_t''$ does not end in $[\underline{S_3}, \overline{S_3}] \cup
[\underline{S_2}, \overline{S_2}] \cup [\underline{S_1}, \underline{L_t}]
\cup [\overline{L_t}, \overline{S_1}]$. We argue by contradiction: First assume
that $A_t''$ ends in $[d_t,\overline{S_3}] \cup [\underline{S_2}, \overline{S_2}]
\cup [\underline{S_1}, \underline{L_t}]$, where
$d_t \in [\underline{S_3},\overline{S_3})$ denotes the end point of $D_t$
(see part (iv)). Then $D_t$ starts at $t \in L_t$ and ends at $d_t$,
$A_t''$ starts at $s_t \in (t,\overline{L_t})$ and ends in $[d_t,t)$, and
$D_t$ and $A_t''$ are otherwise contained in $\mathbb{H}$. $A_t''$ must
therefore intersect $D_t$, which contradicts part (iii). Next assume
that $A_t''$ ends at a point $a_t'' \in [\underline{S_3}, d_t]$.
Then $D_t$ starts at $t \in L_t$ and ends at $d_t \in [\underline{S_3},\overline{S_3})$,
$A_t''$ starts at $s_t \in (t,\overline{L_t})$ and ends at
$a_t'' \in [\underline{S_3}, d_t]$, and $D_t$ and $A_t''$ are otherwise
contained in $\mathbb{H}$ and do not intersect (see part (iii)). Consider the simple
closed contour consisting of $D_t$ and $A_t''$ and $[a_t'',d_t]$ and $[t,s_t]$.
We can proceed as in part (xii) in the proof of lemma \ref{lemConAscDesCase1}
to show that $w \mapsto \text{Im}(f_t(w))$ is a constant harmonic
function on the above simple closed contour, and so it also constant
in the domain bounded by the closed contour. Equation (\ref{eqImt})
easily shows that this is not true. Thus we have a contradiction, and
so $A_t''$ does not end in $[\underline{S_3},d_t]$.
Finally suppose that $A_t''$ ends in $[\overline{L_t}, \overline{S_1}]$.
Then $A_t''$ starts at $s_t \in (t,\overline{L_t})$, ends in
$[\overline{L_t}, \overline{S_1}]$, and is otherwise contained
in $\mathbb{H}$. We can then proceed as in part (xii) in the proof of lemma
\ref{lemConAscDesCase1} to show that this gives a contradiction,
and so $A_t''$ does not end in $[\overline{L_t}, \overline{S_1}]$.
This proves (v) for possibility (A).

Consider (v) for possibility (B). Recall that $D_t''$ and $A_t''$ start at
$s_t+i\e$ where $s_t \in (t,\overline{L_t})$, $D_t''$ ends in $S \cup T_t$,
and $A_t''$ ends either in $S \cup T_t$ or is unbounded.
Then, similar considerations to those used in part (iv), above, show that
$D_t''$ ends in $[\underline{S_3}, \overline{S_3}] \cup
[\underline{S_2}, \overline{S_2}] \cup [\underline{S_1}, \underline{L_t}]
\cup [\overline{L_t}, \overline{S_1}]$, and
$A_t''$ either ends in $[\underline{S_3}, \overline{S_3}] \cup
[\underline{S_2}, \overline{S_2}] \cup [\underline{S_1}, \underline{L_t}]
\cup [\overline{L_t}, \overline{S_1}]$ or is unbounded. Also, letting
$d_t \in [\underline{S_3},\overline{S_3})$ denote the end point of $D_t$
(see part (iv)), similar considerations to those used above (in the proof of
part (v) for possibility (A)) show that both $D_t''$ and $A_t''$ do not
end in $[d_t, \overline{S_3}] \cup [\underline{S_2}, \overline{S_2}] \cup
[\underline{S_1}, \underline{L_t}]$. It thus remains to show that $D_t''$ does not
end in $[\overline{L_t}, \overline{S_1}]$, and $A_t''$ does not end in
$[\underline{S_3}, d_t) \cup [\overline{L_t}, \overline{S_1}]$.
We argue by contradiction: First assume that $D_t''$ and $A_t''$ both
end in $[\underline{S_3}, d_t)$, at $d_t''$ and $a_t''$ respectively.
Then $D_t''$ starts at $s_t+i\e$ and ends at $d_t'' \in [\underline{S_3},d_t)$,
$A_t''$ starts at $s_t+i\e$ and ends at
$a_t'' \in [\underline{S_3}, d_t)$, and $D_t''$ and $A_t''$ are otherwise
contained in $\mathbb{H}$ and do not intersect except at $s_t+i\e$
(see part (iii)). Consider the simple closed contour consisting of $D_t''$ and
$A_t''$ and $[a_t'' \wedge d_t'', a_t'' \vee d_t'']$.
We can proceed as in part (xii) in the proof of lemma \ref{lemConAscDesCase1}
to show that $w \mapsto \text{Im}(f_t(w))$ is a constant harmonic
function on the above simple closed contour, and so it also constant
in the domain bounded by the closed contour. Equation (\ref{eqImt})
easily show that this is not true. Thus we have a contradiction, and
so $D_t''$ and $A_t''$ cannot both end in $[\underline{S_3}, d_t)$.
Next assume that $D_t''$ ends at a point
$d_t'' \in [\overline{L_t},\overline{S_1}]$. Recall (see part (2)
of lemma \ref{lemDesAsc}) that $w \mapsto \text{Im}(f_t(w))$ is
constant on $D_t''$. In particular this gives,
$\text{Im}(f_t(d_t'')) = \text{Im}(f_t(s_t+i\e))$. Also recall (see
case (2) of lemma \ref{lemCases} and equation (\ref{eqIntervalt})) that
$s_t \in L_t \subset [\underline{S_1},\overline{S_1}] \setminus S_1$ and
$[\overline{L_t},\overline{L_t}+\delta) \subset S_1$ for all $\delta>0$
sufficiently small. Equation (\ref{eqIt}) and figure \ref{figImfnExt}
then give $\text{Im}(f_t(s_t)) \ge \text{Im}(f_t(s))$ for all
$s \in [\overline{L_t},\overline{S_1}]$. Equation (\ref{eq-lemConAscDesCase2-2})
thus gives $\text{Im}(f_t(s_t+i\e)) > \text{Im}(f_t(s))$ for all
$s \in [\overline{L_t},\overline{S_1}]$. This contradicts
$\text{Im}(f_t(d_t'')) = \text{Im}(f_t(s_t+i\e))$, and so $D_t''$ does
not end in $[\overline{L_t},\overline{S_1}]$. We can similarly
show that $A_t''$ does not end in $[\overline{L_t},\overline{S_1}]$.
This proves (v) for possibility (B).
\end{proof}

As in the section \ref{secContCase1} for case (1) of lemma \ref{lemCases}, we are
now in a position to define the contours, $\gamma_{1,n}^+$ and $\Gamma_{1,n}^+$
and $\Gamma_{2,n}^+$, that satisfy lemma \ref{lemDesAsc1-12} for case (2) of
lemma \ref{lemCases}. We do not go into as much detail here as the
section \ref{secContCase1}, as the construction is quite
similar to that used for case (1), but we will highlight the differences.

Fix $\xi>0$, $\theta \in (\frac14,\frac13)$ and $\{q_n\}_{n\ge1} \subset \R$
as in the section \ref{secContCase1}. Note, since $f_t'''(t) > 0$ (see case
(2) of lemma \ref{lemCases}), lemma \ref{lemftn'} and definition
\ref{defmnpn} imply that $\{q_n\}_{n\ge1}$ converges to a positive constant.
Then, part (4) of lemma \ref{lemNonAsyRoots} implies that
$\{t_{1,n},t_{2,n}\} \subset B(t, n^{-\theta} q_n) \subset B(t,\xi)$.
Thus, $D_n$ and $A_n$ both start inside $B(t, n^{-\theta} q_n)$ and
$B(t,\xi)$. Define $d_{1,n}, d_{2,n}, a_{1,n}, a_{2,n}$ as in equation
(\ref{eqdn}). Denote the equivalent quantities for $D_t$ and $A_t$ by
$d_{1,t}, d_{2,t}, a_{1,t}, a_{2,t}$. Also, fixing $\xi>0$ sufficiently
small as above, define $\C_\xi$ is in equation (\ref{eqAnalSetftfn}).
Recall that $D_t$ ends in
$[\underline{S_3}, \overline{S_3}) \subset \C \setminus \C_\xi$, and $A_t$
ends in $(\underline{S_2}, \overline{S_2}) \subset \C \setminus \C_\xi$,
and let $d_{3,t}$ and $a_{3,t}$ denote the points where $D_t$ and $A_t$
`exit' $\C_\xi$ respectively. We choose the $\xi>0$ sufficiently small
such that equation (\ref{eqd3ta3t1}) is satisfied in this case also:
$\text{Im}(d_{3,t}) = \text{Im}(a_{3,t}) = \xi^4$. Moreover, we
choose the $\xi>0$ sufficiently small such that
$\text{Re}(d_{3,t}) \in (-\infty, \overline{S_3} - c)$ and
$\text{Re}(a_{3,t}) \in (\underline{S_2} + c, \overline{S_2} - c)$,
where $c = c(t) > 0$ is some constant which is independent of
$\xi$. Next, define $\e := 0$ whenever possibility (A) of lemmas
\ref{lemfn'Case2} and \ref{lemConAscDesCase2} is satisfied, and fix
$\e > 0$ as in lemma \ref{lemConAscDesCase2} whenever possibility (B)
is satisfied. Finally, fix $E>0$ such that $B(0,E)$ contains
$S_1 \cup S_2 \cup S_3$ and the point $s_t+i\e$, and let
$a_{4,t} \in \partial B(0,E)$ denote that point where $A_t''$ `exits'
$B(0,E)$ for the first time. We then define the following contours,
which are depicted in figure \ref{figCongnGnCase2}:
\begin{definition}
\label{defgnGnCase2}
Define $\g_{1,n}^+$ and $\G_{1,n}^+$
similarly as in definition \ref{defgnGnCase1} (they both start at
$t \in L_t = (\underline{L_t}, \overline{L_t})$,
$\g_{1,n}^+$ ends at $\text{Re}(d_{3,t}) \in (-\infty, \overline{S_3} - c)$,
$\G_{1,n}^+$ ends at $\text{Re}(a_{3,t}) \in (\underline{S_2} + c, \overline{S_2} - c)$
etc). Moreover, define $\G_{2,n}^+$ to be
the opposite (traversed in the opposite direction) of the simple contour which:
\begin{itemize}
\item
is independent of $n$,
\item
starts at $s_t \in (t, \overline{L_t})$,
\item
then traverses the straight line from $s_t$ to $s_t+i\e$,
\item
then traverses that section of $A_t''$ from $s_t+i\e$
to $a_{4,t} \in \partial B(0,E)$,
\item
then traverses (clockwise) the arc of $\partial B(t, \xi)$ from
$a_{4,t} \in \partial B(0, E)$ to $E$,
\item
then ends at $E \in (\overline{S_1}, +\infty)$.
\end{itemize}
\end{definition}

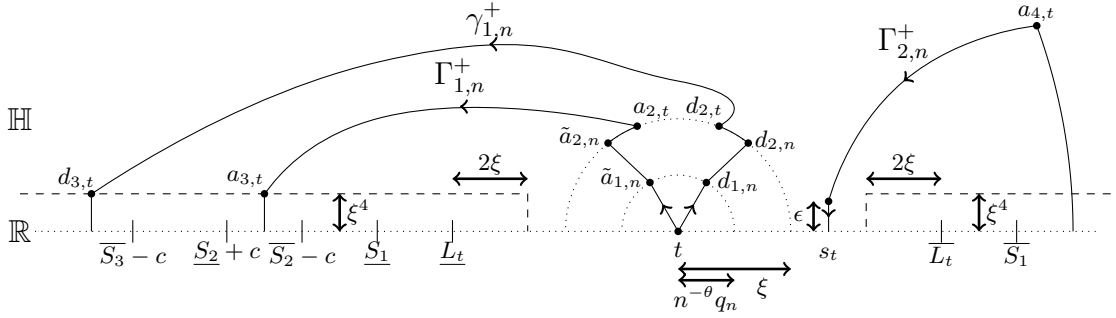
\begin{figure}[t]
\centering
\begin{tikzpicture};

\draw (8,0) --++(.375,.65);
\draw[arrows=->,line width=1pt](8.2,.34641)--(8.205,.35507);
\fill [black] (8.375,.65) circle (.05cm);
\draw (8.8,.65) node {\scriptsize $d_{1,n}$};
\draw (8.375,.65) --++(.557,.525);
\fill [black] (8.932,1.175) circle (.05cm);
\draw (9.3,1.2) node {\scriptsize $d_{2,n}$};
\draw [domain=51.567:68.755] plot ({8+(1.5)*(cos(\x))}, {(1.5)*(sin(\x))});
\fill [black] (8.543,1.4) circle (.05cm);
\draw (8.35,1.65) node {\scriptsize $d_{2,t}$};
\draw plot [smooth, tension=1] coordinates
{ (8.543,1.398) (8,2) (4,2.25) (-.5,0)};
\draw[arrows=->,line width=1pt](5.5,2.49)--(5.49,2.49);
\draw (5.5,2.8) node {$\g_{1,n}^+$};
\fill (0,0) [white] circle (.5cm);
\draw (.2,.5) --++(0,-.5);
\fill [black] (.2,.5) circle (.05cm);
\draw (0,.7) node {\scriptsize $d_{3,t}$};

\draw (8,0) --++(-.375,.65);
\draw[arrows=->,line width=1pt](7.8,.34641)--(7.795,.35507);
\fill [black] (7.625,.65) circle (.05cm);
\draw (7.25,.7) node {\scriptsize $\tilde{a}_{1,n}$};
\draw (7.625,.65) --++(-.557,.525);
\fill [black] (7.068,1.175) circle (.05cm);
\draw (6.7,1.25) node {\scriptsize $\tilde{a}_{2,n}$};
\draw [domain=111.567:128.755] plot ({8+(1.5)*(cos(\x))}, {(1.5)*(sin(\x))});
\fill [black] (7.456,1.4) circle (.05cm);
\draw (7.65,1.6) node {\scriptsize $a_{2,t}$};
\draw plot [smooth, tension=1] coordinates
{ (7.456,1.398) (4,1.5) (2.25,0)};
\draw[arrows=->,line width=1pt](5.11,1.66)--(5.1,1.66);
\draw (5.1,2.05) node {$\G_{1,n}^+$};
\fill (2.5,0) [white] circle (.5cm);
\draw (2.5,.5) --++(0,-.5);
\fill [black] (2.5,.5) circle (.05cm);
\draw (2.25,.7) node {\scriptsize $a_{3,t}$};

\fill [black] (10,0.4) circle (.05cm);
\draw (10,0) --++(0,.4);
\draw[arrows=->,line width=1pt](10,.2)--(10,.19);
\draw (10,-.3) node {\scriptsize $s_t$};
\draw[arrows=<->,line width=1pt](9.8,0)--(9.8,.4);
\draw (9.6,.2) node {\scriptsize $\e$};
\draw plot [smooth, tension=1] coordinates
{ (10,.4) (11,2) (12.7675,2.7362)};
\fill [black] (12.7675,2.7362) circle (.05cm);
\draw (12.7675,2.9) node {\scriptsize $a_{4,t}$};
\draw[arrows=->,line width=1pt](11.05,2.04)--(11,2);
\draw (11,2.5) node {$\G_{2,n}^+$};
\draw [domain=0:20] plot ({5.25+8*cos(\x)}, {8*sin(\x)});

\fill [black] (8,0) circle (.05cm);
\draw (8,-.25) node {\scriptsize $t$};
\draw [dotted, domain=0:180] plot ({8+(.75)*(cos(\x))}, {(.75)*(sin(\x))});
\draw [dotted, domain=0:180] plot ({8+(1.5)*(cos(\x))}, {(1.5)*(sin(\x))});

\draw [dashed] (-.75,.5) --++(6.75,0) --++ (0,-.5);
\draw[arrows=<->,line width=1pt](3.5,0)--(3.5,.5);
\draw (3.75,0.25) node {\scriptsize $\xi^4$};
\draw[arrows=<->,line width=1pt](5,.7)--(6,.7);
\draw (5.5,.9) node {\scriptsize $2\xi$};
\draw [dashed] (13.75,.5) --++(-3.25,0) --++ (0,-.5);
\draw[arrows=<->,line width=1pt](12,0)--(12,.5);
\draw (12.25,0.25) node {\scriptsize $\xi^4$};
\draw[arrows=<->,line width=1pt](10.5,.7)--(11.5,.7);
\draw (11,.9) node {\scriptsize $2\xi$};
\draw (12.5,-.15) --++(0,.3);
\draw (12.5,-.3) node {\scriptsize $\overline{S_1}$};
\draw (11.5,-.15) --++(0,.3);
\draw (11.5,-.3) node {\scriptsize $\overline{L_t}$};
\draw (5,-.15) --++(0,.3);
\draw (5,-.3) node {\scriptsize $\underline{L_t}$};
\draw (4,-.15) --++(0,.3);
\draw (4,-.3) node {\scriptsize $\underline{S_1}$};
\draw (3,-.15) --++(0,.3);
\draw (3,-.3) node {\scriptsize $\overline{S_2}-c$};
\draw (2,-.15) --++(0,.3);
\draw (2,-.3) node {\scriptsize $\underline{S_2}+c$};
\draw (.75,-.15) --++(0,.3);
\draw (.75,-.3) node {\scriptsize $\overline{S_3}-c$};

\draw (-.75,1.5) node {$\mathbb{H}$};
\draw [dotted] (-.55,0) --++(14.15,0);
\draw (-.75,0) node {$\R$};
\draw[arrows=<->,line width=1pt](8,-.5)--(9.5,-.5);
\draw[arrows=<->,line width=1pt](8,-.65)--(8.75,-.65);
\draw (9.125,-.75) node {\scriptsize $\xi$};
\draw (8.375,-.9) node {\scriptsize $n^{-\theta} q_n$};

\end{tikzpicture}
\caption{The contours defined in definition \ref{defgnGnCase2}.
The dashed lines represent boundaries of $\C_\xi$.
$c = c(t) > 0$ is independent of $\xi$.}
\label{figCongnGnCase2}
\end{figure}

We finally show that the above contours satisfy the requirements of lemma
\ref{lemDesAsc1-12}:
\begin{proof}[Proof of lemma \ref{lemDesAsc1-12} for case (2) of lemma \ref{lemCases}:]
The proof for case (2) of lemma \ref{lemCases} is very similar to the proof
for case (1) (see the end of section \ref{secContCase1}). Parts (1,2,3,5,6)
follow from similar arguments. Similarly for part (4), except that we need
to additionally show that $\tilde{R}_n(z) \ge \tilde{R}_n(\tilde{a}_{1,n})$
for all $z \in \G_{2,n}^+$, where $\tilde{R}_n$ denote the real-part of
$\tilde{f}_n$. We will show that there exists a sufficiently
small choice $\e,\xi$ in definition \ref{defgnGnCase2}, and a sufficiently
large choice of the $E>0$, such that the following are satisfied:
\begin{enumerate}
\item[(i)]
$\tilde{R}_n(z) > \tilde{R}_n(\tilde{a}_{1,n})$ for all $z \in \text{cl}(B(s_t,\e))$.
\item[(ii)]
$\tilde{R}_n(z) > \tilde{R}_n(\tilde{a}_{1,n})$ for all $z$ on that section of
$A_t''$ from $s_t+i\e$ to $a_{4,t} \in \partial B(0,E)$.
\item[(iii)]
$\tilde{R}_n(z) > \tilde{R}_n(\tilde{a}_{1,n})$ for all $z \in \partial B(0,E)$.
\end{enumerate}
Definition \ref{defgnGnCase2} then implies that
$\tilde{R}_n(z) > \tilde{R}_n(\tilde{a}_{1,n})$
for all $z \in \G_{2,n}^+$. This proves (4).

Consider (i) for possibility (B). First,
we fix the $\xi,\e>0$ sufficiently small such that $\text{cl}(B(s_t,\e)) \subset \C_\xi$
(see figure \ref{figCongnGnCase2} to see that this can be done). Then, part
(2) of lemma \ref{lemfnftConv} implies that $\tilde{R}_n(z) = R_t(z) + o(1)$
uniformly for $z \in \text{cl}(B(s_t,\e))$. Next note, since
$R_t$ is continuous at $s_t$, that $R_t(z) = R_t(s_t) + O(\e)$ for all $\e>0$
sufficiently small and uniformly for $z \in \text{cl}(B(s_t,\e))$. Next recall
that $s_t \in (t, \overline{L_t})$, and so
$R_t(s_t) > R_t(t)$ (see part (B1) of lemma \ref{lemfn'Case2} and figure
\ref{figConDirCase2}). Finally note, since $\tilde{a}_{1,n} = t + o(1)$,
equations (\ref{eqS1nS1WeakConv}, \ref{eqRt}, \ref{eqRn})
give $R_t(t) = \tilde{R}_n(\tilde{a}_{1,n}) + o(1)$.
Combined, the above proves part (i) for possibility (B). Part (i)
for possibility (A), where $\e=0$ by the remarks preceding definition
\ref{defgnGnCase2}, can be shown similarly.

Consider (ii). First, we fix $\xi,\e$ sufficiently small such that
$A_t'' \subset \C_\xi$ (see figure \ref{figCongnGnCase2} to see
that this can be done). Then, part (2) of lemma \ref{lemfnftConv}
implies that $\tilde{R}_n(z) = R_t(z) + o(1)$
uniformly for $z$ on that section of $A_t''$ from $s_t+i\e$ to
$a_{4,t} \in \partial B(0,E)$. Next note, part (1) of lemma \ref{lemDesAsc}
gives $R_t(z) \ge R_t(s_t+i\e)$ for $z$ on $A_t''$. Moreover, since
$R_t$ is continuous at $s_t$, $R_t(s_t+i\e) = R_t(s_t) + O(\e)$ for
all $\e>0$ sufficiently small. We can then proceed as in the proof
of part (i), above, to prove part (ii).

Consider (iii). First note, whenever $z \neq 0$, equation (\ref{eqRn}) gives,
\begin{align*}
\tilde{R}_n(z)
&= \frac1n \left( |\tilde{S}_{1,n}| - |\tilde{S}_{2,n}| + |\tilde{S}_{3,n}| \right) \log |z| \\
&+ \frac1n \sum_{x \in \tilde{S}_{1,n}} \log \left| 1 - \frac{x}z \right|
- \frac1n \sum_{x \in \tilde{S}_{2,n}} \log \left| 1 - \frac{x}z \right|
+ \frac1n \sum_{x \in \tilde{S}_{3,n}} \log \left| 1 - \frac{x}z \right|,
\end{align*}
where $\log$ is natural logarithm. Recall (see equations
(\ref{eqS1S2S3In}, \ref{eqS1nS2nS3nIn})) that
$\frac1n( |\tilde{S}_{1,n}| - |\tilde{S}_{2,n}| + |\tilde{S}_{3,n}|)
\to \eta \in (0,1)$. Also recall (see equation
(\ref{eqS1S2S3In})) that
$\sup_{x \in S_{1,n} \cup S_{2,n} \cup S_{3,n}} |x| = O(1)$.
Thus $\tilde{R}_n(z) > \frac12 \eta \log |z|$
for all $n$ and $|z|$ sufficiently large, chosen independently.
Finally recall (see the proof of part (i), above) that
$\tilde{R}_n(\tilde{a}_{1,n}) \to R_t(t)$.
Combined, the above prove part (iii).
\end{proof}

\subsection{Lemma \ref{lemDesAsc1-12} for cases (3-12) of lemma \ref{lemCases}}
\label{secContOthCases}

In sections \ref{secContCase1} and \ref{secContCase2} we proved
lemma \ref{lemDesAsc1-12} for cases (1,2) of lemma \ref{lemCases}.
In this section we discuss cases (3-6).
We will be very brief here, as no new ideas are needed.
Instead we seek to highlight some differences which may cause confusion.
Indeed, we only recall the possible behaviours of the roots
of $f_t'$ in $L_t$ for cases (1,2), and state the possible behaviours of the roots
of $f_t'$ in $L_t$ for cases (3-6). For cases (3-6), the behaviour of the
remaining roots of $f_t'$,
the behaviour of the roots of $f_n'$, the contours of steepest descent/ascent,
and the definitions of the contours $\g_{1,n}^+$, etc, then follow from
similar considerations to those used for cases (1,2).
Similar considerations also apply for cases (7-12).

First recall the situation for case (1) of lemma \ref{lemCases}
(see section \ref{secContCase1}). Here, $t > \chi$, $t \in L_t$,
$L_t = J_1 = (\overline{S_1}, +\infty)$, and $f_t'''(t) > 0$.
Moreover $f_t'(s) > 0$ for all $s \in (\underline{L_t},t)$,
$f_t'(t) = f_t''(t) = 0$ and $f_t'''(t) > 0$, and $f_t'(s) > 0$
for all $s \in (t,\overline{L_t})$.

Next recall the situation for case (2) of lemma \ref{lemCases}
(see section \ref{secContCase2}). Here, $t > \chi$, $t \in L_t$,
$L_t \in \{K_1^{(1)},K_2^{(1)},\ldots\}$, and $f_t'''(t) > 0$.
Moreover, one of the following is satisfied:
\begin{enumerate}
\item[(A)]
There exists an $s_t \in (t,\overline{L_t})$ for which:
$f_t'(s) > 0$ for all $s \in (\underline{L_t},t)$,
$f_t'(t) = f_t''(t) = 0$ and $f_t'''(t) > 0$,
$f_t'(s) > 0$ for all $s \in (t,s_t)$,
$f_t'(s_t) = 0$ and $f_t''(s_t) < 0$,
and $f_t'(s) < 0$ for all $s \in (s_t,\overline{L_t})$.
\item[(B)]
$f_t'(s) > 0$ for all $s \in (\underline{L_t},t)$,
$f_t'(t) = f_t''(t) = 0$ and $f_t'''(t) > 0$,
$f_t'(s) > 0$ for all $s \in (t,\overline{L_t})$.
\end{enumerate}

Consider case (3). Lemma \ref{lemCases} gives $t > \chi$, $t \in L_t$,
$L_t \in \{K_1^{(1)},K_2^{(1)},\ldots\}$, and $f_t'''(t) < 0$.
Moreover, we state that one of the following is satisfied:
\begin{enumerate}
\item[(A)]
There exists an $s_t \in (\underline{L_t},t)$ for which:
$f_t'(s) > 0$ for all $s \in (\underline{L_t},s_t)$,
$f_t'(s_t) = 0$ and $f_t''(s_t) < 0$,
$f_t'(s) < 0$ for all $s \in (s_t,t)$,
$f_t'(t) = f_t''(t) = 0$ and $f_t'''(t) < 0$,
and $f_t'(s) < 0$ for all $s \in (t,\overline{L_t})$.
\item[(B)]
$f_t'(s) < 0$ for all $s \in (\underline{L_t},t)$,
$f_t'(t) = f_t''(t) = 0$ and $f_t'''(t) < 0$,
$f_t'(s) < 0$ for all $s \in (t,\overline{L_t})$.
\end{enumerate}

Consider cases (4,5). Lemma \ref{lemCases} gives $t > \chi$
for case (4), and $t \in (\chi+\eta-1,\chi)$ for case (5).
Moreover, for both cases, $t \in L_t$, $\chi \in L_t$,
$L_t = J_3 = (\overline{S_2},\underline{S_1})$
and $f_t'''(t) < 0$. Moreover, we state that
$f_t'(s) < 0$ for all $s \in (\underline{L_t},t)$,
$f_t'(t) = f_t''(t) = 0$ and $f_t'''(t) < 0$,
$f_t'(s) < 0$ for all $s \in (t,\overline{L_t})$.

Consider case (6). Lemma \ref{lemCases} gives $t \in (\chi+\eta-1,\chi)$,
$t \in L_t$, $L_t \in \{K_1^{(2)},K_2^{(2)},\ldots\}$, and $f_t'''(t) < 0$.
Moreover, we state that one of the following is satisfied:
\begin{enumerate}
\item[(A)]
There exists an $s_t \in (t,\overline{L_t})$ for which:
$f_t'(s) < 0$ for all $s \in (\underline{L_t},t)$,
$f_t'(t) = f_t''(t) = 0$ and $f_t'''(t) < 0$,
$f_t'(s) < 0$ for all $s \in (t,s_t)$,
$f_t'(s_t) = 0$ and $f_t''(s_t) > 0$,
and $f_t'(s) > 0$ for all $s \in (s_t,\overline{L_t})$.
\item[(B)]
$f_t'(s) < 0$ for all $s \in (\underline{L_t},t)$,
$f_t'(t) = f_t''(t) = 0$ and $f_t'''(t) < 0$, and
$f_t'(s) < 0$ for all $s \in (t,\overline{L_t})$.
\end{enumerate}

\subsection{Lemma \ref{lemDesAsc1-12Rem} for cases (1-4) of lemma \ref{lemCases}}
\label{secContCase1Rem}

Note, many of the arguments in this section are similar to those used in
the previous sections. Therefore, we shall not go into as much detail here.
Assume the conditions of lemma \ref{lemDesAsc1-12Rem}. Additionally assume
that one of cases (1-4) of lemma \ref{lemCases} is satisfied. Fix $\xi>0$
sufficiently small such that equations (\ref{eqxi}, \ref{eqxi1}, \ref{eqxi4},
\ref{eqxi5}) are satisfied. Define $U_n$ and $V_n$ as in equation (\ref{eqUnVn}),
and recall equation (\ref{eqVn-Un}).

First, since one of case (1-4) of lemma \ref{lemCases} is satisfied,
and since $v > u$, we can proceed as in the proof of lemma
\ref{lemvnunIneq} to show the following:
\begin{equation*}
v_n > u_n
\hspace{.25cm} \text{and} \hspace{.25cm}
v_n + s_n - 1 > u_n + r_n + 1
\hspace{.25cm} \text{and} \hspace{.25cm}
VU^{(n)} \neq \emptyset
\hspace{.25cm} \text{and} \hspace{.25cm}
UV_{(n)} \neq \emptyset,
\end{equation*}
and
\begin{align}
\label{eqxiCase1}
&t - 2\xi
> \chi + 2\xi
> \max(VU^{(n)})
> \min(VU^{(n)})
> \chi-2\xi \\
\nonumber
&> \chi + \eta - 1 + 2\xi
> \max(UV_{(n)})
> \min(UV_{(n)})
> \chi+\eta-1 - 2\xi.
\end{align}
Also, equation (\ref{eqFnw2}) gives,
\begin{equation*}
F_n(w)
= - \frac1n \sum_{x \in VU^{(n)}} \log (w - x)
+ \frac1n \sum_{x \in UV_{(n)}} \log (w - x),
\end{equation*}
for all $w \in (\C \setminus \R) \cup (t-2\xi,t+2\xi)$, where the
branch cuts are all $(-\infty,0]$. Therefore,
\begin{equation*}
F_n'(w)
= - \frac1n \sum_{x \in VU^{(n)}} \frac1{w - x}
+ \frac1n \sum_{x \in UV_{(n)}} \frac1{w - x},
\end{equation*}
and $F_n'$ extends analytically to $\C \setminus ((VU^{(n)}) \cup (UV_{(n)})$.
Finally define $G_t$ as in equation (\ref{eqGt}). Recall that this is also
analytic in $(\C \setminus \R) \cup (t-2\xi,t+2\xi)$, and recall (see equation
(\ref{eqGt'})) that $G_t'$ extends analytically to $\C \setminus \{\chi,\chi+\eta-1\}$.

We now consider the roots of $G_t'$ and $F_n'$:
\begin{lem}
\label{lemFn'Case1}
Assume the above conditions. Then, $t \in (\chi, +\infty)$. Indeed,
$t- 4\xi > \chi$. Moreover:
\begin{enumerate}
\item
$G_t'(s) < 0$ for all $s \in (\chi,t)$,
$G_t'(t) = 0$ and $G_t''(t) > 0$, and
$G_t'(s) > 0$ for all $s \in (t,+\infty)$.
\item
$G_t'$ has no other roots in $\C \setminus \{\chi,\chi+\eta-1\}$.
\end{enumerate}

Next, note $t \in (\max (VU^{(n)}), +\infty)$.
Indeed, $t - 2\xi > \chi + 2\xi > \max (VU^{(n)})$.
Also $F_n'$ has $1$ root in $(t-\xi,t+\xi)$.
We denote this by $w_n$ as in lemma \ref{lemFn'}.
Finally:
\begin{enumerate}
\setcounter{enumi}{2}
\item
$F_n'(s) < 0$ for all $s \in (\max (VU^{(n)}),w_n)$,
$F_n'(w_n) = 0$ and $F_n''(w_n) > 0$, and
$F_n'(s) > 0$ for all $s \in (w_n,+\infty)$.
\item
$F_n'$ has $1$ root in each interval
of the form $(x,y)$, when $x$ and $y$ are any two consecutive elements of
either $VU^{(n)}$ or $UV_{(n)}$.
\item
$F_n'$ has no other roots, other than those listed above.
\end{enumerate}
\end{lem}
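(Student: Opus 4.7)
The plan is to handle parts (1)--(2) about $G_t'$ and parts (3)--(5) about $F_n'$ by the same recipe: first invoke a root-counting result (lemma \ref{lemGt'} or lemma \ref{lemFn'}) to locate all zeros, then read off the signs from the explicit expressions by checking behaviour at $+\infty$ and at the rightmost pole. The preliminary inequalities $t-4\xi>\chi$ and $t-2\xi>\chi+2\xi>\max(VU^{(n)})$ are immediate from equations (\ref{eqxi1}) and (\ref{eqxiCase1}) respectively, the former because cases (1--4) place $t\in R_\mu^+$.

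For parts (1)--(2): part (2) is just lemma \ref{lemGt'}. For part (1) I would observe that $G_t'$ is real-valued and analytic on $(\chi,+\infty)$, since its only singularities are at $\chi$ and $\chi+\eta-1$, both of which lie in $(-\infty,\chi]$. The proof of lemma \ref{lemGt'} records $G_t''(t)=e^{C(t)}C'(t)^2/(e^{C(t)}-1)$, which is strictly positive here because $e^{C(t)}>1$ by lemma \ref{lemAnalExt} (for $t\in R_\mu^+$). Combined with the large-$s$ asymptotic $G_t'(s)\sim(e^{C(t)}-1)/s>0$ read off from equation (\ref{eqGt'}), and the uniqueness of $t$ as a root in $(\chi,+\infty)$ from part (2), this forces the stated sign pattern.

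For parts (3)--(5): lemma \ref{lemvnunIneq} (together with the explicit counts below) shows that $VU^{(n)}$ and $UV_{(n)}$ are nonempty while $UV^{(n)}=VU_{(n)}=\emptyset$, so applying lemma \ref{lemFn'} directly yields parts (4)--(5) and the existence of a simple real root $w_n$ in $(t-\xi,t+\xi)$ with $w_n=t+O(n^{-1/3})$. For the sign claim in (3): by (\ref{eqxiCase1}) both $VU^{(n)}$ and $UV_{(n)}$ lie below $\chi+\eta-1+2\xi<t-2\xi$, so $F_n'$ is analytic and real-valued on $(\max(VU^{(n)}),+\infty)$ with $w_n$ its only zero there. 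Equation (\ref{eqFn'w}) gives $F_n'(s)\to-\infty$ as $s\downarrow\max(VU^{(n)})$ (a simple pole from $VU^{(n)}$ with its minus sign), while a direct count from (\ref{equnrnvnsn2}, \ref{equnrnvnsn3}) gives $|UV_{(n)}|-|VU^{(n)}|=n^{2/3}m_n(e^{C_n(t)}-1)(v-u)+O(n^{1/3})>0$ (using $m_n>0$, $e^{C_n(t)}>1$, $v>u$), hence $F_n'(s)\sim(|UV_{(n)}|-|VU^{(n)}|)/(ns)>0$ as $s\to+\infty$. The intermediate value theorem together with the simplicity of $w_n$ then pins the signs and forces $F_n''(w_n)>0$. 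The only real obstacle here is bookkeeping, namely keeping straight which of the four sets $VU^{(n)}, UV^{(n)}, VU_{(n)}, UV_{(n)}$ are nonempty in this case and what sign each contributes to the pole expansion of $F_n'$---precisely the purpose of lemma \ref{lemvnunIneq}.
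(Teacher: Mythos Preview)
Your argument is correct in substance, with one minor slip and one genuine methodological difference from the paper.

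The slip: you write that ``both $VU^{(n)}$ and $UV_{(n)}$ lie below $\chi+\eta-1+2\xi$''. This is false for $VU^{(n)}$, which by equation (\ref{eqxiCase1}) sits in $(\chi-2\xi,\chi+2\xi)$, well above $\chi+\eta-1+2\xi$. What you need (and what (\ref{eqxiCase1}) does give) is that all elements of $VU^{(n)}\cup UV_{(n)}$ lie below $\max(VU^{(n)})<t-2\xi$, so that $F_n'$ is pole-free on $(\max(VU^{(n)}),+\infty)$. Your conclusion survives; only the intermediate bound is misstated.

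The methodological difference concerns how you establish $F_n''(w_n)>0$. You argue indirectly: the pole at $\max(VU^{(n)})$ forces $F_n'(s)\to-\infty$ from the right, the large-$s$ asymptotic $F_n'(s)\sim(|UV_{(n)}|-|VU^{(n)}|)/(ns)$ is positive by your count, and since $w_n$ is the unique simple root in between the sign change forces $F_n''(w_n)>0$. The paper instead computes $F_n''(w_n)$ directly via part (2) of lemma \ref{lemGtGtnFn}, which gives $n^{1/3}F_n''(w_n)\to m_t(v-u)G_t''(t)>0$, invoking the already-established positivity of $G_t''(t)$ and the sign of $m_n$ from definition \ref{defmnpn}. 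Your route is slightly more self-contained (it does not need lemma \ref{lemGtGtnFn}) but requires the cardinality computation $|UV_{(n)}|-|VU^{(n)}|=n^{2/3}m_n(e^{C_n(t)}-1)(v-u)+O(n^{1/3})$; the paper's route reuses machinery it has already set up. Both are short and valid.
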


\begin{proof}
Consider $G_t'$. Equation (\ref{eqxiCase1}) gives $t- 4\xi > \chi$.
Also, lemma \ref{lemGt'} implies that $G_t'$ has a root of multiplicity $1$
at $t$, has no other roots in $\C \setminus \{\chi,\chi+\eta-1\}$, and that,
\begin{equation*}
G_t''(t) = \frac{e^{C(t)} C'(t)^2}{e^{C(t)} - 1}.
\end{equation*}
Thus, since one of cases (1-4) of lemma \ref{lemCases} is satisfied,
lemma \ref{lemAnalExt} gives $G_t''(t) > 0$. This proves parts (1,2).

Consider $F_n'$. First note, equation (\ref{eqxiCase1}) implies
that $t - 2\xi > \chi + 2\xi > \max (VU^{(n)})$. Next note, part (1) of
lemma \ref{lemFn'} implies
that $F_n'$ has $1$ root in $(t-\xi,t+\xi)$. We denote this by $w_n$, and we recall that
$w_n \to t$ (see part (5) of lemma \ref{lemFn'}). Moreover,
part (2) of lemma \ref{lemGtGtnFn} gives
\begin{equation*}
n^\frac13 F_n''(w) = m_n (v-u) G_{t,n}''(w) + O(n^{-\frac13}),
\end{equation*}
uniformly for $w \in B(t, \xi)$. Recall that $v - u > 0$ by assumption.
Also recall that $\{m_n\}_{n\ge1}$ is a convergent sequence of
real-numbers with a non-zero limit (see definition \ref{defmnpn}),
$m_n (e^{C_n(t)} - 1)/e^{C_n(t)} > 0$ (see definition \ref{defmnpn}),
and $e^{C_n(t)} - 1 \to e^{C(t)} - 1 > 0$ (see cases (1-4) of lemma
\ref{lemCases}, lemma \ref{lemAnalExt}, and equation (\ref{eqNonAsyEdge})).
Therefore $m_n \to m_t$ for some $m_t > 0$.
Moreover (see lemma \ref{lemGt'}), $G_{t,n}''(w) = G_t''(w) + o(1)$
uniformly for $w \in B(t, \xi)$. Finally
(see part (5) of lemma \ref{lemFn'}), $w_n \to t$. Combined, the above
observations give $n^\frac13 F_n''(w_n) \to m_t (v-u) G_t''(t) > 0$.
Therefore $F_n''(w_n) > 0$.
Parts (3,4,5) then follow from parts (2-4) of lemma \ref{lemFn'}.
\end{proof}

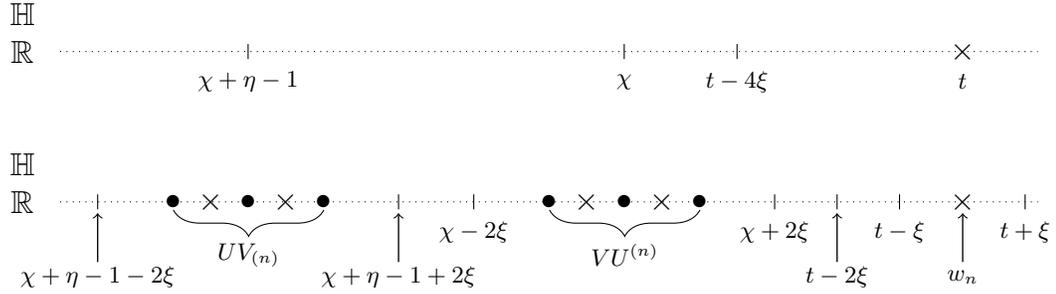
\begin{figure}[t]
\centering
\begin{tikzpicture}

\draw [dotted] (-1,2) --++(13,0);
\draw (-1.5,2) node {$\R$};
\draw (-1.5,2.5) node {$\mathbb{H}$};

\draw (1.5,2.1) --++(0,-.2);
\draw (6.5,2.1) --++(0,-.2);
\draw (8,2.1) --++(0,-.2);
\draw (11,2) node {$\times$};

\draw (1.5,1.6) node {\scriptsize $\chi+\eta-1$};
\draw (6.5,1.6) node {\scriptsize $\chi$};
\draw (8,1.6) node {\scriptsize $t-4\xi$};
\draw (11,1.6) node {\scriptsize $t$};

\draw [dotted] (-1,0) --++(13,0);
\draw (-1.5,0) node {$\R$};
\draw (-1.5,0.5) node {$\mathbb{H}$};

\draw (-.5,0.1) --++(0,-.2);
\draw (.5,0) node {$\bullet$};
\draw (1,0) node {$\times$};
\draw (1.5,0) node {$\bullet$};
\draw (2,0) node {$\times$};
\draw (2.5,0) node {$\bullet$};
\draw (3.5,0.1) --++(0,-.2);
\draw (4.5,0.1) --++(0,-.2);
\draw (5.5,0) node {$\bullet$};
\draw (6,0) node {$\times$};
\draw (6.5,0) node {$\bullet$};
\draw (7,0) node {$\times$};
\draw (7.5,0) node {$\bullet$};
\draw (8.5,0.1) --++(0,-.2);
\draw (9.33,.1) --++(0,-.2);
\draw (10.16,0.1) --++(0,-.2);
\draw (11,0) node {$\times$};
\draw (11.83,.1) --++(0,-.2);

\draw (-.5,-1) node {\scriptsize $\chi+\eta-1-2\xi$};
\draw[arrows=->,line width=.5pt](-.5,-.8)--(-.5,-.15);
\draw [decorate,decoration={brace,amplitude=10pt,mirror},xshift=0.2pt,yshift=-0.2pt]
(.5,-.1) -- (2.5,-.1) node[black,midway,yshift=-0.6cm] {\scriptsize $UV_{(n)}$};
\draw (3.5,-1) node {\scriptsize $\chi+\eta-1+2\xi$};
\draw[arrows=->,line width=.5pt](3.5,-.8)--(3.5,-.15);
\draw (4.5,-.4) node {\scriptsize $\chi-2\xi$};
\draw [decorate,decoration={brace,amplitude=10pt,mirror},xshift=0.2pt,yshift=-0.2pt]
(5.5,-.1) -- (7.5,-.1) node[black,midway,yshift=-0.6cm] {\scriptsize $VU^{(n)}$};
\draw (8.5,-.4) node {\scriptsize $\chi+2\xi$};
\draw (9.33,-1) node {\scriptsize $t-2\xi$};
\draw[arrows=->,line width=.5pt](9.33,-.8)--(9.33,-.15);
\draw (10.16,-.4) node {\scriptsize $t-\xi$};
\draw (11,-1) node {\scriptsize $w_n$};
\draw[arrows=->,line width=.5pt](11,-.8)--(11,-.15);
\draw (11.83,-.4) node {\scriptsize $t+\xi$};

\end{tikzpicture}
\caption{Top: The roots of $G_t'$. Bottom: The
roots of $F_n'$. Roots of multiplicity $1$ are represented by $\times$, and
elements of $VU^{(n)}$ and $UV_{(n)}$
are represented by $\bullet$.
\label{figRootsFn'Case1}}
\end{figure}

A depiction of equation (\ref{eqxiCase1}) and the root behaviours
described in lemma \ref{lemFn'Case1} is given in figure \ref{figRootsFn'Case1}.
Next we state (without proof) a result which follows from similar arguments
to those used to show lemma \ref{lemConAscDesCase1}:
\begin{lem}
\label{lemConAscDesFnCase1}
There exists simple contours, $\mathcal{D}_t$ and $\mathcal{D}_n$,
as shown in figure
\ref{figConDesAscFnCase1} with the following properties:
\begin{itemize}
\item
$\mathcal{D}_t$ starts at $t$, enter $\mathbb{H}$
in the direction $\frac\pi2$, ends at $\chi+\eta-1$,
and is otherwise contained in $\mathbb{H}$. Moreover,
$\mathcal{D}_t$ is a contour of steepest descent for $G_t$.
\item
$\mathcal{D}_n$ starts at $w_n$, enters $\mathbb{H}$
in the direction $\frac\pi2$, ends in the interval shown,
and is otherwise contained in $\mathbb{H}$. Moreover,
$\mathcal{D}_n$ is a contour of steepest descent for $F_n$.
\end{itemize}
\end{lem}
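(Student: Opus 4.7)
My plan is to define each contour as the maximal trajectory of steepest descent emanating from the prescribed starting point, and then use (i) non-vanishing of the derivative away from the listed critical points to keep the contour inside $\mathbb{H}$, (ii) growth of the real part at infinity for boundedness, and (iii) a level-set / maximum-principle argument on the imaginary part to pin down where the contour terminates.

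\textbf{Step 1 (definition and initial direction).} For $G_t$, Lemma~\ref{lemFn'Case1}(1) gives $G_t'(t)=0$ and $G_t''(t)>0$, so $\alpha_t=\mathrm{Arg}(G_t''(t))=0$; by Lemma~\ref{lemDesAsc}(3) the steepest descent directions at $t$ are $\pi/2$ and $3\pi/2$, and I take $\mathcal{D}_t$ to be the contour of steepest descent leaving $t$ in direction $\pi/2$, extended by following the unique local direction of descent at every regular point of $\C\setminus\{\chi,\chi+\eta-1\}$ (which exists by Lemma~\ref{lemDesAsc}(3) since $G_t'$ has no other zeros by Lemma~\ref{lemFn'Case1}(2)), and terminating at the first singular or critical point reached. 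The analogous definition with $F_n''(w_n)>0$ (Lemma~\ref{lemFn'Case1}(3)) produces $\mathcal{D}_n$ starting at $w_n$ in direction $\pi/2$.

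\textbf{Step 2 (containment in $\mathbb{H}$ and boundedness).} At any $s\in\R\setminus\{\chi,\chi+\eta-1\}$ one has $G_t'(s)\in\R\setminus\{0\}$, so the unique steepest descent directions at $s$ are $0$ or $\pi$; therefore a descent curve coming from $\mathbb{H}$ cannot terminate or cross at such $s$, forcing $\mathcal{D}_t\setminus\{t\}\subset\mathbb{H}$ until it meets a singularity. For boundedness, the explicit form
\[
\mathrm{Re}(G_t(w)) = -\log|w-\chi| + e^{C(t)}\log|w-\chi-\eta+1|
\]
combined with $e^{C(t)}>1$ (Lemma~\ref{lemAnalExt} for cases (1--4)) gives $\mathrm{Re}(G_t(w))\sim(e^{C(t)}-1)\log|w|\to+\infty$, so by the argument of Lemma~\ref{lemDesAsc}(4) $\mathcal{D}_t$ is contained in a ball. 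The analogous argument works for $F_n$: all real roots of $F_n'$ away from $w_n$ are simple, real-valued, and lie in gaps of $VU^{(n)}\cup UV_{(n)}$ (Lemma~\ref{lemFn'Case1}(4--5)), hence $\mathcal{D}_n$ cannot touch $\R$ except at those roots or at the singularities $VU^{(n)}\cup UV_{(n)}$; and since $|UV_{(n)}|-|VU^{(n)}|=s_n-r_n-2\geq 0$ (by Lemma~\ref{lemvnunIneq} with $s_n>r_n+1$), $\mathrm{Re}(F_n(w))\to +\infty$ as $|w|\to\infty$, so $\mathcal{D}_n$ is bounded.

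\textbf{Step 3 (identifying endpoints).} The bounded descent curve $\mathcal{D}_t$ must terminate, and by Step~2 the only candidates are $\chi$ and $\chi+\eta-1$. Elements of $VU^{(n)}$ are excluded for $\mathcal{D}_n$ because $\mathrm{Re}(F_n(w))\to+\infty$ there while $\mathrm{Re}(F_n)$ must decrease along $\mathcal{D}_n$; the remaining candidates are the elements of $UV_{(n)}$ (at which $\mathrm{Re}(F_n)\to-\infty$) together with any real roots of $F_n'$ having the correct imaginary value. Since $\mathrm{Im}(G_t)$ is constant on $\mathcal{D}_t$ and equal to $\mathrm{Im}(G_t(t))=0$, and since along $(\chi,+\infty)$ one also has $\mathrm{Im}(G_t)=0$, the assumption that $\mathcal{D}_t$ ends at $\chi$ would give a closed curve $\mathcal{D}_t\cup[\chi,t]\subset\mathrm{cl}(\mathbb{H})$ enclosing a region $\Omega\subset\mathbb{H}$ on whose boundary $\mathrm{Im}(G_t)\equiv 0$; the maximum principle would then force $\mathrm{Im}(G_t)\equiv 0$ on $\Omega$, contradicting the nonconstancy of $G_t$. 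Hence $\mathcal{D}_t$ ends at $\chi+\eta-1$. For $\mathcal{D}_n$, the same argument based on the jumps of $\mathrm{Im}(F_n(s+i0^+))$ across each singularity (a decrease of $\pi/n$ across each $x\in VU^{(n)}$ moving leftward, an increase of $\pi/n$ across each $x\in UV_{(n)}$) shows that the level set $\{\mathrm{Im}(F_n)=0\}\cap\R$ is confined to $(\max VU^{(n)},+\infty)$ and to certain real points near $\min UV_{(n)},\ldots,\max UV_{(n)}$, so $\mathcal{D}_n$ must end in the interval containing $UV_{(n)}$, as shown.

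The main obstacle I expect is the $\mathcal{D}_n$ endpoint identification: unlike $G_t$, which has only two singularities, $F_n$ has many, and one must rule out all the spurious zero--imaginary-part candidates on $\R$ to the right of $UV_{(n)}$ (including real roots of $F_n'$ in gaps of $VU^{(n)}$) using the jump analysis of $\mathrm{Im}(F_n(s+i0^+))$ together with a closed-contour/maximum-principle argument analogous to the one used for $\mathcal{D}_t$ applied to suitably chosen subregions of $\mathbb{H}$.
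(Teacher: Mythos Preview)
Your proposal is correct and follows the same outline the paper intends (it states the lemma without proof, referring to the argument of Lemma~\ref{lemConAscDesCase1}). However, the ``main obstacle'' you flag at the end --- ruling out the real roots of $F_n'$ lying in the gaps of $VU^{(n)}$ as possible endpoints for $\mathcal{D}_n$ --- dissolves by a purely local argument, with no need for the imaginary-part jump analysis or maximum-principle machinery you propose. At any such root $s$ in a gap $(x,y)$ of $VU^{(n)}$, the dominant term $-\tfrac1{n(w-x)}$ gives $F_n'\to-\infty$ as $w\downarrow x$ and $-\tfrac1{n(w-y)}$ gives $F_n'\to+\infty$ as $w\uparrow y$, so $F_n''(s)>0$. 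This is exactly the situation of part~(iv) in the proof of Lemma~\ref{lemConAscDesCase1}: near $s$ the level set $\{\text{Im}(F_n)=\text{Im}(F_n(s))\}\cap\{\text{Re}(F_n)>\text{Re}(F_n(s))\}$ consists of the steepest-\emph{ascent} branches (directions $0$ and $\pi$), which lie in $\R$, so a descent contour in $\mathbb{H}$ cannot terminate at $s$. The only admissible endpoints for $\mathcal{D}_n$ are therefore the elements of $UV_{(n)}$ and the roots of $F_n'$ in gaps of $UV_{(n)}$ (where by the analogous computation $F_n''<0$), all of which lie in $[\min UV_{(n)},\max UV_{(n)}]$ as required.

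Likewise, your maximum-principle argument for excluding $\chi$ as an endpoint of $\mathcal{D}_t$ is valid but unnecessary: since $\text{Re}(G_t(w))=-\log|w-\chi|+e^{C(t)}\log|w-\chi-\eta+1|\to+\infty$ as $w\to\chi$, the point $\chi$ is repelling for descent contours (exactly the mechanism of Lemma~\ref{lemDesAsc}(5)), leaving $\chi+\eta-1$ as the only possible endpoint. With these two simplifications your argument is precisely the one the paper has in mind.
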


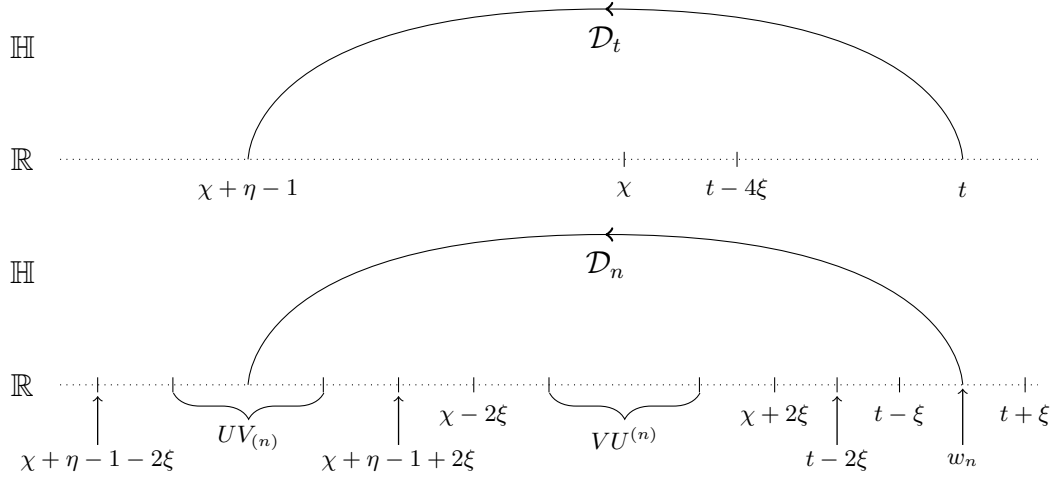
\begin{figure}[t]
\centering
\begin{tikzpicture};

\draw [dotted] (-1,3) --++(13,0);
\draw (-1.5,3) node {$\R$};
\draw (-1.5,4.5) node {$\mathbb{H}$};

\draw (6.5,3.1) --++(0,-.2);
\draw (8,3.1) --++(0,-.2);

\draw (1.5,2.6) node {\scriptsize $\chi+\eta-1$};
\draw (6.5,2.6) node {\scriptsize $\chi$};
\draw (8,2.6) node {\scriptsize $t-4\xi$};
\draw (11,2.6) node {\scriptsize $t$};

\draw plot [smooth, tension=1.8] coordinates
{ (11,3) (6.25,5) (1.5,3) };
\draw[arrows=->,line width=1pt](6.26,5)--(6.25,5);
\draw (6.25,4.6) node {$\mathcal{D}_t$};

\draw [dotted] (-1,0) --++(13,0);
\draw (-1.5,0) node {$\R$};
\draw (-1.5,1.5) node {$\mathbb{H}$};

\draw (-.5,0.1) --++(0,-.2);
\draw (.5,0.1) --++(0,-.2);
\draw (3.5,0.1) --++(0,-.2);
\draw (2.5,0.1) --++(0,-.2);
\draw (4.5,0.1) --++(0,-.2);
\draw (5.5,0.1) --++(0,-.2);
\draw (7.5,0.1) --++(0,-.2);
\draw (8.5,0.1) --++(0,-.2);
\draw (9.33,.1) --++(0,-.2);
\draw (10.16,0.1) --++(0,-.2);
\draw (11.83,.1) --++(0,-.2);

\draw (-.5,-1) node {\scriptsize $\chi+\eta-1-2\xi$};
\draw[arrows=->,line width=.5pt](-.5,-.8)--(-.5,-.15);
\draw [decorate,decoration={brace,amplitude=10pt,mirror},xshift=0.2pt,yshift=-0.2pt]
(.5,-.1) -- (2.5,-.1) node[black,midway,yshift=-0.6cm] {\scriptsize $UV_{(n)}$};
\draw (3.5,-1) node {\scriptsize $\chi+\eta-1+2\xi$};
\draw[arrows=->,line width=.5pt](3.5,-.8)--(3.5,-.15);
\draw (4.5,-.4) node {\scriptsize $\chi-2\xi$};
\draw [decorate,decoration={brace,amplitude=10pt,mirror},xshift=0.2pt,yshift=-0.2pt]
(5.5,-.1) -- (7.5,-.1) node[black,midway,yshift=-0.6cm] {\scriptsize $VU^{(n)}$};
\draw (8.5,-.4) node {\scriptsize $\chi+2\xi$};
\draw (9.33,-1) node {\scriptsize $t-2\xi$};
\draw[arrows=->,line width=.5pt](9.33,-.8)--(9.33,-.15);
\draw (10.16,-.4) node {\scriptsize $t-\xi$};
\draw (11,-1) node {\scriptsize $w_n$};
\draw[arrows=->,line width=.5pt](11,-.8)--(11,-.05);
\draw (11.83,-.4) node {\scriptsize $t+\xi$};

\draw plot [smooth, tension=1.8] coordinates
{ (11,0) (6.25,2) (1.5,0) };
\draw[arrows=->,line width=1pt](6.26,2)--(6.25,2);
\draw (6.25,1.6) node {$\mathcal{D}_n$};

\end{tikzpicture}
\caption{The contours of lemma \ref{lemConAscDesFnCase1}.}
\label{figConDesAscFnCase1}
\end{figure}

Similarly to the previous sections, we are now in a position to define the
contour, $\kappa_n^+$, that satisfies lemma \ref{lemDesAsc1-12Rem} for cases
(1-4) of lemma \ref{lemCases}. As in those sections, recalling (see part (5)
of lemma \ref{lemFn'}) that $w_n = t + O(n^{-\frac13})$, then
$w_n \in B(t, n^{-\theta} |q_n|) \subset B(t,\xi)$. Also, similarly to the
previous section, we let $D_{1,n}$ and $D_{2,n}$ denote the points where
$\mathcal{D}_n$ `exits' $B(t, n^{-\theta} |q_n|)$ and $B(t, \xi)$ respectively.
We denote the equivalent quantities for $\mathcal{D}_t$ by $D_{1,t}, D_{2,t}$.
Finally, define $\C_\xi$ as in equation (\ref{eqAnalSetftfn}), and let
$D_{3,t}$ denote the point where $\mathcal{D}_t$ `exits' $\C_\xi$. Similarly
to the previous sections, note that it is always possible to choose the $\xi>0$
sufficiently small such that,
\begin{equation*}
\text{Im}(D_{3,t}) = \xi^4
\hspace{.5cm} \text{and} \hspace{.5cm}
\text{Re}(D_{3,t}) \in (-\infty, \chi - c),
\end{equation*}
where $c = c(t) > 0$ is some constant which is independent of $\xi$.
Finally define:
\begin{definition}
\label{defknCase1}
Define $\kappa_n^+$ to be the simple contour which:
\begin{itemize}
\item
starts at $t \in (\chi, +\infty)$,
\item
then traverses the straight line from $t$ to
$D_{1,n} \in \partial B(t, n^{-\theta} |q_n|)$,
\item
then traverses that section of $\mathcal{D}_n$
from $D_{1,n} \in \partial B(t, n^{-\theta} |q_n|)$
to $D_{2,n} \in \partial B(t, \xi)$,
\item
then traverses the shortest arc of $\partial B(t, \xi)$ from
$D_{2,n} \in \partial B(t, \xi)$ to $D_{2,t} \in \partial B(t, \xi)$,
\item
then traverses that section of $\mathcal{D}_t$ from $D_{2,t}$ to $D_{3,t}$,
\item
then traverses the straight line from $D_{3,t}$ to $\text{Re}(D_{3,t})$,
\item
then ends at $\text{Re}(D_{3,t}) \in (-\infty, \chi-c)$,
\end{itemize}
where $c = c(t) > 0$ is some constant which is independent of $\xi$.
\end{definition}

We finally show that the above contour satisfies the requirements of lemma
\ref{lemDesAsc1-12Rem}:
\begin{proof}[Proof of lemma \ref{lemDesAsc1-12Rem} for case (1-4) of lemma \ref{lemCases}:]
Note, lemma \ref{lemConAscDesFnCase1} and figure \ref{figConDesAscFnCase1} and
definition \ref{defknCase1} imply that $\kappa_n^+$ starts at $t$, ends in
the interior of the intervals shown in figure \ref{figDesAscRem}, is otherwise contained
in $\mathbb{H}$, and is independent of $n$ outside $\text{cl}(B(t,\xi))$. Moreover,
using similar arguments to those used in the proof of lemma \ref{lemDesAsc1-12}
(see end of section \ref{secContCase1}), we can show that
$\text{Re}(F_n(w)) \le \text{Re}(F_n(D_{1,n}))$ for all
$w \in \kappa_n^+ \setminus B(t,n^{-\theta} |q_n|)$,
and that $|\kappa_n^+| = O(1)$. It thus remains to show that
$\text{Arg} (D_{1,n} - t) = \frac\pi2 + O(n^{-\frac13+\theta})$.

To see the above, first recall (see lemma \ref{lemConAscDesFnCase1}), that
$\mathcal{D}_n$ is a contour of steepest descent for $F_n$ which starts at
$w_n \in (t-\xi,t+\xi)$, and which `exits' $B(t,n^{-\theta} |q_n|)$ at $D_{1,n}$.
Thus parts (1,2) of lemma \ref{lemDesAsc} give,
\begin{equation*}
\text{Re}(F_n(D_{1,n})) < \text{Re}(F_n(w_n))
\hspace{0.5cm} \text{and} \hspace{0.5cm}
\text{Im}(F_n(D_{1,n})) = \text{Im}(F_n(w_n)).
\end{equation*}
Thus, since $w_n = t + O(n^{-\frac13})$ (see part (5) of lemma \ref{lemFn'}),
and since $F_n(t) \in \R$, and since $F_n = f_n - \tilde{f}_n$ (see
equation (\ref{eqFnw})), parts (1,2) of corollary \ref{corTay} give,
\begin{equation*}
\text{Re}(F_n(D_{1,n})) < F_n(t) + O(n^{-1})
\hspace{0.5cm} \text{and} \hspace{0.5cm}
\text{Im}(F_n(D_{1,n})) = O(n^{-1}).
\end{equation*}
Next note, since $\theta \in (\frac14,\frac13)$, and since $F_n = f_n - \tilde{f}_n$,
parts (2,3) of lemma \ref{lemTay} (take $\xi_n = n^{-\theta}$) give,
\begin{equation*}
F_n(t + n^{-\theta} q_n e^{i \alpha})
= F_n(t) + n^{-\frac13-2\theta} (v-u) e^{2i \alpha}
+ O(n^{-\frac23-\theta}),
\end{equation*}
uniformly for $\alpha \subset (-\pi,\pi]$. Therefore, since $F_n(t) \in \R$,
\begin{align*}
\text{Re}(F_n(t + n^{-\theta} q_n e^{i \alpha}))
&= F_n(t) + n^{-\frac13-2\theta} (v-u) \cos(2 \alpha)
+ O(n^{-\frac23-\theta}), \\
\text{Im}(F_n(t + n^{-\theta} q_n e^{i \alpha}))
&= n^{-\frac13-2\theta} (v-u) \sin(2 \alpha)
+ O(n^{-\frac23-\theta}),
\end{align*}
uniformly for $\alpha \subset (-\pi,\pi]$. Finally recall that $v>u$.
We can then proceed similarly to the proof of part (i)
of lemma \ref{lemDesAsc1-12} (see end of section \ref{secContCase1})
to show that $\text{Arg} (D_{1,n} - t) = \frac\pi2 + O(n^{-\frac13+\theta})$,
as required.
\end{proof}

\subsection{Lemma \ref{lemDesAsc1-12Rem} for cases (5-12) of lemma \ref{lemCases}}

In section \ref{secContCase1Rem} we proved
lemma \ref{lemDesAsc1-12Rem} for cases (1-4) of lemma \ref{lemCases}.
In this section we discuss cases (5,6). As in section \ref{secContOthCases},
we will be very brief, as the considerations are very similar.
We only seek to highlight the differences here.
Similar considerations also apply for cases (7-12).

Assume the conditions of lemma \ref{lemDesAsc1-12Rem}, and that
one of cases (5,6) are satisfied. Note, since $v > u$, we can
proceed similarly to the proof of lemma \ref{lemvnunIneq} to get:
\begin{equation*}
v_n > u_n
\hspace{.25cm} \text{and} \hspace{.25cm}
v_n + s_n < u_n + r_n
\hspace{.25cm} \text{and} \hspace{.25cm}
VU^{(n)} \neq \emptyset
\hspace{.25cm} \text{and} \hspace{.25cm}
VU_{(n)} \neq \emptyset,
\end{equation*}
and
\begin{align*}
&\chi + 2\xi
> \max(VU^{(n)})
> \min(VU^{(n)})
> \chi - 2\xi
> t + 2\xi
> t - 2\xi \\
&> \chi + \eta - 1 + 2\xi
> \max(VU_{(n)})
> \min(VU_{(n)})
> \chi+\eta-1 - 2\xi.
\end{align*}
Also, equation (\ref{eqFnw2}) gives,
\begin{equation*}
F_n(w)
= - \frac1n \sum_{x \in VU^{(n)}} \log (w - x)
- \frac1n \sum_{x \in VU_{(n)}} \log (w - x),
\end{equation*}
for all $w \in (\C \setminus \R) \cup (t-2\xi,t+2\xi)$, where the branch cuts
in the 1st sum on the RHS are all $[0,+\infty)$, and the branch cuts in the 2nd
sum are all $(-\infty,0]$. Therefore,
\begin{equation*}
F_n'(w)
= - \frac1n \sum_{x \in VU^{(n)}} \frac1{w - x}
- \frac1n \sum_{x \in VU_{(n)}} \frac1{w - x},
\end{equation*}
and $F_n'$ extends
analytically to $\C \setminus ((VU^{(n)}) \cup (VU_{(n)})$. Finally, define
$G_t$ as in equation (\ref{eqGt}). Recall that this is also analytic in
$(\C \setminus \R) \cup (t-2\xi,t+2\xi)$, and recall (see equation (\ref{eqGt'})
that $G_t'$ extends analytically to $\C \setminus \{\chi,\chi+\eta-1\}$.
Then $t \in (\chi+\eta-1,\chi)$, and we can proceed as in the proof of lemma
\ref{lemFn'Case1} to show that $G_t'(s) < 0$ for all $s \in (\chi+\eta-1,t)$,
$G_t'(t) = 0$ and $G_t''(t) > 0$, and $G_t'(s) > 0$ for all $s \in (t,\chi)$.
Moreover, $G_t'$ has no other roots in $\C \setminus \{\chi,\chi+\eta-1\}$.
The behaviour of the roots of $F_n'$, the contours of steepest descent,
and the definitions of the contour $\kappa_n^+$, then follow from
similar considerations to the previous section.

\vspace{0.5cm}

\textbf{Acknowledgements:} This research was carried out at the Royal Institute
of Technology (KTH), Stockholm, and at Uppsala University. It was partially
supported by grant KAW 2010.0063 from the Knut and Alice Wallenberg Foundation.
Special thanks to Kurt Johansson for helpful comments and suggestions. Special
thanks also to Takis Konstantopoulos for his support.

\end{document}